\numberwithin{equation}{section}
\theoremstyle{plain}
\newtheorem{theorem}[equation]{Theorem}
\newtheorem{lemma}[equation]{Lemma}
\newtheorem{cor}[equation]{Corollary}
\newtheorem{corollary}[equation]{Corollary}
\newtheorem{proposition}[equation]{Proposition}
\newtheorem{prop}[equation]{Proposition}
\newtheorem*{claim*}{Claim}
\newtheorem*{theorem*}{Theorem}
\newtheorem*{question*}{Question}
\newtheorem*{prop:cell-complex}{Proposition \ref{prop:cell-complex}}
\newtheorem*{thm:right}{Theorem \ref{thm:right}}
\newtheorem*{thm:left}{Theorem \ref{thm:left}}
\theoremstyle{definition}
\newtheorem{definition}[equation]{Definition}
\newtheorem{notation}[equation]{Notation}
\newtheorem{remark}[equation]{Remark}
\newtheorem{remarks}[equation]{Remarks}
\newtheorem{example}[equation]{Example}
\newtheorem{examples}[equation]{Examples}
\newcommand{\isom}{\cong}                       % 'is isomorphic to'
\newcommand{\homeq}{\simeq}                     % 'is homotopy equivalent to'
\newcommand{\smsh}{\wedge}                      % smash product
\newcommand{\wdge}{\vee}                        % wedge product
\newcommand{\union}{\cup}                       % union
\newcommand{\varcomp}{\mathbin{\hat{\circ}}}    % completed composition product
\newcommand{\Smsh}{\bigwedge}                   % indexed smash product
\newcommand{\Wdge}{\bigvee}                     % indexed wedge product
\newcommand{\cat}[1]{\mathcal{#1}}              % category symbol
\DeclareMathOperator*{\hocolim}{hocolim}
\DeclareMathOperator*{\colim}{colim}
\DeclareMathOperator*{\holim}{holim}
\newcommand{\Map}{\operatorname{Map} }
\newcommand{\Ext}{\operatorname{Ext} }
\newcommand{\Nat}{\operatorname{Nat} }
\newcommand{\creff}{\operatorname{cr} }
\newcommand{\Hom}{\operatorname{Hom} }
\newcommand{\Tot}{\operatorname{Tot} }
\DeclareMathOperator*{\hofib}{hofib}
\DeclareMathOperator*{\hocofib}{hocofib}
\DeclareMathOperator*{\thofib}{thofib}
\DeclareMathOperator*{\thocofib}{thocofib}
\DeclareMathOperator*{\Sing}{Sing}
\newcommand{\sset}{\mathsf{sSet}_*}                     % based simplicial sets
\newcommand{\finsset}{\mathsf{sSet}_*^{\mathsf{fin}}}   % finite cell complexes in based simplicial sets
\newcommand{\based}{\mathsf{Top}_*}                     % based spaces
\newcommand{\spectra}{\mathsf{Spec}}                       % spectra
\newcommand{\finspec}{{\spectra^{\mathsf{fin}}}}        % finite cell $S$-modules
\newcommand{\weq}{\; \tilde{\longrightarrow} \;}      % weak equivalence
\newcommand{\lweq}{\; \tilde{\longleftarrow} \;}      % left weak equivalence
\newcommand{\fib}{\twoheadrightarrow}           % fibration
\newcommand{\epi}{\twoheadrightarrow}           % epimorphism
\newcommand{\cof}{\rightarrowtail}              % cofibration
\newcommand{\into}{\hookrightarrow}
\newcommand{\dual}{\mathbb{D}}                  % Spanier-Whitehead dual (blackboard bold D)
\newcommand{\der}{\partial}                     % derivative of a functor
\newcommand{\ord}[1]{$#1$\textsuperscript{th}}
\begin{document}

\title[Operads and chain rules]{Operads and chain rules for the calculus of functors}
\author{Greg Arone}
%\address{Department of Mathematics\\ University of Virginia\\ Charlottesville, VA 22904}
\thanks{The first author was supported in part by NSF Research Grant DMS 0605073.}

\author{Michael Ching}
%\address{Department of Mathematics\\ University of Georgia\\ Athens, GA 30602}

\subjclass[2000]{55P65}

\begin{abstract}
We study the structure possessed by the Goodwillie derivatives of a pointed homotopy functor of based topological spaces. These derivatives naturally form a bimodule over the operad consisting of the derivatives of the identity functor. We then use these bimodule structures to give a chain rule for higher derivatives in the calculus of functors, extending that of Klein and Rognes. This chain rule expresses the derivatives of $FG$ as a derived composition product of the derivatives of $F$ and $G$ over the derivatives of the identity.

There are two main ingredients in our proofs. Firstly, we construct new models for the Goodwillie derivatives of functors of spectra. These models allow for natural composition maps that yield operad and module structures. Then, we use a cosimplicial cobar construction to transfer this structure to functors of topological spaces. A form of Koszul duality for operads of spectra plays a key role in this.
\end{abstract}

\maketitle

In a landmark series of papers, \cite{goodwillie:1990}, \cite{goodwillie:1991} and \cite{goodwillie:2003}, Goodwillie outlines his `calculus of homotopy functors'. Let $F: \cat{C} \to \cat{D}$ (where $\cat{C}$ and $\cat{D}$ are each either $\based$, the category of pointed topological spaces, or $\spectra$, the category of spectra) be a pointed homotopy functor. One of the things that Goodwillie does is associate with $F$ a sequence of spectra, which are called the \emph{derivatives} of $F$. We denote these spectra by  $\der_1F, \der_2F, \ldots, \der_nF, \ldots$, or, collectively, by $\der_*F$. Importantly, for each $n$ the spectrum $\der_nF$ has a natural action of the symmetric group $\Sigma_n$. Thus, $\der_*F$ is a \emph{symmetric sequence} of spectra.

The importance of the derivatives of $F$ is that they contain substantial information about the homotopy type of $F$. Goodwillie's main construction in~\cite{goodwillie:2003} defines a sequence of `approximations' to $F$ together with natural transformations forming a so-called `Taylor tower'. This tower takes the form
\[ F \to \dots \to P_nF \to P_{n-1}F \to \dots \to P_0F \]
with $P_nF$ being the universal `$n$-excisive' approximation to $F$. (A functor is \emph{$n$-excisive} if it takes any $n+1$-dimensional cube with homotopy pushout squares for faces to a homotopy cartesian cube.) For `analytic' $F$, this tower \emph{converges} for sufficiently highly connected $X$, that is
\[ F(X) \homeq \holim_n P_nF(X). \]
In order to understand the functors $P_nF$ better, Goodwillie analyzes the fibre $D_nF$ of the map $P_nF \to P_{n-1}F$. This fibre is an `$n$-homogeneous' functor in an appropriate sense, and Goodwillie shows in \cite{goodwillie:2003} that $D_nF$ is determined by $\der_nF$, via the following formula. If $F$ takes values in $\spectra$ then
\[ D_nF(X)\simeq (\der_nF \smsh X^{\smsh n})_{h\Sigma_n} .\]
If $F$ takes values in $\based$ then one needs to prefix the right hand side with $\Omega^\infty$.

This paper investigates the question of what additional structure the collection $\der_*F$ naturally possesses, beyond the symmetric group actions. The first example of such structure was given by the second author in~\cite{ching:2005a}. There, he constructed an operad structure on the sequence $\der_*I_{\based}$, where $I_{\based}$ is the identity functor on $\based$. Our first main result says that if $F$ is a functor from $\based$ to $\based$, then $\der_*F$ has the structure of a bimodule over the operad $\der_*I_{\based}$. (For functors either only from or to $\based$, we get left or right module structures respectively.)

It turns out that these bimodule structures are exactly what is needed to write a `chain rule' for the calculus of functors. By a chain rule we mean a formula for describing the derivatives of a composite functor $FG$ in terms of $\der_*F$ and $\der_*G$. Such a chain rule was first studied by Klein and Rognes, in~\cite{klein/rognes:2002}, who provided a complete answer to this question for first derivatives. In this paper we extend some of their work to higher derivatives, although with some restrictions. In particular, we only consider reduced functors (those with $F(*) \homeq *$) and only derivatives based at the trivial object $*$. (Klein and Rognes consider derivatives at a general base object.) Our result expresses $\der_*(FG)$ as a derived `composition product' of the $\der_*I_{\based}$-bimodule structures on $\der_*F$ and $\der_*G$.

The proofs of our main theorems are rather roundabout, but give us additional interesting results along the way. We first treat the case of functors from $\spectra$ to $\spectra$, and construct new models for the derivatives of such functors. Then, to pass from $\spectra$ to $\based$, we rely heavily on the close connection between topological spaces and coalgebras over the cooperad $\Sigma^\infty\Omega^\infty$, and also on a form of `Koszul duality' for operads in $\spectra$. Koszul duality for operads was first introduced by Ginzburg and Kapranov, in~\cite{ginzburg/kapranov:1994}, in the context of operads of chain complexes. Some of their ideas were extended to operads of spectra by the second author in \cite{ching:2005a}. In particular, it was shown there that the operad $\der_*I_{\based}$ plays the role of the Koszul dual of the commutative cooperad, and hence is a spectrum-level version of the Lie operad. In this paper, we give a deeper topological reason behind this observation. The commutative cooperad appears because it is equivalent to the derivatives of the comonad $\Sigma^\infty \Omega^\infty$. One of our main results is that the derivatives of $I_{\based}$ and $\Sigma^\infty \Omega^\infty$ are related by this form of Koszul duality.

The module and bimodule structures on the derivatives of a general functor $F$ also arise via an extension of Koszul duality ideas to spectra. For example, we show that for $F: \based \to \based$, the derivatives of $F$ and $\Sigma^\infty F$ are related by a corresponding duality between left $\der_*I_{\based}$-modules and left $\der_*(\Sigma^\infty \Omega^\infty)$-comodules.

It seems to us that this paper gives a satisfactory answer to one of the open-ended questions proposed in the introduction to~\cite{ching:2005a}: is there a deeper connection between calculus of functors and the theory of operads? Yes, there is a deeper connection. It stems from two basic sources. The first is the fact that composition of functors is related to the composition product of symmetric sequences. The second is the relationship between $I_{\based}$ and $\Sigma^\infty\Omega^\infty$, which translates, on the level of derivatives, to Koszul duality of operads.

We now proceed with a more precise statement of our main results.

\subsection*{Our results}

As we mentioned already, our results are stated in the language of operads and modules over them. The collection of derivatives of a functor $F$ (of either based spaces or spectra) forms a symmetric sequence of spectra, that is, a sequence
\[ \der_*F = (\der_1F,\der_2F,\dots) \]
in which the \ord{n} term has an action of the symmetric group $\Sigma_n$. (In this paper, we only consider derivatives `at' the trivial object $*$. Derivatives based at other objects are more complicated entities.)

An \emph{operad} consist of a symmetric sequence together with various maps involving the smash products of the terms in the sequence. These maps can be succinctly described by the \emph{composition product}, a (non-symmetric) monoidal product on the category of symmetric sequences. An operad is precisely a monoid for this product. A \emph{module} over an operad $P$ is a symmetric sequence together with an action of the monoid $P$. Because the composition product is non-symmetric, right and left modules have very different flavours. We also have bimodules, either over a single operad, or two separate ones. We review all these notions in Section~\ref{sec:bar}.

We already mentioned that, in \cite{ching:2005a}, the second author constructed an operad structure on $\der_*I_{\based}$, where $I_{\based}$ denotes the identity functor on the category of based topological spaces. The derivatives $\der_*I_{\spectra}$ of the identity functor on spectra, also form an operad, albeit in a trivial way because $\der_*I_{\spectra}$ is equivalent to the unit object $\mathsf{1}$ for the composition product. (The identity functor on spectra is linear so all higher derivatives are trivial.) This observation allows us to state our first theorem in the following way.

\begin{theorem} \label{thm:intro-bimodules}
Let $F: \cat{C} \to \cat{D}$ be a homotopy functor with each of $\cat{C}$ and $\cat{D}$ either equal to $\based$ or $\spectra$. Then the derivatives of $F$ can be given the structure of a $(\der_*I_{\cat{D}},\der_*I_{\cat{C}})$-bimodule in a natural way.
\end{theorem}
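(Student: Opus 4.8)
The plan is to reduce everything to functors \emph{of spectra}, where the operads $\der_*I_{\spectra}$ are equivalent to the unit symmetric sequence $\mathsf{1}$ and the only real content is a natural composition map, and then to transfer that structure to functors involving $\based$ by a cosimplicial cobar construction together with a form of Koszul duality. For a homotopy functor $F \colon \spectra \to \spectra$ I would first build an explicit point-set model for the symmetric sequence $\der_*F$ — say, from the multilinearized cross-effects, or via an explicit (co)limit formula for $\der_n F$ — working in a symmetric monoidal model of spectra and choosing cofibrant/fibrant replacements compatibly with smash powers and $\Sigma_n$-actions, so that the model is strictly functorial in $F$. The crucial point is then to produce, for a pair $F,G$, a \emph{natural, associative and unital} map
\[ \der_* F \circ \der_* G \longrightarrow \der_*(FG), \]
exhibiting $F \mapsto \der_*F$ as a lax monoidal functor from (homotopy functors $\spectra \to \spectra$, $\comp$) to (symmetric sequences, $\circ$). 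Granting this, $\der_*I_{\spectra}$ becomes an operad (tautologically, being $\homeq\mathsf{1}$), and $\der_*F$ — viewed as a bimodule over the monoid $I_{\spectra}$ via $I_{\spectra}F = F = FI_{\spectra}$ — inherits a bimodule structure; this settles the case $\cat C = \cat D = \spectra$, and the displayed map is the structure to be propagated. Here, and below, the target-identity acts on the left and the source-identity on the right because in a composite the target-identity sits on the left.

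To reach functors involving $\based$, I would use the embedding of $\based$ into the category of coalgebras over the comonad $C = \suspec\loops^\infty$ on $\spectra$: a functor with target $\based$ becomes a functor to $C$-coalgebras and is resolved by the cosimplicial cobar construction on $C$, and symmetrically on the source. The derivatives $\der_* C$ of the comonad form a cooperad equivalent to the commutative cooperad, and — re-proving the second author's result from \cite{ching:2005a} in this framework, which is where its conceptual meaning emerges — $\der_*I_{\based}$ is its Koszul dual operad, realized by totalizing the cobar construction whose $k$-th term is the $k$-fold composition product $(\der_* C)^{\circ k}$. Now for $F\colon\based\to\based$ I would resolve both the source and the target, obtaining a bicosimplicial functor of spectra whose levels are composites built from $F$, $C$, $\suspec$ and $\loops^\infty$; by the first part, its derivatives form a bicosimplicial symmetric sequence carrying composition maps with $\der_* C$, i.e.\ a two-sided comodule over the cooperad $\der_* C$. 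Totalizing and passing through Koszul duality converts this comodule data over $\der_* C$ into the desired $(\der_*I_{\based},\der_*I_{\based})$-bimodule structure on $\der_*F$; the two mixed cases come from resolving only one side. Naturality in $F$ is inherited from the naturality of every construction used, and the bimodule axioms reduce, via the bar-construction formalism of Section~\ref{sec:bar}, to the cosimplicial identities.

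The difficulty is concentrated in two places. First, in the first part, producing a model for $\der_*F$ rigid enough that the maps $\der_* F \circ \der_* G \to \der_*(FG)$ are defined on the nose and are strictly associative and unital — a delicate point-set problem in the model category of spectra, made worse by the need to keep track of $\Sigma_n$-actions. Second, in the transfer step, controlling the homotopy theory of the (bi)cosimplicial cobar construction: one must know that the relevant totalizations converge and commute, in the needed range of connectivity, with the derivative functor, and that the comparison between comodules over $\der_* C$ and modules over $\der_*I_{\based}$ really is an equivalence of the relevant homotopy theories. Establishing these convergence and Koszul-duality comparison statements is where the bulk of the technical work will go.
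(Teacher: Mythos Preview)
Your outline is essentially the paper's own strategy, and you have correctly identified the two hard points. But there is one important tactical difference that is not merely cosmetic and that, as you have written it, would land you in a genuine difficulty.

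You propose to build the composition maps on the covariant side, i.e.\ maps $\der_*F \circ \der_*G \to \der_*(FG)$, and then to read off from a comonad $C=\Sigma^\infty\Omega^\infty$ and a $C$-comodule $F\Omega^\infty$ a cooperad structure on $\der_*C$ and a comodule structure on $\der_*(F\Omega^\infty)$. But with covariant composition maps this does not work directly: the coaction $F\Omega^\infty \to F\Omega^\infty C$ gives a map $\der_*(F\Omega^\infty) \to \der_*(F\Omega^\infty C)$, while the composition map goes $\der_*(F\Omega^\infty)\circ\der_*C \to \der_*(F\Omega^\infty C)$; to obtain a coaction $\der_*(F\Omega^\infty)\to\der_*(F\Omega^\infty)\circ\der_*C$ you must \emph{invert} the composition map. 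So the cooperad/comodule structure you want exists only up to inverse weak equivalences, and making it strict is exactly the rigidification problem the paper flags as open in its ``Is there a more direct approach?'' discussion.

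The paper's fix is to work on the dual side from the start: it sets $\der^n(F):=\Nat(FX,X^{\smsh n})$ (as a pro-spectrum indexed by finite subcomplexes of a cell replacement of $F$). These objects are contravariant in $F$, and the composition maps
\[
\der^*(F)\circ\der^*(G)\ \longrightarrow\ \der^*(FG)
\]
are built from literal composition of natural transformations together with a factorization lemma for finite cell subcomplexes; they are strictly associative and unital essentially for free. Now a comonad $C$ yields a genuine \emph{operad} $\der^*(C)$, a comodule $F\Omega^\infty$ yields a genuine \emph{module} $\der^*(F\Omega^\infty)$, and the bar construction $B(\der^*(F\Omega^\infty),\der^*(\Sigma^\infty\Omega^\infty),\mathsf{1})$ followed by Spanier-Whitehead duality produces the right $\der_*(I_{\based})$-module $\der_*(F)$. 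So your Koszul duality step is present, but it runs between an operad and a cooperad obtained from the bar construction, not directly between $\der_*C$-comodules and $\der_*I$-modules.

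One further technical point you should be aware of: even on the dual side, the cobar construction $\Tot(F\Omega^\infty(\Sigma^\infty\Omega^\infty)^\bullet\Sigma^\infty G)$ has ``end'' coface maps (coming from the unit of the adjunction) that are not infinite-loop maps, so one cannot simply apply the spectra chain rule levelwise to them. The paper handles this by first constructing, for each $n$, an auxiliary functor
\[
\Phi_nF(X)\ :=\ \Ext_{\der^*(\Sigma^\infty\Omega^\infty)}\bigl(\der^*(F\Omega^\infty),(\Sigma^\infty X)^{\le n}\bigr)
\]
in the category of right modules, together with a natural map $\phi\colon F\to\Phi_nF$ which is shown to be a $D_n$-equivalence; the module structure is then read off from $\Phi_nF$. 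Only afterwards is the indirect chain rule deduced. This is the concrete mechanism behind the ``convergence'' concern you raise in your last paragraph.
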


We point out that a left or right module structure over $\der_*I_{\spectra} \homeq 1$ provides no more information than a symmetric sequence. Theorem \ref{thm:intro-bimodules} therefore only contains new content in the case that either $\cat{C}$ or $\cat{D}$ (or both) is equal to $\based$. The actual meaning of the theorem is that if $F$ is a functor \emph{from} $\based$, then $\der_*F$ has the structure of a \emph{right} $\der_*I_{\based}$-module; if $F$ is a functor \emph{to} $\based$, then $\der_*F$ has the structure of a \emph{left} $\der_*I_{\based}$-module; and if $F$ is a functor $\based \to \based$, then $\der_*F$ has the structure of a $\der_*I_{\based}$-\emph{bi}module. We have a couple of reasons for phrasing Theorem \ref{thm:intro-bimodules} in this general form. One is that we believe the corresponding statement should be true for categories other than $\based$ and $\spectra$, with module structures over operads formed from the derivatives of the relevant identity functors. Another reason is that it allows us to state a general version of our chain rule for Goodwillie derivatives, as follows.

\begin{theorem} \label{thm:intro-chainrule}
Let $F: \cat{D} \to \cat{E}$ and $G: \cat{C} \to \cat{D}$ be \emph{reduced} homotopy functors with $\cat{C}$, $\cat{D}$ and $\cat{E}$ each equal to either $\based$ or $\spectra$. Suppose further that the functor $F$ preserves filtered homotopy colimits. Then there is a natural equivalence of $(\der_*I_{\cat{E}},\der_*I_{\cat{C}})$-bimodules of the form
\[ \der_*(FG) \homeq \der_*(F) \circ_{\der_*I_{\cat{D}}} \der_*(G). \]
\end{theorem}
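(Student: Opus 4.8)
The plan is to deduce the chain rule (Theorem~\ref{thm:intro-chainrule}) from Theorem~\ref{thm:intro-bimodules} together with the ``new models'' for Goodwillie derivatives of functors of spectra alluded to in the abstract. The main conceptual point is that, with the right model, the derivatives functor $\der_*(-)$ should send composition of functors to the composition product of symmetric sequences, at least on the nose for functors of spectra. So the first step is to establish the spectrum-level chain rule: for reduced $F,G\colon\spectra\to\spectra$ with $F$ preserving filtered homotopy colimits, there is a natural equivalence $\der_*(FG)\homeq\der_*(F)\circ\der_*(G)$. I expect this to follow from the explicit description of $\der_nF$ as a suitable (co)cross-effect or a stable derived functor: a reduced functor of spectra that commutes with filtered hocolims is determined up to the relevant order by its cross-effects, and the cross-effects of a composite decompose, by a Fubini/Eilenberg--Zilber argument, into a sum over partitions indexed exactly by the terms of the composition product $\der_*(F)\circ\der_*(G)$. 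The hypothesis that $F$ preserves filtered homotopy colimits is what lets us commute $\der_*(F)$ past the homotopy colimit defining $\der_*(G)$ (and past the bar-type resolutions below).

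**Next I would** bootstrap from $\spectra$ to $\based$ using the cosimplicial cobar construction and Koszul duality described in the introduction. The key input is the relationship between $I_{\based}$ and the comonad $\suspec\loops^\infty$: a functor $G\colon\cat{C}\to\based$ can be resolved by the cosimplicial object $[\,q\,]\mapsto (\loops^\infty)(\suspec\loops^\infty)^{q}\suspec G$, i.e.\ by applying $\loops^\infty$ to the cobar construction on the comonad $\suspec\loops^\infty$ acting on $\suspec G$. Taking derivatives commutes appropriately with this cosimplicial structure (here one uses that the functors in sight preserve filtered hocolims and that $\der_*$ is a finitary/analytic construction), so $\der_*(G)$ is recovered as the totalization of a cosimplicial symmetric sequence built from $\der_*(\suspec G)$ and the cooperad $\der_*(\suspec\loops^\infty)$, which is equivalent to the commutative cooperad. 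Koszul duality then identifies this totalization as a derived primitives / bar--cobar construction exhibiting $\der_*(G)$ as a left $\der_*I_{\based}$-module: concretely, $\der_*(G)\homeq \der_*I_{\based}\circ_{\mathsf{h}}(\text{divided-power coalgebra }\der_*(\suspec G))$, matching the left-module structure of Theorem~\ref{thm:intro-bimodules}. The same scheme on the source side handles the right-module structure, so $\der_*(G)$ is presented as a two-sided bar construction $B(\der_*I_{\cat{D}},\der_*I_{\cat{C}},-)$ applied to the ``linearized'' data, and similarly for $\der_*(F)$.

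**To combine these**, I would write $FG$ as the composite of the two cosimplicial resolutions and apply the spectrum-level chain rule term by term. The derivatives of $FG$ thus become the realization of a bicosimplicial (or diagonal) object whose $(p,q)$-entry is a composition product $\der_*(F_p)\circ\der_*(G_q)$ of the resolving pieces; because $F$ preserves filtered homotopy colimits, $\der_*(F)$ commutes with the relevant totalizations, and a cofinality/Eilenberg--Zilber argument collapses the bicosimplicial object to the two-sided bar construction computing $\der_*(F)\circ_{\der_*I_{\cat{D}}}\der_*(G)$. Naturality and the bimodule compatibility are then inherited from the naturality of each step. The main obstacle — and where the real work lies — is the first step combined with the homotopy-coherence bookkeeping: one must produce the spectrum-level chain rule \emph{and} all the cobar/bar resolutions as genuinely point-set (or rigidly functorial) constructions, not merely up to homotopy, so that the composition maps assemble into the operad/module actions of Theorem~\ref{thm:intro-bimodules} and the final equivalence is one of bimodules rather than bare symmetric sequences. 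This is exactly why the abstract emphasizes ``new models \dots\ that allow for natural composition maps'': the chain rule is easy once the derivatives of functors of spectra are modeled by something strictly monoidal under composition, and hard precisely because arranging that model (and threading it through the $\suspec\loops^\infty$-resolution and Koszul duality) requires care. I would expect to isolate the strictness statement as a separate proposition — likely the ``new model'' result referenced in the abstract — and then treat Theorem~\ref{thm:intro-chainrule} as a formal consequence of it plus Theorem~\ref{thm:intro-bimodules}.
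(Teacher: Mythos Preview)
Your proposal is essentially correct and follows the same architecture as the paper: establish the chain rule for functors of spectra via explicit models admitting strict composition maps, resolve the ``middle category $=\based$'' case by the cosimplicial cobar construction on $\Sigma^\infty\Omega^\infty$ (the paper's Theorem~\ref{thm:intro-cosimplicial-resolution}), apply the spectrum-level chain rule levelwise to obtain the indirect form $\der_*(FG)\homeq \der_*(F\Omega^\infty)\circ_{\der_*(\Sigma^\infty\Omega^\infty)}\der_*(\Sigma^\infty G)$, and then invoke a Koszul duality result to rewrite this as $\der_*(F)\circ_{\der_*I_{\based}}\der_*(G)$. The one refinement you do not anticipate is that the cobar construction has ``end'' coface maps (e.g.\ $\Omega^\infty\Sigma^\infty G\to\Omega^\infty\Sigma^\infty\Omega^\infty\Sigma^\infty G$) which are not infinite-loop maps, so the paper cannot simply read off the module structures on $\der_*(F)$ and $\der_*(G)$ from the cobar resolution; instead it first proves Theorem~\ref{thm:intro-bimodules} by a separate construction (the functors $\Phi_nF$ built from $\Ext$-objects over $\der^*(\Sigma^\infty\Omega^\infty)$), and only afterwards assembles the chain rule---but since you are taking Theorem~\ref{thm:intro-bimodules} as input, this does not affect the validity of your outline.
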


The right-hand side here is a \emph{derived} composition product `over' the derivatives of the identity functor on $\cat{D}$, using the right $\der_*I_{\cat{D}}$-module structure on $\der_*(F)$ and the left $\der_*I_{\cat{D}}$-module structure on $\der_*(G)$ of Theorem \ref{thm:intro-bimodules}. The derived composition product can be constructed explicitly as a two-sided bar construction and we make extensive use of such bar constructions in this paper. In the case that $\cat{D} = \spectra$, Theorem \ref{thm:intro-chainrule} reduces to the statement
\[ \der_*(FG) \homeq \der_*(F) \circ \der_*(G).\]
This identity may be viewed as a direct analogue of the classical {Fa\`{a}} di Bruno formula \cite{johnson:2002}. In the special case when $\cat{C}=\cat{D}=\cat{E}=\spectra$, Theorem~\ref{thm:intro-chainrule} was proved in \cite{ching:2007} by a different method.

Notice that the statement of Theorem \ref{thm:intro-chainrule} is restricted to \emph{reduced} functors, that is, those $G$ for which $G(*) \homeq *$. This is largely because we deal only with derivatives \emph{at $*$}. To state the analogous chain rule in the non-reduced case, we would have to consider derivatives based at other objects, which require some extra technology (such as parameterized spectra). We do believe that Theorem~\ref{thm:intro-chainrule}, with the notion of derivative suitably interpreted, should hold for derivatives based at arbitrary objects, and for non-reduced functors. This would generalize the full force of Klein and Rognes' chain rule \cite{klein/rognes:2002}. Indeed, their result originally inspired the form of Theorem~\ref{thm:intro-chainrule}.

The other restriction made in Theorem \ref{thm:intro-chainrule} is that the functor $F$ should preserve filtered homotopy colimits. This is an essential condition. A counterexample in the case this does not hold is given in Example \ref{ex:counterexample}. Intuitively, the reason for this hypothesis is that the derivatives of a functor depend only on the values of that functor on finite cell complexes. This condition ensures that the entire functor $F$ is determined by its values on such inputs.

The focus of this paper is on theory, but we also consider a few examples. In particular, we compute the right $\der_*(I_{\based})$-module structure on the derivatives of $A(-)$ (Waldhausen's algebraic K-theory of spaces functor, following Goodwillie's calculation of those derivatives in \cite[9.7]{goodwillie:2003}), and $\Sigma^\infty \Map(K,-)$ (the functor of stable mapping spaces out of a finite complex, following the first author's calculation of its derivatives in \cite{arone:1999}).

\newpage

\subsection*{Open questions, possible directions for future work}
\subsubsection*{Is there a more direct approach to the chain rule?} The following is a natural question to ask. A positive answer would point to a simpler and more direct way to prove many of our results. Suppose that $F$ and $G$ are homotopy functors such that the composition $FG$ is defined. For concreteness, let us assume that $F$ and $G$ are functors between categories that are either $\based$ or $\spectra$.

\begin{question*}
Is there a model of $\der_*$ that is endowed with natural maps
\[ \mu: \der_*F\circ \der_*G \longrightarrow \der_*(FG) \]
and
\[ \eta: \mathsf{1} \longrightarrow \der_*I \]
such that $\mu$ is associative, in the evident sense, $\eta$ induces an equivalence for $*=1$, and the two maps together are unital in the sense that the following composed maps equal the identity:
\[ \der_*F \isom \der_*F\circ \mathsf{1} \longrightarrow \der_*F \circ \der_*I \longrightarrow \der_*(FI)=\der_*F \]
and
\[ \der_*F \isom \mathsf{1}\circ \der_*F \longrightarrow \der_*I \circ\der_*F  \longrightarrow \der_*(IF)=\der_*F. \]
\end{question*}

It is relatively straightforward to construct a composition map that is associative and unital up to homotopy. However, to construct a strictly associative (or even $A_\infty$) model appears to be difficult.

A positive answer to this question would imply that if $F$ is a monad and $G$ is a module over $F$, then $\der_*F$ has a natural operad structure and $\der_*G$ has the structure of a module over $\der_*F$. Taking $F$ to be the identity functor would then imply Theorem~\ref{thm:intro-bimodules}. Furthermore, it would imply the existence of a natural map
\[ \der_*F \circ_{\der_*I_{\cat{D}}} \der_*G \longrightarrow \der_*(FG) \]
and the chain rule (Theorem~\ref{thm:intro-chainrule}) would then amount to the assertion that this map is an equivalence.

As far as we know, no-one has yet managed to construct such maps, although unpublished work in this direction has been done by Bill Richter and Andrew Mauer-Oats.

\subsubsection*{Does the chain rule hold for functors between categories other than $\based$ and $\spectra$?} Kuhn showed in \cite{kuhn:2007} that many of Goodwillie's constructions apply equally well to functors between other categories. In general, if $\cat{C}$ and $\cat{D}$ are (simplicial) model categories, one can define a Taylor tower for functors $F: \cat{C} \to \cat{D}$. Lurie \cite{lurie:2008a} has examined how some of these ideas can be developed in the context of $\infty$-categories. We suspect that suitable versions of Theorems \ref{thm:intro-bimodules} and \ref{thm:intro-chainrule} apply in these more general settings.

Suppose $\cat{C}$ is a category appropriate for `doing calculus' in one of the above senses. Then one can often define a stabilization of $\cat{C}$, denoted $\spectra(\cat{C})$, that plays the role of the category of spectra for topological spaces. Schwede and Shipley, in \cite{schwede/shipley:2003a}, have shown, in good cases, how to present $\spectra(\cat{C})$ as the category of modules over a kind of generalized ring spectrum, which we can denote $R_{\cat{C}}$. (In the classical case, we have $R_{\based} = S$, the sphere spectrum.) If $F: \cat{C} \to \cat{D}$ is a functor between categories for which this process works, one should be able to interpret the \ord{n} derivative of $F$ as a $(R_{\cat{D}},R_{\cat{C}}^{\smsh n})$-bimodule with an appropriate $\Sigma_n$-action. There are corresponding notions of operad and module for sequences of such objects, and we speculate that, in general, the derivatives of the identity functor on $\cat{C}$ have an operad structure. We conjecture that Theorems \ref{thm:intro-bimodules} and \ref{thm:intro-chainrule} carry over in essentially the same form.

Many of the ideas present in the proofs of these Theorems should be applicable in a more general setting. In particular, for $\cat{C}$ as above, we have an adjunction
\[ \Sigma^\infty_{\cat{C}}: \cat{C} \leftrightarrow \spectra(\cat{C}): \Omega^\infty_{\cat{C}} \]
and hence a comonad $\Sigma^\infty_{\cat{C}} \Omega^\infty_{\cat{C}}$. We propose that there exists a duality relationship between the derivatives of $I_{\cat{C}}$ and $\Sigma^\infty_{\cat{C}} \Omega^\infty_{\cat{C}}$ extending that described in this paper for $\cat{C} = \based$. In particular, this could provide interesting examples of Koszul dual operads.

A key example we have in mind for this would be to take $\cat{C}$ equal to the category of spaces over a fixed base space $X$. This is the correct context for describing the Taylor tower of a functor expanded at an arbitrary base object. As we mentioned already, we need this to complete the extension of the Klein-Rognes chain rule to higher derivatives. Their result fits into the conjectural framework described above.

Another family of examples is provided by categories of algebras over operads. This includes various categories of ring spectra (algebras over the associative or commutative operads in $\spectra$) and things like simplicial commutative algebras. If $\cat{C}$ is equal to the category of $P$-algebras, where $P$ is an operad of spectra, we conjecture that the operad $\der_*I_{\cat{C}}$ is equivalent to $P$. In this case, the suspension spectrum functor $\Sigma^\infty$ is related to topological Andr\'{e}-Quillen homology, and our results would tie in with work of Basterra and Mandell \cite{basterra/mandell:2005}.

A somewhat different class of examples arises from Weiss's orthogonal calculus~\cite{weiss:1995}. This is a calculus of functors $\cat{J} \to \cat{C}$, where $\cat{J}$ is the category of Euclidean vector spaces and $\cat{C}$ is either $\based$ or $\spectra$. Let $G: \cat{J} \to \cat{C}$ be such a functor. In this case, the derivatives $\der_*G$ do not form a symmetric sequence, but an \emph{orthogonal sequence} of spectra, that is, $\der_nG$ is a spectrum with an action of $O(n)$ for each $n$. The category of orthogonal sequences is \emph{left tensored} over the category of symmetric sequences. This means that if $M$ is a symmetric sequence and $Q$ is an orthogonal sequence, then there is a natural way to define a composition product $M \circ Q$, which is again an orthogonal sequence. It follows, in particular, that there is a well-defined notion for orthogonal sequences of a left module over an operad. The analogue of Theorem \ref{thm:intro-bimodules} would say that the orthogonal sequence $\der_*G$ has a left $\der_*I_{\based}$-module structure.

Now let $F: \cat{C} \longrightarrow \cat{D}$ be another functor where $\cat{D}$ is again either $\based$ or $\spectra$. Then the composite $FG$ is defined, and we believe that Theorem~\ref{thm:intro-chainrule} should basically hold verbatim, that is:
\[ \der_*(FG) \homeq \der_*F \circ_{\der_*I_{\cat{C}}} \der_*G \]
where now this is an equivalence of orthogonal sequences.

Finally, if $G: \cat{J} \to \based$, then we believe that the orthogonal sequences $\der_*(G)$ and $\der_*(\Sigma^\infty G)$ are related by the same sort of Koszul duality used in this paper. This often provides a practical way to calculate $\der_*(G)$ and this method has been used implicitly by the first author in some recent and current work \cite{arone:2002,arone:2009}.

Note that we do not know of a \emph{right} tensoring for the category of orthogonal sequence. This corresponds to the fact that we have a calculus for functors \emph{from} $\cat{J}$, but we do not know of any version of calculus for functors \emph{to} $\cat{J}$.

\subsubsection*{Can this approach be used to classify Taylor towers?} A basic problem in the calculus of functors is to extend Goodwillie's description of homogeneous functors to a classification of Taylor towers. One way we might answer this is to describe structure on the collection of derivatives of a functor that is sufficient for reconstructing the Taylor tower, and hence, in the analytic case, the functor itself. It is obvious that symmetric group actions described by Goodwillie are not sufficient, for these only determine the layers in the tower and not the (possibly nontrivial) extensions between those layers.

In this paper we show that the derivatives of a functor have more structure, namely that of a (bi)module over a certain operad. However, this structure is still not complete, and is not sufficient for recovering the Taylor tower from the derivatives. For example, for functors from $\spectra$ to $\spectra$, our results do not add any new structure to the derivatives - they still form just a symmetric sequence - but the Taylor tower of a functor from $\spectra$ to $\spectra$ cannot be recovered from its derivatives. McCarthy \cite{mccarthy:2001} has shown that obstructions to such a Taylor tower being the product of its layers live in the Tate cohomology of certain equivariant spectra. On the other hand, Kuhn shows in \cite{kuhn:2004} that, for functors of appropriately localized spectra (such as rational or $K(n)$-local), the relevant Tate cohomologies vanish and we do obtain split Taylor towers.

Now let $F$ be a functor of topological spaces. We show (see Remark \ref{rem:tate}) that the extent to which the bimodule $\der_*F$ fails to determine the Taylor tower of $F$ can also be measured using Tate cohomology in an analogous way to that of McCarthy. We might therefore expect that Kuhn's result can be generalized to functors of $K(n)$-local spaces to obtain a classification purely in terms of bimodule structures on derivatives.

We intend to come back to this in a future work.

\subsubsection*{How far can one develop Koszul duality for operads in $\spectra$?} In the next subsection of the introduction we review the notion of Koszul duality that we use in this paper, but for now let us just say that to an operad $P$, in $\spectra$, one can associate another operad, its `Koszul dual', which we denote by $\overline P$. This is a lift of Ginzburg and Kapranov's construction of the dg-dual \cite{ginzburg/kapranov:1994} for differential graded operads. Furthermore, if $M$ is a module (right, left or bi-) over $P$, one can construct a corresponding module $\overline M$ over $\overline P$. As part of the proof of Theorem \ref{thm:intro-chainrule}, we show that there is a natural equivalence
\[ M \circ_P N \homeq \overline M \circ_{\overline P} \overline N. \]
We believe that our work can also be used to construct an equivalence between certain homotopy categories of $P$-modules and of $\overline P$-modules.

One question, however, that we have failed to answer is whether or not the double Koszul dual of $P$ is equivalent, as an operad, to $P$. Ginzburg and Kapranov show this for operads of chain complexes, but for spectra we do not know if this is true. Without it, the duality picture is incomplete. We \emph{do} show that these operads are equivalent as symmetric sequences, but we do not have a map of operads relating them.

\subsection*{Outline of the paper}

This paper is long and rather technical and we feel that it would be useful to outline the central ideas of the proofs of Theorems \ref{thm:intro-bimodules} and \ref{thm:intro-chainrule} without cluttering those ideas with too much detail. Along the way, we collect some additional results that may be of independent interest. We hope the casual reader will be able to understand the philosophy behind the paper by looking at this section, without having to wade through the whole thing. As a result, we suppress some of the hypotheses needed to make all the statements in this section true. Some of these issues are addressed in the `Technical Remarks' section below.

Although the focus of this paper is functors to and/or from the category of based topological spaces, we start by considering functors from spectra to spectra. If $F,G: \spectra \to \spectra$ are such functors, then take
\[ \Nat(F,G) \]
to be the spectrum of natural transformations between $F$ and $G$. This can be defined as an enriched `end' based on the internal mapping objects in $\spectra$.

For any $F: \spectra \to \spectra$, we set
\[ \der^n(F) := \Nat(FX,X^{\smsh n}). \]
There is a natural $\Sigma_n$-action on $\der^n(F)$ coming from the permutation action on $X^{\smsh n}$ and so the collection $\der^*(F)$ becomes a symmetric sequence of spectra, contravariantly dependent on $F$. One of our central results (Section~\ref{sec:nat})  is that if $F$ is a cofibrant functor (in the usual model structure on the category of functors) then $\der^nF$ is naturally equivalent to the Spanier-Whitehead dual of $\der_nF$.

Thus, if $F$ is a cofibrant functor whose derivatives are homotopy-finite spectra, $\der^*F$ determines $\der_*F$. If $F$ does not have homotopy-finite derivatives, we can refine the definition of $\der^*F$ as follows. Approximate $F$ with an ind-functor $\{C_\alpha\}$, where each $C_\alpha$ does have finite derivatives. Then, define $\der^nF$ to be the \emph{pro-spectrum} $\{\der^nC_\alpha\}$. This pro-spectrum is then Spanier-Whitehead dual to $\der_nF$ in the sense of Christensen and Isaksen \cite{christensen/isaksen:2004}. We mostly suppress the pro-spectrum aspect in this outline. It suffices to say that, if properly defined, the symmetric sequence $\der^*F$ determines $\der_*F$.

The proof that $\der^*F$ is a model for the duals of $\der_*F$ makes use of the following auxiliary construction. Let $\Psi_nF : \spectra \longrightarrow \spectra$ be the functor defined by the formula
\[\Psi_nF(X)=\Map(\der^nF, X^{\smsh n})^{h\Sigma_n}\]
where $\Map(-,-)$ stands for the internal mapping object in $\spectra$. There is an evaluation map
\[\psi: F \longrightarrow \Psi_nF\]
and we prove that this map is a $D_n$-equivalence, that is, it becomes an equivalence after applying $D_n$. It follows that $\der_n(F)\simeq \der_n(\Psi_nF)$. We then prove that $\der_n(\Psi_nF)$ is the Spanier-Whitehead dual of $\der^nF$ to complete the proof.

One of the fundamental observations of this paper is that the construction $\der^*$ naturally relates the composition product of symmetric sequences with composition of functors. In Section~\ref{sec:comp-maps} we prove the existence of maps
\[ \tag{*} \der^*F \circ \der^*G \longrightarrow \der^*(FG) \]
and
\[\mathsf{1} \longrightarrow \der^*I_{\spectra}\]
that are associative and unital in the evident sense. In Section~\ref{sec:chainrule} we show that the maps (*) are weak equivalences of symmetric sequences. This proves Theorem \ref{thm:intro-chainrule} in the case that $F$ and $G$ are functors from $\spectra$ to $\spectra$.

With new models of derivatives, and the chain rule, for functors of spectra well understood, we turn to the method by which we transfer these results to functors of topological spaces. The key is that the maps (*) can be used to construct operad and module structures on the duals of the derivatives of various functors.

Recall that a functor $F: \spectra \longrightarrow \spectra$ is a called a \emph{comonad} if there exist natural transformations $F \longrightarrow FF$ and $F \longrightarrow I_{\spectra}$ that are coassociative and counital in an obvious way. A functor $G: \spectra \longrightarrow \cat{D}$ is said to be a \emph{right comodule} over $F$ if there is a natural transformation $G \longrightarrow GF$ that is coassociative and counital. We naturally also have left comodules and bi-comodules over a comonad.

Now suppose $F$ is a comonad in $\spectra$ and $G$ a comodule over $F$ (either right, left or bi-). Then the maps (*) endow $\der^*(F)$ with an operad structure, and $\der^*(G)$ with the structure of a module over $\der^*(F)$. Unfortunately, the shortcoming of this argument is that $\der^*(F)$ only has the correct homotopy type (i.e. that of the Spanier-Whitehead duals of the derivatives of $F$) when $F$ is a \emph{cofibrant} functor in the projective model structure on the category of functors. This means that we only really obtain structure on the derivatives of a comonad $F$ or comodule $G$ if these are also cofibrant functors. In order to transfer this structure to all comonads and comodules, we would need a cofibrant replacement that preserves the comonad and comodule structures. We do not know if such a replacement exists.

We use the operad structure described in the previous paragraph in the case that $F$ is the comonad $\Sigma^\infty \Omega^\infty$ which \emph{is} cofibrant in our model structure, so the concerns of the previous paragraph do not apply. A routine calculation with the Yoneda Lemma shows that
\[ \der^n(\Sigma^\infty \Omega^\infty) \homeq S, \]
where $S$ is the sphere spectrum, and that the resulting operad $\der^*(\Sigma^\infty \Omega^\infty)$ is equivalent to the commutative operad in the category of spectra. This is essentially Spanier-Whitehead dual to the statement that the derivatives $\der_*(\Sigma^\infty \Omega^\infty)$ form the commutative \emph{cooperad}.

Observe that for any functor $F: \based \to \spectra$, the composite $F \Omega^\infty$ is a right comodule over $\Sigma^\infty \Omega^\infty$, again by way of the $(\Sigma^\infty,\Omega^\infty)$-adjunction. We therefore deduce, using the composition maps (*), that $\der^*(F \Omega^\infty)$ is a right module over the operad $\der^*(\Sigma^\infty \Omega^\infty)$. Similarly, for any $G: \spectra \to \based$, $\Sigma^\infty G$ is a left comodule over $\Sigma^\infty \Omega^\infty$ and $\der^*(\Sigma^\infty G)$ is a left module over $\der^*(\Sigma^\infty \Omega^\infty)$.

The reason that the cooperad $\Sigma^\infty \Omega^\infty$ plays an important role in extending our results from functors on $\spectra$ to functors on $\based$ is that there is a close connection between $\based$ and the category of coalgebras over $\Sigma^\infty \Omega^\infty$. For example, one can show that the categories of $1$-connected spaces and of $1$-connected $\Sigma^\infty\Omega^\infty$-coalgebras are homotopy equivalent. More pertinent for us is the fact that as far as Taylor towers are concerned, functors to or from the category $\based$ are essentially the same as functors that are left or right comodules over $\Sigma^\infty\Omega^\infty$ respectively.

A more precise statement in this vein is given by Theorem~\ref{thm:intro-cosimplicial-resolution} below. This theorem describes the Taylor tower of any composite $FG$ where the `middle' category (i.e. the source of $F$ or target of $G$) is equal to $\based$. This result is fundamental to our paper. It describes how to build $P_n(FG)$ out of the Taylor towers of composite functors for which the middle category is $\spectra$. This allows us to transfer the chain rule for spectra to the unstable setting.

For any $F: \based \to \cat{D}$ and $G: \cat{C} \to \based$, the corresponding functors $F \Omega^\infty$ and $\Sigma^\infty G$ form right and left comodules, respectively, over the comonad $\Sigma^\infty \Omega^\infty$. There is then a natural map of the form
\[ \epsilon: FG \to \Tot(F \Omega^\infty (\Sigma^\infty \Omega^\infty)^{\bullet} \Sigma^\infty G). \]
The right-hand side here is the totalization of a cosimplicial object whose $k$-simplices are given by the composite
\[ F \Omega^\infty (\Sigma^\infty \Omega^\infty)^k \Sigma^\infty G \]
and whose coface and codegeneracy maps come from the comonad and comodule structures on $\Sigma^\infty \Omega^\infty$, $F \Omega^\infty$ and $\Sigma^\infty G$. The map $\epsilon$ is a natural coaugmentation of this cosimplicial object. We then prove the following result in Section~\ref{sec:cobar}.

\begin{theorem}\label{thm:intro-cosimplicial-resolution}
The coaugmentation map $\epsilon$ induces a weak equivalence
\[ P_n(FG) \weq \Tot(P_n(F\Omega^\infty(\Sigma^\infty\Omega^\infty)^{\bullet}\Sigma^\infty G)). \]
It follows that we also have a weak equivalence on the level of derivatives
\[ \der_n(FG) \homeq \Tot(\der_n(F\Omega^\infty(\Sigma^\infty\Omega^\infty)^{\bullet}\Sigma^\infty G)). \]
\end{theorem}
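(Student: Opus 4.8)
The plan is to prove the first equivalence — that $\epsilon$ induces $P_n(FG) \weq \Tot(P_n(F\Omega^\infty(\Sigma^\infty\Omega^\infty)^\bullet \Sigma^\infty G))$ — and then deduce the statement on derivatives by applying the identification $D_nF(X) \simeq (\der_nF \smsh X^{\smsh n})_{h\Sigma_n}$ together with the fact that $\Tot$ commutes with the relevant homotopy limits. The central idea for the first equivalence is that the cosimplicial object $F\Omega^\infty(\Sigma^\infty\Omega^\infty)^\bullet\Sigma^\infty G$ is built from the comonad resolution of $I_\based$ associated to the adjunction $(\Sigma^\infty,\Omega^\infty)$, and that this resolution, while not an equivalence before applying $P_n$, becomes one afterward because $P_n$ only sees finitely many `stages' of the functor in a suitable sense.

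First I would reduce to the case $F = I_\cat{D}$, $G = I_\cat{C}$ with the middle category $\based$, i.e.\ to showing that the coaugmentation
\[ I_\based \to \Tot(\Omega^\infty(\Sigma^\infty\Omega^\infty)^\bullet\Sigma^\infty) \]
induces an equivalence on $P_n$; the general case follows by composing with $F$ on the left and $G$ on the right, using that $P_n$ of a composite is controlled by the Taylor tower of each factor, and that $F$ (resp.\ $G$) applied levelwise to a cosimplicial object commutes with $\Tot$ up to the convergence issues handled by the hypothesis that $F$ preserves filtered homotopy colimits (only $F$ needs this; $G$ enters on the inside). The key input here is that $\Sigma^\infty\Omega^\infty$ is $1$-analytic and, more to the point, that its Taylor tower agrees with that of $I_\spectra$ through a range that improves with connectivity — so the cobar resolution looks, after truncation by $P_n$, like a resolution by functors landing in $\spectra$.

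The heart of the argument is a connectivity/convergence estimate. I would show that for each fixed highly connected $X$, the coaugmented cosimplicial space $(\Omega^\infty(\Sigma^\infty\Omega^\infty)^\bullet\Sigma^\infty)(X)$ — i.e.\ the cobar construction on the comonad $\Sigma^\infty\Omega^\infty$ applied to $X$ — has the property that its $k$-th term is $(k+1)\cdot(\text{conn}(X)) + c_k$-connected relative to $X$, so that the totalization converges and the coaugmentation is an equivalence on $2$-connected spaces (this is essentially the Bousfield--Kan / stable-to-unstable comparison underlying the fact that $\Sigma^\infty\Omega^\infty$-coalgebras model connected spaces, cited in the outline). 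Applying $P_n$ then only requires this comparison on inputs built from finite complexes, where everything is sufficiently connected after suspending; more precisely one argues that $P_n$ commutes with the totalization here because $\Tot$ of a tower whose $k$-th layer is increasingly highly $D_j$-connected for $j \le n$ stabilizes.

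The step I expect to be the main obstacle is the commutation of $P_n$ (equivalently $D_n$, equivalently $\der_n$) with the totalization $\Tot$. Totalization is a homotopy limit over $\Delta$, and $P_n$ is itself built from homotopy limits (over punctured cubes) but also from homotopy colimits (the $T_n$-construction iterates a homotopy colimit over finite subsets), so the two do not formally commute; one must instead show that in the relevant cosimplicial object the layers $D_k$ of the cosimplicial direction interact with the Goodwillie layers $D_n$ in the functor direction so that only finitely many cosimplicial levels contribute to $D_n(FG)$, which is where the analyticity of $\Sigma^\infty\Omega^\infty$ and the reducedness hypotheses are genuinely used. I would handle this by a careful induction on $n$, using the fibration sequences $D_n \to P_n \to P_{n-1}$ on both the source and the totalized target and the fact that $D_n$ of the cobar construction can be computed termwise because $D_n$ is built from the $n$-th cross-effect, which is itself an iterated homotopy colimit that commutes with the cosimplicial homotopy limit in the needed range.
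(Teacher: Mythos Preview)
Your approach via connectivity estimates and the Bousfield--Kan resolution is the route the paper explicitly \emph{avoids}, and for good reason: it only works when $F$ and $G$ are analytic. The paper notes that the map factors as $P_n(FG) \to P_n(\Tot(\cdots)) \to \Tot(P_n(\cdots))$ and that these intermediate maps are \emph{not} equivalences in general; they become equivalences under analyticity assumptions, which is essentially what your connectivity argument would need. Your reduction to $F = G = I$ is also not justified: you cannot simply ``compose with $F$ on the left and $G$ on the right'' because neither $F$ nor $P_n$ commutes with $\Tot$ in the required sense, and the hypothesis that $F$ is finitary does not rescue this (it controls filtered homotopy colimits, not cosimplicial homotopy limits).

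The missing idea is the following. The paper proves the result by a formal induction on the Taylor tower of $F$, with no connectivity input. The three ingredients are: (i) a $P_n$-equivalence $F \to F'$ reduces the claim for $F$ to that for $F'$, via Proposition~\ref{prop:calculus}(1); (ii) the claim passes along fibre sequences $D_kF \to P_kF \to P_{k-1}F$ because $P_n$ and $\widetilde{\Tot}$ both preserve fibre sequences; and crucially (iii) when $F$ factors as $H\Sigma^\infty$ for some $H : \spectra \to \cat{D}$, the cosimplicial object acquires an \emph{extra codegeneracy} (from the counit $\Sigma^\infty\Omega^\infty \to I_\spectra$ applied at the leftmost spot), giving a cosimplicial contraction, so the coaugmentation is an equivalence outright. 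Since each homogeneous layer $D_kF$ does factor through $\Sigma^\infty$ (Proposition~\ref{prop:D_nF}), step (iii) handles the layers, step (ii) climbs the tower, and step (i) finishes. This argument never needs to commute $P_n$ past $\Tot$ in the problematic direction and imposes no analyticity hypothesis.
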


It is worth noting that the map of Theorem \ref{thm:intro-cosimplicial-resolution} factors as
\[ P_n(FG) \longrightarrow P_n(\Tot(F\Omega^\infty(\Sigma^\infty\Omega^\infty)^{\bullet}\Sigma^\infty G))\longrightarrow \Tot(P_n(F\Omega^\infty(\Sigma^\infty\Omega^\infty)^{\bullet}\Sigma^\infty G)). \]
These intermediate maps are not equivalences in general. We do know them to be equivalences if $F$ and $G$ are analytic functors, and in this case the theorem can be proved using connectivity estimates and the classical fact that the natural transformation
\[ I_{\based} \longrightarrow \Tot(\Omega^\infty (\Sigma^\infty\Omega^\infty)^{\bullet} \Sigma^\infty) \]
is an equivalence on simply connected spaces. (Recall that the right-hand side here is a model for the Bousfield-Kan $\mathbb{Z}$-completion functor of \cite{bousfield/kan:1972}.) We do not use these arguments. Instead, we give a `formal' proof of Theorem~\ref{thm:intro-cosimplicial-resolution} that does not assume analyticity or rely on connectivity estimates. This proof is by induction on the Taylor tower of $F$ using the fact that when $F$ is homogeneous, it factors as $F'\Sigma^\infty$ for some $F'$.

The ingredients we have described so far imply the following Proposition.

\begin{proposition} \label{prop:intro-indirect-chain-rule}
Consider functors $F: \based \to \cat{D}$ and $G: \cat{C} \to \based$ where $\cat{C}$ and $\cat{D}$ are themselves either $\based$ or $\spectra$. Then there is an equivalence of symmetric sequences of the form
\[ \der^*(FG) \homeq \der^*(F\Omega^\infty) \circ_{\der^*(\Sigma^\infty \Omega^\infty)} \der^*(\Sigma^\infty G). \]
where the right-hand side makes use the operad and modules $\der^*(\Sigma^\infty \Omega^\infty)$, $\der^*(F \Omega^\infty)$ and $\der^*(\Sigma^\infty G)$ that come from the comonad and comodule structures on these functors.
\end{proposition}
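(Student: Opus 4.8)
The plan is to begin from the cosimplicial resolution of Theorem~\ref{thm:intro-cosimplicial-resolution}, dualize it, feed each level into the chain rule for functors of spectra, and recognize the outcome as a two-sided bar construction. Theorem~\ref{thm:intro-cosimplicial-resolution} supplies, for each $n$ and compatibly with the $\Sigma_n$-actions, a weak equivalence
\[ \der_n(FG) \homeq \Tot\bigl(\der_n(F\Omega^\infty(\Sigma^\infty\Omega^\infty)^{\bullet}\Sigma^\infty G)\bigr). \]
Applying Spanier--Whitehead duality termwise---recall that $\der^*$ computes the duals of $\der_*$ on cofibrant functors, and that the pro-spectrum refinement of $\der^*$ handles derivatives that are not homotopy-finite---turns this cosimplicial spectrum into a simplicial one and the totalization into a geometric realization:
\[ \der^*(FG) \homeq \bigl|\der^*(F\Omega^\infty(\Sigma^\infty\Omega^\infty)^{\bullet}\Sigma^\infty G)\bigr|. \]
The simplicial structure maps on the right arise by applying the contravariant functor $\der^*$ to the coface and codegeneracy maps of the cobar construction, i.e.\ to the comonad structure of $\Sigma^\infty\Omega^\infty$ and the right and left comodule structures of $F\Omega^\infty$ and $\Sigma^\infty G$.

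In each simplicial degree $k$ I would rewrite the composite $F\Omega^\infty(\Sigma^\infty\Omega^\infty)^{k}\Sigma^\infty G$ as $(F\Omega^\infty)\circ(\Sigma^\infty\Omega^\infty)^{k}\circ(\Sigma^\infty G)$, a composition of functors in which every intermediate category is $\spectra$---here $F\Omega^\infty\colon\spectra\to\cat{D}$, $(\Sigma^\infty\Omega^\infty)^{k}\colon\spectra\to\spectra$ and $\Sigma^\infty G\colon\cat{C}\to\spectra$. Iterating the composition maps $(*)$ of Section~\ref{sec:comp-maps}, which are shown to be weak equivalences of symmetric sequences in Section~\ref{sec:chainrule}, then gives
\[ \der^*(F\Omega^\infty(\Sigma^\infty\Omega^\infty)^{k}\Sigma^\infty G) \homeq \der^*(F\Omega^\infty)\circ\der^*(\Sigma^\infty\Omega^\infty)^{\circ k}\circ\der^*(\Sigma^\infty G). \]
The heart of the matter is to upgrade these degreewise equivalences to an equivalence of \emph{simplicial} symmetric sequences. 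As $k$ varies, the right-hand side carries the standard bar-construction structure, whose faces and degeneracies are assembled from the operad multiplication of $\der^*(\Sigma^\infty\Omega^\infty)$, the right and left module actions on $\der^*(F\Omega^\infty)$ and $\der^*(\Sigma^\infty G)$, and the unit $\mathsf{1}\to\der^*(\Sigma^\infty\Omega^\infty)$. Since those operad and module structures are by definition built out of the maps $(*)$, and since $(*)$ is associative and unital by the results of Section~\ref{sec:comp-maps}, one checks that $\der^*$ sends each coface of the cobar construction precisely to the corresponding face of the bar construction; this identifies the simplicial object appearing above with the simplicial two-sided bar construction $B_\bullet(\der^*(F\Omega^\infty),\der^*(\Sigma^\infty\Omega^\infty),\der^*(\Sigma^\infty G))$.

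Finally, the geometric realization of this simplicial object is, by definition, the derived composition product $\der^*(F\Omega^\infty)\circ_{\der^*(\Sigma^\infty\Omega^\infty)}\der^*(\Sigma^\infty G)$, so combining the displays above produces the asserted equivalence; naturality in $F$ and $G$ is inherited from the naturality of Theorem~\ref{thm:intro-cosimplicial-resolution}, of the maps $(*)$, and of Spanier--Whitehead duality. I expect the main obstacle to be precisely this coherence step: verifying not merely a levelwise equivalence but a map of simplicial objects requires careful bookkeeping, tracking how each comonad and comodule structure map is transported under $\der^*$ and the composition maps $(*)$. A secondary technical point is the commutation of Spanier--Whitehead duality with the totalization in the first step, which is where the pro-spectrum formalism (or, alternatively, a finiteness or analyticity hypothesis) is needed.
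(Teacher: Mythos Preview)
Your outline matches the heuristic argument the paper itself gives in the introduction, and it does go through when $\cat{C}=\cat{D}=\spectra$: in that case $F\Omega^\infty$, $(\Sigma^\infty\Omega^\infty)^k$ and $\Sigma^\infty G$ are all functors $\spectra\to\spectra$, every coface and codegeneracy in the cobar construction is a natural transformation between such functors, and the composition maps $(*)$ are natural with respect to all of them, so the levelwise chain rule assembles into a map of simplicial objects exactly as you describe.

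The gap is in the general case, and it is more than bookkeeping. Take $\cat{D}=\based$ (the case $\cat{C}=\based$ is symmetric). Then $F\Omega^\infty:\spectra\to\based$ is not a functor to spectra, so neither the construction $\der^*(-)=\Nat(-,X^{\smsh *})$ nor the composition maps $(*)$ of \S\ref{sec:comp-maps} apply to it. Concretely, the outermost coface $d^0$ in the cobar construction is obtained from the unit $I_{\based}\to\Omega^\infty\Sigma^\infty$ applied at the space $\Omega^\infty(\ldots)$; this is a map between infinite loop spaces that is \emph{not} an infinite-loop map, i.e.\ it is not $\Omega^\infty$ of any morphism of spectrum-valued functors. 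Hence there is no way to identify its effect on derivatives using only the spectra chain rule. The paper flags exactly this obstruction in its ``Technical remarks'' and resolves it by reversing the logic: it first proves the module structures on $\der_*(F)$ (Theorem~\ref{thm:intro-bimodules}) by an independent argument---constructing auxiliary functors $\Phi_nF(X)=\Ext_{\der^*(\Sigma^\infty\Omega^\infty)}(\der^*(F\Omega^\infty),(\Sigma^\infty X)^{\leq n})$ together with a comparison $\phi:F\to\Phi_nF$ shown to be a $D_n$-equivalence---and only afterwards deduces Proposition~\ref{prop:intro-indirect-chain-rule} from these. So your coherence worry is well placed, but in the $\based$ cases it is an actual obstruction rather than a matter of care, and overcoming it is the content of \S\S\ref{sec:spaces-spectra}--\ref{sec:spaces-spaces}.
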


This is proved by applying the chain rule for functors of spectra to the equivalence of Theorem \ref{thm:intro-cosimplicial-resolution} and noting that the Spanier-Whitehead dual of the cosimplicial cobar construction is equivalent to a bar construction model for the derived composition product that appears on the right-hand side of Proposition \ref{prop:intro-indirect-chain-rule}. Strictly speaking, we have only described the proof of the proposition when $\cat{C}$ and $\cat{D}$ are both equal to $\spectra$. There are some additional problems that arise in the spaces case. We address these further in the section on technical remarks below.

We can think of Proposition \ref{prop:intro-indirect-chain-rule} as a kind of indirect chain rule, in that it describes the derivatives of $FG$ in terms of the derivatives of $F\Omega^\infty$ and $\Sigma^\infty G$. But note that taking either $F$ or $G$ to be the identity gives us a relationship between $\der^*(F)$ and $\der^*(F\Omega^\infty)$, on the one hand, and between $\der^*(G)$ and $\der^*(\Sigma^\infty G)$ on the other. In order to understand this relationship and write our chain rule in the form of Theorem \ref{thm:intro-chainrule}, we turn to Koszul duality.

Let $P$ be an operad in $\spectra$ with $P(1) = S$, the sphere spectrum, and consider the symmetric sequence $BP = \mathsf{1} \circ_{P} \mathsf{1}$ where the unit symmetric sequence $\mathsf{1}$ is given the obvious trivial left and right $P$-module structures. (Note that the composition product here is made in the \emph{derived} sense.) The main result of \cite{ching:2005a} was that $BP$ has a natural cooperad structure. The termwise Spanier-Whitehead dual of $BP$ is then an operad which we refer to as the \emph{Koszul dual} of $P$, and denote $\overline{P}$. (Strictly speaking, this generalizes Ginzburg and Kapranov's notion of the \emph{dg-dual} of an operad, but we find the term `Koszul dual' more appealing.) It was also shown in \cite{ching:2005a} that if $M$ is a right $P$-module then the symmetric sequence $M \circ_{P} \mathsf{1}$ is a right $BP$-comodule. The dual of this is then a right $\overline{P}$-module which we denote $\overline{M}$ and refer to as the \emph{Koszul dual} of the module $M$. There are analogous constructions for left $P$-modules and $P$-bimodules.

We now consider the implications of Proposition \ref{prop:intro-indirect-chain-rule} with these facts in mind. Firstly, if we take $F = G = I_{\based}$, then, using equivalences $\der^*(\Omega^\infty) \homeq \mathsf{1}$ and $\der^*(\Sigma^\infty) \homeq \mathsf{1}$, we get
\[ \der^*(I_{\based}) \homeq \mathsf{1} \circ_{\der^*(\Sigma^\infty \Omega^\infty)} \mathsf{1}. \]
Taking the Spanier-Whitehead dual of this, we see that $\der_*(I_{\based})$ is equivalent to the Koszul dual of the operad $\der^*(\Sigma^\infty \Omega^\infty)$, or equivalently, the Koszul dual of the commutative operad. This result was observed empirically in \cite{ching:2005a} but now we see a more solid reason for why this is true.

Next take just $F = I_{\based}$ in Proposition \ref{prop:intro-indirect-chain-rule} and $G$ to be any functor from either spaces or spectra to $\based$. Then we have
\[ \der^*(G) \homeq \mathsf{1} \circ_{\der^*(\Sigma^\infty \Omega^\infty)} \der^*(\Sigma^\infty G). \]
This implies that $\der_*(G)$ is equivalent to the Koszul dual of the module $\der^*(\Sigma^\infty G)$. Incidentally, this gives a practical method for finding the derivatives of such a functor $G$ because the derivatives of $\Sigma^\infty G$ are often easier to compute. It also tells us that $\der_*(G)$ has a left $\der_*(I_{\based})$-module structure. Similarly, if $F$ is a functor \emph{from} $\based$ to either spaces or spectra, then $\der_*(F)$ is equivalent to the Koszul dual of the right module $\der^*(F \Omega^\infty)$, and that $\der_*(F)$ is a left $\der_*(I_{\based})$-module. This completes the proof of Theorem \ref{thm:intro-bimodules}.

Lastly, we prove in Section~\ref{sec:koszul} the following result about Koszul dual operads and modules. If $P$ is an operad with right and left modules $R$ and $L$ respectively, and $\overline{P}$, $\overline{R}$ and $\overline{L}$ denote their Koszul duals, then we have an equivalence
\[ R \circ_{P} L \homeq \overline{R} \circ_{\overline{P}} \overline{L}. \]
Applying this to the statement of Proposition \ref{prop:intro-indirect-chain-rule}, we obtain exactly Theorem \ref{thm:intro-chainrule}.

The structure of our paper is as follows. The first part sets up the categories of functors that we are using and establishes their properties. Section 1 describes our categories of spaces and spectra and recalls basic results about realization of simplicial objects, and homotopy limits and colimits. Section 2 reviews Goodwillie's description of the Taylor tower and his definition of the derivatives of a functor. Section 3 proves an important result that says the Taylor tower of $FG$ is determined by the Taylor towers of $F$ and $G$. Sections 4 and 5 deal with the categories of functors, their structures and goes into some detail on the theory of cell functors and their subcomplexes. Section 6 recalls Christensen and Isaksen's results on pro-spectra and Spanier-Whitehead duality.

In the second part, we turn to operads and modules. Section 7 reviews the bar constructions for operads and modules that form a central part of this paper. Section 8 concerns the homotopy-invariance properties of the bar constructions. This motivates Section 9 which describes the cofibrant replacements for operads and modules needed to make the bar construction invariant. We get these cofibrant replacements by constructing model structures on the categories of operads and modules, the details of which are left to the Appendix. Section 10 is about $\Ext$-objects for modules which play a key role in the later proofs. Section 11 deals with `pro-' versions of symmetric sequences and modules and how Spanier-Whitehead duality works in this context.

The real substance of the paper starts in the third part. This deals with functors from spectra to spectra, their derivatives and the operad and module structures those derivatives possess. In Section 12, we construct our models for the derivatives of a functor from spectra to spectra. Section 13 describes the composition maps that relate these models, and Section 14 proves the chain rule for functors of spectra. In Section 15, we show that the (duals of the) derivatives of a comonad or comodule possess corresponding operad and module structures, and calculate what that structure is for the comonad $\Sigma^\infty \Omega^\infty$.

Finally, in the fourth section, we turn to functors of spaces. Section 16 gives the proof of Theorem \ref{thm:intro-cosimplicial-resolution}. Then in Section 17, we concentrate on functors from spaces to spectra and deduce Theorem \ref{thm:intro-bimodules} in that case. Sections 18 and 19 then apply the same ideas to functors from spectra to spaces, and from spaces to spaces. In these sections, we also discuss some basic examples of functors for which we can calculate the relevant module structures. Section 20 proves the key result on Koszul duality that allows us to deduce the final form of Theorem \ref{thm:intro-chainrule}, which we do in Section 21.

\subsection*{Technical remarks}

Here we make some comments about various technical issues we faced in implementing the ideas of the previous section, and how we solved them. In particular, we explain our choices of models for the homotopy theories of spaces and spectra.

\subsubsection*{Getting a category of functors}

The basic objects of study in this paper are functors $F: \cat{C} \to \cat{D}$ where $\cat{C}$ and $\cat{D}$ are categories of either based spaces or spectra. To put the homotopy theory of such functors on a solid footing, we would like to consider a model structure on the category $[\cat{C},\cat{D}]$ of all such functors. However, there are set-theoretic problems with defining $[\cat{C},\cat{D}]$ since $\cat{C}$ and $\cat{D}$ are not themselves small categories, so there is not in general a set of natural transformations between two such functors. We avoid this by considering only functors defined on the full subcategory $\cat{C}^{\mathsf{fin}}$ of finite cell complexes (or finite cell spectra) in $\cat{C}$. Since $\cat{C}^{\mathsf{fin}}$ is skeletally small, we obtain a well-defined functor category $[\cat{C}^{\mathsf{fin}},\cat{D}]$ on which we can put a projective model structure. From the point of view of finding Goodwillie derivatives, this restriction is not a problem because the derivatives of a functor $F$ are determined by its restriction to finite objects.

\subsubsection*{Use of EKMM S-modules}

Throughout this paper, the category $\spectra$ is taken to be the category of $S$-modules of EKMM with the standard model structure of \cite[VII]{elmendorf/kriz/mandell/may:1997}. The main reason for this is that every object in this model structure is fibrant. This has numerous technical advantages for us. In particular, it means that every functor with values in $\spectra$ is fibrant in the corresponding projective model structure. This is important in that it ensures that the natural transformation objects $\Nat(FX,X^{\smsh n})$ are homotopy-invariant (as long as $F$ is \emph{cofibrant}) without having to arrange separately for $X^{\smsh n}$ to be fibrant. The typical disadvantage of using EKMM $S$-modules for spectra is that the sphere spectrum $S$ is not cofibrant. This turns out not to be a significant problem for us. We make heavy use of a cofibrant replacement for $S$ which we denote $S_c$. It is conceivable that some of our work could be reproduced using, for example the category of orthogonal spectra \cite{mandell/may/schwede/shipley:2001} with the positive stable model structure, since this is known to have a symmetric monoidal fibrant replacement functor \cite{kro:2007}.

\subsubsection*{The functor $\Sigma^\infty \Omega^\infty$ and use of simplicial sets}

A key part of our argument relies on the functor $\Sigma^\infty \Omega^\infty$ from spectra to spectra. In particular, in order to construct the operad and module structures on which this paper is based, we need to use a model for this functor that has all of the following properties:
\begin{itemize}
  \item $\Sigma^\infty \Omega^\infty$ is cofibrant (in the projective model structure on $[\finspec,\spectra]$);
  \item $\Sigma^\infty \Omega^\infty$ is a homotopy functor (i.e. preserves \emph{all} weak equivalences);
  \item $\Sigma^\infty \Omega^\infty$ has a (strict) comonad structure.
\end{itemize}
The standard functors $\Sigma^\infty$ and $\Omega^\infty$ between the categories $\spectra$ and $\based$ (as used in \cite{elmendorf/kriz/mandell/may:1997}) do not have the necessary properties, in particular, they do not preserve all weak equivalences. To obtain a functor of the sort we need, we adopt the following definitions. Firstly, we use the category $\sset$ of pointed simplicial sets as our model for based topological spaces. Thus, results stated in this introduction for $\based$ are actually proved for $\sset$. The geometric realization and singular set functors preserve Taylor towers in an appropriate way which allows us to transfer these results back to functors of based topological spaces. Next, we define
\[ \Sigma^\infty(X) := S_c \smsh |X| \]
where $S_c$ is a cofibrant replacement for the sphere spectrum and $|X|$ denotes the geometric realization of the pointed simplicial set $X$. The functor $\Sigma^\infty: \sset \to \spectra$ now preserves all weak equivalences because every simplicial set is cofibrant. We also have an adjunction between $\Sigma^\infty$ and the functor $\Omega^\infty: \spectra \to \sset$ defined by
\[ \Omega^\infty(E) := \Sing_* \Map(S_c,X) \]
where $\Map(S_c,X)$ denotes the topological enrichment of $\spectra$, and $\Sing_*$ the pointed singular simplicial set functor. The functor $\Omega^\infty$ also preserves all weak equivalences and so $\Sigma^\infty \Omega^\infty$ is a homotopy functor as required. Moreover, the strict adjunction between $\Sigma^\infty$ and $\Omega^\infty$ ensures that $\Sigma^\infty \Omega^\infty$ has a strict comonad structure. Finally, we note that $\Sigma^\infty \Omega^\infty$ \emph{is} cofibrant in the projective model structure on the category of (simplicial) functors $\finspec \to \spectra$. In fact, it is a finite cell object in this category.

Recall that Lewis showed in \cite{lewis:1991} that no model for the stable homotopy category could have functors $\Sigma^\infty$ and $\Omega^\infty$ satisfying \emph{all} the properties one might want from such a pair. In our case, we do \emph{not} have an isomorphism
\[ \Sigma^\infty(X \smsh Y) \isom (\Sigma^\infty X) \smsh (\Sigma^\infty Y). \]
For example, this means that, for a simplicial set $X$, the diagonal on $X$ does not give $\Sigma^\infty X$ a strictly commutative coalgebra structure. We avoid this problem by using a different model for the commutative operad, namely the `coendomorphism operad' on the cofibrant sphere spectrum $S_c$. The spectrum $\Sigma^\infty X$ \emph{is} a coalgebra over this operad.

\subsubsection*{Pro-spectra and duality}

Spanier-Whitehead duality plays an important role in this paper. In order to make statements that hold for functors with derivatives that are possibly not finite spectra, we make heavy use of results of Christensen and Isaksen \cite{christensen/isaksen:2004} on the duality between pro-spectra and spectra. As we noted above, for a functor $F: \spectra \to \spectra$, the correct definition of $\der^n(F)$ is as a pro-spectrum formed by approximating $F$ with a filtered homotopy colimit of finite cell functors (that is, finite cell complexes in the cofibrantly-generated model structure on $[\finspec,\spectra]$). This makes the sequence of objects $\der^*(F)$ into a \emph{pro-symmetric sequence} rather than just a symmetric sequence, which requires us to develop a theory of modules over operads, and bar constructions in this context.

A fundamental part of this paper is the construction of composition maps of the form
\[ \der^*(F) \circ \der^*(G) \to \der^*(FG). \]
These form the basis of the operad and module structures we use to prove our main results. These composition maps must be done at the level of pro-spectra which requires a detailed understanding of the theory of cell functors (that is, functors that are cell complexes with respect to the cofibrantly-generated model structure on the category of functors). In particular, we point out the importance of Lemmas \ref{lem:key} and \ref{lem:factorization}. These in turn depend on basic properties of cell complexes in the category of spectra.

\subsubsection*{Dealing with the non-infinite-loop-space maps in the cobar construction}

The proof of Proposition \ref{prop:intro-indirect-chain-rule} described in the previous part of this introduction is, in reality, somewhat more delicate than indicated above. We can take the case $F = I_{\based}$ to illustrate why this is. The problem can be described in terms of the `end' coface maps in the cosimplicial cobar construction
\[ \Tot(\Omega^\infty (\Sigma^\infty \Omega^\infty)^{\bullet} \Sigma^\infty G). \]
At the lowest level, we have the map
\[ d^0: \Omega^\infty \Sigma^\infty G \to \Omega^\infty \Sigma^\infty \Omega^\infty \Sigma^\infty G \]
given by using the unit of the $\Sigma^\infty,\Omega^\infty$ adjunction to insert the first copy of $\Omega^\infty \Sigma^\infty$ on the right-hand side. The problem is that while this is a map between infinite-loop-spaces, it is not an infinite-loop-space map. It follows that we cannot easily analyze the map induced by $d^0$ on derivatives using our methods based solely on functors of spectra.

Because of this, we actually prove Theorem \ref{thm:intro-bimodules} first by a different method and then go back and deduce Proposition \ref{prop:intro-indirect-chain-rule} from this. This different method can be interpreted as finding an appropriate model for the cosimplicial cobar construction that builds in the non-infinite-loop-space maps in a way we can deal with.

The outline of this new proof of Theorem \ref{thm:intro-bimodules} is closely related to the way we showed that $\der^n(F)$ is the dual of the \ord{n} derivative of $F$ for $F: \spectra \to \spectra$. For example, given $F: \based \to \spectra$, we define, for each $n$, a functor
\[ \Phi_nF: \based \to \spectra \]
by
\[ \Phi_nF(X) := \Ext \left( \der^*(F\Omega^\infty), (\Sigma^\infty X)^{\leq n} \right). \]
This is an $\Ext$-object in the category of right $\der^*(\Sigma^\infty \Omega^\infty)$-modules. (We define the right $\der^*(\Sigma^\infty \Omega^\infty)$-module $(\Sigma^\infty X)^{\leq n}$ in \ref{def:sigmainfty-module}.) We also define a natural transformation
\[ \phi: F \to \Phi_nF \]
and show that $\phi$ is a $D_n$-equivalence, and hence induces an equivalence on \ord{n} derivatives. We then show that the \ord{n} derivative of $\Phi_nF$ is naturally equivalent to the \ord{n} term in the Koszul dual of the module $\der^*(F \Omega^\infty)$. This has a right $\der_*(I_{\based})$-module structure which proves Theorem \ref{thm:intro-bimodules} in this case. Analogous constructions do the same job for functors \emph{to} $\based$.

The functors $\Phi_nF$ form a kind of `fake' Taylor tower for the functor $F$. They are not, in general, equivalent to the Goodwillie's $P_nF$, but they do form a tower which in some sense captures the best approximation to the Taylor tower based on the right $\der_*(I_{\based})$-module structure on the derivatives of $F$. See Remark \ref{rem:tate} for more on this.

\subsection*{Acknowledgements}

The form of the chain rule given in Theorem \ref{thm:intro-chainrule} was originally suggested to the second author by Haynes Miller. The proof of Proposition \ref{prop:calculus} is partly due to Tom Goodwillie. We would like to thank these people and also Andrew Blumberg, Bill Dwyer, Jack Morava and Stefan Schwede for many useful conversations. Much of the work for this paper was done while the second author was in a postdoctoral position at Johns Hopkins University.

\newpage

\tableofcontents

\part{Basics}

\section{Categories of spaces and spectra}

This paper is about the homotopy theory of functors between categories of topological spaces and spectra. For various technical reasons, we use simplicial sets instead of topological spaces, and for spectra, we use EKMM's category of S-modules \cite{elmendorf/kriz/mandell/may:1997}. In this section, we recall various aspects of these categories.

\begin{definition}[Simplicial sets] \label{def:sset}
Let $\sset$ be the category of pointed simplicial sets. We denote the simplicial version of the standard $n$-simplex by $\Delta[n]$, and we write $\Delta[n]_+$ for this with a disjoint basepoint added. Then $\partial \Delta[n]_+$ denotes the simplicial $(n-1)$-sphere with a disjoint basepoint.  The category $\sset$ has the following properties:
\begin{itemize}
  \item $\sset$ is a closed symmetric monoidal category with respect to the smash product $\smsh$, with unit object $\Delta[0]_+$. We write $\sset(X,Y)$ for the internal mapping object;
  \item $\sset$ has a cofibrantly generated pointed proper simplicial monoidal model structure with generating cofibrations given by the inclusions
      \[ \partial \Delta[n]_+ \to \Delta[n]_+ \]
      for $n \geq 0$.
  \item Every object of $\sset$ is cofibrant in this model structure.
\end{itemize}
\end{definition}

\begin{definition}[Spectra] \label{def:spectra}
Let $\spectra$ be the category of $S$-modules of EKMM \cite{elmendorf/kriz/mandell/may:1997} (there this category is denoted $\cat{M}_S$). This is our model for the stable homotopy category. Unlike \cite{elmendorf/kriz/mandell/may:1997}, we refer to an object of $\spectra$ as a \emph{spectrum}, rather than an $S$-module. In \cite{elmendorf/kriz/mandell/may:1997}, the word `spectrum' is reserved for a more fundamental notion, with an $S$-module having additional structure. We choose this different terminology to avoid clashing with modules over operads which feature heavily in this paper. We stress that whenever we write `spectrum', we mean an object of $\spectra$, that is, an $S$-module in the sense of \cite{elmendorf/kriz/mandell/may:1997}.

The category $\spectra$ then has the following properties:
\begin{itemize}
  \item $\spectra$ is closed symmetric monoidal with respect to the smash product $\smsh_S$ which we write just as $\smsh$. The unit for the smash product is the \emph{sphere spectrum} $S$. For spectra $X,Y$, we write $\Map(X,Y)$ for the internal mapping object, that is spectrum of maps from $X$ to $Y$ (We avoid the usual notation $F_S(X,Y)$ because we often use $F$ to denote a arbitrary functor);
  \item $\spectra$ is enriched, tensored and cotensored over the category $\sset$. For spectra $X,Y$, we write $\spectra(X,Y)$ for the \emph{simplicial set} of maps from $X$ to $Y$, and for $K \in \sset, X \in \spectra$, we write $X \smsh K$ for the tensor object in $\spectra$. (In \cite{elmendorf/kriz/mandell/may:1997} only the enrichment over based topological spaces is considered. We obtain $\spectra(X,Y)$ by applying the singular simplicial set functor to the topological mapping object.)
  \item There is a cofibrantly generated pointed proper simplicial monoidal model structure on $\spectra$ in which the generating cofibrations are given by maps
      \[ S \smsh_{\cat{L}} \mathbb{L}\Sigma^\infty_q |\partial \Delta[n]_+| \to S \smsh_{\cat{L}} \mathbb{L}\Sigma^\infty_q |\Delta[n]_+| \]
      for $q,n \geq 0$. (See \cite[I.4-5]{elmendorf/kriz/mandell/may:1997} for the notation here.) This model structure is described in more detail in \cite[VII]{elmendorf/kriz/mandell/may:1997}.
  \item Every object in $\spectra$ is fibrant in this model structure.
\end{itemize}
\end{definition}

\begin{definition}[Cofibrant replacement for the sphere spectrum] \label{def:cofibrant-spectrum}
It is well known that the sphere spectrum $S$ is not cofibrant for the standard model structure on $\spectra$. We fix a cofibrant replacement given by
\[ S_c := S \smsh_{\cat{L}} \mathbb{L}S. \]
(See \cite[I.4-5]{elmendorf/kriz/mandell/may:1997} again for the notation.) An important fact for us is that the map
\[ * \to S_c \]
is precisely one of the generating cofibrations in $\spectra$ (that with $q,n = 0$ in Definition \ref{def:spectra}). In particular, this means that $S_c$ is a finite cell spectrum (see Definition \ref{def:cell-complex} below).
\end{definition}

\begin{definition}[Suspension spectrum and infinite loop-space functors] \label{def:suspec}
We warn the reader that the functors $\Sigma^\infty$ and $\Omega^\infty$ do not have the same meaning in this paper as in \cite{elmendorf/kriz/mandell/may:1997}. In particular, we define them to be the following functors between $\spectra$ and $\sset$:
\begin{itemize}
  \item Define $\Sigma^\infty: \sset \to \spectra$ by
  \[ \Sigma^\infty(K) := S_c \smsh K \]
  where here $\smsh$ denotes the tensoring of $\spectra$ over $\sset$, and $S_c$ is as in Definition \ref{def:cofibrant-spectrum}.
  \item Define $\Omega^\infty: \spectra \to \sset$ by
  \[ \Omega^\infty(E) := \spectra(S_c,E) \]
  where $\spectra(-,-)$ is the simplicial enrichment of the category $\spectra$.
\end{itemize}
The suspension spectrum and infinite loop-space functors are defined in \cite{elmendorf/kriz/mandell/may:1997} by $S \smsh K$ and $\spectra(S,E)$ respectively. We use $S_c$ because it gives these functors better homotopical properties for our purposes.
\end{definition}

\begin{lemma}
The functors $\Sigma^\infty$ and $\Omega^\infty$ of Definition \ref{def:suspec} have the following properties:
\begin{enumerate}
  \item $\Sigma^\infty$ is left adjoint to $\Omega^\infty$;
  \item $\Sigma^\infty$ preserves all weak equivalences and takes values in cofibrant spectra;
  \item $\Omega^\infty$ preserves all weak equivalences and takes values in fibrant pointed simplicial sets.
\end{enumerate}
\end{lemma}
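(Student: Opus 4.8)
The plan is to derive all three parts formally from facts already in hand: every pointed simplicial set is cofibrant (Definition~\ref{def:sset}), every spectrum is fibrant (Definition~\ref{def:spectra}), $S_c$ is a cofibrant spectrum (Definition~\ref{def:cofibrant-spectrum}), and $\spectra$ is a simplicial monoidal model category. For (1) I would simply unwind the enriched adjunctions. Since $\spectra$ is enriched, tensored and cotensored over $\sset$, there are natural isomorphisms of simplicial sets $\spectra(X \smsh K, E) \isom \sset(K, \spectra(X,E))$ for $X,E \in \spectra$ and $K \in \sset$; passing to $0$-simplices gives a natural bijection $\Hom_{\spectra}(X \smsh K, E) \isom \Hom_{\sset}(K, \spectra(X,E))$. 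Specializing to $X = S_c$ this reads $\Hom_{\spectra}(\Sigma^\infty K, E) \isom \Hom_{\sset}(K, \Omega^\infty E)$, naturally in $K$ and $E$, which is exactly the adjunction $\Sigma^\infty \dashv \Omega^\infty$.

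For (2) the point is that $\Sigma^\infty = S_c \smsh (-)$ is left Quillen. I would apply the pushout-product axiom (SM7) to the map $* \to S_c$ --- a cofibration of spectra precisely because $S_c$ is cofibrant --- and a cofibration $i\colon A \to B$ in $\sset$. Since $A \smsh * = B \smsh * = *$, the pushout-product of $i$ with $* \to S_c$ is just the map $A \smsh S_c \to B \smsh S_c$, which is therefore a cofibration, and a trivial cofibration whenever $i$ is trivial. Thus $\Sigma^\infty$ preserves cofibrations and trivial cofibrations; being a left adjoint by (1), it is left Quillen, so by Ken Brown's lemma it preserves weak equivalences between cofibrant objects, hence all weak equivalences since every object of $\sset$ is cofibrant. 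Taking $i$ to be $* \to K$ in the same computation shows that $* \to \Sigma^\infty K$ is a cofibration, i.e. $\Sigma^\infty K$ is cofibrant.

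Part (3) is the dual statement, argued symmetrically. The adjoint form of the pushout-product axiom applied to the cofibration $* \to S_c$ and a fibration $E \to E'$ of spectra shows that the map $\spectra(S_c, E) \to \spectra(S_c, E') \times_{\spectra(*, E')} \spectra(*, E)$ is a fibration of simplicial sets, trivial whenever $E \to E'$ is. Taking $E' = *$ shows that $\Omega^\infty E = \spectra(S_c, E)$ is a Kan complex for every spectrum $E$, since every spectrum is fibrant. Hence $\Omega^\infty$ is right Quillen (its left adjoint $\Sigma^\infty$ being left Quillen by (2)), so it preserves weak equivalences between fibrant spectra, and therefore all weak equivalences because every spectrum is fibrant.

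I do not anticipate a genuine obstacle: the whole lemma is formal once the simplicial model category structure and the "all cofibrant / all fibrant" properties of $\sset$ and $\spectra$ are in place. The only care required is to match up correctly the two variables in the pushout-product axiom (and its adjoint forms) and to invoke, rather than reprove, those structural facts.
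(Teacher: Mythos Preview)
Your proposal is correct and follows essentially the same approach as the paper: the adjunction from enriched category theory, and parts (2) and (3) from the pushout-product axiom together with the cofibrancy of $S_c$, the fact that every simplicial set is cofibrant, and that every spectrum is fibrant. The paper's proof is terse (three sentences total), so your version simply spells out the details of the same argument.
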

\begin{proof}
The existence of an adjunction follows from the standard theory of enriched categories. Since $S_c$ is cofibrant, and every simplicial set is cofibrant, (2) follows from the pushout-product axiom in the simplicial model category $\spectra$. Part (3) also follows from the pushout-product axiom by way of the fact that every spectrum is fibrant.
\end{proof}

\begin{remark}
The functors that we are calling $\Sigma^\infty$ and $\Omega^\infty$ are naturally equivalent to the standard notions of the suspension spectrum of a space, and the infinite loop-space associated to a spectrum. For example, the composite $\Omega^\infty \Sigma^\infty$ is equivalent to the usual stable homotopy functor $Q$ (though defined in terms of simplicial sets).

Lewis showed in \cite{lewis:1991} that one cannot construct functors $\Sigma^\infty$ and $\Omega^\infty$ that have all the good properties one would hope for from such an adjunction. In our case, we are lacking an isomorphism between $\Sigma^\infty(X \smsh Y)$ and $(\Sigma^\infty X) \smsh (\Sigma^\infty Y)$. (This is essentially because $S_c \ncong S_c \smsh S_c$.)
\end{remark}

\begin{definition}[Cell complexes] \label{def:cell-complex}
A \emph{relative cell complex} in either $\sset$ or $\spectra$ is a map that can be expressed as the composite of a \emph{countable} sequence of pushouts along coproducts of the generating cofibrations. A \emph{cell complex} is an object $X$ such that $* \to X$ is a relative cell complex. We refer to Hirschhorn's book \cite[\S10]{hirschhorn:2003} for the general theory of cell complexes.

We refer to cell complexes in $\sset$ just as \emph{cell complexes}, and cell complexes in $\spectra$ as \emph{cell spectra}. This definition of cell spectra agrees with that given in \cite[III.2]{elmendorf/kriz/mandell/may:1997} (except that they call them `cell $S$-modules').

A cell complex or cell spectrum is \emph{finite} if it has a cell structure with finitely many cells. For each of the categories $\cat{C} = \sset, \spectra$, let $\cat{C}^{\mathsf{fin}}$ denote the full subcategory of $\cat{C}$ whose objects are the finite cell complexes. In each of these cases, $\cat{C}^{\mathsf{fin}}$ is a skeletally small category that inherits a simplicial enrichment from $\cat{C}$ with the same simplicial mapping objects $\cat{C}(X,Y)$.

We say that an object in $\cat{C}$ is \emph{homotopy-finite} if it is weakly equivalent (i.e. connected by a zigzag of weak equivalences) to an object in $\cat{C}^{\mathsf{fin}}$.
\end{definition}

\begin{remark} \label{rem:compactly-generated}
In a general cofibrantly generated model category, it is usual to allow cell complexes to be the composite of an arbitrarily long sequence of pushouts along coproducts of the generating cofibrations (i.e. a sequence indexed by any ordinal). This is necessary for the small object argument to produce the appropriate factorizations. In both the categories $\sset$ and $\spectra$, however, the domains of the generating cofibrations are $\omega$-small relative to these arbitrary cell complexes, where $\omega$ is the countable cardinal. It follows that only cell complexes given by countable sequences are necessary for the small object argument. We therefore restrict our notion of `cell complex' to those formed from such countable sequences. Another way to say this is that the model categories $\sset$ and $\spectra$ are \emph{compactly generated} in the sense of \cite[4.5]{may/sigurdsson:2006}.
\end{remark}

\begin{definition}[Presented cell complexes]
We emphasize that a \emph{cell complex} does not include any particular choice of cell structure, rather it is merely an object for which such a structure exists. A \emph{presented cell complex}, on the other hand, consists of the sequence of pushout squares that define a cell structure on a cell complex. In particular, the notion of a \emph{subcomplex} is well-defined only for presented cell complexes. We refer to \cite[10.6]{hirschhorn:2003} for an extended description of the theory of presented cell complexes and their subcomplexes. We describe this theory in more detail in \S\ref{sec:cell-functors} in the context of cell functors.
\end{definition}

Cell complexes in the categories $\sset$ and $\spectra$ satisfy some useful properties beyond those enjoyed in an arbitrary cofibrantly generated model category. The following proposition lists a couple of those properties that are important in this paper.

\begin{proposition}[Properties of cell complexes] \label{prop:cell-complex}
The following facts apply to cell complexes in either $\sset$ or $\spectra$:
\begin{enumerate}
  \item A relative cell complex is a monomorphism.
  \item Let $K$ be a finite cell complex and $X$ any cell complex. Then any map $K \to X$ factors via a finite subcomplex of $X$.
\end{enumerate}
\end{proposition}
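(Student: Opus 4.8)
The plan is to prove the two statements separately, with (1) being essentially standard and (2) being where the real work lies. For (1): a relative cell complex is built as a countable composite of pushouts along coproducts of generating cofibrations. In both $\sset$ and $\spectra$ the generating cofibrations are monomorphisms (in $\sset$ this is clear; in $\spectra$ it follows from the description of the generating cofibrations and the fact that smashing with the cofibrant objects in question preserves monomorphisms), pushouts of monomorphisms are monomorphisms in these categories, coproducts of monomorphisms are monomorphisms, and a countable composite (sequential colimit) of monomorphisms is again a monomorphism. So (1) reduces to checking these closure properties hold in $\sset$ and $\spectra$; in $\sset$ these are elementary facts about sets, and in $\spectra$ they follow from the corresponding facts about the underlying prespectra/spectra together with the exactness properties of filtered colimits.

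For (2): fix a presented cell structure on $X$, writing $X = \colim_\lambda X_\lambda$ as the colimit of its skeleta (or the transfinite composite defining the cell structure). Since $K$ is a \emph{finite} cell complex, the key input is that the domains (hence also, being cell complexes, the objects) of the generating cofibrations — and finite colimits thereof — are \emph{compact}, i.e. $\omega$-small relative to cell complexes, as noted in Remark~\ref{rem:compactly-generated}. I would prove compactness of $K$ by induction on the (finite) number of cells: the empty complex is trivially compact, and if $K'$ is obtained from $K''$ by attaching one cell along a generating cofibration $\partial \to D$, then $K'$ sits in a pushout square and one uses that filtered colimits commute with finite colimits together with compactness of $K''$ and of the single cell domain/codomain (which are themselves $\omega$-small by the compact generation hypothesis). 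Then, given the map $K \to X$: since $X$ is the filtered colimit of its finite subcomplexes (every cell complex is the union of its finite subcomplexes, because each cell is attached along a map from a finite complex which, inductively/by compactness, factors through a finite subcomplex), compactness of $K$ says that the map $K \to X = \colim_i X_i$ factors through some $X_i$ in the filtered system of finite subcomplexes. That is exactly the claim.

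The step I expect to be the main obstacle is establishing that $X$ is genuinely the filtered colimit of its \emph{finite} subcomplexes in a way compatible with the factorization — i.e. setting up the combinatorics of subcomplexes of a presented cell complex carefully enough that the filtered system of finite subcomplexes is cofinal and its colimit is $X$. This is the point where one really needs the theory of presented cell complexes and their subcomplexes from \cite[10.6]{hirschhorn:2003} (to be redeveloped in the functor setting in \S\ref{sec:cell-functors}), and one must be slightly careful that attaching maps of cells land in finite subcomplexes — which is itself an instance of the statement being proved, so the argument is really a simultaneous induction over the cell structure. Once that bookkeeping is in place, the compactness argument finishes things quickly. A cleaner alternative, which I would use if the subcomplex machinery is heavy, is to argue directly: induct up the skeleta $X_\lambda$ of $X$, using at each stage that $K$ is compact to push the map into a bounded stage, and at each cell-attachment use the pushout description to absorb only finitely many cells; since $K$ has finitely many cells this terminates. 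Either way the essential mechanism is $\omega$-smallness of finite cell complexes, guaranteed by compact generation of $\sset$ and $\spectra$.
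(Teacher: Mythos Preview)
Your approach to (2) is essentially correct and is in fact more detailed than the paper's proof, which simply invokes \cite[III.2.3]{elmendorf/kriz/mandell/may:1997} for the $\spectra$ case and declares the $\sset$ case familiar. The compactness-plus-filtered-colimit-of-finite-subcomplexes strategy you outline is exactly the standard one, and you correctly identify the delicate point about the simultaneous induction; indeed, the paper carries out precisely this kind of argument later (Proposition~\ref{prop:subcomplexes} and Corollary~\ref{cor:subcomplexes}) in the setting of cell \emph{functors}.

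For (1), however, there is a genuine gap in the $\spectra$ case. Your argument rests on the claim that pushouts of monomorphisms are monomorphisms in $\spectra$, which you justify only by gesturing at ``corresponding facts about the underlying prespectra/spectra together with the exactness properties of filtered colimits.'' But EKMM $S$-modules are not a topos, and there is no general reason pushouts should preserve categorical monomorphisms there; this is a nontrivial statement that would itself require an argument at the level of the underlying spaces. The paper takes a different and more robust route: it observes that a relative cell spectrum is a cofibration in the model structure, hence (by \cite[VII.4.14]{elmendorf/kriz/mandell/may:1997}) has the homotopy extension property, hence (by \cite[I.8.1]{lewis/may/steinberger:1986}) is a \emph{spacewise closed inclusion}, meaning each constituent map of based spaces $X(V) \to Y(V)$ is a closed inclusion. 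This immediately gives the monomorphism property. So the paper's argument for (1) in $\spectra$ is topological rather than purely categorical, and avoids the closure-under-pushout issue you would otherwise need to confront.
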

\begin{proof}
These are familiar results in the cases of simplicial sets. A relative cell spectrum $f$ is a cofibration in the model structure on $\spectra$. By \cite[VII.4.14]{elmendorf/kriz/mandell/may:1997}, it is therefore a cofibration in the sense that it has the homotopy extension property. By \cite[I.8.1]{lewis/may/steinberger:1986}, $f$ is a \emph{spacewise closed inclusion}. Recall that a map $X \to Y$ of spectra (in the sense of EKMM \cite{elmendorf/kriz/mandell/may:1997}) ultimately consists of a map of based topological spaces $X(V) \to Y(V)$ for each finite-dimensional linear subspace $V \subset \mathbb{R}^\infty$. We say that $X \to Y$ is a \emph{spacewise closed inclusion} if each map $X(V) \to Y(V)$ is isomorphic to the inclusion of a closed subspace. In particular, note that a spacewise closed inclusion is a monomorphism which proves part (1). Part (2) is effectively \cite[III.2.3]{elmendorf/kriz/mandell/may:1997}.
\end{proof}

\begin{definition}[Simplicial and cosimplicial objects] \label{def:simplicial-objects}
Simplicial and cosimplicial objects in our categories $\sset$ and $\spectra$ play an important role in this paper. We write $\mathsf{\Delta}$ for the category whose objects are the totally ordered sets $\mathbf{n} := \{0,\dots,n\}$ for $n \geq 0$, and whose morphisms are the order-preserving functions. A \emph{simplicial object} in a category $\cat{C}$ is a functor $X_\bullet: \mathsf{\Delta}^{op} \to \cat{C}$, and a cosimplicial object in $\cat{C}$ is a functor $X^{\bullet}: \mathsf{\Delta} \to \cat{C}$. We refer to the object $X_k := X_{\bullet}(\mathbf{k})$ or $X^k := X^{\bullet}(\mathbf{k})$ as the \emph{object of $k$-simplices} in $X_{\bullet}$ or $X^{\bullet}$ respectively.

More explicitly, a simplicial object in $\cat{C}$ consists of a sequence of objects $X_k \in \cat{C}$ for $k \geq 0$ and:
\begin{itemize}
  \item \emph{face maps} $d_i: X_k \to X_{k-1}$ for $i = 0,\dots,k$;
  \item \emph{degeneracy maps} $s_j: X_k \to X_{k+1}$ for $j = 0,\dots,k$;
\end{itemize}
satisfying the `simplicial identities' (see \cite[I.1]{goerss/jardine:1999}). Dually, a cosimplicial object in $\cat{C}$ consists of objects $X^k \in \cat{C}$ for $k \geq 0$ and:
\begin{itemize}
  \item \emph{coface maps} $d^i: X^k \to X^{k+1}$ for $i = 0,\dots,k+1$;
  \item \emph{codegeneracy maps} $s^j: X^k \to X^{k-1}$ for $j = 0,\dots,k-1$;
\end{itemize}
satisfying corresponding `cosimplicial identities'.
\end{definition}

\begin{definition}[Geometric realization and totalization] \label{def:realization}
If $\cat{C}$ is either $\sset$ or $\spectra$, then a simplicial object $X_{\bullet}$ has a \emph{geometric realization}, written $|X_{\bullet}|$, and a cosimplicial object $X^{\bullet}$ has a \emph{totalization}, written $\Tot X^{\bullet}$. These can be defined as the following coend and end respectively:
\[ |X_{\bullet}| := \Delta[n]_+ \smsh_{\mathsf{\Delta}} X_n, \quad \Tot X^{\bullet} := \Map_{\mathsf{\Delta}}(\Delta[n]_+,X^n). \]
\end{definition}

\begin{definition}[Augmented simplicial objects] \label{def:augmented-simplicial}
Let $X_{\bullet}$ be a simplicial object in a category $\cat{C}$. An \emph{augmentation} of $X_{\bullet}$ is a map $\epsilon: X_0 \to Y$ in $\cat{C}$ such that $\epsilon d_0 = \epsilon d_1$. If $\cat{C}$ is either $\sset$ or $\spectra$, then such an augmentation determines a map
\[ \tilde{\epsilon}: |X_{\bullet}| \to Y. \]
Dually, if $X^{\bullet}$ is a cosimplicial object in $\cat{C}$, then an \emph{augmentation} of $X^{\bullet}$ is a map $\eta: Y \to X^0$ such that $d^0 \eta = d^1 \eta$. This determines a map
\[ \tilde{\eta}: Y \to \Tot X^{\bullet}. \]
\end{definition}

\begin{remark}
Another way to view an augmentation of the simplicial object $X_{\bullet}$ to $Y$ is as a map of simplicial objects from $X_{\bullet}$ to the constant simplicial object $Y_{\bullet}$ with value $Y$. If $f: X_{\bullet} \to Y_{\bullet}$ is a map of simplicial objects, then $f_0: X_0 \to Y_0 = Y$ is an augmentation of $X_{\bullet}$. Conversely, given an augmentation $\epsilon: X_0 \to Y$, we build such a map $f$ by taking $f_n : X_n \to Y_n = Y$ to be $\epsilon d_0 \dots d_0$.

Dually, an augmentation of the cosimplicial object $X^{\bullet}$ is equivalent to a map $Y^{\bullet} \to X^{\bullet}$ where $Y^{\bullet}$ is a constant cosimplicial object.
\end{remark}

An important idea for this paper is the notion of a simplicial or cosimplicial `contraction' as used, for example, by May \cite[\S9]{may:1972}. We recall this here.

\begin{lemma}[Simplicial contractions] \label{lem:simplicial-contraction}
Let $\cat{C}$ be either $\sset$ or $\spectra$. Let $X_{\bullet}$ be a simplicial object in $\cat{C}$ with augmentation $d_0: X_0 \to X_{-1}$. Suppose that for all $k \geq -1$ there exist maps
\[ s_{-1}: X_k \to X_{k+1} \]
that satisfy the \emph{extended simplicial identities}
\[ \begin{split} s_{-1}d_i &= d_{i+1}s_{-1} \text{ for $i \geq 0$} \\ s_{-1}s_j &= s_{j+1}s_{-1} \text{ for $j \geq -1$}. \end{split} \]
Then the induced map
\[ d_0: |X_{\bullet}| \to X_{-1} \]
is a homotopy equivalence in $\cat{C}$. The maps $s_{-1}$ are referred to as \emph{extra degeneracies} and provide a \emph{simplicial contraction} for the augmented simplicial object $X_{\bullet}$.
\end{lemma}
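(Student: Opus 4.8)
The plan is to construct an explicit simplicial homotopy, built from the extra degeneracies $s_{-1}$, between the identity on $|X_\bullet|$ and the composite $|X_\bullet| \to X_{-1} \to |X_\bullet|$, where the second map is induced by the augmentation viewed as a (constant) simplicial map. Since $X_{-1}$ is the constant simplicial object, its realization is just $X_{-1}$ itself, and the map $d_0 \colon |X_\bullet| \to X_{-1}$ has an evident section $\sigma$ coming from the degeneracies, so one direction of the homotopy equivalence ($d_0 \circ \sigma = \mathrm{id}_{X_{-1}}$) is automatic. The content is showing $\sigma \circ d_0 \homeq \mathrm{id}_{|X_\bullet|}$.

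First I would recall the classical combinatorial fact: the extended simplicial identities say precisely that the maps $s_{-1}$ assemble into a map of simplicial objects $h \colon X_\bullet \to X_\bullet$ of "degree $+1$" whose structure exhibits a simplicial homotopy. Concretely, following May \cite[\S9]{may:1972}, the family $h_i \colon X_k \to X_{k+1}$ defined by $h_i = s_i s_{i-1} \cdots s_0 \, s_{-1}^{?}$ — more precisely the standard formula $h_i = s_{-1}$ composed with iterated genuine degeneracies and faces — satisfies the simplicial-homotopy identities relating $\mathrm{id}$ to $\sigma d_0$. The key input is exactly the two lines of extended simplicial identities in the hypothesis, which guarantee that $s_{-1}$ commutes past $d_i$ and $s_j$ in the manner needed for the $h_i$ to be well-defined and to interlock correctly. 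I would then invoke the standard fact that a simplicial homotopy between maps of simplicial objects in $\cat{C}$ induces an honest homotopy after geometric realization; this uses that realization is a left adjoint (hence preserves the cylinder $-\smsh \Delta[1]_+$ appropriately) and the simplicial model structure on $\cat{C}$, facts available for both $\sset$ and $\spectra$ as recorded in Definitions \ref{def:sset} and \ref{def:spectra}.

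Concretely, the steps in order: (1) observe $d_0 \colon |X_\bullet| \to X_{-1}$ has a section induced by $X_{-1} \to X_k$, $k \mapsto$ the iterated degeneracy of the augmentation's "missing" simplex, using that $X_{-1}$ is constant; (2) write down the maps $h_i \colon X_k \to X_{k+1}$ in terms of $s_{-1}$ and the genuine $s_j, d_i$; (3) verify, purely formally from the extended and ordinary simplicial identities, that the $h_i$ constitute a simplicial homotopy from $\mathrm{id}_{X_\bullet}$ to $\sigma d_0$; (4) realize to conclude $|X_\bullet| \to X_{-1}$ is a homotopy equivalence in $\cat{C}$. I expect step (3) — the bookkeeping that the $h_i$ really satisfy all the simplicial homotopy identities — to be the main obstacle, though it is a standard (if tedious) calculation; the only genuinely non-formal ingredient is that these categories are simplicial so that "simplicial homotopy" realizes to "homotopy", which is immediate from the cited model structures. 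I would present step (3) by citing May \cite[\S9]{may:1972} rather than reproducing the full verification.
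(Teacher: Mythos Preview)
Your proposal is correct and follows essentially the same approach as the paper: use $s_{-1}\colon X_{-1}\to X_0$ to produce a section $\tilde{s}_{-1}\colon X_{-1}\to |X_\bullet|$ with $\tilde{d}_0\tilde{s}_{-1}=\mathrm{id}$, and use the higher $s_{-1}$'s to build a simplicial homotopy between $\tilde{s}_{-1}\tilde{d}_0$ and the identity, which realizes to an honest homotopy. The paper's proof is even terser than your sketch---it simply asserts that the maps $s_{-1}\colon X_k\to X_{k+1}$ for $k\geq 0$ determine the required homotopy, deferring the bookkeeping you call step~(3) to the reader (and to May~\cite[\S9]{may:1972}).
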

\begin{proof}
The map $s_{-1}: X_{-1} \to X_0$ determines
\[ \tilde{s}_{-1}: X_{-1} \to |X_{\bullet}| \]
such that $\tilde{d_0}\tilde{s}_{-1}$ is the identity on $X_{-1}$. The maps $s_{-1}: X_k \to X_{k+1}$ for $k \geq 0$ determine a homotopy between the identity map on $|X_{\bullet}|$ and $\tilde{s}_{-1}\tilde{d_0}$. Therefore $\tilde{d_0}$ is a homotopy equivalence.
\end{proof}

\begin{remark} \label{rem:simplicial-contraction}
Let $f:X_{\bullet} \to Y_{\bullet}$ be a map from $X_{\bullet}$ to the constant simplicial object $Y_{\bullet}$ with value $Y = X_{-1}$. A collection of extra degeneracies, as in Lemma \ref{lem:simplicial-contraction}, determines a map $i:Y_{\bullet} \to X_{\bullet}$ such that $fi$ is the identity on $Y_{\bullet}$, and a simplicial homotopy from $if$ to the identity on $X_{\bullet}$. In other words, it makes $Y_{\bullet}$ into a deformation retract of $X_{\bullet}$. Taking geometric realizations, we deduce that the $Y = |Y_{\bullet}|$ is a deformation retract of $|X_{\bullet}|$ which recovers Lemma \ref{lem:simplicial-contraction}.
\end{remark}

\begin{lemma} \label{lem:cosimplicial-contraction}
Let $\cat{C}$ be either $\sset$ or $\spectra$ and let $X^{\bullet}$ be a cosimplicial object in $\cat{C}$ with augmentation $d^0: X^{-1} \to X^0$. Suppose that for all $k \geq -1$, there exist maps
\[ s^{-1}: X^{k+1} \to X^k \]
that satisfy \emph{extended cosimplicial identities} dual to the extended simplicial identities of Lemma \ref{lem:simplicial-contraction}. Then the induced map
\[ d^0: X^{-1} \to \Tot X^{\bullet} \]
is a homotopy equivalence in $\cat{C}$. The maps $s^{-1}$ are referred to as \emph{extra codegeneracies} and provide a \emph{cosimplicial contraction} for the augmented cosimplicial object $X^{\bullet}$.
\end{lemma}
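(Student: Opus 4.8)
The plan is to dualize the proof of Lemma~\ref{lem:simplicial-contraction} essentially verbatim. Recall from Definition~\ref{def:realization} that $\Tot X^{\bullet} = \Map_{\mathsf{\Delta}}(\Delta[n]_+, X^n)$, so in particular there is a canonical evaluation map $\mathrm{pr}_0 \colon \Tot X^{\bullet} \to X^0$ to the $0$-cosimplices. First I would set
\[ \tilde{s}^{-1} \colon \Tot X^{\bullet} \xrightarrow{\ \mathrm{pr}_0\ } X^0 \xrightarrow{\ s^{-1}\ } X^{-1}, \]
the dual of the map $\tilde{s}_{-1} \colon X_{-1} \to |X_{\bullet}|$ appearing in the proof of Lemma~\ref{lem:simplicial-contraction}. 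Writing $\tilde{d}^0 \colon X^{-1} \to \Tot X^{\bullet}$ for the map induced by the augmentation $d^0$ (Definition~\ref{def:augmented-simplicial}), the composite $\tilde{s}^{-1}\tilde{d}^0$ is $s^{-1}d^0 \colon X^{-1} \to X^{-1}$, which equals $\mathrm{id}_{X^{-1}}$ by the relation $s^{-1}d^0 = \mathrm{id}$ --- the dual of the relation $d_0 s_{-1} = \mathrm{id}$ satisfied by a simplicial contraction, and part of the extended cosimplicial identities. Thus $\tilde{d}^0$ has a retraction $\tilde{s}^{-1}$, and it remains to produce a homotopy in $\cat{C}$ between $\mathrm{id}_{\Tot X^{\bullet}}$ and $\tilde{d}^0\tilde{s}^{-1}$.

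For this the remaining extra codegeneracies $s^{-1} \colon X^{k+1} \to X^k$, $k \geq 0$, should assemble --- using the extended cosimplicial identities --- into exactly such a homotopy, just as the $s_{-1}$ do in the proof of Lemma~\ref{lem:simplicial-contraction}. The cleanest way to organize this, dual to Remark~\ref{rem:simplicial-contraction}, is to observe that the maps $s^{-1}$ package into a map of cosimplicial objects $r \colon X^{\bullet} \to K^{\bullet}$, where $K^{\bullet}$ is the constant cosimplicial object with value $X^{-1}$ and $r^k$ is the $(k{+}1)$-fold iterate of $s^{-1}$; the augmentation corresponds to a map $a \colon K^{\bullet} \to X^{\bullet}$ with $r a = \mathrm{id}_{K^{\bullet}}$, and the extra codegeneracies then furnish a cosimplicial homotopy $a r \homeq \mathrm{id}_{X^{\bullet}}$ (a map of cosimplicial objects $X^{\bullet} \smsh \Delta[1]_+ \to X^{\bullet}$ restricting to $ar$ and the identity at the two endpoints). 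Applying $\Tot$ --- which, being an enriched end over $\mathsf{\Delta}$, is functorial in maps of cosimplicial objects and compatible with cotensoring by $\Delta[1]_+$, and which sends $K^{\bullet}$ to $X^{-1}$ because $\colim_{\mathbf{n} \in \mathsf{\Delta}} \Delta[n]_+ \isom \Delta[0]_+$, the unit for $\smsh$ --- carries $a$ to $\tilde{d}^0$, carries $r$ to $\tilde{s}^{-1}$, and carries the cosimplicial homotopy to an honest homotopy $\mathrm{id}_{\Tot X^{\bullet}} \homeq \tilde{d}^0\tilde{s}^{-1}$ in $\cat{C}$. Hence $\tilde{d}^0$ is a homotopy equivalence, which is the assertion.

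I do not anticipate a deep obstacle: the lemma is formally dual to Lemma~\ref{lem:simplicial-contraction}. The one genuine point is that this duality cannot simply be quoted, since neither $\sset$ nor $\spectra$ is equivalent to its opposite category; so one must (i) write out the extended simplicial identities in cosimplicial form and check that they make $r$ a morphism of cosimplicial objects and supply the cosimplicial homotopy, and (ii) verify the three properties of $\Tot$ used above --- functoriality for maps of cosimplicial objects, compatibility with the cotensor by $\Delta[1]_+$, and normalization of constant cosimplicial objects. All of this is routine from the enriched/simplicial structure recorded in Definitions~\ref{def:simplicial-objects} and~\ref{def:realization}, but it is the only real content of the argument.
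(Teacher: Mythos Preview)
Your proposal is correct and takes essentially the same approach as the paper, which simply states ``This is dual to the proof of Lemma \ref{lem:simplicial-contraction}.'' You have written out in detail what that dualization amounts to, including the care point that one cannot literally invoke categorical duality since $\sset$ and $\spectra$ are not self-opposite; this is more explicit than the paper but not different in substance.
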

\begin{proof}
This is dual to the proof of Lemma \ref{lem:simplicial-contraction}.
\end{proof}

\begin{remark} \label{rem:cosimplicial-contraction}
Let $f: Y^{\bullet} \to X^{\bullet}$ be a map from the constant cosimplicial object $Y^{\bullet}$ with value $Y = X^{-1}$ to $X^{\bullet}$. A collection of extra codegeneracies, as in Lemma \ref{lem:cosimplicial-contraction}, determines a map $p: X^{\bullet} \to Y^{\bullet}$ such that $pf$ is the identity on $Y^{\bullet}$, and a cosimplicial homotopy between $fp$ and the identity on $X^{\bullet}$ making $Y^{\bullet}$ into a deformation retract of $X^{\bullet}$. Taking totalizations, we deduce that $Y = \Tot Y^{\bullet}$ is a deformation retract of $\Tot X^{\bullet}$, which recovers Lemma \ref{lem:cosimplicial-contraction}.
\end{remark}

\begin{remark}
There are similar results to Lemmas \ref{lem:simplicial-contraction} and \ref{lem:cosimplicial-contraction} for extra degeneracy and codegeneracy maps on `the other side', that is, $s_{k+1}: X_k \to X_{k+1}$ or $s^{k+1}: X^{k+1} \to X^k$ for $k \geq -1$ that satisfy corresponding extended simplicial or cosimplicial identities.
\end{remark}

We now briefly recall some of the homotopical properties of realization and totalization of simplicial and cosimplicial objects. Recall the existence of Reedy model structures on the categories of simplicial and cosimplicial objects in a model category (see \cite[Chapter 15]{hirschhorn:2003}). We use the following result several times.

\begin{prop} \label{prop:reedy}
\begin{enumerate}
  \item Let $f: X_{\bullet} \to Y_{\bullet}$ be a map between Reedy cofibrant simplicial objects in $\cat{C}$ such that each $f_n:X_n \to Y_n$ is a weak equivalence in $\cat{C}$. Then the induced map
  \[ |f|: |X_{\bullet}| \to |Y_{\bullet}| \]
  is a weak equivalence.
  \item Let $f: Y^{\bullet} \to X^{\bullet}$ be a map between Reedy fibrant cosimplicial objects in $\cat{C}$ such that each $f^n: Y^n \to X^n$ is a weak equivalence in $\cat{C}$. Then the induced map
      \[ \Tot f: \Tot Y^{\bullet} \to \Tot X^{\bullet} \]
      is a weak equivalence in $\cat{C}$.
\end{enumerate}
\end{prop}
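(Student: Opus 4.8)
The statement to prove is Proposition~\ref{prop:reedy}, which asserts that geometric realization preserves weak equivalences between Reedy cofibrant simplicial objects, and dually that totalization preserves weak equivalences between Reedy fibrant cosimplicial objects, in each of the categories $\sset$ and $\spectra$.

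\medskip

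\textbf{Plan.} The natural approach is to recognize this as a standard consequence of the interaction between the Reedy model structure and the (geometric realization, constant) or (constant, totalization) Quillen adjunction, and to assemble it from ingredients already available in the literature we are citing. First I would recall that $|X_\bullet|$ is precisely the coend $\Delta[n]_+ \smsh_{\mathsf\Delta} X_n$ of Definition~\ref{def:realization}, and that this coend functor is left adjoint to the functor $\cat C \to \cat C^{\mathsf\Delta^{op}}$ sending $Y$ to the cosimplicial-cotensor simplicial object $n \mapsto \cat C(\Delta[n]_+, Y)$ (using the simplicial cotensoring of $\cat C$ over $\sset$, which exists since both $\sset$ and $\spectra$ are simplicial model categories). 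The key point is that this adjunction is a Quillen adjunction from the Reedy model structure on $\cat C^{\mathsf\Delta^{op}}$ to $\cat C$: this is exactly the statement that $|{-}|$ is a left Quillen functor, which holds because $\Delta[n]_+$ with its skeletal filtration provides the requisite structure making the latching-object pushout-product map a cofibration. I would cite \cite[Chapter~18]{hirschhorn:2003} for this; there the relevant statement is that for a simplicial model category the realization of a Reedy cofibrant simplicial object is cofibrant and realization is a left Quillen functor in the second variable, hence sends Reedy weak equivalences between Reedy cofibrant objects to weak equivalences. Part (1) then follows from Ken Brown's lemma: a left Quillen functor preserves weak equivalences between cofibrant objects.

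\medskip

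For part (2), I would argue dually: $\Tot X^\bullet = \Map_{\mathsf\Delta}(\Delta[n]_+, X^n)$ of Definition~\ref{def:realization} is the right adjoint in a Quillen adjunction between $\cat C$ and the Reedy model structure on the category $\cat C^{\mathsf\Delta}$ of cosimplicial objects, again by \cite[Chapter~18]{hirschhorn:2003}. Hence $\Tot$ is a right Quillen functor and, by the dual form of Ken Brown's lemma, preserves weak equivalences between Reedy fibrant objects. I would note explicitly that both $\sset$ and $\spectra$ are pointed proper simplicial model categories by Definitions~\ref{def:sset} and~\ref{def:spectra}, so Hirschhorn's hypotheses (a simplicial model category) are satisfied in each case, and the proposition holds uniformly.

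\medskip

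\textbf{Main obstacle.} The only real subtlety is bookkeeping about which notion of realization and totalization is being used: Hirschhorn works with the \emph{realization} of a simplicial object in a simplicial model category via the coend against the cosimplicial space $[n] \mapsto \Delta[n]$, whereas our Definition~\ref{def:realization} uses the \emph{pointed} simplices $\Delta[n]_+$. These agree because the tensoring of $\cat C$ over $\sset$ is the pointed one, and $\Delta[n]_+$ is the free pointed simplicial set on $\Delta[n]$, so the pointed coend computes the same object. I would include a one-sentence remark to this effect so that the citation lines up cleanly. Beyond that, the proof is a direct appeal to the cited general theory, so I do not anticipate needing any new argument; the work is entirely in confirming that the hypotheses of the Quillen-adjunction statements are met, which they are by the properties of $\sset$ and $\spectra$ recorded in Definitions~\ref{def:sset} and~\ref{def:spectra}.
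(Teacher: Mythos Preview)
Your proposal is correct and aligns with the paper's approach: the paper's entire proof is the one-line citation ``This is \cite[18.6.6]{hirschhorn:2003},'' and you are appealing to the same source (Hirschhorn, Chapter~18). Your write-up expands on the mechanism behind Hirschhorn's result --- the Quillen adjunction and Ken Brown's lemma --- which is accurate and more explanatory, but the paper simply cites the conclusion directly.
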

\begin{proof}
This is \cite[18.6.6]{hirschhorn:2003}.
\end{proof}

\begin{definition}[Homotopy-invariant realization and totalization] \label{def:invariant-realization}
Let $\cat{C}$ be either $\sset$ or $\spectra$ and let $X_{\bullet}$ be a simplicial object in $\cat{C}$. The \emph{homotopy-invariant geometric realization} of $X_{\bullet}$ is given by taking the realization of a Reedy cofibrant replacement for $X_{\bullet}$. This is determined only up to weak equivalence. In practice, we can fix a Reedy cofibrant replacement functor and obtain a particular (functorial) choice of homotopy-invariant realization. We denote the homotopy-invariant realization of $X_{\bullet}$ by
\[ |\tilde{X}_{\bullet}|. \]

Dually, if $X^{\bullet}$ is a cosimplicial object in $\cat{C}$, then the \emph{homotopy-invariant totalization} of $X^{\bullet}$ is given by the totalization of a Reedy fibrant replacement for $X^{\bullet}$. We denote this by
\[ \widetilde{\Tot} X^{\bullet} = \Tot \tilde{X}^{\bullet}. \]

Proposition \ref{prop:reedy} implies that an objectwise weak equivalence between simplicial or cosimplicial objects induces a weak equivalence between their homotopy-invariant realizations or totalizations.
\end{definition}

\begin{lemma} \label{lem:contraction-homotopy}
Let $\cat{C}$ be either $\sset$ or $\spectra$ and let $X_{\bullet}$ be a simplicial object in $\cat{C}$. Let $X_0 \to Y$ be an augmentation of $X_{\bullet}$ that admits extra degeneracies as in Lemma \ref{lem:simplicial-contraction}. Then there is a weak equivalence
\[ |\tilde{X}_{\bullet}| \weq Y. \]
Let $X^{\bullet}$ be a cosimplicial object in $\cat{C}$ with augmentation $Y \to X^0$ that admits extra codegeneracies as in Lemma \ref{lem:cosimplicial-contraction}. Suppose that $Y$ is fibrant. Then there is a weak equivalence
\[ Y \to \widetilde{\Tot} X^{\bullet}. \]
\end{lemma}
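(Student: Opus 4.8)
The plan is to reduce both halves of the statement to the ``naive'' contraction lemmas already proved (Lemma~\ref{lem:simplicial-contraction} and Lemma~\ref{lem:cosimplicial-contraction}), the only new issue being that one must replace $X_{\bullet}$ (resp.\ $X^{\bullet}$) by a Reedy cofibrant (resp.\ fibrant) object before forming the realization (resp.\ totalization). I would treat the simplicial case first; the cosimplicial case is formally dual.

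First I would use Remark~\ref{rem:simplicial-contraction}: the extra degeneracies exhibit the constant simplicial object $Y_{\bullet}$ with value $Y = X_{-1}$ as a deformation retract of $X_{\bullet}$, so the augmentation-induced map $f\colon X_{\bullet}\to Y_{\bullet}$ is a simplicial homotopy equivalence. The crucial step is then to check that homotopy-invariant realization sends simplicial homotopy equivalences to weak equivalences. For this I would fix a functorial Reedy cofibrant replacement $Q$ on $[\mathsf{\Delta}^{op},\cat{C}]$, chosen to be a simplicial functor, so that it preserves simplicial homotopies; then $Qf$ is a simplicial homotopy equivalence between Reedy cofibrant simplicial objects, and since naive geometric realization preserves simplicial homotopies, $|\tilde{f}|\colon|\tilde{X}_{\bullet}|\to|\tilde{Y}_{\bullet}|$ is a homotopy equivalence. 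Finally, $|\tilde{Y}_{\bullet}|$ is a model for the homotopy colimit over $\mathsf{\Delta}^{op}$ of the constant diagram at $Y$, and the natural map $|\tilde{Y}_{\bullet}|\to|Y_{\bullet}|\isom Y$ is a weak equivalence because $\mathsf{\Delta}$ has a terminal object $\mathbf{0}$, hence $\mathsf{\Delta}^{op}$ has contractible nerve. (When $Y$ is cofibrant --- as it is in all our applications --- one can skip this remark: then $Y_{\bullet}$ is already Reedy cofibrant, and $|\tilde{Y}_{\bullet}|\weq|Y_{\bullet}|=Y$ is immediate from Proposition~\ref{prop:reedy}.) Composing yields the asserted equivalence $|\tilde{X}_{\bullet}|\weq Y$, refining Lemma~\ref{lem:simplicial-contraction}.

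For the cosimplicial statement I would argue dually. By Remark~\ref{rem:cosimplicial-contraction}, the constant cosimplicial object $Y^{\bullet}$ is a deformation retract of $X^{\bullet}$, so the coaugmentation $Y^{\bullet}\to X^{\bullet}$ is a cosimplicial homotopy equivalence; a simplicial functorial Reedy fibrant replacement preserves this, and $\widetilde{\Tot}$ of a cosimplicial homotopy equivalence between Reedy fibrant cosimplicial objects is a weak equivalence. Here the hypothesis that $Y$ is fibrant enters precisely to guarantee that the constant cosimplicial object $Y^{\bullet}$ is Reedy fibrant, so that $\widetilde{\Tot}(Y^{\bullet})=\Tot(Y^{\bullet})=Y$; this gives the weak equivalence $Y\to\widetilde{\Tot}X^{\bullet}$.

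The main obstacle, as I see it, lies entirely in propagating the (co)simplicial homotopy equivalence through the Reedy cofibrant/fibrant replacement: one needs the replacement functor to be compatible with the $\mathsf{\Delta}$-enrichment of $[\mathsf{\Delta}^{op},\cat{C}]$ (resp.\ $[\mathsf{\Delta},\cat{C}]$) so that it preserves (co)simplicial homotopies, together with the standard but slightly delicate fact that naive realization and totalization preserve such homotopies. Once that is in place, the rest is a mechanical transcription of Lemmas~\ref{lem:simplicial-contraction} and~\ref{lem:cosimplicial-contraction} into the homotopy-invariant setting, using contractibility of $N\mathsf{\Delta}$ (resp.\ fibrancy of $Y$) to dispose of the constant (co)simplicial object.
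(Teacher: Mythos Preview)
Your approach is essentially the paper's: use Remark~\ref{rem:simplicial-contraction} (resp.~\ref{rem:cosimplicial-contraction}) to exhibit the constant object $Y_{\bullet}$ as a deformation retract of $X_{\bullet}$, push this through a functorial Reedy replacement, realize/totalize, and identify the result with $Y$. You are more explicit than the paper at the one delicate step---the paper simply asserts that applying the replacement functor yields a deformation retract, whereas you correctly flag that this requires the replacement to be simplicially enriched. For identifying $|\tilde{Y}_{\bullet}|$ with $Y$, the paper takes a slightly more elementary route than your contractibility-of-$N\mathsf{\Delta}$ argument: it observes directly that a constant simplicial object on a cofibrant value is Reedy cofibrant (the latching maps are either identities or maps from the initial object), so the non-cofibrant case is handled by first replacing $Y$ in $\cat{C}$.
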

\begin{proof}
Let $Y_{\bullet}$ denote the constant simplicial object with value $Y$. As in Remark \ref{rem:simplicial-contraction}, we have a deformation retract $Y_{\bullet} \to X_{\bullet}$. Applying a Reedy cofibrant replacement functor, we obtain a deformation retract
\[ \tilde{Y}_{\bullet} \to \tilde{X}_{\bullet} \]
which induces a deformation retract
\[ |\tilde{Y}_{\bullet}| \weq |\tilde{X}_{\bullet}| \]
whose homotopy inverse is a weak equivalence
\[ |\tilde{X}_{\bullet}| \to |\tilde{Y}_{\bullet}|. \]
Now a constant simplicial object with cofibrant value is Reedy cofibrant (because the latching maps are either the identity or map from the initial object). It follows that (whether or not $Y$ is cofibrant), there is a weak equivalence
\[ |\tilde{Y}_{\bullet}| \weq |Y_{\bullet}| = Y. \]
Combining this with the previous weak equivalence gives the first part of the Lemma. The second part is similar, using the fact that a constant cosimplicial object on fibrant object is Reedy fibrant.
\end{proof}

\begin{definition}[Homotopy limits and colimits] \label{def:holim}
We use homotopy-invariant versions of the homotopy limit and colimit of a diagram of pointed simplicial sets or spectra. The standard notions of homotopy limit and colimit (due to Bousfield and Kan \cite{bousfield/kan:1972}) preserve objectwise weak equivalences between only objectwise-fibrant, and objectwise-cofibrant diagrams respectively (see \cite[18.5.3]{hirschhorn:2003}). In our notation $\holim$ and $\hocolim$, we understand that appropriate fibrant or cofibrant replacements have been taken (if necessary) before applying the Bousfield-Kan construction.

Note that all objects in $\spectra$ are fibrant so that the standard homotopy limit is already the homotopically correct one. Similarly, all objects in $\sset$ are cofibrant and so the Bousfield-Kan homotopy colimit is already correct.

At many points in this paper, we take the homotopy limit or colimit of a diagram of \emph{functors} with values in either $\sset$ or $\spectra$. Unless noted otherwise, such homotopy limits and colimits are always formed objectwise.
\end{definition}

\begin{definition}[Functors] \label{def:functors}
We are interested in studying functors between the categories $\spectra$ and $\sset$. To make the technical constructions of this paper, we need some basic conditions on such functors. Let $F: \cat{C} \to \cat{D}$ be a functor where  $\cat{C}$ and $\cat{D}$ are each either $\spectra$ or $\sset$. Then we say
\begin{itemize}
  \item $F$ is \emph{pointed} if $F(*) = *$;
  \item $F$ is \emph{simplicial} if it induces maps of simplicial sets of the form
  \[ \cat{C}(X,Y) \to \cat{D}(FX,FY) \]
  where recall that $\cat{C}(X,Y)$ denotes the simplicial set of maps from $X$ to $Y$ in the category $\cat{C}$. Notice that a simplicial functor $F$ is pointed if and only if all these are maps of \emph{pointed} simplicial sets, that is, preserve the basepoint.
  \item $F$ is a \emph{homotopy functor} if it preserves weak equivalences, that is if $X \weq Y$ is a weak equivalence in $\cat{C}$, then $FX \weq FY$ is a weak equivalence in $\cat{D}$
  \item $F$ is \emph{finitary} if it preserves filtered homotopy colimits in the following sense: given a filtered diagram $X:\cat{I} \to \cat{C}$, the natural map
      \[ \hocolim_{i \in \cat{I}} F(X(i)) \to F \left( \hocolim_{i \in \cat{I}} X(i) \right), \]
      should be a weak equivalence.
\end{itemize}
Strictly speaking the map involved in the definition of when $F$ is finitary is a zigzag involving inverse weak equivalences. We therefore really mean that each of the forward maps involved in that zigzag should be also be a weak equivalence.
\end{definition}

\begin{remark} \label{rem:pointed}
The condition that a functor be \emph{pointed} is clearly somewhat limiting -- there are many interesting functors without the property that $F(*) = *$. We remark here that any \emph{reduced} functor $F: \cat{C} \to \cat{D}$ (i.e. with $F(*)$ weakly equivalent to $*$) is itself weak equivalent to a pointed functor. (By this we mean that there is a zigzag of natural transformations connecting them, each of which is an objectwise weak equivalence.)

If $\cat{D} = \sset$, then $F(*)$ is a retract of $F(X)$ for any $X \in \cat{C}$, because $*$ is a retract of $X$. Hence the map
\[ F(*) \to F(X) \]
is a monomorphism, and so a cofibration of simplicial sets. But $F(*)$ is a weakly contractible simplicial set since $F$ is reduced, and it follows that the map
\[ F(X) \to F(X)/F(*) \]
is a weak equivalence. We therefore obtain a model $\tilde{F}$ for the original functor $F$ by taking
\[ \tilde{F}(X) := F(X)/F(*) \]
and by construction this is pointed.

If $\cat{D} = \spectra$, there is a similar argument using the Quillen equivalence between $\spectra$ and the category $\mathsf{Sp}^{\Sigma}$ of symmetric spectra (based on simplicial sets) constructed by Schwede \cite{schwede:2001a}. In this case, we define the symmetric spectrum $\hat{F}(X)$ by
\[ \hat{F}(X)_n := \spectra((S^{-1}_c)^{\smsh n}, F(X)) \]
where $S^{-1}_c$ is a cofibrant replacement of the $-1$-sphere spectrum. The symmetric spectrum $\hat{F}(*)$ is now levelwise weakly equivalent to $*$ and so the map
\[ \hat{F}(X) \to \hat{F}(X)/\hat{F}(*) \]
is a level weak equivalence, and hence a stable weak equivalence. Returning to $\spectra$, by applying the other half of the Quillen adjunction to $\hat{F}(X)/\hat{F}(*)$, we obtain the required pointed model for the functor $F$.
\end{remark}

\section{Taylor tower and derivatives of functors of simplicial sets and spectra} \label{sec:Taylor}

This paper is about Goodwillie calculus applied to functors between the categories of simplicial sets and spectra, including all four combinations of source and target category. In \cite{goodwillie:2003}, Goodwillie describes the construction of the Taylor tower of a functor from topological spaces to spaces or spectra. Kuhn \cite{kuhn:2007} then shows that Goodwillie's work generalizes easily to functors between fairly arbitrary model categories including those we are interested in.

We do not need the details of the construction of the Taylor tower, so we recall only the key properties that it possesses. We do concentrate more on the \emph{derivatives} of the functors we are interested in because they are the focus of this paper. Note that we only consider Taylor towers expanded at the null object $*$, and only derivatives at $*$.

\begin{definition}[Cartesian and cocartesian cubes]
Recall that a cubical diagram of simplicial sets or spectra is \emph{cartesian} if the map from the initial vertex to the homotopy limit of the remaining diagram is an equivalence, and \emph{cocartesian} if the map to the terminal vertex from the homotopy colimit of the remaining diagram is an equivalence. Such a cubical diagram is \emph{strongly cocartesian} if every two-dimensional face is cocartesian.
\end{definition}

\begin{definition}[$n$-excisive functors] \label{def:excisive}
A homotopy functor $F: \cat{C} \to \cat{D}$ is \emph{$n$-excisive} if it takes strongly cocartesian $(n+1)$- dimensional cubes in $\cat{C}$ to cartesian cubes in $\cat{D}$. See \cite{goodwillie:1991} for more details on this definition.
\end{definition}

\begin{theorem}[Goodwillie \cite{goodwillie:2003}] \label{thm:Taylor}
Let $F: \cat{C} \to \cat{D}$ be a homotopy functor where $\cat{C}$ and $\cat{D}$ are either $\sset$ or $\spectra$. Then there exist homotopy functors
\[ P_nF: \cat{C} \to \cat{D} \]
for $n \geq 0$, and a diagram of natural transformations
\[ F \to \dots \to P_nF \to P_{n-1}F \to \dots \to P_0F \]
such that
\begin{itemize}
  \item $P_nF$ is $n$-excisive;
  \item the map $p_nF: F \to P_nF$ is initial, up to homotopy, among natural transformations from $F$ to an $n$-excisive functor;
  \item $P_nF$ is functorial in $F$, and an equivalence $F \to G$ induces an equivalence $P_nF \to P_nG$;
  \item $P_n$ preserves finite homotopy limits and filtered homotopy colimits; for spectrum-valued functors, $P_n$ preserves all homotopy colimits.
\end{itemize}
\end{theorem}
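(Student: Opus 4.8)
This is Goodwillie's theorem \cite{goodwillie:2003}, extended to functors between $\sset$ and $\spectra$ by Kuhn \cite{kuhn:2007}, so one could simply cite those sources; but let me indicate how the argument runs. The main tool is Goodwillie's construction $T_n$. For a homotopy functor $F \colon \cat{C} \to \cat{D}$, a finite set $U$, and an object $X \in \cat{C}$, one has a ``fattening'' $X \star U$ of $X$ --- in $\sset$ one can take the join of $X$ with the finite discrete set $U$, and there is an analogous homotopy-pushout construction in $\spectra$ --- with the property that the $(n+1)$-cube $U \mapsto X \star U$, indexed by the subsets of $\underline{n+1} := \{1,\dots,n+1\}$, is strongly cocartesian with initial vertex $X \star \emptyset = X$. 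I would then set
\[ T_n F(X) := \holim_{\emptyset \neq U \subseteq \underline{n+1}} F(X \star U), \]
with $t_n F \colon F \to T_n F$ the map induced by the initial vertex, and define $P_n F := \hocolim_k T_n^k F$, with $p_n F \colon F \to P_n F$ the canonical map to the colimit. The maps $P_n F \to P_{n-1} F$ arise from the universal property below, using that an $(n-1)$-excisive functor is automatically $n$-excisive. All of these constructions are functorial in $F$, and since $T_n F$ is an objectwise homotopy limit of a diagram built from $F$, the functor $T_n$ --- and hence $P_n$ --- preserves objectwise weak equivalences, which gives the third bullet.

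The technical heart is the characterization I would establish first: \emph{a homotopy functor $F$ is $n$-excisive if and only if $t_n F \colon F \to T_n F$ is a weak equivalence}. The ``only if'' direction is easy: applying an $n$-excisive $F$ to the strongly cocartesian $(n+1)$-cube $U \mapsto X \star U$ yields a cartesian cube, and being cartesian says exactly that $F(X) = F(X \star \emptyset) \to \holim_{\emptyset \neq U} F(X \star U) = T_n F(X)$ is an equivalence. The ``if'' direction --- that $t_n F$ being an equivalence forces $F$ to carry \emph{every} strongly cocartesian $(n+1)$-cube to a cartesian one --- is the substantive part, and is where I expect the main difficulty: it requires a careful inductive argument with cubical diagrams, decomposing an arbitrary strongly cocartesian cube through the $\star$-construction. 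I would either quote this from \cite{goodwillie:2003} or reproduce that induction.

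Granting the characterization, the remaining bullets follow formally. For $n$-excisiveness of $P_n F$: the indexing poset for $T_n$ is finite and $X \star U$ commutes with filtered colimits, while in $\sset$ and $\spectra$ finite homotopy limits commute with filtered (in particular sequential) homotopy colimits; hence $T_n(P_n F) \homeq \hocolim_k T_n^{k+1} F \homeq P_n F$, and checking that this identification is realized by $t_n(P_n F)$ shows $P_n F$ is $n$-excisive. For the universal property of $p_n F$: given $F \to G$ with $G$ $n$-excisive, iterating the characterization shows each $t_n^k G \colon G \to T_n^k G$ is an equivalence; applying $T_n^k$ to $F \to G$ and passing to $\hocolim_k$ yields a zigzag $P_n F \to \hocolim_k T_n^k G \lweq G$, hence a factorization of $F \to G$ through $p_n F$. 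Uniqueness up to homotopy comes from the fact that $P_n(p_n F)$ is an equivalence --- since $P_n F$ is already $n$-excisive, $p_n(P_n F)$ is an equivalence, and these maps are compatible --- so restriction along $p_n F$ is a bijection on homotopy classes of natural transformations into any $n$-excisive target.

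Finally, the exactness properties: $T_n$ is an objectwise \emph{finite} homotopy limit, so it commutes with objectwise finite homotopy limits, and since filtered homotopy colimits commute with finite homotopy limits in both $\sset$ and $\spectra$, the functor $P_n$ preserves finite homotopy limits. The same facts, together with $\hocolim_k$ always commuting with filtered homotopy colimits and $U \mapsto (-) \star U$ being compatible with them, show $P_n$ preserves filtered homotopy colimits. When $\cat{D} = \spectra$ one gets more: there finite homotopy limits commute with \emph{all} homotopy colimits by stability, and sequential homotopy colimits always do, so $P_n$ preserves all homotopy colimits in that case.
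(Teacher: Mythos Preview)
The paper does not give a proof of this theorem: it is stated with attribution to Goodwillie \cite{goodwillie:2003}, and the surrounding text notes that Kuhn \cite{kuhn:2007} extends the construction to the model categories used here. Your sketch goes well beyond what the paper does and correctly outlines Goodwillie's $T_n$-construction and the standard deductions of the listed properties; it is essentially the argument in the cited sources, so there is nothing to correct.

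One small remark on packaging: the biconditional you single out, ``$F$ is $n$-excisive if and only if $t_nF$ is an equivalence,'' is true, but Goodwillie does not isolate it quite this way. His route to $n$-excisiveness of $P_nF$ goes through a lemma showing that for \emph{any} strongly cocartesian $(n+1)$-cube, the map induced by $t_nF$ factors through a cartesian cube; iterating and passing to the colimit then gives the result directly. Your formulation requires knowing that ``$F$ takes $\star$-cubes to cartesian cubes'' implies ``$F$ takes all strongly cocartesian cubes to cartesian cubes,'' which is correct but amounts to the same cubical induction you flag as the substantive part. Either packaging is fine; just be aware that a reader comparing with \cite{goodwillie:2003} will find the key lemma stated in the second form.
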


\begin{remark}
We need a couple of comments on the construction of $P_nF$:
\begin{itemize}
  \item if $F$ has any of the properties of being pointed, simplicial or finitary, then $P_nF$ has the same properties;
  \item the definition of $P_nF(X)$ depends on the value of $F$ on the joins of $X$ with finite sets. In particular, if $X$ is a finite cell complex, then $P_nF(X)$ depends only on the restriction of $F$ to finite cell complexes. Therefore, given a functor $F: \cat{C}^{\mathsf{fin}} \to \cat{D}$ (recall that $\cat{C}^{\mathsf{fin}}$ is the full subcategory of finite cell complexes), we can construct $P_nF: \cat{C}^{\mathsf{fin}} \to \cat{D}$.
\end{itemize}
\end{remark}

\begin{remark}
We give results in this paper for the calculus of functors to and/or from the category of pointed simplicial sets. Results for functors to and/or from based topological spaces can quickly be deduced via the Quillen equivalence between $\sset$ and $\based$. Explicitly, we have
\[ P_nF(|X|) \homeq P_n(F|-|)(X), \quad \text{for $F$ defined on $\based$} \]
and
\[ \Sing P_nF(X) \homeq P_n(\Sing F)(X), \quad \text{for $F$ taking values in $\based$}. \]
These equivalences follow from the construction of $P_n$ using the properties of the realization functor $|-|: \sset \to \based$ and the singular simplicial set functor $\Sing: \based \to \sset$.
\end{remark}

\begin{definition}[Layers of the Taylor tower] \label{def:layers}
The \emph{layers} in the Taylor tower of $F: \cat{C} \to \cat{D}$ are the functors $D_nF: \cat{C} \to \cat{D}$ given by
\[ D_nF:= \hofib(P_nF \to P_{n-1}F). \]
The functor $D_nF$ is \emph{$n$-homogeneous}, that is both $n$-excisive and \emph{$n$-reduced} (i.e. $P_{n-1}(D_nF) \homeq *$.) Goodwillie's classification of homogeneous functors then leads to the following proposition.
\end{definition}

\begin{proposition} \label{prop:D_nF}
In each case below, $F$ is a finitary homotopy functor between the given categories:
\begin{enumerate}
  \item for $F: \spectra \to \sset$, there is an $n$-homogeneous functor $\mathbb{D}_nF: \spectra \to \spectra$ such that
  \[ D_nF(X) \homeq \Omega^\infty (\mathbb{D}_nF)(X); \]
  \item for $F: \sset \to \spectra$, there is an $n$-homogeneous functor $\mathbb{D}_nF: \spectra \to \spectra$ such that
  \[ D_nF(X) \homeq (\mathbb{D}_nF)(\Sigma^\infty X); \]
  \item for $F: \sset \to \sset$, there is an $n$-homogeneous functor $\mathbb{D}_nF: \spectra \to \spectra$ such that
  \[ D_nF(X) \homeq \Omega^\infty (\mathbb{D}_nF)(\Sigma^\infty X). \]
\end{enumerate}
In each case, the construction of $\mathbb{D}_nF$ can be made functorial in $F$.
\end{proposition}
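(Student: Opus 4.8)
The plan is to deduce Proposition \ref{prop:D_nF} from Goodwillie's classification of $n$-homogeneous functors \cite{goodwillie:2003} (valid between the categories $\sset$ and $\spectra$ via the extension of Goodwillie's framework in \cite{kuhn:2007}). Recall that this classification identifies an $n$-homogeneous functor $H: \cat{C} \to \cat{D}$, with $\cat{C},\cat{D}$ each $\sset$ or $\spectra$, with a single spectrum $\Theta$ carrying a $\Sigma_n$-action --- namely $\Theta \homeq \der_n H$ --- through natural equivalences
\[ H(X) \homeq ( \Theta \smsh (\Sigma^\infty_{\cat{C}} X)^{\smsh n} )_{h\Sigma_n} \text{ when } \cat{D} = \spectra, \qquad H(X) \homeq \Omega^\infty ( \Theta \smsh (\Sigma^\infty_{\cat{C}} X)^{\smsh n} )_{h\Sigma_n} \text{ when } \cat{D} = \sset, \]
where $\Sigma^\infty_{\cat{C}}$ denotes $\Sigma^\infty$ if $\cat{C} = \sset$ and the identity functor if $\cat{C} = \spectra$, and where the smash product and homotopy $\Sigma_n$-orbits are taken in the derived sense. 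The one genuinely substantive ingredient is Goodwillie's result that an $n$-homogeneous functor of spaces depends only on the stable homotopy type of its input, i.e.\ factors through $\Sigma^\infty$.

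Granting this, the argument runs as follows. By Definition \ref{def:layers} the layer $D_nF = \hofib(P_nF \to P_{n-1}F)$ is $n$-homogeneous, so the classification applies to it. Put $\der_n F := \der_n(D_nF)$, a spectrum with $\Sigma_n$-action, and define
\[ \mathbb{D}_nF : \spectra \to \spectra, \qquad \mathbb{D}_nF(Y) := ( \der_n F \smsh Y^{\smsh n} )_{h\Sigma_n}, \]
using a cofibrant model for $\der_n F$ together with the derived smash and homotopy orbits so that $\mathbb{D}_nF$ is a homotopy functor. This $\mathbb{D}_nF$ is $n$-homogeneous: the $n$-fold smash power $Y \mapsto Y^{\smsh n}$ on $\spectra$ is itself $n$-homogeneous (in particular $n$-excisive and $n$-reduced), and smashing with a fixed spectrum and passing to homotopy $\Sigma_n$-orbits both preserve $n$-excisiveness and $n$-reducedness --- the latter using that $P_{n-1}$ commutes with homotopy colimits for spectrum-valued functors, by Theorem \ref{thm:Taylor}.

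It remains to read off the three equivalences by specializing the classification formulas to $H = D_nF$. In case (1), $\cat{C} = \spectra$ so $\Sigma^\infty_{\cat{C}}$ is the identity and $\cat{D} = \sset$, giving $D_nF(X) \homeq \Omega^\infty(\der_n F \smsh X^{\smsh n})_{h\Sigma_n} = \Omega^\infty \mathbb{D}_nF(X)$. In case (2), $\cat{C} = \sset$ and $\cat{D} = \spectra$, giving $D_nF(X) \homeq (\der_n F \smsh (\Sigma^\infty X)^{\smsh n})_{h\Sigma_n} = \mathbb{D}_nF(\Sigma^\infty X)$. In case (3), $\cat{C} = \cat{D} = \sset$, and combining the two computations gives $D_nF(X) \homeq \Omega^\infty \mathbb{D}_nF(\Sigma^\infty X)$. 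For functoriality: Goodwillie's constructions of $P_n$, $D_n$, and of the classifying spectrum are all functorial in $F$, so $F \mapsto \mathbb{D}_nF$ is functorial and each displayed equivalence is natural in $F$; if strict rather than merely homotopy-coherent functoriality is wanted, one fixes a point-set model for the derivative, such as the ones developed later in this paper.

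I do not anticipate a serious obstacle here: the proof is bookkeeping layered on \cite{goodwillie:2003}, and the only real input --- that homogeneous functors of spaces factor through stabilization --- is already available. The two points needing mild care are (i) making the constructions $(\Sigma^\infty X)^{\smsh n}$, $\smsh$, and $(-)_{h\Sigma_n}$ homotopy-invariant in the $S$-module model used here --- one replaces $\der_n F$ by a cofibrant spectrum and uses derived functors, noting that the $\Sigma^\infty$ of Definition \ref{def:suspec} already lands in cofibrant spectra --- and (ii) committing to a model for the derivative rigid enough to make $F \mapsto \der_n F$, hence $F \mapsto \mathbb{D}_nF$, strictly functorial; both are routine and independent of the essential point.
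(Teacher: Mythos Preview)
Your proposal is correct and follows essentially the same route as the paper: both deduce the result directly from Goodwillie's classification of homogeneous functors, with $\mathbb{D}_nF$ built from the coefficient spectrum $\der_nF$ in cases (2) and (3). The only minor difference is in case (1), where the paper takes $\mathbb{D}_nF := B^\infty D_nF$ using just Goodwillie's delooping (which does not require the finitary hypothesis), whereas you pass one step further to the explicit formula $(\der_nF \smsh Y^{\smsh n})_{h\Sigma_n}$; both are valid under the stated hypotheses.
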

\begin{proof}
Goodwillie describes in \cite[2.1]{goodwillie:2003} a (functorial) infinite delooping for a homogeneous functor that takes values in based spaces. Using an appropriate Quillen equivalence to make sure this delooping lives in the category $\spectra$, we get in cases (1) and (3) functors
\[ B^{\infty}D_nF: \cat{C} \to \spectra \]
such that
\[ D_nF \homeq \Omega^\infty B^{\infty} D_nF. \]
This does not require the finitary condition and in case (1), we set $\mathbb{D}_nF := B^\infty D_nF$ and are done.

For (2), we use the classification of finitary homogeneous functors in terms of coefficient spectra given in \cite[\S5]{goodwillie:2003}. We have a natural equivalence
\[ D_nF(X) \homeq (E \smsh X^{\smsh n})_{h\Sigma_n} \homeq (E \smsh (\Sigma^\infty X)^{\smsh n})_{h\Sigma_n} \]
where $E$ is a spectrum with $\Sigma_n$-action. This gives (2) with $\mathbb{D}_nF(Y) := (E \smsh Y^{\smsh n})_{h\Sigma_n}$. This can be made functorial in $F$ since $E$ can be given by the formula
\[ E := \creff_n(D_nF)(S^0,\dots,S^0) \]
where $\creff_n$ is the \ord{n} cross-effect construction defined below (\ref{def:cross-effects}).

Finally, (3) is given by combining the constructions of (1) and (2).
\end{proof}

The spectrum $E$ that appears in the classification results in the proof of \ref{prop:D_nF} is the `\ord{n} derivative' of the functor $F$.

\begin{definition}[Cross-effects] \label{def:cross-effects}
Given a homotopy functor $F: \cat{C} \to \cat{D}$, the \emph{\ord{n} cross-effect of $F$} is the multivariable functor
\[ \creff_n(F) : \cat{C}^{n} \to \cat{D} \]
given by
\[ \creff_n(F)\left(X_1,\dots,X_n\right) := \thofib_{I \subset \{1,\dots,n\}} F\left(\Wdge_{i \in I} X_i\right). \]
This is the total homotopy fibre of the cube, indexed by subsets of $\{1,\dots,n\}$, consisting of $F$ applied to wedges of the corresponding subsets of the objects $X_i$. See \cite{goodwillie:1991} for a detailed description of total homotopy fibres of cubes, and \cite{goodwillie:2003} for more on cross-effects.

Note that a permutation $\sigma$ of $\{1,\dots,n\}$ induces a natural symmetry isomorphism
\[ \creff_n(F)\left(X_1,\dots,X_n\right) \isom \creff_n(F)\left(X_{\sigma(1)},\dots,X_{\sigma(n)}\right). \]
Strictly speaking, the functor $\creff_n(F)$ defined here only preserves weak equivalences between cofibrant inputs. We therefore tacitly compose with a cofibrant replacement (if necessary) to get an honest homotopy functor.
\end{definition}

\begin{definition}[Goodwillie derivatives] \label{def:derivative}
Let $F: \cat{C} \to \cat{D}$ be a homotopy functor and let $\mathbb{D}_nF$ be as in Proposition \ref{prop:D_nF}. The \emph{\ord{n} Goodwillie derivative of $F$} is given by evaluating the \ord{n} cross-effect of $\mathbb{D}_nF$ at the cofibrant sphere spectrum for each input.
\[ \der^G_n(F) := \creff_n(\mathbb{D}_nF)(S_c,\dots,S_c) \]
The symmetry isomorphisms give the spectrum $\der^G_n(F)$ an action of the symmetric group $\Sigma_n$. The construction of $\der^G_n(F)$ is natural in $F$.
\end{definition}

\begin{remark}
The superscript $G$ in $\der^G_n(F)$ is meant to indicate that these are the derivatives of $F$ as defined by Goodwillie. The main idea of this paper is to construct new models for these derivatives which we denote just by $\der_n(F)$. These are defined in \S\S\ref{sec:nat},\ref{sec:spaces-spectra}-\ref{sec:spaces-spaces}.
\end{remark}

\begin{remark}
Following through the definition of $\der^G_n(F)$, we can see that the derivatives of $F$ depend only on the restriction of $F$ to finite cell complexes. We therefore consider $\der^G_n(F)$ to be defined for any functor $F: \cat{C}^{\mathsf{fin}} \to \cat{D}$ where $\cat{C}$ and $\cat{D}$ are either $\sset$ or $\spectra$ and $\cat{C}^{\mathsf{fin}}$ denotes the full subcategory of finite cell complexes in $\cat{C}$.
\end{remark}

\begin{proposition} \label{prop:Dn-formula}
Let $F: \cat{C} \to \cat{D}$ be a finitary homotopy functor and let $\mathbb{D}_nF$ be as in Proposition \ref{prop:D_nF}. There is a zigzag of natural weak equivalences
\[ \mathbb{D}_nF(X) \homeq (\der^G_n(F) \smsh X^{\smsh n})_{h\Sigma_n}. \]
Combined with Proposition \ref{prop:D_nF}, this gives us formulas for layers in the Taylor tower of $F:\cat{C} \to \cat{D}$.
\end{proposition}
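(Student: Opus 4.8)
The plan is to recognize the asserted formula as an instance of Goodwillie's classification of $n$-homogeneous functors of spectra, applied to the functor $\mathbb{D}_nF : \spectra \to \spectra$, combined with the fact that finitary multilinear functors of spectra are ``smashing''. First I would record that $\mathbb{D}_nF$ is not only $n$-homogeneous (Proposition \ref{prop:D_nF}) but also finitary: in each of the three cases it is assembled from $D_nF = \hofib(P_nF \to P_{n-1}F)$, which is finitary whenever $F$ is by the last clause of Theorem \ref{thm:Taylor}, together with Goodwillie's infinite delooping and the cross-effect constructions, all of which commute with filtered homotopy colimits of spectrum-valued functors. It therefore suffices to prove: for any finitary $n$-homogeneous $H : \spectra \to \spectra$ there is a natural, $\Sigma_n$-equivariant zigzag of weak equivalences
\[ H(X) \homeq \left( \creff_n(H)(S_c,\dots,S_c) \smsh X^{\smsh n} \right)_{h\Sigma_n}, \]
because, taking $H = \mathbb{D}_nF$, the $\Sigma_n$-spectrum $\creff_n(\mathbb{D}_nF)(S_c,\dots,S_c)$ is by Definition \ref{def:derivative} precisely $\der^G_n(F)$, and every construction used along the way is functorial in $F$.

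To prove this reduced statement I would proceed in two steps. Step one: by the spectrum-valued version of Goodwillie's classification of homogeneous functors (\cite[\S3, \S5]{goodwillie:2003}, extended to functors between our categories by Kuhn \cite{kuhn:2007}), the cross-effect $\creff_n(H) : \spectra^n \to \spectra$ is symmetric multilinear and the natural fold-and-include map assembles into a $\Sigma_n$-equivariant zigzag of weak equivalences $\creff_n(H)(X,\dots,X)_{h\Sigma_n} \homeq H(X)$. Step two: a finitary symmetric multilinear functor $L : \spectra^n \to \spectra$ is ``smashing'', i.e. there is a natural zigzag of equivalences $L(S_c,\dots,S_c)\smsh X_1 \smsh\cdots\smsh X_n \homeq L(X_1,\dots,X_n)$; this follows by inducting on the variables from the corresponding statement for a linear finitary endofunctor $l$ of $\spectra$, namely $l(X)\homeq l(S_c)\smsh X$, which holds because such an $l$ preserves all homotopy colimits and commutes with suspension, hence agrees with the smashing functor $l(S_c)\smsh(-)$ on $S_c$ and therefore on every (cofibrant) spectrum. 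Setting all $X_i = X$ and feeding step two into step one gives the displayed equivalence with $\creff_n(H)(S_c,\dots,S_c)$, and since the cofibrantly-replaced $\creff_n(H)$ is a genuine homotopy functor and $S_c \weq S$, this spectrum is also equivalent to $\creff_n(H)(S,\dots,S)$; naturality in $F$ is automatic because $P_n$, $D_n$, the delooping $B^\infty$, $\creff_n$, and all the assembly maps are functorial in $F$. (In cases (2) and (3) of Proposition \ref{prop:D_nF} one can alternatively argue directly: there $\mathbb{D}_nF(Y) \simeq (E \smsh Y^{\smsh n})_{h\Sigma_n}$ already, and a short computation of the $n$th cross-effect of this functor at $(S_c,\dots,S_c)$ recovers $E$.)

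The main obstacle is the bookkeeping of $\Sigma_n$-equivariance: one must check that the permutation action on $X^{\smsh n}$ on the right-hand side corresponds, through the entire zigzag, to the symmetry isomorphisms of $\creff_n$ that define the $\Sigma_n$-action on $\der^G_n(F)$, and that the assembly equivalences of step two can be taken $\Sigma_n$-equivariantly (they can, being built from the individual linear assembly maps in a manner compatible with permuting the inputs). A secondary technical point is to apply the smashing statement only after suitable cofibrant replacement, so that all functors in sight are homotopy functors and the homotopy orbits $(-)_{h\Sigma_n}$ are computed correctly; this is exactly where the finitary hypothesis on $F$ (hence on $\mathbb{D}_nF$) is indispensable, since without it a multilinear functor of spectra is pinned down only by its values on finite cell spectra.
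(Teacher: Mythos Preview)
Your proposal is correct and is essentially the same approach as the paper, which proves this proposition in one line by citing ``Goodwillie's classification of finitary homogeneous functors.'' You have simply unpacked what that classification says: that a finitary $n$-homogeneous functor $H:\spectra\to\spectra$ satisfies $H(X)\homeq(\creff_n(H)(S_c,\dots,S_c)\smsh X^{\smsh n})_{h\Sigma_n}$, which is exactly the content of \cite[\S5]{goodwillie:2003}.
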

\begin{proof}
This is again Goodwillie's classification of finitary homogeneous functors.
\end{proof}

Finally, we record some of the properties of the process of taking derivatives.

\begin{prop} \label{prop:deriv-commute}
For homotopy functors $F: \cat{C} \to \cat{D}$ with $\cat{C},\cat{D} = \sset,\spectra$, the functor
\[ F \mapsto \der^G_n(F) \]
preserves finite homotopy limits and filtered homotopy colimits. If $\cat{D} = \spectra$, then $\der^G_n$ preserves all homotopy colimits.
\end{prop}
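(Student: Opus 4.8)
The plan is to factor the construction $F \mapsto \der^G_n(F)$ as a composite of simpler operations on homotopy functors, each of which is manifestly compatible with the relevant homotopy limits and colimits (for diagrams of functors, these are all computed objectwise, so it suffices to track the operation itself).

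By Theorem~\ref{thm:Taylor}, the operations $P_n$ and $P_{n-1}$ preserve finite homotopy limits and filtered homotopy colimits, and, when $\cat D = \spectra$, all homotopy colimits. The layer $D_nF = \hofib(P_nF \to P_{n-1}F)$ is a homotopy fibre, hence a finite homotopy limit; since in $\sset$ and $\spectra$ finite homotopy limits commute with finite homotopy limits and with filtered homotopy colimits, and in $\spectra$, by stability, with all homotopy colimits, the operation $F \mapsto D_nF$ inherits all the claimed preservation properties from $P_n$ and $P_{n-1}$. Similarly the cross-effect $\creff_n$ is a total homotopy fibre of an $n$-cube, again a finite homotopy limit, and evaluating a multivariable functor at a fixed tuple of objects commutes with all homotopy limits and colimits.

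It remains to pass from $D_nF$ to $\der^G_n(F)$ through the classification of homogeneous functors in Proposition~\ref{prop:D_nF}. When $\cat D = \spectra$ this is direct: for $F:\sset\to\spectra$ we have $\mathbb D_nF(Y)\homeq (E_F\smsh Y^{\smsh n})_{h\Sigma_n}$ with $E_F=\creff_n(D_nF)(S^0,\dots,S^0)$, and the standard computation of the $n$-th cross-effect of $Y\mapsto(E\smsh Y^{\smsh n})_{h\Sigma_n}$ --- whose cross-effect of $(-)^{\smsh n}$ carries a \emph{free} $\Sigma_n$-action, so the homotopy orbits cancel --- identifies $\der^G_n(F)=\creff_n(\mathbb D_nF)(S_c,\dots,S_c)$ with $E_F$, i.e.\ with a finite homotopy limit built from $D_nF$; the case $F:\spectra\to\spectra$ (where $\mathbb D_nF=D_nF$) is analogous. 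Combining this with the previous paragraph gives the proposition when $\cat D=\spectra$. When $\cat D=\sset$, the classification additionally uses Goodwillie's infinite delooping $\mathbb D_nF=B^\infty D_nF$ of an $n$-homogeneous functor \cite[2.1]{goodwillie:2003} (and, for $F:\sset\to\sset$, the factorization of $D_nF$ through $\Sigma^\infty$). Here I would argue that $B^\infty$, being the inverse of the equivalence between $n$-homogeneous space-valued and $n$-homogeneous spectrum-valued functors induced by $\Omega^\infty$, preserves finite homotopy limits and filtered homotopy colimits: $\Omega^\infty$ is a right adjoint, and since $S_c$ is a finite cell spectrum it also commutes with filtered homotopy colimits, so $\Omega^\infty$ preserves both, and hence so does its inverse. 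With this, $\der^G_n(F)=\creff_n(B^\infty D_nF)(S_c,\dots,S_c)$ is once more a composite of the operations above, completing the argument.

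I expect the delooping step to be the main point requiring care: one must confirm that Goodwillie's $B^\infty$, restricted to $n$-homogeneous functors, is compatible with finite homotopy limits and filtered homotopy colimits --- either by the abstract argument above via the $\Omega^\infty$-equivalence, or by inspecting that $B^\infty$ is itself assembled from homotopy (co)limits of the permitted type. Everything else reduces, through the expression of $\der^G_n$ in terms of homotopy fibres and cross-effects together with Theorem~\ref{thm:Taylor}, to the single elementary fact that finite homotopy limits commute with finite homotopy limits and with filtered homotopy colimits in $\sset$ and $\spectra$, and with all homotopy colimits in $\spectra$.
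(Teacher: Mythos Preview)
Your proposal is correct and follows the same approach as the paper: reduce to the properties of $D_n$ (inherited from $P_n$ via Theorem~\ref{thm:Taylor}, which is Goodwillie's \cite[1.18]{goodwillie:2003}) and note that cross-effects, being finite homotopy limits, preserve the relevant (co)limits. The paper's one-sentence proof does not explicitly address the passage through $\mathbb{D}_nF$ when $\cat{D}=\sset$; your extra discussion of the delooping $B^\infty$ is a reasonable elaboration of a point the paper simply elides.
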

\begin{proof}
The $D_n$-construction has these properties (by \cite[1.18]{goodwillie:2003}) and taking cross-effects preserves these homotopy limits and colimits, so $\der^G_n$ also has them.
\end{proof}

\section{Basic results on Taylor towers of composite functors}

In this section, we prove some key results about the behaviour of Taylor towers of composite functors. The main result we need is the following.

\begin{proposition} \label{prop:calculus}
Let $F$ and $G$ be pointed simplicial homotopy functors (between any combination of the categories $\spectra$ and $\sset$). Suppose that $F$ and $G$ are composable so that $FG$ exists. Then:
\begin{enumerate}
  \item the natural map $P_n(FG) \to P_n((P_nF)G)$ is an equivalence;
  \item if $F$ is finitary, then the natural map $P_n(FG) \to P_n(F(P_nG))$ is an equivalence.
\end{enumerate}
\end{proposition}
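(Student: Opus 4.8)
The plan is to view both statements as instances of the principle that \emph{whiskering} with a fixed functor (on either side) preserves \emph{$P_n$-equivalences}, where a natural transformation $\alpha$ is called a $P_n$-equivalence if $P_n\alpha$ is an objectwise weak equivalence. The only $P_n$-equivalence needed is the tower projection $p_nH\colon H\to P_nH$, which is a $P_n$-equivalence by idempotency of $P_n$ (immediate from the universal property in Theorem~\ref{thm:Taylor}). For part~(1) it then suffices to show that $p_nF$ whiskered on the right with $G$, namely $FG\to (P_nF)G$, is a $P_n$-equivalence: composing it with the $P_n$-equivalence $(P_nF)G\to P_n((P_nF)G)$ exhibits the $n$-excisive functor $P_n((P_nF)G)$ as an $n$-excisive approximation to $FG$, whence the canonical comparison map $P_n(FG)\to P_n((P_nF)G)$ is an equivalence. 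For part~(2), applying part~(1) to the pairs $(F,G)$ and $(F,P_nG)$ -- using that $P_nF$, $P_nG$ inherit `pointed' and `simplicial', and $P_nF$ inherits `finitary', from $F$, $G$ (the remark after Theorem~\ref{thm:Taylor}) -- reduces us to the case where $F$ is $n$-excisive and finitary, where it suffices to show that $F$ whiskered on the left with $p_nG$, namely $FG\to F(P_nG)$, is a $P_n$-equivalence.

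For part~(1), form the objectwise homotopy fibre $C:=\hofib(p_nF)$. Since $P_n$ preserves finite homotopy limits (Theorem~\ref{thm:Taylor}), $C$ is a pointed homotopy functor with $P_nC\homeq *$; and since whiskering with $G$ commutes with objectwise homotopy fibres, $\hofib(FG\to(P_nF)G)\homeq CG$, and $P_n$ preserves this finite homotopy limit too, so $P_n(CG)\homeq *$ would show that $P_n(FG)\to P_n((P_nF)G)$ has trivial homotopy fibre and hence -- both sides being $n$-excisive -- is an equivalence. We are thus reduced to: \emph{for a pointed homotopy functor $C$ with $P_nC\homeq *$ and any pointed homotopy functor $G$, we have $P_n(CG)\homeq *$}, equivalently $D_k(CG)\homeq *$ for $0\le k\le n$. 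This is the one genuinely non-formal input -- essentially due to Goodwillie -- and I expect it to be the main obstacle. It comes out of Goodwillie's analysis of the cross-effects of a composite: $\creff_k(CG)$, hence the $k$-th derivative and the layer $D_k(CG)$, is built, summing over partitions of $\{1,\dots,k\}$, from the derivatives $\der_jC$ with $j\le k$ together with the derivatives of $G$, so the vanishing of $\der_jC$ for $j\le n$ forces $D_k(CG)\homeq *$ for $k\le n$. Some care is needed to keep this honest, since the coherent (operadic) refinement of this `Fa\`{a} di Bruno' decomposition is itself one of the goals of the paper; but here only the bare vanishing statement is required, and that follows from the cross-effect connectivity estimates in~\cite{goodwillie:2003}.

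For part~(2), with $F$ now $n$-excisive and finitary, $F\homeq P_nF$ is the limit of the finite tower $P_nF\to P_{n-1}F\to\dots\to P_0F\homeq *$ whose fibres are the finitary homogeneous functors $D_kF$ for $1\le k\le n$. Whiskering this tower on the left with $G$, respectively with $P_nG$, preserves the objectwise fibration sequences, and $P_n$ preserves finite homotopy limits, so by induction up the tower it is enough to show each $D_kF\circ G\to D_kF\circ(P_nG)$ is a $P_n$-equivalence. By the classification of finitary homogeneous functors (Propositions~\ref{prop:D_nF} and~\ref{prop:Dn-formula}), $D_kF$ is obtained -- up to an $\Omega^\infty$ and a $\Sigma^\infty$, inserted as appropriate for the source and target categories -- from the spectrum-valued functor $Z\mapsto(\der_kF\smsh Z^{\smsh k})_{h\Sigma_k}$. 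Since $G$ is reduced, the Leibniz rule gives $\der_j(G^{\smsh k})\homeq *$ for $j<k$ and, for $k\le j\le n$, a wedge of terms $\der_{j_1}G\smsh\dots\smsh\der_{j_k}G$ with $\sum_i j_i=j$ and all $j_i\ge 1$, hence all $j_i\le n$; therefore $G^{\smsh k}\to(P_nG)^{\smsh k}$ is an equivalence on every derivative $\der_j$ with $j\le n$, i.e. a $P_n$-equivalence. Smashing with the fixed spectrum $\der_kF$ and forming homotopy $\Sigma_k$-orbits preserve $P_n$-equivalences of spectrum-valued functors, since for such functors $P_n$ commutes with all homotopy colimits (Theorem~\ref{thm:Taylor}) as well as with smash products and finite homotopy limits; and $\Omega^\infty=\spectra(S_c,-)$ preserves $P_n$-equivalences because $S_c$ is a finite cell spectrum, so $\Omega^\infty$ commutes both with finite homotopy limits and with the sequential homotopy colimit defining $P_n$. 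This yields the desired $P_n$-equivalence $D_kF\circ G\to D_kF\circ(P_nG)$, completing part~(2).
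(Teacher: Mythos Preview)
Your strategy---recast both parts as ``whiskering preserves $P_n$-equivalences''---is natural, but the execution has a genuine circularity in part~(1) and a gap in part~(2) precisely in the new case where the middle category is $\sset$.

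For part~(1), you reduce to showing that $P_n(CG)\homeq *$ whenever $P_nC\homeq *$, and you justify this by a Fa\`{a}~di~Bruno--type decomposition of $\der_k(CG)$ in terms of $\der_jC$ for $j\le k$. But that decomposition is (a form of) the chain rule, and in this paper the chain rule is \emph{deduced from} Proposition~\ref{prop:calculus}: the proofs of Theorems~\ref{thm:chainrule}, \ref{thm:key} and \ref{thm:chainspaces} all invoke it. For the spectra-to-spectra case one could fall back on~\cite{ching:2007}, but no such input is available when the source of $C$ is $\sset$, and the connectivity estimates in~\cite{goodwillie:2003} you allude to need analyticity, which is not assumed. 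The paper sidesteps all of this with a short, purely formal retract argument (due to Goodwillie): one first constructs $v_n(F,G)\colon P_n((P_nF)G)\to P_n(FG)$ with $v_n\circ(p_nF)_*$ homotopic to the identity (this part works verbatim from~\cite{ching:2007}), and then compares the rows $P_n(FG)\to P_n((P_nF)G)\to P_n(FG)$ and $P_n((P_nF)G)\to P_n(P_n(P_nF)G)\to P_n((P_nF)G)$ via the map induced by $p_nF$. The middle vertical is an equivalence and both rows compose to the identity, so the left vertical is a retract of an equivalence. No Fa\`{a}~di~Bruno, no finitary hypothesis. (As a secondary point: even granting $P_n(CG)\homeq *$, ``trivial homotopy fibre implies equivalence'' is not immediate when $\cat{D}=\sset$, since the homotopy fibre only sees the basepoint component of the target.)

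For part~(2), your reduction to $F$ homogeneous matches the paper's. The divergence is in the endgame when the source of $F$ is $\sset$: the classification of $D_kF$ then inserts a $\Sigma^\infty$, so your Leibniz argument is really about $(\Sigma^\infty G)^{\smsh k}\to(\Sigma^\infty P_nG)^{\smsh k}$, which needs $\der_i(\Sigma^\infty G)\homeq\der_i(\Sigma^\infty P_nG)$ for $i\le n$. That is exactly the $F=\Sigma^\infty$ instance of what you are proving, and you cannot simply borrow the $P_n$-equivalence $G\to P_nG$ since $\der_i(\Sigma^\infty G)$ is not $\der_iG$ in general (already $\der_2\Sigma^\infty\homeq *$ while $\der_2 I_{\sset}\not\homeq *$). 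The paper handles $F=\Sigma^\infty$ directly via the $(\Sigma^\infty,\Omega^\infty)$ adjunction: any $\Sigma^\infty G\to H$ with $H$ $n$-excisive adjoints to $G\to\Omega^\infty H$, which factors through $P_nG$, so $\Sigma^\infty G\to H$ factors through $\Sigma^\infty P_nG\to P_n(\Sigma^\infty P_nG)$; the universal property of $P_n$ then gives the required equivalence. After this, the spectra-to-spectra argument of~\cite{ching:2007} finishes the reduction just as you outlined.
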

\begin{proof}
In \cite[\S6]{ching:2007}, the second author proved these results in the case that $F$ and $G$ are functors from spectra to spectra. In remarks in that paper, it was indicated that the proofs largely carry over to the cases of functors to and/or from spaces. Here we fill in the details of these extensions.

For part (1), the first part of the proof for spectra given in \cite{ching:2007} applies directly to functors of simplicial sets also. This allows us to construct natural maps
\[ v_n(F,G): P_n((P_nF)G) \longrightarrow P_n(FG) \]
such that the composite
\[ {\dgTEXTARROWLENGTH=4em P_n(FG) \arrow{e,t}{p_nF} P_n((P_nF)G) \arrow{e,t}{v_n(F,G)} P_n(FG)} \]
is homotopic to the identity. (See \cite[6.9]{ching:2007}.)

The remainder of the proof given for spectra does not work for spaces, but we are grateful to Tom Goodwillie for providing the following more general argument which completes the proof of (1).

Consider the diagram
\[ \begin{diagram}
  \node{P_n(FG)} \arrow{e,t}{p_nF} \arrow{s} \node{P_n((P_nF)G)} \arrow{e,t}{v_n(F,G)} \arrow{s} \node{P_n(FG)} \arrow{s} \\
  \node{P_n((P_nF)G)} \arrow{e,t}{p_n(P_nF)} \node{P_n(P_n(P_nF)G)} \arrow{e,t}{v_n(P_nF,G)} \node{P_n((P_nF)G)}
\end{diagram} \]
where the vertical maps are all induced by $p_nF:F \to P_nF$. Each row here is homotopic to the identity by the previous constructions, but the middle vertical map is an equivalence since $P_nF \to P_n(P_nF)$ is an equivalence. Thus the left-hand vertical map is a retract of an equivalence, so is an equivalence. This completes part (1) of the Proposition.

For part (2), the argument given in \cite{ching:2007} works for any $F$ and $G$ where the `middle' category of the composition (i.e. the source category of $F$ and the target category of $G$) is $\spectra$. Here we describe the changes necessary to apply this method in the case that the middle category is $\sset$.

As in \cite{ching:2007}, we can reduce to the case that $F$ is $k$-homogeneous for some $k$, using part (1) of this proposition and using the fibre sequences $D_kF \to P_kF \to P_{k-1}F$. Now suppose that the source category of $F$ is $\sset$. Then, by Proposition \ref{prop:D_nF}, we can write
\[ F \homeq F' \Sigma^\infty \]
where $F'$ is a homogeneous functor with source category $\spectra$. The method of proof of \cite[6.11]{ching:2007} now applies if we can show that the map
\[ P_n(\Sigma^\infty G) \to P_n(\Sigma^\infty P_nG) \]
is an equivalence. In other words, the spectra to spectra result allows us to reduce to the case that $F = \Sigma^\infty$.

To see this case, it is sufficient to show that any map $\Sigma^\infty G \to H$ with $H$ $n$-excisive factors via the map
\[ \Sigma^\infty G \to \Sigma^\infty P_nG \to P_n(\Sigma^\infty P_nG). \]
The claimed equivalence then follows by the universal property of $P_n$ (see \ref{thm:Taylor}).

But $\Sigma^\infty G \to H$ is adjoint to a map $G \to \Omega^\infty H$ which factors via $P_nG$ since $\Omega^\infty H$ is $n$-excisive. This then gives us a factorization
\[ \Sigma^\infty G \to \Sigma^\infty P_nG \to H, \]
and the second map factors via $P_n(\Sigma^\infty P_nG)$ since $H$ is $n$-excisive. This completes the proof of (2).
\end{proof}

\begin{remark}
These are important results for our approach to the calculus of composite functors because they say that the terms of the Taylor tower of $FG$ depend only on the appropriate terms of the individual Taylor towers of $F$ and $G$.
\end{remark}

\begin{example} \label{ex:counterexample}
The following example of Kuhn \cite{kuhn:2007} shows that the finitary condition is necessary in part (2) of Proposition \ref{prop:calculus}. Let $F: \spectra \to \spectra$ be a non-smashing localization $L_E$ (e.g. with respect to mod $2$ K-theory), and let $G$ be the functor given by $G(X) = (X \smsh X)_{h\Sigma_2}$. Then
\[ P_1(FG)(S) \homeq \hocofib(L_E(S) \smsh \mathbb{R}P^\infty \to L_E(\mathbb{R}P^\infty)) \neq * \]
but $P_1G \homeq *$, so $P_1(F(P_1G)) \homeq *$.
\end{example}

\begin{remark}
Proposition \ref{prop:calculus} can be generalized to the following results. Recall that we say a functor $F$ is $m$-reduced if $P_{m-1}F \homeq *$, and that a map $F_1 \to F_2$ is $m$-reduced if it induces an equivalence $P_{m-1}F_1 \to P_{m-1}F_2$.
\begin{itemize}
  \item If $F_1 \to F_2$ is an $n$-reduced map between finitary homotopy functors, and $G$ is $m$-reduced (where $m,n \geq 1$) then the map $F_1G \to F_2G$ is $mn$-reduced.
  \item If $F$ is an $n$-reduced finitary homotopy functor (where $n \geq 1$), and $G_1 \to G_2$ is an $m$-reduced map between $d$-reduced homotopy functors (where $m \geq d \geq 1$), then the map $FG_1 \to FG_2$ is $m+d(n-1)$-reduced.
\end{itemize}
These facts can be proved by generalizations of the arguments used to prove Proposition \ref{prop:calculus}.
\end{remark}

\section{The category of functors} \label{sec:functors}

We are interested in studying functors $\cat{C} \to \cat{D}$, where the categories $\cat{C}$ and $\cat{D}$ are each either $\sset$ or $\spectra$. We cannot, however, define the category of all functors $\cat{C} \to \cat{D}$ without incurring set-theoretic problems. Fortunately, this is unnecessary for us since the derivatives of a functor depend only on its values on finite cell objects. We therefore make the following definition.

\begin{definition}[Functor categories]
Recall that if $\cat{C}$ is either $\sset$ or $\spectra$, then $\cat{C}^{\mathsf{fin}}$ is the full subcategory of finite cell complexes in $\cat{C}$. Let $[\cat{C}^{\mathsf{fin}},\cat{D}]$ be the category whose objects are the pointed simplicial (see \ref{def:functors}) functors $F:\cat{C}^{\mathsf{fin}} \to \cat{D}$ and whose morphisms $F \to G$ are the simplicial natural transformations. Since $\cat{C}^{\mathsf{fin}}$ is skeletally small, there is only a set of natural transformations between two such functors. Therefore $[\cat{C}^{\mathsf{fin}}, \cat{D}]$ is a (locally small) category in the usual sense (i.e. the morphisms between any two objects form a set).
\end{definition}

We have the following version of the Yoneda Lemma for the category $[\cat{C}^{\mathsf{fin}},\cat{D}]$:
\begin{lemma}[Enriched Yoneda Lemma] \label{lem:weak-yoneda}
Let $F:\cat{C}^{\mathsf{fin}} \to \cat{D}$ be a pointed simplicial functor, and take $K \in \cat{C}^{\mathsf{fin}}$ and $I \in \cat{D}$. Then there is a 1-1 correspondence between the set of simplicial natural transformations
\[ I \smsh \cat{C}(K,-) \to F \]
and the set of morphisms in $\cat{D}$ of the form
\[ I \to F(K). \]
\end{lemma}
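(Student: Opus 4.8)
The plan is to prove the enriched Yoneda Lemma by the standard route: exhibit mutually inverse maps between the two sets in question. First I would set up the evaluation map. Given a simplicial natural transformation $\alpha: I \smsh \cat{C}(K,-) \to F$, we obtain its component at $K$, namely $\alpha_K : I \smsh \cat{C}(K,K) \to F(K)$, and then precompose with the map $I = I \smsh \Delta[0]_+ \to I \smsh \cat{C}(K,K)$ induced by the identity $\Delta[0]_+ \to \cat{C}(K,K)$ picking out $\mathrm{id}_K$. This yields a morphism $\mathrm{ev}(\alpha): I \to F(K)$ in $\cat{D}$.

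Conversely, given a morphism $g: I \to F(K)$ in $\cat{D}$, I would construct a natural transformation $\widehat{g}: I \smsh \cat{C}(K,-) \to F$ as follows. For each $L \in \cat{C}^{\mathsf{fin}}$, the functor $F$ being simplicial supplies a map of simplicial sets $\cat{C}(K,L) \to \cat{D}(F(K),F(L))$, which by the tensor-cotensor adjunction in $\cat{D}$ (recall $\cat{D}$ is tensored over $\sset$) corresponds to a map $F(K) \smsh \cat{C}(K,L) \to F(L)$. Smashing $g$ with the identity on $\cat{C}(K,L)$ and composing gives $\widehat{g}_L : I \smsh \cat{C}(K,L) \to F(K) \smsh \cat{C}(K,L) \to F(L)$. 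Naturality of $\widehat{g}$ in $L$ — both as an ordinary natural transformation and as a simplicial one — follows from the functoriality and simplicial-enrichment axioms for $F$, i.e. compatibility with the simplicial composition maps $\cat{C}(L,L') \smsh \cat{C}(K,L) \to \cat{C}(K,L')$; this is a routine diagram chase that I would not write out in full.

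Then I would check the two composites are identities. That $\mathrm{ev}(\widehat{g}) = g$ is immediate from the unit axiom for the simplicial enrichment of $F$ (the map $\cat{C}(K,K) \to \cat{D}(F(K),F(K))$ sends $\mathrm{id}_K$ to $\mathrm{id}_{F(K)}$). That $\widehat{\mathrm{ev}(\alpha)} = \alpha$ is the content of the usual Yoneda argument: for any $L$ and any ``point'' of $\cat{C}(K,L)$, i.e. a morphism $f: K \to L$, naturality of $\alpha$ with respect to $f: \cat{C}(K,K) \to \cat{C}(K,L)$ (more precisely, with respect to $f_*$) forces $\alpha_L(i \smsh f) = F(f)(\alpha_K(i \smsh \mathrm{id}_K))$; since everything is determined simplicially and $\cat{D}$ is tensored over $\sset$, checking this on simplices of $\cat{C}(K,L)$ suffices to pin down $\alpha_L$ completely in terms of $\mathrm{ev}(\alpha)$.

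The main obstacle, such as it is, is purely bookkeeping: keeping track of which version of the enrichment (simplicial-set-level versus the induced tensor structure on $\cat{D}$) is being used at each step, and verifying that a ``simplicial natural transformation'' is determined by, and freely constructed from, its behaviour on the $0$-simplices $\mathrm{id}_K$ and $f: K \to L$. There is no real analytic or homotopical content here — no model-category input is needed — so the proof is entirely formal, and I would present it compactly, citing the standard enriched Yoneda Lemma (e.g. as in Kelly's book on enriched category theory) for the parts that are genuinely routine.
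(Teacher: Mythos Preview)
Your proposal is correct and is essentially the standard enriched Yoneda argument; the paper's own proof is even terser, simply citing Kelly's enriched category theory text \cite[1.9]{kelly:2005} without spelling anything out. So you are more explicit than the paper, but following exactly the same route.
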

\begin{proof}
This follows from \cite[1.9]{kelly:2005}.
\end{proof}

\begin{proposition} \label{prop:model-functors}
Let $\cat{C}$ and $\cat{D}$ be either $\sset$ or $\spectra$. Then there is a model structure on the functor category $[\cat{C}^{\mathsf{fin}},\cat{D}]$ with the following properties:
\begin{itemize}
  \item a natural transformation $F \to G$ is a \emph{weak equivalence} or \emph{fibration} if and only if $F(X) \to G(X)$ is a weak equivalence or fibration in$\cat{D}$, respectively, for all $X \in \cat{C}^{\mathsf{fin}}$;
  \item the model structure is cofibrantly generated with generating cofibrations of the form
  \[ I_0 \smsh \cat{C}(K,-) \to I_1 \smsh \cat{C}(K,-) \]
  where $I_0 \to I_1$ is one of the generating cofibrations in $\cat{D}$ and $K \in \cat{C}^{\mathsf{fin}}$. Similarly, the generating trivial cofibrations are of the form
  \[ J_0 \smsh \cat{C}(K,-) \to J_1 \smsh \cat{C}(K,-) \]
  where $J_0 \to J_1$ is one of the generating trivial cofibrations in $\cat{D}$ and $K \in \cat{C}^{\mathsf{fin}}$.
\end{itemize}
We refer to this as the \emph{projective model structure} on $[\cat{C}^{\mathsf{fin}},\cat{D}]$.
\end{proposition}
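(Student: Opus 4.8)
This is a standard application of the recognition theorem for cofibrantly generated model categories (Kan's theorem, \cite[11.3.1]{hirschhorn:2003}), using the generating (trivial) cofibrations as written. The plan is as follows. First I would define the proposed classes: weak equivalences and fibrations are detected objectwise, and cofibrations are whatever has the left lifting property against objectwise trivial fibrations. Then I would verify the hypotheses of the recognition theorem: (i) the category $[\cat{C}^{\mathsf{fin}},\cat{D}]$ is complete and cocomplete — this holds because limits and colimits of functors into $\cat{D}$ are computed objectwise and $\cat{D}$ ($=\sset$ or $\spectra$) is bicomplete; (ii) the proposed weak equivalences satisfy the two-out-of-three property and are closed under retracts — immediate from the same property in $\cat{D}$; (iii) the sets $I$ and $J$ of maps displayed in the statement permit the small object argument; and (iv) the two compatibility conditions relating $I$-cofibrations, $J$-cofibrations and weak equivalences.

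For the small object argument (iii), I would observe that a map of functors $I_0 \smsh \cat{C}(K,-) \to I_1 \smsh \cat{C}(K,-)$ has domain whose smallness (relative to the relevant cell complexes) follows from the enriched Yoneda Lemma \ref{lem:weak-yoneda}: a map out of $I_0 \smsh \cat{C}(K,-)$ into a functor $F$ is the same as a map $I_0 \to F(K)$ in $\cat{D}$, and since $\cat{D}$ is compactly generated (Remark \ref{rem:compactly-generated}), $I_0$ is small relative to cell complexes in $\cat{D}$; colimits being objectwise, this transfers to smallness of $I_0 \smsh \cat{C}(K,-)$ in the functor category. For condition (iv), the key identification is that a map $F \to G$ has the right lifting property against all of $I$ (respectively $J$) if and only if, by Yoneda again, for each $K \in \cat{C}^{\mathsf{fin}}$ the map $F(K) \to G(K)$ has the right lifting property against all generating cofibrations (respectively trivial cofibrations) of $\cat{D}$ — i.e.\ $F \to G$ is an objectwise trivial fibration (respectively objectwise fibration). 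Thus the $I$-injectives are exactly the (objectwise) trivial fibrations and the $J$-injectives are exactly the (objectwise) fibrations, and since the generating classes in $\cat{D}$ themselves satisfy the analogous compatibility — $J\text{-cof} \subseteq I\text{-cof} \cap \{\text{weak equivalences}\}$ and, using that $\cat{D}$ is already a cofibrantly generated model category, equality of the class of trivial cofibrations with $I\text{-cof} \cap \{\text{weak equivalences}\}$ — these conditions lift objectwise to the functor category.

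The one point that needs a little care, and which I expect to be the main obstacle, is the equality (not merely inclusion) in condition (iv): one must show that every map in $I\text{-cof}$ that is also an objectwise weak equivalence lies in $J\text{-cof}$. The usual argument factors such a map $f$, via the small object argument on $J$, as $f = p \circ j$ with $j \in J\text{-cof}$ and $p$ a $J$-injective, i.e.\ an objectwise fibration; two-out-of-three makes $p$ an objectwise trivial fibration, hence $I$-injective, so $f$ has the left lifting property against $p$ and is therefore a retract of $j$, placing it in $J\text{-cof}$. This argument goes through verbatim because every step (factorization, lifting, retract) is either objectwise or formal, and the corresponding facts in $\cat{D}$ are part of the hypothesis that $\sset$ and $\spectra$ are themselves cofibrantly generated model categories. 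The simplicial enrichment of $[\cat{C}^{\mathsf{fin}},\cat{D}]$ and the properness and monoidal structure (needed implicitly elsewhere) are then inherited objectwise from $\cat{D}$ via the pushout-product axiom, though these are not asserted in the present proposition. Finally, I would note that all constructions respect the pointed simplicial structure of the functors, so everything takes place within $[\cat{C}^{\mathsf{fin}},\cat{D}]$ as defined.
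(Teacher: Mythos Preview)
Your proposal is correct and takes essentially the same approach as the paper: the paper simply cites the projective model structure theorem \cite[11.6.1]{hirschhorn:2003} and says its proof carries over to the enriched setting using the Yoneda Lemma and objectwise limits/colimits, while you unpack this by applying the underlying recognition theorem \cite[11.3.1]{hirschhorn:2003} directly with exactly those ingredients. The only difference is level of detail, not mathematical content.
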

\begin{proof}
This is an enriched version of \cite[11.6.1]{hirschhorn:2003}. The proof of that result carries over to this case using the Yoneda Lemma, and the fact that limits and colimits in $[\cat{C}^{\mathsf{fin}},\cat{D}]$ can be calculated objectwise.
\end{proof}

\begin{remark} \label{rem:compactly-generated-functors}
Just as for $\sset$ and $\spectra$ (see Remark \ref{rem:compactly-generated}), the model structure we use on $[\cat{C}^{\mathsf{fin}},\cat{D}]$ is `compactly generated' (see \cite[4.5]{may/sigurdsson:2006}). This requires that the domains of the generating cofibrations be $\omega$-small relative to the $\mathbb{I}$-cell complexes (where $\mathbb{I}$ is the set of generating cofibrations and $\omega$ is the countable cardinal). This claim follows from the Yoneda Lemma and the corresponding claim in the model category $\cat{D}$. A consequence of this is that, as in $\sset$ and $\spectra$, we only require cell complexes formed from countable sequences of pushouts of coproducts of generating cofibrations in order to apply the small object argument. We therefore restrict our notion of `cell complex' in $[\cat{C}^{\mathsf{fin}},\cat{D}]$ to such cases. This gives us the following definition.
\end{remark}

\begin{definition}[Cell functors] \label{def:cell-functor}
A \emph{cell functor} in $[\cat{C}^{\mathsf{fin}},\cat{D}]$ is a cell complex with respect to the generating cofibrations of Proposition \ref{prop:model-functors}. We emphasize that a cell functor does not come with any specified cell structure, just the assertion that one exists.
A \emph{presented cell functor} is a cell functor $F$ together with a chosen cell structure. Explicitly, this consists of:
\begin{itemize}
  \item a sequence of functors
  \[ * = F_0 \to F_1 \to \dots \to F \]
  such that $F$ is the colimit of the $F_i$;
  \item a sequence of pushout squares of the form
  \[ \begin{diagram}
    \node{\Wdge_{\alpha \in A_i}^{\mathstrut} I_0^{\alpha} \smsh \cat{C}(K_{\alpha},-)} \arrow{e} \arrow{s} \node{F_i} \arrow{s} \\
    \node{\Wdge_{\alpha \in A_i}^{\mathstrut} I_1^{\alpha} \smsh \cat{C}(K_{\alpha},-)} \arrow{e} \node{F_{i+1}}
  \end{diagram} \]
  where $A_i$ is an indexing set (the set of \emph{cells of degree $i+1$}), each $I^{\alpha}_0 \to I^{\alpha}_1$ is a cofibration in $\cat{D}$, and each $K_{\alpha}$ is an object of $\cat{C}^{\mathsf{fin}}$.
\end{itemize}
\end{definition}

\begin{definition}[Cofibrant replacements for functors] \label{def:QF}
Given $F \in [\cat{C}^{\mathsf{fin}},\cat{D}]$, the small object argument determines a cofibrant replacement for $F$, which we write $QF$. The functor $QF$ comes with a canonical cell structure in which the cells of degree $i+1$ are in 1-1 correspondence with commutative diagrams of the form
\[ \begin{diagram}
  \node{I_0 \smsh \cat{C}(K,-)} \arrow{e} \arrow{s} \node{(QF)_i} \arrow{s} \\
  \node{I_1 \smsh \cat{C}(K,-)} \arrow{e} \node{F}
\end{diagram} \]
The approximation map $QF \to F$ is always a fibration and a weak equivalence in $[\cat{C}^{\mathsf{fin}},\cat{D}]$, that is, $QF(X) \to F(X)$ is a fibration and a weak equivalence for all $X \in \cat{C}^{\mathsf{fin}}$.
\end{definition}

\begin{definition}[Kan extension] \label{def:Kan-extension}
In order to describe the chain rule we have to be able to compose the functors we are working with. We therefore extend an object $F \in [\cat{C}^{\mathsf{fin}},\cat{D}]$ to a pointed simplicial functors $LF: \cat{C} \to \cat{D}$ by enriched left Kan extension. Explicitly, for $X \in \cat{C}$, we define
\[ LF(X) := \colim \left( \Wdge_{K,K' \in \cat{C}^{\mathsf{fin}}} F(K) \smsh \cat{C}(K,K') \smsh \cat{C}(K',X) \rightrightarrows \Wdge_{K \in \cat{C}^{\mathsf{fin}}} F(K) \smsh \cat{C}(K,X) \right). \]
One map in this coequalizer is given by the composition map
\[ \cat{C}(K,K') \smsh \cat{C}(K',X) \to \cat{C}(K,X) \]
and the other by
\[ FK \smsh \cat{C}(K,K') \to FK \smsh \cat{D}(FK,FK') \to FK'. \]
Equivalently, this is an (enriched) \emph{coend} (in the sense of MacLane \cite{maclane:1998}).

The Kan extension $LF$ satisfies the following universal property. If we have any pointed simplicial functor $H:\cat{C} \to \cat{D}$ then there is a 1-1 correspondence between natural transformations $F \to H |_{\cat{C}^{\mathsf{fin}}}$, i.e. from $F$ to the restriction of $H$ to $\cat{C}^{\mathsf{fin}}$, and natural transformations $LF \to H$.
\end{definition}

\begin{lemma} \label{lem:extend-basic}
Let $F \in [\cat{C}^{\mathsf{fin}},\cat{D}]$ be given by
\[ F(X) = I \smsh \cat{C}(K,X) \]
for some $I \in \cat{D}$ and $K \in \cat{C}^{\mathsf{fin}}$. Then the Kan extension of $F$ to all of $\cat{C}$ is given by the same formula, that is, there is a natural isomorphism
\[ LF(X) \isom I \smsh \cat{C}(K,X). \]
This isomorphism is also natural with respect to $F$, that is, with respect to $I$ and $K$.
\end{lemma}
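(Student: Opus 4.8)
This lemma is essentially a special case of the general fact that left Kan extension along a fully faithful functor restricts back to the original functor, but here we want an honest natural isomorphism, so the plan is to compute the relevant coend directly.

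The plan is to substitute $F(X) = I \smsh \cat{C}(K,X)$ into the coend formula of Definition \ref{def:Kan-extension} and identify it with $I \smsh \cat{C}(K,-)$ using the co-Yoneda lemma. Concretely, I would first observe that the coequalizer defining $LF(X)$ becomes
\[ \colim \left( \Wdge_{K'',K' \in \cat{C}^{\mathsf{fin}}} I \smsh \cat{C}(K,K'') \smsh \cat{C}(K'',K') \smsh \cat{C}(K',X) \rightrightarrows \Wdge_{K'' \in \cat{C}^{\mathsf{fin}}} I \smsh \cat{C}(K,K'') \smsh \cat{C}(K'',X) \right), \]
(I've renamed the dummy object $K$ of the coend to $K''$ to avoid collision with our fixed $K$.) This exhibits $LF(X)$ as $I$ smashed with the coend $\int^{K'' \in \cat{C}^{\mathsf{fin}}} \cat{C}(K,K'') \smsh \cat{C}(K'',X)$. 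The enriched co-Yoneda lemma (the covariant form of the Yoneda Lemma \ref{lem:weak-yoneda}, applied to the representable functor $\cat{C}(K'',X)$ regarded as a functor of $K''$, or equivalently the standard co-Yoneda reduction for coends) identifies this coend with $\cat{C}(K,X)$, since $K \in \cat{C}^{\mathsf{fin}}$ lies in the indexing category. This gives the natural isomorphism $LF(X) \isom I \smsh \cat{C}(K,X)$.

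For naturality in $F$, i.e. in the pair $(I,K)$, I would note that a morphism $I \to I'$ in $\cat{D}$ and a morphism $K' \to K$ in $\cat{C}^{\mathsf{fin}}$ together induce a map $I \smsh \cat{C}(K,-) \to I' \smsh \cat{C}(K',-)$ of functors in $[\cat{C}^{\mathsf{fin}},\cat{D}]$, and hence a map of their Kan extensions; both the construction of $LF$ and the co-Yoneda identification are manifestly functorial in all the data involved, so the isomorphism commutes with these maps. I would also remark that the isomorphism is compatible with the universal property in Definition \ref{def:Kan-extension}: under the identification, the unit $F \to (LF)|_{\cat{C}^{\mathsf{fin}}}$ corresponds to the identity, which is what one expects since the representables and the fixed object $K$ lie in $\cat{C}^{\mathsf{fin}}$.

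The only mild subtlety — and the point I would take slight care over — is bookkeeping with the enrichment: one must check that the two maps in the original coequalizer do specialize correctly to the two structure maps in the coend $\int^{K''} \cat{C}(K,K'') \smsh \cat{C}(K'',X)$, i.e. that the map built from $FK'' \smsh \cat{C}(K'',K') \to FK''\smsh \cat{D}(FK'',FK'') \to FK'$ becomes, after our substitution, simply the composition $\cat{C}(K,K'') \smsh \cat{C}(K'',K') \to \cat{C}(K,K')$ smashed with $I$. This is a routine unwinding of the definition of the $\cat{D}$-action on the representable functor $\cat{C}(K,-)$ (which is via postcomposition), and once it is in place the co-Yoneda lemma does all the work. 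No step here is genuinely hard; the main thing is simply to be careful that the coend manipulation is the standard one and that naturality in $(I,K)$ is genuinely functorial rather than merely case-by-case.
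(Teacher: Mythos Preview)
Your proposal is correct and is essentially the same argument as the paper's: both compute the coend $\int^{K''} I \smsh \cat{C}(K,K'') \smsh \cat{C}(K'',X)$ and identify it with $I \smsh \cat{C}(K,X)$ via co-Yoneda. The paper simply spells out the co-Yoneda step by exhibiting the explicit inverse using the unit $\Delta[0]_+ \to \cat{C}(K,K)$, whereas you invoke co-Yoneda by name; your added remarks on checking the coequalizer maps and on naturality in $(I,K)$ are fine and more detailed than what the paper writes.
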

\begin{proof}
The extension is given by
\[ LF(X) = \int^{L \in \cat{C}^{\mathsf{fin}}} I \smsh \cat{C}(K,L) \smsh \cat{C}(L,X) \]
which has a natural map to $I \smsh \cat{C}(K,X)$ given by the composition map
\[ \cat{C}(K,L) \smsh \cat{C}(L,X) \to \cat{C}(K,X). \]
But there is an inverse to this given by
\[ I \smsh \cat{C}(K,X) \to I \smsh \cat{C}(K,K) \smsh \cat{C}(K,X) \to \int^{L \in \cat{C}^{\mathsf{fin}}} I \smsh \cat{C}(K,L) \smsh \cat{C}(L,X) \]
using the unit map $\Delta[0]_+ \to \cat{C}(K,K)$, and the fact that $K \in \cat{C}^{\mathsf{fin}}$.
\end{proof}

\begin{remark} \label{rem:extend}
The claim of Lemma \ref{lem:extend-basic} applies to any presented cell functor in $[\cat{C}^{\mathsf{fin}},\cat{D}]$ in the following sense. The left Kan extension is a left adjoint and so preserves the pushout diagrams that describe the attaching of cells, and preserves the sequential colimit that describes the union of the cells. Therefore, if $F$ is the Kan extension of a presented cell functor, then we can still write it as the colimit of a sequence
\[ * = F_0 \to F_1 \to \dots \]
where $F_{i+1}$ is obtained from $F_i$ by a pushout diagram of the form given in Definition \ref{def:cell-functor}. In other words, it is still a cell complex with respect to the Kan extensions of the generating cofibrations in $[\cat{C}^{\mathsf{fin}},\cat{D}]$ (which by Lemma \ref{lem:extend-basic} are given by the same formulas).
\end{remark}

\begin{notation}
Partly as a result of Lemma \ref{lem:extend-basic} and Remark \ref{rem:extend} we now drop the extra notation for left Kan extension and write the extension of $F$ also as $F$. Context should determine the exact meaning. In particular, if $G \in [\cat{C}^{\mathsf{fin}},\cat{D}]$ and $F \in [\cat{D}^{\mathsf{fin}},\cat{E}]$ then we define $FG \in [\cat{C}^{\mathsf{fin}},\cat{E}]$ to be the composite of $G$ with the left Kan extension of $F$ to all of $\cat{D}$.
\end{notation}

\begin{lemma} \label{lem:cell-F(X)}
Let $F \in [\cat{C}^{\mathsf{fin}},\cat{D}]$ be a cell functor, left Kan extended to all of $\cat{C}$ as above. Let $X \in \cat{C}$ be any object. Then $F(X)$ is a cell complex in $\cat{D}$. (More generally, if $F \to F'$ is a relative cell functor, then $F(X) \to F'(X)$ is a relative cell complex in $\cat{D}$.)
\end{lemma}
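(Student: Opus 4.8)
The plan is to strip off the cell structure on $F$ one step at a time and check that left Kan extension followed by evaluation at a fixed object $X$ turns each elementary step into a relative cell complex in $\cat{D}$. It is enough to prove the parenthetical relative statement: the first assertion is the case of that statement in which the source functor is the constant functor $*$, whose left Kan extension evaluated at $X$ is again $*$ (so that $*\to F(X)$ being a relative cell complex says exactly that $F(X)$ is a cell complex). So fix a presentation of the relative cell functor $F \to F'$. By Remark~\ref{rem:extend}, the left Kan extension of $F \to F'$ to all of $\cat{C}$ can still be written as a countable composite
\[ F = F'_0 \to F'_1 \to \dots \to \colim_i F'_i = F' \]
in which each $F'_i \to F'_{i+1}$ is a pushout of a map of the form $\Wdge_{\alpha} I_0^{\alpha} \smsh \cat{C}(K_{\alpha},-) \to \Wdge_{\alpha} I_1^{\alpha} \smsh \cat{C}(K_{\alpha},-)$, with each $I_0^{\alpha} \to I_1^{\alpha}$ a generating cofibration of $\cat{D}$ and each $K_{\alpha}\in\cat{C}^{\mathsf{fin}}$. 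Since colimits of functors are computed objectwise, evaluating at $X$ presents $F(X) \to F'(X)$ as a countable composite of pushouts of coproducts of maps of the shape
\[ I_0 \smsh \cat{C}(K,X) \longrightarrow I_1 \smsh \cat{C}(K,X) \]
with $I_0 \to I_1$ a generating cofibration of $\cat{D}$ and $K \in \cat{C}^{\mathsf{fin}}$.

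Relative cell complexes in $\cat{D}$ are closed under pushout, coproduct and transfinite composition, and, because $\cat{D}$ is compactly generated, a composite of countably many such can be re-indexed as a countable one; so the Lemma will follow once I show that each map $I_0 \smsh \cat{C}(K,X) \to I_1 \smsh \cat{C}(K,X)$ is a relative cell complex in $\cat{D}$. Here $L := \cat{C}(K,X)$ is just an arbitrary pointed simplicial set, so it remains to prove: \emph{for every generating cofibration $I_0 \to I_1$ of $\cat{D}$ and every pointed simplicial set $L$, the map $I_0 \smsh L \to I_1 \smsh L$ (tensoring of $\cat{D}$ over $\sset$) is a relative cell complex in $\cat{D}$.} I would prove this case by case. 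If $\cat{D} = \sset$ then $I_0 \to I_1$ is a map $\partial\Delta[n]_+ \to \Delta[n]_+$ and the tensoring is the smash product; smashing with $L$ preserves monomorphisms, and every monomorphism of pointed simplicial sets is a relative cell complex, so we are done. If $\cat{D} = \spectra$ then $I_0 \to I_1$ is the image of some $\partial\Delta[n]_+ \to \Delta[n]_+$ under the functor $G_q := S \smsh_{\cat{L}} \mathbb{L}\Sigma^\infty_q|{-}|$ of Definition~\ref{def:spectra}; this $G_q$ preserves colimits, commutes with the tensoring by pointed simplicial sets, and carries generating cofibrations of $\sset$ to generating cofibrations of $\spectra$, so $I_0 \smsh L \to I_1 \smsh L$ is isomorphic to $G_q$ applied to the monomorphism $\partial\Delta[n]_+ \smsh L \to \Delta[n]_+ \smsh L$, which is therefore a relative cell complex of spectra.

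I expect the only delicate point to be the $\spectra$ case just sketched: checking that $S \smsh_{\cat{L}} \mathbb{L}\Sigma^\infty_q|{-}|$ really commutes with the simplicial tensoring and preserves cell structures amounts to unwinding the constructions of \cite[I.4--5, VII]{elmendorf/kriz/mandell/may:1997}. The rest is formal bookkeeping, relying on Remark~\ref{rem:extend} together with the closure properties of relative cell complexes, and the $\sset$ case is immediate.
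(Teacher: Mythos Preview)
Your proof is correct and follows essentially the same route as the paper: reduce to showing that $I_0\smsh\cat{C}(K,X)\to I_1\smsh\cat{C}(K,X)$ is a relative cell complex, then handle $\cat{D}=\sset$ and $\cat{D}=\spectra$ separately, the latter by pushing the $\sset$ case through the functor $S\smsh_{\cat{L}}\mathbb{L}\Sigma^\infty_q|{-}|$. The one minor difference is that in the $\sset$ case the paper writes down the cell structure explicitly (one $(n+k)$-cell for each nondegenerate $k$-simplex of $\cat{C}(K,X)$) rather than invoking the general fact that monomorphisms of pointed simplicial sets are relative cell complexes; this explicit description is then recorded as Remark~\ref{rem:cell-F(X)} and used later (e.g.\ in Lemma~\ref{lem:key} and Proposition~\ref{prop:subcomplexes}), so it is worth having.
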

\begin{proof}
Pick a presentation of $F$. Then $F(X)$ is the colimit of the sequence
\[ * = F_0(X) \to F_1(X) \to \dots, \]
and each $F_i(X) \to F_{i+1}(X)$ is the pushout of a map of the form
\[ \Wdge_{\alpha} I_0^{\alpha} \smsh \cat{C}(K_{\alpha},X) \to \Wdge_{\alpha} I_1^{\alpha} \smsh \cat{C}(K_{\alpha},X) \]
and so it is sufficient to show that each map
\[ I_0 \smsh \cat{C}(K,X) \to I_1 \smsh \cat{C}(K,X) \]
is a relative cell complex in $\cat{D}$. (A similar argument in the relative case also reduces to this claim.)

When $\cat{D} = \sset$, this map is of the form
\[ \tag{*} \partial \Delta[n]_+ \smsh \cat{C}(K,X) \to \Delta[n]_+ \smsh \cat{C}(K,X) \]
for some $n$. There is a relative cell structure on this map with an $n+k$-dimensional cell for each nondegenerate $k$-simplex in the simplicial set $\cat{C}(K,X)$. From this we obtain the required relative cell structure on $I_0 \smsh \cat{C}(K,X) \to I_1 \smsh \cat{C}(K,X)$.

When $\cat{D} = \spectra$, the map $I_0 \smsh \cat{C}(K,X) \to I_1 \smsh \cat{C}(K,X)$ is given by applying one of the functors $S \smsh_{\cat{L}} \mathbb{L}\Sigma^\infty_q$ to one of the maps (*) above. Since this functor takes the generating cofibrations in $\sset$ to generating cofibrations in $\spectra$, and preserves colimits, it preserves relative cell complexes.
\end{proof}

\begin{remark}[Cell structure on $F(X)$] \label{rem:cell-F(X)}
The proof of Lemma \ref{lem:cell-F(X)} determines an explicit cell structure on $F(X)$ where $F: \cat{C} \to \cat{D}$ is a presented cell functor and $X \in \cat{D}$ is any object. The cells in this structure are in 1-1 correspondence with pairs $(\alpha,\epsilon)$ where $\alpha$ is one of the cells in the presented cell structure on $F$, and $\epsilon$ is a nondegenerate simplex in the simplicial set $\cat{C}(K_{\alpha},X)$ where $K_{\alpha}$ is the object of $\cat{C}^{\mathsf{fin}}$ corresponding to the cell $\alpha$. Note that each such $\epsilon$ corresponds to a morphism
\[ K_{\alpha} \smsh \Delta[n]_+ \to X \]
in the category $\cat{C}$.
\end{remark}

\begin{lemma} \label{lem:homotopy-cell-functors}
Let $F \in [\finspec,\cat{D}]$ be a cell functor. Then the Kan extension of $F$ is a homotopy functor $\spectra \to \cat{D}$.
\end{lemma}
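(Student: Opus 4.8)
The statement to be proved is: if $F \in [\finspec,\cat{D}]$ is a cell functor, then its Kan extension $F: \spectra \to \cat{D}$ is a homotopy functor, i.e. preserves all weak equivalences. The plan is to reduce, via the cell structure on $F$ together with the fact that Kan extension is a left adjoint (hence commutes with the pushouts and sequential colimits defining that cell structure, as noted in Remark \ref{rem:extend}), to the basic building blocks $F(X) = I \smsh \spectra(K,X)$ with $I \in \cat{D}$ a (cofibrant) cell object and $K \in \finspec$. So the real content is: for fixed finite cell spectrum $K$, the simplicial-set-valued functor $X \mapsto \spectra(K,X)$ sends weak equivalences of spectra to weak equivalences of simplicial sets, and then smashing with the cofibrant object $I$ and passing through pushouts/colimits preserves this.

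First I would record that for $K$ a finite cell spectrum and $X$ any spectrum, $\spectra(K,X)$ computes the correct (derived) mapping space: since $K$ is cofibrant and \emph{every} object of $\spectra$ is fibrant (Definition \ref{def:spectra}), the enriched hom $\spectra(K,-)$ already has the right homotopy type and takes weak equivalences $X \weq Y$ to weak equivalences of simplicial sets. This is exactly the kind of statement that the EKMM model (all objects fibrant) was chosen to make clean; concretely it follows from the fact that $\spectra$ is a simplicial model category together with the fact that a weak equivalence between fibrant objects induces a weak equivalence on mapping spaces out of a cofibrant object (Hirschhorn \cite[9.3.1]{hirschhorn:2003} or the analogue). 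Next, smashing with $I \in \cat{D}$: when $\cat{D} = \sset$, $I \smsh (-)$ of a weak equivalence of simplicial sets is a weak equivalence since every simplicial set is cofibrant and the model structure is monoidal (the pushout-product axiom); when $\cat{D} = \spectra$, the generating-cofibration building blocks have $I$ of the form $S \smsh_{\cat L} \mathbb L \Sigma^\infty_q |\Delta[n]_+|$ etc., which are cofibrant, so again smashing with $I$ preserves weak equivalences between cofibrant simplicial sets (lands in cofibrant spectra by the pushout-product axiom). Thus each generating-cofibration functor $X \mapsto I_j \smsh \spectra(K,X)$ is a homotopy functor, and so is each generating cofibration $I_0 \smsh \spectra(K,-) \to I_1 \smsh \spectra(K,-)$ as a \emph{map} of homotopy functors which is moreover an objectwise cofibration (by Lemma \ref{lem:cell-F(X)}, it is an objectwise relative cell complex).

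Then I would run the induction over the cell structure $* = F_0 \to F_1 \to \cdots$ with $F = \colim_i F_i$. At each stage $F_{i+1}$ is the pushout of $F_i \leftarrow \Wdge_\alpha I_0^\alpha \smsh \spectra(K_\alpha,-) \to \Wdge_\alpha I_1^\alpha \smsh \spectra(K_\alpha,-)$; by the inductive hypothesis $F_i$ is a homotopy functor, the other two corners are homotopy functors by the previous paragraph, and the top horizontal maps are objectwise cofibrations, so the pushout is a homotopy pushout objectwise and hence $F_{i+1}(X) \to F_{i+1}(Y)$ is a weak equivalence whenever $X \weq Y$ — using that a pushout of a weak equivalence along a cofibration, in a left proper model category, is a weak equivalence, and gluing two such pushout squares along the equivalence $F_i(X) \weq F_i(Y)$. (Both $\sset$ and $\spectra$ are proper, per Definitions \ref{def:sset} and \ref{def:spectra}.) Finally, $F(X) = \colim_i F_i(X)$ is a sequential colimit along objectwise cofibrations (Lemma \ref{lem:cell-F(X)} again, or Proposition \ref{prop:cell-complex}(1)), and such colimits are homotopy colimits and preserve objectwise weak equivalences, so $F(X) \weq F(Y)$.

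The step I expect to be the main (though still routine) obstacle is the very first reduction: making sure that the objectwise cell structure on $F(X)$ from Lemma \ref{lem:cell-F(X)} is compatible enough with the cell structure on $F$ that left-properness and the "cofibration" hypotheses really do apply uniformly in $X$ — in other words, that all the maps in sight are genuine cofibrations objectwise, not merely monomorphisms. This is handled by Lemma \ref{lem:cell-F(X)} (relative cell functors give objectwise relative cell complexes) but one must be a little careful that the smash $I \smsh \spectra(K,-)$ of a cofibration $I_0 \to I_1$ in $\cat{D}$ with the varying simplicial set $\spectra(K,X)$ is still an (objectwise) cofibration, which is precisely what the proof of Lemma \ref{lem:cell-F(X)} establishes. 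With that in hand the induction goes through formally.
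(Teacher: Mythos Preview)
Your proposal is correct and follows essentially the same approach as the paper's proof: first verify that the basic functors $I \smsh \spectra(K,-)$ are homotopy functors (using that $K$ is cofibrant and every spectrum is fibrant), then induct up the cell structure using that the attaching maps are objectwise cofibrations so that each pushout is a homotopy pushout and the sequential colimit is a homotopy colimit. The paper's argument is slightly terser and phrases the inductive step directly in terms of homotopy pushouts rather than left properness, but the content is the same.
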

\begin{proof}
The basic cell functors $I \smsh \cat{C}(K,-)$ preserve weak equivalences when $I$ and $K$ are cofibrant because every object of $\based$ or $\spectra$ is fibrant and every simplicial set is cofibrant.

We now proceed by induction on a cell structure for $F$. Pick such a cell structure as in Definition \ref{def:cell-functor}. For any $X \in \cat{C}$, the map
\[ I_0 \smsh \cat{C}(K,X) \to I_1 \smsh \cat{C}(K,X) \]
is a cofibration in $\cat{D}$ and it follows that the pushout diagram determining $F_{i+1}(X)$ is a homotopy pushout. It also follows that $F_i(X) \to F_{i+1}(X)$ is a cofibration and so $F(X)$, as the colimit of the $F_i(X)$, is also their homotopy colimit. Now if $X \weq Y$ is a weak equivalence in $\cat{C}$, then it induces weak equivalences
\[ \Wdge_{\alpha} I_0^{\alpha} \smsh \cat{C}(K_{\alpha},X) \weq \Wdge_{\alpha} I_1^{\alpha} \smsh \cat{C}(K_{\alpha},X) \]
and so by induction on the homotopy pushout squares, it gives weak equivalences $F_i(X) \weq F_i(Y)$ for each $i$. Therefore, by the property of homotopy colimits, we get an equivalence $F(X) \weq F(Y)$. (See also Props. 13.5.4 and 17.9.1 of \cite{hirschhorn:2003} for statements of the invariance of homotopy colimits used here.)
\end{proof}

\begin{remark}
Lemma \ref{lem:homotopy-cell-functors} does not hold for cell functors $F:\sset \to \cat{D}$ because not all simplicial sets are fibrant. However, a similar argument shows that if $X \weq Y$ is a weak equivalence between fibrant simplicial sets, then $F(X) \weq F(Y)$ is a weak equivalence.
\end{remark}

\begin{lemma} \label{lem:finitary-cell-functors}
Let $F \in [\finspec,\cat{D}]$ be a cell functor. Then the Kan extension of $F$ to $\spectra$ is finitary.
\end{lemma}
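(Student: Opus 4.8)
The plan is to mimic the inductive structure of the proof of Lemma~\ref{lem:homotopy-cell-functors}, replacing the verification that basic cell functors preserve weak equivalences with the verification that they preserve filtered homotopy colimits. Fix a filtered diagram $X: \cat{I} \to \spectra$ and write $X_\infty := \hocolim_{i \in \cat{I}} X(i)$. I would first reduce to the case of a \emph{presented} cell functor by picking a cell structure on $F$ as in Definition~\ref{def:cell-functor}, giving the sequence $* = F_0 \to F_1 \to \dots$ with colimit $F$ and pushout squares attaching cells. The key point is that left Kan extension is a left adjoint, so it commutes with these colimits (Remark~\ref{rem:extend}), and, by Lemma~\ref{lem:cell-F(X)}, each $F_i(X) \to F_{i+1}(X)$ is a relative cell complex in $\spectra$, hence a cofibration, and $F(X)$ is the (homotopy) colimit of the $F_i(X)$.

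The base case is that the basic cell functor $I \smsh \cat{C}(K,-)$ is finitary for $I \in \spectra$ cofibrant and $K \in \finspec$. Here I would use that $K$ is a \emph{finite} cell spectrum: by Proposition~\ref{prop:cell-complex}(2), applied objectwise (or rather by the standard fact that finite cell spectra are homotopically compact), the natural map
\[ \hocolim_{i \in \cat{I}} \spectra(K, X(i)) \to \spectra\left(K, \hocolim_{i \in \cat{I}} X(i)\right) \]
is a weak equivalence of simplicial sets; smashing with the fixed cofibrant spectrum $I$ preserves this equivalence and commutes with the filtered homotopy colimit (which is formed objectwise and is a homotopy colimit since everything in sight is appropriately cofibrant). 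This shows $I \smsh \cat{C}(K,-)$ is finitary.

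For the inductive step, suppose $F_i$ is finitary. The functor $F_{i+1}$ is obtained from $F_i$ by a pushout along a coproduct of basic cofibrations $\Wdge_\alpha I_0^\alpha \smsh \cat{C}(K_\alpha,-) \to \Wdge_\alpha I_1^\alpha \smsh \cat{C}(K_\alpha,-)$, and as noted this pushout is a homotopy pushout objectwise. Since filtered homotopy colimits commute with homotopy pushouts and with (possibly infinite) wedges, and since each wedge summand is finitary by the base case, $F_{i+1}$ is finitary. Finally, $F(X) = \hocolim_i F_i(X)$ and $F(X_\infty) = \hocolim_i F_i(X_\infty)$, and since filtered homotopy colimits commute with sequential homotopy colimits, the map $\hocolim_{i \in \cat{I}} F(X(i)) \to F(X_\infty)$ is a weak equivalence. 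I expect the main obstacle to be purely bookkeeping: making precise the compatible system of natural maps and confirming that all the colimits involved are genuinely \emph{homotopy} colimits (so that one may freely interchange them), which requires tracking cofibrancy through the cell structure exactly as in Lemma~\ref{lem:homotopy-cell-functors}. The genuinely mathematical input — compactness of finite cell spectra — is standard and already implicit in Proposition~\ref{prop:cell-complex}(2).
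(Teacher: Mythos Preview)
Your proposal is correct and follows essentially the same approach as the paper: reduce via the cell-structure decomposition (as a sequential homotopy colimit of homotopy pushouts of coproducts of basic cells) to the base case that $I \smsh \spectra(K,-)$ is finitary, which holds because $K \in \finspec$ is compact. The paper's proof is more terse but structurally identical, citing the decomposition from the proof of Lemma~\ref{lem:homotopy-cell-functors} and the fact that $\spectra(K,-)$ is finitary as ``well-known'' rather than invoking Proposition~\ref{prop:cell-complex}(2).
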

\begin{proof}
We saw in the proof of Lemma \ref{lem:homotopy-cell-functors} that a cell functor is the homotopy colimit of a sequence of functors formed from taking homotopy pushouts along maps between coproducts of functors of the form $I \smsh \spectra(K,-)$ for $I \in \cat{D}$ the source/domain of one of the generating cofibrations, and $K \in \finspec$. Since all these homotopy colimits commute with filtered homotopy colimits, it is sufficient to show that $I \smsh \spectra(K,-)$ is finitary. Also $I \smsh - $ preserves filtered homotopy colimits, so it is enough that $\spectra(K,-)$ be finitary, which is well-known.
\end{proof}

\begin{remark} \label{rem:finitary}
Lemma \ref{lem:finitary-cell-functors} tells us that any pointed simplicial homotopy functor $F: \spectra \to \cat{D}$ for $\cat{D}$ either $\sset$ or $\spectra$ has a natural finitary approximation. If $QF$ denotes the cellular replacement in $[\finspec,\cat{D}]$ for the restriction of $F$ to $\finspec$, then $QF$ (Kan extended back to all of $\spectra$) is a finitary homotopy functor, and there is a natural transformation $QF \to F$ that is a weak equivalence of finite cell spectra. This map is a weak equivalence on all $X \in \spectra$ if and only if $F$ is finitary.
\end{remark}

\section{Subcomplexes of presented cell functors} \label{sec:cell-functors}

In this section we describe the theory of subcomplexes in the model category $[\cat{C}^{\mathsf{fin}},\cat{D}]$. We refer again to \cite[10.6]{hirschhorn:2003} for a general treatment. The situation is greatly simplified by the following lemma.

\begin{lemma} \label{lem:monomorphisms}
A relative cell functor in $[\cat{C}^{\mathsf{fin}},\cat{D}]$ (that is, a relative cell complex with respect to the generating cofibrations of Proposition \ref{prop:model-functors}) is a monomorphism.
\end{lemma}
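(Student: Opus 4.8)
The plan is to reduce to an objectwise statement and then appeal to Proposition~\ref{prop:cell-complex}(1). First I would recall that limits and colimits in $[\cat{C}^{\mathsf{fin}},\cat{D}]$ are computed objectwise, so that a natural transformation $F \to F'$ is a monomorphism precisely when $F(X) \to F'(X)$ is a monomorphism in $\cat{D}$ for every $X \in \cat{C}^{\mathsf{fin}}$ (the kernel pair of $F \to F'$ is computed objectwise, so it is trivial iff it is objectwise trivial). It therefore suffices to show that if $F \to F'$ is a relative cell functor, then $F(X) \to F'(X)$ is a monomorphism in $\cat{D}$ for all such $X$.

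Next, fixing $X \in \cat{C}^{\mathsf{fin}}$, I would use that evaluation at $X$ is a colimit-preserving functor $[\cat{C}^{\mathsf{fin}},\cat{D}] \to \cat{D}$, so it carries the presentation of $F \to F'$ as a countable sequence of pushouts along coproducts of generating cofibrations to an analogous presentation of $F(X) \to F'(X)$, built now from maps of the form $I_0 \smsh \cat{C}(K,X) \to I_1 \smsh \cat{C}(K,X)$ with $I_0 \to I_1$ a generating cofibration of $\cat{D}$ and $K \in \cat{C}^{\mathsf{fin}}$. This is exactly the situation analyzed in the proof of Lemma~\ref{lem:cell-F(X)}: each such map is itself a relative cell complex in $\cat{D}$ --- when $\cat{D} = \sset$ one gives $\partial\Delta[n]_+ \smsh \cat{C}(K,X) \to \Delta[n]_+ \smsh \cat{C}(K,X)$ a cell structure with one $(n+k)$-cell for each nondegenerate $k$-simplex of $\cat{C}(K,X)$, and when $\cat{D} = \spectra$ one applies the colimit-preserving functors $S \smsh_{\cat{L}} \mathbb{L}\Sigma^\infty_q$, which send generating cofibrations of $\sset$ to generating cofibrations of $\spectra$. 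Hence the relative version of Lemma~\ref{lem:cell-F(X)} applies and $F(X) \to F'(X)$ is a relative cell complex in $\cat{D}$.

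To finish, I would invoke Proposition~\ref{prop:cell-complex}(1): a relative cell complex in $\sset$ or $\spectra$ is a monomorphism. Thus $F(X) \to F'(X)$ is a monomorphism for every $X \in \cat{C}^{\mathsf{fin}}$, and by the objectwise criterion this shows $F \to F'$ is a monomorphism in $[\cat{C}^{\mathsf{fin}},\cat{D}]$. I do not expect any real obstacle here: the substantive work is already contained in Lemma~\ref{lem:cell-F(X)} and Proposition~\ref{prop:cell-complex}, and the only extra ingredient is the elementary observation that monomorphisms into $\sset$ or $\spectra$ are detected objectwise. If one wanted to be careful about anything, it would be checking that this objectwise detection of monos is legitimate for our functor categories, but this is immediate from the objectwise computation of pullbacks.
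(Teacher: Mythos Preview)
Your proof is correct and follows essentially the same approach as the paper: reduce to an objectwise check, invoke the relative form of Lemma~\ref{lem:cell-F(X)} to see that $F(X)\to F'(X)$ is a relative cell complex in $\cat{D}$, and then apply Proposition~\ref{prop:cell-complex}(1). The paper's version is just a terse one-line summary of exactly this argument.
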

\begin{proof}
Let $\iota: F \to G$ be a relative cell functor. To show that $\iota$ is a monomorphism, it is enough to show that $\iota_X: FX \to GX$ is a monomorphism for all $X \in \cat{C}^{\mathsf{fin}}$. But $FX \to GX$ is a relative cell complex by Lemma \ref{lem:cell-F(X)}, so a monomorphism by \ref{prop:cell-complex}(1).
\end{proof}

It follows from this lemma that a subcomplex of a presented cell functor is determined by its set of cells (see \cite[10.6.10]{hirschhorn:2003}). We therefore define a subcomplex as follows.

\begin{definition}[Subcomplexes of cell functors] \label{def:subcomplex}
Let $F$ be a presented cell functor (as in Definition \ref{def:cell-functor}). A \emph{subcomplex} $C$ of $F$ is a subset of the set of cells of $F$ that for each $i \geq 1$ satisfies the following inductive condition:
\begin{itemize}
  \item[$P_i$] Suppose that as a result of the condition $P_{i-1}$ we have constructed a functor $C_{i-1}$ and a monomorphism $C_{i-1} \to F_{i-1}$ (where $C_0 = *$). The condition $P_i$ is then that for each cell $\alpha$ of degree $i$ in $C$, the attaching map for $\alpha$ of the form
      \[ I^{\alpha}_0 \smsh \cat{C}(K_{\alpha},-) \to F_{i-1} \]
      factors via $C_{i-1} \to F_{i-1}$. (Such a factorization is unique because $C_{i-1} \to F_{i-1}$ is a monomorphism.)

      With this condition satisfied, we define $C_i$ by the pushout diagram
      \[ \begin{diagram}
        \node{\Wdge_{\alpha}^{\mathstrut} I^{\alpha}_0 \smsh \cat{C}(K_{\alpha},-)} \arrow{e} \arrow{s} \node{C_{i-1}} \arrow{s} \\
        \node{\Wdge_{\alpha}^{\mathstrut} I^{\alpha}_1 \smsh \cat{C}(K_{\alpha},-)} \arrow{e} \node{C_i}
      \end{diagram} \]
      with the map $C_i \to F_i$ then determined by the universal property of the pushout and the diagrams that define the cell structure on $F$. The map $C_i \to F_i$ constructed in this way is a relative cell functor and hence a monomorphism by Lemma \ref{lem:monomorphisms}.
\end{itemize}
The data associated to the subcomplex $C$ (that is, the sequence $* = C_0 \to C_1 \to \dots$ and the pushout diagrams above) form a presented cell functor in their own right. We abuse notation by writing $C$ for this functor, that is, for the colimit of the sequence of $C_i$. The colimit of the maps $C_i \to F_i$ is then a map $C \to F$ which we call the \emph{inclusion} of the subcomplex $C$ into $F$.
\end{definition}

\begin{definition}[Finite subcomplexes] \label{def:finite-subcomplex}
A subcomplex $C$ of a presented cell functor $F$ is \emph{finite} if it has finitely many cells. The finite subcomplexes of $F$ form a partially ordered set under inclusion, which we denote $\mathsf{Sub}(F)$. If $C,D \in \mathsf{Sub}(F)$ with $C \subset D$, then there is a unique map $C \to D$ between the corresponding functors that commutes with the inclusions into $F$. We call this the \emph{inclusion} of the subcomplex $C$ into the subcomplex $D$.
\end{definition}

\begin{lemma} \label{lem:lattice}
Let $F$ be a presented cell functor in $[\cat{C}^{\mathsf{fin}},\cat{D}]$ Then the category $\mathsf{Sub}(F)$ has the following properties:
\begin{enumerate}
  \item $\mathsf{Sub}(F)$ is a poset;
  \item each object in $\mathsf{Sub}(F)$ has finitely many predecessors;
  \item any finite set of objects in $\mathsf{Sub}(F)$ has a \emph{least} upper bound. In particular, $\mathsf{Sub}(F)$ is a filtered category.
\end{enumerate}
\end{lemma}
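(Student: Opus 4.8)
The plan is to handle the three parts in turn, each following fairly directly from Lemma~\ref{lem:monomorphisms} together with the fact recorded after that lemma that a subcomplex of a presented cell functor is determined by its set of cells. For part~(1), I would observe that every object of $\mathsf{Sub}(F)$ is recorded by a (finite) subset of the set of cells of $F$, that distinct subcomplexes have distinct cell sets, and that by Definition~\ref{def:finite-subcomplex} there is exactly one morphism $C \to D$ in $\mathsf{Sub}(F)$ whenever $C \subset D$ (the inclusion), and none otherwise. So $\mathsf{Sub}(F)$ is thin, and it is antisymmetric since $C \to D$ and $D \to C$ force the cell sets of $C$ and $D$ to coincide; the remaining category axioms (identities are inclusions, a composite of inclusions is the inclusion) are checked using that the inclusion $E \hookrightarrow F$ of any subcomplex is a monomorphism, so that the relevant universal property pins down each map uniquely. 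Hence $\mathsf{Sub}(F)$ is a poset. Part~(2) is then immediate: a finite subcomplex $C$ has finitely many cells, and every predecessor of $C$ is determined by a subset of that finite set, so there are only finitely many.

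For part~(3), the real content is the claim that a union of subcomplexes of $F$ is again a subcomplex, whose cell set is the union of the cell sets. I would prove this by induction along the skeletal filtration $F_0 \to F_1 \to \cdots$ of the presented cell functor: given subcomplexes $\{C_j\}$, let $S$ be the union of their cell sets and verify the conditions $P_i$ of Definition~\ref{def:subcomplex} for $S$, while simultaneously maintaining the inductive statement that each stage $(C_j)_{i-1}$ factors (uniquely) through the stage $C_{i-1}$ built from $S$. The key step is that for a degree-$i$ cell $\alpha$ of $S$ one chooses $j$ with $\alpha$ a cell of $C_j$; its attaching map factors through $(C_j)_{i-1}$ because $C_j$ is a subcomplex, hence through $C_{i-1}$ by the inductive hypothesis, which is exactly the factorization required by $P_i$ to attach $\alpha$ to $C$. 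The extended maps $(C_j)_i \to C_i$ then exist and are unique by the pushout property and the fact that $C_i \to F_i$ is a monomorphism (Lemma~\ref{lem:monomorphisms}). Granting this, for $C_1,\dots,C_k \in \mathsf{Sub}(F)$ the union $C_1 \cup \cdots \cup C_k$ is a subcomplex of $F$, it is finite because a finite union of finite sets is finite, it lies above each $C_l$, and it is contained in every common upper bound of the $C_l$; hence it is the least upper bound. Finally, $\mathsf{Sub}(F)$ is filtered because it is nonempty (it contains the empty subcomplex $* = F_0$), any two objects have an upper bound by the previous sentence, and any two parallel morphisms coincide since $\mathsf{Sub}(F)$ is a poset.

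The main obstacle is the union-of-subcomplexes statement used in part~(3): one must set up the induction so that all the attaching maps factor through the stages of the union in a coherent way, and invoke the monomorphism property exactly where uniqueness of these factorizations is needed. This is the analogue for the functor category $[\cat{C}^{\mathsf{fin}},\cat{D}]$ of a standard fact about cell complexes, cf.\ \cite[10.6]{hirschhorn:2003}; everything else is bookkeeping.
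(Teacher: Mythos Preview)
Your proposal is correct and follows essentially the same approach as the paper's proof, just spelled out in more detail: the paper treats (1) as already observed in the definition, deduces (2) from the fact that a finite set has finitely many subsets, and for (3) simply asserts that the union of subcomplexes is a subcomplex, ``easily checked using the inductive definition''---precisely the argument you outline. Your explicit verification of the conditions $P_i$ for the union is exactly what the paper is gesturing at.
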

\begin{proof}
We already mentioned (1) in Definition \ref{def:finite-subcomplex}. A finite set has finitely many subsets which gives (2). For (3), it is sufficient to show that the (set-theoretic) union of a set of subcomplexes is a subcomplex. This is easily checked using the inductive definition (\ref{def:subcomplex}).
\end{proof}

\begin{proposition} \label{prop:subcomplexes}
Let $F: \cat{C}^{\mathsf{fin}} \to \cat{D}$ be a presented cell functor. Then:
\begin{enumerate}
  \item any cell of $F$ is contained in a finite subcomplex;
  \item any map $C \to F$, with $C$ a finite cell functor, factors via a finite subcomplex of $F$.
\end{enumerate}
\end{proposition}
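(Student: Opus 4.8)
The plan is to prove the two statements together by an induction on the degree of cells, mirroring the classical arguments for cell complexes (e.g.\ \cite[10.6.7, 10.7.4]{hirschhorn:2003}) but using the key structural facts already established: that relative cell functors are monomorphisms (Lemma \ref{lem:monomorphisms}), that $\mathsf{Sub}(F)$ is closed under finite unions (Lemma \ref{lem:lattice}(3)), and above all that for each basic cell the attaching map has the form $I_0^\alpha \smsh \cat{C}(K_\alpha,-) \to F_{i-1}$ with $K_\alpha \in \cat{C}^{\mathsf{fin}}$, so that by the enriched Yoneda Lemma (Lemma \ref{lem:weak-yoneda}) it corresponds to a single morphism $I_0^\alpha \to F_{i-1}(K_\alpha)$ in $\cat{D}$. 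First I would prove (1) by strong induction on the degree $i$ of the cell $\alpha$. The attaching map of $\alpha$ corresponds, via Yoneda, to a morphism $I_0^\alpha \to F_{i-1}(K_\alpha)$ in $\cat{D}$; the source $I_0^\alpha$ is the domain of one of the generating cofibrations of $\cat{D}$, hence a finite cell complex in $\cat{D}$ (for $\sset$ it is $\partial\Delta[n]_+$; for $\spectra$ it is of the form $S\smsh_{\cat L}\mathbb{L}\Sigma^\infty_q|\partial\Delta[n]_+|$, a finite cell spectrum). By Remark \ref{rem:cell-F(X)}, $F_{i-1}(K_\alpha)$ is a presented cell complex in $\cat{D}$ whose cells are pairs (cell $\beta$ of $F_{i-1}$, nondegenerate simplex of $\cat{C}(K_\beta,K_\alpha)$). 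By Proposition \ref{prop:cell-complex}(2), the map $I_0^\alpha \to F_{i-1}(K_\alpha)$ factors through a finite subcomplex of $F_{i-1}(K_\alpha)$, which involves only finitely many cells $\beta$ of $F_{i-1}$, each of degree $<i$. By the inductive hypothesis each such $\beta$ lies in a finite subcomplex of $F$; taking the union of these (Lemma \ref{lem:lattice}(3)) gives a finite subcomplex $C' \subset F_{i-1}$ through which the attaching map of $\alpha$ factors, and then $C' \cup \{\alpha\}$ — which one checks satisfies the inductive condition $P_i$ of Definition \ref{def:subcomplex} precisely because the attaching map factors through $C'$ — is a finite subcomplex of $F$ containing $\alpha$.

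For (2), let $C \to F$ be a map with $C$ a finite cell functor, and choose a presentation of $C$ with cells $\gamma_1,\dots,\gamma_m$. I would argue by induction on $m$ (equivalently, along the finite cell filtration $* = C_0 \to C_1 \to \dots \to C_m = C$), constructing at each stage a finite subcomplex of $F$ through which $C_j \to F$ factors. At the inductive step, $C_{j}$ is obtained from $C_{j-1}$ by attaching a cell $\gamma_j$ via a pushout along $I_0 \smsh \cat{C}(K,-) \to I_1 \smsh \cat{C}(K,-)$; by hypothesis $C_{j-1} \to F$ factors through some finite subcomplex $D_{j-1}$, so it suffices to factor the composite characteristic map $I_1 \smsh \cat{C}(K,-) \to F$ through a finite subcomplex. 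By Yoneda this corresponds to a morphism $I_1 \to F(K)$ in $\cat{D}$ with $I_1$ a finite cell complex in $\cat{D}$; as in the proof of (1), $F(K)$ is a presented cell complex (Remark \ref{rem:cell-F(X)}) and by Proposition \ref{prop:cell-complex}(2) this map factors through a finite subcomplex of $F(K)$, hence through the image of $F'(K)$ for some finite subcomplex $F' \subset F$ (by part (1), finitely many cells of $F$ suffice, and their union is a finite subcomplex). Taking $D_j := D_{j-1} \cup F'$ gives a finite subcomplex of $F$ through which $C_j \to F$ factors — one must check the pushout square for $C_j$ maps compatibly into $D_j$, which follows from the universal property of the pushout once both the restriction to $C_{j-1}$ and the characteristic map of $\gamma_j$ have been arranged to land in $D_j$. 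Setting $j = m$ completes (2).

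The main obstacle, and the point requiring the most care, is the bookkeeping in the inductive step of (1): one must verify that the set of cells produced (the union of finite subcomplexes covering the relevant lower-degree cells $\beta$, together with $\alpha$ itself) genuinely satisfies the inductive closure conditions $P_1,\dots,P_i$ of Definition \ref{def:subcomplex}, i.e.\ that it is closed under "attaching maps of its cells factor through the previously constructed stages." This is where it matters that $\mathsf{Sub}(F)$ is closed under unions and that $C_{i-1} \to F_{i-1}$ is a monomorphism, so that the required factorizations, once they exist, are unique and compatible. A secondary subtlety is the translation between the "map out of $I_0^\alpha \smsh \cat{C}(K_\alpha,-)$" picture and the "finitely many cells of $F(K_\alpha)$" picture via Remark \ref{rem:cell-F(X)}: one needs that a finite subcomplex of $F(K_\alpha)$ only "uses" finitely many cells of $F$, which is immediate from the explicit description of the cell structure on $F(K_\alpha)$. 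Everything else is a routine application of Proposition \ref{prop:cell-complex}(2) in $\cat{D}$ together with the enriched Yoneda Lemma.
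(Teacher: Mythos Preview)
Your proposal is correct and follows essentially the same approach as the paper: part (1) by induction on the degree of the cell using Yoneda to pass to a map $I_0 \to F_{i-1}(K_\alpha)$ in $\cat{D}$, then Proposition \ref{prop:cell-complex}(2) and Remark \ref{rem:cell-F(X)} to pick out finitely many lower-degree cells of $F$, and the union of their finite subcomplexes (plus $\alpha$) as the desired finite subcomplex; part (2) by induction on the number of cells of $C$, factoring the characteristic map of the new cell via Yoneda and the argument of part (1), then taking the union with the previously constructed subcomplex. Your exposition is in fact slightly more explicit than the paper's about the role of monomorphicity (Lemma \ref{lem:monomorphisms}) in ensuring the pushout-compatibility needed at the inductive step of (2).
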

\begin{proof}
We prove (1) by induction on the degree of the cell. A cell $\alpha$ of degree $i+1$ has an attaching map of the form
\[ \tag{*} I_0 \smsh \cat{C}(K,-) \to F_i \]
where $I_0$ is the domain of one of the generating cofibrations in $\cat{D}$ and $K \in \cat{C}^{\mathsf{fin}}$. By the Yoneda Lemma (\ref{lem:weak-yoneda}), (*) determines a map $f: I_0 \to F_i(K)$ in $\cat{D}$. Now $F_i(K)$ has a cell structure as in Remark \ref{rem:cell-F(X)} and so, by \ref{prop:cell-complex}(2), $f$ factors via a finite subcomplex $A \subset F_i(K)$. As noted in Remark \ref{rem:cell-F(X)}, each of the cells in $A$ corresponds to a cell in $F_i$ which of course has degree at most $i$. By the induction hypothesis, each of these is contained in a finite subcomplex of $F_i$. Taking the union of these finite subcomplexes gives a finite subcomplex $C$ of $F_i$ such that $A \subset C(K)$. But then the map $f$ factors as
\[ I_0 \to C(K) \to F_i(K) \]
and hence the original attaching map (*) factors as
\[ I_0 \smsh \cat{C}(K,-) \to C \to F_i. \]
Therefore, $C \cup \{\alpha\}$ is a finite cell complex containing $\alpha$.

For (2) pick a cell structure for $C$ with finitely many cells. We prove (2) by induction on the number of cells. With no cells, $C = *$ and so the map factors via the finite subcomplex $* \subset F$. Now suppose that $C$ is obtained by adding a cell to $C'$, and suppose that the restricted map $C' \to F$ factors via the finite subcomplex $D' \subset F$. Then we have the following diagram where the left-hand square is a pushout:
\[ \begin{diagram}
  \node{I_0 \smsh \cat{C}(K,-)} \arrow{e} \arrow{s} \node{C'} \arrow{s} \arrow{e} \node{D'} \arrow{s} \\
  \node{I_1 \smsh \cat{C}(K,-)} \arrow{e} \node{C} \arrow{e} \node{F}
\end{diagram} \]
By the Yoneda Lemma (\ref{lem:weak-yoneda}), the overall square corresponds to a square of the form
\[ \begin{diagram}
  \node{I_0} \arrow{e} \arrow{s} \node{D'(K)} \arrow{s} \\
  \node{I_1} \arrow{e} \node{F(K)}
\end{diagram} \]
The argument we used in part (1) now tells us that the map $I_1 \to F(K)$ factors via $D''(K)$ for some finite subcomplex $D'' \subset F$. Set $D = D' \cup D''$ so that $D$ is also a finite subcomplex of $F$. Then the map $I_1 \to D(K)$ corresponds by the Yoneda Lemma to a natural transformation
 \[ I_1 \smsh \cat{C}(K,-) \to D \]
which by construction factors the map $I_1 \smsh \cat{C}(K,-) \to F$. By the universal property of the pushout this gives us a factorization $C \to D \to F$. By induction, this is true for any finite cell functor $C$.
\end{proof}

\begin{corollary} \label{cor:subcomplexes}
Let $F:\cat{C}^{\mathsf{fin}} \to \cat{D}$ be a presented cell functor. Then the canonical map
\[ \colim_{C \in \mathsf{Sub}(F)} C \to F \]
is an isomorphism. The colimit here is taken over the poset of finite subcomplexes of $F$ (see Definition \ref{def:finite-subcomplex}).
\end{corollary}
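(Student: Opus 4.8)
The plan is to reduce the statement to a standard fact about subcomplexes of cell complexes in $\sset$ and $\spectra$ by working objectwise. Since colimits in $[\cat{C}^{\mathsf{fin}},\cat{D}]$ are computed objectwise (see the proof of Proposition \ref{prop:model-functors}), and since isomorphisms in $\sset$ and $\spectra$ are detected objectwise, it suffices to show that for every $X \in \cat{C}^{\mathsf{fin}}$ the canonical map
\[ \colim_{C \in \mathsf{Sub}(F)} C(X) \longrightarrow F(X) \]
is an isomorphism in $\cat{D}$. Here I would equip $F(X)$ with the presented cell structure of Remark \ref{rem:cell-F(X)}, whose cells are the pairs $(\alpha,\epsilon)$ with $\alpha$ a cell of the presentation of $F$ and $\epsilon$ a nondegenerate simplex of $\cat{C}(K_\alpha,X)$.

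The first step is to identify, for each $C \in \mathsf{Sub}(F)$, the object $C(X)$ with the subcomplex of $F(X)$ spanned by those cells $(\alpha,\epsilon)$ for which $\alpha$ is a cell of $C$. Applying the relative form of Lemma \ref{lem:cell-F(X)} to the relative cell functor $C \to F$ shows that $C(X) \to F(X)$ is a relative cell complex, and unwinding the construction in Definition \ref{def:subcomplex} together with the explicit cell structure of Remark \ref{rem:cell-F(X)} identifies its cells as claimed; by Proposition \ref{prop:cell-complex}(1) this map is in particular a monomorphism, and similarly, for $C \subseteq D$ in $\mathsf{Sub}(F)$, the transition map $C(X) \to D(X)$ is a subcomplex inclusion. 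By Lemma \ref{lem:lattice}, $\mathsf{Sub}(F)$ is a filtered poset, so the $C(X)$ form a filtered system of subcomplexes of $F(X)$.

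Finally I would argue that this filtered system exhausts $F(X)$. A filtered colimit of subcomplex inclusions is again the inclusion of a subcomplex: in $\sset$ the colimit is just the (degreewise) union; in $\spectra$ the inclusions are cofibrations, hence spacewise closed inclusions as in the proof of Proposition \ref{prop:cell-complex}, filtered colimits in $\spectra$ are computed spacewise, and so the colimit is again a subcomplex. Thus $\colim_{C} C(X) \to F(X)$ is the inclusion of a subcomplex of $F(X)$. By Proposition \ref{prop:subcomplexes}(1), every cell $\alpha$ of $F$ is contained in some finite subcomplex $C$ of $F$, hence every cell $(\alpha,\epsilon)$ of $F(X)$ lies in $C(X)$; so $\colim_C C(X)$ contains all the cells of $F(X)$, and a subcomplex containing all cells of a presented cell complex is the whole complex. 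Hence the map is an isomorphism. The argument is essentially bookkeeping with cell structures; the only point requiring a little care is the $\spectra$ case of the parenthetical claim above — that a filtered colimit of subcomplex inclusions of cell spectra remains a subcomplex inclusion — which is where one must invoke the good behaviour of filtered colimits along cofibrations in EKMM's category of $S$-modules.
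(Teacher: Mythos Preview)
Your argument is correct, but it takes a different route from the paper's proof. The paper works entirely in the functor category $[\cat{C}^{\mathsf{fin}},\cat{D}]$ and constructs an explicit inverse: by induction on the cell filtration $* = F_0 \to F_1 \to \dots$, using Proposition~\ref{prop:subcomplexes}(1) to factor each cell's attaching map through some finite subcomplex, one builds compatible maps $F_i \to \colim_C C$ whose colimit is an inverse to the canonical map. This avoids any objectwise analysis and in particular sidesteps the ``filtered colimit of subcomplex inclusions is a subcomplex inclusion'' claim in $\spectra$ that you flag as the one delicate point. Your approach, by contrast, reduces to the objectwise statement and then leans on the explicit cell structure on $F(X)$ from Remark~\ref{rem:cell-F(X)}; this is more concrete but requires you to verify that $C(X)$ really sits inside $F(X)$ as the subcomplex on the cells $(\alpha,\epsilon)$ with $\alpha \in C$ (the paper does eventually prove a closely related statement, in the proof of Lemma~\ref{lem:key}, but only later). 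Both arguments ultimately hinge on Proposition~\ref{prop:subcomplexes}(1); the paper's version is slightly cleaner because it needs nothing beyond monomorphism properties and universal properties of pushouts, while yours trades that for a more hands-on picture of what happens at each object.
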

\begin{proof}
We inductively construct maps from $F_i$ to the colimit such that the composite $F_i \to \colim C \to F$ is the usual inclusion. Suppose a map from $F_i$ has already been constructed. By Proposition \ref{prop:subcomplexes}(1), each cell $\alpha$ of degree $i+1$ is contained in some finite subcomplex $C$ of $F$ and so the map $I_1 \smsh \cat{C}(K,-) \to F$ associated to $\alpha$ factors via this colimit. But then the universal property of the pushout determines a map $F_{i+1} \to \colim C$ with the required property. Taking the colimit over $i$ we get a map
\[ F \to \colim_{C \in \mathsf{Sub}(F)} C \]
such that the composite $F \to \colim C \to F$ is the identity. The other composite is the identity on $\colim C$ and so we have an isomorphism.
\end{proof}

\begin{corollary} \label{cor:hocolim}
Every functor $F \in [\cat{C}^{\mathsf{fin}},\cat{D}]$ is weakly equivalent to a filtered homotopy colimit of finite cell functors.
\end{corollary}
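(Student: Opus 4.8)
The plan is to deduce this directly from Corollary~\ref{cor:subcomplexes}, the only real work being to recognize the colimit appearing there as a \emph{homotopy} colimit. First I would pass to a cofibrant replacement: for $F \in [\cat{C}^{\mathsf{fin}},\cat{D}]$ the map $QF \to F$ of Definition~\ref{def:QF} is an objectwise weak equivalence, so it suffices to realize $QF$ as a filtered homotopy colimit of finite cell functors. Since $QF$ is canonically a presented cell functor, Corollary~\ref{cor:subcomplexes} gives an isomorphism
\[ \colim_{C \in \mathsf{Sub}(QF)} C \;\isom\; QF, \]
where the colimit runs over the poset $\mathsf{Sub}(QF)$ of finite subcomplexes. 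By Lemma~\ref{lem:lattice} this poset is filtered, and each object $C$ is by construction a finite cell functor; so all that remains is to check that this ordinary colimit agrees with the homotopy colimit (formed objectwise, as in Definition~\ref{def:holim}).

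For that I would exploit two features of the diagram $C \mapsto C$ indexed by $\mathsf{Sub}(QF)$: every object is cofibrant, and every morphism, being an inclusion of subcomplexes, is a relative cell functor and hence a cofibration (and a monomorphism, by Lemma~\ref{lem:monomorphisms}). The cleanest route is to assign to each $C$ the finite number of its cells as a degree; by Lemma~\ref{lem:lattice}(2) this exhibits $\mathsf{Sub}(QF)$ as a direct (Reedy) category in which each object has only finitely many predecessors, and on such a category the Reedy and projective model structures on diagrams coincide. One then checks that the latching map at $C$, namely $\colim_{C' \subsetneq C} C' \to C$, attaches only top-degree cells of $C$: any cell of $C$ of strictly smaller degree already lies in the proper subcomplex obtained by deleting a top cell of $C$, and therefore is accounted for in the colimit of proper subcomplexes, which by the union-of-subcomplexes property behind Lemma~\ref{lem:lattice}(3) is itself a subcomplex of $C$. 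Hence each latching map is a relative cell functor, the diagram is Reedy-cofibrant — equivalently projectively cofibrant — and so $\colim$ already computes $\hocolim$. Combining,
\[ \hocolim_{C \in \mathsf{Sub}(QF)} C \;\weq\; \colim_{C \in \mathsf{Sub}(QF)} C \;\isom\; QF \;\homeq\; F. \]
(Alternatively, one can simply quote the standard fact that in $\sset$ and $\spectra$ a filtered colimit of cofibrations between cofibrant objects computes the homotopy colimit, and apply it objectwise.)

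The hard part is precisely this last point — the homotopy-invariance of the colimit, i.e.\ the cofibrancy of the diagram of subcomplexes. Everything preceding it is a direct invocation of Corollary~\ref{cor:subcomplexes} and Lemma~\ref{lem:lattice}; the latching-object verification, though routine, is the one place where the bookkeeping with attaching maps from the proof of Proposition~\ref{prop:subcomplexes} must be reused, together with the lattice structure of $\mathsf{Sub}(QF)$ recorded in Lemma~\ref{lem:lattice}. If one prefers to cite the black-box statement about filtered colimits of cofibrations instead, the whole argument collapses to the reduction in the first paragraph.
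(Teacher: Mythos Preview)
Your proof is correct and follows essentially the same strategy as the paper: reduce to $QF$, invoke Corollary~\ref{cor:subcomplexes}, and then show that the canonical diagram of finite subcomplexes is projectively cofibrant so that the strict colimit computes the homotopy colimit. The only difference is packaging---you verify projective cofibrancy via the Reedy framework (degree $=$ number of cells, check latching maps are relative cell inclusions), whereas the paper does a direct inductive lifting argument; both rest on the same key fact that the union of proper subcomplexes of $C$ is again a subcomplex and its inclusion into $C$ is a cofibration.
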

\begin{proof}
Any $F$ is equivalent to its cellular replacement $QF$ (see Definition \ref{def:QF}) which is the strict colimit of its finite subcomplexes. It is therefore sufficient to show that the strict colimit of the canonical diagram of finite subcomplexes is equivalent to the homotopy colimit. To see this is it enough to show that the canonical diagram is a cofibrant object in the projective model structure on $\mathsf{Sub}(QF)$-indexed diagrams of functors. From this it follows that the strict and homotopy colimits are equivalent.

It is therefore enough to show that the canonical diagram of finite subcomplexes has the left-lifting property with respect to maps of $\mathsf{Sub}(QF)$-indexed diagrams in $[\cat{C}^{\mathsf{fin}},\cat{D}]$ that are objectwise trivial fibrations. Let $\mathcal{I}$ denote that canonical diagram, so that $\mathcal{I}(C) = C$ for $C \in \mathsf{Sub}(QF)$. Then, given a diagram:
\[ \begin{diagram}
  \node{*} \arrow{s} \arrow{e} \node{\mathcal{A}} \arrow{s,r,A}{\sim} \\
  \node{\mathcal{I}} \arrow{e} \node{\mathcal{B}}
\end{diagram}\]
we construct a lift by induction on the objects in $\mathsf{Sub}(QF)$. Suppose that we have constructed a lift for all proper subcomplexes of some $C \in \mathsf{Sub}(QF)$ (and suppose that those lifts are compatible with inclusions of subcomplexes). If $C'$ is the union of those proper subcomplexes, then together those lifts define a map $C' \to \mathcal{A}(C)$ that fits into a square
\[ \begin{diagram}
  \node{C'} \arrow{s} \arrow{e} \node{A(C)} \arrow{s,r,A}{\sim} \\
  \node{C} \arrow{e} \node{B(C)}
\end{diagram} \]
Now $C'$ is a subcomplex of $C$ (possibly not proper), so $C' \to C$ is a cofibration. This diagram therefore has a lift $C = \mathcal{I}(C) \to \mathcal{A}(C)$. Of course it might have many lifts, but anyone we pick is compatible with the morphisms in $\mathsf{Sub}(QF)$, namely the inclusions of subcomplexes. Since every object in $\mathsf{Sub}(QF)$ has finitely many subcomplexes, we can proceed with the induction, and thus obtain a lift $\mathcal{I} \to \mathcal{A}$ for the entire $\mathsf{Sub}(QF)$-indexed diagrams.
\end{proof}

\section{Pro-spectra} \label{sec:pro-spectra}

Our constructions of new models for the Goodwillie derivatives make substantial use of Spanier-Whitehead duality for spectra. It is well know that this is only really a duality theory for finite spectra. For general spectra, the appropriate extension of Spanier-Whitehead duality uses the category of pro-spectra. This theory was worked out by Christensen and Isaksen in \cite{christensen/isaksen:2004}. They constructed a model structure on the category of pro-spectra, and a zigzag of Quillen equivalences between that structure and the usual model structure on the opposite category of $\spectra$. In this section, we recall some of the main constructions and results of \cite{christensen/isaksen:2004}.

\begin{definition}[Pro-objects] \label{def:pro-objects}
Let $\cat{C}$ be any category. A \emph{pro-object} in $\cat{C}$ is a functor $X: \cat{J} \to \cat{C}$ where $\cat{J}$ is a small cofiltered category. If $X: \cat{J} \to \cat{C}$ is a pro-object and $j \in \cat{J}$, we write $X_j$ for the object of $\cat{C}$ given by evaluating $X$ at $j$.

Given two pro-objects $X: \cat{J} \to \cat{C}$ and $Y: \cat{K} \to \cat{C}$, a \emph{morphism of pro-objects} from $X$ to $Y$ is an element in the set
\[ \lim_{k \in \cat{K}} \colim_{j \in \cat{J}} {\textstyle\Hom_{\cat{C}}}(X_j,Y_k). \]
More explicitly, it consists of the following data:
\begin{itemize}
  \item a function $f$ from the set of objects of $\cat{K}$ to the set of objects of $\cat{J}$;
  \item for each $k \in \cat{K}$, a map $\phi_k: X_{f(k)} \to Y_k$;
\end{itemize}
subject to the following condition:
\begin{itemize}
  \item if $k \to k'$ is a map in $\cat{K}$, and $j$ is an object in $\cat{J}$ with maps $j \to f(k)$ and $j \to f(k')$ (such an object exists since $\cat{J}$ is cofiltered), then the following diagram commutes:
\[ \begin{diagram}
  \node[2]{X_{f(k)}} \arrow{e,t}{\phi_k} \node{Y_{k}} \arrow[2]{s} \\
  \node{X_j} \arrow{ne} \arrow{se} \\
  \node[2]{X_{f(k')}} \arrow{e,t}{\phi_{k'}} \node{Y_{k'}}
\end{diagram} \]
\end{itemize}
Let $(f,\phi_{\bullet})$ and $(f',\phi'_{\bullet})$ be two such sets of data. They determine the same morphism $X \to Y$ if for each $k \in \cat{K}$, there are maps $j \to f(k)$ and $j \to f(k')$ for some $j \in \cat{J}$ such that the following diagram commutes:
\[ \begin{diagram}
  \node{X_j} \arrow{s} \arrow{e} \node{X_{f(k)}} \arrow{s,r}{\phi_k} \\
  \node{X_{f'(k)}} \arrow{e,t}{\phi'_k} \node{Y_k}
\end{diagram} \]
For a fixed $\cat{C}$, the pro-objects in $\cat{C}$ and their morphisms form a category which we write $\mathsf{Pro}(\cat{C})$.
\end{definition}

\begin{remarks}\hfill
\begin{enumerate}
\item Recall that isomorphic pro-objects need not be indexed on the same cofiltered category. For example, if $X$ is a pro-object indexed on a category $\cat{J}$ that has an initial object $j_0$ then $X$ is isomorphic to the pro-object with value $X_{j_0}$ indexed on the trivial category with one object.
\item Any morphism of pro-objects has a \emph{level representation}, that is, by replacing the source and target with isomorphic pro-objects we can write it as a map $\phi: X \to Y$ where $X$ and $Y$ are indexed on the same cofiltered category $\cat{J}$ and $\phi$ is just a natural transformation between functors $\cat{J} \to \cat{C}$ (that is, we can take the function $f$ involved in $\phi$ to be the identity on the objects of $\cat{J}$).
\end{enumerate}
\end{remarks}

\begin{definition}[Properties of pro-objects]
A pro-object $X$ in the category $\cat{C}$ is said to have a property \emph{levelwise} if there is some pro-object $Y$, isomorphic to $X$, such that each $Y_j$ has that property.

A map $\phi:X \to Y$ of pro-objects has a property \emph{levelwise} if there exists a level representation $\phi'$ of $\phi$ such that each map $\phi'_j$ has that property.
\end{definition}

\begin{definition}[Ind-objects]
An \emph{ind-object} in the category $\cat{C}$ is a functor $X: \cat{J} \to \cat{C}$ where $\cat{J}$ is a small filtered category. We can identify an ind-object in $\cat{C}$ with a pro-object in $\cat{C}^{op}$ by identifying $X$ with the corresponding functor $\cat{J}^{op} \to \cat{C}^{op}$. A \emph{morphism of ind-objects} in $\cat{C}$ is a morphism of the corresponding pro-objects in $\cat{C}^{op}$. The ind-objects and their morphisms form a category which we write $\mathsf{Ind}(\cat{C})$.
\end{definition}

We now summarize the main results of \cite{christensen/isaksen:2004}.

\begin{theorem}[Christensen-Isaksen] \label{thm:pro-spectra} \hfill \\
(1) There is a model structure on the category $\mathsf{Pro}(\spectra)$ in which
\begin{itemize}
  \item a morphism $\phi: X \to Y$ is a \emph{weak equivalence} if for each $n \in \mathbb{Z}$ it induces an isomorphism
  \[ \colim_{k \in \cat{K}} [Y_k,S^n] \to \colim_{j \in \cat{J}} [X_j,S^n] \]
  where $[A,B]$ denotes the abelian group of weak homotopy classes of maps $A \to B$ in $\spectra$;
  \item a morphism of pro-spectra is a \emph{cofibration} if it has a level representation $\phi: X \to Y$ such that each map $\phi_j: X_j \to Y_j$ is a cofibration. In particular, if $X_j$ is cofibrant for all $j$, then $X$ is a cofibrant pro-spectrum.
\end{itemize}

(2) There is a model structure on the category $\mathsf{Ind}(\spectra)$ in which
\begin{itemize}
  \item a morphism $\phi: X \to Y$ is a \emph{weak equivalence} if for each $n \in \mathbb{Z}$ it induces an isomorphism
  \[ \colim_{j \in \cat{J}} \pi_n(X_j) \to \colim_{k \in \cat{K}} \pi_n(Y_k) \]
  where $\pi_n(A) = [S^n,A]$ denotes the \ord{n} homotopy group of the spectrum $A$;
  \item a morphism of ind-spectra is a \emph{fibration} if it has a level representation $\phi: X \to Y$ such that each map $\phi_j: X_j \to Y_j$ is a fibration in $\spectra$.
  \item an ind-spectrum is cofibrant if and only if it is levelwise homotopy-finite and has the left lifting property with respect to levelwise trivial fibrations (i.e. maps which have a level representation in which each map is a trivial fibration in $\spectra$).
\end{itemize}

(3) There is a Quillen equivalence between $\mathsf{Pro}(\spectra)$ and the opposite of $\mathsf{Ind}(\spectra)$ in which both sides of the equivalence are given by applying the functor $\Map(-,S)$ levelwise.

(4) There is a Quillen equivalence between $\mathsf{Ind}(\spectra)$ and $\spectra$ in which the left adjoint is the colimit functor
\[ \mathsf{Ind}(\spectra) \to \spectra; \quad X \mapsto \colim_{j \in \cat{J}} X_j \]
and the right adjoint is the functor $\spectra \to \mathsf{Ind}(\spectra)$ that sends a spectrum $X$ to the ind-spectrum with value $X$ indexed on the trivial category with one object.
\end{theorem}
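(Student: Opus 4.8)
Since Theorem \ref{thm:pro-spectra} collects results of \cite{christensen/isaksen:2004}, the plan is to deduce parts (1), (2) and (3) directly from that paper, after matching our conventions to theirs, and to supply a short self-contained argument for part (4). The only point that will require real care is checking that the pro- and ind- model structures of \cite{christensen/isaksen:2004} specialize, for our EKMM model of $\spectra$, to the ones described in (1) and (2); everything else is either cited verbatim or a routine computation with filtered colimits of homotopy groups.

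For (1) I would invoke the construction, used in \cite{christensen/isaksen:2004}, of a model structure on a pro-category $\mathsf{Pro}(\cat{C})$ from a class of weak equivalences in $\cat{C}$, applied with $\cat{C} = \spectra$ and with the class of maps $A \to B$ inducing isomorphisms $[B,S^n] \isom [A,S^n]$ for all $n \in \mathbb{Z}$; one then checks that the resulting weak equivalences, cofibrations and (by lifting) fibrations are exactly those in the statement and that the factorization and lifting axioms hold. The remark that a levelwise-cofibrant pro-spectrum is cofibrant follows immediately from the definition of cofibration in $\mathsf{Pro}(\spectra)$, since $* \to X$ then has a level representation by cofibrations. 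Part (2) is the dual instance, using $\mathsf{Ind}(\spectra) = \mathsf{Pro}(\spectra^{op})^{op}$ and detecting weak equivalences by $\pi_*$; its cofibrant objects are characterized, again following \cite{christensen/isaksen:2004}, as the levelwise homotopy-finite ind-spectra with the left-lifting property against levelwise trivial fibrations. For (3), the functor $\Map(-,S)$ on $\spectra$ carries cofibrations between cofibrant objects to fibrations between fibrant objects and weak equivalences between cofibrant objects to weak equivalences — here it is essential that every object of $\spectra$ is fibrant — so applying it levelwise yields a Quillen pair $\mathsf{Pro}(\spectra) \rightleftarrows \mathsf{Ind}(\spectra)^{op}$; that this is a Quillen equivalence is the duality theorem of \cite{christensen/isaksen:2004}, which ultimately reduces to Spanier-Whitehead duality being an equivalence on homotopy-finite spectra.

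Part (4) calls for a short direct argument. The constant-ind-spectrum functor $c: \spectra \to \mathsf{Ind}(\spectra)$ is right adjoint to $\colim$, and $c$ visibly preserves fibrations and trivial fibrations — a single map, regarded as a one-object level diagram, is in particular a levelwise (trivial) fibration — so $\colim$ is left Quillen. Moreover $\colim$ preserves all weak equivalences of ind-spectra, since such a weak equivalence induces an isomorphism on $\colim_j \pi_n$, which is precisely $\pi_n$ of the colimit; as the counit $\colim c(X) \to X$ is an isomorphism, the derived counit is then a weak equivalence. The derived unit at a cofibrant $X$ is the canonical map $X \to c(\colim_j X_j)$, which is a weak equivalence because homotopy groups of spectra commute with filtered colimits. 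Hence the pair is a Quillen equivalence, which completes the plan. I expect the bulk of the remaining work to be the conventions-matching flagged in the first paragraph; the verifications in (4) and the levelwise-cofibrancy remarks in (1)--(2) are routine.
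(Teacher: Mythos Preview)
Your proposal is correct, and in fact gives considerably more detail than the paper does: the paper offers no proof for this theorem beyond the attribution to \cite{christensen/isaksen:2004} (it is introduced as a summary of the main results of that reference). Your sketch of how parts (1)--(3) are extracted from Christensen--Isaksen, and your direct argument for (4) via the Quillen adjunction $(\colim, c)$ together with the fact that $\pi_n$ of spectra commutes with filtered colimits, are both sound and go beyond what the paper provides.
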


\begin{definition}[Spanier-Whitehead duals] \label{def:SW-dual}
A \emph{Spanier-Whitehead dual} of a pro-spectrum $X$ is an object $\dual X$ in $\spectra$ that corresponds to $X \in \mathsf{Pro}(\spectra)$ under the Quillen equivalences of Theorem \ref{thm:pro-spectra}. Note that this dual is only defined up to weak equivalence, although a natural (but not canonical) choice can be made by fixing cofibrant replacement functors in the categories of pro- and ind-spectra.
\end{definition}

\begin{remark}
We are concerned mainly with pro-spectra whose indexing categories are the opposites of the categories $\mathsf{Sub}(F)$ (see Definition \ref{def:finite-subcomplex}) where $F$ is a presented cell functor. We saw in Lemma \ref{lem:lattice} that such indexing categories have various useful properties. The following lemma helps us give a homotopy colimit form for the Spanier-Whitehead dual of a pro-spectrum indexed by such a category.
\end{remark}

\begin{lemma} \label{lem:colim-ind}
Let $\cat{J}$ be a filtered poset in which each object has finitely many predecessors. (In particular, the categories $\mathsf{Sub}(F)$ of Definition \ref{def:finite-subcomplex} have these properties.) Let $X$ be a levelwise homotopy-finite ind-spectrum indexed on $\cat{J}$. Then
\[ (\mathbb{L}\colim)(X) \homeq \hocolim X \]
where the left-hand side is the left derived functor of the colimit functor from ind-spectra to spectra, and the right-hand side is the homotopy colimit of $X$ as a $\cat{J}$-indexed diagram of spectra.
\end{lemma}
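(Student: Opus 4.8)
The plan is to produce a single diagram $\tilde X\colon\cat J\to\spectra$ that is simultaneously a cofibrant replacement of $X$ in the projective model structure on $[\cat J,\spectra]$ and a cofibrant replacement of $X$ in the model structure on $\mathsf{Ind}(\spectra)$ of Theorem~\ref{thm:pro-spectra}(2). Granting such a $\tilde X$, the right-hand side $\hocolim X$ is computed by $\colim_{j\in\cat J}\tilde X_j$, because the Bousfield--Kan homotopy colimit of a diagram of spectra computes the left derived functor of $\colim\colon[\cat J,\spectra]\to\spectra$ on the projective model structure, and hence is obtained by applying $\colim$ to any projective-cofibrant replacement of $X$ (see \cite{hirschhorn:2003}); and the left-hand side $(\mathbb L\colim)(X)$ is likewise computed by $\colim_{j\in\cat J}\tilde X_j$, since $\colim\colon\mathsf{Ind}(\spectra)\to\spectra$ is left Quillen by Theorem~\ref{thm:pro-spectra}(4) and the levelwise weak equivalence $\tilde X\to X$ is in particular a weak equivalence of ind-spectra (it induces an isomorphism on each $\colim_j\pi_n(-)$). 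So the two sides of the lemma agree.

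To construct $\tilde X$, I would first observe that the hypothesis lets us define $\deg(j)$ to be the (finite) number of $j'\in\cat J$ with $j'\le j$; then $\deg(j')<\deg(j)$ whenever $j'<j$, so $\cat J$ is a direct category and a $\cat J$-diagram is projective-cofibrant exactly when all its latching maps $L_j\tilde X:=\colim_{j'<j}\tilde X_{j'}\to\tilde X_j$ are cofibrations --- the same set-up used in the proof of Corollary~\ref{cor:hocolim}. Now build $\tilde X_j\to X_j$ by induction on $\deg(j)$, maintaining that each latching map is a \emph{relative finite cell complex}: given this for all $j'$ of smaller degree, $L_j\tilde X$ is a colimit over the finite direct poset $\{j':j'<j\}$ of finite cell spectra along relative finite cell complexes, hence is itself a finite cell spectrum, and one factors the canonical map $L_j\tilde X\to X_j$ (which exists since $\tilde X_{j'}\to X_{j'}\to X_j$) as a relative finite cell complex $L_j\tilde X\cof\tilde X_j$ followed by a weak equivalence $\tilde X_j\weq X_j$. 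This last factorization is where \emph{levelwise homotopy-finiteness} of $X$ enters: a map from a finite cell spectrum to a homotopy-finite spectrum factors through a finite cell approximation, and a map between finite cell spectra can be replaced --- via a mapping cylinder assembled from finitely many cells --- by a relative finite cell complex followed by a weak equivalence. The resulting $\tilde X$ is then projective-cofibrant, levelwise finite cell, and carries a levelwise weak equivalence $\tilde X\weq X$.

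It remains to check that $\tilde X$ is cofibrant in $\mathsf{Ind}(\spectra)$, which by Theorem~\ref{thm:pro-spectra}(2) means: $\tilde X$ is levelwise homotopy-finite (true, being levelwise finite cell) and has the left lifting property against levelwise trivial fibrations of ind-spectra. I expect this to be the main obstacle, and would dispatch it by an induction on $\deg(j)$ formally identical to the lifting argument in the proof of Corollary~\ref{cor:hocolim}: given a lifting problem against a levelwise trivial fibration $\cat A\to\cat B$ and compatible lifts $\tilde X_{j'}\to\cat A$ already chosen for all $j'<j$, these assemble over the finite poset below $j$ to a map $L_j\tilde X\to\cat A$; since $L_j\tilde X\cof\tilde X_j$ is a cofibration of (finite cell) spectra and $\cat A\to\cat B$ restricts to a trivial fibration of spectra, the square has a lift $\tilde X_j\to\cat A$, and because $j$ has only finitely many predecessors these choices stay mutually compatible as the induction proceeds. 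The delicate point --- exactly as in Proposition~\ref{prop:subcomplexes} and Corollary~\ref{cor:hocolim} --- is unwinding ``levelwise trivial fibration of ind-spectra'' (and the lifting problem against it) into honest spectrum-level data despite the change of indexing category inherent in morphisms of ind-objects; here the finiteness of $\tilde X$ levelwise, together with the finitely-many-predecessors condition, guarantees that at each stage only finitely many constraints involving finite cell spectra are in play, so the relevant colimits over indexing categories can be resolved one object at a time. With $\tilde X$ cofibrant on both sides, the two computations of $\colim_j\tilde X_j$ from the first paragraph complete the proof.
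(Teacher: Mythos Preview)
Your strategy---find a single diagram cofibrant both in the projective model structure on $[\cat J,\spectra]$ and in the ind-spectra model structure, then apply $\colim$---is exactly the paper's. Two remarks on execution.

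First, the hand-built levelwise-finite-cell replacement is unnecessary, and you have not fully justified it: the factorization $L_j\tilde X\cof\tilde X_j\weq X_j$ with $\tilde X_j$ a finite cell spectrum and the triangle commuting \emph{strictly} is not immediate (a mapping cylinder off a finite cell approximation to $X_j$ only makes the triangle commute up to homotopy). The paper avoids this entirely by observing that \emph{any} projective-cofibrant replacement of $X$ is automatically levelwise homotopy-finite, since it is levelwise weakly equivalent to $X$; that is all Theorem~\ref{thm:pro-spectra}(2) demands. So there is no need to arrange finite cells at each level.

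Second, the ``delicate point'' you flag in the lifting is the heart of the matter, and the paper resolves it more cleanly than your sketch. Rather than interleaving the choice of target index in $\cat K$ with the lifting step, the paper first builds an honest functor $f\colon\cat J\to\cat K$ by induction on the number of predecessors: at each $j$ it chooses $f(j)\in\cat K$ receiving maps from the finitely many $f(j')$ with $j'<j$ and from the index $g(j)$ arising from the morphism $X\to B$ (filteredness of $\cat K$ makes this possible). It then pulls back $A\to B$ along $f$ to an objectwise trivial fibration $A'\to B'$ of $\cat J$-diagrams; the morphism $X\to B$ becomes a genuine natural transformation $X\to B'$, and projective cofibrancy of $X$ supplies the lift $X\to A'$, hence the ind-spectra lift $X\to A$, with all compatibilities handled automatically. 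Your inductive scheme can be made to work, but it amounts to constructing $f$ and the lift simultaneously, and the bookkeeping you gesture at (``resolved one object at a time'') is precisely the construction of $f$.
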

\begin{proof}
It is sufficient to show that a cofibrant replacement for $X$ in the projective model structure on $\cat{J}$-indexed diagrams is also a cofibrant ind-spectrum. Applying the colimit functor to such a replacement is then a model for both the derived colimit of ind-spectra, and the homotopy colimit of $\cat{J}$-indexed diagrams.

So suppose that $X: \cat{J} \to \spectra$ is cofibrant in the projective model structure on such diagrams. We need to show that such an $X$ is a cofibrant ind-spectrum in the model structure of Theorem \ref{thm:pro-spectra}. Since we are assuming $X$ is levelwise homotopy-finite, we need only show that it is strictly cofibrant, that is, it has the left lifting property with respect to essentially levelwise trivial fibrations. So take a diagram of ind-spectra of the following form
\[ \begin{diagram}
  \node[2]{A} \arrow{s,r,A}{\sim} \\
  \node{X} \arrow{e} \node{B}
\end{diagram} \]
in which, for convenience, we choose a level representation (indexed, say, by $\cat{K}$) for $A \to B$ that is a levelwise trivial fibration, i.e. each map $A_{k} \to B_{k}$ is a trivial fibration in $\spectra$. Note that the morphism $X \to B$ of ind-spectra determines a collection of compatible maps $X_{j} \to B_{g(j)}$ where $g(j)$ is some object of $\cat{K}$ for each $j \in \cat{J}$.

We now inductively construct a \emph{functor} $f: \cat{J} \to \cat{K}$. (The condition that $f$ be a functor means that we have maps $f(j') \to f(j)$ when $j' < j$, and that these maps are compatible with composition.) The starting point for the induction is to define $f(j)$ for those $j \in \cat{J}$ with no predecessors. For such a $j$, we pick any object in $\cat{K}$ to be $f(j)$.

Now suppose that we have already constructed $f$ on the restriction of $\cat{J}$ to the (finitely many) $j'$ that map to some given $j$. Since $\cat{K}$ is filtered, there is some $k \in \cat{K}$ that accepts a map from all the $f(j')$. Also since $\cat{K}$ is filtered, we can pick these maps such that they are all compatible with the already chosen maps $f(j') \to f(j'')$. We may also assume that $k$ accepts a map $g(j) \to k$ (where $g(j)$ comes from the morphism $X \to B$ as above). Now set $f(j) := k$. Proceeding inductively, again using the fact that any object in $\cat{J}$ has finitely many predecessors, we obtain a functor $f: \cat{J} \to \cat{K}$. This also has the property that there is a map $g(j) \to f(j)$ in $\cat{K}$ for any $j \in \cat{J}$.

We now pullback the map $A \to B$ of $\cat{K}$-indexed diagrams to a map $A' \to B'$ of $\cat{J}$-indexed diagrams by
\[ A'_{j} := A_{f(j)}, \quad B'_{j} := B_{f(j)}. \]
Since $f$ is a functor, we do obtain a natural transformation $A' \to B'$. The morphism $X \to B$ of ind-spectra now determines a natural transformation $X \to B'$ by the maps
\[ X_{j} \to B_{g(j)} \to B_{f(j)} = B'_{j}. \]
This is a natural transformation by the definition of a morphism of ind-spectra.

Now the natural transformation $A' \to B'$ is still an objectwise trivial fibration, and so, since $X$ is assumed to be a cofibrant diagram, there is a lift $X \to A'$. But we have therefore constructed maps
\[ X_{j} \to A_{f(j)} \]
which together form a morphism of ind-spectra $X \to A$, which in turn lifts the morphism $X \to B$. We have therefore checked that $X$ is cofibrant as an ind-spectrum which completes the proof.
\end{proof}

\begin{remark} \label{rem:cofinite-directed}
The condition on the indexing category $\cat{J}$ required in Lemma \ref{lem:colim-ind} is dual to that used by Isaksen in \cite{isaksen:2002} to calculate limits and colimits of pro-objects. He defines a pro-object to be `cofinite directed' if its indexing category $\cat{J}$ satisfies that dual condition: i.e. that $\cat{J}$ is a cofiltered poset in which every element has finitely many successors. He also shows that any pro-object is isomorphic to one of this form, and dually any ind-object is isomorphic to one which satisfies the conditions of Lemma \ref{lem:colim-ind}.
\end{remark}

\begin{lemma} \label{lem:dual-hocolim}
Let $X: \cat{J} \to \spectra$ be a pro-spectrum. Suppose that the indexing category $\cat{J}$ is a cofiltered poset in which every element has finitely many successors, and suppose that each spectrum $X_j$ is both cofibrant and homotopy-finite. Then the Spanier-Whitehead dual of $X$ is given by
\[ \dual(X) \homeq \hocolim_{j \in \cat{J}} \Map(X_j,S). \]
\end{lemma}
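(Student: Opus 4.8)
The plan is to unwind the definition of the Spanier--Whitehead dual (\ref{def:SW-dual}) by carrying $X$ through the two Quillen equivalences of Theorem~\ref{thm:pro-spectra}, and then to recognise the resulting derived colimit of an ind-spectrum as an ordinary homotopy colimit via Lemma~\ref{lem:colim-ind}.

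First I would observe that, since each $X_j$ is cofibrant, $X$ is already a cofibrant object of $\mathsf{Pro}(\spectra)$ by the last clause of Theorem~\ref{thm:pro-spectra}(1), so no cofibrant replacement is needed. The first Christensen--Isaksen equivalence, between $\mathsf{Pro}(\spectra)$ and $\mathsf{Ind}(\spectra)^{op}$, is given by applying $\Map(-,S)$ levelwise; because every object of $\spectra$ is fibrant, $\Map(-,S)$ sends a (trivial) cofibration in $\spectra$ to a (trivial) fibration, so levelwise it carries a (trivial) cofibration of pro-spectra to a map of ind-spectra with a level representation by (trivial) fibrations, i.e. a (trivial) cofibration of $\mathsf{Ind}(\spectra)^{op}$, and is therefore the left Quillen functor of the equivalence. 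Since $X$ is cofibrant, applying it computes the corresponding derived functor directly, so $X$ corresponds to the ind-spectrum
\[ j \longmapsto \Map(X_j, S), \qquad j \in \cat{J}^{op}, \]
which is a legitimate ind-spectrum since $\cat{J}^{op}$ is filtered. Composing with the equivalence of Theorem~\ref{thm:pro-spectra}(4), under which an ind-spectrum corresponds to its derived colimit $\mathbb{L}\colim$, then yields
\[ \dual(X) \homeq (\mathbb{L}\colim)\bigl( \{ \Map(X_j, S) \}_{j \in \cat{J}^{op}} \bigr). \]

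It remains to replace $\mathbb{L}\colim$ by the honest homotopy colimit, and this is exactly Lemma~\ref{lem:colim-ind}, whose hypotheses hold here. The indexing poset $\cat{J}^{op}$ is filtered, and since every element of $\cat{J}$ has finitely many successors, every element of $\cat{J}^{op}$ has finitely many predecessors. The ind-spectrum $\{\Map(X_j,S)\}$ is levelwise homotopy-finite: each $X_j$, being cofibrant, fibrant and homotopy-finite, is homotopy equivalent to a finite cell spectrum $K_j$, so $\Map(X_j,S) \homeq \Map(K_j,S)$, and the Spanier--Whitehead dual of a finite cell spectrum is again homotopy-finite. Lemma~\ref{lem:colim-ind} then identifies the right-hand side above with $\hocolim_{j \in \cat{J}} \Map(X_j, S)$, which is the asserted formula.

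The step needing the most care is the second paragraph: keeping the several reversals of variance straight in passing $\mathsf{Pro}(\spectra) \to \mathsf{Ind}(\spectra)^{op} \to \spectra$, and checking that cofibrancy of $X$ really does let the levelwise $\Map(-,S)$ compute the derived functor with no intervening replacement, so that $\dual(X)$ emerges in the stated levelwise form. This is bookkeeping rather than a genuine obstacle; all of the homotopy-theoretic substance has been packaged into Theorem~\ref{thm:pro-spectra} and Lemma~\ref{lem:colim-ind}.
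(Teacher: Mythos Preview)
Your proof is correct and follows essentially the same route as the paper's: use cofibrancy of $X$ to pass through the first Quillen equivalence without replacement, obtaining the ind-spectrum $j \mapsto \Map(X_j,S)$, then invoke Lemma~\ref{lem:colim-ind} (whose hypotheses you verify) to identify $\mathbb{L}\colim$ with $\hocolim$. The paper's proof is just a terser version of yours, omitting the justifications you spell out for why $\Map(-,S)$ computes the derived functor and why $\Map(X_j,S)$ is homotopy-finite.
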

\begin{proof}
Since each $X_j$ is cofibrant, the pro-spectrum $X$ is cofibrant (by Theorem \ref{thm:pro-spectra}(1)). Therefore, the ind-spectrum corresponding to $X$ under the Quillen equivalence of \ref{thm:pro-spectra}(3) is given by
\[ j \mapsto \Map(X_j,S). \]
The spectrum $\Map(X_j,S)$ is homotopy-finite since $X_j$ is, and the filtered category $\cat{J}^{op}$ satisfies the condition of Lemma \ref{lem:colim-ind}. Applying \ref{lem:colim-ind} we see that $\dual(X)$ is given by the indicated colimit.
\end{proof}

\begin{definition}[Directly dualizable pro-spectra] \label{def:dualizable}
The pro-spectra we deal with in this paper almost exclusively satisfy the conditions of Lemma \ref{lem:dual-hocolim}. It is helpful to have some terminology for this case. We say that a pro-spectrum $X$ is \emph{directly-dualizable} if
\begin{itemize}
  \item the indexing category for $X$ is a cofiltered poset in which every element has finitely many successors;
  \item each spectrum $X_j$ is cofibrant and homotopy-finite.
\end{itemize}
In this case we fix a model for the Spanier-Whitehead dual $\dual X$ using Lemma \ref{lem:dual-hocolim}. We define
\[ \dual X := \hocolim_{j \in \cat{J}} \Map(X_j,S). \]
The homotopy colimit here is formed in the category $\spectra$. Later on in this paper, we consider the Spanier-Whitehead duals of collections of pro-spectra that have extra structure. In those cases, we form the homotopy colimit in other categories in order to retain that additional information. (See \S\ref{sec:prosymseq}.)
\end{definition}

We now check that the definition of $\dual X$ in Definition \ref{def:dualizable} is functorial.

\begin{definition}[Spanier-Whitehead dual of a map] \label{def:pro-morphism}
Let $f:X \to Y$ be a morphism of pro-spectra with $X$ and $Y$ directly-dualizable as in Definition \ref{def:dualizable}. Then $f$ induces a morphism of spectra
\[ \dual f : \dual Y \to \dual X \]
in the following way. Recall that $f$ consists of maps $X_{f(j)} \to Y_j$ where $f:\cat{J} \to \cat{I}$ is a map from the indexing category of $Y$ to the indexing category of $X$. We then obtain dual maps
\[ \Map(Y_j,S) \to \Map(X_{f(j)},S) \]
which together make up the required map
\[ \dual f: \hocolim_{j \in \cat{J}} \Map(Y_j,S) \to \hocolim_{i \in \cat{I}} \Map(X_i,S). \]
This construction makes $\dual$ into a contravariant functor from the full subcategory of $\mathsf{Pro}(\spectra)$ consisting of the directly-dualizable spectra, to $\spectra$.
\end{definition}

The construction of $\dual f$ gives us the following useful way to decide if a morphism of pro-objects is a weak equivalence.

\begin{lemma} \label{lem:pro-equivalence}
Let $f:X \to Y$ be a morphism between directly-dualizable pro-spectra. Then $f$ is a weak equivalence (in the sense of Theorem \ref{thm:pro-spectra}) if and only if $\dual(f)$ is a weak equivalence in $\spectra$.
\end{lemma}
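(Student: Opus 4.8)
The plan is to compute the stable homotopy groups of $\dual X$ and $\dual Y$ and recognise them as the pro-invariants that define a weak equivalence of pro-spectra. Recall from Definition \ref{def:dualizable} and Lemma \ref{lem:dual-hocolim} that for a directly-dualizable pro-spectrum $X:\cat{J}\to\spectra$ one has $\dual X \simeq \hocolim_{j}\Map(X_j,S)$, a homotopy colimit over the filtered poset $\cat{J}^{op}$. Since every spectrum is fibrant and each $X_j$ is cofibrant, there are natural isomorphisms $[X_j,S^n]\cong\pi_{-n}\Map(X_j,S)$, and because homotopy groups of spectra commute with filtered homotopy colimits we get
\[ \pi_{-n}(\dual X)\;\cong\;\colim_{j}\pi_{-n}\Map(X_j,S)\;\cong\;\colim_{j}[X_j,S^n], \]
and similarly for $Y$. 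These are exactly the pro-abelian groups appearing in the definition of a weak equivalence in Theorem \ref{thm:pro-spectra}(1).

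Next I would check that, under these identifications, the map $\pi_{-n}(\dual f)\colon \pi_{-n}(\dual Y)\to\pi_{-n}(\dual X)$ coincides with the canonical comparison $\colim_{k}[Y_k,S^n]\to\colim_{j}[X_j,S^n]$ induced by $f$. This is a diagram-chase with the explicit description of $\dual f$ in Definition \ref{def:pro-morphism}: a level representation of $f$ is a reindexing function together with maps $X_{f(k)}\to Y_k$, $\dual f$ is assembled from the dualised maps $\Map(Y_k,S)\to\Map(X_{f(k)},S)$, and passing to homotopy groups and filtered colimits recovers precisely the structure map comparing the two pro-groups; one also checks this is independent of the chosen level representation. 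Granting this, $f$ is a weak equivalence of pro-spectra iff $\pi_{-n}(\dual f)$ is an isomorphism for every $n$, iff $\dual f$ is a weak equivalence of spectra, which is the assertion.

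The conceptually cleaner way to organise the same argument is to observe that $\dual$, restricted to directly-dualizable pro-spectra, is a model for the total derived functor of the composite of the Quillen equivalences of Theorem \ref{thm:pro-spectra}(3) and (4): a directly-dualizable pro-spectrum is cofibrant by Theorem \ref{thm:pro-spectra}(1), applying $\Map(-,S)$ levelwise then computes the derived functor into $\mathsf{Ind}(\spectra)$, the result is a levelwise homotopy-finite ind-spectrum indexed on a filtered poset with the finiteness property, and Lemma \ref{lem:colim-ind} identifies its derived colimit with $\hocolim_j\Map(X_j,S)=\dual X$. Since the total derived functor of a Quillen equivalence both preserves and reflects weak equivalences between cofibrant objects, the lemma follows at once. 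I expect the main obstacle to be the bookkeeping in the middle paragraph — matching $\pi_{-n}(\dual f)$ with the definitional comparison map requires unwinding both the homotopy-colimit model of $\dual$ and the reindexing-dependent description of morphisms of pro-objects; everything else is standard stable homotopy theory combined with results already in hand.
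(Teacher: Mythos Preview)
Your second approach is exactly the paper's proof: the paper simply observes that $\dual$ is the derived functor of one side of a Quillen equivalence, hence preserves and reflects weak equivalences, and stops there. Your first approach is a correct and more explicit unwinding of the same fact, computing $\pi_{-n}(\dual X)\cong\colim_j[X_j,S^n]$ directly and matching the induced map with the comparison map from Theorem~\ref{thm:pro-spectra}(1); this buys a self-contained argument that does not lean on abstract properties of derived functors, at the cost of the bookkeeping you flag.
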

\begin{proof}
The functor $\dual$ is the derived functor of one side of a Quillen equivalence. It therefore preserves and reflects weak equivalences.
\end{proof}

\part{Operads and modules}

One of the main goals of this paper is to describe new structures on the Goodwillie derivatives of various sorts of functors. All of these new structures arise from the theory of operads, so we now turn to this. The aims of this part of the paper are as follows:
\begin{itemize}
  \item to recall the definitions of operads and modules over operads (as well as cooperads and comodules), and to describe the bar construction for operads, which is a crucial part of producing these new structures (this occupies \S\ref{sec:bar});
  \item to examine the homotopical properties of the bar construction (\S\ref{sec:bar-homotopy});
  \item to describe the homotopy theory of operads and modules in the category of spectra, in particular in order to produce appropriate cofibrant replacements (\S\ref{sec:cofibrant});
  \item to construct a theory of pro-modules and pro-comodules over operads and cooperads respectively, and to understand how Spanier-Whitehead duality relates pro-comodules to modules (\S\ref{sec:prosymseq}).
\end{itemize}

\section{Composition products, operads and bar constructions} \label{sec:bar}

We first recall the definition of operads and modules over them, and of the bar construction in this context. We start with symmetric sequences.

\begin{definition}[Symmetric sequences]
Let $\mathsf{\Sigma}$ denote the category whose objects are the finite sets $\{1,\dots,n\}$ for $n \geq 1$ and whose morphisms are bijections. Thus $\mathsf{\Sigma}(m,n)$ is empty unless $m = n$, in which case it is the group $\Sigma_n$.

Let $\cat{C}$ be any category. A \emph{symmetric sequence in $\cat{C}$} is a functor $A: \mathsf{\Sigma} \to \cat{C}$. Explicitly then, we can think of a symmetric sequence as consisting of a sequence $\{A(n)\}$ of objects of $\cat{C}$ together with a (left) $\Sigma_n$-action on $A(n)$ for each $n \geq 1$. A \emph{morphism of symmetric sequences} $f:A \to B$ is a natural transformation of functors or equivalently, a collection of $\Sigma_n$-equivariant maps $f_n: A(n) \to B(n)$.

The objects $A(n)$ involved in a symmetric sequence are called the \emph{terms} of the symmetric sequence, and a symmetric sequence $A$ is said to have some property \emph{termwise} if each $A(n)$ has that property.
\end{definition}

\begin{example}
For a functor $F: \cat{C} \to \cat{D}$ where $\cat{C}$ and $\cat{D}$ are either $\sset$ or $\spectra$, the Goodwillie derivatives $\der^G_*F$ of $F$ form a symmetric sequence in $\spectra$.
\end{example}

\begin{definition}[Composition product of symmetric sequences]
Suppose now that $\cat{C}$ is a cocomplete closed symmetric monoidal category with monoidal product denoted $\smsh$ and unit object $S$. (We have the category $\spectra$ in mind for $\cat{C}$.) If $A$ and $B$ are two symmetric sequences in $\cat{C}$, then the \emph{composition product} of $A$ and $B$ is the symmetric sequence $A \circ B$ given by
\[ (A \circ B)(n) := \Wdge_{\text{partitions of $\{1,\dots,n\}$}} A(k) \smsh B(n_1) \smsh \dots \smsh B(n_k). \]
The coproduct here is taken over all unordered partitions of the set $\{1,\dots,n\}$ into nonempty subsets. For each such partition, we fix an order on the pieces (hence a bijection between the set of pieces and $\{1,\dots,k\}$ where $k$ is the number of pieces), and an order on each piece (hence a bijection between each piece and $\{1,\dots,n_i\}$ where $n_i$ is the number of elements in the \ord{i} piece).

The $\Sigma_n$-action on $(A \circ B)(n)$ is given as follows. Let $\sigma$ be a permutation of $\{1,\dots,n\}$. For each partition $\lambda$ of $\{1,\dots,n\}$, $\sigma$ determines a new partition $\sigma(\lambda)$ where the pieces of $\sigma(\lambda)$ are obtained by applying $\sigma$ to the elements of the pieces of $\lambda$. Note that both $\lambda$ and $\sigma(\lambda)$ have the same number of pieces (say, $k$) and that with respect to our chosen orderings of those pieces, $\sigma$ determines an element $\sigma_* \in \Sigma_k$ and hence a map $A(k) \to A(k)$. Furthermore, $\sigma$ determines a bijection between the \ord{i} piece of $\lambda$ and the \ord{\sigma_*(i)} piece of $\sigma(\lambda)$ and hence a map $B(n_i) \to B(n_{\sigma_*(i)})$. Putting all these maps together for all partitions $\lambda$ determines the required map $(A \circ B)(n) \to (A \circ B)(n)$.
\end{definition}

\begin{remark}
The terminology `composition product' comes about for the following reason. To a symmetric sequence $A$ in $\cat{C}$, one can associate a functor from $\cat{C}$ to $\cat{C}$ given by
\[ F_A(X) := \Wdge_{n} \left(A(n) \smsh X^{\smsh n}\right)_{\Sigma_n}. \]
The composition product of symmetric sequences then mirrors the composition of functors in the sense that there is a natural isomorphism
\[ F_A F_B \isom F_{A \circ B}. \]
In fact one can view the taking of Goodwillie derivatives (at least for functors of spectra) as a partial inverse to this process. In particular, for an (appropriately cofibrant) symmetric sequence $A$ in $\spectra$, we have a natural equivalence of symmetric sequences
\[ \der_*(F_A) \homeq A. \]
Combining these observations, we obtain examples of the chain rule for spectra that we prove in \S\ref{sec:chainrule}. For symmetric sequences $A,B$, we have
\[ \der_*(F_A F_B) \homeq \der_*(F_A) \circ \der_*(F_B). \]
\end{remark}

\begin{definition}[Unit symmetric sequence] \label{def:unit-symmetric-sequence}
Let $\cat{C}$ be a cocomplete closed symmetric monoidal category with terminal object $*$ and unit object $S$. The \emph{unit symmetric sequence} in $\cat{C}$ is the symmetric sequence $\mathsf{1}$ given by
\[ \mathsf{1}(n) := \begin{cases} S & \text{if $n = 1$}; \\ * & \text{otherwise}. \end{cases} \]
\end{definition}

\begin{proposition} \label{prop:comprod-monoidal}
Let $\cat{C}$ be a pointed closed symmetric monoidal category. Then the composition product forms a (non-symmetric) monoidal product on the category of symmetric sequences in $\cat{C}$ with unit object given by the unit symmetric sequence $\mathsf{1}$.
\end{proposition}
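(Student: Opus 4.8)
The plan is to construct the associativity constraint and the left and right unit isomorphisms by hand, and then to check the pentagon and triangle coherence conditions. The only features of $\cat{C}$ that enter are: (i) the monoidal product $\smsh$ preserves colimits in each variable, which holds because $\cat{C}$ is closed, so in particular $\smsh$ distributes over coproducts; and (ii) $X \smsh * \isom *$ for every $X$, which holds because the basepoint $*$ of a pointed category is the zero object and $\smsh$, being closed, preserves it. Note also that every coproduct appearing in the definition of $\circ$ is finite, since a finite set has only finitely many partitions, so no hypothesis on $\cat{C}$ beyond the existence of finite coproducts is needed.

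First I would dispose of the unit. For a symmetric sequence $A$, the term $(\mathsf{1} \circ A)(n)$ is the coproduct, over partitions $\lambda$ of $\{1,\dots,n\}$ into $k$ pieces, of $\mathsf{1}(k) \smsh A(n_1) \smsh \dots \smsh A(n_k)$. Since $\mathsf{1}(k) = *$ unless $k = 1$, property (ii) kills all summands except the one indexed by the one-block partition $\lambda = \{\{1,\dots,n\}\}$, which is $\mathsf{1}(1) \smsh A(n) = S \smsh A(n) \isom A(n)$. One reads off from the definition of the $\Sigma_n$-action on a composition product that this identification is $\Sigma_n$-equivariant, and it is evidently natural in $A$; this gives the left unitor $\mathsf{1} \circ A \isom A$. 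Dually, in $(A \circ \mathsf{1})(n)$ all summands are the zero object except the one indexed by the partition into singletons, which is $A(n) \smsh S \smsh \dots \smsh S \isom A(n)$, and again this is equivariant and natural, giving the right unitor.

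The substance is associativity. The key point is that the indexing sets of the coproducts $((A \circ B) \circ C)(n)$ and $(A \circ (B \circ C))(n)$ are each canonically in bijection with the set of \emph{two-level partitions} of $\{1,\dots,n\}$, i.e.\ a partition $\mu$ of $\{1,\dots,n\}$ together with a partition of the set of blocks of $\mu$. Indeed, expanding the definitions and using (i) to distribute $\smsh$ over the coproducts, a summand of $((A\circ B)\circ C)(n)$ is indexed by a partition $\mu$ of $\{1,\dots,n\}$ into $m$ (ordered) blocks together with a partition $\nu$ of $\{1,\dots,m\}$ into $k$ (ordered) pieces, with summand $A(k) \smsh B(m_1)\smsh\dots\smsh B(m_k)\smsh C(\mu_1)\smsh\dots\smsh C(\mu_m)$; while a summand of $(A\circ(B\circ C))(n)$ is indexed by a partition $\rho$ of $\{1,\dots,n\}$ into $k$ (ordered) blocks $Q_1,\dots,Q_k$ together with a partition of each $Q_i$ into $m_i$ (ordered) pieces, with summand $A(k)\smsh\bigl(B(m_1)\smsh C(\dots)\smsh\dots\bigr)\smsh\dots\smsh\bigl(B(m_k)\smsh C(\dots)\smsh\dots\bigr)$. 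Passing from the first description to the second by grouping the blocks of $\mu$ according to which $Q_i$ contains them, and back by forming the common refinement, matches the indexing sets; on a matching pair of summands I would build an isomorphism using the associator and the symmetry of $\smsh$ in $\cat{C}$ to move the $C$-factors past the $B$-factors into the order prescribed by the other bracketing. The main obstacle, and the part requiring genuine care, is to verify that this isomorphism does not depend on the auxiliary orderings of the partition blocks — two choices of ordering differ by a permutation whose effect is absorbed by the $\Sigma$-actions built into the relevant terms of $A$, $B$ and $C$ — and that the resulting isomorphism is $\Sigma_n$-equivariant and natural in all three variables. This is a mechanical but lengthy bookkeeping exercise with symmetric group actions.

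It remains to check the pentagon and triangle axioms, and these reduce to coherence in $\cat{C}$. On the level of indexing sets, the pentagon amounts to the (evident) identity between the two reassociations of a \emph{three-level} partition of $\{1,\dots,n\}$; on each summand it then follows from Mac Lane's coherence theorem for the symmetric monoidal structure of $\cat{C}$, every map in sight being a composite of associators and symmetries between tensor words in which each factor occurs once. The triangle axiom is checked the same way against the unit isomorphisms of the second paragraph. Together these verify that $\circ$ makes the category of symmetric sequences in $\cat{C}$ into a (non-symmetric) monoidal category with unit $\mathsf{1}$, which proves the proposition.
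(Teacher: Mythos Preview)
Your proposal is correct and is precisely the standard argument; the paper does not spell it out at all, writing only ``This is standard (see [Markl--Shnider--Stasheff, 1.68])''. You have supplied the sketch the paper omits, and your identification of the two essential uses of the hypotheses---distributivity of $\smsh$ over coproducts from closedness, and $X \smsh * \isom *$ from pointedness---is exactly right.
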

\begin{proof}
This is standard (see \cite[1.68]{markl/shnider/stasheff:2002}).
\end{proof}

\begin{remark}
Proposition \ref{prop:comprod-monoidal} relies heavily on the hypothesis that $\cat{C}$ be \emph{closed} symmetric monoidal because we need the monoidal structure to commute with coproducts. This is necessary in order that the composition product be associative. By introducing higher-order versions of the composition product, we can make partial sense of this proposition in the non-closed case. See \cite{ching:2005c} for details.
\end{remark}

\begin{definition}[Operads] \label{def:operad}
Let $\cat{C}$ be a pointed closed symmetric monoidal category. An \emph{operad} in $\cat{C}$ is a monoid for the composition product. In other words, an operad consists of a symmetric sequence $P$ together with a \emph{composition map}
\[ P \circ P \to P \]
and a \emph{unit map}
\[ \mathsf{1} \to P \]
satisfying standard associativity and unit axioms. A \emph{morphism of operads} is a map of symmetric sequences that commutes with the operad structures.
\end{definition}

\begin{remark} \label{rem:operad}
From the definition of the composition product, we see that this definition of operad is equivalent to the traditional one (e.g. see \cite{may:1972}), that is, as a symmetric sequence $P$ together with a collection of composition maps
\[ P(k) \smsh P(n_1) \smsh \dots \smsh P(n_k) \to P(n_1+\dots+n_k) \]
and a unit map
\[ S \to P(1) \]
satisfying various equivariance, associativity and unit axioms. This traditional definition has the advantage that it does not require the underlying symmetric monoidal category $\cat{C}$ to be closed.
\end{remark}

\begin{definition}[Modules over operads] \label{def:modules}
Let $P$ be an operad in a closed symmetric monoidal category $\cat{C}$. A \emph{right $P$-module} consists of a symmetric sequence $R$ together with a right $P$-action map
\[ R \circ P \to R \]
satisfying the usual associativity and unit axioms. A \emph{left $P$-module} consists of a symmetric sequence $L$ and a left $P$-action map
\[ P \circ L \to L \]
again satisfying the usual axioms. A \emph{$P$-bimodule} consists of a symmetric sequence $M$ together with commuting right and left $P$-actions.

A \emph{morphism of right $P$-modules} is a map of symmetric sequences that commutes with the module structure maps. Similarly, we have \emph{morphisms of left $P$-modules} and \emph{morphisms of $P$-bimodules}. These notions then give us categories of right $P$-modules (denoted $\mathsf{Mod}_{\mathsf{right}}(P)$), left $P$-modules (denoted $\mathsf{Mod}_{\mathsf{left}}(P)$) and $P$-bimodules (denoted $\mathsf{Mod}_{\mathsf{bi}}(P)$).
\end{definition}

\begin{remark} \label{rem:algebras}
Left modules over an operad $P$ are related to $P$-algebras in the following way. If we allowed our symmetric sequences to include a \ord{0} term (i.e. if we included the empty set as an object in the category $\mathsf{\Sigma}$) and extended the definition of composition product in the usual way, then a $P$-algebra would be equivalent to a left $P$-module concentrated in the \ord{0} term (and equal to the terminal object of $\cat{C}$ in all other positions).
\end{remark}

\begin{definition}[Cooperads and comodules] \label{def:cooperads}
Let $\cat{C}$ be a symmetric monoidal category. Then the opposite category $\cat{C}^{op}$ has a natural symmetric monoidal structure given by that of $\cat{C}$. Also a symmetric sequence in $\cat{C}$ can be identified with a symmetric sequence in $\cat{C}^{op}$ via the isomorphism of categories $\mathsf{\Sigma} \isom \mathsf{\Sigma}^{op}$ that sends a bijection to its inverse.

We then define a \emph{cooperad} in $\cat{C}$ to be an operad in the symmetric monoidal category $\cat{C}^{op}$, but viewed as a symmetric sequence in $\cat{C}$ rather than $\cat{C}^{op}$. Thus a cooperad in $\cat{C}$ is a symmetric sequence together with structure maps of the form
\[ Q(n_1+\dots+n_k) \to Q(k) \smsh Q(n_1) \smsh \dots \smsh Q(n_k). \]
The opposite of a closed symmetric monoidal category is very rarely closed so we are relying on the traditional definition (Remark \ref{rem:operad}) to say what an operad in $\cat{C}^{op}$ is. If $Q$ is a cooperad in $\cat{C}$, we write $Q^{op}$ for the corresponding operad in $\cat{C}^{op}$.

It is useful to have notation for the dual of the composition product. We write $M \varcomp N$ for the composition product of the symmetric sequences $M$ and $N$ viewed as symmetric sequences in $\cat{C}^{op}$. The object $(M \varcomp N)(n)$ is then just the product (rather than the coproduct) of the same terms used to define $(M \circ N)(n)$. With this notation, a cooperad consists of a symmetric sequence $Q$ and a map $Q \to Q \varcomp Q$ satisfying certain axioms dual to those for operads. Note that the operation $\varcomp$ is unlikely to be associative, again since $\cat{C}^{op}$ is usually not \emph{closed} symmetric monoidal.

If $Q$ is a cooperad in $\cat{C}$, then a \emph{right $Q$-comodule} is a right $Q^{op}$-module considered as a symmetric sequence in $\cat{C}$, i.e. it consists of a symmetric sequence $R$ and a right $Q$-coaction $R \to R \varcomp Q$. A \emph{left $Q$-comodule} is a left $Q^{op}$-module, so consists of a symmetric sequence $L$ and a left $Q$-coaction $L \to Q \varcomp L$. A \emph{$Q$-bicomodule} is a $Q^{op}$-bimodule. As with operads and modules there are obvious notions of morphisms of cooperads and comodules and we obtain corresponding categories.
\end{definition}

\begin{definition}[Reduced operads] \label{def:reduced}
An operad $P$ is said to be \emph{reduced} if the unit map $S \to P(1)$ is an isomorphism. Note in particular that this means $P$ has a unique \emph{augmentation} $P(1) \to S$ given by the inverse of the unit map. If $P$ is reduced then the unit symmetric sequence $\mathsf{1}$ has both a right and left $P$-module structure. We denote by $\mathsf{Op}(\cat{C})$ the category consisting of the reduced operads in $\cat{C}$ and the morphisms of operads between them.
\end{definition}

We now recall the bar construction for operads. From here on, we take $\cat{C}$ to be the category $\spectra$ of spectra, although many of the remaining results of this section apply equally well in a more general setting. We leave the reader to extend to the general case as necessary.

\begin{definition}[Simplicial bar constructions] \label{def:bar}
Let $P$ be an operad in $\spectra$ with right module $R$ and left module $L$. Then the \emph{simplicial bar construction} on $P$ with coefficients in $R$ and $L$ is a simplicial object $B_{\bullet}(R,P,L)$ in the category of symmetric sequences of spectra. The $k$-simplices are given by
\[ B_k(R,P,L) := R \circ \underbrace{P \circ \dots \circ P}_k \circ L. \]
The face and degeneracy maps are given as follows:
\begin{itemize}
  \item $d_0: B_k(R,P,L) \to B_{k-1}(R,P,L)$ by the action map $R \circ P \to R$;
  \item $d_i: B_k(R,P,L) \to B_{k-1}(R,P,L)$ for $i = 1,\dots,k-1$ by the operad composition map $P \circ P \to P$ applied to the \ord{k} and \ord{k+1} factors of $P$;
  \item $d_k: B_k(R,P,L) \to B_{k-1}(R,P,L)$ by the action map $P \circ L \to L$;
  \item $s_j: B_k(R,P,L) \to B_{k+1}(R,P,L)$ for $j = 0,\dots,k$ by using the unit map $\mathsf{1} \to P$ to insert the \ord{j+1} copy of $P$.
\end{itemize}
This is a standard two-sided simplicial bar construction.

We now take the (termwise) geometric realization of this simplicial object to obtain what we call just the \emph{bar construction} on $P$ with coefficients in $R$ and $L$:
\[ B(R,P,L)(n) := |B_{\bullet}(R,P,L)(n)|. \]
This bar construction is our model for the derivative composition product $R \circ_{P} L$ of the right and left modules $R$ and $L$, \emph{over} $P$.
\end{definition}

The main result of \cite{ching:2005a} is the following:

\begin{proposition} \label{prop:bar}
Let $P$ be a \emph{reduced} operad in $\spectra$ with right module $R$ and left module $L$. Then there are natural maps
\[ \phi_{R,L}: B(R,P,L) \to B(R,P,\mathsf{1}) \varcomp B(\mathsf{1},P,L) \]
that are associative in the sense that the following diagram commutes
\[ \begin{diagram}
  \node{B(R,P,L)} \arrow{e,t}{\phi_{R,L}} \arrow{s,l}{\phi_{R,L}} \node{B(R,P,\mathsf{1}) \varcomp B(\mathsf{1},P,L)} \arrow{s,r}{\phi_{R,\mathsf{1}}} \\
  \node{B(R,P,\mathsf{1}) \varcomp B(\mathsf{1},P,L)} \arrow{e,t}{\phi_{\mathsf{1},L}} \node{B(R,P,\mathsf{1}) \varcomp B(\mathsf{1},P,\mathsf{1}) \varcomp B(\mathsf{1},P,L)}
\end{diagram} \]
(Strictly speaking, the bottom-right corner of this diagram does not make sense because the dual composition product $\varcomp$ is not associative. However, there is a natural way of forming iterated versions of $\varcomp$ by taking one large product. See \cite[Remark 2.20]{ching:2005a} or \cite{ching:2005c} for more details.)
\end{proposition}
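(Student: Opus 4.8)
The plan is to build an explicit combinatorial model for the realized two-sided bar construction and to define $\phi_{R,L}$ by a ``tree-cutting'' operation; this is essentially the argument of \cite{ching:2005a}, in which the case $R = L = \mathsf{1}$ (the cooperad structure on $B(\mathsf{1},P,\mathsf{1})$) is carried out in detail and the module versions are indicated. First I would record, following \cite{ching:2005a}, a description of $B(R,P,L)(n) = |B_\bullet(R,P,L)(n)|$ --- valid because $P$ is \emph{reduced} --- as a space whose points are represented by rooted trees $T$ with $n$ leaves labelled by $\{1,\dots,n\}$, equipped with length data on the edges drawn from a suitably normalized simplex, a decoration of the root vertex by a point of $R$, of each vertex adjacent to a leaf by a point of $L$, and of every remaining vertex by a point of $P$ (all of the appropriate arities), subject to the relations that an internal edge of length $0$ may be contracted --- composing the decorations of its two endpoints via the operad multiplication $P \circ P \to P$ or the module actions $R \circ P \to R$, $P \circ L \to L$ --- and that a bivalent $P$-vertex decorated by the unit $S = P(1)$ may be erased. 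The $\Sigma_n$-action relabels leaves. Establishing this model amounts to identifying the two-sided simplicial bar construction with a space of \emph{leveled} trees and then absorbing the leveling into the length data; reducedness is exactly what allows the unit vertices produced by the degeneracies to be discarded.

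With the model in hand, I would define $\phi_{R,L}$ as follows. The length data equips each $T$ with a well-defined ``level'' function running from $0$ at the leaves to $1$ at the root; cutting $T$ along the locus where the level equals $1/2$ severs it into a ``trunk'' $T_0$, with some number $k$ of leaves and carrying only the $R$- and $P$-decorations, and a forest of ``crowns'' $T_1,\dots,T_k$ with $n_1 + \dots + n_k = n$ leaves in total, each carrying only $P$- and $L$-decorations. After renormalizing, $T_0$ represents a point of $B(R,P,\mathsf{1})(k)$ and each $T_i$ a point of $B(\mathsf{1},P,L)(n_i)$, and together with the partition of $\{1,\dots,n\}$ recorded by the leaf-sets of the crowns these assemble to a point of $\bigl(B(R,P,\mathsf{1}) \varcomp B(\mathsf{1},P,L)\bigr)(n)$, all coordinates of that product indexed by other partitions being sent to the basepoint. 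I would then verify that this respects the defining relations: contracting a length-$0$ edge lying strictly above or strictly below level $1/2$ is matched by a contraction in the target, the normalization prevents a length-$0$ edge from straddling level $1/2$, and the bivalent $S = P(1)$-vertex that a cut edge may acquire after renormalizing is precisely the kind that reducedness lets us erase --- so nothing spurious is introduced. Naturality in $R$ and $L$ is then immediate, since the construction only reorganizes the tree and renormalizes lengths, carrying the $R$- and $L$-decorations along functorially.

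Finally, the commuting square in the statement --- coassociativity --- is the assertion that cutting $T$ at level $1/4$ and at level $1/2$, in either order (or simultaneously), produces the same trisected tree, which is visible from the model. The one genuine subtlety is the one already flagged in the statement: the bottom-right corner must be read as a single large product over partitions of $\{1,\dots,n\}$ into blocks with each block further partitioned, as in \cite[Remark 2.20]{ching:2005a} and \cite{ching:2005c}, since $\varcomp$ is not literally associative; with that convention the square commutes on the nose. I expect the real obstacle to lie not in any conceptual step but in the bookkeeping: pinning down the tree model precisely, matching it levelwise against $B_\bullet(R,P,L)$ so that the degeneracies and the exact role of reducedness come out right, and checking compatibility of the cut with each generating relation --- this is delicate (it is easy to mishandle an edge that degenerates exactly at the cutting level), which is why it is worth importing the careful treatment of \cite{ching:2005a} rather than reproducing it here.
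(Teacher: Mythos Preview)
Your proposal is correct and is exactly the argument the paper invokes: the paper's own proof is simply a citation to \cite[\S7.3]{ching:2005a}, and you have faithfully sketched that tree-cutting construction (including the role of reducedness and the iterated-$\varcomp$ convention).
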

\begin{proof}
This is in \cite[\S7.3]{ching:2005a}.
\end{proof}

\begin{corollary} \label{cor:bar}
Let $P$ be a reduced operad in $\spectra$ with right module $R$ and left module $L$. Then:
\begin{itemize}
  \item the \emph{reduced bar construction} $B(P) := B(\mathsf{1},P,\mathsf{1})$ forms a reduced cooperad in $\spectra$;
  \item the \emph{one-sided bar construction} $B(R,P,\mathsf{1})$ forms a right comodule over the cooperad $BP$;
  \item the \emph{one-sided bar construction} $B(\mathsf{1},P,L)$ forms a left comodule over the cooperad $BP$.
\end{itemize}
\end{corollary}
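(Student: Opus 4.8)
\textbf{Proof proposal for Corollary \ref{cor:bar}.}

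The plan is to deduce all three statements directly from the associativity of the maps $\phi_{R,L}$ established in Proposition \ref{prop:bar}, by specializing the coefficient modules. First I would take $R = L = \mathsf{1}$, which is legitimate since $P$ is reduced and so $\mathsf{1}$ carries both a right and a left $P$-module structure (Definition \ref{def:reduced}). The map $\phi_{\mathsf{1},\mathsf{1}}$ then has the form
\[ BP = B(\mathsf{1},P,\mathsf{1}) \longrightarrow B(\mathsf{1},P,\mathsf{1}) \varcomp B(\mathsf{1},P,\mathsf{1}) = BP \varcomp BP, \]
which is exactly a cooperad comultiplication $BP \to BP \varcomp BP$ in the sense of Definition \ref{def:cooperads}. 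The coassociativity axiom for this structure map is precisely the commuting square of Proposition \ref{prop:bar} in the case $R = L = \mathsf{1}$ (with the usual caveat about forming the iterated $\varcomp$ as a single large product). For the counit, I would use the augmentation $P(1) \to S$ coming from the reduced structure to produce a map $BP \to \mathsf{1}$ and check the counit identities; alternatively one observes that the $0$-simplices of $B_\bullet(\mathsf{1},P,\mathsf{1})$ are $\mathsf{1} \circ \mathsf{1} = \mathsf{1}$, which furnishes the counit after realization. This establishes the first bullet: $BP$ is a reduced cooperad.

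Next, for the second bullet I would take $L = \mathsf{1}$ (again using reducedness of $P$) and keep $R$ a general right $P$-module. Then $\phi_{R,\mathsf{1}}$ gives a map
\[ B(R,P,\mathsf{1}) \longrightarrow B(R,P,\mathsf{1}) \varcomp B(\mathsf{1},P,\mathsf{1}) = B(R,P,\mathsf{1}) \varcomp BP, \]
which is a right $BP$-coaction. The coassociativity of this coaction — that is, the compatibility of the coaction with the comultiplication on $BP$ just constructed — is the $L = \mathsf{1}$ case of the square in Proposition \ref{prop:bar}, and the counit compatibility again follows by inspecting $0$-simplices. Symmetrically, the third bullet follows by taking $R = \mathsf{1}$: the map $\phi_{\mathsf{1},L}: B(\mathsf{1},P,L) \to B(\mathsf{1},P,\mathsf{1}) \varcomp B(\mathsf{1},P,L) = BP \varcomp B(\mathsf{1},P,L)$ is a left $BP$-coaction, with the coassociativity coming from the $R = \mathsf{1}$ instance of the Proposition's square.

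The only genuinely delicate point — and the place where one must be careful rather than merely routine — is the handling of the non-associativity of $\varcomp$. Since $\spectra^{op}$ is not closed symmetric monoidal, the iterated products $BP \varcomp BP \varcomp BP$ appearing in the coassociativity diagrams are not literally composites of binary $\varcomp$'s but must be interpreted as a single combined product, exactly as in \cite[Remark 2.20]{ching:2005a} or \cite{ching:2005c}; I would invoke that framework to make the cooperad and comodule axioms precise, and then the verification reduces to Proposition \ref{prop:bar}. Everything else is formal: the face/degeneracy structure of Definition \ref{def:bar} and the naturality of the $\phi_{R,L}$ in $R$ and $L$ guarantee that the specialized maps assemble into genuine cooperad and comodule structures after termwise geometric realization.
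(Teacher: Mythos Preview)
Your proposal is correct and is exactly the argument the paper has in mind: the paper's own proof is simply a citation to \cite[Prop.~7.26]{ching:2005a}, and what you have written is precisely the derivation given there---specialize $R$ and $L$ to $\mathsf{1}$ in Proposition~\ref{prop:bar} to obtain the cooperad structure, then set only one side to $\mathsf{1}$ for the comodule structures, with the associativity square supplying coassociativity in each case. Your caveat about interpreting the iterated $\varcomp$ as a single large product is also on target and matches the remark in Proposition~\ref{prop:bar}.
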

\begin{proof}
See \cite[Prop. 7.26]{ching:2005a}.
\end{proof}

\begin{definition}[Bisimplicial bar constructions] \label{def:bar-bi}
Let $P$ be an operad in $\spectra$ and let $M$ be a $P$-bimodule, $R$ a right $P$-module and $L$ a left $P$-module. Then we define the \emph{bisimplicial bar construction} on $M$ to be the bisimplicial object
\[ B_{\bullet,\bullet}(R,P,M,P,L) := R \circ P^{\bullet} \circ M \circ P^{\bullet} \circ L. \]
with face and degeneracy maps similar to those in the bar constructions above. The \emph{bimodule bar construction} on $M$ is then the realization
\[ B(R,P,M,P,L) := |B_{\bullet,\bullet}(R,P,M,P,L)|. \]
\end{definition}

\begin{remark}
Dual to the bar constructions considered above, we have cobar constructions for cooperads and their comodules. We do not make explicit use of these. Instead we use Spanier-Whitehead duality to write make most of our constructions in terms of operads and modules.
\end{remark}

We conclude this section by noting that if the right and left $P$-modules $R$ and $L$ involved in the bar construction $B(R,P,L)$ (or $B(R,P,M,P,L)$) are themselves bimodules, then this bar construction retains some of that additional structure. We start with the following construction.

\begin{prop} \label{prop:bar-comprod}
Let $P$ be a reduced operad in $\spectra$ and let $R$ and $L$ be right and left $P$-modules respectively. Let $A$ be any symmetric sequence. Then there are isomorphisms of symmetric sequences:
\[ \chi_r: A \circ B(R,P,L) \isom B(A \circ R,P,L) \]
and
\[ \chi_l: B(R,P,L) \circ A \isom B(R,P,L \circ A) \]
where, in the targets of these maps, we give $A \circ R$ the structure of a right $P$-module via
\[ (A \circ R) \circ P \isom A \circ (R \circ P) \to A \circ R \]
using the right $P$-module structure on $R$, and we give $L \circ A$ the structure of a left $P$-module similarly.
\end{prop}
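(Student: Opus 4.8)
The plan is to construct the isomorphisms $\chi_r$ and $\chi_l$ levelwise on the simplicial bar constructions and then realize. Recall that $B_\bullet(R,P,L) = R \circ P^{\bullet} \circ L$, so the $k$-simplices of $A \circ B_\bullet(R,P,L)$ are $A \circ (R \circ P^{\circ k} \circ L)$. First I would invoke the associativity of the composition product (Proposition \ref{prop:comprod-monoidal}) to obtain a natural isomorphism $A \circ (R \circ P^{\circ k} \circ L) \isom (A \circ R) \circ P^{\circ k} \circ L$, which is precisely the $k$-simplices of $B_\bullet(A \circ R, P, L)$, where $A \circ R$ carries the right $P$-module structure described in the statement. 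The key check is that these levelwise isomorphisms are compatible with the simplicial structure maps: the face map $d_0$ on $A \circ B_\bullet(R,P,L)$ uses the right $P$-action $R \circ P \to R$, and under the associativity isomorphism this matches the face map $d_0$ on $B_\bullet(A \circ R, P, L)$, which uses the action $(A \circ R) \circ P \isom A \circ (R \circ P) \to A \circ R$ — this commutes by the very definition of the induced module structure. The interior faces $d_i$ for $0 < i < k$ use the operad multiplication $P \circ P \to P$, and the face $d_k$ uses the left action $P \circ L \to L$; both are unchanged by precomposing with $A \circ -$, so compatibility is immediate from associativity being natural. The degeneracies $s_j$ insert a copy of $P$ via the unit $\mathsf{1} \to P$ and are handled identically. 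Thus we get an isomorphism of simplicial objects $A \circ B_\bullet(R,P,L) \isom B_\bullet(A \circ R, P, L)$, and since $A \circ -$ commutes with the (termwise) geometric realization — realization is a coend, $A \circ -$ is a left adjoint in the second variable (as $\cat{C}=\spectra$ is closed symmetric monoidal, so $\circ$ preserves colimits in each variable) — applying $|-|$ termwise yields the desired isomorphism $\chi_r$.

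The construction of $\chi_l$ is entirely dual: the $k$-simplices of $B_\bullet(R,P,L) \circ A$ are $(R \circ P^{\circ k} \circ L) \circ A \isom R \circ P^{\circ k} \circ (L \circ A)$ by associativity, which are the $k$-simplices of $B_\bullet(R, P, L \circ A)$ with $L \circ A$ given the left $P$-module structure $P \circ (L \circ A) \isom (P \circ L) \circ A \to L \circ A$. The compatibility with face and degeneracy maps is checked as before — the only face that interacts with $A$ is $d_k$, which uses the left action, and the induced structure is defined precisely to make this square commute. Realizing termwise, and using that $- \circ A$ also commutes with colimits (hence with geometric realization) since the composition product preserves colimits in its first variable, gives $\chi_l$.

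The main obstacle is not conceptual but bookkeeping: one must verify that the associativity isomorphisms for the composition product are natural enough to intertwine \emph{all} the face and degeneracy operators simultaneously, and that the coherence data (the associators of Proposition \ref{prop:comprod-monoidal}) are compatible across the simplicial levels. In particular, one should be slightly careful that $A \circ -$ and $- \circ A$ genuinely commute with the termwise geometric realization used in Definition \ref{def:bar}; this follows because the composition product in $\spectra$ is built from smash products and coproducts, both of which preserve the relevant colimits, and geometric realization of simplicial spectra is such a colimit. Once these points are dispatched, the proposition follows formally.
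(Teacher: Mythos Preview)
Your approach is essentially the same as the paper's: identify the simplicial objects $A \circ B_\bullet(R,P,L)$ and $B_\bullet(A \circ R,P,L)$ via associativity of $\circ$, then commute the outer construction with geometric realization. The paper carries this out termwise and explicitly, but the structure of the argument is identical.

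There is, however, a genuine gap in your justification for $\chi_r$. You claim that ``$A \circ -$ is a left adjoint in the second variable (as $\cat{C}=\spectra$ is closed symmetric monoidal, so $\circ$ preserves colimits in each variable)''. This is false: the composition product does \emph{not} preserve colimits in its right variable. Looking at the formula
\[ (A \circ B)(n) = \Wdge A(k) \smsh B(n_1) \smsh \dots \smsh B(n_k), \]
the symmetric sequence $B$ appears $k$ times in a smash product, so $A \circ -$ is not additive, let alone colimit-preserving. (Your justification for $\chi_l$ is fine: $-\circ A$ \emph{is} colimit-preserving, since $A$ appears linearly.) The correct reason $A \circ -$ commutes with realization is the specific fact, cited by the paper as \cite[X.1.4]{elmendorf/kriz/mandell/may:1997}, that for simplicial spectra $X_\bullet^{(1)},\dots,X_\bullet^{(k)}$ one has $|X_\bullet^{(1)}| \smsh \dots \smsh |X_\bullet^{(k)}| \isom |X_\bullet^{(1)} \smsh \dots \smsh X_\bullet^{(k)}|$ with the diagonal simplicial structure on the right. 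Combined with the fact that realization commutes with coproducts and with smashing by a constant object, this gives $A \circ |B_\bullet(R,P,L)| \isom |A \circ B_\bullet(R,P,L)|$. Your last paragraph gestures at ``smash products and coproducts preserve the relevant colimits'', but that phrasing still conflates the issue: what is needed is not that $\smsh$ preserves colimits in each variable separately, but this Eilenberg--Zilber-type compatibility between realization and iterated smash products.
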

\begin{proof}
To define $\chi_r$, we notice that
\[ [A \circ B(R,P,L)](n) := \Wdge_{\text{partitions of $\{1,\dots,n\}$}} A(k) \smsh B(R,P,L)(n_1) \smsh \dots \smsh B(R,P,L)(n_k). \]
Each term here is defined to be
\[ A(k) \smsh |B_{\bullet}(R,P,L)(n_1)| \smsh \dots \smsh |B_{\bullet}(R,P,L)(n_k)| \]
which by \cite[X.1.4]{elmendorf/kriz/mandell/may:1997} is isomorphic to
\[ |A(k) \smsh B_{\bullet}(R,P,L)(n_1) \smsh \dots \smsh B_{\bullet}(R,P,L)(n_k)| \]
which by definition is the same as
\[ |A(k) \smsh (R \circ P^\bullet \circ L)(n_1) \smsh \dots \smsh (R \circ P^\bullet \circ L)(n_k)| \]
Taking the coproduct over all $n_1+\dots+n_k = n$, (and because coproducts commute with realization), we get
\[ |(A \circ R \circ P^{\bullet} \circ L)(n)| \]
which is the definition of
\[ B(A \circ R,P,L)(n). \]
This sequence defines the isomorphism $\chi_r$, and $\chi_l$ is given similarly.
\end{proof}

\begin{definition}[Module structures on bar construction] \label{def:module-bar}
Let $P$ be a reduced operad in $\spectra$ and let $R$ and $L$ be right and left $P$-modules respectively. Suppose that the right module structure on $R$ is part of a $P$-bimodule structure. We then define a left $P$-module structure on $B(R,P,L)$ by
\[ P \circ B(R,P,L) \isom B(P \circ R,P,L) \to B(R,P,L). \]
The first map is the isomorphism $\chi_r$ of Proposition \ref{prop:bar-comprod}, and the second comes from the left module structure on the bimodule $R$.

Similarly, if the left module structure on $L$ is part of a $P$-bimodule structure then we define a right $P$-module structure on $B(R,P,L)$ by
\[ B(R,P,L) \circ P \isom B(R,P,L \circ P) \to B(R,P,L). \]
If $R$ and $L$ are both $P$-bimodules, then these constructions together give $B(R,P,L)$ a $P$-bimodule structure as well.
\end{definition}

\begin{remark} \label{rem:bibar}
We can think of the bimodule bar construction $B(R,P,M,P,L)$ as given by first forming $B(R,P,M)$ (i.e. taking the bar construction for the left module structure on $M$) and then forming $B(B(R,P,M),P,L)$ (i.e. taking the bar construction for the right module structure on $B(R,P,M)$ that comes via Definition \ref{def:module-bar} from the right module structure on $M$). Alternatively, we can do these constructions in the other order. In any case, we get
\[ B(R,P,M,P,L) \isom B(B(R,P,M),P,L) \isom B(R,P,B(M,P,L)). \]
These isomorphisms come from doing horizontal then vertical, or vertical then horizontal, realizations of the bisimplicial bar construction, instead of the diagonal realization. It also follows that if either $R$ or $L$ is a $P$-bimodule, then the bar construction $B(R,P,M,P,L)$ is a left or right $P$-module respectively.
\end{remark}

\begin{cor} \label{cor:bar-bi}
Let $P$ be a reduced operad in $\spectra$ and let $M$ be a $P$-bimodule. Then the bimodule bar construction
\[ B(\mathsf{1},P,M,P,\mathsf{1}) \]
forms a bicomodule over the cooperad $B(P)$.
\end{cor}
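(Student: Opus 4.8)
The plan is to leverage the two-step description of the bimodule bar construction given in Remark \ref{rem:bibar}, together with the cooperad and comodule structures already established in Corollary \ref{cor:bar}, and the module structures on one-sided bar constructions from Definition \ref{def:module-bar}. The key observation is that $B(\mathsf{1},P,M,P,\mathsf{1})$ can be written as $B(\mathsf{1},P,B(M,P,\mathsf{1}))$, where the inner one-sided bar construction $B(M,P,\mathsf{1})$ inherits a left $P$-module structure from the left $P$-module structure on the $P$-bimodule $M$ via the isomorphism $\chi_r$ of Proposition \ref{prop:bar-comprod} (exactly as in Definition \ref{def:module-bar}). So the outer bar construction $B(\mathsf{1},P,-)$ is being applied to a genuine left $P$-module, and by the left-comodule half of Corollary \ref{cor:bar}, $B(\mathsf{1},P,B(M,P,\mathsf{1}))$ acquires a left $B(P)$-comodule structure.

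Symmetrically, writing $B(\mathsf{1},P,M,P,\mathsf{1}) \isom B(B(\mathsf{1},P,M),P,\mathsf{1})$, where $B(\mathsf{1},P,M)$ carries a right $P$-module structure coming from the right $P$-module structure on the bimodule $M$ (again via Definition \ref{def:module-bar}), the right-comodule half of Corollary \ref{cor:bar} endows $B(\mathsf{1},P,M,P,\mathsf{1})$ with a right $B(P)$-comodule structure. First I would verify that these two structures are defined on the same underlying symmetric sequence — this is precisely the content of the isomorphisms in Remark \ref{rem:bibar}, which identify the diagonal realization of the bisimplicial bar construction with either iterated realization. Then I would check that the left and right $B(P)$-coactions commute, i.e. that together they make $B(\mathsf{1},P,M,P,\mathsf{1})$ into a $B(P)$-bicomodule in the sense of Definition \ref{def:cooperads}.

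The compatibility of the two coactions is the step I expect to be the main obstacle, though it should be essentially formal. The point is that the left coaction only touches the `left' simplicial direction of the bisimplicial bar construction (the copies of $P$ between $\mathsf{1}$ and $M$) while the right coaction only touches the `right' direction (the copies of $P$ between $M$ and $\mathsf{1}$), and since these are independent simplicial coordinates, the corresponding structure maps commute. More precisely, both coactions are built by applying the map $\phi$ of Proposition \ref{prop:bar} in one of the two bar directions, and one checks that $\phi_{\text{left}}$ and $\phi_{\text{right}}$ commute by an argument at the bisimplicial level: on $k,\ell$-bisimplices both maps are assembled from the operad composition and unit maps acting on disjoint blocks of factors, so they commute before realization, and hence after taking the diagonal realization. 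This is directly analogous to — and indeed a consequence of the same bookkeeping as — the associativity diagram in Proposition \ref{prop:bar}, and can be reduced to it by viewing $M$ as a $(P,P)$-bimodule and tracking which tensor factors each structure map affects. No new connectivity or homotopical input is needed; the entire argument is a diagram chase combining Proposition \ref{prop:bar}, Corollary \ref{cor:bar}, Proposition \ref{prop:bar-comprod} and Remark \ref{rem:bibar}.
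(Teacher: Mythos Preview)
Your proposal is correct and follows essentially the same approach as the paper: use Remark \ref{rem:bibar} to write $B(\mathsf{1},P,M,P,\mathsf{1})$ as either $B(B(\mathsf{1},P,M),P,\mathsf{1})$ or $B(\mathsf{1},P,B(M,P,\mathsf{1}))$, apply Corollary \ref{cor:bar} in each case, and note that the resulting coactions commute. Your version is actually more detailed than the paper's, which simply asserts the commutativity without the bisimplicial justification you sketch.
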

\begin{proof}
From Remark \ref{rem:bibar}, we can think of $B(\mathsf{1},P,M,P,\mathsf{1})$ as
\[ B(B(\mathsf{1},P,M),P,\mathsf{1}) \]
from which it follows by Corollary \ref{cor:bar} that this has a right $BP$-comodule structure. Alternatively, we can think of $B(\mathsf{1},P,M,P,\mathsf{1})$ as
\[ B(\mathsf{1},P,B(M,P,\mathsf{1})) \]
from which it follows that this has a left $BP$-comodule structure. These comodule structures commute and so we have a $BP$-bicomodule.
\end{proof}

\section{Homotopy invariance of the bar construction} \label{sec:bar-homotopy}

We now want to address the homotopical properties of the bar construction. Specifically, given weak equivalences between operads and modules, when do they induce weak equivalences between the corresponding bar constructions. For us, weak equivalences of operads and modules are always detected termwise.

\begin{definition}[Weak equivalences] \label{def:weq-operads}
Let $f:A \to B$ be a morphism of symmetric sequences of spectra. We say that $f$ is a \emph{weak equivalence} if the map
\[ f_n: A(n) \to B(n) \]
is a weak equivalence in the category $\spectra$ for all $n \geq 1$. A morphism of operads, modules, cooperads or comodules is said to be a \emph{weak equivalence} if the underlying map of symmetric sequences is a weak equivalence.
\end{definition}

In the most general case, we want to consider the effect on the bar construction of changing both the operad and the modules involved. Therefore we make the following definition.

\begin{definition}[Morphisms of modules]
Let $f:P \to P'$ be a morphism of operads of spectra. Let $R$ be a right $P$-module, and $R'$ a right $P'$-module. If $r:R \to R'$ is a morphism of symmetric sequences, then we say \emph{$r$ respects the module structures on $R$ and $R'$ via $f$} if the following diagram commutes:
\[ \begin{diagram}
  \node{R \circ P} \arrow{e} \arrow{s,l}{r \circ f} \node{R} \arrow{s,l}{r} \\
  \node{R' \circ P'} \arrow{e} \node{R'}
\end{diagram} \]
where the top and bottom maps are the module structures on $R$ and $R'$ respectively. If $l:L \to L'$ is a morphism of symmetric sequences from a left $P$-module to a left $P'$-module, then there is a corresponding definition of when \emph{$l$ respects the module structures on $L$ and $L'$ via $f$}, and similarly for bimodules.
\end{definition}

\begin{definition}[Induced maps on bar constructions] \label{def:bar-induced}
Now suppose that $f:P \to P'$ is a morphism of reduced operads, $r:R \to R'$ a morphism that respects right module structures on $R$ and $R'$ via $f$, and $l:L \to L'$ a morphism that respects left module structures on $L$ and $L'$ via $f$. Then the triple $(r,f,l)$ induces a morphism of symmetric sequences
\[ (r,f,l)_*: B(R,P,L) \to B(R',P',L') \]
via the induced maps $r \circ f^k \circ l: R \circ P^k \circ L$. In particular, $f$ induces a morphism
\[ f_*: B(P) \to B(P'). \]
If $P' = P$ and $f$ is the identity map, then $r:R \to R'$ is just a morphism of right $P$-modules, and induces a map
\[ r_*: B(R,P,\mathsf{1}) \to B(R',P,\mathsf{1}). \]
Similarly, $l:L \to L'$ is a morphism of left $P$-modules and induces a map
\[ l_*: B(\mathsf{1},P,L) \to B(\mathsf{1},P,L'). \]
\end{definition}

Now suppose that the maps $r,f,l$ in Definition \ref{def:bar-induced} are weak equivalences. It is not always true that the induced map $(r,f,l)_*$ is a weak equivalence. For example, $(r,f,l)_1$ is the map
\[ r_1 \smsh l_1 : R(1) \smsh L(1) \to R'(1) \smsh L'(1). \]
This is in general not a weak equivalence unless all these objects are cofibrant. In general we need some cofibrancy hypotheses in order that $(r,f,l)_*$ be a weak equivalence of symmetric sequences.

\begin{definition}[Termwise-cofibrant operads] \label{def:termwise-cofibrant}
Let $M$ be a symmetric sequence in $\spectra$. We say that $M$ is \emph{termwise-cofibrant} if:
\begin{enumerate}
  \item $M(n)$ is a cofibrant spectrum for $n \geq 2$; and
  \item either $M(1)$ is a cofibrant spectrum, or $M(1) \isom S$.
\end{enumerate}
Recall that the sphere spectrum $S$ is not cofibrant, so the alternatives in the second condition here are meaningful. It is important for us to include the case $M(1) \isom S$ to allow for reduced operads, and for the unit symmetric sequence $\mathsf{1}$.
\end{definition}

We can now state our main result on the homotopy invariance of the bar construction.

\begin{prop} \label{prop:bar-invariance}
Let $f:P \to P'$, $r:R \to R'$ and $l: L \to L'$ be as in Definition \ref{def:bar-induced}. Suppose that $f$, $r$ and $l$ are weak equivalences, and that the symmetric sequences $P,P',R,R',L,L'$ are all termwise-cofibrant. Then $B(R,P,L)$ and $B(R',P',L')$ are termwise-cofibrant, and the induced map
\[ \phi = (r,f,l)_*: B(R,P,L) \to B(R',P',L') \]
is a weak equivalence.
\end{prop}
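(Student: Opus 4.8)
The plan is to prove homotopy invariance of the bar construction $B(R,P,L)$ by bootstrapping from the homotopy invariance of geometric realization (Proposition~\ref{prop:reedy}) together with the observation that the relevant simplicial objects are Reedy cofibrant in each term. First I would reduce to the termwise level: since $B(R,P,L)(n) = |B_\bullet(R,P,L)(n)|$ and the induced map $\phi_*$ is the termwise realization of a map of simplicial objects, it suffices by Proposition~\ref{prop:reedy}(1) to show that (i) each $B_k(R,P,L)$ and $B_k(R',P',L')$ is termwise cofibrant, (ii) the map $B_k(R,P,L)(n) \to B_k(R',P',L')(n)$ is a weak equivalence for each $k$ and $n$, and (iii) the simplicial objects $B_\bullet(R,P,L)(n)$ and $B_\bullet(R',P',L')(n)$ are Reedy cofibrant. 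The termwise-cofibrancy claim for the bar construction itself then follows from (i) and (iii) together with the fact that realization of a Reedy cofibrant simplicial object in $\spectra$ is cofibrant.

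The heart of the argument is a careful analysis of the composition product. I would establish a lemma to the effect that if $M$ and $N$ are termwise-cofibrant symmetric sequences, then so is $M \circ N$, and moreover that $\circ$ preserves termwise weak equivalences between termwise-cofibrant symmetric sequences. This uses the explicit formula $(M\circ N)(n) = \bigvee_{\lambda} M(k)\smsh N(n_1)\smsh\cdots\smsh N(n_k)$: smash products of cofibrant spectra are cofibrant and smash products preserve weak equivalences between cofibrant objects (the pushout-product axiom in $\spectra$), and wedges of cofibrant spectra are cofibrant and preserve weak equivalences. The one subtlety is the term with $M(1)\isom S$ or $N(n_i)\isom S$: when $n=1$ and the partition is trivial, we get $M(1)\smsh N(1)$, which need not be cofibrant if both factors are $S$ — but this does not occur for $B_k(R,P,L) = R\circ P^{\circ k}\circ L$ unless $R(1)$, all $P(1)$ and $L(1)$ are simultaneously $S$, and even then $S\smsh\cdots\smsh S \isom S$ is handled by the second clause of Definition~\ref{def:termwise-cofibrant}. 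One must track this carefully: the point is that in $R\circ P^{\circ k}\circ L$ evaluated at $n$, every summand is a smash product in which at least one factor is a $P(m)$, $R(m)$ or $L(m)$ with $m\geq 2$ (hence genuinely cofibrant) unless \emph{all} indices are $1$, in which case the whole summand is a smash of copies of possibly-$S$ cofibrant-or-$S$ objects, still cofibrant-or-$S$. This gives (i), and applying the same analysis to the map $r\circ f^{\circ k}\circ l$ gives (ii).

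For (iii), Reedy cofibrancy of $B_\bullet(R,P,L)(n)$, I would check that the latching maps $L_k B_\bullet \to B_k$ are cofibrations of spectra. The latching object at level $k$ is built from the degeneracies, which in the bar construction are insertions of the unit $\mathsf{1}\to P$; since $\mathsf{1}(1) = S$ and $P(1)$ is cofibrant-or-$S$, the unit map $\mathsf{1}\to P$ is, termwise, a cofibration (either $\ast\to P(m)$ for $m\geq 2$ with $P(m)$ cofibrant, or $S\to P(1)$, which is the inclusion of a cofibrant-or-initial-into-cofibrant). A standard computation (as in \cite{ching:2005a} or \cite{may:1972}) shows the latching map for a two-sided bar construction on such data is a cofibration, so the simplicial object is Reedy cofibrant termwise. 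Then Proposition~\ref{prop:reedy}(1) applies directly to conclude $\phi_*$ is a termwise weak equivalence, and the realization of a Reedy cofibrant simplicial object in the model category $\spectra$ is cofibrant, giving termwise cofibrancy of $B(R,P,L)$.

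The main obstacle I anticipate is the bookkeeping around the $M(1)\isom S$ exception in Definition~\ref{def:termwise-cofibrant}: the sphere spectrum is not cofibrant in EKMM $S$-modules, so one cannot blithely say ``smash of cofibrants is cofibrant'' — one has to verify that every place where a non-cofibrant $S$ could appear, it appears either alone (so the summand is $\isom S$, covered by clause (2)) or smashed only with other copies of $S$ (again $\isom S$). This requires being precise about which summands of $(R\circ P^{\circ k}\circ L)(n)$ can be built entirely from the ``$1$-slots'' of the symmetric sequences, and checking the induced maps on those summands are weak equivalences by hand rather than by the pushout-product axiom. Once that case analysis is set up cleanly, the rest is a routine application of the Reedy machinery and the homotopy invariance of realization.
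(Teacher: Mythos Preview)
Your overall strategy---levelwise weak equivalence plus Reedy cofibrancy, then invoke Proposition~\ref{prop:reedy}---is exactly the paper's approach, and your treatment of the $S$-versus-cofibrant bookkeeping for the iterated composition products is along the right lines.

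The weak point is your Reedy cofibrancy argument. Your claim that the unit map $\mathsf{1}\to P$ is termwise a cofibration because ``$S\to P(1)$ is the inclusion of a cofibrant-or-initial-into-cofibrant'' is confused: in EKMM $S$-modules, $S$ is neither cofibrant nor initial, so this does not justify anything. You then defer to a ``standard computation,'' but the computation in the generality you describe (unit map merely a cofibration) is genuinely delicate---the latching colimits are not simply sub-wedges in that case.

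What you are missing is that the hypothesis in Definition~\ref{def:bar-induced} makes $P$ and $P'$ \emph{reduced}, so the unit $S\to P(1)$ is an \emph{isomorphism}. This is exactly what the paper exploits: with $P(1)\isom S$, a degeneracy just inserts a copy of $S$, so the $t^{\text{th}}$ latching object of $B_\bullet(R,P,L)(n)$ is literally the sub-coproduct of $(R\circ P^{\circ t}\circ L)(n)$ consisting of those summands in which at least one $P$-factor contributes only $P(1)$'s. The latching map is then the inclusion of a wedge summand, hence a cofibration provided the complementary (``nondegenerate'') summands are cofibrant---and those summands each contain at least one $P(m)$, $R(m)$, or $L(m)$ with $m\geq 2$, so they are genuine smash products of cofibrant spectra. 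This is the concrete step that replaces your appeal to a standard computation; once you use reducedness this way, the rest of your outline goes through.
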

\begin{proof}
We start by noting that, in a monoidal model category, a smash product of weak equivalences \emph{is} a weak equivalence if all the objects involved are cofibrant. This follows from the pushout-product axiom. Even if the unit object $S$ is not cofibrant, this claim is still true when the objects involved are either cofibrant or equal to $S$. This follows from the condition in a monoidal model structure that the map
\[ S_c \smsh X \to S \smsh X \isom X \]
is a weak equivalence (where $S_c$ is a cofibrant replacement for $S$).

Now note that the map $\phi_1: B(R,P,L)(1) \to B(R',P',L')(1)$ is isomorphic to
\[ r_1 \smsh l_1 : R(1) \smsh L(1) \to R'(1) \smsh L'(1).\]
The spectra involved here are either cofibrant, or isomorphic to $S$, and this map is a smash product of weak equivalences, so is itself a weak equivalence. Each of the terms $R(1) \smsh L(1)$ and $R'(1) \smsh L'(1)$ is either cofibrant or isomorphic to $S$, so the symmetric sequences $B(R,P,L)$ and $B(R',P',L')$ satisfy condition (2) of Definition \ref{def:termwise-cofibrant}.

Now consider $\phi_n: B(R,P,L)(n) \to B(R',P',L')(n)$ for some $n \geq 2$. This is the map on geometric realizations induced by
\[ \phi_{n,r}: \left[R \circ \overbrace{P \circ \dots \circ P}^r \circ L \right](n) \to \left[R' \circ \overbrace{P' \circ \dots \circ P'}^r \circ L' \right](n). \]
This is a coproduct of maps of the form
\[ R(i) \smsh \dots \smsh P(j) \smsh \dots \smsh L(k) \to R'(i) \smsh \dots \smsh P'(j) \smsh \dots \smsh L'(k) \]
which in turn is a smash product of weak equivalences in which all the objects involved are either cofibrant or isomorphic to $S$. So again it is itself a weak equivalence. Moreover, not all the indices $i,j,k,\dots$ can be equal to $1$, so this is a weak equivalence between cofibrant objects. The map $\phi_{n,r}$ is therefore a coproduct of weak equivalences between cofibrant spectra, so it too is a weak equivalence.

We have therefore shown that map $\phi_n$ is the realization of a levelwise weak equivalence of simplicial spectra. By Proposition \ref{prop:reedy}, it is now sufficient to show that each of these simplicial spectra is Reedy cofibrant (see \cite[15.3]{hirschhorn:2003}). It then follows that $\phi_n$ itself is a weak equivalence between cofibrant spectra, which completes the proof of the proposition.

To show that the simplicial bar construction $B_{\bullet}(R,P,L)(n)$ is Reedy cofibrant, we have to examine the \emph{latching maps}
\[ \lambda_t: \colim_{t \epi s} B_s(R,P,L)(n) \to B_t(R,P,L)(n) \]
where this colimit is taken over all surjections $t \epi s$ in the simplicial indexing category $\mathsf{\Delta}$ with $s < t$. We need to show that each $\lambda_r$ is a cofibration of spectra.

To see this, first note that surjections in the simplicial indexing category $\mathsf{\Delta}$ correspond to degeneracies in the simplicial object $B_{\bullet}(R,P,L)(n)$ which in turn come from the unit map $S \to P(1)$ of the operad $P$. Since $P$ is reduced, this unit map is an isomorphism and so the colimits in question take a particularly simple form.

Recall that $B_t(R,P,L)(n)$ is a coproduct of terms of the form
\[ R(i) \smsh (P(j_{1,1}) \smsh \dots \smsh P(j_{1,i})) \smsh \dots \smsh (P(j_{t,1}) \smsh \dots \smsh P(j_{t,m})) \smsh (L(k_1) \smsh \dots \smsh L(k_j)). \]
We have written it out like this to show that there are effectively $t$ copies of $P$ (coming from the composition product $R \circ P \circ \dots \circ P \circ L$), each contributing to one section of this smash product.

The colimit involved in the latching map can be described as the set of `degenerate' terms in this coproduct, or more precisely, those terms in which one of the copies of $P$ contributes only via $P(1)$ (that is, there is some $u$ such that $j_{u,v} = 1$ for all $v$).

The latching map $\lambda_t$ is then isomorphic to the inclusion of the coproduct of these degenerate terms into the full coproduct defining $B_t(R,P,L)(n)$. This map is a cofibration of spectra if each of the `nondegenerate' terms is cofibrant. Each such term is a smash product of cofibrant objects (and possibly some copies of $S$) so is indeed cofibrant.

Thus the latching maps are cofibrations, and $B(R,P,L)(n)$ is a Reedy cofibrant simplicial spectrum. Therefore $\phi_n$ is indeed a weak equivalence. This completes the proof that $\phi$ is a weak equivalence of symmetric sequences.
\end{proof}

\begin{remark}
Proposition \ref{prop:bar-invariance} is not special to operads in $\spectra$, but applies in any closed symmetric monoidal model category in which the relevant bar constructions can be formed.
\end{remark}

\begin{remark}
A version of Proposition \ref{prop:bar-invariance} holds for operads $P$ and $P'$ that are not reduced. The correct generalization of the cofibrant condition is to insist that the unit maps $S \to P(1)$ and $S \to P'(1)$ be cofibrations. The colimits involved in the latching maps then take on a more complicated form, but the latching maps can still be shown to be cofibrations.
\end{remark}

As a special case of Proposition \ref{prop:bar-invariance}, we get homotopy-invariance statements for the reduced, one-sided and bimodule bar constructions:
\begin{corollary}
With $f:P \to P'$, $r:R \to R'$ and $l: L \to L'$ as in Proposition \ref{prop:bar-invariance} and $m: M \to M'$ a morphism that respects $P$ and $P'$-bimodule structures on $M$ and $M'$ via $f$, each of the following maps is a weak equivalence of symmetric sequences (and hence of cooperads, or comodules as appropriate):
\begin{itemize}
  \item $f_*: B(P) \to B(P')$;
  \item $r_*: B(R,P,\mathsf{1}) \to B(R',P',\mathsf{1})$;
  \item $l_*: B(\mathsf{1},P,L) \to B(\mathsf{1},P',L')$;
  \item $m_*: B(\mathsf{1},P,M,P,\mathsf{1}) \to B(\mathsf{1},P,M,P,\mathsf{1})$.\qed
\end{itemize}
\end{corollary}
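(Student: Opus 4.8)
This corollary is a direct consequence of Proposition \ref{prop:bar-invariance}, so the plan is essentially to observe that each of the four bar constructions listed is a special case of the two-sided bar construction $B(R,P,L)$ (or the bimodule version), and that the hypotheses of the corollary ensure the hypotheses of the proposition are met.

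\begin{proof}
Each statement is an instance of Proposition \ref{prop:bar-invariance} (together with Remark \ref{rem:bibar} in the bimodule case), so we need only check that the relevant termwise-cofibrancy hypotheses hold. For the first three items, note that the unit symmetric sequence $\mathsf{1}$ is termwise-cofibrant: $\mathsf{1}(1) = S$ and $\mathsf{1}(n) = *$ for $n \geq 2$, and the terminal object $*$ is cofibrant. Thus:
\begin{itemize}
  \item Taking $R = R' = \mathsf{1}$ and $L = L' = \mathsf{1}$ (with their trivial $P$- and $P'$-module structures coming from the augmentations $P(1) \to S$, $P'(1) \to S$ of the reduced operads; see Definition \ref{def:reduced}), and the triple $(\mathrm{id}_{\mathsf{1}},f,\mathrm{id}_{\mathsf{1}})$, Proposition \ref{prop:bar-invariance} shows $f_*: B(\mathsf{1},P,\mathsf{1}) \to B(\mathsf{1},P',\mathsf{1})$ is a weak equivalence, i.e. $f_*: B(P) \to B(P')$ is a weak equivalence. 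That $f_*$ is a map of cooperads is part of Corollary \ref{cor:bar} applied functorially.
  \item Taking $L = L' = \mathsf{1}$ and the triple $(r,f,\mathrm{id}_{\mathsf{1}})$, Proposition \ref{prop:bar-invariance} shows $r_*: B(R,P,\mathsf{1}) \to B(R',P',\mathsf{1})$ is a weak equivalence, and this is a map of right comodules over the respective cooperads by Corollary \ref{cor:bar}.
  \item Symmetrically, taking $R = R' = \mathsf{1}$ and $(\mathrm{id}_{\mathsf{1}},f,l)$ handles $l_*: B(\mathsf{1},P,L) \to B(\mathsf{1},P',L')$.
\end{itemize}
For the last item, use Remark \ref{rem:bibar}: the bimodule bar construction satisfies
\[ B(\mathsf{1},P,M,P,\mathsf{1}) \isom B(\mathsf{1},P,B(M,P,\mathsf{1})). \]
Here $B(M,P,\mathsf{1})$ is a left $P$-module by Definition \ref{def:module-bar} (using the left $P$-action on the bimodule $M$), and a morphism $m: M \to M'$ of $P$-bimodules induces a morphism $m_*: B(M,P,\mathsf{1}) \to B(M',P,\mathsf{1})$ of left $P$-modules. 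One checks as in the proof of Proposition \ref{prop:bar-invariance} that if $M$ is termwise-cofibrant then so is $B(M,P,\mathsf{1})$: its terms are realizations of simplicial spectra whose simplices are coproducts of smash products of terms of $R = \mathsf{1}$, $P$, $M$ and $\mathsf{1}$, each of which is cofibrant or equal to $S$, and the relevant latching maps are cofibrations by the same argument used there (the degeneracies again come only from the isomorphism $S \isom P(1)$ since $P$ is reduced). Applying the $l$-case just proved to $l = m_*$, we conclude $m_*: B(\mathsf{1},P,M,P,\mathsf{1}) \to B(\mathsf{1},P,M',P,\mathsf{1})$ is a weak equivalence, and it is a map of $BP$-bicomodules by Corollary \ref{cor:bar-bi}.
\end{proof}
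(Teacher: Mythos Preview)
Your proof is correct and follows the same approach as the paper, which simply marks the corollary with a \qed\ symbol and treats it as immediate from Proposition~\ref{prop:bar-invariance}. You have supplied the details the paper omits, and in particular your observation that $\mathsf{1}$ is termwise-cofibrant (since $\mathsf{1}(1)=S$ and $*$ is cofibrant) is exactly the point needed for the first three items.

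One small slip in the bimodule case: the corollary is stated with $M'$ a $P'$-bimodule (not a $P$-bimodule), so the target of $m_*$ should be $B(\mathsf{1},P',M',P',\mathsf{1})$; the paper's own statement has a typo here. In your iterated-bar argument, the inner map should therefore be $B(M,P,\mathsf{1}) \to B(M',P',\mathsf{1})$, which is a weak equivalence by the $r$-case (applied with the triple $(m,f,\mathrm{id}_{\mathsf{1}})$), and this map respects left module structures via $f$ by the naturality of the construction in Definition~\ref{def:module-bar}. With that correction, your two-step application of Proposition~\ref{prop:bar-invariance} goes through, and the termwise-cofibrancy of $B(M,P,\mathsf{1})$ that you need is in fact part of the conclusion of Proposition~\ref{prop:bar-invariance} itself, so you need not re-derive it.
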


\section{Cofibrant replacements and model structures for operads and modules} \label{sec:cofibrant}

For the main part of this paper, we need homotopically-invariant versions of the various bar construction on an operad $P$. Proposition \ref{prop:bar-invariance} tells us that to do this, we first need to find termwise-cofibrant replacements for the operad $P$, and for the $P$-modules involved, and then take the bar construction. In order for this to be possible, we need to show that such termwise-cofibrant replacements actually exist.

We obtain the necessary termwise-cofibrant replacements by constructing projective model structures for our categories of operads and modules. Cofibrant replacements in these model structures then turn out to be termwise-cofibrant. The first part of this section concerns the existence of these model structures. For most of this we follow methods of EKMM \cite{elmendorf/kriz/mandell/may:1997} and the details are left to the Appendix.

\begin{remark} \label{rem:model}
Model structures on categories of operads have been extensively studied. Rezk \cite{rezk:1996} described model categories of operads of simplicial sets. Hinich \cite{hinich:1997} studied operads for chain complexes. Then Berger and Moerdijk \cite{berger/moerdijk:2003} proved a general result establishing the existence of projective model structures on operads in various contexts including topological spaces. Their examples do not include any models for stable homotopy theory, but Kro \cite{kro:2007} applied their methods to the category of orthogonal spectra of \cite{mandell/may/schwede/shipley:2001} with the positive stable model structure. Kro thus established the existence of a model structure on operads in this category. Spitzweck \cite{spitzweck:2001} analyzed the general case of operads in a cofibrantly generated monoidal model category and demonstrated the existence of a `$J$-semi model structure' (a notion slightly weaker than a model structure) on these.

These and other authors have studied model structures on categories of algebras and modules over operads. Again Rezk \cite{rezk:1996} gave the initial account of these in the context of simplicial sets and Berger-Moerdijk's work extended to categories of algebras. Schwede and Shipley \cite{schwede/shipley:2000} described general conditions for finding model structures on associative and commutative monoids (i.e. algebras over the associative and commutative operads). Most recently, Harper \cite{harper:2008} constructed model structures for the categories of algebras and left modules over an operad in symmetric spectra. Note that for some of these authors, operads are allowed to include zero terms. Essentially we are looking at the case where the zero terms are trivial which makes the homotopy theory simpler.

Model structures for the categories of associative and commutative algebras in $\spectra$ were studied in detail by EKMM \cite[VII]{elmendorf/kriz/mandell/may:1997} and extended to algebras over other operads in $\spectra$ by Basterra-Mandell \cite[8.6]{basterra/mandell:2005}. There is little new in our work here. We just verify that their approach applies to our case.
\end{remark}

In our model structures, fibrations (as well as weak equivalences) are always detected termwise.

\begin{definition}[Fibrations] \label{def:fib-operads}
Let $f:M \to N$ be a morphism of symmetric sequences of spectra. We say that $f$ is a \emph{fibration} if each map
\[ f_n: M(n) \to N(n) \]
is a fibration in $\spectra$. If $f$ is a morphism of operads, modules, cooperads or comodules, we say that $f$ is a \emph{fibration} if it is a fibration of the underlying symmetric sequences.
\end{definition}

To describe the generating cofibrations in our model categories, we define the free objects in each of these cases.

\begin{definition}[Free symmetric sequences] \label{def:free-symseq}
Let $X$ be a spectrum and fix an integer $n \geq 2$. The \emph{free symmetric sequence on $X$ in position $n$} is the symmetric sequence $A_n(X)$ given by
\[ A_n(X)(r) := \begin{cases} (\Sigma_n)_+ \smsh X & \text{if $r = n$}; \\ * & \text{otherwise}. \end{cases} \]
The functor $A_n$ from spectra to symmetric sequences is left adjoint to the functor that picks out the \ord{n} term of a symmetric sequence (and forgets the $\Sigma_n$-action).
\end{definition}

\begin{definition}[Free operads] \label{def:free-operads}
Say that a symmetric sequence $A$ is \emph{reduced} if $A(1) = *$ and write $\spectra^{\mathsf{\Sigma}}_{\mathsf{red}}$ for the full subcategory of $\spectra^{\mathsf{\Sigma}}$ consisting of the reduced symmetric sequences.

Now recall the definition of the free operad on $A$ using trees. For $n \geq 2$, let $\mathsf{T}_n$ be the set of (isomorphism classes of) rooted trees (where each internal vertex has at least two incoming edges) with leaves labelled $\{1,\dots,n\}$. For $T \in \mathsf{T}_n$, we set
\[ A(T) := \Smsh_{v \in T} A(i(v)) \]
where the smash product is taken over all internal vertices of $T$ and $i(v)$ is the number of incoming edges to the vertex $v$. The \emph{free reduced operad} on the reduced symmetric sequence $A$  is the operad $F(A)$ given by
\[ F(A)(n) := \begin{cases} \Wdge_{T \in \mathsf{T}_n} A(T) & \text{if $n > 1$}; \\ S & \text{if $n = 1$}. \end{cases} \]
with operad composition given by grafting trees. This construction defines a functor $F$ from reduced symmetric sequences (i.e. those concentrated in terms $2$ and above) to reduced operads. The functor $F$ is left adjoint to the forgetful functor. (See \cite[II.1.9]{markl/shnider/stasheff:2002} for more details on the free operad construction.)
\end{definition}

\begin{definition}[Free $P$-modules] \label{def:free-modules}
Next consider a fixed reduced operad $P$ in $\spectra$ and let $A$ be any symmetric sequence of spectra. The \emph{free right $P$-module on $A$} is the symmetric sequence
\[ R(A) := A \circ P \]
with right $P$-module structure given by
\[ (A \circ P) \circ P \isom A \circ (P \circ P) \to A \circ P. \]
Similarly, the \emph{free left $P$-module on $A$} is the symmetric sequence
\[ L(A) := P \circ A \]
with left $P$-module structure given by
\[ P \circ (P \circ A) \isom (P \circ P) \circ A \to P \circ A. \]
Finally, the \emph{free $P$-bimodule on $A$} is the symmetric sequence
\[ M(A) := P \circ A \circ P \]
with $P$-bimodule structure given similarly. Each of these constructions gives a functor from symmetric sequences to modules that is left adjoint to the appropriate forgetful functor.
\end{definition}

\begin{definition}[Free operads and modules on a spectrum] \label{def:more-free-operads}
Now let $X$ be a spectrum again. We define the \emph{free reduced operad on $X$ in position $n$} by:
\[ F_n(X) := F(A_n(X)) \]
for $n \geq 2$, the \emph{free right $P$-module on $X$ in position $n$} by:
\[ R_n(X) := R(A_n(X)), \]
the \emph{free left $P$-module on $X$ in position $n$} by:
\[ L_n(X) := L(A_n(X)) \]
and the \emph{free $P$-bimodule on $X$ in position $n$} by:
\[ M_n(X) := M(A_n(X)). \]
These constructions give functors from $\spectra$ to our categories of reduced operads and modules that are left adjoint to the functors that pick out the \ord{n} term (and forget the $\Sigma_n$-action).
\end{definition}

Next, we describe the generating cofibrations in each of our model categories of symmetric sequences, operads or modules.

\begin{definition}[Generating cofibrations for operads and modules] \label{def:gen-cofs}
Write $\mathbb{I}$ for the set of generating cofibrations in $\spectra$ (see Definition \ref{def:spectra}). Then we define the following sets of morphisms:
\begin{itemize}
  \item $\mathbb{I}_{\spectra^{\mathsf{\Sigma}}} := \left\{ A_n(I_0) \to A_n(I_1) \; | \; I_0 \to I_1 \in \mathbb{I}, \; n \geq 1 \right\}$;
  \item $\mathbb{I}_{\spectra^{\mathsf{\Sigma}}_{\mathsf{red}}} := \left\{ A_n(I_0) \to A_n(I_1) \; | \; I_0 \to I_1 \in \mathbb{I}, \; n \geq 2 \right\}$;
  \item $\mathbb{I}_{\mathsf{Op}(\spectra)} := \left\{ F_n(I_0) \to F_n(I_1) \; | \; I_0 \to I_1 \in \mathbb{I}, \; n \geq 2 \right\}$;
  \item $\mathbb{I}_{\mathsf{Mod}_{\mathsf{right}}(P)} := \left\{ R_n(I_0) \to R_n(I_1) \; | \; I_0 \to I_1 \in \mathbb{I}, \; n \geq 1 \right\}$;
  \item $\mathbb{I}_{\mathsf{Mod}_{\mathsf{left}}(P)} := \left\{ L_n(I_0) \to L_n(I_1) \; | \; I_0 \to I_1 \in \mathbb{I}, \; n \geq 1 \right\}$;
  \item $\mathbb{I}_{\mathsf{Mod}_{\mathsf{bi}}(P)} := \left\{ M_n(I_0) \to M_n(I_1) \; | \; I_0 \to I_1 \in \mathbb{I}, \; n \geq 1 \right\}$.
\end{itemize}
Similarly, if $\mathbb{J}$ is the set of generating trivial cofibrations in $\spectra$, we define corresponding sets $\mathbb{J}_{\cat{C}}$ of morphisms in each of the categories $\cat{C}$.
\end{definition}

\begin{theorem} \label{thm:projective-model}
Let $\cat{C}$ be one of the following categories:
\begin{itemize}
  \item $\spectra^{\mathsf{\Sigma}}$: the category of symmetric sequences in $\spectra$;
  \item $\spectra^{\mathsf{\Sigma}}_{\mathsf{red}}$: the category of reduced symmetric sequences in $\spectra$;
  \item $\mathsf{Op}(\spectra)$: the category of reduced operads in $\spectra$;
  \item $\mathsf{Mod}_{\mathsf{right}}(P)$: the category of right modules over a fixed reduced operad $P$ in $\spectra$;
  \item $\mathsf{Mod}_{\mathsf{left}}(P)$: the category of left modules over a fixed reduced operad $P$ in $\spectra$;
  \item $\mathsf{Mod}_{\mathsf{bi}}(P)$: the category of bimodules over a fixed reduced operad $P$ in $\spectra$.
\end{itemize}
Then there is a cofibrantly-generated simplicial model structure on $\cat{C}$ with weak equivalences and fibrations defined termwise (as in \ref{def:weq-operads} and \ref{def:fib-operads}), and with generating cofibrations given by the set $\mathbb{I}_{\cat{C}}$ of Definition \ref{def:gen-cofs}.
\end{theorem}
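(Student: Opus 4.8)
The plan is to handle the two ``diagram category'' cases directly and to obtain the four operad/module cases by transferring the model structure on $\spectra^{\mathsf{\Sigma}}$ (or $\spectra^{\mathsf{\Sigma}}_{\mathsf{red}}$) along the relevant free--forgetful adjunctions. For $\cat{C} = \spectra^{\mathsf{\Sigma}}$ (resp.\ $\spectra^{\mathsf{\Sigma}}_{\mathsf{red}}$) one notes that the indexing category $\mathsf{\Sigma}$ (resp.\ its full subcategory on $\{1,\dots,n\}$ for $n\geq 2$) is a disjoint union of the one-object groupoids $B\Sigma_n$, so $\cat{C} \isom \prod_n \spectra^{B\Sigma_n}$. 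Each factor carries the projective model structure of an enriched functor category (as in the proof of Proposition \ref{prop:model-functors}, or \cite[11.6.1]{hirschhorn:2003}), with generating (trivial) cofibrations exactly the maps $A_n(I_0)\to A_n(I_1)$ (resp.\ $A_n(J_0)\to A_n(J_1)$) of Definitions \ref{def:free-symseq} and \ref{def:gen-cofs}; a product of cofibrantly generated simplicial model categories is again one, which settles these two cases.

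For $\mathsf{Op}(\spectra)$, $\mathsf{Mod}_{\mathsf{right}}(P)$, $\mathsf{Mod}_{\mathsf{left}}(P)$ and $\mathsf{Mod}_{\mathsf{bi}}(P)$, the key point is that each is the category of algebras for a monad on $\spectra^{\mathsf{\Sigma}}_{\mathsf{red}}$ (resp.\ $\spectra^{\mathsf{\Sigma}}$): the free-operad monad built from trees (Definition \ref{def:free-operads}), and the monads $(-)\circ P$, $P\circ(-)$, $P\circ(-)\circ P$, with free functors $F_n$, $R_n$, $L_n$, $M_n$ of Definition \ref{def:more-free-operads}. Each such monad is assembled from finite smash products and coproducts in $\spectra$, hence preserves reflexive coequalizers and filtered colimits in every variable; so the forgetful functor $U$ creates these colimits (making $\cat{C}$ complete and cocomplete) and the free functors send $\omega$-small objects to $\omega$-small objects. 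Since every spectrum is fibrant, every object of $\cat{C}$ is fibrant. One then invokes the standard transfer theorem (see, e.g., \cite{schwede/shipley:2000}, \cite{berger/moerdijk:2003}): defining weak equivalences and fibrations via $U$ (Definitions \ref{def:weq-operads}, \ref{def:fib-operads}), the category $\cat{C}$ becomes cofibrantly generated with generating (trivial) cofibrations $\mathbb{I}_{\cat{C}}$ (resp.\ $\mathbb{J}_{\cat{C}}$), provided the small object argument applies to these sets and every relative $\mathbb{J}_{\cat{C}}$-cell complex is a weak equivalence. The second condition follows from Quillen's path-object argument: since all objects are fibrant it suffices to produce a functorial path object $M \to M^{\Delta[1]} \to M\times M$. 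This is obtained by cotensoring termwise with $\Delta[1]$ using the simplicial structure on $\spectra$; the lax monoidality of the cotensor (natural maps $X^K\smsh Y^L \to (X\smsh Y)^{K\times L}$ composed with the iterated diagonals $K\to K^{\times m}$) makes $M^{\Delta[1]}$ an operad, module or bimodule over $P$ as appropriate, the map $M\to M^{\Delta[1]}$ is a termwise trivial fibration because $\Delta[0]\to\Delta[1]$ is a trivial cofibration and every spectrum is fibrant, and $M^{\Delta[1]}\to M\times M = M^{\partial\Delta[1]}$ is a fibration because $\partial\Delta[1]\to\Delta[1]$ is a cofibration. The same tensoring/cotensoring supplies the simplicial model structure.

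The main obstacle — and the bulk of the Appendix — is the analysis of relative $\mathbb{I}_{\cat{C}}$- and $\mathbb{J}_{\cat{C}}$-cell complexes needed to run the small object argument cleanly and to establish EKMM's ``cofibration hypothesis''. Concretely, one must show that a pushout in $\cat{C}$ of a free map along an arbitrary map (for instance, attaching an operad cell $F_n(I_0)\to F_n(I_1)$ to an operad $Q$) admits a natural filtration whose successive stages are pushouts in $\spectra^{\mathsf{\Sigma}}$ of explicit maps built from the generating cofibration and $Q$; for operads this requires the combinatorics of trees with some vertices decorated by $Q$ and at least one by the new cell, while for modules the filtration is considerably simpler. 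These filtrations also yield the fact (used later, e.g.\ for Proposition \ref{prop:bar-invariance}) that cofibrant objects of $\cat{C}$ are termwise-cofibrant. With the filtrations in place, the remaining verifications are routine and follow \cite[VII]{elmendorf/kriz/mandell/may:1997} and \cite[8.6]{basterra/mandell:2005}.
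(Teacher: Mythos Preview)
Your approach is correct, but it differs from the paper's in two places worth noting.

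First, the paper does \emph{not} treat $\mathsf{Mod}_{\mathsf{right}}(P)$ via the monad $(-)\circ P$. Instead, it observes that right $P$-modules are precisely the $\spectra$-enriched functors $\cat{C}\to\spectra$ for a certain $\spectra$-category $\cat{C}$ built from $P$, and then appeals to the same enriched-diagram argument as for $\spectra^{\mathsf{\Sigma}}$ (this is Proposition~\ref{prop:right-mods}). This is both quicker and yields the $\spectra$-enrichment of $\mathsf{Mod}_{\mathsf{right}}(P)$ (Definition~\ref{def:enriched-modules}) for free, something your monad approach does not directly give; this enrichment matters later, since only right modules, not left or bi-, are enriched over $\spectra$.

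Second, for $\mathsf{Op}(\spectra)$, $\mathsf{Mod}_{\mathsf{left}}(P)$ and $\mathsf{Mod}_{\mathsf{bi}}(P)$ the paper does not run the path-object argument. It instead verifies EKMM's Cofibration Hypothesis directly, via Lemma~\ref{lem:pushouts} (pushouts along free maps are termwise monomorphisms, analyzed through a simplicial model of the pushout as in \cite[VII.3.5--3.9]{elmendorf/kriz/mandell/may:1997}) and Lemma~\ref{lem:colimits} (filtered colimits are termwise), and then invokes \cite[VII.4.7]{elmendorf/kriz/mandell/may:1997}. Your path-object argument is a legitimate alternative and is essentially the Berger--Moerdijk/Schwede--Shipley route; it works here because every spectrum is fibrant and the termwise cotensor $M^{\Delta[1]_+}$ carries the required algebra structure via the diagonal (with the caveat that for reduced operads the first term stays $S$, since limits and cotensors in $\mathsf{Op}(\spectra)$ are created by the forgetful functor to $\spectra^{\mathsf{\Sigma}}_{\mathsf{red}}$, not to $\spectra^{\mathsf{\Sigma}}$). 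Note that the filtration you sketch in your final paragraph---decorating trees with old and new vertices---is not the argument the paper actually uses for Lemma~\ref{lem:pushouts}; the paper uses a two-sided bar/simplicial model of the coproduct instead. Your filtration is closer to the approach of Harper \cite{harper:2008} or Spitzweck \cite{spitzweck:2001}, and it would also work, but it is more combinatorially involved than what the paper does.
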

\begin{proof}
We leave the proof of this theorem to the appendix. See Proposition \ref{prop:right-mods}(4) and Corollary \ref{cor:left-mods-model}
\end{proof}

\begin{definition}[$\Sigma$-cofibrations] \label{def:sigma-cofibrant}
We say that a symmetric sequence is \emph{$\Sigma$-cofibrant} if it is cofibrant in the projective model structure on $\spectra^{\mathsf{\Sigma}}$, and a reduced symmetric sequence is \emph{$\Sigma$-cofibrant} if it is cofibrant in the projective model structure on $\spectra^{\mathsf{\Sigma}}_{\mathsf{red}}$.

A reduced operad in $\spectra$ is said to be \emph{$\Sigma$-cofibrant} if the underlying \emph{reduced} symmetric sequence is $\Sigma$-cofibrant. For an operad $P$ in $\spectra$, a $P$-module is said to be \emph{$\Sigma$-cofibrant} if the underlying symmetric sequence is $\Sigma$-cofibrant.

Similarly, we say that a map of modules or reduced operads is a $\Sigma$-cofibration if the corresponding map of symmetric sequences, or reduced symmetric sequences, is a cofibration in the relevant projective model structure.
\end{definition}

\begin{definition}[$\Sigma_n$-cofibrations] \label{def:sigma-n-cofibrant}
There is a projective model structure on the category $\spectra^{\Sigma_n}$ whose objects are spectra with $\Sigma_n$-actions, and whose morphisms are $\Sigma_n$-equivariant maps of spectra. If $E$ is an object in this category, we say that $E$ is \emph{$\Sigma_n$-cofibrant} if it is cofibrant in this projective model structure. Equivalently, this means that $E$ has the left-lifting property with respect to $\Sigma_n$-equivariant maps of spectra that are trivial fibrations in $\spectra$.
\end{definition}

\begin{remark} \label{rem:sigma-cofibrant}
The condition of being $\Sigma$-cofibrant can be verified termwise. A symmetric sequence, reduced operad, or module $A$ is $\Sigma$-cofibrant if and only if each $A(n)$ is $\Sigma_n$-cofibrant. (For reduced operads and reduced symmetric sequence, this needs to hold only $n \geq 2$.)
\end{remark}

\begin{lemma} \label{lem:termwise-cofibrant}
A $\Sigma$-cofibrant symmetric sequence, reduced operad or module is termwise-cofibrant.
\end{lemma}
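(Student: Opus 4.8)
The plan is to reduce the statement to a comparison between cofibrancy in the projective model structure on $\spectra^{\mathsf{\Sigma}}$ (or $\spectra^{\mathsf{\Sigma}}_{\mathsf{red}}$) and cofibrancy in $\spectra^{\Sigma_n}$, and then invoke Definition \ref{def:termwise-cofibrant}. By Remark \ref{rem:sigma-cofibrant}, if $A$ is a $\Sigma$-cofibrant symmetric sequence, reduced operad, or module, then each term $A(n)$ is $\Sigma_n$-cofibrant (for reduced operads and reduced symmetric sequences, only for $n \geq 2$). So the crux is: a $\Sigma_n$-cofibrant spectrum — i.e. a cofibrant object of the projective model structure on $\spectra^{\Sigma_n}$ — has underlying spectrum cofibrant in $\spectra$.

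First I would observe that the forgetful functor $\spectra^{\Sigma_n} \to \spectra$ has a left adjoint, namely $X \mapsto (\Sigma_n)_+ \smsh X$ (the free $\Sigma_n$-spectrum functor used implicitly in Definition \ref{def:free-symseq}), and that this left adjoint is a left Quillen functor with respect to the projective model structure on $\spectra^{\Sigma_n}$: it sends generating (trivial) cofibrations of $\spectra$ to generating (trivial) cofibrations of $\spectra^{\Sigma_n}$, by the very definition of the projective model structure. Hence its right adjoint, the forgetful functor, is a right Quillen functor. But the point I actually need is about the left adjoint: the generating cofibrations of $\spectra^{\Sigma_n}$ are the maps $(\Sigma_n)_+ \smsh I_0 \to (\Sigma_n)_+ \smsh I_1$ with $I_0 \to I_1 \in \mathbb{I}$, and the underlying spectrum of $(\Sigma_n)_+ \smsh I$ is a finite coproduct of copies of $I$. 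Thus each generating cofibration of $\spectra^{\Sigma_n}$ has underlying map a cofibration in $\spectra$ (a coproduct of generating cofibrations).

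Next, I would run the standard argument that a cofibrant object of a cofibrantly generated model category is a retract of an $\mathbb{I}$-cell complex built from the generating cofibrations (with initial object $\ast$). Applying the forgetful functor $\spectra^{\Sigma_n} \to \spectra$, which preserves colimits (it is a left adjoint too, or more simply it preserves the relevant pushouts and transfinite compositions since these are computed on underlying objects), turns such a cell complex into an $\mathbb{I}$-cell complex in $\spectra$, hence a cofibrant spectrum; and a retract of a cofibrant spectrum is cofibrant. Therefore the underlying spectrum of a $\Sigma_n$-cofibrant object is cofibrant. Applying this termwise — and noting that in the reduced cases the term $A(1) = \ast$ is trivially cofibrant, while an unreduced operad or module may have $A(1) \isom S$, which satisfies the alternative in condition (2) of Definition \ref{def:termwise-cofibrant} — shows that each $A(n)$ is a cofibrant spectrum for $n \geq 2$ and that $A(1)$ is either cofibrant or isomorphic to $S$. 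This is exactly the definition of termwise-cofibrant.

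The main obstacle, such as it is, is bookkeeping rather than anything deep: one must be careful that in the reduced operad and reduced symmetric sequence cases the relevant projective model structure lives on $\spectra^{\mathsf{\Sigma}}_{\mathsf{red}}$, so $\Sigma$-cofibrancy only constrains the terms $A(n)$ for $n \geq 2$, and one must separately note that the first term is either $\ast$ (reduced case) or $S$ (the non-reduced operad/unit-sequence case flagged in Definition \ref{def:termwise-cofibrant}) — both of which are permitted. One should also check that the free $\Sigma_n$-spectrum on a cofibrant spectrum has cofibrant underlying spectrum, which is immediate since it is a finite wedge, and that the forgetful functor commutes with the transfinite compositions and pushouts appearing in a cell presentation, which follows because colimits of diagrams of $\Sigma_n$-spectra are computed on underlying spectra. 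No genuinely hard step is involved; the content is entirely in unwinding the definitions and the standard cell-complex description of cofibrant objects.
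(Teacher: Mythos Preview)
Your argument is correct, but it takes a different route from the paper. You argue via cell complexes: the generating cofibrations of $\spectra^{\Sigma_n}$ forget to finite coproducts of generating cofibrations in $\spectra$, the forgetful functor preserves colimits, so cell objects forget to cell objects, and cofibrant objects are retracts of these. The paper instead gives a direct lifting argument using the \emph{right} adjoint (coinduction): given a trivial fibration $X \to Y$ in $\spectra$ and a map $M(n) \to Y$, one applies $\Map((\Sigma_n)_+,-)$ to obtain a $\Sigma_n$-equivariant trivial fibration $\Map((\Sigma_n)_+,X) \to \Map((\Sigma_n)_+,Y)$, uses the $\Sigma_n$-cofibrancy of $M(n)$ to lift, and then projects back to obtain the desired lift $M(n) \to X$. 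The paper's argument is a one-diagram trick that avoids any appeal to cell structures; yours is the generic ``forgetful functor is left Quillen'' argument, and in fact proves the slightly stronger statement that the forgetful functor preserves all cofibrations, not just cofibrant objects. One small wrinkle in your write-up: the first paragraph observes that the forgetful functor is \emph{right} Quillen, which is true but irrelevant to what you need; you correctly pivot to the direct argument in the next sentence, but it would be cleaner to simply note that the forgetful functor is also left Quillen (its right adjoint is coinduction $\Map((\Sigma_n)_+,-)$) and skip the detour. Your bookkeeping on the $n=1$ term is fine: for modules, $M(1)$ is $\Sigma_1$-cofibrant hence cofibrant; for reduced operads, $P(1)=S$ and the alternative clause of Definition~\ref{def:termwise-cofibrant} applies.
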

\begin{proof}
Let $M$ be the $\Sigma$-cofibrant object. We know from Remark \ref{rem:sigma-cofibrant} that each $M(n)$ has the left-lifting property with respect to $\Sigma_n$-equivariant trivial fibrations. We have to show that $M(n)$ has the left-lifting property with respect to all trivial fibrations in $\spectra$. Take any trivial fibration $X \to Y$ and a map $f:M(n) \to Y$ in $\spectra$. Extending $f$ equivariantly to a map $M(n) \to \Map((\Sigma_n)_+,Y)$ we get a diagram of $\Sigma_n$-equivariant maps
\[ \begin{diagram}
  \node[2]{\Map((\Sigma_n)_+,X)} \arrow{s,r,A}{\sim} \\
  \node{M(n)} \arrow{e} \node{\Map((\Sigma_n)_+,Y)}
\end{diagram} \]
The vertical map here is still a trivial fibration since the discrete space $\Sigma_n$ is cofibrant. Therefore this diagram has a lift $M(n) \to \Map(\Sigma_n,X)$ which determines a lift $M(n) \to X$ of the original map $f:M(n) \to Y$. Thus, $M(n)$ has the necessary lifting property and is cofibrant in $\spectra$.
\end{proof}

\begin{definition}[Projective-cofibrations] \label{def:projectively-cofibrant}
Let $\cat{C}$ be one of the categories of Theorem \ref{thm:projective-model}. We use the term \emph{projective-cofibration} to describe the morphisms in $\cat{C}$ that are cofibrations in the model structure described in \ref{thm:projective-model}. The objects of $\cat{C}$ that are cofibrant in that model structure are then described as \emph{projectively-cofibrant}. We stress this so as not to confuse these with termwise-cofibrant objects.
\end{definition}

We now have three cofibrancy notions for operads and modules:
\begin{itemize}
  \item projectively-cofibrant (cofibrant in the relevant projective model structure);
  \item $\Sigma$-cofibrant (cofibrant as a symmetric sequence);
  \item termwise-cofibrant (individual spectra are cofibrant).
\end{itemize}
We have already shown that $\Sigma$-cofibrant objects are termwise-cofibrant. We now verify that, under suitable conditions, projective-cofibrant objects are $\Sigma$-cofibrant. This completes the construction of termwise-cofibrant replacements for operads and modules, and allows us to form homotopy-invariant versions of the bar constructions.

\begin{proposition} \label{prop:termwise-cofibrant}
A projectively-cofibrant reduced operad is $\Sigma$-cofibrant. If $P$ is a $\Sigma$-cofibrant reduced operad, then a projectively-cofibrant $P$-module (left-, right- or bi-) is $\Sigma$-cofibrant. If $P$ is a termwise-cofibrant reduced operad, then a projectively-cofibrant $P$-module is termwise-cofibrant.
\end{proposition}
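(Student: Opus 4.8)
The plan is to prove the three assertions in order, using the standard strategy for transferred model structures: a cofibrant object is a retract of a cell complex, so it suffices to check the claim for cell complexes built from the generating cofibrations $\mathbb{I}_{\mathsf{Op}(\spectra)}$ or $\mathbb{I}_{\mathsf{Mod}(P)}$ of Definition \ref{def:gen-cofs}, and then the property in question has to be shown to be preserved under (a) the free functors applied to generating cofibrations of $\spectra$, (b) pushouts along such maps, (c) transfinite (here, countable) composites, and (d) retracts. Since $\Sigma$-cofibrancy of a symmetric sequence is itself detected termwise (Remark \ref{rem:sigma-cofibrant}) and is a cofibrancy condition in the projective model structure on $\spectra^{\mathsf{\Sigma}}$ (resp. $\spectra^{\mathsf{\Sigma}}_{\mathsf{red}}$), each of these closure properties is formal \emph{provided} we can analyze what the free-operad and free-module functors do to cells.

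First I would handle the operad case. One shows that the free reduced operad functor $F$ of Definition \ref{def:free-operads}, applied to a $\Sigma$-cofibration of reduced symmetric sequences, yields a $\Sigma$-cofibration of reduced symmetric sequences; equivalently, in each arity $n \geq 2$ the $n$-th term $F(A)(n) = \Wdge_{T \in \mathsf{T}_n} A(T)$, with $A(T) = \Smsh_{v} A(i(v))$, is $\Sigma_n$-cofibrant when each $A(m)$ is $\Sigma_m$-cofibrant. This is the key computation: for a fixed tree $T$, the wedge summand $A(T)$ is freely induced up from the subgroup of $\Sigma_n$ fixing the tree's leaf-labelling structure, acting on a smash product of $\Sigma_{i(v)}$-cofibrant spectra, and such induced smash products are $\Sigma_n$-cofibrant by the pushout-product axiom in $\spectra$ together with the fact that induction along a subgroup inclusion of finite groups preserves cofibrant objects. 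Summing over the $\Sigma_n$-orbits of trees in $\mathsf{T}_n$ and using that coproducts of cofibrant objects are cofibrant gives $\Sigma_n$-cofibrancy of $F(A)(n)$. Since pushouts of operads along maps $F(A) \to F(B)$ are computed by a filtration whose associated subquotients are again built from such free pieces (the standard "$\mathbb{I}$-cell" analysis, exactly as in \cite[VII]{elmendorf/kriz/mandell/may:1997} or \cite{spitzweck:2001}), and $\Sigma$-cofibrations are closed under pushout, countable composite, and retract in $\spectra^{\mathsf{\Sigma}}_{\mathsf{red}}$, it follows that a projectively-cofibrant reduced operad, being a retract of an $\mathbb{I}_{\mathsf{Op}(\spectra)}$-cell complex, is $\Sigma$-cofibrant.

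The module cases are analogous but easier. With $P$ a fixed $\Sigma$-cofibrant reduced operad, the free right $P$-module on a symmetric sequence $A$ is $A \circ P$, and its $n$-th term is a coproduct over partitions of $\{1,\dots,n\}$ of terms $A(k) \smsh P(n_1) \smsh \dots \smsh P(n_k)$, each of which is $\Sigma_n$-cofibrant because it is a suitably induced smash product of a $\Sigma_k$-cofibrant spectrum with $\Sigma_{n_i}$-cofibrant spectra (here we use the hypothesis that $P$ is $\Sigma$-cofibrant, hence termwise-cofibrant by Lemma \ref{lem:termwise-cofibrant}, and that each $P(m)$ is $\Sigma_m$-cofibrant); so $R(A)$ is $\Sigma$-cofibrant whenever $A$ is, and likewise for $L(A) = P \circ A$ and $M(A) = P \circ A \circ P$. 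The same cell-by-cell and pushout closure argument then shows a projectively-cofibrant $P$-module is $\Sigma$-cofibrant. For the third assertion, where $P$ is only assumed termwise-cofibrant, we cannot run the argument in $\spectra^{\mathsf{\Sigma}}$, but we can run it termwise directly in $\spectra$: the same coproduct-of-induced-smash-products description of $R(A)(n)$, $L(A)(n)$, $M(A)(n)$ shows these spectra are cofibrant (not necessarily $\Sigma_n$-cofibrant) when $A$ is termwise-cofibrant and $P$ is termwise-cofibrant, using that a smash product of cofibrant spectra (or cofibrant spectra and copies of $S$) is cofibrant and that $\Sigma_n{}_+ \smsh (-)$ preserves cofibrant spectra; the pushout/composite/retract closure of termwise-cofibrancy then finishes the proof.

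The main obstacle is the combinatorial bookkeeping in the operad case: identifying the $n$-th term of a free-operad pushout $P \amalg_{F(A)} F(B)$ with an explicit filtered colimit whose filtration quotients are free pieces of the form analyzed above (the "operadic cell attachment" filtration), and checking the $\Sigma_n$-equivariance of that identification so that the conclusion lands in $\spectra^{\Sigma_n}$ rather than just $\spectra$. This is entirely parallel to the treatment of pushouts of algebras over operads in \cite[VII.6, VII.7]{elmendorf/kriz/mandell/may:1997} and of operads themselves in \cite{spitzweck:2001} and \cite{berger/moerdijk:2003}, so I would cite those analyses and only indicate the modifications needed for reduced operads and for the one-term-at-a-time $\Sigma_n$-cofibrancy statement, rather than reproduce the filtration in full.
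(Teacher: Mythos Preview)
Your overall strategy---reduce to cell complexes via the retract argument, then induct on the cell structure by analyzing what happens at each operadic (or module) pushout along a generating cofibration---is exactly the paper's approach. The paper defers the pushout analysis to Lemma~\ref{lem:pushouts} in the appendix, where it uses the EKMM-style simplicial resolution of the coproduct $F(B)\amalg X'$ (as in \cite[VII.3.5--3.9]{elmendorf/kriz/mandell/may:1997}) rather than the Spitzweck/Berger--Moerdijk filtration you invoke; both routes are standard and either suffices.

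There is, however, one genuine confusion in your write-up that you should fix. You assert that ``$F$ applied to a $\Sigma$-cofibration yields a $\Sigma$-cofibration,'' and then claim this is \emph{equivalent} to ``$F(A)(n)$ is $\Sigma_n$-cofibrant when $A$ is $\Sigma$-cofibrant.'' Neither the equivalence nor the first statement is correct: for a tree $T$ with more than one internal vertex, $A(T)\to B(T)$ is a smash product of cofibrations and is not itself a cofibration in general (the pushout-product axiom gives you something weaker). More to the point, even if $F(A)\to F(B)$ were a $\Sigma$-cofibration, closure of $\Sigma$-cofibrations under pushout \emph{in symmetric sequences} says nothing about the pushout \emph{in operads}, which is what you need. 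The paper flags exactly this subtlety: it notes that projective-cofibrations in $\mathsf{Op}(\spectra)$ are \emph{not} in general $\Sigma$-cofibrations, and that one only gets a $\Sigma$-cofibration $X'\to X$ under the additional hypothesis that $X'$ is already $\Sigma$-cofibrant. So the inductive statement you should be proving is precisely that of Lemma~\ref{lem:pushouts}: if $X'$ is $\Sigma$-cofibrant, then the operadic pushout $X'\to X$ along $F(A)\to F(B)$ is a $\Sigma$-cofibration. Your filtration argument does deliver this once correctly formulated---each stage of the filtration is a pushout in $\spectra^{\mathsf{\Sigma}}$ along an iterated pushout-product built from $A\to B$ and the (already $\Sigma$-cofibrant) pieces of $X'$---but your current phrasing obscures where the hypothesis on $X'$ enters and could mislead a reader into thinking the free functor alone does the work.
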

\begin{proof}
Let $P$ be a projectively-cofibrant reduced operad. Then $P$ is a retract of a `cell operad' (that is, a cell complex formed from the generating cofibrations in $\mathsf{Op}(\spectra)$). If we can show that a cell operad is $\Sigma$-cofibrant, it follows that $P$ is too. So we can assume, without loss of generality, that $P$ actually is a cell operad.

We now use induction on a cell structure for $P$. The colimit of a sequence of cofibrations in a model category is a cofibration, so it is sufficient to show the following claim.

Suppose we have a pushout square
\[ \begin{diagram}
  \node{F(A)} \arrow{e} \arrow{s} \node{X} \arrow{s} \\
  \node{F(B)} \arrow{e} \node{X'}
\end{diagram} \]
in the category of reduced operads. Here $F(A) \to F(B)$ is the coproduct of some set of generating cofibrations in the model structure of Theorem \ref{thm:projective-model}. Note that any such coproduct is given by applying the free operad functor $F$ to a map $A \to B$ of symmetric sequences. It is sufficient then to show that, if $X$ is $\Sigma$-cofibrant, $X \to X'$ is a $\Sigma$-cofibration (i.e. a cofibration of the underlying symmetric sequences). This is related to the `Cofibration Hypothesis' used to establish the model structure on operads and we prove this claim in the appendix (Lemma \ref{lem:pushouts}).

It should be noted that in general it is not true that $X \to X'$ is always a $\Sigma$-cofibration. In particular, projective-cofibrations in $\mathsf{Op}(\spectra)$ are not always $\Sigma$-cofibrations. However, our proof demonstrates that a projective-cofibration with projective-cofibrant domain \emph{is} a $\Sigma$-cofibration.

The module cases are similar (with right modules being much easier to deal with since pushouts are then calculated on the underlying symmetric sequences).
\end{proof}

\begin{corollary} \label{cor:termwise-cofibrant}
If $P$ is a reduced operad in $\spectra$, then there is a functorial termwise-cofibrant replacement
\[ \tilde{P} \to P \]
such that:
\begin{itemize}
  \item $\tilde{P}$ is a termwise-cofibrant operad (in fact, we can take $\tilde{P}$ to be projectively-cofibrant);
  \item the map $\tilde{P} \to P$ is a trivial fibration, and so in particular $\tilde{P}(n) \to P(n)$ is a weak equivalence for all $n$.
\end{itemize}
Similarly, if $P$ is a termwise-cofibrant operad and $M$ a $P$-module (either right-, left- or bi-), then there is a functorial termwise-cofibrant replacement $\tilde{M} \weq M$.
\end{corollary}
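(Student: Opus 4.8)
The plan is to deduce this corollary by assembling the model structures of Theorem~\ref{thm:projective-model} with the cofibrancy comparison results of Proposition~\ref{prop:termwise-cofibrant} and Lemma~\ref{lem:termwise-cofibrant}. The key mechanism is that in each category in question the model structure is cofibrantly generated, so the small object argument supplies a \emph{functorial} factorization of any map, in particular of the map out of the initial object; this gives the functorial cofibrant replacement. One then has to feed this cofibrant replacement through the chain of implications relating the three cofibrancy notions in play (projectively-cofibrant $\Rightarrow$ $\Sigma$-cofibrant $\Rightarrow$ termwise-cofibrant).

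For the operad statement, recall that $\mathsf{Op}(\spectra)$ has initial object the unit symmetric sequence $\mathsf{1}$ (the minimal reduced operad). Applying the functorial factorization to the canonical map $\mathsf{1} \to P$ produces $\mathsf{1} \to \tilde{P} \to P$ in which $\mathsf{1} \to \tilde{P}$ is a projective-cofibration and $\tilde{P} \to P$ a trivial fibration. Then $\tilde{P}$ is a projectively-cofibrant reduced operad, so $\tilde{P}(1) = S$. By Proposition~\ref{prop:termwise-cofibrant}, $\tilde{P}$ is $\Sigma$-cofibrant, and by Lemma~\ref{lem:termwise-cofibrant} a $\Sigma$-cofibrant reduced operad is termwise-cofibrant; note that the clause $\tilde{P}(1)\isom S$ permitted in Definition~\ref{def:termwise-cofibrant} is exactly what keeps this consistent. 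Finally, since $\tilde{P} \to P$ is a fibration and a weak equivalence of operads, Definitions~\ref{def:fib-operads} and~\ref{def:weq-operads} give that $\tilde{P}(n) \to P(n)$ is a trivial fibration in $\spectra$, hence a weak equivalence, for every $n$. Functoriality in $P$ is inherited from the small-object-argument factorization.

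The module statement is handled in the same way. Given a termwise-cofibrant reduced operad $P$ and a $P$-module $M$ (right, left, or bi-), Theorem~\ref{thm:projective-model} provides a cofibrantly generated model structure on the relevant category of $P$-modules, with initial object the trivial module $*$. Functorial factorization of $* \to M$ gives $* \to \tilde{M} \to M$ with $\tilde{M}$ projectively-cofibrant and $\tilde{M} \to M$ a trivial fibration. Here we must invoke the \emph{last} clause of Proposition~\ref{prop:termwise-cofibrant}: since $P$ is merely termwise-cofibrant (not necessarily $\Sigma$-cofibrant), a projectively-cofibrant $P$-module is still termwise-cofibrant. As before, $\tilde{M}(n) \to M(n)$ is a weak equivalence for all $n$, so $\tilde{M} \weq M$, and the replacement is functorial in $M$.

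No substantial obstacle remains at this stage: the genuine work has already been carried out in establishing Theorem~\ref{thm:projective-model} (in the appendix) and Proposition~\ref{prop:termwise-cofibrant}. The only point requiring care is to keep the three cofibrancy notions straight and to apply the comparison results in the correct direction — in particular, to use that a \emph{projectively}-cofibrant object is termwise-cofibrant, which is not automatic and is precisely the imported content that makes the bar constructions of \S\ref{sec:bar} homotopy-invariant.
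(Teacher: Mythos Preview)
Your proof is correct and follows essentially the same approach as the paper: take the functorial projectively-cofibrant replacement supplied by the small object argument in the model structures of Theorem~\ref{thm:projective-model}, and then invoke Proposition~\ref{prop:termwise-cofibrant} and Lemma~\ref{lem:termwise-cofibrant} to conclude termwise-cofibrancy. Your version is simply more explicit about the initial object, the source of functoriality, and which clause of Proposition~\ref{prop:termwise-cofibrant} is needed in the module case.
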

\begin{proof}
Take $\tilde{P} \to P$ to be a functorial projectively-cofibrant replacement for $P$, as guaranteed by the model structure of Theorem \ref{thm:projective-model}, for example, using the small object argument. Then $\tilde{P}$ is termwise-cofibrant by Proposition \ref{prop:termwise-cofibrant} and Lemma \ref{lem:termwise-cofibrant}. The proof is similar for modules.
\end{proof}

\begin{remark} \label{rem:homotopy-invariant-bar}
Given a reduced operad $P$, left $P$-module $L$ and right $P$-module $R$, we can form a (functorial) homotopy-invariant version of the two-sided bar construction as follows. First let
\[ \tilde{P} \to P \]
be a (functorial) termwise-cofibrant replacement for $P$, as in Corollary \ref{cor:termwise-cofibrant}. Via the map $\tilde{P} \to P$, $L$ inherits a left $\tilde{P}$-module structure, and $R$ inherits a right $\tilde{P}$-module structure. We then take termwise-cofibrant replacements
\[ \tilde{L} \to L, \quad \tilde{R} \to R \]
of $L$ and $R$ respectively, as $\tilde{P}$-modules. The two-sided bar construction
\[ B(\tilde{R},\tilde{P},\tilde{L}) \]
is then a functorial homotopy invariant of the original data $P$, $R$ and $L$.
\end{remark}

Our projectively-cofibrant replacements are guaranteed by the small object argument in the relevant projective model category. It is, however, useful to have more explicit examples of projectively-cofibrant replacements. In the remainder of this section, we show that our bar constructions can be used to do this in some cases. The essential idea is that if $R$ is a $\Sigma$-cofibrant right $P$-module, then $B(R,P,P)$ is a projectively-cofibrant right $P$-module, that is weakly equivalent to $R$. Similar statements hold for left modules and bimodules.

\begin{definition}[Bar resolutions] \label{def:module-resolutions}
Let $P$ be a reduced operad in $\spectra$ and let $R$ be a right $P$-module. We define a map
\[ B(R,P,P) \to R \]
of symmetric sequences as follows. Treating $R$ as a constant simplicial object we define maps
\[ B_{\bullet}(R,P,P) \to R \]
by means of the iterated composition maps
\[ R \circ P^k \circ P \to R \circ P \to R. \]
These commute with the face and degeneracies and so taking realizations, we get the required map
\[ B(R,P,P) \to R. \]
Similarly, if $L$ is a left $P$-module, we obtain a map of symmetric sequences of the form
\[ B(P,P,L) \to L. \]
If $M$ is a $P$-bimodule, there is a corresponding map
\[ B(P,P,M,P,P) \to M. \]
\end{definition}

\begin{lemma} \label{lem:module-resolutions-eqs}
The maps $B(R,P,P) \to R$, $B(P,P,L) \to L$ and $B(P,P,M,P,P) \to M$ of Definition \ref{def:module-resolutions} are weak equivalences of symmetric sequences.
\end{lemma}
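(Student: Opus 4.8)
The plan is to exhibit, in each case, a simplicial contraction (extra degeneracies) on the relevant simplicial bar object and then invoke the homotopy-invariance of geometric realization. Concretely, for the map $B(R,P,P) \to R$, consider $R$ as a constant simplicial object; the augmented simplicial object $B_\bullet(R,P,P) \to R$ admits an extra degeneracy $s_{-1}\colon R \circ P^k \circ P \to R \circ P^{k+1} \circ P$ given by using the unit $\mathsf{1} \to P$ to insert a copy of $P$ in the last ($P$-coefficient) slot — equivalently, this is the extra degeneracy ``on the other side'' mentioned in the remark following Lemma \ref{lem:cosimplicial-contraction}. One checks directly that these maps satisfy the extended simplicial identities. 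By Lemma \ref{lem:simplicial-contraction} (or rather its variant for extra degeneracies on the other side, as noted in that remark), the induced map on geometric realizations $B(R,P,P)(n) \to R(n)$ is a homotopy equivalence for each $n$, hence a weak equivalence. The case $B(P,P,L) \to L$ is dual: the extra degeneracy inserts a copy of $P$ into the first slot, again using the unit map, and the same argument applies.

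For the bimodule case $B(P,P,M,P,P) \to M$, I would reduce to the two previous cases using the factorization of the bisimplicial bar construction discussed in Remark \ref{rem:bibar}. Recall that $B(P,P,M,P,P) \isom B(B(P,P,M),P,P)$. Applying the right-module result with $R = B(P,P,M)$ gives a weak equivalence $B(B(P,P,M),P,P) \to B(P,P,M)$, and then the left-module result gives a weak equivalence $B(P,P,M) \to M$. Composing, and checking that this composite agrees with the map of Definition \ref{def:module-resolutions} (which is immediate from the description of that map via iterated composition maps), yields the claim. Alternatively, one can give a direct extra-degeneracy argument on the diagonal of the bisimplicial object, but routing through the iterated one-sided bar constructions is cleaner and reuses work already done.

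The one genuinely delicate point — and the main obstacle — is verifying that the extra degeneracies really do satisfy the extended simplicial identities with respect to the specific face and degeneracy maps listed in Definition \ref{def:bar}. In particular, the face map $d_k$ on $B_k(R,P,P)$ is the \emph{left} action $P \circ (\text{last }P) \to (\text{last }P)$ of $P$ on the coefficient module, which here is $P$ itself acting by operad composition; one must check that inserting a unit in the final slot via $s_{-1}$ and then applying $d_k$ recovers the identity, and that $s_{-1}$ intertwines correctly with $d_0, \dots, d_{k-1}$ and with the ordinary degeneracies $s_0, \dots, s_k$. This is a bookkeeping exercise in the unit and associativity axioms for the operad and module actions, but it is the step where one could easily make a sign-of-indexing-type error, so it deserves care. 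No cofibrancy hypotheses are needed here since we only claim a weak equivalence of symmetric sequences (indeed a termwise homotopy equivalence), not that the bar construction is homotopy-invariant in its inputs — that is the separate content of Proposition \ref{prop:bar-invariance}.
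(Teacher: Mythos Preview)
Your proposal is correct and takes essentially the same approach as the paper: the paper also exhibits extra degeneracies on the right-hand end of $R \circ P^k \circ P$ via the unit $\mathsf{1} \to P$, invokes Lemma~\ref{lem:simplicial-contraction} to get a homotopy equivalence on realizations, and then says ``similarly'' for the other two cases. The only minor difference is that for the bimodule case the paper does not route through the factorization of Remark~\ref{rem:bibar} but simply treats it in parallel with the one-sided cases; your factored argument is fine and arguably more transparent, though it requires the extra check that the composite agrees with the map of Definition~\ref{def:module-resolutions}.
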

\begin{proof}
The maps module structure map $R \circ P \to R$ provides an augmentation of the simplicial bar construction $B_{\bullet}(R,P,P)$ (see Definition \ref{def:augmented-simplicial}). The unit maps $\mathsf{1} \to P$ applied on the right-hand end of the $k$-simplices objects $R \circ P^k \circ P$ provide a simplicial contraction and so by Lemma \ref{lem:simplicial-contraction}, the induced map $B(R,P,P) \to R$ is a homotopy equivalence and hence a weak equivalence in $\spectra$. Similarly for $B(P,P,L) \to L$ and $B(P,P,M,P,P) \to M$.
\end{proof}

\begin{lemma} \label{lem:module-resolutions}
The maps $B(R,P,P) \to R$, $B(P,P,L) \to L$ and $B(P,P,M,P,P) \to M$ of Definition \ref{def:module-resolutions} are morphisms of right, left and bi- $P$-modules respectively.
\end{lemma}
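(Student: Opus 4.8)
The plan is to verify directly from the construction in Definition \ref{def:module-resolutions} that the maps respect the relevant module structures. The key point is that the right $P$-module structure on $B(R,P,P)$ comes (via Definition \ref{def:module-bar}, applied to the right $P$-module structure on the \emph{last} factor of $P$ in $R \circ P^k \circ P$) from letting $P$ act on the rightmost copy of $P$; concretely, the isomorphism $\chi_l$ of Proposition \ref{prop:bar-comprod} identifies $B(R,P,P) \circ P$ with $B(R,P,P \circ P)$, and then we apply the operad composition $P \circ P \to P$. So I would first recall that the map $B(R,P,P) \to R$ is the realization of the simplicial map $R \circ P^k \circ P \to R$ built from iterating the various action and composition maps down to $R \circ P \to R$.

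The main step is then a diagram chase at the level of simplicial objects, before realizing. I would write down the square
\[
\begin{diagram}
  \node{B_\bullet(R,P,P) \circ P} \arrow{e} \arrow{s} \node{B_\bullet(R,P,P \circ P)} \arrow{e} \node{B_\bullet(R,P,P)} \arrow{s} \\
  \node{R \circ P} \arrow[2]{e} \node[2]{R}
\end{diagram}
\]
where the left vertical map is $(\text{augmentation}) \circ \mathrm{id}_P$, the right vertical is the augmentation, the top composite is the right $P$-action on $B_\bullet(R,P,P)$, and the bottom map is the right $P$-action on $R$. Commutativity of this square reduces, after unwinding $\chi_l$ and the definition of the face maps, to the associativity axiom for the right $P$-module $R$ together with the operad associativity of $P$ — i.e.\ it is just the statement that the two ways of collapsing $R \circ P^k \circ P \circ P$ down to $R$ agree. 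Since $\chi_l$ is natural and commutes with realization, applying $|-|$ to this commuting square of simplicial objects gives exactly the assertion that $B(R,P,P) \to R$ is a map of right $P$-modules.

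For $B(P,P,L) \to L$ the argument is the mirror image, using $\chi_r$ and the left $P$-action map $P \circ B(P,P,L) \to B(P,P,L)$ together with the left $P$-module axiom for $L$. For $B(P,P,M,P,P) \to M$ one uses Remark \ref{rem:bibar} to present the bimodule bar construction as $B(P,P,B(M,P,P))$ (for the left structure) and as $B(B(P,P,M),P,P)$ (for the right structure), and then applies the two previous cases; since the left and right $P$-actions on the bimodule bar construction commute (Corollary \ref{cor:bar-bi}), the map is a map of $P$-bimodules.

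The only mild obstacle is bookkeeping: making sure the module structure on $B(R,P,P)$ that appears in Lemma \ref{lem:module-resolutions} is \emph{the one used later in the paper}, namely the one from Definition \ref{def:module-bar} arising from treating the extra rightmost $P$ as a $P$-bimodule (acting on itself on the right), rather than some other candidate action; once that identification is pinned down, everything is a formal consequence of the simplicial identities and the associativity/unit axioms, so I would not expect any real difficulty beyond writing the diagrams carefully.
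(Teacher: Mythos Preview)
Your proposal is correct and follows the same approach as the paper. The paper's proof is a one-liner: it writes down the square
\[
\begin{diagram}
  \node{B(R,P,P) \circ P} \arrow{s} \arrow{e} \node{B(R,P,P)} \arrow{s} \\
  \node{R \circ P} \arrow{e} \node{R}
\end{diagram}
\]
and says it commutes ``from the definition of the right $P$-module structure on $B(R,P,P)$ (see Definition \ref{def:module-bar})'', with the other parts similar. You have simply unpacked this, correctly identifying the module structure as the one coming from $\chi_l$ and the operad multiplication on the rightmost $P$; your bookkeeping concern is addressed explicitly in the proof of Proposition \ref{prop:bar-resolution}, where the paper confirms that the right $P$-module structure on $B(R,P,P)$ is exactly the one arising from the regular right action on the last factor.
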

\begin{proof}
In the first case, this amounts to showing that the following diagram commutes:
\[ \begin{diagram}
  \node{B(R,P,P) \circ P} \arrow{s} \arrow{e} \node{B(R,P,P)} \arrow{s} \\
  \node{R \circ P} \arrow{e} \node{R}
\end{diagram} \]
This follows from the definition of the right $P$-module structure on $B(R,P,P)$. (See Definition \ref{def:module-bar}.) The other parts are similar.
\end{proof}

The previous two lemmas together say that $B(R,P,P)$ is some kind of resolution of $R$ in the category of right $P$-modules. Proposition \ref{prop:bar-resolution} below gives a condition that $B(R,P,P)$ be a projectively-cofibrant right $P$-module. To prove this, we need the following lemma.

\begin{lemma} \label{lem:sigma-cofibrant-comprod}
\hfill\begin{enumerate}
  \item Let $A$ and $B$ be $\Sigma$-cofibrant symmetric sequences in $\spectra$. Then $A \circ B$ is $\Sigma$-cofibrant.
  \item Let $P$ be a $\Sigma$-cofibrant reduced operad in $\spectra$ and let $R$ and $L$ be $\Sigma$-cofibrant right and left $P$-modules respectively. Then $B(R,P,L)$ is $\Sigma$-cofibrant.
\end{enumerate}
\end{lemma}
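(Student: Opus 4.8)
The plan is to deduce part (2) from part (1), so I would establish part (1) first. By Remark~\ref{rem:sigma-cofibrant} it is enough to check that $(A \circ B)(r)$ is $\Sigma_r$-cofibrant for every $r$. Expanding the composition product gives
\[ (A \circ B)(r) \isom \Wdge_{k \geq 1} A(k) \smsh_{\Sigma_k} \beta_k(r), \qquad \beta_k(r) := \Wdge_{r_1 + \dots + r_k = r} (\Sigma_r)_+ \smsh_{\Sigma_{r_1} \times \dots \times \Sigma_{r_k}} \left( B(r_1) \smsh \dots \smsh B(r_k) \right), \]
where $\beta_k(r)$ carries commuting actions of $\Sigma_k$ (permuting the $k$ blocks) and $\Sigma_r$. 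I would then argue: (i) $\beta_k(r)$ is $\Sigma_r$-cofibrant --- each $B(r_i)$ is $\Sigma_{r_i}$-cofibrant, so by the (multivariable) pushout--product axiom the external smash $B(r_1) \smsh \dots \smsh B(r_k)$ is cofibrant in $\spectra^{\Sigma_{r_1} \times \dots \times \Sigma_{r_k}}$, and induction along a subgroup inclusion is left Quillen (its right adjoint, restriction, preserves fibrations and weak equivalences, which are detected on underlying spectra), so each induced-up summand, hence all of $\beta_k(r)$, is $\Sigma_r$-cofibrant; (ii) the functor $(-) \smsh_{\Sigma_k} \beta_k(r) \colon \spectra^{\Sigma_k} \to \spectra^{\Sigma_r}$ preserves colimits and carries the generating cofibrations $(\Sigma_k)_+ \smsh I_0 \to (\Sigma_k)_+ \smsh I_1$ to the maps $I_0 \smsh \beta_k(r) \to I_1 \smsh \beta_k(r)$, which are $\Sigma_r$-cofibrations by (i) and the pushout--product axiom, so since $A(k)$ is $\Sigma_k$-cofibrant, $A(k) \smsh_{\Sigma_k} \beta_k(r)$ is $\Sigma_r$-cofibrant. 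A wedge of $\Sigma_r$-cofibrant spectra is $\Sigma_r$-cofibrant, giving part~(1). The identical argument applies with $B$ (or $A$) replaced by a $\Sigma$-cofibrant \emph{reduced} operad $P$: the only change is that a factor $P(1) = S$ may appear when some $r_i = 1$, but such a factor is isomorphic to the unit and may be deleted from the smash --- together with the trivial group $\Sigma_1$ from the product --- reducing to genuinely cofibrant factors; in the degenerate case $k = r$ with all $r_i = 1$ one is left with the free spectrum $(\Sigma_r)_+$, which is $\Sigma_r$-cofibrant. (This is the ``cofibrant-or-$S$'' bookkeeping from the proof of Proposition~\ref{prop:bar-invariance}.)

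For part~(2), I would first note that, parenthesizing $B_k(R,P,L) = R \circ P^{\circ k} \circ L$ from the inside out and applying part~(1) (in its reduced-operad form) at each stage, every $k$-simplex object $B_k(R,P,L)$ is a $\Sigma$-cofibrant symmetric sequence. Next I would show that the simplicial symmetric sequence $B_{\bullet}(R,P,L)$ is Reedy cofibrant in the projective model structure on simplicial objects in $\spectra^{\mathsf{\Sigma}}$. Since $P$ is reduced, the degeneracy maps $s_j$ are split inclusions of wedge summands, so --- exactly as in the latching-object analysis in the proof of Proposition~\ref{prop:bar-invariance} --- the latching map $L_k B_{\bullet}(R,P,L) \to B_k(R,P,L)$ is the inclusion of the ``degenerate'' summands (those in which some copy of $P$ contributes only through $P(1)$), and the complementary decomposition $B_k(R,P,L) \isom L_k B_{\bullet}(R,P,L) \wdge N_k$ holds $\Sigma_r$-equivariantly in every term $r$ (whether a summand is degenerate does not depend on leaf labellings). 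Hence $N_k$ is a retract of the $\Sigma$-cofibrant object $B_k(R,P,L)$, so is $\Sigma$-cofibrant, and the latching map, being a cobase change of $* \to N_k$, is a $\Sigma$-cofibration.

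Finally, since $\spectra^{\mathsf{\Sigma}}$ is a cofibrantly generated simplicial model category by Theorem~\ref{thm:projective-model}, the geometric realization of a Reedy cofibrant simplicial object in it is cofibrant: the skeletal filtration of $|B_{\bullet}(R,P,L)|$ exhibits $* \to |B_{\bullet}(R,P,L)|$ as a transfinite composite of cobase changes of the pushout--products of $L_k B_{\bullet}(R,P,L) \to B_k(R,P,L)$ with $\partial\Delta[k]_+ \to \Delta[k]_+$, each of which is a cofibration because $L_k B_{\bullet}(R,P,L) \to B_k(R,P,L)$ is. (The realization formed in $\spectra^{\mathsf{\Sigma}}$ agrees with the termwise geometric realization of Definition~\ref{def:bar} because colimits and simplicial tensors in $\spectra^{\mathsf{\Sigma}}$ are computed termwise.) Therefore $B(R,P,L) = |B_{\bullet}(R,P,L)|$ is $\Sigma$-cofibrant, which is part~(2).

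The step that carries essentially all the content is establishing part~(1), and within it point~(i): that $\beta_k(r)$, a wedge of induced-up external smashes of the $B(r_i)$, is $\Sigma_r$-cofibrant. This is \emph{not} an instance of the (false) claim that smash powers of cofibrant spectra are $\Sigma_n$-cofibrant; what rescues the argument is that in the composition product the block-permuting $\Sigma_k$-symmetry is absorbed by the \emph{freeness} of $A(k)$ as a $\Sigma_k$-spectrum, while the surviving factors $B(r_1), \dots, B(r_k)$ occupy distinct tensor slots with independent symmetric-group actions, so their external smash really is cofibrant over the \emph{product} group $\Sigma_{r_1} \times \dots \times \Sigma_{r_k}$. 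Once point~(i) and the attendant cofibrant-or-$S$ bookkeeping are in hand, the rest --- the iteration for the $k$-simplices, the latching-map analysis, and the passage from Reedy cofibrancy to cofibrancy of the realization --- is formal.
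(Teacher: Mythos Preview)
Your proof is correct. For part~(2) your argument is essentially identical to the paper's: both show that the simplicial bar construction is Reedy $\Sigma$-cofibrant by identifying the latching map as the inclusion of an equivariant wedge summand, and then invoke \cite[18.6.7]{hirschhorn:2003} (which you spell out via the skeletal filtration).

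For part~(1) the two arguments reach the same conclusion by slightly different organizations. The paper does a double cell-induction: first on a cell structure for $A$, reducing to $A = A_k(I)$, and then---via the device of replacing the single $B$ by distinct symmetric sequences $B_1,\dots,B_k$ so as to induct on each independently---on cell structures for the $B_i$, bottoming out at the free $\Sigma_n$-spectrum on a cofibration. Your argument instead isolates the structural facts that this induction is verifying: that the external smash $\spectra^{\Sigma_{r_1}} \times \cdots \times \spectra^{\Sigma_{r_k}} \to \spectra^{\Sigma_{r_1}\times\cdots\times\Sigma_{r_k}}$ is a left Quillen multifunctor, and that induction along a subgroup inclusion is left Quillen. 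Your step~(ii) corresponds exactly to the paper's first induction on $A$ (both arrive at the map $I_0\smsh\beta_k(r)\to I_1\smsh\beta_k(r)$); your step~(i) replaces the paper's second induction and its ``different $B_i$'s'' trick with a direct appeal to these left Quillen properties. The paper's version is more self-contained, while yours makes the mechanism more transparent and avoids the auxiliary device of varying the $B_i$.
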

\begin{proof}
For (1), we can assume, without loss of generality, that $A$ is a cell complex with respect to the generating cofibrations in the model structure on $\spectra^{\mathsf{\Sigma}}$ (see Definition \ref{def:gen-cofs}). The composition product $A \circ B$ commutes with colimits in the $A$-variable. Therefore, by induction on a cell structure for $A$, it is sufficient to show that, for $k \geq 1$ and $I_0 \to I_1$ one of the generating cofibrations in $\spectra$, the map
\[ A_k(I_0) \circ B \to A_k(I_1) \circ B \]
is a projective-cofibration in $\spectra^{\mathsf{\Sigma}}$, where $A_k$ denotes the free symmetric sequence functor on an object in position $k$ (Definition \ref{def:free-symseq}).

It follows from the definition of $A_k$ that we can write
\[ [A_k(I_0) \circ B](n) \isom \Wdge_{n = n_1+\dots+n_k} I_0 \smsh B(n_1) \smsh \dots \smsh B(n_k) \]
where the coproduct is now taken over \emph{ordered} partitions of the set $\{1,\dots,n\}$ into $k$ pieces of sizes $n_1,\dots,n_k$. Since the pieces of the partition are now ordered, we can think of the map
\[ [A_k(I_0) \circ B](n) \to [A_k(I_1) \circ B](n) \]
as a special case of
\[ \tag{*} \Wdge_{n = n_1+\dots+n_k} I_0 \smsh B_1(n_1) \smsh \dots \smsh B_k(n_k) \to \Wdge_{n = n_1+\dots+n_k} I_1 \smsh B_1(n_1) \smsh \dots \smsh B_k(n_k) \]
where $B_1,\dots,B_k$ can now be different symmetric sequences. The map we are interested in then comes by taking each $B_i$ equal to $B$.

It is now sufficient to show that the map (*) is a $\Sigma_n$-cofibration for any $\Sigma$-cofibrant symmetric sequences $B_1,\dots,B_k$. We prove this by again assuming, without loss of generality, that the $B_i$ are cell complexes in $\spectra^{\mathsf{\Sigma}}$, and applying induction on cell structures. This works because each side of the map (*) preserves colimits in each $B_i$-variable.

We have now reduced to showing that the map (*) is a cofibration when $B_i = A_{n_i}(K_i)$ for some finite cell spectra $K_i \in \spectra$. In this case, we can rewrite (*) as
\[ \Wdge_{n = n_1+\dots+n_k} I_0 \smsh K_1 \smsh \dots \smsh K_k \smsh (\Sigma_{n_1} \times \dots \times \Sigma_{n_k})_+ \to \Wdge_{n = n_1+\dots+n_k} I_1 \smsh K_1 \smsh \dots \smsh K_k \smsh (\Sigma_{n_1} \times \dots \times \Sigma_{n_k})_+. \]
This can be rewritten in turn as
\[ (I_0 \smsh K_1 \smsh \dots \smsh K_k) \smsh (\Sigma_n)_+ \to (I_1 \smsh K_1 \smsh \dots \smsh K_k) \smsh (\Sigma_n)_+. \]
This is now the free $\Sigma_n$-spectrum functor applied to a cofibration in $\spectra$ and is therefore a $\Sigma_n$-cofibration as required.

For (2), we start by showing that $B_{\bullet}(R,P,L)$ is Reedy $\Sigma$-cofibrant (that is, Reedy cofibrant with respect to the Reedy model structure on simplicial symmetric sequences coming from the projective model structure on $\spectra^{\mathsf{\Sigma}}$). To see this, we consider the latching maps
\[ \colim_{m < n} B_m(R,P,L) \to B_n(R,P,L) = R \circ P^n \circ L. \]
Arguing as in the proof of Proposition \ref{prop:bar-invariance}, this map is the inclusion of a $\Sigma_n$-equivariant wedge summand. But then, since $R \circ P^n \circ L$ is $\Sigma$-cofibrant by (1), it follows that this map is a projective-cofibration. Thus the simplicial bar construction $B_{\bullet}(R,P,L)$ is $\Sigma$-cofibrant. By \cite[18.6.7]{hirschhorn:2003}, it follows that $B(R,P,L)$ is $\Sigma$-cofibrant.
\end{proof}

\begin{prop} \label{prop:bar-resolution}
Let $P$ be a $\Sigma$-cofibrant operad in $\spectra$ and let $R$ be a $\Sigma$-cofibrant right $P$-module. Then $B(R,P,P)$ is a projectively-cofibrant right $P$-module. Similarly, if $L$ is a $\Sigma$-cofibrant left $P$-module, $B(P,P,L)$ is a projectively-cofibrant left $P$-module, and if $M$ is a $\Sigma$-cofibrant $P$-bimodule, then $B(P,P,M,P,P)$ is a projectively-cofibrant $P$-bimodule.
\end{prop}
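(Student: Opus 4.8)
```latex
\begin{proof}[Proof proposal]
The plan is to prove the right-module case carefully and then remark that the
left-module and bimodule cases go through \emph{mutatis mutandis}. The goal is
to show that $B(R,P,P)$ has the left-lifting property with respect to all
trivial fibrations of right $P$-modules. The standard way to do this is to
exhibit $B(R,P,P)$ as a retract of a cell complex in $\mathsf{Mod}_{\mathsf{right}}(P)$
built from the generating cofibrations $R_n(I_0) \to R_n(I_1)$ of Definition
\ref{def:gen-cofs}, or equivalently to build the lift by induction over the
simplicial filtration (skeletal filtration) of the bar construction.

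First I would set up the skeletal filtration of the geometric realization
$B(R,P,P) = |B_{\bullet}(R,P,P)|$. Because the simplicial symmetric sequence
$B_{\bullet}(R,P,P)$ is Reedy $\Sigma$-cofibrant (this is essentially the
content of Lemma \ref{lem:sigma-cofibrant-comprod}(2), or rather its proof,
applied with $L = P$), the realization is filtered by $\mathsf{sk}_k$ with each
inclusion $\mathsf{sk}_{k-1} \to \mathsf{sk}_k$ a pushout of the form
\[ \Delta[k]_+/\partial\Delta[k]_+ \smsh (\text{nondegenerate part of } R \circ P^k \circ P) \]
attached along $\partial\Delta[k]_+ \smsh (\dots)$. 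The key observation is that
the right $P$-module structure on $B(R,P,P)$ (Definition \ref{def:module-bar})
comes entirely from the \emph{last} copy of $P$, so that at each simplicial
level the $k$-simplices $R \circ P^k \circ P$ are \emph{free} as a right
$P$-module on the $\Sigma$-cofibrant symmetric sequence $R \circ P^k$; that is,
$R \circ P^k \circ P = R(R \circ P^k)$ in the notation of Definition
\ref{def:free-modules}. Hence the skeletal filtration of $B(R,P,P)$ is a
filtration by right $P$-modules in which each stage is built from the previous
by attaching a free right $P$-module on (a suspension of) the nondegenerate
$k$-simplices, which is a $\Sigma$-cofibrant symmetric sequence by Lemma
\ref{lem:sigma-cofibrant-comprod}(1). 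Applying the free right $P$-module functor
$R(-)$ (a left adjoint, hence preserving the relevant pushouts and colimits) to
a cofibration of $\Sigma$-cofibrant symmetric sequences of the form
$\partial\Delta[k]_+ \smsh N \to \Delta[k]_+ \smsh N$ produces a
projective-cofibration in $\mathsf{Mod}_{\mathsf{right}}(P)$; this is where the generating
cofibrations $R_n(I_0) \to R_n(I_1)$ enter, after expressing $N$ as a cell
complex in $\spectra^{\mathsf{\Sigma}}$ and $\partial\Delta[k]_+ \to \Delta[k]_+$
as a composite of generating cofibrations in $\sset$ (and thence $\spectra$).
Since $B(R,P,P)$ is the colimit of this sequence of projective-cofibrations
starting from the initial object $* = \mathsf{sk}_{-1}$, it is
projectively-cofibrant.

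For the left-module case one uses instead that $B(P,P,L) = |P \circ P^{\bullet} \circ L|$
is, at each simplicial level, free as a \emph{left} $P$-module on $P^{\bullet} \circ L$,
and the same skeletal-filtration argument applies with $L(-)$ in place of
$R(-)$; the only subtlety is that pushouts of left $P$-modules are not computed
on underlying symmetric sequences, so one must verify the relevant pushouts in
$\mathsf{Mod}_{\mathsf{left}}(P)$ behave as needed --- but this is exactly the kind of
computation handled by the Cofibration Hypothesis / Lemma \ref{lem:pushouts}
cited in Proposition \ref{prop:termwise-cofibrant}. The bimodule case
$B(P,P,M,P,P)$ is handled by viewing it (Remark \ref{rem:bibar}) as an iterated
bar construction and combining the two previous arguments.

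The main obstacle I expect is the bookkeeping needed to identify the skeletal
attaching maps of the realization with honest pushouts in the category of right
(resp.\ left) $P$-modules against coproducts of the generating cofibrations
$\mathbb{I}_{\mathsf{Mod}_{\mathsf{right}}(P)}$ --- in particular, checking that the
nondegenerate-simplices subobjects assemble into the required $\Sigma$-cofibrant
symmetric sequences $N$ and that the free-module functor genuinely carries the
geometric cofibration $\partial\Delta[k]_+ \smsh N \to \Delta[k]_+ \smsh N$ to a
projective-cofibration. This is the same mechanism already used in the proof of
Proposition \ref{prop:bar-invariance} to verify Reedy cofibrancy of the
simplicial bar construction, so I would model the argument closely on that
proof. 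In the left-module and bimodule cases there is the additional mild
annoyance that the projective model structure's pushouts are not termwise, so
one cannot argue purely on underlying symmetric sequences and must instead
invoke Lemma \ref{lem:pushouts}.
\end{proof}
```
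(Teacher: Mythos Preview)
Your proposal is correct and follows essentially the same approach as the paper. The paper packages the argument slightly differently: rather than working explicitly with the skeletal filtration, it invokes Hirschhorn's result \cite[18.6.7]{hirschhorn:2003} that the realization of a Reedy cofibrant simplicial object is cofibrant, and then checks directly that $B_{\bullet}(R,P,P)$ is Reedy cofibrant in the projective model structure on right $P$-modules. The key step is exactly your observation that each $R \circ P^k \circ P$ is the free right $P$-module on $R \circ P^k$; since the free functor $- \circ P$ is a left adjoint (hence preserves the latching colimit) and takes generating cofibrations to generating cofibrations, the latching map for $B_{\bullet}(R,P,P)$ is the free functor applied to the corresponding latching map for $B_{\bullet}(R,P,\mathsf{1})$, which is a $\Sigma$-cofibration by the argument of Lemma~\ref{lem:sigma-cofibrant-comprod}(2). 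Your skeletal-filtration description unwinds precisely this Reedy analysis, so the two arguments are the same underneath.
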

\begin{proof}
In the appendix (\ref{prop:right-mods}(5) and \ref{prop:left-mods-realization}), we show that the geometric realization of a simplicial $P$-module (right-, left- or bi-) inherits a $P$-module structure. The simplicial bar construction $B_{\bullet}(R,P,P)$ is a simplicial right $P$-module (via the regular right $P$-action on $R \circ P \circ \dots \circ P$) and our chosen right $P$-module structure on $B(R,P,P)$ comes from taking geometric realization of this. (See also \ref{prop:bar-comprod}.)

By \cite[18.6.7]{hirschhorn:2003} it is now sufficient to show that the simplicial bar construction $B_{\bullet}(R,P,P)$ is Reedy projectively-cofibrant. This is similar to the proof of Lemma \ref{lem:sigma-cofibrant-comprod}(2) except that we are working in the Reedy model structure on simplicial right $P$-modules, rather than on simplicial symmetric sequences.

The source of the \ord{n} latching map is
\[ \colim_{m < n} B_m(R,P,P) = \colim_{m < n} R \circ P^{m} \circ P. \]
This colimit is calculated in the category of right $P$-modules, but the free functor $- \circ P$ from symmetric sequences to right $P$-modules preserves colimits and so this is isomorphic to
\[ \left( \colim_{m < n} R \circ P^{m} \right) \circ P \]
where the colimit is now taken in the category of symmetric sequences. Since $- \circ P$ also preserves cofibrations (it takes generating cofibrations to generating cofibrations), it is now sufficient to show that the map
\[ \colim_{m < n} R \circ P^{m} \to R \circ P^{n} \]
is a projective-cofibration of symmetric sequences (i.e. a $\Sigma$-cofibration). But this map is equal to the corresponding latching map in the simplicial bar construction $B(R,P,\mathsf{1})$ which we showed in the proof of Lemma \ref{lem:sigma-cofibrant-comprod}(2) to be a $\Sigma$-cofibration.

Analogous arguments apply for the cases of $B(P,P,L)$ and $B(P,P,M,P,P)$.
\end{proof}

\begin{definition}[Bar resolutions] \label{def:bar-resolution}
Let $P$ be a $\Sigma$-cofibrant operad in $\spectra$. If $R$ is a $\Sigma$-cofibrant right $P$-module, then we call $B(R,P,P)$ the \emph{bar resolution of $R$ as a right $P$-module}. Similarly, if $L$ is a $\Sigma$-cofibrant left $P$-module, then $B(P,P,L)$ is the \emph{bar resolution of $L$ as a left $P$-module}, and if $M$ is a $\Sigma$-cofibrant $P$-bimodule, then $B(P,P,M,P,P)$ is the \emph{bar resolution of $M$ as a $P$-bimodule}.
\end{definition}

\section{Derived mapping objects for modules} \label{sec:ext}

As part of Theorem \ref{thm:projective-model} we constructed simplicial enrichments on the categories of symmetric sequences, and of right-, left- and bimodules over a fixed reduced operad $P$. These enrichments play an important role in the later sections of this paper so we describe them more explicitly here. In addition, the categories of symmetric sequences and \emph{right} $P$-modules have enrichments over $\spectra$, not just $\sset$ which are also important. The last part of this section is dedicated to constructing derived versions of the various mapping objects associated to these enrichments, based on the bar resolutions of Definition \ref{def:bar-resolution}.

\begin{definition}[Mapping objects] \label{def:enriched-symseq}
Let $M$ and $N$ be symmetric sequences of spectra. We define
\[ \Map_{\mathsf{\Sigma}}(M,N) := \prod_{r = 1}^{\infty} \Map(M(r),N(r))^{\Sigma_r} \]
where the $\Sigma_r$-fixed point object can be defined as the limit of the corresponding $\Sigma_r$-indexed diagram in $\spectra$. Equivalently, this is the `end' (in the sense of MacLane \cite[IX.5]{maclane:1998}) of the bifunctor
\[ \mathsf{\Sigma} \times \mathsf{\Sigma}^{op} \to \spectra; (r,s) \mapsto \Map(M(r),N(s)). \]
This definition produces an enrichment of the category of symmetric sequences over $\spectra$. There is a corresponding enrichment over $\sset$ which we write
\[ \Hom_{\mathsf{\Sigma}}(M,N) := \prod_{r = 1}^{\infty} \spectra(M(r),N(r))^{\Sigma_r}. \]
These are related by
\[ \Hom_{\mathsf{\Sigma}}(M,N) \isom \spectra(S,\Map_{\mathsf{\Sigma}}(M,N)). \]
This is the same as the simplicial enrichment considered in the proof of Theorem \ref{thm:projective-model} (see the appendix).
\end{definition}

\begin{definition} \label{def:enriched-symseq-maps}
Let $M,N,P$ be symmetric sequences in $\spectra$. Then there are natural maps
\[ \Map_{\mathsf{\Sigma}}(M,N) \to \Map_{\Sigma}(M \circ P,N \circ P) \]
constructed from the maps
\[ \Map(M(r),N(r)) \to \Map\left(M(r) \smsh P(n_1) \smsh \dots \smsh P(n_r), N(r) \smsh P(n_1) \smsh \dots \smsh P(n_r)\right). \]
There are also natural maps
\[ \Hom_{\mathsf{\Sigma}}(M,N) \to \Hom_{\Sigma}(P \circ M, P \circ N) \]
constructed from the composites
\[  \begin{split} \prod_{r = 1}^{\infty} \spectra(M(r),N(r))
    & \dgTEXTARROWLENGTH=4em\arrow{e,t}{\Delta} \left(\prod_{r = 1}^{\infty} \spectra(M(r),N(r))\right)^{\smsh k} \\
    & \dgTEXTARROWLENGTH=4em\arrow{e,t}{(\pi_{r_1},\dots,\pi_{r_k})} \spectra(M(r_1),N(r_1)) \smsh \dots \smsh \spectra(M(r_k),N(r_k)) \\
    & \dgTEXTARROWLENGTH=4em\arrow{e} \spectra\left(P(k) \smsh M(r_1) \smsh \dots \smsh M(r_k), P(k) \smsh N(r_1) \smsh \dots \smsh N(r_k)\right).
\end{split} \]
Note that in the latter case, we need to use the $k$-fold diagonal map for pointed simplicial sets to make the construction. The corresponding object cannot be defined using only the enrichment over $\spectra$ since we have no diagonal on the spectrum $\Map(M(r),N(r))$.
\end{definition}

\begin{definition}[Mapping objects for modules] \label{def:enriched-modules}
Let $P$ be an operad in $\spectra$ and let $R,R'$ be right $P$-modules. We then define a spectrum
\[ \Map^{\mathsf{right}}_{P}(R,R') := \lim \left(\Map_{\mathsf{\Sigma}}(R,R') \rightrightarrows \Map_{\mathsf{\Sigma}}(R \circ P,R') \right) \]
where one of the arrows on the right-hand side is induced by the module structure map $R \circ P \to R$, and the other is the composite
\[ \Map_{\mathsf{\Sigma}}(R,R') \to \Map_{\mathsf{\Sigma}}(R \circ P, R' \circ P) \to \Map_{\mathsf{\Sigma}}(R \circ P,R') \]
where the first map here comes from Definition \ref{def:enriched-symseq-maps}, and the second is induced by the module structure map $R' \circ P \to R'$. This gives us an enrichment of $\mathsf{Mod}_{\mathsf{right}}(P)$ over $\spectra$ and we get a corresponding simplicial enrichment by replacing $\Map_{\Sigma}(-,-)$ with $\Hom_{\Sigma}(-,-)$.

If $L$ and $L'$ are left $P$-modules, we define
\[ \Hom^{\mathsf{left}}_{P}(L,L') := \lim \left(\Hom_{\mathsf{\Sigma}}(L,L') \rightrightarrows \Hom_{\mathsf{\Sigma}}(P \circ L,L') \right) \]
in the corresponding way. Similarly, if $M$ and $M'$ are $P$-bimodules, we define
\[ \Hom^{\mathsf{bi}}_{P}(M,M') := \lim \left(\Hom_{\mathsf{\Sigma}}(M,M') \rightrightarrows \Hom_{\mathsf{\Sigma}}(P \circ M \circ P,M') \right). \]
Note that $\Hom_{P}(L,L')$ and $\Hom_{P}(M,M')$ are only simplicial sets. There are no natural enrichments for left and bi- $P$-modules over $\spectra$.
\end{definition}

\begin{remark}
The fact that we can enrich the category of right, but not left or bi-, modules over $\spectra$ is related to the fact that right modules (in $\spectra$) form a \emph{stable} model category. Right modules are really just functors from an appropriate ($\spectra$-enriched) category into $\spectra$ (see Appendix A) and categories of functors with values is a stable category are stable. On the other hand, left modules and bimodules are more closely related to algebras which generally do not form stable model categories.
\end{remark}

\begin{lemma} \label{lem:adjunction}
Let $P$ be a reduced operad in $\spectra$. The adjunctions between free and forgetful functors for $P$-modules extend to the following isomorphisms in the enriched setting.
\begin{itemize}
  \item $\Map^{\mathsf{right}}_{P}(A \circ P,R) \isom \Map_{\mathsf{\Sigma}}(A,R)$
  \item $\Hom^{\mathsf{left}}_{P}(P \circ A,L) \isom \Hom_{\mathsf{\Sigma}}(A,L)$
  \item $\Hom^{\mathsf{bi}}_{P}(P \circ A \circ P, M) \isom \Hom_{\mathsf{\Sigma}}(A,M)$
\end{itemize}
where $A$ is any symmetric sequence, $R$ is a right $P$-module, $L$ is a left $P$-module and $M$ is a $P$-bimodule.
\end{lemma}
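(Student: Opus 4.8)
The plan is to prove the three isomorphisms of Lemma~\ref{lem:adjunction} as enriched analogues of the free-forgetful adjunctions for $P$-modules, upgrading the ordinary adjunction bijections (on sets of morphisms) to isomorphisms of mapping objects. The key principle is that whenever one has a free-forgetful adjunction that is suitably compatible with an enrichment, the adjunction bijection on hom-sets is the set of points of an isomorphism of enriched hom-objects; so the content is to identify the relevant equalizers and check that the maps defining them match up under the underlying adjunction isomorphisms $\Map_{\mathsf{\Sigma}}$ and $\Hom_{\mathsf{\Sigma}}$ for symmetric sequences.

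First I would treat the right-module case, $\Map^{\mathsf{right}}_{P}(A \circ P,R) \isom \Map_{\mathsf{\Sigma}}(A,R)$. Unwinding Definition~\ref{def:enriched-modules}, the left-hand side is the equalizer (limit of a reflexive pair) of the two maps $\Map_{\mathsf{\Sigma}}(A \circ P, R) \rightrightarrows \Map_{\mathsf{\Sigma}}((A \circ P) \circ P, R)$. I would use the associativity isomorphism $(A \circ P) \circ P \isom A \circ (P \circ P)$ and then the operad composition $P \circ P \to P$ to rewrite the target, and compare with the bar-type description. Concretely, the free right $P$-module functor $A \mapsto A \circ P$ is left adjoint to the forgetful functor, and the counit of this adjunction applied at a right $P$-module $R$ is precisely the action map $R \circ P \to R$; the unit at a symmetric sequence $A$ is $A \isom A \circ \mathsf{1} \to A \circ P$ induced by $\mathsf{1} \to P$. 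The key computation is that the map $\Map_{\mathsf{\Sigma}}(A,R) \to \Map_{\mathsf{\Sigma}}(A \circ P, R)$ obtained by precomposing with the natural map $\Map_{\mathsf{\Sigma}}(A,R) \to \Map_{\mathsf{\Sigma}}(A\circ P, R\circ P)$ of Definition~\ref{def:enriched-symseq-maps} followed by the action map $R \circ P \to R$ lands in the equalizer defining $\Map^{\mathsf{right}}_{P}(A\circ P, R)$; the two composites one must compare both amount to the action of $P$ on $R$ via the ``outer'' copy of $P$, with associativity of the $P$-action on $R$ providing the required equality. An inverse map is given by restriction along the unit $A \to A \circ P$. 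That these are mutually inverse follows from the triangle identities for the free-forgetful adjunction on symmetric sequences, applied objectwise in the $\Sigma_r$-fixed-point ends.

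Next I would do the left-module case, $\Hom^{\mathsf{left}}_{P}(P \circ A, L) \isom \Hom_{\mathsf{\Sigma}}(A,L)$. This is formally dual, but now one must work with the $\sset$-enrichment $\Hom_{\mathsf{\Sigma}}$ rather than $\Map_{\mathsf{\Sigma}}$, because — as Definition~\ref{def:enriched-symseq-maps} notes — the natural maps $\Hom_{\mathsf{\Sigma}}(M,N) \to \Hom_{\mathsf{\Sigma}}(P \circ M, P \circ N)$ used in the left-module enrichment require the $k$-fold diagonal of pointed simplicial sets and have no $\spectra$-level analogue. So I would simply repeat the argument of the previous paragraph with $\Map_{\mathsf{\Sigma}}$ replaced by $\Hom_{\mathsf{\Sigma}}$, using the isomorphism $(P \circ P) \circ A \isom P \circ (P \circ A)$, the operad composition, and the left $P$-action on $L$; the free left $P$-module functor $A \mapsto P \circ A$ is left adjoint to the forgetful functor, and again the unit/counit are built from $\mathsf{1} \to P$ and the $L$-action respectively. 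The bimodule case $\Hom^{\mathsf{bi}}_{P}(P \circ A \circ P, M) \isom \Hom_{\mathsf{\Sigma}}(A,M)$ is handled the same way, combining the left and right versions: one uses associativity isomorphisms to rewrite $(P \circ (P \circ A \circ P) \circ P)$ and the commuting left and right actions on the bimodule $M$, and the unit is $A \isom \mathsf{1} \circ A \circ \mathsf{1} \to P \circ A \circ P$.

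The main obstacle, and the only place where real care is needed, is verifying that the candidate maps genuinely factor through the equalizers that define the enriched mapping objects for modules, and that the relevant squares of ``action'' maps and ``$\Map_{\mathsf{\Sigma}}$-functoriality'' maps from Definition~\ref{def:enriched-symseq-maps} commute. This is essentially a bookkeeping check that the enrichment of Theorem~\ref{thm:projective-model} (constructed in the appendix) is compatible with the free-forgetful adjunctions in the strong, enriched sense; once one believes this compatibility, the isomorphisms follow formally. In fact, the cleanest way to present the proof may be to observe that the forgetful functors here are the ``evaluation at the generating data'' functors whose left adjoints are the free-module functors, and that the enrichments on module categories are defined precisely so that these adjunctions become $\spectra$- (resp. $\sset$-) enriched adjunctions; Lemma~\ref{lem:adjunction} is then just the enriched-hom form of those adjunctions. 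I would state this as the guiding principle and then spell out the right-module case in full as the representative computation, indicating that the left- and bi-module cases are obtained by the evident modifications (replacing $\Map_{\mathsf{\Sigma}}$ by $\Hom_{\mathsf{\Sigma}}$ throughout).
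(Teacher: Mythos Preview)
Your proposal is correct and is essentially what the paper has in mind: the paper's own proof is simply ``These are standard,'' and what you have written is precisely the standard enriched free--forgetful adjunction argument spelled out in detail. Your careful identification of the equalizers and the unit/counit maps is more than the paper provides, but it is exactly the content behind the word ``standard.''
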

\begin{proof}
These are standard.
\end{proof}

Our mapping objects have the usual homotopy invariance properties.

\begin{prop} \label{prop:map-homotopy}
Let $P$ be a reduced operad in $\spectra$.
\begin{enumerate}
  \item The construction $\Map^{\mathsf{right}}_{P}(R,R')$ for right $P$-modules $R,R'$ preserves weak equivalences between projectively-cofibrant objects in the $R$-variable. If $R$ is projectively-cofibrant, it preserves all weak equivalences in the $R'$-variable.
  \item The construction $\Hom^{\mathsf{left}}_{P}(L,L')$ for left $P$-modules $L,L'$ preserves weak equivalences between projectively-cofibrant objects in the $L$-variable. If $L$ is projectively-cofibrant, it preserves all weak equivalences in the $L'$-variable.
  \item The construction $\Hom^{\mathsf{bi}}_{P}(M,M')$ for $P$-bimodules $M,M'$ preserves weak equivalences between projectively-cofibrant objects in the $M$-variable. If $M$ is projectively-cofibrant, it preserves all weak equivalences in the $M'$-variable.
  \item The construction $\Map_{\mathsf{\Sigma}}(A,A')$ for symmetric sequences $A,A'$ preserves weak equivalences between projectively-cofibrant objects in the $A$-variable. If $A$ is projectively-cofibrant, it preserves all weak equivalences in the $A'$-variable.
\end{enumerate}
\end{prop}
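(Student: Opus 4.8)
The plan is to recognize each of the four constructions as the enriched mapping object of an appropriate projective model category and then quote the standard homotopy-invariance of such mapping objects. The key preliminary observation is that, by Definitions \ref{def:weq-operads} and \ref{def:fib-operads}, weak equivalences and fibrations in all of $\spectra^{\mathsf{\Sigma}}$, $\mathsf{Mod}_{\mathsf{right}}(P)$, $\mathsf{Mod}_{\mathsf{left}}(P)$ and $\mathsf{Mod}_{\mathsf{bi}}(P)$ are detected termwise; since every spectrum is fibrant, \emph{every} symmetric sequence and every module over a reduced operad $P$ is fibrant in the model structure of Theorem \ref{thm:projective-model}. Hence "weak equivalence between fibrant objects" always just means "weak equivalence", and in each part it suffices to establish the two assertions: (a) the construction carries a weak equivalence between projectively-cofibrant objects in the contravariant variable to a weak equivalence, and (b) when the contravariant variable is projectively-cofibrant, the construction carries any weak equivalence in the covariant variable to a weak equivalence.

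For parts (2) and (3), the objects $\Hom^{\mathsf{left}}_P(L,L')$ and $\Hom^{\mathsf{bi}}_P(M,M')$ are precisely the simplicial mapping objects of the simplicial model categories $\mathsf{Mod}_{\mathsf{left}}(P)$ and $\mathsf{Mod}_{\mathsf{bi}}(P)$ of Theorem \ref{thm:projective-model}, so (a) and (b) are instances of the homotopy-invariance of the simplicial mapping object in a simplicial model category (\cite[Ch. 9]{hirschhorn:2003}), using that every object is fibrant. The same argument applies to the simplicial enrichments of $\spectra^{\mathsf{\Sigma}}$ and $\mathsf{Mod}_{\mathsf{right}}(P)$; this gives the (unstated) statements for $\Hom_{\mathsf{\Sigma}}$ but not the spectral statements for $\Map_{\mathsf{\Sigma}}$ and $\Map^{\mathsf{right}}_P$.

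For parts (1) and (4) we need to know that $\spectra^{\mathsf{\Sigma}}$ and $\mathsf{Mod}_{\mathsf{right}}(P)$, equipped with the $\spectra$-enrichments of Definitions \ref{def:enriched-symseq} and \ref{def:enriched-modules}, are \emph{$\spectra$-model categories}: tensored and cotensored over $\spectra$ compatibly with the enrichment, and satisfying the pushout-product axiom with respect to $\smsh$. The tensoring is objectwise: for a spectrum $X$ set $(X \smsh M)(r) := X \smsh M(r)$, with the evident $\Sigma_r$-action, and in the module case the evident right $P$-action (using that $X \smsh -$ commutes with $\circ$ in the first variable); the cotensoring is the evident end, and the adjunction $\spectra(X,\Map_{\mathsf{\Sigma}}(M,N)) \isom \Map_{\mathsf{\Sigma}}(X \smsh M,N)$ holds because $\Map(X,-)$ preserves the limits defining $(-)^{\Sigma_r}$. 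One checks directly that $X \smsh A_n(I) \isom A_n(X \smsh I)$ and $X \smsh R_n(I) \isom R_n(X \smsh I)$; since $A_n$ and $R_n = A_n(-)\circ P$ are left adjoints taking generating (trivial) cofibrations to generating (trivial) cofibrations, the pushout-product axiom reduces, by the usual argument that pushout-product with a fixed cofibration preserves colimits in the other variable, to the pushout-product axiom in $\spectra$. Alternatively — and this is the cleanest route, since $\circ$ is not symmetric and the direct computation must be handled with care — one observes, as indicated in the remark following Definition \ref{def:enriched-modules} and in the appendix, that $\spectra^{\mathsf{\Sigma}}$ and $\mathsf{Mod}_{\mathsf{right}}(P)$ are categories of $\spectra$-enriched functors into $\spectra$ from a skeletally small $\spectra$-category, whose projective model structures are automatically $\spectra$-model structures. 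Granting this, assertions (a) and (b) for $\Map_{\mathsf{\Sigma}}$ and $\Map^{\mathsf{right}}_P$ are the $\spectra$-enriched analogue of \cite[Ch. 9]{hirschhorn:2003}: in a $\mathcal{V}$-model category the derived enriched mapping object is homotopy-invariant on cofibrant–fibrant pairs.

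The only genuinely non-formal point is the verification that $\mathsf{Mod}_{\mathsf{right}}(P)$ — for an arbitrary reduced operad $P$ — is a $\spectra$-model category; everything else is a direct appeal to enriched-model-category theory together with the termwise detection of weak equivalences and fibrations, which forces all objects to be fibrant.
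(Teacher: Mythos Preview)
Your approach is essentially the same as the paper's: both verify that the mapping objects make the relevant categories into enriched model categories (over $\spectra$ or $\sset$), then invoke the standard homotopy invariance of enriched mapping objects using that all objects are fibrant. The paper's proof is terser---it simply states that one checks the enriched model category axioms on generating (trivial) cofibrations and sketches the reduction via the adjunction $\Map^{\mathsf{right}}_P(R_n(I),R') \isom \Map(I,R'(n))$ to the pushout-product axiom in $\spectra$---but this is exactly the ``direct computation'' route you outline, and your alternative route via the enriched-functor-category description in the appendix is also implicitly available there.
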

\begin{proof}
In each case, we check that the given mapping object makes the module category into a enriched model category (in the sense of Hovey \cite{hovey:1999}) over either $\spectra$ or $\sset$. For this it is enough to check the claims for the generating cofibrations and trivial cofibrations. For example, in (1) we check that the map
\[ \Map^{\mathsf{right}}_{P}(R_n(I_1),R') \to \Map^{\mathsf{right}}_{P}(R_n(I_0),R') \]
is a fibration in $\spectra$ when $I_0 \to I_1$ is one of the generating cofibrations in $\spectra$ and $R_n$ is the free right $P$-module construction of Definition \ref{def:more-free-operads}. But by use of various enriched adjunction isomorphisms, this reduces to the fact that
\[ \Map(I_1,R'(n)) \to \Map(I_0,R'(n)) \]
is a fibration in $\spectra$.
\end{proof}

Proposition \ref{prop:map-homotopy} tells us that the homotopically correct form of the mapping object between two $P$-modules requires taking a projectively-cofibrant replacement for the first module. We are particularly interested in the case where such a projectively-cofibrant replacement is given by one of the bar resolutions of Definition \ref{def:bar-resolution}.

\begin{definition}[Ext-objects] \label{def:ext-modules}
Let $P$ be a reduced operad in $\spectra$ and let $R,R'$ be right $P$-modules and suppose that $R$ is $\Sigma$-cofibrant. We define
\[ \Ext^{\mathsf{right}}_{P}(R,R') := \Map^{\mathsf{right}}_{P}(B(R,P,P),R'). \]
Note that this $\Ext$-object is in $\spectra$. By Propositions \ref{prop:bar-resolution} and \ref{prop:map-homotopy}, this is the derived mapping object in the category of right $P$-modules (with the projective model structure).

Let $L,L'$ be left $P$-modules and suppose that $L$ is $\Sigma$-cofibrant. We define
\[ \Ext^{\mathsf{left}}_{P}(L,L') := \Hom^{\mathsf{left}}_{P}(B(P,P,L),L'). \]
This $\Ext$-object is a pointed simplicial set because left $P$-modules are enriched only over $\sset$, not $\spectra$.

Let $M,M'$ be $P$-bimodules and suppose that $M$ is $\Sigma$-cofibrant. We define
\[ \Ext^{\mathsf{bi}}_{P}(M,M') := \Hom^{\mathsf{bi}}_{P}(B(P,P,M,P,P),M'). \]
This again is a pointed simplicial set because $P$-bimodules are also enriched only over $\sset$.
\end{definition}

\begin{lemma} \label{lem:ext-homotopy}
Each of the $\Ext$-objects of Definition \ref{def:ext-modules} is homotopy invariant with respect to $\Sigma$-cofibrant modules in the first variable, and all modules in the second variable.
\end{lemma}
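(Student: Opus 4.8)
The plan is to deduce Lemma \ref{lem:ext-homotopy} formally from the homotopy-invariance properties already established for the enriched mapping objects (Proposition \ref{prop:map-homotopy}) together with the fact that the bar resolutions of $\Sigma$-cofibrant modules are projectively cofibrant (Proposition \ref{prop:bar-resolution}). Throughout I assume, as is implicit in Definition \ref{def:ext-modules}, that $P$ is $\Sigma$-cofibrant; this is exactly what Proposition \ref{prop:bar-resolution} needs in order to guarantee that $B(R,P,P)$, $B(P,P,L)$ and $B(P,P,M,P,P)$ are projectively-cofibrant objects of $\mathsf{Mod}_{\mathsf{right}}(P)$, $\mathsf{Mod}_{\mathsf{left}}(P)$ and $\mathsf{Mod}_{\mathsf{bi}}(P)$ respectively.

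For the second variable the argument is immediate: I would fix a $\Sigma$-cofibrant right $P$-module $R$, observe that $B(R,P,P)$ is then projectively cofibrant, and invoke Proposition \ref{prop:map-homotopy}(1), which says that $\Map^{\mathsf{right}}_P(B(R,P,P),-)$ carries \emph{all} weak equivalences of right $P$-modules to weak equivalences of spectra. Hence $\Ext^{\mathsf{right}}_P(R,-)$ is homotopy invariant in its second variable. The left-module and bimodule statements are handled in exactly the same way, using parts (2) and (3) of Proposition \ref{prop:map-homotopy} together with the corresponding clauses of Proposition \ref{prop:bar-resolution}.

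For the first variable I would start from a weak equivalence $r\colon R\to R'$ of $\Sigma$-cofibrant right $P$-modules and show that the induced map $r_*\colon B(R,P,P)\to B(R',P,P)$ is a weak equivalence of projectively-cofibrant right $P$-modules. The cleanest route is to use naturality of the augmentation $B(-,P,P)\to(-)$ of Definition \ref{def:module-resolutions}: it fits $r_*$ into a commuting square with $r$ and the two augmentations $B(R,P,P)\to R$, $B(R',P,P)\to R'$, which are weak equivalences by Lemma \ref{lem:module-resolutions-eqs}, so $r_*$ is a weak equivalence by two-out-of-three. (Alternatively this follows directly from Proposition \ref{prop:bar-invariance}, since $\Sigma$-cofibrant modules are termwise cofibrant by Lemma \ref{lem:termwise-cofibrant}.) Since both source and target of $r_*$ are projectively cofibrant, Proposition \ref{prop:map-homotopy}(1) applied in the first variable then gives a weak equivalence $\Map^{\mathsf{right}}_P(B(R',P,P),R'')\weq\Map^{\mathsf{right}}_P(B(R,P,P),R'')$, i.e. $\Ext^{\mathsf{right}}_P(R',R'')\weq\Ext^{\mathsf{right}}_P(R,R'')$, for any right $P$-module $R''$. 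The left-module and bimodule cases are identical, replacing $B(R,P,P)$ by $B(P,P,L)$ or $B(P,P,M,P,P)$, using the appropriate augmentation from Definition \ref{def:module-resolutions} and parts (2),(3) of Propositions \ref{prop:bar-resolution} and \ref{prop:map-homotopy}.

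There is no serious obstacle here; the one point that requires care is the cofibrancy bookkeeping. Proposition \ref{prop:map-homotopy} only asserts invariance of the mapping object \emph{between projectively-cofibrant objects} in the first slot, so it is essential that the bar resolutions be genuinely projectively cofibrant and not merely termwise cofibrant --- this is precisely why Proposition \ref{prop:bar-resolution}, and hence the standing $\Sigma$-cofibrancy hypothesis on $P$, is needed. Everything else is a formal consequence of the model-category-theoretic statements already in hand.
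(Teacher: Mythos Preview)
Your proposal is correct and follows exactly the same approach as the paper, which simply states that the lemma follows from Propositions \ref{prop:bar-resolution} and \ref{prop:map-homotopy}. You have merely unpacked this one-line reference into its constituent steps, including the observation that the bar resolution is functorial and carries weak equivalences of $\Sigma$-cofibrant modules to weak equivalences between projectively-cofibrant modules.
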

\begin{proof}
This follows from Propositions \ref{prop:bar-resolution} and \ref{prop:map-homotopy}.
\end{proof}

\begin{remark} \label{rem:ext-tot}
We can replace maps out of a geometric realization with the totalization of a corresponding cosimplicial object. For the $\Ext$-objects of Definition \ref{def:ext-modules} we get:
\[ \Ext^{\mathsf{right}}_{P}(R,R') \isom \Tot \left[ \Map^{\mathsf{right}}_{P}(R \circ P^{\bullet} \circ P, R')\right] \isom \Tot \left[ \Map_{\mathsf{\Sigma}}(R \circ P^{\bullet},R') \right] \]
The coface and codegeneracy maps are dual to those in the simplicial bar construction, except that in the second presentation, the final coface map is given by
\[ \delta^k: \Map_{\mathsf{\Sigma}}(R \circ P^k,R') \to \Map_{\mathsf{\Sigma}}(R \circ P^k \circ P,R' \circ P) \to \Map_{\mathsf{\Sigma}}(R \circ P^{k+1},R'). \]
Similarly, for left- and bimodules, we have
\[ \Ext^{\mathsf{left}}_{P}(L,L') \isom \Tot \left[ \Hom_{\mathsf{\Sigma}}(P^{\bullet} \circ L,L') \right] \]
and
\[ \Ext^{\mathsf{bi}}_{P}(M,M') \isom \Tot \left[ \Hom_{\mathsf{\Sigma}}(P^{\bullet} \circ M \circ P^{\bullet}, M') \right] \]
respectively.

If the operad $P$ and the module $R$, $L$ or $M$ are termwise-cofibrant (which we have assumed to be the case in defining the $\Ext$-objects), then the cosimplicial objects appearing above are Reedy fibrant (essentially by the same argument as in the proof of Proposition \ref{prop:bar-invariance}). The homotopy-invariance properties of Lemma \ref{lem:ext-homotopy} then follow by Lemma \ref{lem:cosimplicial-contraction}.
\end{remark}

\begin{definition}[Maps of Ext-objects] \label{def:ext-maps}
Let $P$ be a reduced operad in $\spectra$ and let $R,R'$ be right $P$-modules. Then the resolution map $B(R,P,P) \to R$ of Definition \ref{def:module-resolutions} gives us a natural map
\[ \Map^{\mathsf{right}}_{P}(R,R') \to \Ext^{\mathsf{right}}_{P}(R,R'). \]
Similarly, for left $P$-modules $L$ and $L'$, we have a map
\[ \Hom^{\mathsf{left}}_{P}(L,L') \to \Ext^{\mathsf{left}}_{P}(L,L') \]
and, for $P$-bimodules $M$ and $M'$, a map
\[ \Hom^{\mathsf{bi}}_{P}(M,M') \to \Ext^{\mathsf{bi}}_{P}(M,M'). \]
\end{definition}

\section{Pro-symmetric sequences and Spanier-Whitehead Duality} \label{sec:prosymseq}

We now consider Spanier-Whitehead duality for modules over operads. This requires us to extend the theory of pro-spectra described in \S\ref{sec:pro-spectra} to symmetric sequences, and to modules and comodules. The main aims of this section are: to define pro-symmetric sequences, pro-modules and pro-comodules; to construct specific Spanier-Whitehead duality functors relating these; and to establish the basic properties of Spanier-Whitehead duality, including how it interacts with the composition product and bar construction.

\begin{definition}[Pro-symmetric sequences]
A \emph{pro-symmetric sequence} in $\spectra$ is a pro-object in the category of symmetric sequences in $\spectra$. Note that this differs from a symmetric sequence in $\mathsf{Pro}(\spectra)$ although those two categories are equivalent. Explicitly then, a pro-symmetric sequence consists of a sequence $M(n)$ of pro-spectra, each indexed on the same cofiltered category, together with a $\Sigma_n$-action on $M(n)$ by levelwise maps.

Let $P$ be a reduced operad in $\spectra$. A \emph{pro-right-$P$-module} is a pro-object in the category of right $P$-modules. A \emph{pro-left-$P$-module} is a pro-object in the category of left $P$-modules.

Let $Q$ be a reduced cooperad in $\spectra$. A \emph{pro-right-$Q$-comodule} is a pro-object in the category of right $Q$-comodules. A \emph{pro-left-$Q$-comodule} is a pro-object in the category of left $Q$-comodules.

We emphasize that in each of these cases, the pro-object involves a single cofiltered category on which all the terms in the symmetric sequences are indexed.

If $M$ is a pro-symmetric sequence (or pro-module or -comodule) indexed on the cofiltered category $\cat{J}$, then we write $M_j$ for the symmetric sequence (or module or comodule) given by evaluating $M$ at $j \in \cat{J}$, and we write $M(n)$ for the pro-spectrum formed by the terms $M_j(n)$.
\end{definition}

\begin{remark}
We could also define pro-operads and pro-cooperads along similar lines, but we do not need these ideas in this paper. In particular, the pro-modules and pro-comodules we consider are all over individual (not pro-) operads and cooperads.
\end{remark}

\begin{definition}[Weak equivalence] \label{def:prosymseq-weq}
A morphism $f: A \to B$ of pro-symmetric sequences in $\spectra$ is a \emph{weak equivalence} if each map $f_n: A(n) \to B(n)$ is a weak equivalence of pro-spectra in the sense of Theorem \ref{thm:pro-spectra}. A morphism of pro-modules or pro-comodules is a \emph{weak equivalence} if the underlying morphism of pro-symmetric sequences is a weak equivalence.
\end{definition}

\begin{definition}[Directly-dualizable pro-symmetric sequences] \label{def:directly-dualizable}
Recall that a pro-spectrum $X$ is \emph{directly-dualizable} if the indexing category has a certain form and if each term $X_j$ is a cofibrant homotopy-finite spectrum (see Definition \ref{def:dualizable}). We extend this definition to pro-symmetric sequences. We say that a pro-symmetric sequence $M$ (or pro-module or pro-comodule) is \emph{directly-dualizable} if each $M(n)$ is a directly-dualizable pro-spectrum. We include the case where the indexing category for $M$ is trivial, so that a symmetric sequence (or operad, cooperad, module or comodule) $M$ is \emph{directly-dualizable} if each $M(n)$ is a cofibrant homotopy-finite spectrum. In particular, note that a directly-dualizable operad or module is termwise-cofibrant.
\end{definition}

\begin{remark}
The term `directly-dualizable' is meant to invoke the ease with which we can form the Spanier-Whitehead duals of these objects. In particular, assuming that each spectrum $E$ involved in one of our pro-symmetric sequences is cofibrant and homotopy-finite ensures that the naive dual $\Map(E,S)$ is a model for the homotopically-correct Spanier-Whitehead dual. Thus, no further cofibrant replacements are necessary to form that dual. For pro-objects, it also includes a condition on the indexing category that allows the dual spectrum to be formed as a homotopy colimit (see Lemma \ref{lem:colim-ind}).
\end{remark}

\begin{definition}[Spanier-Whitehead duality for pro-symmetric sequences] \label{def:dual-symseq}
Let $M$ be a directly-dualizable pro-symmetric sequence indexed on the cofiltered poset $\cat{J}$. We define the \emph{Spanier-Whitehead dual} of $M$ to be the symmetric sequence of spectra given by
\[ (\dual M)(n) := \dual M(n) \]
where $\dual M(n)$ is the Spanier-Whitehead dual of the pro-spectrum $M(n)$ as described in Definition \ref{def:dualizable}. Explicitly, then we have
\[ (\dual M)(n) := \hocolim_{j \in \cat{J}^{op}} \Map(M_j(n),S). \]
We also write this as
\[ \dual M := \hocolim_{j \in \cat{J}^{op}} \Map(M_j,S) \]
with the understanding that homotopy colimits of diagrams of symmetric sequences are formed termwise.
\end{definition}

We now look at how Spanier-Whitehead duality relates to the composition product. For this we need to extend the composition product to pro-symmetric sequences.

\begin{definition}[Composition products] \label{def:pro-comp}
Let $M$ and $N$ be pro-symmetric sequences indexed on the cofiltered categories $\cat{J}$ and $\cat{K}$ respectively. We then define $M \circ N$ to be the pro-symmetric sequence indexed on the cofiltered category $\cat{J} \times \cat{K}$ given by
\[ (M \circ N)_{(j,k)} := M_j \circ N_k \]
for $j \in \cat{J}, k \in \cat{K}$.
\end{definition}

We now show that Spanier-Whitehead duality commutes with composition products. For this we need the following lemma.

\begin{lemma} \label{lem:comprod-hocolim}
Let $M: \cat{J} \to \spectra^{\mathsf{\Sigma}}$ be a diagram of termwise-cofibrant symmetric sequences of spectra and let $N$ be any termwise-cofibrant symmetric sequence of spectra. Then we have a natural equivalence
\[ \hocolim_{j \in \cat{J}} (M_j \circ N) \homeq (\hocolim_{j \in \cat{J}} M_j) \circ N \]
If $\cat{J}$ is \emph{filtered}, then there is also a natural equivalence
\[ \hocolim_{j \in \cat{J}} (N \circ M_j) \homeq N \circ (\hocolim_{j \in \cat{J}} M_j). \]
In other words, composition product commutes with filtered homotopy colimits in both variables.
\end{lemma}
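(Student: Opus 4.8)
The plan is to reduce both equivalences to statements about ordinary (termwise) homotopy colimits and the behaviour of smash products in $\spectra$. Recall that $(M_j \circ N)(n)$ is, by definition, the wedge
\[ \Wdge_{\lambda} M_j(k) \smsh N(n_1) \smsh \dots \smsh N(n_k) \]
over partitions $\lambda$ of $\{1,\dots,n\}$, and that homotopy colimits of symmetric sequences are formed termwise. So it suffices to prove the corresponding statements one term at a time, and there the composition product is built out of finite wedges and $k$-fold smash products of the terms $M_j(k)$ and $N(n_i)$.

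First I would handle the first equivalence, which holds for \emph{arbitrary} (not necessarily filtered) $\cat{J}$. Fix $n$ and a partition into pieces of sizes $n_1,\dots,n_k$. The relevant term on the left is $\hocolim_j \bigl( M_j(k) \smsh N(n_1) \smsh \dots \smsh N(n_k) \bigr)$, and on the right it is $\bigl(\hocolim_j M_j(k)\bigr) \smsh N(n_1) \smsh \dots \smsh N(n_k)$. Since smashing with a fixed object $N(n_1) \smsh \dots \smsh N(n_k)$ is a left adjoint, it commutes with all colimits on the nose, and because all objects in sight are cofibrant (the hypothesis is that $M$ and $N$ are termwise-cofibrant, and $M_j(k) \smsh N(n_1)\smsh\dots\smsh N(n_k)$ is then cofibrant since $\spectra$ is a monoidal model category), smashing with a cofibrant object preserves the objectwise weak equivalences and cofibrations that compute the homotopy colimit; hence it commutes with $\hocolim$ up to natural equivalence. (Concretely: take a cofibrant replacement of the diagram $M(\bullet)(k)$ in the projective model structure on $\cat{J}$-diagrams, smash it levelwise with the fixed cofibrant object, note the result is still projectively cofibrant, and compare strict and homotopy colimits.) Finally, $\hocolim$ commutes with the finite wedge over partitions. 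Reassembling over all partitions and all $n$ gives the first equivalence, and naturality is clear from the construction.

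Next I would turn to the second equivalence, $\hocolim_j (N \circ M_j) \homeq N \circ (\hocolim_j M_j)$, which is where the filteredness of $\cat{J}$ is genuinely needed. Now the $n$-th term of $N \circ M_j$ involves smash products $N(k) \smsh M_j(n_1) \smsh \dots \smsh M_j(n_k)$ in which \emph{several} factors depend on $j$. The point is that a filtered homotopy colimit commutes with finite smash products: for a filtered diagram, $\hocolim_j (A_j \smsh B_j) \homeq (\hocolim_j A_j) \smsh (\hocolim_j B_j)$, which follows because the diagonal $\cat{J} \to \cat{J} \times \cat{J}$ is cofinal when $\cat{J}$ is filtered, together with the first-variable statement applied twice (smashing with a fixed cofibrant object, then using cofinality to collapse the double index). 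Iterating this over the $k$ factors $M_j(n_i)$ and smashing with the fixed cofibrant $N(k)$ handles a single partition; then $\hocolim$ commutes with the finite wedge over partitions, and we reassemble over all $n$.

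The main obstacle is the second equivalence: the interchange of a filtered homotopy colimit with a $k$-fold smash product. This requires a cofinality argument for the diagonal $\cat{J} \to \cat{J}^{\times k}$ (valid precisely because $\cat{J}$ is filtered) combined with care about cofibrancy so that the homotopy colimits really are computed correctly by the strict ones after projectively-cofibrant replacement — the $\omega$-smallness and compact generation of $\spectra$ recorded in Remark~\ref{rem:compactly-generated} make the filtered strict colimits already homotopically meaningful on cofibrant diagrams. The first-variable statement, by contrast, is essentially formal once one invokes that smashing with a cofibrant object is a left Quillen functor. I would state the $k$-fold smash interchange as an auxiliary lemma, prove it by induction on $k$ using the two-variable case, and then deduce both halves of the proposition from it together with the termwise formula for the composition product.
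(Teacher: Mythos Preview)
Your proof is correct and follows essentially the same approach as the paper: the first equivalence reduces to the fact that smashing with a fixed cofibrant object commutes with homotopy colimits, and the second reduces to cofinality of the diagonal $\cat{J} \to \cat{J}^{k}$ for filtered $\cat{J}$. Your treatment is considerably more explicit about the model-categorical bookkeeping (projectively-cofibrant replacements, left Quillen functors), whereas the paper dispatches both parts in a few lines, but the underlying argument is the same.
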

\begin{proof}
The first claim follows from the fact that homotopy colimits commute with smash products. For the second, we have
\[ N \circ (\hocolim M_j)(r) \homeq \Wdge \hocolim_{j_1,\dots,j_k \in \cat{J}} N(k) \smsh M_{j_1}(r_1) \smsh \dots \smsh M_{j_k}(r_k). \]
There is a natural map from
\[ \hocolim_{j \in \cat{J}} N(k) \smsh M_j(r_1) \smsh \dots \smsh M_j(r_k) \]
to this given by the diagonal on $\cat{J}$. But since $\cat{J}$ is filtered, the set of $k$-tuples of the form $(j,\dots,j)$ is cofinal in $\cat{J}^k$ and so this has an inverse (up to weak equivalence).
\end{proof}

\begin{lemma} \label{lem:dualcomprod}
Let $M$ and $N$ be directly-dualizable pro-symmetric sequences in $\spectra$. Then $M \circ N$ is also directly-dualizable, and there is a natural equivalence
\[ \widetilde{\dual(M)} \circ \widetilde{\dual(N)} \homeq \dual(M \circ N). \]
Here $\widetilde{\dual(-)}$ denotes a termwise-cofibrant replacement for the symmetric sequence $\dual(-)$.
\end{lemma}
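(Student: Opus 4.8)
The plan is to reduce the statement, via Lemma \ref{lem:comprod-hocolim} (composition product commutes with filtered homotopy colimits), to Spanier--Whitehead duality for finite spectra applied termwise. Write $M$ for a directly-dualizable pro-symmetric sequence indexed on the cofiltered poset $\cat{J}$ and $N$ for one indexed on $\cat{K}$. First I would check that $M \circ N$ is directly-dualizable: by Definition \ref{def:pro-comp} it is indexed on $\cat{J} \times \cat{K}$, which is again a cofiltered poset in which every object $(j,k)$ has finitely many successors (its set of successors being the product of the successor sets of $j$ and of $k$); and for each $(j,k)$ and each $n$ the spectrum $(M_j \circ N_k)(n)$ is a \emph{finite} wedge --- over the finitely many partitions of $\{1,\dots,n\}$ --- of finite smash products of the cofibrant homotopy-finite spectra $M_j(p)$ and $N_k(n_i)$, and hence is itself cofibrant and homotopy-finite. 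So $\dual(M \circ N)$ is defined, and unwinding Definition \ref{def:dual-symseq},
\[ \dual(M \circ N) \homeq \hocolim_{(j,k) \in \cat{J}^{op} \times \cat{K}^{op}} \Map(M_j \circ N_k, S), \]
with homotopy colimits of symmetric sequences formed termwise.

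Next I would establish the finite-level identity: for fixed $j, k$ there is a natural weak equivalence $\widetilde{\dual M_j} \circ \widetilde{\dual N_k} \weq \Map(M_j \circ N_k, S)$ (using, say, the functorial termwise-cofibrant replacements of Corollary \ref{cor:termwise-cofibrant}). On the wedge summand of $(M_j \circ N_k)(n)$ indexed by a partition with block sizes $n_1,\dots,n_p$, the evaluation pairings give a natural map
\[ \widetilde{\dual M_j}(p) \smsh \widetilde{\dual N_k}(n_1) \smsh \dots \smsh \widetilde{\dual N_k}(n_p) \to \Map\bigl(M_j(p) \smsh N_k(n_1) \smsh \dots \smsh N_k(n_p), S\bigr), \]
and this is an equivalence because a finite smash product of cofibrant homotopy-finite spectra is dualizable, with the smash of the duals computing the dual of the smash. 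Assembling over the finite set of partitions --- and using that in the stable category $\spectra$ a finite wedge agrees with the corresponding finite product, so $\Map\bigl(\Wdge_{\text{part.}}(-), S\bigr) \homeq \prod_{\text{part.}} \Map(-, S) \homeq \Wdge_{\text{part.}} \Map(-, S)$ --- identifies the target with $\Map((M_j \circ N_k)(n), S)$ and the source with $(\widetilde{\dual M_j} \circ \widetilde{\dual N_k})(n)$. The resulting equivalence is $\Sigma_n$-equivariant and natural in $j$ and $k$, so applying it termwise to the $\cat{J}^{op} \times \cat{K}^{op}$-diagrams and passing to homotopy colimits gives $\dual(M \circ N) \homeq \hocolim_{(j,k)} \bigl(\widetilde{\dual M_j} \circ \widetilde{\dual N_k}\bigr)$.

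Finally I would pull the homotopy colimit out through the composition product. Rewriting $\hocolim_{\cat{J}^{op} \times \cat{K}^{op}}$ as the iterated homotopy colimit $\hocolim_{j}\hocolim_{k}$, the filtered case of Lemma \ref{lem:comprod-hocolim} (with $\cat{K}^{op}$ filtered and the termwise-cofibrant $\widetilde{\dual M_j}$ held fixed) gives $\hocolim_{k}\bigl(\widetilde{\dual M_j}\circ\widetilde{\dual N_k}\bigr) \homeq \widetilde{\dual M_j} \circ \hocolim_{k}\widetilde{\dual N_k}$; then the other case of Lemma \ref{lem:comprod-hocolim} (with the termwise-cofibrant symmetric sequence $\hocolim_{k}\widetilde{\dual N_k}$ held fixed --- a homotopy colimit of cofibrant spectra being cofibrant) gives $\hocolim_{j}\bigl(\widetilde{\dual M_j}\circ\hocolim_{k}\widetilde{\dual N_k}\bigr) \homeq \bigl(\hocolim_{j}\widetilde{\dual M_j}\bigr) \circ \bigl(\hocolim_{k}\widetilde{\dual N_k}\bigr)$. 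Since $\hocolim_{j}\widetilde{\dual M_j}$ is termwise-cofibrant and maps by a termwise weak equivalence to $\hocolim_{j}\Map(M_j,S) = \dual M$, it is a legitimate choice of $\widetilde{\dual M}$, and likewise for $N$; because the composition product of a termwise weak equivalence between termwise-cofibrant symmetric sequences is again a weak equivalence (the smash-of-equivalences argument from the proof of Proposition \ref{prop:bar-invariance}), this gives $\widetilde{\dual M} \circ \widetilde{\dual N} \homeq \dual(M \circ N)$ for any choices of termwise-cofibrant replacement, naturally in $M$ and $N$.

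I expect the main obstacle to be the finite-level identity of the second paragraph: one has to be careful that the natural comparison map runs $\widetilde{\dual M_j} \circ \widetilde{\dual N_k} \to \Map(M_j \circ N_k, S)$ rather than in the opposite direction, and that dualizing the \emph{coproduct} defining $M_j \circ N_k$ yields a \emph{coproduct} of duals --- so that the target genuinely models $\dual M_j \circ \dual N_k$ and not $\dual M_j \varcomp \dual N_k$ --- which is exactly where the coincidence of finite products and coproducts in $\spectra$ enters. Once that identity is in hand (together with its $\Sigma_n$-equivariance and naturality), the remaining homotopy-colimit manipulations are routine applications of Lemma \ref{lem:comprod-hocolim}.
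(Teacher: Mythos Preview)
Your proposal is correct and follows essentially the same approach as the paper: verify direct-dualizability of $M\circ N$, build the levelwise comparison map $\widetilde{\dual M_j}\circ\widetilde{\dual N_k}\to\Map(M_j\circ N_k,S)$ from the smash-of-duals identity for finite spectra, and then invoke Lemma~\ref{lem:comprod-hocolim} to pass to the homotopy colimit over $\cat{J}^{op}\times\cat{K}^{op}$. You are in fact more explicit than the paper on a couple of points it leaves implicit (the finite-wedge-versus-product identification when dualizing, and the use of both halves of Lemma~\ref{lem:comprod-hocolim}), but the architecture is the same.
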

\begin{proof}
Let $\cat{J}$ and $\cat{K}$ be the indexing categories for $M$ and $N$ respectively. Then $\cat{J}$ and $\cat{K}$ are cofiltered posets in which every element has finitely many successors. The same is therefore true of $\cat{J} \times \cat{K}$. The directly-dualizable hypothesis also says that each $M_j(r)$ and $N_k(r)$ is cofibrant and homotopy-finite. The same is then true of each $(M \circ N)_{(j,k)}(r)$. Therefore, $M \circ N$ is directly-dualizable.

Now for each $j \in \cat{J},k \in \cat{K}$, we have maps
\[ \widetilde{\Map(M_j(r),S)} \smsh \widetilde{\Map(N_k(n_1),S)} \smsh \dots \smsh \widetilde{\Map(N_k(n_r),S)} \to \Map(M_j(r) \smsh N_k(n_1) \smsh \dots \smsh N_k(n_r),S) \]
and these are weak equivalences because of the finiteness hypotheses. These make up weak equivalences of symmetric sequences
\[ \widetilde{\Map(M_j,S)} \circ \widetilde{\Map(N_k,S)} \weq \Map(M_j \circ N_k,S) \]
Then taking the homotopy colimit over $j \in \cat{J}$ and $k \in \cat{K}$, and combining these maps with Lemma \ref{lem:comprod-hocolim}, we get the required result.
\end{proof}

\begin{cor} \label{cor:pro-comprod-equivalence}
Let $A \weq B$ be a weak equivalence of directly-dualizable pro-symmetric sequences in $\spectra$, and let $C$ be another directly-dualizable symmetric sequence. Then the induced maps
\[ A \circ C \weq B \circ C \]
and
\[ C \circ A \weq C \circ B \]
are weak equivalences.
\end{cor}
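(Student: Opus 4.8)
The plan is to deduce the statement from Lemma~\ref{lem:dualcomprod} (Spanier--Whitehead duality commutes with the composition product of directly-dualizable pro-symmetric sequences) together with Lemma~\ref{lem:pro-equivalence} (a map of directly-dualizable pro-spectra is a weak equivalence if and only if its dual is). The only input beyond these is an elementary observation which I would record first: if $f:X\to Y$ is a map of termwise-cofibrant symmetric sequences of spectra such that each $f_n:X(n)\to Y(n)$ is a weak equivalence, and $Z$ is any termwise-cofibrant symmetric sequence, then both $f\circ\mathrm{id}_Z:X\circ Z\to Y\circ Z$ and $\mathrm{id}_Z\circ f:Z\circ X\to Z\circ Y$ are again termwise weak equivalences. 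Indeed, on the $n$-th term each of these maps is, by definition of the composition product, a coproduct over partitions of $\{1,\dots,n\}$ of smash products of the $f_k$ (respectively the $f_{n_i}$) with identity maps of the cofibrant spectra $Z(k)$ (respectively $Z(n_i)$); a smash product of weak equivalences between cofibrant objects is a weak equivalence by the pushout-product axiom in $\spectra$, exactly as in the proof of Proposition~\ref{prop:bar-invariance}, and coproducts in $\spectra$ preserve weak equivalences between cofibrant objects, so each such term map is a weak equivalence.

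Next I would carry out the reduction via duality. By Lemma~\ref{lem:dualcomprod}, $A\circ C$ and $B\circ C$ are directly-dualizable, and there are natural equivalences $\dual(A\circ C)\homeq\widetilde{\dual A}\circ\widetilde{\dual C}$ and $\dual(B\circ C)\homeq\widetilde{\dual B}\circ\widetilde{\dual C}$. By naturality, the Spanier--Whitehead dual of $A\circ C\to B\circ C$ is identified, up to weak equivalence, with the map $\widetilde{\dual B}\circ\widetilde{\dual C}\to\widetilde{\dual A}\circ\widetilde{\dual C}$ induced by $\widetilde{\dual B}\to\widetilde{\dual A}$. Now, since $A\weq B$ is a weak equivalence of directly-dualizable pro-symmetric sequences, each $A(n)\to B(n)$ is a weak equivalence of directly-dualizable pro-spectra, so by Lemma~\ref{lem:pro-equivalence} each $\dual B(n)\to\dual A(n)$ is a weak equivalence in $\spectra$; hence $\dual B\to\dual A$, and therefore the induced map $\widetilde{\dual B}\to\widetilde{\dual A}$ between termwise-cofibrant replacements, is a termwise weak equivalence of termwise-cofibrant symmetric sequences. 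By the first paragraph, $\widetilde{\dual B}\circ\widetilde{\dual C}\to\widetilde{\dual A}\circ\widetilde{\dual C}$ is then a termwise weak equivalence, so the Spanier--Whitehead dual of $A\circ C\to B\circ C$ is a weak equivalence of symmetric sequences. Applying Lemma~\ref{lem:pro-equivalence} termwise once more (legitimate since $A\circ C$ and $B\circ C$ are directly-dualizable) shows that $A\circ C\to B\circ C$ is a weak equivalence of pro-symmetric sequences. The argument for $C\circ A\to C\circ B$ is identical, using the $\mathrm{id}_Z\circ f$ half of the first paragraph.

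I do not anticipate a genuine obstacle: the substance is entirely carried by Lemmas~\ref{lem:dualcomprod} and~\ref{lem:pro-equivalence}, and the point to be careful about is that the directly-dualizable hypotheses (in particular termwise cofibrancy and homotopy-finiteness, plus the form of the indexing posets) are precisely what make the naive duals compute the homotopically-correct ones and make the smash products well-behaved. The one mild subtlety is whether the naturality asserted in Lemma~\ref{lem:dualcomprod} is strong enough to identify the dual of $A\circ C\to B\circ C$ with the expected map; if one wishes to avoid relying on that, one can instead verify directly, from the explicit construction of the equivalence in the proof of Lemma~\ref{lem:dualcomprod} as a homotopy colimit of the maps $\widetilde{\Map(M_j,S)}\circ\widetilde{\Map(N_k,S)}\weq\Map(M_j\circ N_k,S)$, that the relevant square commutes up to coherent homotopy, which suffices for the argument.
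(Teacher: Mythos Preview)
Your proposal is correct and follows essentially the same argument as the paper: reduce via Lemma~\ref{lem:pro-equivalence} to checking that the Spanier--Whitehead dual map is a weak equivalence, identify that dual map using Lemma~\ref{lem:dualcomprod} with a composition product of termwise-cofibrant symmetric sequences, and then invoke the fact that the composition product preserves weak equivalences between such. The paper's proof is terser and simply asserts the last fact, whereas you spell it out and flag the naturality issue in Lemma~\ref{lem:dualcomprod}, but there is no substantive difference in strategy.
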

\begin{proof}
This follows from Lemmas \ref{lem:pro-equivalence} and \ref{lem:dualcomprod} in the following way. By Lemma \ref{lem:pro-equivalence}, it is sufficient for the first part to check that
\[ \dual(B \circ C) \to \dual(A \circ C) \]
is a weak equivalence of symmetric sequences in $\spectra$. By Lemma \ref{lem:dualcomprod}, this is equivalent to checking that
\[ \widetilde{\dual B} \circ \widetilde{\dual C} \to \widetilde{\dual A} \circ \widetilde{\dual C} \]
is a weak equivalence. This is true because the composition product in spectra preserves weak equivalences of termwise-cofibrant symmetric sequences. The second part is similar.
\end{proof}

We now turn to Spanier-Whitehead duality for cooperads and pro-comodules. The main results are that the Spanier-Whitehead dual of a cooperad has a natural operad structure and that the Spanier-Whitehead dual of a pro-comodule has a natural module structure. In order to construct this module structure, we need to be careful with the homotopy colimits involved in our chosen model for Spanier-Whitehead duality. In particular, these homotopy colimits need to have the required model structures. We verify this by forming these homotopy colimits \emph{in} the category of modules and showing that these are equivalent to the homotopy colimits formed termwise in the category of spectra, and so correctly model the Spanier-Whitehead dual.

\begin{lemma} \label{lem:dual-cooperad}
Let $Q$ be a directly-dualizable reduced cooperad in $\spectra$. Then the symmetric sequence $\dual Q$ given (as in Definition \ref{def:dual-symseq}) by
\[ (\dual Q)(n) := \Map(Q(n),S) \]
has a natural reduced operad structure. If $M$ is a directly-dualizable (right- or left-) $Q$-comodule, then the symmetric sequence $\dual M$ (defined similarly) has a natural (right- or left- respectively) $\dual Q$-module structure.
\end{lemma}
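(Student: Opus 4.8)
\textbf{Proof proposal for Lemma \ref{lem:dual-cooperad}.}

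The plan is to build the operad structure on $\dual Q$ directly from the cooperad structure on $Q$ by dualizing, being careful about two things: first, the cooperad structure maps are defined at the level of the individual symmetric sequences $Q_j$ (for $j$ in the indexing poset $\cat{J}$), and dualizing turns the cofiltered limit into a filtered homotopy colimit; second, the homotopy colimits defining $\dual Q$ and $(\dual Q) \circ (\dual Q)$ must be compatible, which is exactly what Lemma \ref{lem:dualcomprod} and Lemma \ref{lem:comprod-hocolim} give us. So first I would recall that $Q$ directly-dualizable means each $Q(n)$ is a directly-dualizable pro-spectrum indexed on a cofiltered poset $\cat{J}$ in which every element has finitely many successors, with each $Q_j(n)$ cofibrant and homotopy-finite. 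The structure maps $Q_j \to Q_j \varcomp Q_j$ are morphisms of symmetric sequences, and applying $\Map(-,S)$ termwise to the components $Q_j(n_1+\dots+n_k) \to Q_j(k) \smsh Q_j(n_1) \smsh \dots \smsh Q_j(n_k)$ gives maps
\[ \Map(Q_j(k),S) \smsh \Map(Q_j(n_1),S) \smsh \dots \smsh \Map(Q_j(n_k),S) \to \Map(Q_j(n_1+\dots+n_k),S), \]
using the lax monoidal structure of $\Map(-,S)$ (which is a genuine equivalence here because of the homotopy-finiteness hypothesis). Assembling these over all partitions gives a map of symmetric sequences $(\dual Q_j) \circ (\dual Q_j) \to \dual Q_j$, i.e. we have already produced, for each $j$, an honest operad-composition-type map.

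Next I would take the homotopy colimit over $j \in \cat{J}^{op}$ (which is filtered). By definition $\dual Q = \hocolim_j \dual Q_j$, and by Lemma \ref{lem:dualcomprod} (or directly by Lemma \ref{lem:comprod-hocolim}, using that $\cat{J}^{op}$ is filtered and the relevant symmetric sequences are termwise-cofibrant) we have
\[ (\dual Q) \circ (\dual Q) \homeq \hocolim_{j} \left( \dual Q_j \circ \dual Q_j \right). \]
The maps $\dual Q_j \circ \dual Q_j \to \dual Q_j$ are natural in $j$, so passing to the homotopy colimit yields the composition map $\mu: (\dual Q)\circ(\dual Q) \to \dual Q$. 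The unit map $\mathsf{1} \to \dual Q$ comes from dualizing the counit $Q(1) \to S$ of the reduced cooperad $Q$: since $Q$ is reduced, $Q(1) \isom S$, so $\Map(Q(1),S) \isom S$ and $\dual Q$ is reduced, with the unit map being the identity $S \to S$ in degree $1$ (and the trivial map in higher degrees). Associativity and unitality of $\mu$ follow because each $\dual Q_j$ satisfies these axioms on the nose (being the dual of the coassociative, counital structure on $Q_j$, using the coherence of the lax monoidal structure of $\Map(-,S)$ — here one should be slightly careful that the non-associativity of $\varcomp$ on the cooperad side is handled exactly as in \cite[Remark 2.20]{ching:2005a}, via a single large product), and homotopy colimits of diagrams in which every object satisfies these relations again satisfy them.

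For the module statement, the argument is parallel. If $M$ is a directly-dualizable right $Q$-comodule indexed on the same poset, the coaction maps $M_j \to M_j \varcomp Q_j$ dualize termwise, using homotopy-finiteness, to maps $(\dual M_j) \circ (\dual Q_j) \to \dual M_j$ which are natural in $j$ and satisfy the right-module axioms relative to the operad structure on $\dual Q_j$. Taking $\hocolim$ over $j \in \cat{J}^{op}$ and invoking Lemma \ref{lem:dualcomprod} to identify $(\dual M)\circ(\dual Q) \homeq \hocolim_j(\dual M_j \circ \dual Q_j)$ produces the right $\dual Q$-action on $\dual M$; the left case is identical with $Q_j \varcomp M_j$ in place of $M_j \varcomp Q_j$. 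The main obstacle I anticipate is purely bookkeeping rather than conceptual: making precise that all the structure maps genuinely assemble into a map \emph{of diagrams} over $\cat{J}^{op}$ (so that the homotopy colimit inherits the structure strictly, not merely up to coherent homotopy), and checking that the lax-monoidal comparison maps for $\Map(-,S)$ interact correctly with the "one large product" convention needed because $\varcomp$ is not associative. This is where one must be most careful, but it is the same care already exercised in \cite{ching:2005a}, and the finiteness hypotheses built into "directly-dualizable" are exactly what make the comparison maps equivalences, so no further cofibrant replacements intrude.
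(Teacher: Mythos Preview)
You have misread the hypotheses. In this lemma $Q$ is an ordinary cooperad, not a pro-cooperad: the paper never introduces pro-cooperads (see the remark immediately following the definition of pro-symmetric sequences), and by Definition \ref{def:directly-dualizable} a ``directly-dualizable'' cooperad is simply a cooperad whose terms $Q(n)$ are cofibrant homotopy-finite spectra, with trivial indexing category. Likewise $M$ in the second sentence is an ordinary $Q$-comodule. The pro-comodule case is handled separately in Definition \ref{def:dual-pro-comodule}, which applies the present lemma levelwise and then takes a homotopy colimit in the category of $\widetilde{\dual Q}$-modules.

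Consequently the argument is far shorter than you propose: there is no filtered homotopy colimit to take, and Lemmas \ref{lem:comprod-hocolim} and \ref{lem:dualcomprod} play no role. The composition on $\dual Q$ is the composite
\[ \Map(Q(k),S) \smsh \Map(Q(n_1),S) \smsh \dots \smsh \Map(Q(n_k),S) \to \Map\bigl(Q(k) \smsh Q(n_1) \smsh \dots \smsh Q(n_k),S\bigr) \to \Map(Q(n),S), \]
where the first map is the lax monoidal structure of $\Map(-,S)$ (available strictly from the closed symmetric monoidal structure on $\spectra$, with no finiteness assumption needed for its \emph{existence}) and the second is the dual of the cooperad comultiplication. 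Associativity and unitality follow on the nose from coassociativity and counitality of $Q$, and $Q(1)\isom S$ gives $\dual Q(1)\isom S$, so the operad is reduced. This yields a \emph{strict} operad structure, which matters: the paper needs a point-set operad so that in Definition \ref{def:dual-pro-comodule} the dual of a pro-comodule can be formed as a homotopy colimit in an honest model category of $\widetilde{\dual Q}$-modules. Your route through filtered hocolims and identifications $\homeq$ would at best produce an operad up to homotopy, at which point the bookkeeping worry you raise at the end becomes a real obstruction rather than a formality.
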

\begin{proof}
The operad composition maps are given by
\[ \begin{split} \Map(Q(k),S) \smsh \Map(Q(n_1),S) &\smsh \dots \smsh \Map(Q(n_k),S) \\
        &\to \Map(Q(k) \smsh Q(n_1) \smsh \dots \smsh Q(n_k),S) \\
        &\to \Map(Q(n),S) \end{split} \]
where the first map comes from the fact that $\Map(-,-)$ is part of a closed symmetric monoidal structure on $\spectra$, and the second is the dual of the cooperad structure map
\[ Q(n) \to Q(k) \smsh Q(n_1) \smsh \dots \smsh Q(n_k). \]
The unit isomorphism (which makes this a \emph{reduced} operad) is given by
\[ S \isom \Map(S,S) \isom \Map(Q(1),S) \]
where the second map is induced by the isomorphism $Q(1) \isom S$. The module structures are defined similarly.
\end{proof}

\begin{definition}[Spanier-Whitehead duals of pro-comodules] \label{def:dual-pro-comodule}
Let $Q$ be a directly-dualizable cooperad in $\spectra$ and let $M$ be a directly-dualizable pro-$Q$-comodule (either right, left or bi-). If $M$ is indexed over $\cat{J}$ then this means each $M_j$ for $j \in \cat{J}$ has the structure of a $Q$-comodule. Then the symmetric sequences $\Map(M_j,S)$ are $\dual Q$-modules, by the same argument as in Lemma \ref{lem:dual-cooperad}. If $\widetilde{\dual Q}$ denotes a $\Sigma$-cofibrant replacement for $\dual Q$, then $\Map(M_j,S)$ become $\widetilde{\dual Q}$-modules.

We now define, as in \ref{def:dual-symseq}, the \emph{Spanier-Whitehead dual} of $M$ to be
\[ \dual M := \hocolim_{j \in \cat{J}} \Map(M_j,S). \]
We choose this homotopy colimit to be formed using the model structure on $\widetilde{\dual Q}$-modules of Theorem \ref{thm:projective-model}. In the Appendix we show (Proposition \ref{prop:filtered-hocolim}) that this is equivalent to the homotopy colimit formed in the category of symmetric sequences. It therefore does represent the termwise Spanier-Whitehead dual of $M$, but retains the additional structure of a $\widetilde{\dual Q}$-module.
\end{definition}

Next we extend the bar construction to pro-modules and check that the homotopical results of \S\ref{sec:bar-homotopy} carry over to this setting.

\begin{definition}[Bar constructions for pro-modules] \label{def:pro-bar}
Let $P$ be an operad in $\spectra$, let $R$ be a pro-right-$P$-module (indexed over $\cat{J}$) and $L$ a pro-left-$P$-module (indexed over $\cat{K}$). We can then form the \emph{two-sided bar construction} $B(R,P,L)$. This is the pro-symmetric sequence indexed over the cofiltered category $\cat{J} \times \cat{K}$ given by
\[ B(R,P,L)_{(j,k)} := B(R_j,P,L_k). \]
Note that this could also be defined as the realization of a simplicial bar construction based on the composition product for pro-symmetric sequences (see Definition \ref{def:pro-comp}).

We can treat the unit symmetric sequence as a pro-object indexed over the trivial category. We then have a one-sided bar construction $B(R,P,\mathsf{1})$ indexed over $\cat{J}$ and given by
\[ B(R,P,\mathsf{1})_j := B(R_j,P,\mathsf{1}). \]
This is clearly a pro-right-$BP$-comodule. We also have $B(\mathsf{1},P,L)$ indexed over $\cat{K}$ given by
\[ B(\mathsf{1},P,L)_k := B(\mathsf{1},P,L_k). \]
This is a pro-left-$BP$-comodule.

If in addition $M$ is a pro-$P$-bimodule indexed on the category $\cat{L}$, then we can form the \emph{bimodule bar construction} $B(R,P,M,P,L)$. This is a pro-symmetric sequence indexed on $\cat{J} \times \cat{L} \times \cat{K}$ given by
\[ B(R,P,M,P,L)_{(j,l,k)} := B(R_j,P,M_l,P,L_k). \]
In particular, we have a pro-$BP$-comodule $B(\mathsf{1},P,M,P,\mathsf{1})$ indexed on $\cat{L}$.
\end{definition}

In order to take the Spanier-Whitehead dual of the bar construction, we need to check that it preserves direct-dualizability.

\begin{lemma} \label{lem:bar-directly-dualizable}
Let $P$ be an operad in $\spectra$, $R$ a pro-right-$P$-module, $L$ a pro-left-$P$-module and $M$ a pro-$P$-bimodule. Suppose that $P$, $R$, $L$ and $M$ are directly-dualizable. Then $B(R,P,L)$ is also directly-dualizable and $B(R,P,M,P,L)$ are directly-dualizable.
\end{lemma}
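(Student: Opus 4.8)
The claim is that the bar constructions preserve the property of being directly-dualizable (Definition \ref{def:directly-dualizable}), which has two components: a condition on the indexing category (cofiltered poset in which every element has finitely many successors), and a condition on each term of each symmetric sequence (cofibrant and homotopy-finite spectrum). The plan is to verify these two conditions separately.

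\emph{The indexing categories.} Recall from Definition \ref{def:pro-bar} that $B(R,P,L)$ is indexed on $\cat{J} \times \cat{K}$ and $B(R,P,M,P,L)$ on $\cat{J} \times \cat{L} \times \cat{K}$, where $\cat{J}$, $\cat{K}$, $\cat{L}$ are the indexing categories for $R$, $L$, $M$ respectively. Since each of these is a cofiltered poset in which every element has finitely many successors (as part of the direct-dualizability hypothesis), I must check that a finite product of such posets has the same property. A product of posets is a poset; it is cofiltered since each factor is; and the successors of $(j,k)$ are precisely the pairs $(j',k')$ with $j \leq j'$ and $k \leq k'$, so there are at most (number of successors of $j$) $\times$ (number of successors of $k$) of them, which is finite. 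The same argument works for triple products. This is routine.

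\emph{The terms.} Here I use Lemma \ref{lem:bar-directly-dualizable}'s hypotheses: $P$, $R$, $L$, $M$ are directly-dualizable, so in particular each term $P(m)$, $R_j(m)$, $L_k(m)$, $M_l(m)$ is a cofibrant homotopy-finite spectrum, and $P$ is termwise-cofibrant. I need each $B(R_j,P,L_k)(n)$ to be cofibrant and homotopy-finite. Cofibrancy: each $B_t(R_j,P,L_k)(n)$ is a finite wedge (since $n$ is fixed, only finitely many partitions occur) of smash products of cofibrant spectra, hence cofibrant; moreover, the argument in the proof of Proposition \ref{prop:bar-invariance} shows the simplicial bar construction $B_{\bullet}(R_j,P,L_k)(n)$ is Reedy cofibrant (the latching maps are inclusions of wedge summands with cofibrant complements, using that $P$ is reduced so the degeneracies are simple), and the realization of a Reedy cofibrant simplicial object is cofibrant. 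Homotopy-finiteness: this is the point requiring a little care. For fixed $n$, in each simplicial degree $t$ the spectrum $B_t(R_j,P,L_k)(n)$ is a finite wedge of finite smash products of homotopy-finite spectra, hence homotopy-finite; but the realization involves infinitely many simplicial degrees. However, because $P$ is reduced, the bar construction $B_{\bullet}(R_j,P,L_k)(n)$ has only finitely many nondegenerate simplices in each dimension and, crucially, since the $n$-th term only involves partitions of an $n$-element set, the nondegenerate part stabilizes: beyond simplicial degree $n$ every simplex of $B_{\bullet}(R_j,P,L_k)(n)$ is degenerate (any composite $R \circ P^t \circ L$ contributing nontrivially to degree $n$ has at most $n-1$ of the $t$ copies of $P$ contributing via a term $P(m)$ with $m \geq 2$). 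Therefore the realization is weakly equivalent to the realization of a simplicial object with only finitely many nondegenerate simplices, each a homotopy-finite spectrum, hence is itself homotopy-finite. The bimodule case $B(R,P,M,P,L)$ is identical using Remark \ref{rem:bibar} to express it as an iterated two-sided bar construction (or directly, noting the bisimplicial analogue of the degeneracy-stabilization argument).

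\emph{Main obstacle.} The only genuinely non-formal point is the homotopy-finiteness of the realization, i.e.\ controlling the fact that the realization is an infinite homotopy colimit. The key observation that makes this work is that the reducedness of $P$ forces all simplices in bar-degree $> n$ of the $n$-th symmetric-sequence term to be degenerate, so the realization is an \emph{effectively finite} homotopy colimit of homotopy-finite spectra. Everything else — the indexing-category bookkeeping and the cofibrancy — is a direct transcription of arguments already given in the proof of Proposition \ref{prop:bar-invariance}.
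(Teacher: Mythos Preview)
Your proposal is correct and follows essentially the same approach as the paper's proof: you verify the two ingredients of direct-dualizability separately, handling the indexing category by closure of the relevant poset conditions under finite products, cofibrancy by invoking the Reedy-cofibrancy argument from Proposition~\ref{prop:bar-invariance}, and homotopy-finiteness by the key observation that reducedness of $P$ forces $B_{\bullet}(R_j,P,L_k)(n)$ to have no nondegenerate simplices above degree~$n$. The paper's proof is organized identically, and your treatment of the bimodule case via Remark~\ref{rem:bibar} is exactly what the paper means by ``a similar argument.''
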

\begin{proof}
If the indexing categories for $R$ and $L$ are denoted $\cat{J}$ and $\cat{K}$, then $\cat{J}$ and $\cat{K}$ are cofiltered posets in which every element has finitely many successors. It follows that $\cat{J} \times \cat{K}$, which is the indexing category for $B(R,P,L)$, is also a cofiltered poset in which every element has finitely many successors.

By assumption, all the spectra involved in the pro-symmetric sequences $R,P,L$ are cofibrant. In the proof of Proposition \ref{prop:bar-invariance} we showed that in this case the terms involved in $B(R,P,L)$ are the realizations of Reedy cofibrant simplicial objects, hence are themselves cofibrant spectra by \cite[18.6.7]{hirschhorn:2003}.

Finally, we must check that the spectra involved in $B(R,P,L)$ are homotopy-finite. It is sufficient to show this when $R$ and $L$ are ordinary (not pro-) $P$-modules. The spectrum $B(R,P,L)(n)$ is then the geometric realization of the simplicial spectrum $B_{\bullet}(R,P,L)(n)$. This simplicial spectrum only has nondegenerate simplices up to degree $n$ (since $P$ is a reduced operad), and each spectrum of simplices is homotopy-finite. Therefore we can write $B(R,P,L)(n)$ as a finite homotopy colimit of finite cell spectra. It follows that $B(R,P,L)(n)$ is homotopy-finite.

We have now checked that $B(R,P,L)$ is directly-dualizable, and a similar argument holds for $B(R,P,M,P,L)$.
\end{proof}

\begin{prop} \label{prop:bar-pro-invariance}
Let $P$ be an directly-dualizable operad in $\spectra$ and let $R \weq R'$ and $L \weq L'$ be weak equivalences between directly-dualizable right and left pro-$P$-modules respectively. Then the induced map
\[ B(R,P,L) \to B(R',P,L') \]
is a weak equivalence of pro-symmetric sequences.
\end{prop}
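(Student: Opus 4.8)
The plan is to reduce the statement to the ordinary (non-pro) case by means of Spanier-Whitehead duality. First I would note that, by Lemma~\ref{lem:bar-directly-dualizable}, both $B(R,P,L)$ and $B(R',P,L')$ are directly-dualizable pro-symmetric sequences, so the map in question is a morphism between such objects. By Lemma~\ref{lem:pro-equivalence}, applied in each arity $n$, this morphism is a weak equivalence of pro-symmetric sequences precisely when the induced map
\[ \dual B(R',P,L') \longrightarrow \dual B(R,P,L) \]
is a weak equivalence of symmetric sequences in $\spectra$. So it is enough to understand the Spanier-Whitehead dual of the two-sided bar construction.

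Second, I would identify this dual with a totalization of dualized bar data. The bar construction $B(R,P,L)$ is the termwise realization of the simplicial pro-symmetric sequence $B_{\bullet}(R,P,L) = R \circ P^{\bullet} \circ L$, and the end--coend adjunction gives a natural isomorphism $\Map(|X_{\bullet}|,S) \isom \Tot\,\Map(X_{\bullet},S)$. Since, in each arity $n$, the simplicial spectrum $B_{\bullet}(R,P,L)(n)$ has nondegenerate simplices only in degrees $\le n$ and is Reedy cofibrant (as shown in the proof of Lemma~\ref{lem:bar-directly-dualizable}), this $\Tot$ is a finite homotopy limit. Because the dual of a directly-dualizable pro-symmetric sequence is a filtered homotopy colimit (Definition~\ref{def:dual-symseq}) and finite homotopy limits commute with filtered homotopy colimits in $\spectra$, I can pull the homotopy colimit inside the totalization. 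Combining this with Lemma~\ref{lem:dualcomprod} applied levelwise to $B_m(R,P,L) = R \circ P^m \circ L$ yields a natural equivalence
\[ \dual B(R,P,L) \homeq \widetilde{\Tot}\left( \widetilde{\dual R} \circ \widetilde{\dual P}^{\bullet} \circ \widetilde{\dual L} \right), \]
where $\widetilde{\dual(-)}$ denotes termwise-cofibrant replacements of duals and the cosimplicial structure is dual to that of $B_{\bullet}$. This cosimplicial symmetric sequence is Reedy fibrant (by the argument dual to the Reedy-cofibrancy computation in the proof of Proposition~\ref{prop:bar-invariance}, using that $\widetilde{\dual P}$, $\widetilde{\dual R}$, $\widetilde{\dual L}$ are termwise-cofibrant), so $\widetilde{\Tot}$ agrees with the ordinary $\Tot$.

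Third, I would finish by homotopy invariance. The weak equivalences $R \weq R'$ and $L \weq L'$ of directly-dualizable pro-$P$-modules dualize, again by Lemma~\ref{lem:pro-equivalence}, to weak equivalences $\widetilde{\dual R'} \weq \widetilde{\dual R}$ and $\widetilde{\dual L'} \weq \widetilde{\dual L}$ of termwise-cofibrant symmetric sequences, compatibly with the comodule structures that give the cosimplicial maps. Since the composition product in $\spectra$ preserves weak equivalences between termwise-cofibrant symmetric sequences (pushout-product axiom, exactly as in the proof of Proposition~\ref{prop:bar-invariance}), the resulting map $\widetilde{\dual R'} \circ \widetilde{\dual P}^{\bullet} \circ \widetilde{\dual L'} \to \widetilde{\dual R} \circ \widetilde{\dual P}^{\bullet} \circ \widetilde{\dual L}$ is a levelwise weak equivalence of Reedy fibrant cosimplicial objects, so Proposition~\ref{prop:reedy}(2) shows the induced map on totalizations is a weak equivalence. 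Unwinding the identification of the previous paragraph, $\dual B(R',P,L') \to \dual B(R,P,L)$ is a weak equivalence, completing the proof.

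The step I expect to be the main obstacle is the second one: carefully justifying the identification of $\dual B(R,P,L)$ with the totalization of the dualized bar construction. The delicate points are the interchange of the filtered homotopy colimit defining the pro-dual with the totalization --- legitimate only because $B_{\bullet}(R,P,L)$ is arity-wise bounded and Reedy cofibrant, so that the totalization is a genuine finite homotopy limit --- and the bookkeeping of the various termwise-cofibrant replacements needed to apply Lemma~\ref{lem:dualcomprod} at each simplicial level while keeping the cosimplicial (and comodule) structure maps compatible.
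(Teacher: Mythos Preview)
Your argument is correct, but it takes a different and somewhat more circuitous route than the paper. The paper works directly in the category of simplicial pro-spectra: it first invokes Corollary~\ref{cor:pro-comprod-equivalence} to see that each level map $R \circ P^k \circ L \to R' \circ P^k \circ L'$ is already a weak equivalence of pro-symmetric sequences, and then shows that the simplicial bar constructions $B_{\bullet}(R,P,L)(n)$ and $B_{\bullet}(R',P,L')(n)$ are Reedy cofibrant in the Reedy model structure on simplicial objects in $\mathsf{Pro}(\spectra)$ (the latching maps are wedge-summand inclusions of levelwise-cofibrant pro-spectra, hence cofibrations). Realization then sends this levelwise equivalence of Reedy-cofibrant objects to a weak equivalence, exactly as in the ordinary case of Proposition~\ref{prop:bar-invariance}.

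Your approach instead dualizes first, reducing to the non-pro setting via Lemma~\ref{lem:pro-equivalence}, and then argues on the level of cosimplicial spectra. This is valid, and has the minor advantage that it does not explicitly invoke the Reedy model structure on simplicial pro-spectra. However, it pays for this with the interchange argument you flagged (commuting the filtered homotopy colimit defining the dual past the totalization, legitimate here because the $\Tot$ is finite) and with some delicacy in naming the cosimplicial object: strictly speaking, what you obtain after dualizing is the cosimplicial symmetric sequence $k \mapsto \dual(R \circ P^k \circ L)$, with coface and codegeneracy maps dual to those of $B_{\bullet}$; Lemma~\ref{lem:dualcomprod} only identifies each level with $\widetilde{\dual R} \circ \widetilde{\dual P}^{k} \circ \widetilde{\dual L}$ up to equivalence, so the cosimplicial structure does not literally live on the latter. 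This is harmless for the conclusion (the levelwise weak equivalence and Reedy fibrancy are all you need), but it is worth stating carefully. The paper's argument sidesteps this bookkeeping entirely by staying on the pro side until the end.
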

\begin{proof}
For any $k$, the map
\[ R \circ P^k \circ L(n) \to R' \circ P^k \circ L'(n) \]
is a weak equivalence of pro-spectra by Corollary \ref{cor:pro-comprod-equivalence}.
We therefore obtain a levelwise weak equivalence of simplicial pro-spectra
\[ B_{\bullet}(R,P,L)(n) \to B_{\bullet}(R',P,L')(n). \]
As in the proof of Proposition \ref{prop:bar-invariance}, it is now sufficient to show that each of these simplicial bar constructions is a cofibrant in the Reedy model structure on simplicial pro-spectra. Following the same argument as in \ref{prop:bar-invariance}, the latching maps for $B_{\bullet}(R,P,L)(n)$ are given by inclusions of wedge summands of pro-spectra. It is therefore sufficient to show that each of these wedge summands is a cofibrant pro-spectrum. This is true since, by hypothesis, all the spectra making up the pro-symmetric sequences $R$, $P$ and $L$ are cofibrant.
\end{proof}

We also record the following lemma.

\begin{lemma} \label{lem:bar-hocolim}
Let $P$ be a reduced operad in $\spectra$ and let $R: \cat{J} \to \mathsf{Mod}_{\mathsf{right}}(P)$ and $L: \cat{K} \to \mathsf{Mod}_{\mathsf{left}}(P)$ be filtered diagrams of right and left $P$-modules respectively. Then we have an equivalence
\[ \hocolim_{(j,k) \in \cat{J} \times \cat{K}} B(R_j,P,L_k) \homeq B\left(\hocolim_{j \in \cat{J}} R_j, P, \hocolim_{k \in \cat{K}} L_k \right). \]
In other words, the bar construction commutes with filtered homotopy colimits in both module variables.
\end{lemma}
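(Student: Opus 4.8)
The plan is to reduce the statement to the fact that the geometric realization of a simplicial object commutes with filtered homotopy colimits, together with the corresponding fact for the composition product established in Lemma \ref{lem:comprod-hocolim}. First I would recall that, by Definition \ref{def:bar}, $B(R_j,P,L_k)$ is the termwise realization of the simplicial symmetric sequence $B_\bullet(R_j,P,L_k)$ whose $m$-simplices are $R_j \circ P^m \circ L_k$. Since homotopy colimits of symmetric sequences are formed termwise (Definition \ref{def:holim}), and since geometric realization of simplicial spectra is itself a homotopy colimit over $\mathsf{\Delta}^{op}$, the homotopy colimit over $\cat{J}\times\cat{K}$ commutes with realization (homotopy colimits commute with one another). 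Thus it suffices to produce, for each $m$, a natural equivalence
\[ \hocolim_{(j,k) \in \cat{J} \times \cat{K}} \left( R_j \circ P^m \circ L_k \right) \homeq \left( \hocolim_{j \in \cat{J}} R_j \right) \circ P^m \circ \left( \hocolim_{k \in \cat{K}} L_k \right), \]
compatible with the simplicial structure maps.

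The key step is then iterated application of Lemma \ref{lem:comprod-hocolim}. Since $\cat{J}$ and $\cat{K}$ are filtered (being indexing categories of filtered diagrams), so is $\cat{J}\times\cat{K}$; and a homotopy colimit over $\cat{J}\times\cat{K}$ of a functor of the form $(j,k)\mapsto R_j \circ P^m \circ L_k$ can be computed by first taking the homotopy colimit over $\cat{J}$ (in the first variable of the composition product, which commutes with \emph{all} homotopy colimits by the first part of Lemma \ref{lem:comprod-hocolim}) and then over $\cat{K}$ (in the last variable, which commutes with \emph{filtered} homotopy colimits by the second part of that lemma). To make this rigorous I would invoke the Fubini theorem for homotopy colimits, writing $\hocolim_{\cat{J}\times\cat{K}} \homeq \hocolim_{\cat{J}} \hocolim_{\cat{K}}$, and then move the two homotopy colimits past the composition product one at a time using Lemma \ref{lem:comprod-hocolim}, using that $P^m$ is termwise-cofibrant (as $P$ is reduced hence termwise-cofibrant in the cases at hand, or after termwise-cofibrant replacement) and that $R_j$, $L_k$ may be assumed termwise-cofibrant after applying a functorial termwise-cofibrant replacement in each variable — noting such replacements commute with filtered homotopy colimits.

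Finally I would check naturality of these equivalences in the simplicial variable $m$, so that they assemble into a levelwise weak equivalence of simplicial symmetric sequences, and then apply Proposition \ref{prop:reedy} (after noting, as in the proof of Proposition \ref{prop:bar-invariance}, that the relevant simplicial bar constructions are Reedy cofibrant) to conclude that the induced map on realizations is a weak equivalence. Combining this with the commutation of homotopy colimits with realization gives the claimed equivalence. The main obstacle I anticipate is bookkeeping: ensuring that the cofibrancy hypotheses needed for each application of Lemma \ref{lem:comprod-hocolim} are genuinely met at every stage (in particular that the intermediate objects $\hocolim_{\cat{J}} R_j$ and $P^m$ remain termwise-cofibrant) and that the Fubini interchange and the interchange with realization are applied to diagrams that are objectwise of the right homotopy type — none of this is deep, but it requires care to state cleanly. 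A secondary point is that one should phrase the result so that, when $P$, $R$, $L$ are moreover directly-dualizable (the case used later), the equivalence is compatible with the module and comodule structures; this follows formally since all the maps involved are induced by the composition product and the structure maps, but it is worth remarking.
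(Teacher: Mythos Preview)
Your proposal is correct and follows essentially the same approach as the paper: the paper's one-sentence proof simply cites Lemma \ref{lem:comprod-hocolim} together with the fact that geometric realization of simplicial spectra commutes with filtered homotopy colimits, and you have unwound exactly those two ingredients (with the Fubini and Reedy-cofibrancy bookkeeping made explicit). The extra care you take with termwise-cofibrancy and the simplicial compatibility is reasonable, though the paper is content to leave all of that implicit.
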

\begin{proof}
This follows from Lemma \ref{lem:comprod-hocolim} and the fact that geometric realization of simplicial spectra commutes with filtered homotopy colimits.
\end{proof}

We also need to extend the $\Ext$-objects for modules of \S\ref{sec:ext} to pro-modules. For a pro-$P$-module $M$ and ordinary $P$-module $M'$, we define objects $\Ext(M,M')$. Applying our previous $\Ext$-construction levelwise, we obtain an ind-spectrum (or ind-simplicial set), which we can identify with an actual spectrum (or simplicial set) via the homotopy colimit (which in our cases represents the Quillen equivalence between ind-spectra and spectra of Theorem \ref{thm:pro-spectra}). These new $\Ext$-objects then have the homotopical properties one would hope for.

We start by describing mapping objects for individual pro-spectra.

\begin{definition}[Mapping objects for pro-spectra] \label{def:map-pro-spectra}
Let $X$ be a directly-dualizable pro-spectrum indexed on the cofiltered category $\cat{J}$, and let $Y$ be any spectrum. We then define
\[ \Map(X,Y) := \hocolim_{j \in \cat{J}^{op}} \Map(X_j,Y). \]
\end{definition}

\begin{definition}[Mapping objects for pro-modules] \label{def:pro-ext-modules}
Let $M$ be a pro-symmetric sequence, or pro-$P$-module (left-, right- or bi-) for some fixed reduced operad $P$ in $\spectra$. Let $N$ be a (non-pro-) symmetric sequence or $P$-module respectively. Suppose that $M$ is directly-dualizable and is indexed on the cofiltered poset $\cat{J}$. We then define
\[ \Map_{\mathsf{\Sigma}}(M,N) := \hocolim_{j \in \cat{J}^{op}} \Map_{\mathsf{\Sigma}}(M_j,N) \]
\[ \Map^{\mathsf{right}}_{P}(M,N) := \hocolim_{j \in \cat{J}^{op}} \Map^{\mathsf{right}}_{P}(M_j,N) \]
\[ \Hom^{\mathsf{left}}_{P}(M,N) := \hocolim_{j \in \cat{J}^{op}} \Hom^{\mathsf{left}}_{P}(M_j,N) \]
\[ \Hom^{\mathsf{bi}}_{P}(M,N) := \hocolim_{j \in \cat{J}^{op}} \Hom^{\mathsf{bi}}_{P}(M_j,N) \]
accordingly.

Similarly, in the module cases, we define $\Ext$-objects:
\[ \Ext^{\mathsf{right}}_{P}(M,N) := \hocolim_{j \in \cat{J}^{op}} \Ext^{\mathsf{right}}_{P}(M_j,N) \isom \Map^{\mathsf{right}}_{P}(B(M,P,P),N) \]
\[ \Ext^{\mathsf{left}}_{P}(M,N) := \hocolim_{j \in \cat{J}^{op}} \Ext^{\mathsf{left}}_{P}(M_j,N) \isom \Map^{\mathsf{left}}_{P}(B(P,P,M),N) \]
\[ \Ext^{\mathsf{bi}}_{P}(M,N) := \hocolim_{j \in \cat{J}^{op}} \Ext^{\mathsf{bi}}_{P}(M_j,N) \isom \Map^{\mathsf{bi}}_{P}(B(P,P,M,P,P),N) \]
\end{definition}

We conclude this section by describing the homotopy-invariance properties of these generalized mapping objects. In this case, we prove only the properties that we need later. In particular, we consider only mapping objects of the form $\Map_{P}(A,B)$ where the symmetric sequence $B$ has only finitely-many nontrivial terms.

\begin{lemma} \label{lem:pro-symseq-invariance}
Let $A \weq A'$ be a weak equivalence between levelwise $\Sigma$-cofibrant directly-dualizable pro-symmetric sequences, and let $B$ be a \emph{truncated} symmetric sequence (i.e. there exists $N$ such that $B(n) = *$ for $n > N$). Then the induced map
\[ \Map_{\mathsf{\Sigma}}(A',B) \to \Map_{\mathsf{\Sigma}}(A,B) \]
is a weak equivalence in $\spectra$.
\end{lemma}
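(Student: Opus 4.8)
The plan is to reduce the statement to the corresponding homotopy-invariance property for the (non-pro) mapping objects $\Map_{\mathsf{\Sigma}}(-,B)$ that was established in Proposition~\ref{prop:map-homotopy}(4), together with the formula defining $\Map_{\mathsf{\Sigma}}$ on pro-symmetric sequences as a homotopy colimit over the indexing posets (Definition~\ref{def:pro-ext-modules}). The key point is that for a \emph{truncated} symmetric sequence $B$, with $B(n) = *$ for $n > N$, the mapping object $\Map_{\mathsf{\Sigma}}(A_j,B) \cong \prod_{r=1}^{N} \Map(A_j(r),B(r))^{\Sigma_r}$ is a \emph{finite} product, and for each $r \leq N$ the factor $\Map(A_j(r),B(r))^{\Sigma_r}$ is a contravariant functor of the cofibrant spectrum $A_j(r)$ which carries weak equivalences between cofibrant $\Sigma_r$-spectra to weak equivalences. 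This is exactly the content that makes $\Map_{\mathsf{\Sigma}}(-,B)$ send levelwise weak equivalences between levelwise $\Sigma$-cofibrant pro-symmetric sequences to levelwise weak equivalences of ind-spectra — and truncation is what guarantees this without any finiteness hypothesis on $B$ itself.

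First I would use a level representation of the weak equivalence $f: A \weq A'$: by the remarks following Definition~\ref{def:pro-objects}, we may assume $A$ and $A'$ are indexed on the same cofiltered poset $\cat{J}$ and that $f$ is an objectwise natural transformation $f_j: A_j \to A'_j$. The hypothesis that $A \weq A'$ is a weak equivalence of directly-dualizable pro-symmetric sequences means, by Definitions~\ref{def:prosymseq-weq} and \ref{def:directly-dualizable} and Theorem~\ref{thm:pro-spectra}(1), that for each $r$ the map $A(r) \to A'(r)$ is a weak equivalence of (directly-dualizable) pro-spectra; passing through the Spanier-Whitehead duality of Lemma~\ref{lem:pro-equivalence}, this is equivalent to $\dual A'(r) \to \dual A(r)$ being a weak equivalence of spectra, i.e. $\hocolim_{j \in \cat{J}^{op}} \Map(A'_j(r),S) \to \hocolim_{j \in \cat{J}^{op}} \Map(A_j(r),S)$ being a weak equivalence. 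Strictly speaking the cleanest route is to observe directly that a weak equivalence of pro-spectra $X \weq X'$ induces, for \emph{any} spectrum $Y$, a weak equivalence $\hocolim_{j} \Map(X'_j,Y) \to \hocolim_j \Map(X_j,Y)$ between the corresponding ind-spectra — this follows from the Christensen–Isaksen machinery since the pro-spectrum mapping construction $X \mapsto \Map(X,Y)$ of Definition~\ref{def:map-pro-spectra} is, after the identification of Theorem~\ref{thm:pro-spectra}(3)–(4), the derived dual followed by a smash with $Y$ (or, more elementarily, because the weak equivalences in $\mathsf{Ind}(\spectra)$ of Theorem~\ref{thm:pro-spectra}(2) are detected on homotopy groups and $\pi_*\Map(X_j,Y)$ is functorial in the stable homotopy type of $X_j$). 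One then notes that this applies with the $\Sigma_r$-actions present, using that the $A_j(r)$ are $\Sigma_r$-cofibrant.

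Having that, the proof assembles as follows. By Definition~\ref{def:pro-ext-modules}, $\Map_{\mathsf{\Sigma}}(A,B) = \hocolim_{j \in \cat{J}^{op}} \Map_{\mathsf{\Sigma}}(A_j,B)$ and similarly for $A'$, so it suffices to show the induced map of diagrams $\Map_{\mathsf{\Sigma}}(A'_j,B) \to \Map_{\mathsf{\Sigma}}(A_j,B)$ is an objectwise weak equivalence (then the homotopy colimit of an objectwise weak equivalence of $\cat{J}^{op}$-diagrams is a weak equivalence). Since $B$ is truncated, $\Map_{\mathsf{\Sigma}}(A_j,B) = \prod_{r=1}^{N}\Map(A_j(r),B(r))^{\Sigma_r}$ is a finite product, so it is enough to check each factor. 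For each fixed $r$, the map $f_j: A_j(r) \to A'_j(r)$ is a weak equivalence between $\Sigma_r$-cofibrant spectra (levelwise $\Sigma$-cofibrant hypothesis plus Remark~\ref{rem:sigma-cofibrant}), so by Proposition~\ref{prop:map-homotopy}(4) — applied to the symmetric sequences concentrated in position $r$, or directly to the $\Sigma_r$-enriched mapping object — the induced map $\Map(A'_j(r),B(r))^{\Sigma_r} \to \Map(A_j(r),B(r))^{\Sigma_r}$ is a weak equivalence in $\spectra$. Taking the finite product over $r = 1,\dots,N$ gives an objectwise weak equivalence of $\cat{J}^{op}$-diagrams, and passing to homotopy colimits completes the proof.

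\textbf{Main obstacle.} The only genuinely delicate point is the interaction between the homotopy colimit and the mapping-object construction: one must make sure that $\Map_{\mathsf{\Sigma}}(-,B)$ as a pro-spectrum-level construction really is computed by the indicated $\hocolim_{j\in\cat{J}^{op}}$ and that this homotopy colimit represents the correct derived object (this is where truncation of $B$, hence finiteness of the product and absence of any further cofibrancy subtlety, is essential — if $B$ were not truncated, the infinite product would not commute with the homotopy colimit and the argument would fail). Everything else is a routine application of Proposition~\ref{prop:map-homotopy}(4) and the fact that homotopy colimits preserve objectwise weak equivalences.
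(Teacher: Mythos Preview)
Your assembled proof contains a genuine gap: the reduction ``it suffices to show the induced map of diagrams $\Map_{\mathsf{\Sigma}}(A'_j,B) \to \Map_{\mathsf{\Sigma}}(A_j,B)$ is an objectwise weak equivalence'' is unjustified, and the subsequent claim ``the map $f_j: A_j(r) \to A'_j(r)$ is a weak equivalence between $\Sigma_r$-cofibrant spectra'' is generally false. A weak equivalence of pro-spectra in the sense of Theorem~\ref{thm:pro-spectra}(1) is a condition on the cohomotopy groups in the colimit, not a levelwise condition; taking a level representation does not change this. So you cannot invoke Proposition~\ref{prop:map-homotopy}(4) at each index $j$ separately --- that would require each $A_j \to A'_j$ to be a weak equivalence of symmetric sequences, which is much stronger than what you are given.

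The paper's proof handles exactly this point, and in fact it is the route you called ``cleanest'' in your first paragraph but then did not execute. One commutes the filtered homotopy colimit past the \emph{finite} product and the homotopy fixed points (this is where truncation is used), and then uses the homotopy-finiteness of each $A_j(n)$ (part of the directly-dualizable hypothesis) to rewrite $\Map(A_j(n),B(n)) \homeq \Map(A_j(n),S) \smsh B(n)$. After these manipulations one obtains
\[
\Map_{\mathsf{\Sigma}}(A,B) \homeq \prod_{n=1}^{N} \bigl[\dual A(n) \smsh B(n)\bigr]^{h\Sigma_n},
\]
and similarly for $A'$. Now the hypothesis that $A \weq A'$ is a weak equivalence of pro-symmetric sequences is, by Lemma~\ref{lem:pro-equivalence}, precisely the statement that each $\dual A'(n) \to \dual A(n)$ is a weak equivalence in $\spectra$, and the result follows. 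The directly-dualizable hypothesis is not incidental: without the finiteness of the $A_j(n)$ you cannot split off the $B(n)$ factor and reduce to the Spanier--Whitehead dual.
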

\begin{proof}
Suppose that $A$ is indexed on the cofiltered category $\cat{J}$. We have
\[ \Map_{\mathsf{\Sigma}}(A,B) = \hocolim_{j \in \cat{J}^{op}} \prod_{n=1}^{N} \Map(A_j(n),B(n))^{\Sigma_n}. \]
Now $A_j(n)$ is $\Sigma_n$-cofibrant, so the fixed-point object $\Map(A_j(n),B(n))^{\Sigma_n}$ is equivalent to the homotopy fixed points $\Map(A_j(n),B(n))^{h\Sigma_n}$
Now $(E \Sigma_n)_+ \smsh A_j(n)$ is homotopy-finite (since $A$ is directly dualizable) so
\[ \Map(A_j(n),B(n)) \homeq \Map(A_j(n),S) \smsh B(n) \]
by \cite[III.7]{elmendorf/kriz/mandell/may:1997}. Now the filtered homotopy colimit commutes with the finite product and the homotopy fixed points, so we have
\[ \Map_{\mathsf{\Sigma}}(A,B) \homeq \prod_{n=1}^{N} \left[ \hocolim_{j \in \cat{J}^{op}} \Map(A_j(n),S) \smsh B(n) \right]^{h\Sigma_n} \]
which is the same as
\[ \prod_{n = 1}^{N} \left[ \dual A(n) \smsh B(n) \right]^{h\Sigma_n}. \]
Similarly we have
\[ \Map_{\mathsf{\Sigma}}(A',B) \homeq \prod_{n = 1}^{N} \left[ \dual A'(n) \smsh B(n) \right]^{h\Sigma_n}. \]
Now by Lemma \ref{lem:pro-equivalence}, the map
\[ \dual A'(n) \to \dual A(n) \]
is a weak equivalence in $\spectra$ and so the induced map
\[ \prod_{n = 1}^{N} \left[ \dual A'(n) \smsh B(n) \right]^{h\Sigma_n} \to \prod_{n = 1}^{N} \left[ \dual A(n) \smsh B(n) \right]^{h\Sigma_n} \]
is an equivalence as claimed.
\end{proof}

\begin{lemma} \label{lem:pro-map-invariance}
Let $P$ be a directly-dualizable reduced $\Sigma$-cofibrant operad in $\spectra$ and let $M \weq M'$ be a weak equivalence between levelwise $\Sigma$-cofibrant directly-dualizable pro-$P$-modules (either right-, left- or bi-), and let $N$ be a $P$-module that is truncated as a symmetric sequence (see Lemma \ref{lem:pro-symseq-invariance}). Then the induced map
\[ \Ext_{P}(M',N) \to \Ext_{P}(M,N) \]
is a weak equivalence in either $\spectra$ or $\sset$ as appropriate.
\end{lemma}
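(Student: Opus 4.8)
The plan is to reduce this statement to the homotopy-invariance of the $\Ext$-objects for ordinary (non-pro-) modules (Lemma \ref{lem:ext-homotopy}), together with the behaviour of the bar construction and homotopy colimits on directly-dualizable pro-modules established earlier in this section. First I would unpack the definition of the pro-$\Ext$ objects (Definition \ref{def:pro-ext-modules}): writing $M$ as indexed on a cofiltered poset $\cat{J}$, we have $\Ext_{P}(M,N) \isom \Map_{P}(B(M,P,P),N)$ where $B(M,P,P)$ is the pro-$P$-module with $B(M,P,P)_j = B(M_j,P,P)$. By Lemma \ref{lem:bar-directly-dualizable}, $B(M,P,P)$ is directly-dualizable, and by Proposition \ref{prop:bar-pro-invariance} the map $B(M,P,P) \to B(M',P,P)$ induced by $M \weq M'$ is a weak equivalence of directly-dualizable pro-$P$-modules. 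So it suffices to prove the following: if $A \weq A'$ is a weak equivalence between levelwise-$\Sigma$-cofibrant directly-dualizable pro-$P$-modules which are moreover \emph{projectively-cofibrant} levelwise (which the bar resolutions $B(M_j,P,P)$ are, by Proposition \ref{prop:bar-resolution}), and $N$ is a truncated $P$-module, then $\Map_{P}(A',N) \to \Map_{P}(A,N)$ is a weak equivalence.

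The key step is then to follow the strategy of Lemma \ref{lem:pro-symseq-invariance} almost verbatim, replacing $\Map_{\mathsf{\Sigma}}$ by $\Map_{P}$ (or $\Hom_{P}^{\mathsf{left}}$, $\Hom_{P}^{\mathsf{bi}}$). Concretely: $\Map_{P}(A,N) = \hocolim_{j \in \cat{J}^{op}} \Map_{P}(A_j,N)$, and since $N$ is truncated and $A_j$ is $\Sigma$-cofibrant and directly-dualizable, the finiteness argument of Lemma \ref{lem:pro-symseq-invariance} (via \cite[III.7]{elmendorf/kriz/mandell/may:1997}) lets us rewrite each $\Map_{P}(A_j,N)$ in terms of the Spanier-Whitehead duals $\dual A_j(n)$ smashed with the finitely many terms $N(n)$, $n \leq N$, with appropriate (homotopy) fixed points. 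Because filtered homotopy colimits commute with finite products and with homotopy fixed points, $\Map_{P}(A,N)$ is expressed as a finite homotopy limit built out of $\dual A(n) = \hocolim_{j} \Map(A_j(n),S)$ and the $N(n)$'s. Applying Lemma \ref{lem:pro-equivalence} (which says $A \weq A'$ iff $\dual A(n) \to \dual A'(n)$ are all weak equivalences in $\spectra$) then gives the result, since everything on the right is now built functorially out of the $\dual A(n)$. For the left- and bi-module cases one uses the $\sset$-enrichment instead and the analogous dualization, noting that $\Hom_{P}^{\mathsf{left}}(A_j,N) \isom \spectra(S,\Map^{\mathsf{right}}_{?}(\ldots))$-type comparisons, or more simply that the bar-construction presentation $\Ext^{\mathsf{left}}_{P}(M,N) \isom \Tot[\Hom_{\mathsf{\Sigma}}(P^{\bullet} \circ M, N)]$ of Remark \ref{rem:ext-tot} reduces everything to the truncated symmetric-sequence statement of Lemma \ref{lem:pro-symseq-invariance} applied termwise in the cosimplicial direction, together with Proposition \ref{prop:reedy} for totalizations.

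The main obstacle I anticipate is bookkeeping rather than anything deep: one must check that the $\Sigma_n$-fixed points appearing in $\Map_{P}(A_j,N)$ really do agree with homotopy fixed points, which requires the levelwise $\Sigma$-cofibrancy hypothesis on $A$ (here supplied by $B(M,P,P)$ being $\Sigma$-cofibrant via Lemma \ref{lem:sigma-cofibrant-comprod} and Proposition \ref{prop:termwise-cofibrant}), and that the various filtered homotopy colimits can be interchanged with the finite homotopy limits and with smashing against $N(n)$ — all of which hold because the indexing poset $\cat{J}$ is filtered and $N$ is truncated. A secondary subtlety is making sure that when we form $\Map_{P}(A,N)$ as a homotopy colimit in the category of modules (as in the discussion before Definition \ref{def:dual-pro-comodule}) this agrees with the termwise homotopy colimit in spectra; this is handled by the Appendix result cited there (Proposition \ref{prop:filtered-hocolim}). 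Once these interchanges are justified, the proof is a direct transcription of Lemma \ref{lem:pro-symseq-invariance}.
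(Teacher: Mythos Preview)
Your proposal is correct, and in fact your ``or more simply'' remark at the end of the second paragraph is exactly the paper's proof: write $\Ext_{P}(M,N)$ as $\Tot \Map_{\mathsf{\Sigma}}(M \circ P^{\bullet}, N)$ via Remark~\ref{rem:ext-tot}, observe that each $M \circ P^k \to M' \circ P^k$ is a weak equivalence of pro-symmetric sequences (this is Corollary~\ref{cor:pro-comprod-equivalence}, which you should cite explicitly), apply Lemma~\ref{lem:pro-symseq-invariance} at each cosimplicial level, and then use Reedy fibrancy and Proposition~\ref{prop:reedy}. That is the whole argument; it takes about four lines.

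Your main route, via $\Map_{P}(B(M,P,P),N)$ and a direct finiteness analysis of $\Map_{P}(A_j,N)$, is workable but unnecessarily circuitous. The step where you ``rewrite each $\Map_{P}(A_j,N)$ in terms of the Spanier-Whitehead duals $\dual A_j(n)$ smashed with $N(n)$'' is doing real work that you have not justified: $\Map_{P}$ is a strict equalizer of two maps between $\Map_{\mathsf{\Sigma}}$-objects, and you would need to argue that this equalizer is a \emph{homotopy} equalizer (or otherwise interacts correctly with the filtered homotopy colimit) before the commutation argument goes through. This can be done using the projective-cofibrancy of $B(M_j,P,P)$, but at that point you are essentially redoing the $\Tot$ argument in disguise. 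I would drop the first route entirely and lead with the cosimplicial one.
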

\begin{proof}
We prove the right module case, with the others being similar. As in Remark \ref{rem:ext-tot}, we can write $\Ext_{P}(M,N)$ as
\[ \Tot \Map_{\mathsf{\Sigma}}(M \circ P^{\bullet},N). \]
If $M \to M'$ is a weak equivalence, then so is each map $M \circ P^k \to M' \circ P^k$ (by Corollary \ref{cor:pro-comprod-equivalence}), and hence so is each
\[ \Map_{\mathsf{\Sigma}}(M' \circ P^k,N) \to \Map_{\mathsf{\Sigma}}(M \circ P^k,N). \]
The map
\[ \Ext_{P}(M',N) \to \Ext_{P}(M,N) \]
is the totalization of a levelwise weak equivalence of cosimplicial spectra (or simplicial sets in the left- and bimodule cases). We noted in Remark \ref{rem:ext-tot} that these cosimplicial objects are Reedy fibrant, so it follows by Proposition \ref{prop:reedy} that this map is a weak equivalence.
\end{proof}

\part{Functors of spectra}

We now turn to the main goal of this paper: to construct new models for Goodwillie derivatives that have appropriate operad and module structures. We start by concentrating on functors from spectra to spectra since these form the basis for all our constructions. Here is a brief summary of the next few sections:
\begin{itemize}
  \item we show that (at least for finite cell functors) the Goodwillie derivatives of $F: \spectra \to \spectra$ are the Spanier-Whitehead duals of certain `natural transformation objects' of the form $\Nat(FX,X^{\smsh n})$ (\S\ref{sec:nat});
  \item we construct `composition maps' that relate our models for the derivatives of $FG$ to the composition product of the corresponding models for $F$ and $G$, for functors $F,G: \spectra \to \spectra$ (\S\ref{sec:comp-maps}). These maps form the basis of all the operad and module structures that we later produce;
  \item we prove that the composition maps of the previous section are equivalences, thus establishing the chain rule for functors from spectra to spectra (\S\ref{sec:chainrule}). As well as being of interest in its own right, the chain rule for spectra is a key tool in our construction of module structure, and proof of the chain rule, for functors to or from spaces;
  \item we show that our composition maps give rise to operad structures on the duals of the derivatives of an appropriate comonad (\S\ref{sec:comonads}).
\end{itemize}

\section{Models for Goodwillie derivatives of functors of spectra} \label{sec:nat}

Our new models for Goodwillie derivatives are based on an enrichment of the functor category $[\finspec,\spectra]$ over $\spectra$, that is, the existence of a spectrum of natural transformations between two functors. We start by describing this.

\begin{definition}[Natural transformation objects] \label{def:nattrans}
Let $F$ and $G$ be pointed simplicial functors $\finspec \to \spectra$. We define the \emph{spectrum of natural transformations from $F$ to $G$} by the formula
\[ \Nat(F,G) := \lim \left( \prod_{K \in \finspec} \Map(FK,GK) \rightrightarrows \prod_{K,K' \in \finspec} \Map(\spectra(K,K') \smsh FK,GK') \right). \]
One of these maps is given by the simplicial structure on $F$ using:
\[ \spectra(K,K') \smsh FK \to FK' \]
and the other by the simplicial structure on $G$ using:
\[ GK \to \Map(\spectra(K,K'),GK'). \]
The objects $\Nat(F,G)$ then define an enrichment of $[\finspec,\spectra]$ over $\spectra$.
\end{definition}

\begin{lemma}[Strong Yoneda Lemma] \label{lem:yoneda}
There is a natural isomorphism
\[ \Nat(I \smsh \spectra(K,-),G) \isom \Map(I,GK) \]
for any $I \in \spectra$, $K \in \finspec$ and $G \in [\finspec,\spectra]$.
\end{lemma}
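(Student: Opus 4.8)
This is the enriched (spectrum-level) Yoneda Lemma, and the strategy is to mimic the standard proof of the ordinary Yoneda Lemma, but carried out in the $\spectra$-enriched setting, checking at each stage that the maps produced are maps of spectra (not just of sets or simplicial sets). The starting point is the weak/simplicial Yoneda Lemma already available as Lemma \ref{lem:weak-yoneda} together with the general enriched-category-theory input from Kelly \cite{kelly:2005} (which was cited for Lemma \ref{lem:weak-yoneda} and applies verbatim here since $[\finspec,\spectra]$ is a $\spectra$-enriched functor category in the sense there).

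\textbf{Construction of the two maps.} First I would write down the evaluation map: given a natural transformation $\alpha \in \Nat(I \smsh \spectra(K,-), G)$, its component at $K$ is a map $I \smsh \spectra(K,K) \to GK$; precomposing with $I \smsh \mathrm{id}_K$, i.e.\ with the map $I \isom I \smsh \Delta[0]_+ \to I \smsh \spectra(K,K)$ induced by the unit $\Delta[0]_+ \to \spectra(K,K)$ (available because $K \in \finspec$ so $\spectra(K,K)$ has a unit $0$-simplex), gives an element of $\Map(I,GK)$. This assignment is the map from left to right, and it is manifestly a map of spectra because $\Nat$ and $\Map$ are defined as limits (ends) of diagrams of mapping spectra and the operation just described is a map between those limit diagrams. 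For the inverse, given $f \colon I \to GK$, I would define a natural transformation with component at $L \in \finspec$ equal to the composite
\[ I \smsh \spectra(K,L) \xrightarrow{f \smsh \mathrm{id}} GK \smsh \spectra(K,L) \to GL, \]
where the second arrow is adjoint to the simplicial-functoriality map $\spectra(K,L) \to \Map(GK, GL)$ of $G$ (which exists since $G$ is a simplicial functor). Checking naturality of this collection in $L$ is a routine diagram chase using the simplicial identities for $G$ and associativity of composition in $\spectra$; again everything is assembled from maps of spectra, so the resulting assignment $\Map(I,GK) \to \Nat(I \smsh \spectra(K,-),G)$ is a map of spectra.

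\textbf{Checking the two maps are mutually inverse.} One composite is easy: starting from $f \colon I \to GK$, building the natural transformation, and then evaluating at the identity of $K$ recovers $f$ because $G$ applied to $\mathrm{id}_K$ is the identity. The other composite --- start with $\alpha$, restrict to $f = \alpha_K \circ (I \smsh \mathrm{id}_K)$, then re-expand --- requires the naturality of $\alpha$ itself: for each $L$ and each $g \colon K \to L$, the naturality square for $\alpha$ forces $\alpha_L \circ (I \smsh g) = Gg \circ \alpha_K$, and running over all $g$ (equivalently, using the simplicial naturality) shows the re-expanded transformation agrees with $\alpha$ at every $L$. Since the ordinary Yoneda Lemma (Lemma \ref{lem:weak-yoneda}) already gives this bijection on the level of underlying sets of morphisms $I' \to GK$ for every test object $I'$, the cleanest route is actually to observe that the two maps of spectra constructed above induce, on applying $\spectra(I',-)$ for arbitrary $I' \in \spectra$, exactly the bijection of Lemma \ref{lem:weak-yoneda} (naturally in $I'$), and hence by the (enriched) Yoneda Lemma in $\spectra$ they are mutually inverse isomorphisms of spectra. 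Naturality in $I$, $K$ and $G$ is then automatic from the naturality built into each construction.

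\textbf{Main obstacle.} The genuine content is not the construction --- which is forced --- but verifying that the bijection of underlying sets really does upgrade to an isomorphism of the spectrum objects, i.e.\ that the limit (end) defining $\Nat$ is computed correctly and that the unit/counit identities hold strictly rather than up to homotopy. This is where one must be slightly careful that $\finspec$ genuinely has the enriched structure needed (it does: it inherits the simplicial mapping objects $\spectra(X,Y)$, as recorded in Definition \ref{def:cell-complex}), and that $\Map(I,GK)$, as an end over the trivial one-object category, is literally the value we want. Given all the enriched-category machinery of \cite{kelly:2005} this is standard, so in the write-up I would simply cite \cite[1.9]{kelly:2005} (or the relevant strong Yoneda statement there) after setting up the two maps, exactly as was done for Lemma \ref{lem:weak-yoneda}.
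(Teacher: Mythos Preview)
Your proposal is correct and takes essentially the same approach as the paper: both construct the forward map by projecting to the $K$-component and precomposing with the unit $\Delta[0]_+ \to \spectra(K,K)$, and both construct the inverse from a map $f\colon I \to GK$ by forming $I \smsh \spectra(K,X) \xrightarrow{f \smsh 1} GK \smsh \spectra(K,X) \to GX$. The paper simply asserts these are mutually inverse without further comment, whereas you supply the additional verification step (reducing to Lemma~\ref{lem:weak-yoneda} by testing against arbitrary $I'$), which is a reasonable elaboration but not a different argument.
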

\begin{proof}
A map from left to right is given by
\[ \Nat(I \smsh \spectra(K,-),G) \to \Map(I \smsh \spectra(K,K),GK) \to \Map(I,GK) \]
using the unit map $\Delta[0]_+ \to \spectra(K,K)$ for the simplicial structure on $\spectra$. The inverse is the adjoint to
\[ \Map(I,GK) \smsh (I \smsh \spectra(K,X)) \to GK \smsh \spectra(K,X) \to GX. \]
\end{proof}

\begin{lemma} \label{lem:enriched-model}
The objects $\Nat(F,G)$ make the functor category $[\finspec,\spectra]$ into an enriched model category over the symmetric monoidal model category $\spectra$.
\end{lemma}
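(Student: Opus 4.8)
The plan is to verify the axioms of an enriched model category in the sense of Hovey \cite{hovey:1999}; since the model structure on $[\finspec,\spectra]$ is the projective one (Proposition \ref{prop:model-functors}) and the monoidal model category in question is $\spectra$, the only thing to check is the pushout-product (SM7) axiom: for a cofibration $i: A \to B$ in $[\finspec,\spectra]$ and a cofibration $j: K \to L$ in $\spectra$, the induced map
\[ A \smsh L \cup_{A \smsh K} B \smsh K \to B \smsh L \]
is a cofibration in $[\finspec,\spectra]$, which is moreover trivial if either $i$ or $j$ is. Here $\smsh$ denotes the tensoring of $[\finspec,\spectra]$ over $\spectra$, computed objectwise via the tensoring of $\spectra$ over itself; the compatibility between this tensoring and the enrichment $\Nat(-,-)$ is the standard closed-module structure, which I would note but not belabor. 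One should also record that $[\finspec,\spectra]$ is tensored, cotensored and enriched over $\spectra$ in a compatible way — the tensor $F \smsh I$ is $(F \smsh I)(K) := FK \smsh I$, the cotensor is $\Map(I, F)(K) := \Map(I, FK)$, and adjointness with $\Nat(F,G)$ follows from Definition \ref{def:nattrans} together with the enriched adjunctions in $\spectra$.

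The key reduction is that both classes — cofibrations and trivial cofibrations — in the projective model structure are generated (as saturated classes) by the sets $\mathbb{I}_{[\finspec,\spectra]}$ and $\mathbb{J}_{[\finspec,\spectra]}$ of maps of the form $I_0 \smsh \spectra(K,-) \to I_1 \smsh \spectra(K,-)$ with $I_0 \to I_1$ a generating (trivial) cofibration of $\spectra$ and $K \in \finspec$. Since the pushout-product operation preserves colimits in each variable and a pushout-product of saturated classes lands in the saturation of the pushout-products of the generators, it suffices to check SM7 on generators. So I would take $i$ of the form $I_0 \smsh \spectra(K,-) \to I_1 \smsh \spectra(K,-)$ and $j: I'_0 \to I'_1$ a generating (trivial) cofibration of $\spectra$. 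Because $(I \smsh \spectra(K,-)) \smsh I' \isom (I \smsh I') \smsh \spectra(K,-)$ (the tensoring is objectwise, and smash in $\spectra$ is associative), the pushout-product in $[\finspec,\spectra]$ of $i$ and $j$ is precisely
\[ \bigl( (I_0 \smsh I'_1) \cup_{I_0 \smsh I'_0} (I_1 \smsh I'_0) \to I_1 \smsh I'_1 \bigr) \smsh \spectra(K,-), \]
i.e. the functor $(-) \smsh \spectra(K,-)$ applied to the pushout-product of $i$ (forgotten to a map of spectra, namely $I_0 \to I_1$) and $j$ in $\spectra$. Since $\spectra$ is a monoidal model category, that pushout-product is a cofibration of spectra, trivial if either factor is; and since $(-) \smsh \spectra(K,-)$ sends generating (trivial) cofibrations of $\spectra$ to generating (trivial) cofibrations of $[\finspec,\spectra]$ — indeed $\spectra(K,-)$ for $K$ a finite cell spectrum is a retract of a finite wedge of representables, so $(-)\smsh\spectra(K,-)$ preserves (trivial) cofibrations — the result follows. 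One should also check the unit axiom, but the unit of $\spectra$ is $S$ and $F \smsh S \isom F$ strictly, so this is immediate (no cofibrant-replacement subtlety is needed here because the unit isomorphism is strict).

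The main obstacle, such as it is, is bookkeeping rather than mathematics: one must be careful that the objectwise tensoring $F \smsh I$ genuinely is the left adjoint matching the enrichment $\Nat(F,G)$ of Definition \ref{def:nattrans}, so that the "pushout-product" being checked for SM7 is the correct one, and one must handle the minor wrinkle that $\spectra(K,-)$ for general $K \in \finspec$ is not literally a corepresentable but is built from corepresentables by finitely many cells (so $(-) \smsh \spectra(K,-)$ preserves the relevant classes by an easy induction using Lemma \ref{lem:cell-F(X)}-style reasoning). A cleaner alternative, which I would mention, is to invoke the enriched analogue of \cite[11.6.1]{hirschhorn:2003} (as in the proof of Proposition \ref{prop:model-functors}): the projective model structure on a category of $\cat{V}$-enriched functors into a $\cat{V}$-model category is automatically a $\cat{V}$-model category when $\cat{V}$ is a monoidal model category with cofibrant unit — but since $S$ is not cofibrant in our $\spectra$ we cannot cite this verbatim and must instead observe that the only place cofibrancy of the unit enters is the unit axiom, which holds here because $F \smsh S \isom F$ on the nose. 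Thus the proof amounts to: reduce SM7 to generators, identify the pushout-product on generators with $(-)\smsh\spectra(K,-)$ applied to a pushout-product in $\spectra$, and invoke that $\spectra$ is monoidal model together with the fact that $(-)\smsh\spectra(K,-)$ preserves generating (trivial) cofibrations.
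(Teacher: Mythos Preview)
Your argument is correct but takes the dual route to the paper's. You verify the pushout-product form of SM7, reducing to generating cofibrations on both sides and then identifying the pushout-product as $(-)\smsh\spectra(K,-)$ applied to a pushout-product in $\spectra$; the paper instead checks the pullback-hom form, reduces only the cofibration variable to a generator, and then applies the strong Yoneda Lemma (Lemma~\ref{lem:yoneda}) to get directly the map
\[ \Map(I_1,G'K) \to \Map(I_0,G'K) \times_{\Map(I_0,GK)} \Map(I_1,GK), \]
which is handled by the monoidal model axioms for $\spectra$. The paper's route is marginally cleaner: it avoids having to set up and verify the tensor/cotensor adjunctions explicitly, and it does not need the separate observation that the left Quillen functor $(-)\smsh\spectra(K,-)$ preserves (trivial) cofibrations. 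Your route has the virtue of making the tensoring over $\spectra$ explicit.

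One minor confusion to clean up: your parenthetical that ``$\spectra(K,-)$ for $K\in\finspec$ is not literally a corepresentable but is built from corepresentables by finitely many cells'' is off. In the paper's setup, $\spectra(K,-)$ \emph{is} exactly the enriched representable at $K$, and the generating cofibrations are precisely $I_0\smsh\spectra(K,-)\to I_1\smsh\spectra(K,-)$. There is no induction over cells of $K$ needed; the functor $I\mapsto I\smsh\spectra(K,-)$ is left adjoint to evaluation at $K$ (this is the content of Lemma~\ref{lem:yoneda}) and hence left Quillen, which is all you need.
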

\begin{proof}
We need to show that if $F \cof F'$ is a cofibration and $G' \fib G$ a fibration then the map
\[ \Nat(F',G') \to \Nat(F,G') \times_{\Nat(F,G)} \Nat(F',G) \]
is a fibration in $\spectra$, and that it is a weak equivalence if either $F \to F'$ or $G' \to G$ is. Since $[\finspec,\spectra]$ is cofibrantly generated, it is sufficient to do this in the case that $F \to F'$ is either one of the generating cofibrations or generating trivial cofibrations in $[\finspec,\spectra]$.

In this case, by the Yoneda Lemma, the given map reduces to one of the form
\[ \tag{*} \Map(I_1,G'K) \to \Map(I_0,G'K) \times_{\Map(I_0,GK)} \Map(I_1,GK) \]
where $I_0 \to I_1$ is either one of the generating cofibrations, or generating trivial cofibrations in $\spectra$. This a fibration since $\spectra$ is a simplicial model category and $G'K \to GK$ is a fibration. If $F \to F'$ is a weak equivalence (i.e. one of the generating trivial cofibrations), then $I_0 \to I_1$ is also. If $G' \to G$ is a weak equivalence, then so is $G'K \to GK$. In either case, the map (*) is a weak equivalence.
\end{proof}

We now, finally, construct our models for the Goodwillie derivatives of a functor $F$ from spectra to spectra. Below we define a sequence of pro-objects associated to $F$. In the rest of this section we then show that these pro-objects are Spanier-Whitehead dual to the derivatives of $F$.

\begin{definition}[Models for the derivatives of functors of spectra] \label{def:derivatives-presented}
Let $F: \finspec \to \spectra$ be a presented cell functor and $n$ a positive integer. Define a pro-spectrum $\der^n(F)$, indexed on the cofiltered category $\mathsf{Sub}(F)^{op}$, by the formula
\[ \der^n(F) := \{ \Nat(CX,X^{\smsh n}) \}_{C \in \mathsf{Sub}(F)}. \]
The pro-object $\der^n(F)$ has a $\Sigma_n$-action that arises from the permutation action of $\Sigma_n$ on $X^{\smsh n}$. Together these pro-objects form a pro-symmetric sequence $\der^*(F)$ with \ord{n} term equal to $\der^n(F)$.
\end{definition}

\begin{definition} \label{def:derivatives}
Now let $F: \finspec \to \spectra$ be any pointed simplicial functor. Then there the cellular replacement $QF$ of $F$ (Definition \ref{def:QF}) comes with a canonical presentation and so we can form the pro-symmetric sequence $\der^*(QF)$ as in Definition \ref{def:derivatives-presented}. This pro-symmetric sequence comprises our models for the (Spanier-Whitehead duals of) the Goodwillie derivatives of $F$.

A natural transformation $\gamma: F \to G$ induces a morphism of pro-symmetric sequences $\der^*(\gamma): \der^*(QG) \to \der^*(QF)$ in the following way:
\begin{itemize}
  \item Recall that a degree $i$ cell $\alpha$ in the presented cell complex $QF$ corresponds to a commutative diagram of the form
  \[ \begin{diagram}
    \node{I_0 \smsh \spectra(K,-)} \arrow{e} \arrow{s} \node{(QF)_{i-1}} \arrow{s} \\
    \node{I_1 \smsh \spectra(K,-)} \arrow{e} \node{F}
  \end{diagram} \]
  Composing this with the diagram
  \[ \begin{diagram}
    \node{(QF)_{i-1}} \arrow{e,t}{Q\gamma} \arrow{s} \node{(QG)_{i-1}} \arrow{s} \\
    \node{F} \arrow{e,t}{\gamma} \node{G}
  \end{diagram} \]
  induced by $\gamma$, we obtain a corresponding cell $\gamma_{\alpha}$ of degree $i$ in $QG$.
  \item Now let $C$ be a finite subcomplex of $QF$. Then the cells $\gamma_{\alpha}$ for $\alpha \in C$ form a finite subcomplex $C'$ of $QG$ and $Q(\gamma)$ restricts to a map
      \[ \gamma_C : C \to C'. \]
      (Note that $C'$ can have fewer cells than $C$, for example if $G = *$.)
  \item We then define the map $\der^n(\gamma)_C: \Nat(C'(X),X^{\smsh n}) \to \Nat(CX,X^{\smsh n})$ induced by $\gamma: C \to C'$ coming from the previous construction. Together these make up a morphism of pro-objects $\der^n(QG) \to \der^n(QF)$. It is easy to see that these maps are $\Sigma_n$-equivariant and form a morphism of pro-symmetric sequences as claimed.
\end{itemize}
This construction makes $\der^*(Q-)$ into a contravariant functor from $[\finspec,\spectra]$ to the category of pro-symmetric sequences in $\spectra$.
\end{definition}

\begin{notation} \label{not:cofibrant}
In order to apply our results on Spanier-Whitehead duality to the pro-objects $\der^n(F)$, we need to establish that they are directly-dualizable in the sense of Definition \ref{def:directly-dualizable}. We already know from Lemma \ref{lem:lattice} that the indexing categories $\mathsf{Sub}(F)^{op}$ have the required properties. In Lemma \ref{lem:homotopy-finite} below, we check that the spectra $\Nat(CX,X^{\smsh n})$ are homotopy-finite. These spectra are, however, not in general cofibrant, so $\der^n(F)$ is usually not directly-dualizable.

We therefore introduce the following notation for levelwise cofibrant replacement of these pro-spectra. We write
\[ \tilde{\der}^n(F) \weq \der^n(F) \]
for such a levelwise cofibrant replacement. The pro-spectrum $\tilde{\der}^n(F)$ then \emph{is} directly-dualizable.

The precise meaning of $\tilde{\der}^n(F)$, however, depends on the context. The pro-symmetric sequence $\der^*(F)$ may have additional structure that we wish to preserve. For example, if $\der^*(F)$ is a module over an operad $P$, then we might use $\tilde{\der}^*(F)$ to denote a (levelwise) projectively-cofibrant replacement in the sense of Proposition \ref{sec:cofibrant}. We saw in Proposition \ref{prop:termwise-cofibrant} that these projectively-cofibrant replacements are always termwise-cofibrant, so this is consistent.

We also use tildes more generally to denote cofibrant replacement. In particular, we write $\widetilde{\Map}$ and $\widetilde{\Nat}$ as cofibrant replacements for $\Map$ and $\Nat$-objects respectively.
\end{notation}

\begin{lemma} \label{lem:homotopy-finite}
Let $C: \finspec \to \spectra$ be a finite cell functor. Then the spectrum $\Nat(CX,X^{\smsh n})$ is homotopy-finite.
\end{lemma}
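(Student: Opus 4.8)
The plan is to induct on a cell structure for $C$. A finite cell functor $C$ is built from a finite sequence of pushouts
\[ * = C_0 \to C_1 \to \dots \to C_m = C \]
in which each $C_{i+1}$ is obtained from $C_i$ by attaching a single cell, i.e. by a pushout along a generating cofibration $I_0 \smsh \spectra(K,-) \to I_1 \smsh \spectra(K,-)$ with $I_0 \to I_1$ one of the generating cofibrations of $\spectra$ and $K \in \finspec$. I would apply $\Nat(-,X^{\smsh n})$ and use that this functor is contravariant and, being built from limits over $\finspec$ of internal mapping objects $\Map(-,X^{\smsh n})$, takes the homotopy pushout defining $C_{i+1}$ to a homotopy pullback. (Here one uses Lemma~\ref{lem:enriched-model}, which says $\Nat$ makes $[\finspec,\spectra]$ into an enriched model category over $\spectra$; in particular $\Nat$ sends cofibrations to fibrations and preserves the relevant homotopy-invariance, so it converts the cell-attachment pushout squares into homotopy cartesian squares.) Since the class of homotopy-finite spectra is closed under (finite) homotopy pullbacks and contains $*$, it suffices to handle the base case and the ``new cell'' contribution at each stage.

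The key computational input is the Strong Yoneda Lemma (Lemma~\ref{lem:yoneda}): for a single free cell functor,
\[ \Nat\bigl(I \smsh \spectra(K,-),\, X^{\smsh n}\bigr) \isom \Map\bigl(I, X^{\smsh n}\bigr). \]
When $X$ ranges over finite cell spectra, $X^{\smsh n}$ is again a finite cell spectrum, hence homotopy-finite; and the generating-cofibration domains $I_0, I_1$ are finite cell spectra by the description in Definition~\ref{def:spectra} (they are of the form $S \smsh_{\cat{L}} \mathbb{L}\Sigma^\infty_q |\partial\Delta[n]_+|$ etc., hence compact). For a \emph{finite} spectrum $I$, the mapping spectrum $\Map(I, X^{\smsh n})$ is Spanier--Whitehead dual to $I \smsh X^{\smsh n}$ up to homotopy — more precisely, $\Map(I, X^{\smsh n}) \homeq \dual I \smsh X^{\smsh n}$ where $\dual I$ is the (finite) Spanier--Whitehead dual — and a smash product of two homotopy-finite spectra is homotopy-finite. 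So each $\Map(I_j, X^{\smsh n})$ is homotopy-finite, which gives the base case ($C_0 = *$, $\Nat(*, X^{\smsh n}) = *$) and feeds the induction: the ``fibre'' of $\Nat(C_{i+1}X, X^{\smsh n}) \to \Nat(C_iX, X^{\smsh n})$ is, by Yoneda applied to the attached cell, built from $\Map(I_0, X^{\smsh n})$ and $\Map(I_1, X^{\smsh n})$, both homotopy-finite. Assembling: $\Nat(C_{i+1}X, X^{\smsh n})$ sits in a homotopy pullback with all other corners homotopy-finite, hence is homotopy-finite, and after $m$ steps we conclude $\Nat(CX, X^{\smsh n})$ is homotopy-finite.

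A minor technical point to watch: I am treating $X$ as fixed (a finite cell spectrum) and asking homotopy-finiteness of the \emph{value} $\Nat(CX, X^{\smsh n})$; the relevant homotopy-invariance of $\Nat$ from Lemma~\ref{lem:enriched-model} requires the cofibrancy of $C$ (which we have) and uses that every object of $\spectra$ is fibrant, so that $X^{\smsh n}$ need not be separately replaced. One must also be slightly careful that the pushout squares defining the cell structure become \emph{homotopy} pushouts — this is exactly where Lemma~\ref{lem:cell-F(X)} and the compact-generation remarks are used: the maps $C_i X \to C_{i+1} X$ are relative cell complexes in $\spectra$, hence cofibrations, so the pushouts are homotopy pushouts, and applying the enriched mapping-out functor $\Nat(-, X^{\smsh n})$ yields genuine homotopy pullbacks.

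The main obstacle I anticipate is bookkeeping rather than conceptual: making sure that ``homotopy pushout of functors, applied $\Nat(-,X^{\smsh n})$, yields a homotopy pullback of spectra'' is correctly justified from Lemma~\ref{lem:enriched-model} — i.e. that the enriched model structure genuinely gives a Quillen-adjunction-type statement converting the cofibration $C_i \into C_{i+1}$ into a fibration of $\Nat$-objects over the appropriate base, so that the square of $\Nat$-objects is homotopy cartesian. Once that is in hand, the homotopy-finiteness propagates immediately through the finitely many cell attachments, using only that homotopy-finite spectra are closed under finite homotopy limits and under smashing (and that finite spectra are Spanier--Whitehead self-dual up to finiteness). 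I do not expect to need any genuinely new ideas beyond the enriched Yoneda lemma and the closure properties of the subcategory of homotopy-finite spectra.
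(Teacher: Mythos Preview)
Your overall approach---induction on the cell structure, the Strong Yoneda Lemma for the representable cells, and closure of homotopy-finite spectra under finite homotopy pullbacks---is exactly the paper's. However, there is a conceptual slip you should correct. The object $\Nat(CX, X^{\smsh n})$ is \emph{not} a function of a fixed spectrum $X$; the $X$ here is a dummy variable, and this denotes the spectrum of natural transformations between the functors $C$ and $(-)^{\smsh n}$ (an end over all $X \in \finspec$, as in Definition~\ref{def:nattrans}). Accordingly, the Strong Yoneda isomorphism (Lemma~\ref{lem:yoneda}) does not produce $\Map(I, X^{\smsh n})$ but rather
\[
\Nat\bigl(I \smsh \spectra(K,-),\, (-)^{\smsh n}\bigr) \;\isom\; \Map(I, K^{\smsh n}),
\]
where $K \in \finspec$ is the representing object of the cell. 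Since $K$ is a finite cell spectrum, your finiteness argument (via $\Map(I,K^{\smsh n}) \homeq \dual I \smsh K^{\smsh n}$, which is the paper's invocation of \cite[III.7]{elmendorf/kriz/mandell/may:1997}) then goes through verbatim. From there your inductive step matches the paper: the cell-attachment pushout goes to a strict pullback under $\Nat(-,(-)^{\smsh n})$, and the map $\Map(I_1,K^{\smsh n}) \to \Map(I_0,K^{\smsh n})$ is a fibration (since $I_0 \to I_1$ is a cofibration and every spectrum is fibrant), so the pullback is a homotopy pullback with three homotopy-finite corners. Once you fix the role of $X$, your proof and the paper's are the same.
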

\begin{proof}
We prove this by induction on the cell structure of $C$. First suppose that $C = I \smsh \spectra(K,-)$ where $I,K \in \finspec$. Then we have
\[ \Nat(CX,X^{\smsh n}) \isom \Map(I,K^{\smsh n}) \]
by the Yoneda Lemma \ref{lem:yoneda}. Now $\Map(I,K^{\smsh n}) \homeq \Map(I,S) \smsh K^{\smsh n}$ by \cite[III.7]{elmendorf/kriz/mandell/may:1997} and $\Map(I,S)$ and $K$ are homotopy-finite. Therefore, $\Map(I,K^{\smsh n})$ is homotopy-finite.

Now suppose that $C$ is obtained from $C'$ by adding a cell via the pushout diagram
\[ \begin{diagram}
  \node{I_0 \smsh \spectra(K,-)} \arrow{e} \arrow{s} \node{C'} \arrow{s} \\
  \node{I_1 \smsh \spectra(K,-)} \arrow{e} \node{C}
\end{diagram} \]
Applying $\Nat(-,X^{\smsh n})$ to this , and using the Yoneda Lemma \ref{lem:yoneda}, we get a pullback diagram
\[ \begin{diagram}
  \node{\Nat(CX,X^{\smsh n})} \arrow{e} \arrow{s} \node{\Map(I_1,K^{\smsh n})} \arrow{s} \\
  \node{\Nat(C'X,X^{\smsh n})} \arrow{e} \node{\Map(I_0,K^{\smsh n})}
\end{diagram} \]
The right-hand vertical map is a fibration by the pushout-product axiom in $\spectra$ because $I_0 \to I_1$ is a cofibration, and $K^{\smsh n}$ is fibrant. Therefore, the above square is a homotopy-pullback by \cite[13.3.8]{hirschhorn:2003}. The inductive hypothesis is that the bottom-left corner is homotopy-finite, and we saw above that the top- and bottom-right corners are homotopy-finite. Therefore, the top-left is homotopy-finite also. It follows by induction that $\Nat(CX,X^{\smsh n})$ is homotopy-finite for any finite cell functor $C$.
\end{proof}

\begin{definition} \label{def:d_*(F)}
Let $F: \finspec \to \spectra$ be a pointed simplicial functor. and let $\tilde{\der}^*(QF)$ be a termwise-cofibrant replacement for $\der^*(QF)$. It follows from Lemma \ref{lem:homotopy-finite} that the pro-symmetric sequence $\tilde{\der}^*(F)$ is directly-dualizable (see Definition \ref{def:directly-dualizable}) and so, following \ref{def:dual-symseq} we set
\[ \begin{split} \der_*(F) :&= \dual \tilde{\der}^*(QF) \\ &= \hocolim_{C \in \mathsf{Sub}(QF)} \Map(\widetilde{\Nat}(CX,X^{\smsh *}),S) \end{split} \]
where $\widetilde{\Nat}(CX,X^{\smsh *})$ denotes a cofibrant replacement of the spectrum $\Nat(CX,X^{\smsh *})$. Note that by Definition \ref{def:pro-morphism}, the construction of $\der_*(F)$ is functorial in $F$.
\end{definition}

The main aim of the rest of this section is to show that $\der_*(F)$ is a model for the symmetric sequence of Goodwillie derivatives of $F$. Our approach to this is to construct a natural transformation $\alpha$ from $F$ to a functor whose \ord{n} Goodwillie derivative is (equivariantly) equivalent to $\der_n(F)$. We then show that $\alpha$ is a $D_n$-equivalence (i.e. becomes an equivalence after applying $D_n$) and hence induces an equivalence of \ord{n} derivatives.

\begin{lemma} \label{lem:F_E}
Let $E$ be a $\Sigma_n$-cofibrant spectrum. Then the functor
\[ F_E(X) := \Map(E,X^{\smsh n})^{\Sigma_n} \]
is an $n$-excisive pointed simplicial homotopy functor in $[\finspec,\spectra]$ with \ord{n} Goodwillie derivative
\[ \der^G_n(F_E) \homeq \Map(E,S) \]
$\Sigma_n$-equivariantly.
\end{lemma}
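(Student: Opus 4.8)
The plan is to verify each of the three claims about $F_E(X) = \Map(E, X^{\smsh n})^{\Sigma_n}$ in turn: that it is a pointed simplicial homotopy functor in $[\finspec, \spectra]$, that it is $n$-excisive, and that its $n$-th Goodwillie derivative is $\Map(E,S)$ with the correct $\Sigma_n$-action. The first claim is essentially formal. The functor $X \mapsto X^{\smsh n}$ is a pointed simplicial functor $\finspec \to \spectra$ (with the evident $\Sigma_n$-action), and $\Map(E,-)$ is a simplicial functor since it is the internal mapping object. Since $E$ is $\Sigma_n$-cofibrant and every spectrum $X^{\smsh n}$ is fibrant (every spectrum is fibrant in our model structure), the fixed-point object $\Map(E, X^{\smsh n})^{\Sigma_n}$ computes the homotopy fixed points and hence $F_E$ preserves weak equivalences: a weak equivalence $X \weq Y$ of finite cell spectra induces a $\Sigma_n$-equivariant weak equivalence $X^{\smsh n} \weq Y^{\smsh n}$ between fibrant spectra, hence an equivalence on $\Map(E,-)$ and then on $(-)^{\Sigma_n}$ by the homotopy-invariance of $\Sigma_n$-fixed points on a $\Sigma_n$-cofibrant source. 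Pointedness ($F_E(*) = \Map(E,*)^{\Sigma_n} = *$) is immediate.

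For the second claim, the key point is that $X \mapsto X^{\smsh n}$ is $n$-excisive as a functor of spectra. This follows from the fact that the smash product in $\spectra$ preserves homotopy pushouts (equivalently homotopy cofibre sequences) in each variable, so that applying $X^{\smsh n}$ to a strongly cocartesian $(n+1)$-cube produces a cartesian cube — this is the standard calculation underlying the identification of the derivatives of the $n$-th smash power functor. Then $\Map(E,-)$ preserves homotopy limits (in particular cartesian cubes), and $(-)^{\Sigma_n}$, being a homotopy limit over the group $\Sigma_n$ when applied to a $\Sigma_n$-cofibrant source, also preserves cartesian cubes. Composing, $F_E$ is $n$-excisive. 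One should be slightly careful to phrase the $\Sigma_n$-fixed-point step correctly: since $E$ is $\Sigma_n$-cofibrant, $\Map(E, X^{\smsh n})^{\Sigma_n} \homeq \Map(E, X^{\smsh n})^{h\Sigma_n} = \holim_{\Sigma_n} \Map(E, X^{\smsh n})$, and homotopy limits commute with the homotopy limit defining a cartesian cube.

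For the third claim, the cleanest approach is to compute the $n$-th cross-effect. Since $F_E$ is already $n$-excisive and $n$-reduced (the $n$-th smash power is $n$-reduced, and $\Map(E,-)$ and $(-)^{\Sigma_n}$ preserve this), $F_E$ is $n$-homogeneous, so $\mathbb{D}_nF_E \homeq F_E$ and the derivative is $\der^G_n(F_E) = \creff_n(F_E)(S_c,\dots,S_c)$. The cross-effect $\creff_n(F_E)(X_1,\dots,X_n)$ is the total homotopy fibre over subsets $I \subseteq \{1,\dots,n\}$ of $F_E(\Wdge_{i\in I} X_i) = \Map(E, (\Wdge_{i\in I} X_i)^{\smsh n})^{\Sigma_n}$. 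Pulling $\Map(E,-)^{\Sigma_n}$ outside the total fibre (it preserves homotopy limits, using $\Sigma_n$-cofibrancy of $E$ for the fixed points), this equals $\Map(E, \thofib_I (\Wdge_{i \in I} X_i)^{\smsh n})^{\Sigma_n}$. The total homotopy fibre of the cube $I \mapsto (\Wdge_{i \in I} X_i)^{\smsh n}$ is the familiar computation giving $\Wdge_{\sigma \in \Sigma_n} X_{\sigma(1)} \smsh \dots \smsh X_{\sigma(n)}$, i.e. $(\Sigma_n)_+ \smsh X_1 \smsh \dots \smsh X_n$ with the appropriate $\Sigma_n$-action (only the "multilinear" terms survive, indexed by bijections $\{1,\dots,n\} \to \{1,\dots,n\}$). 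Evaluating at $X_i = S_c$ for all $i$: $\thofib_I (\Wdge_{i\in I} S_c)^{\smsh n} \homeq (\Sigma_n)_+ \smsh S_c^{\smsh n}$, which is $\Sigma_n$-equivariantly equivalent to $(\Sigma_n)_+ \smsh S$ (using that $S_c \weq S$ and $S_c^{\smsh n} \weq S$). Therefore
\[ \der^G_n(F_E) \homeq \Map(E, (\Sigma_n)_+ \smsh S)^{\Sigma_n} \homeq \Map((\Sigma_n)_+ \smsh E, S)^{\Sigma_n} \homeq \Map(E, S), \]
where the last step is the free–forgetful adjunction $\Map((\Sigma_n)_+ \smsh E, S)^{\Sigma_n} \isom \Map(E,S)$ (the $\Sigma_n$-fixed points of a co-induced $\Sigma_n$-object is the underlying object), and one checks this identification carries no residual $\Sigma_n$-action — the $\Sigma_n$-action on $\der^G_n(F_E)$ coming from permuting the cross-effect inputs is absorbed into the free $\Sigma_n$ in $(\Sigma_n)_+$.

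I expect the main obstacle to be the bookkeeping in the cross-effect computation: correctly identifying which terms of the total homotopy fibre of $I \mapsto (\Wdge_{i\in I} X_i)^{\smsh n}$ survive (the multilinear/bijective ones) and tracking the $\Sigma_n$-equivariance through to see that the final answer $\Map(E,S)$ carries the $\Sigma_n$-action induced by the original $\Sigma_n$-action on $E$ — i.e. that the two sources of $\Sigma_n$-action (permuting cross-effect variables vs. permuting smash factors) cancel appropriately against the free $\Sigma_n$. Alternatively, one could avoid the cross-effect entirely and argue directly via Goodwillie's classification: $F_E$ is visibly $n$-homogeneous and of the form $X \mapsto \Map(E, X^{\smsh n})^{\Sigma_n}$, which is the "dual" homogeneous functor, and its coefficient spectrum is read off directly — but making the $\Sigma_n$-action precise still requires essentially the same care, so I would present the cross-effect computation as the main argument.
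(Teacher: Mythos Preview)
Your overall strategy---check homotopy functor, check $n$-excisive via $X^{\smsh n}$, compute $\creff_n$ and pull $\Map(E,-)^{\Sigma_n}$ through the total fibre---is the same as the paper's. The cross-effect computation and the identification $\Map\bigl(E,\prod_{\sigma}X_{\sigma(1)}\smsh\cdots\smsh X_{\sigma(n)}\bigr)^{\Sigma_n}\isom \Map(E,X_1\smsh\cdots\smsh X_n)$ match the paper's argument exactly (the paper writes $\prod_\sigma$ rather than $\Wdge_\sigma$, which is the same in spectra).

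There is, however, a genuine error in your justification for why $\der^G_n(F_E)=\creff_n(F_E)(S_c,\dots,S_c)$. You claim $F_E$ is $n$-reduced because ``$\Map(E,-)$ and $(-)^{\Sigma_n}$ preserve this.'' They do not: $P_{n-1}$ is a filtered homotopy colimit of finite homotopy limits, and while $\Map(E,-)^{\Sigma_n}$ commutes with the finite limits, it need not commute with the filtered colimit. Concretely, for $n=2$ and $E$ homotopy-finite, one has $F_E(X)\homeq(\Map(E,S)\smsh X^{\smsh 2})^{h\Sigma_2}$ and $D_2F_E(X)\homeq(\Map(E,S)\smsh X^{\smsh 2})_{h\Sigma_2}$, so $P_1F_E(X)$ is the Tate construction $\operatorname{Tate}_{\Sigma_2}(\Map(E,S)\smsh X^{\smsh 2})$, which is typically nonzero. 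So $F_E$ is \emph{not} $n$-homogeneous.

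The fix is easy and is what the paper implicitly uses: you do not need $n$-homogeneity. Since $F_E$ is $n$-excisive there is a fibre sequence $D_nF_E\to F_E\to P_{n-1}F_E$, and $\creff_n$ of an $(n-1)$-excisive functor is contractible, so $\creff_n(D_nF_E)\homeq\creff_n(F_E)$. Equivalently, $\creff_n(\mathbb{D}_nF_E)$ is the multilinearization of $\creff_n(F_E)$, and you have already shown $\creff_n(F_E)\homeq\Map(E,X_1\smsh\cdots\smsh X_n)$ is multilinear. Either way the conclusion $\der^G_n(F_E)\homeq\Map(E,S)$ stands.

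One smaller point: the intermediate identity $\Map(E,(\Sigma_n)_+\smsh S)\homeq\Map((\Sigma_n)_+\smsh E,S)$ is not a correct formula as written. What you want is that $(\Sigma_n)_+\smsh S\homeq\prod_{\sigma}S$ is the coinduced $\Sigma_n$-spectrum on $S$, so $\Map(E,\prod_\sigma S)^{\Sigma_n}\isom\Map(E,S)$ by the coinduction--restriction adjunction. The paper phrases this directly as $\Map\bigl(E,\prod_\sigma X_{\sigma(1)}\smsh\cdots\bigr)^{\Sigma_n}\isom\Map(E,X_1\smsh\cdots\smsh X_n)$.
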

\begin{proof}
The object $F_E(X)$ is equal to the natural transformation object $\Nat_{\Sigma_n}(E,X^{\smsh n})$ where we are considering a spectrum with $\Sigma_n$-action as a functor $\Sigma_n \to \spectra$. By the argument of the proof of \ref{lem:enriched-model}, these natural transformation objects provide $[\Sigma_n,\spectra]$ with an enrichment over $\spectra$ that makes it into a $\spectra$-model category. Therefore, if $E$ is $\Sigma_n$-cofibrant, it follows that $\Map(E,-)$ preserves all weak equivalences (since all objects in $[\Sigma_n,\spectra]$ are fibrant). But if $X \weq Y$ is a weak equivalence in $\finspec$, then $X^{\smsh n} \to Y^{\smsh n}$ is also a weak equivalence. This tells us that $F_E$ is a homotopy functor, and it is easy to check it is pointed and simplicial.

We now check that $F_E$ is $n$-excisive (see Definition \ref{def:excisive}). Firstly, note that when $E$ is $\Sigma_n$-cofibrant, $\Map(E,-)^{\Sigma_n}$ is equivalent to $\Map(E,-)^{h\Sigma_n}$. This therefore preserves homotopy cartesian squares. Therefore, since we know that $X \mapsto X^{\smsh n}$ is $n$-excisive, it follows that $F_E$ is $n$-excisive.

Finally, we calculate the \ord{n} Goodwillie derivative of $F_E$ using the cross-effect. The construction $\Map(E,-)^{\Sigma_n}$ commutes up to homotopy with cross-effects and the \ord{n} cross-effect of the functor $X \mapsto X^{\smsh n}$ is equivalent to $(X_1,\dots,X_n) \mapsto \prod_{\sigma \in \Sigma_n} X_{\sigma(1)} \smsh \dots \smsh X_{\sigma(n)}$. We therefore have
\[ \creff_n(F_E)(X_1,\dots,X_n) \homeq \Map \left(E, \prod_{\sigma \in \Sigma_n} X_{\sigma(1)} \smsh \dots \smsh X_{\sigma(n)} \right)^{\Sigma_n} \isom \Map(E,X_1 \smsh \dots \smsh X_n). \]
Following through this sequence of equivalences, we see that the symmetry isomorphisms come from the permutation action of $\Sigma_n$ on $X_1 \smsh \dots \smsh X_n$ together with the given action on $E$. We therefore get
\[ \der^G_n(F) = \creff_n(F_E)(S_c,\dots,S_c) \homeq \Map(E,S_c \smsh \dots \smsh S_c) \homeq \Map(E,S) \]
with $\Sigma_n$-action agreeing with that coming from $E$.
\end{proof}

\begin{corollary} \label{cor:derivatives}
Let $F: \finspec \to \spectra$ be a pointed simplicial functor and let $\tilde{\der}^n(QF)$ denote a levelwise $\Sigma_n$-cofibrant replacement for the pro-object $\der^n(QF)$. Then the functor
\[ \Psi_nF(X) := \Map(\tilde{\der}^n(QF), X^{\smsh n})^{\Sigma_n} \]
is an $n$-excisive pointed simplicial homotopy functor with \ord{n} Goodwillie derivative equivalent to $\der_n(F)$ (see Definition \ref{def:derivatives}).
\end{corollary}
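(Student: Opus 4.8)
The statement packages together several ingredients that have essentially already been assembled, so the plan is to chain them carefully while paying attention to the pro-object/cofibrant-replacement bookkeeping. First I would observe that $\der^n(QF)$ is a pro-spectrum indexed on $\mathsf{Sub}(QF)^{op}$, which by Lemma \ref{lem:lattice} is a cofiltered poset in which every element has finitely many successors, and by Lemma \ref{lem:homotopy-finite} each term $\Nat(CX,X^{\smsh n})$ is homotopy-finite. Hence a levelwise $\Sigma_n$-cofibrant replacement $\tilde{\der}^n(QF)$ is directly-dualizable, and for each subcomplex $C$ the spectrum $\tilde{\der}^n(QF)_C$ is $\Sigma_n$-cofibrant. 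This is what is needed to feed each term into Lemma \ref{lem:F_E}.

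The next step is to identify $\Psi_nF$ with a filtered homotopy colimit (over $\mathsf{Sub}(QF)$) of the functors $F_{\tilde{\der}^n(QF)_C}$ of Lemma \ref{lem:F_E}. Strictly, $\Psi_nF(X) = \Map(\tilde{\der}^n(QF),X^{\smsh n})^{\Sigma_n}$ is defined, following Definition \ref{def:map-pro-spectra}, as $\hocolim_{C \in \mathsf{Sub}(QF)} \Map(\tilde{\der}^n(QF)_C, X^{\smsh n})^{\Sigma_n} = \hocolim_{C} F_{\tilde{\der}^n(QF)_C}(X)$. Since each $F_E$ with $E$ $\Sigma_n$-cofibrant is an $n$-excisive pointed simplicial homotopy functor (Lemma \ref{lem:F_E}), and $n$-excisive functors are closed under filtered homotopy colimits (the filtered homotopy colimit of $n$-excisive functors is $n$-excisive because $P_n$ commutes with filtered homotopy colimits, Theorem \ref{thm:Taylor}), it follows that $\Psi_nF$ is $n$-excisive; pointedness, the simplicial structure, and the homotopy-functor property are each inherited termwise from the $F_E$ and preserved under (objectwise) homotopy colimit.

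For the derivative computation, I would use Proposition \ref{prop:deriv-commute}: for functors to $\spectra$, $\der^G_n$ commutes with all homotopy colimits, in particular filtered ones, and the equivalences are $\Sigma_n$-equivariant. Therefore
\[ \der^G_n(\Psi_nF) \homeq \hocolim_{C \in \mathsf{Sub}(QF)} \der^G_n\bigl(F_{\tilde{\der}^n(QF)_C}\bigr) \homeq \hocolim_{C \in \mathsf{Sub}(QF)} \Map\bigl(\tilde{\der}^n(QF)_C, S\bigr), \]
$\Sigma_n$-equivariantly, where the second equivalence is Lemma \ref{lem:F_E}. The right-hand side is exactly $\dual \tilde{\der}^n(QF)$ by the directly-dualizable-dual formula of Definition \ref{def:dualizable}/\ref{def:dual-symseq}, which is $\der_n(F)$ by Definition \ref{def:d_*(F)}. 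Assembling over $n$ and checking that the symmetry isomorphisms match gives the claimed equivalence of symmetric sequences $\der^G_*(\Psi_nF) \homeq \der_*(F)$ in the relevant degree.

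The main obstacle I anticipate is not any single deep step but the coherence of the homotopy colimits: one must make sure that the homotopy colimit defining $\Psi_nF$ (formed objectwise in $\spectra$) is the same as the one over which $\der^G_n$ is being commuted, and that the identification $\Psi_nF \simeq \hocolim_C F_{\tilde{\der}^n(QF)_C}$ respects all of the structure (pointed, simplicial, $\Sigma_n$-equivariant) used in Lemma \ref{lem:F_E}. There is also a mild subtlety in that $\tilde{\der}^n(QF)$ is only a \emph{levelwise} cofibrant replacement, so the transition maps in the pro-object need not be cofibrations; but since we only ever take a (derived) homotopy colimit over the opposite poset — and Lemma \ref{lem:colim-ind} together with Lemma \ref{lem:lattice} guarantee that the relevant projectively-cofibrant diagram computes this homotopy colimit correctly — this causes no trouble. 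Once these bookkeeping points are settled, the proof is a short concatenation of Lemmas \ref{lem:lattice}, \ref{lem:homotopy-finite}, \ref{lem:F_E}, Proposition \ref{prop:deriv-commute}, and Definition \ref{def:d_*(F)}.
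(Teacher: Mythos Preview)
Your proposal is correct and follows essentially the same approach as the paper: identify $\Psi_nF$ as the filtered homotopy colimit over $\mathsf{Sub}(QF)$ of the functors $F_{\tilde{\der}^n(QF)_C}$ of Lemma~\ref{lem:F_E}, then use that taking derivatives commutes with filtered homotopy colimits to compute $\der^G_n(\Psi_nF)$ as $\hocolim_C \Map(\tilde{\der}^n(QF)_C,S) = \der_n(F)$. The paper's proof is terser and leaves implicit the verification that $\Psi_nF$ is $n$-excisive, pointed, simplicial, and a homotopy functor, as well as the direct-dualizability bookkeeping; you spell these out explicitly, which is fine.
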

\begin{proof}
The functor $\Psi_nF$ is the homotopy colimit of the functors $F_{\widetilde{\Nat}(CX,X^{\smsh n})}$, defined as in Lemma \ref{lem:F_E}. Since taking derivatives commutes with filtered homotopy colimits, the \ord{n} derivative of $\Psi_nF$ is equivalent to
\[ \hocolim_{C \in \mathsf{Sub}(QF)} \Map(\widetilde{\Nat}(CX,X^{\smsh n}),S) \]
which is the definition of $\der_n(F)$.
\end{proof}

We now construct a natural transformation from $F$ to $\Psi_nF$.

\begin{definition} \label{def:spectra-map}
For a finite cell functor $C: \finspec \to \spectra$, there is a $\Sigma_n$-equivariant evaluation map
\[ C(X) \smsh \Nat(CX,X^{\smsh n}) \to X^{\smsh n}. \]
This is adjoint to a map
\[ C(X) \to \Map(\Nat(CX,X^{\smsh n}),X^{\smsh n})^{\Sigma_n}. \]
Composing with a $\Sigma_n$-cofibrant replacement for $\Nat(CX,X^{\smsh n})$, we get
\[ \psi_C: C(X) \to \Map(\widetilde{\Nat}(CX,X^{\smsh n}), X^{\smsh n})^{\Sigma_n}. \]
Now let $F: \finspec \to \spectra$ be any presented cell functor. By Corollary \ref{cor:hocolim}, $F$ is equivalent to the homotopy colimit of the finite subcomplexes of $QF$. Therefore, taking the homotopy colimit over $C \in \mathsf{Sub}(QF)$ of the maps $\psi_C$, we get
\[ \psi: F(X) \to \Psi_nF(X). \]
Strictly speaking, the source of this map is (naturally) weakly equivalent to $F$, not equal to it, but for simplicity of notation, we write it just as $F$.
\end{definition}

\begin{prop} \label{prop:models-correct}
Let $F: \finspec \to \spectra$ be a pointed simplicial homotopy functor. Then the map
\[ \psi: F(X) \to \Map(\tilde{\der}^n(QF), X^{\smsh n})^{\Sigma_n} \]
of Definition \ref{def:spectra-map} is a $D_n$-equivalence (i.e. becomes an equivalence after taking $D_n$).
\end{prop}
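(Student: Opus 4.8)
The plan is to reduce the statement to the already-established fact (Lemma \ref{lem:F_E}) that the functors $F_E(X) = \Map(E,X^{\smsh n})^{\Sigma_n}$ are $n$-excisive with \ord{n} derivative $\Map(E,S)$, together with the homotopy-colimit description of $\Psi_nF$ from Corollary \ref{cor:derivatives}. Since $D_n$ commutes with filtered homotopy colimits (by \ref{prop:deriv-commute}, or rather the stronger fact that for spectrum-valued functors $D_n$ commutes with all homotopy colimits), and since by Corollary \ref{cor:hocolim} the functor $F$ itself is weakly equivalent to $\hocolim_{C \in \mathsf{Sub}(QF)} C$, it suffices to prove that the map $\psi_C: C(X) \to \Map(\widetilde{\Nat}(CX,X^{\smsh n}), X^{\smsh n})^{\Sigma_n}$ of Definition \ref{def:spectra-map} is a $D_n$-equivalence for each \emph{finite} cell functor $C$. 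The target here is exactly $\Psi_n C$, and its \ord{n} derivative is $\dual \tilde{\der}^n(C) = \hocolim_{D \in \mathsf{Sub}(C)} \Map(\widetilde\Nat(DX,X^{\smsh n}),S)$, but we must be slightly careful: for a finite $C$ the poset $\mathsf{Sub}(C)$ has a terminal object (namely $C$ itself), so this homotopy colimit is just $\Map(\widetilde\Nat(CX,X^{\smsh n}),S)$.

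So the core of the argument is: for a \emph{finite} cell functor $C$, the map $\psi_C$ induces an equivalence on \ord{n} derivatives. I would prove this by induction on the number of cells of $C$, mirroring the inductive structure used in Lemma \ref{lem:homotopy-finite}. The base case is $C = I \smsh \spectra(K,-)$ with $I, K \in \finspec$. Here the Strong Yoneda Lemma \ref{lem:yoneda} identifies $\Nat(CX,X^{\smsh n}) \isom \Map(I,K^{\smsh n})$, and one computes directly that $\der^G_n(C)$ is $\Map(\Nat(CX,X^{\smsh n}),S)$ — indeed $\der^G_n(I \smsh \spectra(K,-))$ should come out to $\Map(I, K^{\smsh n}/\text{diagonal})$-type expression, but more cleanly: $D_n$ of $I\smsh\spectra(K,-)$ can be computed via cross-effects and the formula for derivatives of representable functors, and it must match $\Map(E,S)$ for $E = \widetilde\Nat(CX,X^{\smsh n})$ by Lemma \ref{lem:F_E} applied to $F_E = \Psi_n C$. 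The key point for the base case is therefore to check that $\psi_C$ itself, not merely some abstract equivalence, realizes this identification on $D_n$; this is a Yoneda-style computation tracing through the evaluation map $C(X) \smsh \Nat(CX,X^{\smsh n}) \to X^{\smsh n}$.

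For the inductive step, suppose $C$ is obtained from $C'$ by attaching a single cell along a pushout square as in Lemma \ref{lem:homotopy-finite}. Applying $D_n$ to the resulting square relating $C$, $C'$, $I_0\smsh\spectra(K,-)$ and $I_1\smsh\spectra(K,-)$ — which is a homotopy pushout of functors since the generating maps are cofibrations — and using that $D_n$ preserves homotopy pushouts for spectrum-valued functors (it preserves all homotopy colimits), one reduces the inductive step for the source $C(X)$ to the cases already handled. On the target side, $\Nat(-,X^{\smsh n})$ turns the attaching pushout into a homotopy pullback (as in Lemma \ref{lem:homotopy-finite}); after Spanier-Whitehead dualizing this becomes a homotopy pushout again, and $D_n$ applied to $\Psi_n$ of the square matches up termwise with the source via naturality of $\psi$. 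Comparing the two homotopy-pushout (equivalently, homotopy-cofiber) squares cell-by-cell and invoking the five-lemma / long exact sequence of a cofibration in $\spectra$ then gives that $D_n\psi_C$ is an equivalence. \textbf{The main obstacle} I anticipate is the base case: making precise that the specific natural transformation $\psi_C$ — not just the existence of \emph{some} equivalence between the source and target derivatives guaranteed abstractly by Lemma \ref{lem:F_E} — induces the identity (up to coherent equivalence) on $\der^G_n$, including matching the $\Sigma_n$-actions. This requires carefully unwinding the adjunction defining $\psi_C$, the evaluation map, and the cross-effect computation of $\der^G_n$ of a representable functor, and checking these are compatible; the bookkeeping with the $\Sigma_n$-equivariance of all the maps involved is where the real work lies.
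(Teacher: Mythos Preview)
Your outline is correct and matches the paper's proof almost exactly: reduce to finite cell functors by the filtered-homotopy-colimit argument, induct on the number of cells using that both the source square and the target square (obtained by applying $\Nat(-,X^{\smsh n})$ and then $\Map(-,X^{\smsh n})^{\Sigma_n}$, using that homotopy pullbacks and pushouts agree in $\spectra$) are homotopy pushouts preserved by $D_n$, and treat the representable base case $C = I \smsh \spectra(K,-)$ directly.

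For the base case you flagged as the obstacle, the paper's method is to compare multilinearized \ord{n} cross-effects rather than $D_n$ directly. Using that for functors $\spectra \to \spectra$ the cross-effect agrees with the co-cross-effect, one computes $\creff_n(I \smsh \spectra(K,-))(X_1,\dots,X_n) \homeq I \smsh \spectra(K,X_1) \smsh \dots \smsh \spectra(K,X_n)$; on the target side the cross-effect was already found in the proof of Lemma~\ref{lem:F_E}. The map $\psi_C$ then induces on multilinearized cross-effects the evident map
\[
I \smsh \Map(K,X_1) \smsh \dots \smsh \Map(K,X_n) \;\longrightarrow\; \Map\bigl(\widetilde{\Map}(I,K^{\smsh n}),\,X_1 \smsh \dots \smsh X_n\bigr),
\]
which is an equivalence because $I,K,X_1,\dots,X_n$ are all finite cell spectra. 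This is exactly the ``unwinding the adjunction plus $\Sigma_n$-equivariance'' you anticipated, made concrete via cross-effects.
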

\begin{proof}
Since $D_n$ commutes with filtered homotopy colimits, it is sufficient to prove that for any finite cell functor $C$, the map
\[ \psi: C(X) \to \Map(\widetilde{\Nat}(CX,X^{\smsh n}),X^{\smsh n})^{\Sigma_n} \]
is a $D_n$-equivalence. Here $\widetilde{\Nat}(CX,X^{\smsh n})$ denotes a $\Sigma_n$-cofibrant replacement for the natural transformation object $\Nat(CX,X^{\smsh n})$.

We do this by induction on the cell structure of $C$. Suppose that the following pushout diagram represents the attaching of a cells to $C'$ to get $C$:
\[ \tag{*} \begin{diagram}
  \node{I_0 \smsh \spectra(K,-)} \arrow{e} \arrow{s} \node{C'} \arrow{s} \\
  \node{I_1 \smsh \spectra(K,-)} \arrow{e} \node{C}
\end{diagram} \]
For any $X$, the map
\[ I_0 \smsh \spectra(K,X) \to I_1 \smsh \spectra(K,X) \]
is a cofibration in $\spectra$ (because $\spectra(K,X)$ is a simplicial set and hence cofibrant). The above diagram is therefore objectwise a homotopy pushout (and also a homotopy pushout in the functor category).

Applying $\widetilde{\Nat}(-,X^{\smsh n})$ to the above diagram, we get a homotopy pullback square in $\spectra$ that looks like:
\[ \begin{diagram}
  \node{\widetilde{\Nat}(CX,X^{\smsh n})} \arrow{e} \arrow{s} \node{\widetilde{\Map}(I_1,K^{\smsh n})} \arrow{s} \\
  \node{\widetilde{\Nat}(C'X,X^{\smsh n})} \arrow{e} \node{\widetilde{\Map}(I_0,K^{\smsh n})}
\end{diagram} \]
Now this is also a homotopy pushout square (since homotopy pushouts and pullbacks agree in $\spectra$), and so taking $\Map(-,X^{\smsh n})^{\Sigma_n}$, we get an objectwise homotopy pullback (and hence also homotopy pushout) square
\[ \tag{**} \begin{diagram}
  \node{\Map(\widetilde{\Map}(I_0,K^{\smsh n}),X^{\smsh n})^{\Sigma_n}} \arrow{e} \arrow{s}
    \node{\Map(\widetilde{\Nat}(C'X,X^{\smsh n}),X^{\smsh n})^{\Sigma_n}} \arrow{s} \\
  \node{\Map(\widetilde{\Map}(I_1,K^{\smsh n}),X^{\smsh n})^{\Sigma_n}} \arrow{e}
    \node{\Map(\widetilde{\Nat}(CX,X^{\smsh n}),X^{\smsh n})^{\Sigma_n}}
\end{diagram} \]
The map $\psi$ gives us a morphism from the original homotopy pushout square (*) to (**). By induction we may assume that $\psi$ is a $D_n$-equivalence on the top-left corner. Below we show that $\psi$ is a $D_n$-equivalence on the left-hand corners. Then, since $D_n$ commutes with homotopy pushouts (for spectrum-valued functors), it follows that $\psi$ is a $D_n$-equivalence on the bottom-right corner. This completes the induction and proves Proposition \ref{prop:models-correct} for all finite cell functors, and hence all presented cell functors.

We have now reduced the Proposition to the case where $F$ is of the form $I \smsh \spectra(K,-)$ for $I,K \in \finspec$. This means we need to show that the map
\[ \psi: I \smsh \spectra(K,X) \to \Map(\widetilde{\Map}(I,K^{\smsh n}),X^{\smsh n})^{\Sigma_n} \]
is a $D_n$-equivalence. We show that this map induces an equivalence on multilinearized \ord{n} cross-effects. This is sufficient by \cite[6.1]{goodwillie:2003}.

Recall that for functors from spectra to spectra, there is an equivalence between the \ord{n} cross-effect and the \ord{n} \emph{co-}cross-effect, where the latter is defined dually, as the total homotopy cofibre of the cube given by applying the functor to products. For the source of the map $\psi$, we therefore have
\[ \creff_n(I \smsh \spectra(K,-))(X_1,\dots,X_n) \homeq \thocofib\left( I \smsh \spectra(K,X_{i_1} \times \dots \times X_{i_r}) \right). \]
where the maps in the cube on the right-hand side are determined by the inclusions $* \to X_i$. To calculate this total cofibre we take cofibres in each direction in the cube successively. The maps is this cube are determined by inclusions of simplicial sets
\[ \spectra(K,X) \to \spectra(K,X \times X') \isom \spectra(K,X) \times \spectra(K,X') \]
and so are cofibrations. We may therefore take strict cofibres instead of homotopy cofibres. But then the total cofibre is equivalent to collapsing the subspace of
\[ \spectra(K,X_1) \times \dots \times \spectra(K,X_n) \]
in which any of the terms in the product is the basepoint, in other words, forming the smash product. This implies that
\[ \creff_n(I \smsh \spectra(K,-))(X_1,\dots,X_n) \homeq I \smsh \spectra(K,X_1) \smsh \dots \smsh \spectra(K,X_n). \]

For the target of $\psi$, we calculated the cross-effect in the proof of Lemma \ref{lem:F_E}. The map induced by $\psi$ on \ord{n} cross-effects is then of the form
\[ I \smsh \spectra(K,X_1) \smsh \dots \smsh \spectra(K,X_n) \to \Map(\widetilde{\Map}(I,K^{\smsh n}),X_1 \smsh \dots \smsh X_n). \]

The right-hand side here is already multilinear. For the left-hand side, notice that
\[ I \smsh \spectra(K,X_i) \homeq I \smsh \Sigma^\infty\Omega^\infty \Map(K,X_i) \]
which has multilinearization
\[ I \smsh \Sigma^\infty \Omega^\infty \Map(K,X_i) \to I \smsh \Map(K,X_i). \]
The map induced by $\psi$ on multilinearized \ord{n} cross-effects is therefore
\[ I \smsh \Map(K,X_1) \smsh \dots \smsh \Map(K,X_n) \to \Map(\widetilde{\Map}(I,K^{\smsh n}),X_1 \smsh \dots \smsh X_n). \]
But this is an equivalence whenever $I,K,X_1,\dots,X_n$ are finite cell spectra, which they are in our case. It follows that the original map $\psi$ is a $D_n$-equivalence as claimed.
\end{proof}

Putting this all together we deduce that the spectra $\der_nF$ of Definition \ref{def:derivatives} are naturally equivalent to the Goodwillie derivatives of $F$.
\begin{theorem} \label{thm:derivatives}
Let $F: \finspec \to \spectra$ be a pointed simplicial homotopy functor. Then there is a natural equivalence of symmetric sequences of spectra
\[ \der_*F \homeq \der^G_*(F). \]
\end{theorem}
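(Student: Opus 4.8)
The plan is to assemble the theorem from the pieces already developed in this section. First I would recall that for a pointed simplicial homotopy functor $F: \finspec \to \spectra$, Definition \ref{def:derivatives} produces the pro-symmetric sequence $\der^*(QF)$ via the canonical cell presentation of the cellular replacement $QF$, and Definition \ref{def:d_*(F)} defines $\der_*(F)$ as the Spanier--Whitehead dual of a termwise-cofibrant replacement $\tilde{\der}^*(QF)$. The key point is that both of these constructions are functorial in $F$ and that replacing $F$ by $QF$ changes nothing up to natural equivalence (the map $QF \to F$ is an objectwise weak equivalence, and Goodwillie derivatives are homotopy-invariant by Theorem \ref{thm:Taylor}), so it suffices to compare $\der_*(F)$ with $\der^G_*(F)$ through an auxiliary functor.

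The central step is to invoke Corollary \ref{cor:derivatives}, which gives, for each $n$, an $n$-excisive pointed simplicial homotopy functor
\[ \Psi_nF(X) = \Map(\tilde{\der}^n(QF), X^{\smsh n})^{\Sigma_n} \]
whose $n$\textsuperscript{th} Goodwillie derivative is equivariantly equivalent to $\der_n(F)$, where the equivalence is obtained by commuting $\der^G_n$ past the filtered homotopy colimit defining $\Psi_nF$ and applying the computation $\der^G_n(F_E) \homeq \Map(E,S)$ of Lemma \ref{lem:F_E} termwise over $C \in \mathsf{Sub}(QF)$. Then I would apply Proposition \ref{prop:models-correct}: the natural transformation $\psi: F \to \Psi_nF$ of Definition \ref{def:spectra-map} is a $D_n$-equivalence. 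Since $\der^G_n$ factors through $D_n$ (indeed $\der^G_n$ depends only on $D_nF$, as $D_nF \homeq \Omega^\infty$ or the appropriate variant of $(\der^G_nF \smsh X^{\smsh n})_{h\Sigma_n}$, and more directly $\der^G_n$ is built from $\mathbb{D}_nF$ which depends only on $D_nF$), a $D_n$-equivalence induces an equivalence on $n$\textsuperscript{th} Goodwillie derivatives. Combining these, $\der^G_n(F) \homeq \der^G_n(\Psi_nF) \homeq \der_n(F)$.

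The remaining work is to check that these equivalences are natural in $F$ and compatible with the $\Sigma_n$-actions, so that they assemble into an equivalence of symmetric sequences, not merely a termwise equivalence. Naturality in $F$ follows because $\psi$ is a natural transformation (functorial in the presented cell structure via the construction in Definition \ref{def:derivatives} of induced maps on pro-symmetric sequences) and because all the intermediate constructions --- cellular replacement $Q$, the pro-object $\der^*(Q-)$, the cofibrant replacement tildes, the functor $\Psi_n$, and the derivative functor $\der^G_n$ --- were each arranged to be functorial. The $\Sigma_n$-equivariance is built into $\der^n(F)$ (the permutation action on $X^{\smsh n}$), into $\Psi_nF$ (same), into the computation in Lemma \ref{lem:F_E} (where the symmetry isomorphisms are explicitly tracked), and into the cross-effect argument of Proposition \ref{prop:models-correct}.

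\textbf{Main obstacle.} The hard part is not any single estimate but the bookkeeping of \emph{naturality and equivariance through the zigzag}: $\der_*(F)$ is defined as a homotopy colimit of duals, $\Psi_nF$ as a homotopy colimit of $\Map(E,-)$-functors, and the comparison $\der^G_n(\Psi_nF) \homeq \der_n(F)$ requires commuting $\der^G_n$ with a filtered homotopy colimit (using Proposition \ref{prop:deriv-commute}) while keeping the $\Sigma_n$-action visible; one must ensure the cofibrant replacements $\tilde{\der}^n(QF)$ can be chosen functorially and $\Sigma_n$-equivariantly (levelwise $\Sigma_n$-cofibrant replacement, as in Notation \ref{not:cofibrant}), and that the $D_n$-equivalence $\psi$ of Proposition \ref{prop:models-correct} really does pass to an equivalence of \emph{symmetric sequences} of derivatives rather than just unstructured spectra. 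This is routine given the care taken in the preceding definitions, but it is where all the attention goes.
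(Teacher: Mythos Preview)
Your proposal is correct and follows essentially the same approach as the paper: invoke Proposition \ref{prop:models-correct} to see that $\psi: F \to \Psi_nF$ is a $D_n$-equivalence, hence induces an equivalence on \ord{n} Goodwillie derivatives, and then use Corollary \ref{cor:derivatives} to identify $\der^G_n(\Psi_nF)$ with $\der_n(F)$. The paper's proof is a two-line invocation of exactly these two ingredients; your additional discussion of naturality and $\Sigma_n$-equivariance is reasonable bookkeeping that the paper leaves implicit.
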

\begin{proof}
The map $\psi: F \to \Psi_nF$ of Definition \ref{def:spectra-map} is a $D_n$-equivalence and so induces an equivalence of \ord{n} Goodwillie derivatives. Therefore, by Corollary \ref{cor:derivatives}, the \ord{n} derivative of $F$ is equivalent to $\der_nF$.
\end{proof}

\begin{example} \label{ex:SigOm}
The most important example of Theorem \ref{thm:derivatives} is when $F = \Sigma^\infty \Omega^\infty$ (see Definition \ref{def:suspec}). In that case we have
\[ \Sigma^\infty \Omega^\infty(X) = S_c \smsh \spectra(S_c,X) \]
which is a finite cell functor (with an obvious presentation consisting of a single cell). We therefore have
\[ \der^n(\Sigma^\infty \Omega^\infty) \isom \Nat(S_c \smsh \spectra(S_c,-),X^{\smsh n}) \isom \Map(S_c,S_c^{\smsh n}) \homeq S. \]
This recovers the now well-known fact that the \ord{n} derivative of $\Sigma^\infty \Omega^\infty$ is equivalent to the sphere spectrum $S$ with the trivial $\Sigma_n$-action.
\end{example}

\section{Composition maps for Goodwillie derivatives} \label{sec:comp-maps}

Our reason for introducing new models for Goodwillie derivatives is that these models admit `composition maps' that relate the derivatives of $F$ and $G$ to the derivatives of $FG$, where $F$ and $G$ are functors from spectra to spectra.

These composition maps take the following form. For presented cell functors $F,G: \spectra \to \spectra$, we construct a morphism of pro-symmetric sequences of the form
\[ \mu: \der^*(F) \circ \der^*(G) \to \der^*(Q(FG)). \]
Taking the Spanier-Whitehead dual of this map gives us, up to weak equivalence, a map
\[ \der_*(F) \circ \der_*(G) \to \der_*(FG). \]
In \S\ref{sec:chainrule}, we prove that this map is an equivalence which is the form of our `chain rule' for functors of spectra.

This section is the technical heart of this paper because we use these composition maps to obtain all our operad and module structures on Goodwillie derivatives. We start with some important factorization lemmas.

\begin{lemma} \label{lem:key}
Let $C:\cat{C}^{\mathsf{fin}} \to \cat{D}$ be a cell functor with $\cat{C},\cat{D}$ equal to either $\sset$ or $\spectra$. Let $X$ be a presented cell complex in $\cat{C}$ and $A$ a finite cell complex in $\cat{D}$. Suppose we have a map
\[ A \to C(X) \]
in $\cat{D}$. Then there is a finite subcomplex $L \subset X$ such that the above map factors as
\[ A \to C(L) \to C(X). \]
\end{lemma}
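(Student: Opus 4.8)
The strategy is to reduce the statement to the analogous fact about cell complexes in $\cat{D}$ itself, namely Proposition \ref{prop:cell-complex}(2), by exploiting the explicit cell structure that $C(X)$ acquires from a presented cell structure on $C$ and a presented cell structure on $X$, as recorded in Remark \ref{rem:cell-F(X)}. First I would fix a presented cell structure on the cell functor $C$ and the given presented cell structure on $X$. By Lemma \ref{lem:cell-F(X)} (and the explicit description in Remark \ref{rem:cell-F(X)}), $C(X)$ is then a presented cell complex in $\cat{D}$ whose cells are indexed by pairs $(\alpha,\epsilon)$, where $\alpha$ is a cell of $C$ with associated object $K_\alpha \in \cat{C}^{\mathsf{fin}}$, and $\epsilon$ is a nondegenerate simplex of the simplicial set $\cat{C}(K_\alpha,X)$, i.e.\ (up to the suspension-spectrum functors in the spectra case) a map $K_\alpha \smsh \Delta[n]_+ \to X$ in $\cat{C}$.

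Next, since $A$ is a finite cell complex in $\cat{D}$, Proposition \ref{prop:cell-complex}(2) tells us that the map $A \to C(X)$ factors through a finite subcomplex of $C(X)$, hence through the union of finitely many cells $(\alpha_1,\epsilon_1),\dots,(\alpha_m,\epsilon_m)$. For each such pair, the simplex $\epsilon_i$ corresponds to a map $K_{\alpha_i} \smsh \Delta[n_i]_+ \to X$ in $\cat{C}$; since $K_{\alpha_i} \smsh \Delta[n_i]_+$ is a finite cell complex in $\cat{C}$, Proposition \ref{prop:cell-complex}(2) applied in $\cat{C}$ shows this map factors through a finite subcomplex $L_i \subset X$. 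Taking $L := L_1 \cup \dots \cup L_m$ (a finite subcomplex of $X$, by Lemma \ref{lem:lattice}-type closure of finite subcomplexes under finite unions, or directly), each simplex $\epsilon_i$ lifts to a simplex of $\cat{C}(K_{\alpha_i},L)$.

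The remaining point is to check that this lifting of simplices is compatible with the cell structure, so that the corresponding finite subcomplex of $C(X)$ through which $A \to C(X)$ factors is actually the image of a subcomplex of $C(L)$ under the map $C(L) \to C(X)$ induced by $L \into X$. Concretely: the inclusion $L \into X$ induces a map of presented cell complexes $C(L) \to C(X)$, and I would argue that the finite subcomplex of $C(X)$ spanned by the $(\alpha_i,\epsilon_i)$ lies in the image of the subcomplex of $C(L)$ spanned by the lifted cells $(\alpha_i,\tilde\epsilon_i)$ — which requires observing that the attaching data of these cells, being pulled back from $X$ along maps that already factor through $L$, also factor through $L$ (using that $\cat{C}(K,L) \to \cat{C}(K,X)$ is a monomorphism of simplicial sets, since $L \into X$ is, so lifts are unique and coherent). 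Then $A \to C(X)$ factors through $C(L) \to C(X)$ as claimed.

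\textbf{Main obstacle.} The genuinely delicate step is the last one: ensuring that the finitely many cells of $C(X)$ that $A$ touches, together with \emph{all} of their attaching maps, can be simultaneously lifted to a subcomplex of $C(L)$. A single cell $(\alpha,\epsilon)$ of $C(X)$ has an attaching map that refers to lower cells of $C$ evaluated at $X$ via lower-dimensional faces of $\epsilon$; one must enlarge $L$ so that every simplex of $\cat{C}(K,X)$ appearing in this recursive attaching data lands in $L$, and check this process terminates (it does, because $C$ has finitely many cells below a given degree contributing to a finite subcomplex, and $\Delta[n]_+$ has finitely many faces). Packaging this as an induction on the cell degree in $C$ — analogous to the induction in the proof of Proposition \ref{prop:subcomplexes} — is the technical core; everything else is formal manipulation of the Yoneda correspondence of Lemma \ref{lem:weak-yoneda} and the monomorphism property from Lemma \ref{lem:monomorphisms} and Proposition \ref{prop:cell-complex}(1).
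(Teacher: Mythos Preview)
Your plan is essentially the same as the paper's proof: use the cell structure on $C(X)$ from Remark~\ref{rem:cell-F(X)}, factor $A \to C(X)$ through a finite subcomplex $B$ with cells $(\alpha_1,\epsilon_1),\dots,(\alpha_m,\epsilon_m)$, factor each $\epsilon_i$ through a finite subcomplex $L_i \subset X$, and set $L = \bigcup L_i$.

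Where you diverge from the paper is in your ``main obstacle''. You worry about recursively enlarging $L$ to capture all the attaching data of the cells $(\alpha_i,\epsilon_i)$, and propose an induction on cell degree in $C$. This is unnecessary, and the paper's resolution is cleaner. The key observation is that $C(L)$ is \emph{itself a subcomplex of $C(X)$}: its cells are precisely those $(\alpha,\epsilon)$ for which $\epsilon$ factors through $L$ (uniqueness of such factorizations follows from $L \into X$ being a monomorphism, hence $\cat{C}(K_\alpha,L) \into \cat{C}(K_\alpha,X)$ being injective, which you already noted). Once you know $C(L) \subset C(X)$ is a subcomplex, you are done: $B$ is already a subcomplex of $C(X)$, and by construction every cell of $B$ is a cell of $C(L)$; since subcomplexes of a presented cell complex are determined by their sets of cells (Lemma~\ref{lem:monomorphisms}), $B \subset C(L)$. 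No recursion is needed because the ``closure under attaching'' is already built into $B$ being a subcomplex in the first place --- the finite subcomplex produced by Proposition~\ref{prop:cell-complex}(2) automatically contains all the lower cells needed for its own attaching maps, and those lower cells are again of the form $(\alpha',\epsilon')$ with $\epsilon'$ a face of some $\epsilon_i$, hence also factoring through $L$.
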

\begin{proof}
Fix a presentation of the cell functor $C$. Recall from Remark \ref{rem:cell-F(X)} that $C(X)$ has a natural cell structure in which the cells correspond to pairs $(\alpha,\epsilon)$ where $\alpha$ is a cell in $C$ and $\epsilon: K_{\alpha} \smsh \Delta[n]_+ \to X$ is a nondegenerate simplex in the simplicial set $\cat{C}(K_{\alpha},X)$. By \ref{prop:cell-complex}(2) the map $A \to C(X)$ factors via a finite subcomplex $B \subset C(X)$. Suppose that the cells in $B$ correspond to pairs $(\alpha_1,\epsilon_1),\dots,(\alpha_r,\epsilon_r)$.

Now the map $\epsilon_j:\Delta^k_+ \smsh K_{\alpha_j} \to X$ itself factors via a finite subcomplex $L_j \subset X$ by \ref{prop:cell-complex} again, since $\Delta^k_+ \smsh K_{\alpha_j}$ is a finite cell complex. Take $L$ to be a finite subcomplex of $X$ containing all of $L_1,\dots,L_r$.

We now claim that $C(L)$ is a subcomplex of $C(X)$ with respect to the cell structure of Remark \ref{rem:cell-F(X)}. In particular, we claim that $C(L)$ is the subcomplex whose cells are those for which the corresponding map $\epsilon: \Delta^k_+ \smsh K_{\alpha} \to X$ factors via $L$. This is true because $L \to X$ is a monomorphism by Proposition \ref{prop:cell-complex}(1), therefore any such map $\epsilon$ can factor via $L$ in at most one way. This produces a bijection between the cells of $C(L)$ and the cells in the claimed subcomplex of $C(X)$ which determines the claimed isomorphism.

Now by the construction of $L$, the subcomplex $B$ of $C(X)$ is contained in $C(L)$. Therefore the map $A \to C(X)$ factors via $C(L)$ as claimed.
\end{proof}

The key step in the construction of the map (*) is then the following factorization result which depends on Lemma \ref{lem:key}.

\begin{lemma} \label{lem:factorization}
Let $F,G,H$ be presented cell complexes (between the categories $\sset$ and $\spectra$) such that the composite $FG$ makes sense, and suppose we are given a natural transformation $\alpha: H \to FG$.

Let $E$ be a finite subcomplex of $H$. Then there are finite subcomplexes $C \subset F$ and $D \subset G$ such that $\alpha$ restricts to a map $E \to CD$. In other words we have a commutative diagram:
\[ \begin{diagram}
  \node{E} \arrow{e} \arrow{s,l}{\alpha|_E} \node{H} \arrow{s,r}{\alpha} \\
  \node{CD} \arrow{e} \node{FG}
\end{diagram} \]
where the top and bottom maps are given by inclusions of subcomplexes. The map $E \to CD$ is unique up to inclusions of $C$ and $D$ into other finite subcomplexes of $F$ and $G$.
\end{lemma}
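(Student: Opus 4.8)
The plan is to prove this by induction on the cell structure of the finite subcomplex $E$ of $H$. The base case $E = *$ is trivial: take $C = *$ and $D = *$, and the map $* \to * = *\cdot *$ is forced. For the inductive step, suppose $E$ is obtained from a finite subcomplex $E'$ by attaching a single cell $\beta$ via a pushout square
\[ \begin{diagram}
  \node{I_0 \smsh \cat{C}(K,-)} \arrow{e} \arrow{s} \node{E'} \arrow{s} \\
  \node{I_1 \smsh \cat{C}(K,-)} \arrow{e} \node{E}
\end{diagram} \]
and assume by induction that $\alpha|_{E'}$ factors as $E' \to C'D' \to FG$ for some finite subcomplexes $C' \subset F$ and $D' \subset G$. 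The task is to enlarge $C'$ and $D'$ to finite subcomplexes $C$ and $D$ so that the attaching data for $\beta$ also factors through $CD$, and then invoke the universal property of the pushout to get the desired map $E \to CD$ extending $E' \to C'D'$.

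The heart of the argument handles the single new cell. By the Yoneda Lemma (\ref{lem:weak-yoneda}), the composite $I_1 \smsh \cat{C}(K,-) \to E \to H \xrightarrow{\alpha} FG$ corresponds to a map $I_1 \to F(G(K))$ in $\cat{D}$. Since $I_1$ is a finite cell complex and $G$ is a cell functor Kan-extended to all of $\cat{C}$, the object $G(K)$ is a (presented) cell complex in $\cat{D}$ by Lemma \ref{lem:cell-F(X)} with the explicit cell structure of Remark \ref{rem:cell-F(X)}. Now apply Lemma \ref{lem:key} with the cell functor $F$, the presented cell complex $X := G(K)$, and the finite cell complex $A := I_1$: the map $I_1 \to F(G(K))$ factors through $F(L)$ for some finite subcomplex $L \subset G(K)$. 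By Remark \ref{rem:cell-F(X)}, $L$ is built from finitely many cells of $G$, each pulled back along a simplex of $\cat{C}(K_{\alpha'},G(K))$, and each such simplex is a morphism $K_{\alpha'} \smsh \Delta[n]_+ \to G(K)$ which by Proposition \ref{prop:subcomplexes}(1) lies in a finite subcomplex of $G$. Taking the union (a finite subcomplex, by Lemma \ref{lem:lattice}(3)) of these together with $D'$ gives a finite subcomplex $D \subset G$ with $L \subset D(K)$ and $D' \subset D$. Then $I_1 \to F(G(K))$ factors through $F(D(K))$, and since $I_0 \to I_1$ is a cofibration this factorization is compatible with the attaching map of $\beta$ to $E'$ (using that $C'D' \to FG$ and hence $C'D \to FG$ is a monomorphism, by Lemmas \ref{lem:monomorphisms} and \ref{lem:cell-F(X)}, so the factorization is unique and agrees with the one for $E'$). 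Finally apply Lemma \ref{lem:key} once more — or rather, repeat the argument above — to absorb the cell of $F$ hit by $I_1 \to F(D(K))$ into a finite subcomplex $C \supset C'$ of $F$, using that $D(K)$ is finite. One then obtains that the attaching data for $\beta$ factors through $CD$, compatibly with $E' \to C'D' \to CD$, so the pushout gives $E \to CD$ as required.

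For uniqueness up to enlargement of $C$ and $D$: given two such factorizations $E \to C_1 D_1$ and $E \to C_2 D_2$, set $C = C_1 \cup C_2$ and $D = D_1 \cup D_2$ (finite subcomplexes by Lemma \ref{lem:lattice}(3)); both factorizations become maps $E \to CD$ composing to $\alpha|_E$, and since $CD \to FG$ is a monomorphism (Lemmas \ref{lem:monomorphisms}, \ref{lem:cell-F(X)}, \ref{prop:cell-complex}(1)) these two maps $E \to CD$ coincide. The main obstacle I anticipate is the bookkeeping in the inductive step: one must carefully track that the newly-chosen $C$ and $D$ \emph{contain} the previously-chosen $C'$ and $D'$ and that the new factorization \emph{restricts} to the old one on $E'$, which is exactly where the monomorphism properties (and hence the uniqueness of all factorizations through subcomplexes) are used. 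The nested double application of Lemma \ref{lem:key} — first to resolve the $G$-cells inside $G(K)$, then to resolve the $F$-cell in $F(D(K))$ — is the technically delicate point, but each application is a direct instance of a result already in hand.
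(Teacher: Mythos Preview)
Your inductive setup and the compatibility argument via monomorphisms are fine, and your use of Lemma~\ref{lem:key} to produce a finite subcomplex $L \subset G(K)$ through which $I_1 \to F(G(K))$ factors is exactly right. The problem is your final step.

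You assert that $D(K)$ is a finite cell complex and use this to factor $I_1 \smsh \cat{D}(D(K),-) \to F$ through a finite subcomplex $C$. But $D(K)$ is \emph{not} finite in general. By Remark~\ref{rem:cell-F(X)}, the cells of $D(K)$ correspond to pairs $(\alpha,\epsilon)$ with $\alpha$ a cell of $D$ and $\epsilon$ a nondegenerate simplex of $\cat{C}(K_\alpha,K)$; even though $D$ has finitely many cells, the simplicial mapping set $\cat{C}(K_\alpha,K)$ typically has infinitely many nondegenerate simplices (for instance $\spectra(S_c,S_c)$ is a model for $QS^0$). So $D(K) \notin \cat{D}^{\mathsf{fin}}$, and you cannot apply Proposition~\ref{prop:subcomplexes}(2) to $I_1 \smsh \cat{D}(D(K),-) \to F$. (Incidentally, your description of the cells of $L$ as indexed by simplices of $\cat{C}(K_{\alpha'},G(K))$ is also garbled: they are simplices of $\cat{C}(K_{\alpha'},K)$.)

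The fix is to avoid $D(K)$ altogether and work with the finite object you already have, namely $L$. Since $L \subset G(K)$ is a finite cell complex in $\cat{D}$, the map $I_1 \to F(L)$ corresponds by Yoneda to a map $I_1 \smsh \cat{D}(L,-) \to F$ from a finite cell functor; \emph{now} Proposition~\ref{prop:subcomplexes}(2) applies and gives a finite $C \subset F$ (which you may enlarge to contain $C'$). Similarly, the inclusion $L \hookrightarrow G(K)$ corresponds to $L \smsh \cat{C}(K,-) \to G$, and Proposition~\ref{prop:subcomplexes}(2) gives a finite $D \subset G$ (containing $D'$) with $L \to D(K)$. Composing, $I_1 \to C(L) \to C(D(K))$ gives the desired factorization. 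This is precisely what the paper does: it extracts the finite $L_1 \subset G(K)$ first and then applies Proposition~\ref{prop:subcomplexes}(2) twice, once for $C$ and once for $D$, using the finiteness of $L_1$ rather than of any $D(K)$.
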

\begin{proof}
We prove this by induction on the number of cells in $E$. For $E = *$, we can take $C = *$ and $D = *$, so it is true in the base case. Now suppose that $E$ is obtained by adding a cell to $E'$. By induction we can assume that there are finite subcomplexes $C'$ and $D'$ or $F$ and $G$ respectively, such that $\alpha$ restricts to a map $E' \to C'D'$. We then have the following diagram
\[ \begin{diagram}
  \node{I_0 \smsh \spectra(K,-)} \arrow{s} \arrow{e} \node{E'} \arrow{e} \arrow{s} \node{C'D'} \arrow{s} \\
  \node{I_1 \smsh \spectra(K,-)} \arrow{e} \node{E} \arrow{e} \node{FG}
\end{diagram} \]
The left-hand square is a pushout and so, to obtain $E \to CD$, it is sufficient to construct an appropriate factorization
\[ I_1 \smsh \spectra(K,-) \to CD \to FG \]
where $C$ is a finite subcomplex of $F$ containing $C'$ and $D$ is a finite subcomplex of $G$ containing $D'$. We now show how to do this.

The above diagram is adjoint, by the Yoneda Lemma \ref{lem:weak-yoneda}, to
\[ \tag{*} \begin{diagram}
  \node{I_0} \arrow{e} \arrow{s} \node{C'(D'(K))} \arrow{s} \\
  \node{I_1} \arrow{e} \node{F(G(K))}
\end{diagram} \]
Now recall that $D'(K)$ has a natural cell structure by Lemma \ref{lem:cell-F(X)} and so by Lemma \ref{lem:key}, the top map in this diagram factors as
\[ I_0 \to C'(L_0) \to C'(D'(K)) \]
for some finite subcomplex $L_0 \subset D'(K)$. Similarly, the bottom map factors as
\[ I_1 \to F(L_1) \to F(G(K)) \]
for a finite subcomplex $L_1 \subset G(K)$. Since $L_0$ is also a finite subcomplex of $G(K)$ (as $D'$ is a subcomplex of $G$), we can assume that $L_1$ contains $L_0$.

Altogether we get a commutative diagram
\[ \begin{diagram}
  \node{I_0} \arrow{s} \arrow{e} \node{C'(L_0)} \arrow{s} \arrow{e} \node{C'(D'(K))} \arrow{s} \\
  \node{I_1} \arrow{e} \node{F(L_1)} \arrow{e} \node{F(G(K))}
\end{diagram} \]
The first square is adjoint to
\[ \begin{diagram}
  \node{I_0 \smsh \spectra(L_1,-)} \arrow{s} \arrow{e} \node{C'} \arrow{s} \\
  \node{I_1 \smsh \spectra(L_1,-)} \arrow{e} \node{F}
\end{diagram} \]
By Proposition \ref{prop:subcomplexes}(2), the bottom map factors via a finite subcomplex $C$ of $F$ which we can assume contains $C'$. We also have a diagram
\[ \begin{diagram}
  \node{L_0 \smsh \spectra(K,-)} \arrow{s} \arrow{e} \node{D'} \arrow{s} \\
  \node{L_1 \smsh \spectra(K,-)} \arrow{e} \node{G}
\end{diagram} \]
The bottom map here factors via a finite subcomplex $D$ of $G$ which we can assume contains $D'$. But then we have the necessary factorization
\[ I_1 \smsh \spectra(K,-) \to CD \to FG \]
given by the composite
\[ I_1 \to C(L_1) \to C(D(K)). \]
This completes the induction step and hence the proof of the lemma.
\end{proof}

\begin{definition} \label{def:composition}
Now suppose that $F,G,H$ are presented cell functors in $[\finspec,\spectra]$ and let $\alpha: H \to FG$ be a natural transformation. We define maps of pro-symmetric sequences of the form
\[ \alpha^*: \der^*(F) \circ \der^*(G) \to \der^*(H) \]
as follows.

Definition \ref{def:pro-objects} tells us that, to define a morphism of pro-objects such as this, we have to choose, for each $H \in \mathsf{Sub}(H)$, objects $C \in \mathsf{Sub}(F)$ and $D \in \mathsf{Sub}(G)$, and maps
\[ \alpha^*_E: \Nat(CX,X^{\smsh *}) \circ \Nat(DX,X^{\smsh *}) \to \Nat(EX,X^{\smsh *}) \]
of symmetric sequences that satisfy appropriate compatibility conditions.

To do this, we use Lemma \ref{lem:factorization} to choose $C,D$ such that $\alpha$ restricts to $E \to CD$. We then construct the map $\alpha^*_E$ as composites
\[ \Nat(CX,X^{\smsh *}) \circ \Nat(DX,X^{\smsh *}) \to \Nat(CDX,X^{\smsh *}) \to \Nat(EX,X^{\smsh *}) \]
where the last map is induced by the restriction of $\alpha$, and the first map is given by the composites
\[ \begin{split} \Nat(CX,X^{\smsh k}) \smsh& \Nat(DX,X^{\smsh n_1}) \smsh \dots \smsh \Nat(DX,X^{\smsh n_k}) \\
    & \to \Nat(CX,X^{\smsh k}) \smsh \Nat((DX)^{\smsh k},X^{\smsh n}) \\
    & \to \Nat(C(DX),(DX)^{\smsh k}) \smsh \Nat((DX)^{\smsh k},X^{\smsh n}) \\
    & \to \Nat(C(DX),X^{\smsh n}). \end{split} \]
The first map here smashes together the natural transformations $DX \to X^{\smsh n_i}$. The second takes the natural transformation $CX \to X^{\smsh k}$, Kan extends $C$ to all of $\spectra$, and then applies the resulting map to $DX$ to get a natural transformation $C(DX) \to (DX)^{\smsh k}$. The third map is composition of natural transformations.
\end{definition}

\begin{proposition} \label{prop:composition}
let $F,G,H$ be presented cell functors in $[\finspec,\spectra]$ and let $\alpha: H \to FG$ be a natural transformation. The maps $\alpha^*_E$ of Definition \ref{def:composition} give a well-defined map of pro-symmetric sequences:
\[ \alpha^*: \der^*(F) \circ \der^*(G) \to \der^*(H). \]
\end{proposition}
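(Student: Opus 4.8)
The plan is to verify that the termwise maps $\alpha^*_E$ constructed in Definition \ref{def:composition} assemble into a genuine morphism of pro-objects, in the precise sense laid out in Definition \ref{def:pro-objects}. Recall that $\der^*(F) \circ \der^*(G)$ is indexed on the cofiltered poset $\mathsf{Sub}(F)^{op} \times \mathsf{Sub}(G)^{op}$ (using Definition \ref{def:pro-comp}), while $\der^*(H)$ is indexed on $\mathsf{Sub}(H)^{op}$. So a morphism consists of: a function $E \mapsto (C_E, D_E)$ from objects of $\mathsf{Sub}(H)$ to objects of $\mathsf{Sub}(F) \times \mathsf{Sub}(G)$; for each $E$ a map $\alpha^*_E \colon \Nat(C_E X, X^{\smsh *}) \circ \Nat(D_E X, X^{\smsh *}) \to \Nat(EX, X^{\smsh *})$ of symmetric sequences; and a compatibility condition under the morphisms of $\mathsf{Sub}(H)^{op}$, i.e. under inclusions $E' \subset E$ of finite subcomplexes. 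Finally one must check this data is independent, up to the equivalence relation on pro-morphisms, of the various choices made.

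The steps I would carry out, in order, are as follows. First, fix the choice function: for each finite subcomplex $E \subset H$, Lemma \ref{lem:factorization} supplies finite subcomplexes $C \subset F$ and $D \subset G$ with $\alpha$ restricting to $E \to CD$; choose one such pair and call it $(C_E, D_E)$. Second, confirm that $\alpha^*_E$ as defined is $\Sigma_n$-equivariant in each term and natural in $E$, $C$, $D$ — this is routine from the fact that every map in the composite (smashing natural transformations, Kan-extending $C$ and applying it to $DX$, composing natural transformations, restricting along $\alpha$) is built from enriched-categorical operations that respect the symmetric-group actions coming from permuting smash factors of $X$. Third, verify the compatibility condition: given $E' \subset E$, Lemma \ref{lem:factorization}'s uniqueness clause says the two restrictions $E' \to C_{E'} D_{E'}$ and $E' \to C_E D_E \to C_E D_E$ agree after including $C_{E'}, C_E$ (resp. $D_{E'}, D_E$) into a common larger finite subcomplex; taking such an upper bound (which exists by Lemma \ref{lem:lattice}(3)) one checks that the diagram relating $\alpha^*_{E'}$ and $\alpha^*_E$ commutes — this reduces to the fact that the Kan-extension/composition operations are functorial with respect to inclusions of subcomplexes and that restriction along $\alpha$ is compatible with the inclusions $E' \subset E$ and $C_{E'}D_{E'} \subset C_E D_E$. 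Fourth, check well-definedness of the pro-morphism: a different choice of $(C_E, D_E)$, or a different factorization $E \to C_E D_E$, differs from the original only after passing to a larger finite subcomplex of $F$ and of $G$ (again by the uniqueness clause of Lemma \ref{lem:factorization}), which is exactly the condition in Definition \ref{def:pro-objects} for two sets of data to represent the same morphism.

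The main obstacle is the third step, the compatibility condition, together with its interaction with the fourth. The subtlety is that the pair $(C_E, D_E)$ chosen for $E$ need not contain the pair $(C_{E'}, D_{E'})$ chosen for a subcomplex $E' \subset E$ — the factorizations produced by Lemma \ref{lem:factorization} are only canonical up to enlargement — so one cannot directly compare $\alpha^*_{E'}$ with the restriction of $\alpha^*_E$ on the nose. The resolution is to work with a common refinement: choose a finite subcomplex $C'' \supset C_E \cup C_{E'}$ of $F$ and $D'' \supset D_E \cup D_{E'}$ of $G$, observe that both $\alpha^*_{E'}$ and (restriction of) $\alpha^*_E$ factor through $\Nat(C''X, X^{\smsh *}) \circ \Nat(D''X, X^{\smsh *}) \to \Nat(E'X, X^{\smsh *})$, and show this latter map is well-defined independent of which intermediate subcomplexes one passed through. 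This last point is where one genuinely uses that $L \to X$ being a monomorphism (Proposition \ref{prop:cell-complex}(1), via Lemma \ref{lem:key}) forces the relevant factorizations to be unique once they exist, so there is no ambiguity to track. I expect the remaining bookkeeping — equivariance, naturality in the functor variables — to be entirely formal, following the pattern already used in Definition \ref{def:derivatives} to make $\der^*(Q-)$ a contravariant functor.
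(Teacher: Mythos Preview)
Your proposal is correct and follows essentially the same approach as the paper: both reduce the compatibility check under an inclusion of finite subcomplexes to passing to a common upper bound $C'' = C \cup C'$, $D'' = D \cup D'$ in $\mathsf{Sub}(F) \times \mathsf{Sub}(G)$, and then invoke the uniqueness (up to enlargement) of the factorizations from Lemma~\ref{lem:factorization}. The paper's proof is considerably terser---it displays the single commutative diagram relating $E \to CD$, $E' \to C'D'$, and $C''D''$, and asserts the claim follows---whereas you spell out the equivariance and well-definedness checks explicitly; but the substance is the same.
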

\begin{proof}
To see that the $\alpha^*_E$ give us a map of pro-objects, we have to check that if $E' \in \mathsf{Sub}(H)$ has a factorization
\[ E' \to C'D' \to FG \]
and $E \subset E'$, then we have a commutative diagram
\[ \begin{diagram}
  \node[2]{\der^*(C') \circ \der^*(D')} \arrow{e,t}{\alpha^*_{E'}} \node{\der^*(E')} \arrow[2]{s} \\
  \node{\der^*(C'') \circ \der^*(D'')} \arrow{ne} \arrow{se} \\
  \node[2]{\der^*(C) \circ \der^*(D)} \arrow{e,t}{\alpha^*_E} \node{\der^*(E)}
\end{diagram} \]
for some $C''$ and $D''$ such that $C,C' \subset C''$ and $D,D' \subset D''$.

To see this, we take $C'' = C \union C'$ and $D'' = D \union D'$. We then have a commutative diagram
\[ \begin{diagram}
  \node{E} \arrow{e} \arrow[2]{s} \node{CD} \arrow{se} \\
  \node[3]{C''D''} \\
  \node{E'} \arrow{e} \node{C'D'} \arrow{ne}
\end{diagram} \]
from which the claim follows.
\end{proof}

\begin{definition} \label{def:mu}
Let $F$ and $G$ be presented cell functors in $[\finspec,\spectra]$. Then the map $\mu: Q(FG) \to FG$ determines a map of pro-symmetric sequences of the form
\[ \mu^*: \der^*(F) \circ \der^*(G) \to \der^*(Q(FG)). \]
Composing with appropriate cofibrant replacement for $\der^*(F)$ and $\der^*(G)$, we obtain a homotopy invariant version of the map $\mu^*$:
\[ \mu^*: \tilde{\der}^*(F) \circ \tilde{\der}^*(G) \to \der^*(Q(FG)). \]
Now for arbitrary pointed simplicial functors $F,G: \spectra \to \spectra$, we can apply the above construction to the presented cell functors $QF$ and $QG$ to obtain a map of pro-symmetric sequences of the form
\[ \mu^*: \tilde{\der}^*(QF) \circ \tilde{\der}^*(QG) \to \der^*(Q((QF)(QG))). \]
Recall that a natural transformation $F \to F'$ induces a morphism of pro-symmetric sequences
\[ \der^*(QF') \to \der^*(QF). \]
Then $\mu^*$ is natural with respect to these maps.
\end{definition}

\section{Chain rule for functors from spectra to spectra} \label{sec:chainrule}

We are now able to state and prove our chain rule for functors from spectra to spectra.

\begin{theorem}[Chain rule for spectra] \label{thm:chainrule}
Let $F,G: \spectra \to \spectra$ be pointed simplicial functors with $F$ a finitary homotopy functor. Then there is a natural equivalence of symmetric sequences of the form:
\[ \tilde{\der}_*(FG) \homeq \tilde{\der}_*(F) \circ \tilde{\der}_*(G) \]
where $\tilde{\der}_*(-)$ denotes a termwise-cofibrant replacement for the symmetric sequence $\der_*(F)$ of Definition \ref{def:d_*(F)}.
\end{theorem}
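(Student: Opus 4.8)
The plan is to reduce the chain rule to the statement that the composition map $\mu^*$ of Definition \ref{def:mu}, after dualizing, is a weak equivalence of symmetric sequences. First I would dualize: by Theorem \ref{thm:derivatives} and Definition \ref{def:d_*(F)}, the symmetric sequence $\der_*(F)$ is the Spanier-Whitehead dual $\dual \tilde\der^*(QF)$, and by Lemma \ref{lem:dualcomprod} we have a natural equivalence
\[ \widetilde{\dual\tilde\der^*(QF)} \circ \widetilde{\dual\tilde\der^*(QG)} \homeq \dual\left(\tilde\der^*(QF) \circ \tilde\der^*(QG)\right). \]
So by Lemma \ref{lem:pro-equivalence} it suffices to show that $\mu^*: \tilde\der^*(QF) \circ \tilde\der^*(QG) \to \der^*(Q((QF)(QG)))$ is a weak equivalence of (directly-dualizable) pro-symmetric sequences, since the right-hand side dualizes to $\der_*(FG)$ (using that $(QF)(QG)$ is a cellular model for $FG$, with the same derivatives by the remark following Definition \ref{def:derivative}, and Lemma \ref{lem:homotopy-cell-functors}, Lemma \ref{lem:finitary-cell-functors} to handle homotopy-functoriality and the finitary hypothesis on $F$).

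The heart of the argument, then, is to prove $\mu^*$ is a termwise weak equivalence of pro-spectra, i.e. induces an isomorphism on the relevant $\colim$ of homotopy classes. Here I would use the $D_n$-equivalence technology of Section \ref{sec:nat}. Recall from Proposition \ref{prop:models-correct} and Corollary \ref{cor:derivatives} that $\der_n(F)$ is realized as the $n$-th Goodwillie derivative of the auxiliary functor $\Psi_n F(X) = \Map(\tilde\der^n(QF), X^{\smsh n})^{\Sigma_n}$, via the $D_n$-equivalence $\psi: F \to \Psi_n F$. The plan is to exploit naturality: the composite $\Psi_n$-functors for $F$, $G$, $FG$ fit together, and the composition map $\mu^*$ on derivatives is exactly the map obtained by applying $\der_n$ to a natural transformation relating $\Psi_*(FG)$ to an appropriate composite built from $\Psi_*(F)$ and $\Psi_*(G)$. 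More precisely, I would show that the chain-rule map corresponds, under the $\psi$-equivalences, to the natural transformation $FG \to \Psi_*(F)\,\Psi_*(G)$, whose effect on $n$-th derivatives can be computed directly using the classification of homogeneous functors (Propositions \ref{prop:D_nF}, \ref{prop:Dn-formula}): the $n$-th derivative of a composite of the form $\Map(A_\bullet,(-)^{\smsh\bullet})^{\Sigma}$ composed with a similar functor is, by a partition-indexed computation mirroring the composition product of symmetric sequences, precisely $(\dual\tilde\der^*(QF)\circ\dual\tilde\der^*(QG))(n)$. The factorization lemmas (Lemma \ref{lem:key}, Lemma \ref{lem:factorization}) are what make the pro-spectrum bookkeeping work: they guarantee that the composition map is well-defined at the pro-level and that the comparison of indexing categories $\mathsf{Sub}(Q((QF)(QG)))$ versus $\mathsf{Sub}(QF)\times\mathsf{Sub}(QG)$ is cofinal enough to identify the two pro-systems up to weak equivalence.

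The main obstacle I anticipate is exactly this last identification: showing that the natural transformation $\mu^*$ is not merely \emph{defined} at the pro-level but is a \emph{weak equivalence} of pro-spectra. The $D_n$-equivalence $\psi$ handles the single-functor case, but for the composite one needs to know that applying $D_n$ to the composite natural transformation $FG \to (\Psi_* F)(\Psi_* G)$ yields an equivalence, and this requires both parts of Proposition \ref{prop:calculus} (that $P_n(FG)$ depends only on $P_n F$ and $P_n G$, using the finitary hypothesis on $F$ for part (2)) together with an induction up the Taylor tower of $F$ — reducing to the case $F$ homogeneous, where $F$ factors through $\Sigma^\infty$ (Proposition \ref{prop:D_nF}(2)) and the computation becomes an explicit smash-product manipulation. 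I would carry this out by: (i) reducing to $F$ and $G$ finite cell functors via Corollary \ref{cor:hocolim} and the fact that $\der_n$ commutes with filtered homotopy colimits (Proposition \ref{prop:deriv-commute}, Lemma \ref{lem:bar-hocolim}); (ii) reducing to $F$ homogeneous by the Taylor tower induction and Proposition \ref{prop:calculus}(1); (iii) in the homogeneous case, using the multilinearized cross-effect computation of Proposition \ref{prop:models-correct} to verify the map is an equivalence on each term. The remaining naturality and compatibility checks (that $\mu^*$ respects the forgetful maps between pro-systems, Definition \ref{def:mu}) are routine given the factorization lemmas.
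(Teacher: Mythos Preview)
Your three-step plan (i)--(iii) at the end is essentially the paper's approach: reduce $F$ along its Taylor tower (using Proposition~\ref{prop:calculus} and the fibre sequences $D_kF \to P_kF \to P_{k-1}F$) down to the case $F(X) = E \smsh X^{\smsh r}$ with $E$ a finite cell spectrum, then induct on the cell structure of $G$, and finish with a Yoneda/cross-effect computation. So the skeleton is right.

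However, your middle paragraph contains a genuine error that you should abandon rather than try to repair. You propose to compare $FG$ with $(\Psi_* F)(\Psi_* G)$ via the $\psi$-maps. But $\psi: F \to \Psi_n F$ is only a $D_n$-equivalence, and the functor $\Psi_n F(X) = \Map(\tilde\der^n(QF), X^{\smsh n})^{\Sigma_n}$ is $n$-\emph{homogeneous} (its lower derivatives vanish). A $D_n$-equivalence $F \to F'$ does \emph{not} induce a $D_n$-equivalence $FG \to F'G$: by the very chain rule you are trying to prove, $\der_n(FG)$ depends on $\der_k F$ for all $k \leq n$, not just $k = n$. So replacing $F$ by $\Psi_n F$ kills most of $\der_n(FG)$, and there is no clean way to reassemble it from the various $\Psi_k$. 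The paper does not use the $\Psi$-functors at all in this proof; it works directly with $\mu^*$ throughout.

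Two further points your plan glosses over. First, after reducing to $F(X) = E \smsh X^{\smsh r}$, one still has to identify $\tilde\der^*(QF)$: the paper does this by constructing an explicit map $\beta: (\Sigma_r)_+ \smsh \widetilde{\Map}(E,S) \to \tilde\der^r(QF)$ and showing it is an equivalence (this is where $\psi$ actually reappears, but only for this single homogeneous $F$). Second --- and this is the trick that makes the cell induction on $G$ go through cleanly --- once $F = E \smsh X^{\smsh r}$, the paper \emph{generalizes} the map $\mu^*$ to allow $r$ different functors $G_1, \dots, G_r$ in place of the single $G$, so that one can induct on each $G_i$'s cell structure independently. Without this generalization, the induction step for a pushout in $G$ is awkward because $(E \smsh G^{\smsh r})$ does not decompose nicely along a single pushout in $G$. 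Your cofinality remark about $\mathsf{Sub}(Q((QF)(QG)))$ versus $\mathsf{Sub}(QF) \times \mathsf{Sub}(QG)$ is not how the argument is organized and would not by itself give what you need.
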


\begin{proof}
Using Lemma \ref{lem:dualcomprod}, the Spanier-Whitehead dual of the map
\[ \mu^*: \tilde{\der}^*(QF) \circ \tilde{\der}^*(QG) \to \der^*(Q((QF)(QG))) \]
is (up to inverse weak equivalences) a map
\[ \mu_*: \tilde{\der}_*((QF)(QG)) \to \tilde{\der}_*(F) \circ \tilde{\der}_*(G). \]
Since we are assuming that $F$ is finitary, we know that $QF(X) \weq F(X)$ for \emph{all} $X \in \spectra$ (not just the finite cell spectra). It follows that we have weak equivalences
\[ (QF)(QG) \weq F(QG) \weq FG, \]
the last because $F$ is a homotopy functor. Therefore, in the \emph{homotopy category} of symmetric sequences of spectra, we can view $\mu_*$ as a map
\[ \mu_*: \der_*(FG) \to \der_*(F) \circ \der_*(G) \]
(where $\circ$ here denotes the composition product induced on the homotopy category). To prove the Theorem, we show that $\mu_*$ is an isomorphism in the homotopy category. This is equivalent to showing that $\mu^*$ is a weak equivalence of pro-symmetric sequences, and we use both these points of view in this proof.

We start by using the naturality of the construction of $\mu_*$ to make a series of reductions. We also use the fact that we know $\der_*(F)$ is equivalent to the Goodwillie derivatives of $F$, and so enjoys the same properties as those derivatives (see Proposition \ref{prop:deriv-commute}).

First consider the following diagram
\[ \begin{diagram}
  \node{\der_n(FG)} \arrow{e,t}{\mu_n} \arrow{s,l}{\sim} \node{\left(\der_*(F) \circ \der_*(G)\right)(n)} \arrow{s,l}{\sim} \\
  \node{\der_n((P_nF)G)} \arrow{e,t}{\mu_n} \node{\left(\der_*(P_nF) \circ \der_(G)\right)(n)}
\end{diagram} \]
where the vertical maps are induced by $p_nF: F \to P_nF$. The left-hand vertical map is an equivalence by Proposition \ref{prop:calculus}(1). The right-hand vertical map is an equivalence because the \ord{n} term in the composition product $A \circ B$ only depends on the first $n$ terms in the symmetric sequence $A$. This reduces the Theorem to the case where $F$ is $n$-excisive for some $n$.

Now consider the diagrams
\[ \begin{diagram}
  \node{\der_*((D_kF)G)} \arrow{e,t}{\mu_*} \arrow{s} \node{\der_*(D_kF) \circ \der_*(G)} \arrow{s} \\
  \node{\der_*((P_kF)G)} \arrow{e,t}{\mu_*} \arrow{s} \node{\der_*(P_kF) \circ \der_*(G)} \arrow{s} \\
  \node{\der_*((P_{k-1}F)G)} \arrow{e,t}{\mu_*} \node{\der_*(P_{k-1}F) \circ \der_*(G)}
\end{diagram} \]
The left-hand column is a fibre sequence because $(D_kF)G \to (P_kF)G \to (P_{k-1}F)G$ is a fibre sequence, and derivatives preserve fibre sequences (\ref{prop:deriv-commute}). The right-hand column is a fibre sequence by Lemma \ref{lem:comprod-hocolim}. Therefore, by induction, we can reduce the Theorem to the case where $F$ is a homogeneous functor.

Now suppose that $F$ is $r$-homogeneous. Then by Proposition \ref{prop:Dn-formula}, we have
\[ F(X) \homeq (E \smsh X^{\smsh r})_{h\Sigma_r} \]
for some cofibrant spectrum $E$ with $\Sigma_r$-action (namely the \ord{r} derivative of $F$). Taking homotopy orbits in the $F$ variable commutes with both sides of $\mu_*$ (by \ref{prop:deriv-commute} and \ref{lem:comprod-hocolim}). Therefore, we reduce to proving that $\mu_*$ is an isomorphism in the case
\[ F(X) = E \smsh X^{\smsh r}. \]
Finally, since both sides of $\mu_*$ commute with filtered homotopy colimits (again by \ref{prop:deriv-commute} and \ref{lem:comprod-hocolim}), we may assume that $E$ is a finite cell spectrum.

To conclude then, we have reduced the proof of Theorem \ref{thm:chainrule} to the case where
\[ F(X) = E \smsh X^{\smsh r} \]
for some finite cell spectrum $E$ with $\Sigma_r$-action. For the remainder of this proof, we take $F$ to be given by this formula. We also assume without loss of generality that $G$ is a presented cell functor

We now analyze the map
\[ \mu^*: \tilde{\der}^*(QF) \circ \tilde{\der}^*(G) \to \der^*(Q((QF)G)) \]
in this case. First we calculate $\der^*(QF)$.

For each $\sigma \in \Sigma_r$ and each $C \in \mathsf{Sub}(QF)$, we define maps
\[ \Map(E,S) \arrow{e} \Nat(E \smsh X^{\smsh r},X^{\smsh r}) \arrow{e,t}{\sigma_*} \Nat(E \smsh X^{\smsh r},X^{\smsh r}) \arrow{e,t}{\iota^*} \Nat(CX,X^{\smsh r}). \]
The first map here is given by smashing a morphism $E \to S$ with $X^{\smsh r}$, the second by the permutation of $X^{\smsh r}$ determined by $\sigma$. The third map is induced by the composite
\[ \iota: CX \into QF(X) \to F(X) = E \smsh X^{\smsh r}. \]
These maps are clearly consistent with inclusions between the subcomplexes $C$ and so, taking cofibrant replacements, define a map of pro-spectra
\[ \beta: (\Sigma_r)_+ \smsh \widetilde{\Map}(E,S) \to \tilde{\der}^r(QF) \]
where the left-hand side is a pro-object indexed on the trivial category.

We now claim that $\beta$ is an equivalence of pro-spectra, thus giving us a calculation of $\der^r(QF)$. To see this, consider the following diagram
\[ \begin{diagram}
  \node{F(X)} \arrow{e,t}{\psi} \arrow{se} \node{\Map(\tilde{\der}^r(QF),X^{\smsh r})^{\Sigma_r}} \arrow{s,r}{\beta^*} \\
  \node[2]{\Map((\Sigma_r)_+ \smsh \widetilde{\Map(E,S)},X^{\smsh r})^{\Sigma_r}}
\end{diagram} \]
where $\psi$ is as in Definition \ref{def:spectra-map} and the right-hand map is induced by $\beta$. Following the definitions of $\psi$ and $\beta$, we see that the diagonal map is then given by
\[ E \smsh X^{\smsh r} \to \Map(\widetilde{\Map(E,S)},S) \smsh X^{\smsh r} \to \Map(\widetilde{\Map(E,S)},X^{\smsh r}). \]
The first map here is an equivalence since $E$ is a finite cell spectrum, and the second is an equivalence by \cite[III.7]{elmendorf/kriz/mandell/may:1997}. Since $\psi$ is a $D_n$-equivalence, it follows that the map $\beta^*$ in the above diagram is a $D_n$-equivalence. Taking $D_n$ gives us the dual of the map $\beta$ (by \ref{lem:F_E}) and so by \ref{lem:pro-equivalence}, we deduce that $\beta$ is an equivalence.

We now have a calculation of $\der^r(F)$ and, since $F$ is $r$-homogeneous, we know that $\der^k(F) \homeq *$ for $k \neq r$. Therefore, we can reinterpret the map $\mu^*$ as
\[ \Wdge_{n = n_1+\dots+n_k} \widetilde{\Map}(E,S) \smsh \tilde{\der}^{n_1}(G) \smsh \dots \smsh \tilde{\der}^{n_k}(G) \longrightarrow \der^{n}(Q(E \smsh G^{\smsh r})). \]
Here we have included the $\Sigma_r$ factor into the coproduct by making this a coproduct over all \emph{ordered} partitions of the set $\{1,\dots,n\}$ into $r$ pieces (whose sizes are then labelled $n_1,\dots,n_k$). (Recall that in the composition product, we take the coproduct over unordered partitions.)

We now think of the above map as a special case of
\[ \tag{*} \Wdge_{n = n_1+\dots+n_r} \widetilde{\Map}(E,S) \smsh \tilde{\der}^{n_1}(G_1) \smsh \dots \smsh \tilde{\der}^{n_r}(G_r) \to \der^n(Q(E \smsh G_1 \smsh \dots \smsh G_r)) \]
where $G_1,\dots,G_r$ are any presented cell functors. The map we are interested in is the case $G_1 = \dots = G_r = G$.

Let us be explicit about where this map comes from (which amounts to unpacking the definitions of $\mu^*$ and the map $\beta$ of the previous paragraph. Given a finite subcomplex $C \subset Q(E \smsh G_1 \smsh \dots \smsh G_r)$, we can see, by iterated applications of Lemma \ref{lem:factorization}, that the composite
\[ C \to E \smsh G_1 \smsh \dots \smsh G_r \]
factors as
\[ C \to E \smsh D_1 \smsh \dots \smsh D_r \]
for finite subcomplexes $D_i \subset G_i$. For each ordered partition $\lambda$ of the set $\{1,\dots,n\}$ into $r$ pieces, we then have a composition map
\[ \Map(E,S) \smsh \Nat(D_1X,X^{\smsh n_1}) \smsh \dots \smsh \Nat(D_rX,X^{\smsh n_r}) \to \Nat(CX,X^{\smsh n}) \]
given by the composite
\[ CX \to E \smsh D_1X \smsh \dots \smsh D_rX \to S \smsh X^{\smsh n_1} \smsh \dots \smsh X^{\smsh n_r} \isom X^{\smsh n}. \]
The final isomorphism in this sequence is based on the given partition $\lambda$ of $\{1,\dots,n\}$.

Our aim now is to prove that the maps above are equivalences for any $G_1,\dots,G_r$. We do this by looking at the cell structure of the $G_i$. Notice that each side of (*) preserves (after composing with a Spanier-Whitehead dual) filtered homotopy colimits in the variables $G_i$ (since taking derivatives and smash products commute with those). This allows us to reduce to the case that each $G_i$ is a finite cell functor. Now we induct on the cell structure. Notice that each side of (*) also preserves homotopy pushouts (for the same reasons). This allows us to reduce to the case where each $G_i$ is of the form
\[ G_i = J_i \smsh \spectra(K_i,-) \]
for some finite cell spectra $J_i,K_i$.

Now for these $G_i$, the source of the map (*) is isomorphic to
\[ \Wdge_{n = n_1+\dots+n_r} \Map(E,S) \smsh \Map(J_1,K_1^{\smsh n_1}) \smsh \dots \smsh \Map(J_r,K_r^{\smsh n_r}) \]
by  the Yoneda Lemma. By \cite[III.7]{elmendorf/kriz/mandell/may:1997}, this in turn is equivalent to
\[ \Wdge_{n = n_1+\dots+n_r} \Map(E \smsh J_1 \smsh \dots \smsh J_r, K_1^{\smsh n_1} \smsh \dots \smsh K_r^{\smsh n_r}) \]
since all these spectra are finite cell complexes. On the other hand, the target of (*) is now
\[ \der^n(Q(E \smsh J_1 \smsh \dots \smsh J_r \smsh \spectra(K_1,-) \smsh \dots \smsh \spectra(K_r,-))). \]
Writing $L = E \smsh J_1 \smsh \dots \smsh J_r$, we can write the map (*) as
\[ \tag{**} \Wdge_{n = n_1+\dots+n_r} \Map(L,K_1^{\smsh n_1} \smsh \dots \smsh K_r^{\smsh n_r}) \to \der^n(Q(L \smsh \spectra(K_1,-) \smsh \dots \smsh \spectra(K_r,-))). \]

To show that (**) is an equivalence, we show that the right-hand side can be identified with the \ord{r} cross-effect of the functor $X \mapsto \Map(L,X^{\smsh n})$ evaluated at $(K_1,\dots,K_r)$. To see this a similar argument to that used in the proof of Proposition \ref{prop:models-correct} shows that
\[ L \smsh \spectra(K_1,X) \smsh \dots \smsh \spectra(K_r,X) \]
is equivalent to the total homotopy cofibre of the cube whose terms are the finite cell functors
\[ L \smsh \spectra(K_{i_1} \wdge \dots \wdge K_{i_s},X) \]
for $\{i_1,\dots,i_s\} \subseteq \{1,\dots,r\}$.

We can now calculate the pro-object on the right-hand side of the map (**). Firstly, note that since $\der^n$ is dual to the \ord{n} Goodwillie derivative, it takes homotopy colimits to homotopy limits. Therefore, by the Yoneda Lemma:
\[ \begin{split} \der^n(Q(L \smsh \spectra(K_1,X) \smsh \dots \smsh \spectra(K_r,X)))
    &\homeq \thofib \left\{ \Map(L,(K_{i_1} \wdge \dots \wdge K_{i_r})^{n}) \right\} \\
    &\homeq \Map\left(L, \thofib((K_{i_1} \wdge \dots \wdge K_{i_r})^{n}) \right). \end{split} \]
Therefore, to show that (**) is an equivalence, it is sufficient to show that we have an equivalence
\[ \tag{***} \Wdge_{n = n_1+\dots+n_r} K_1^{n_1} \smsh \dots \smsh K_r^{\smsh n_r} \to \thofib \left\{ (K_{i_1} \wdge \dots \wdge K_{i_r})^{\smsh n} \right\} \]
where the left-hand side is the coproduct over all ordered partitions of a set of size $n$ into $r$ nonempty pieces. The map (***) is given by the inclusions
\[ K_1^{n_1} \smsh \dots \smsh K_r^{n_r} \to (K_1 \wdge \dots \wdge K_r)^{\smsh n} \]
This gives a map into the total homotopy fibre because the composites
\[ K_1^{n_1} \smsh \dots \smsh K_r^{n_r} \to (K_1 \wdge \dots \wdge K_r)^{\smsh n} \to (K_{i_1} \wdge \dots \wdge K_{i_s})^{\smsh n} \]
are trivial for any proper subset $\{i_1,\dots,i_s\}$ of $\{1,\dots,r\}$.

The right-hand side of the map (***) above is exactly the definition of the \ord{r} cross-effect of the functor $X \mapsto X^{\smsh n}$ evaluated at $(K_1,\dots,K_r)$. We can calculate this cross-effect by taking the iterated homotopy fibres and this gives precisely the left-hand side of (***) with the given map being an equivalence.

This concludes the proof that the map (*) above is an equivalence of pro-symmetric sequences for any presented cell functors $G_1,\dots,G_r$. Taking $G_1 = \dots = G_r = G$, we then deduce that the map
\[ \mu^*: \tilde{\der}^*(QF) \circ \tilde{\der}^*(QG) \to \tilde{\der}^*(Q((QF)(QG))) \]
is an equivalence as required. This completes the proof of the Theorem.
\end{proof}

For future reference, we note that we have proved that the map $\mu^*$ of Definition \ref{def:mu} is an equivalence of pro-symmetric sequences:

\begin{cor} \label{cor:chainrule}
Let $F,G: \spectra \to \spectra$ be presented cell functors (left Kan extended to all of $\spectra$ as usual). Then the map
\[ \mu^*: \tilde{\der}^*(F) \circ \tilde{\der}^*(G) \to \der^*(Q(FG)) \]
of Definition \ref{def:mu} is an equivalence of pro-symmetric sequences.
\end{cor}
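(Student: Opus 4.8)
The plan is to extract this statement from the proof of Theorem \ref{thm:chainrule}, verifying only that that argument applies in the present generality. Since $F$ and $G$ are presented cell functors they are homotopy functors (Lemma \ref{lem:homotopy-cell-functors}) and finitary (Lemma \ref{lem:finitary-cell-functors}), so Theorem \ref{thm:chainrule} applies to them. By Lemmas \ref{lem:pro-equivalence} and \ref{lem:dualcomprod}, the morphism $\mu^*$ of pro-symmetric sequences is a weak equivalence if and only if its Spanier--Whitehead dual $\mu_* \colon \der_*(FG) \to \der_*(F) \circ \der_*(G)$ is an equivalence of symmetric sequences, and the proof of Theorem \ref{thm:chainrule} is carried out partly in terms of $\mu^*$ and partly in terms of $\mu_*$, passing freely between the two via Lemma \ref{lem:pro-equivalence}. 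What needs checking is that the reductions and computations there are valid when $F$ and $G$ are arbitrary presented cell functors; this is what I would spell out.

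Concretely, I would recall the skeleton of that proof. Using the identification $\der_*(-) \homeq \der^G_*(-)$ of Theorem \ref{thm:derivatives}, the exactness and finitariness of Goodwillie derivatives from Proposition \ref{prop:deriv-commute}, and the naturality of the composition map (Definition \ref{def:mu}): Proposition \ref{prop:calculus}(1) reduces us to the case $F$ is $n$-excisive; the fibre sequences $(D_kF)G \to (P_kF)G \to (P_{k-1}F)G$ together with Lemma \ref{lem:comprod-hocolim} reduce us to $F$ homogeneous; Proposition \ref{prop:Dn-formula} together with the fact that homotopy orbits commute with both sides of $\mu_*$ reduces us to $F(X) = E \smsh X^{\smsh r}$; and since both sides commute with filtered homotopy colimits we may take $E$ a finite cell spectrum with $\Sigma_r$-action. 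For such $F$ one computes $\der^*$ of its cellular replacement via an equivalence $\beta \colon (\Sigma_r)_+ \smsh \widetilde{\Map}(E,S) \to \tilde{\der}^r(F)$, which holds because the evaluation map $\psi$ of Definition \ref{def:spectra-map} is a $D_n$-equivalence (Proposition \ref{prop:models-correct}). The map $\mu^*$ then becomes the map $(*)$ of the proof of Theorem \ref{thm:chainrule} with $G_1 = \dots = G_r = G$, and one shows $(*)$ is an equivalence for arbitrary presented cell functors $G_1,\dots,G_r$ by induction over their cell structures: filtered homotopy colimits and homotopy pushouts reduce to the representable case $G_i = J_i \smsh \spectra(K_i,-)$, where the Yoneda Lemma, the duality equivalence of \cite[III.7]{elmendorf/kriz/mandell/may:1997}, and the computation of the \ord{r} cross-effect of $X \mapsto X^{\smsh n}$ identify the two sides.

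I do not expect a real obstacle: the substantive work --- the factorization Lemma \ref{lem:factorization} underlying the very construction of $\mu^*$, and the proof that $\psi$ is a $D_n$-equivalence --- has already been done, and the reductions above are formal consequences of earlier results. The one point deserving a sentence of care is that the map $\mu^*$ of Definition \ref{def:mu}, built from the given presentations of $F$ and $G$, agrees up to weak equivalence with the version built from the canonical presentations of $QF$ and $QG$; this follows from naturality of $\mu^*$ with respect to the cellular-replacement maps $QF \to F$, $QG \to G$, which are objectwise weak equivalences and hence induce weak equivalences on the directly-defined pro-symmetric sequences $\der^*(-)$ of Definition \ref{def:derivatives-presented}.
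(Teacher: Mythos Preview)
Your proposal is correct and matches the paper's approach: the paper gives no separate proof of this corollary, stating only ``we note that we have proved that the map $\mu^*$ of Definition~\ref{def:mu} is an equivalence of pro-symmetric sequences'' immediately before it, so the content is precisely that the argument of Theorem~\ref{thm:chainrule} already establishes the claim for presented cell functors $F,G$. Your sketch of how the reductions and the final cross-effect computation go through in this generality is accurate; the only minor point is that your final paragraph's appeal to ``naturality of $\mu^*$ with respect to $QF\to F$'' is not quite how things are set up (Definition~\ref{def:derivatives} gives functoriality via $Q$, not directly for arbitrary presented cell functors), but this is unnecessary anyway since, as you indicate in the main body, the proof of Theorem~\ref{thm:chainrule} runs verbatim with $F,G$ in place of $QF,QG$ once one observes that Proposition~\ref{prop:models-correct} and Corollary~\ref{cor:subcomplexes} apply to any presented cell functor.
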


\begin{remark}
Theorem \ref{thm:chainrule} was previously proved by the second author in \cite{ching:2007} by a different method. The added value of the result in this paper is that we have constructed an equivalence using explicit models for the derivatives of these functors. These constructions are used in the following sections to construct operad and module structures on such derivatives, and to prove chain rules for functors from or to the category of simplicial sets.
\end{remark}

\begin{remark}
Our proof of Theorem \ref{thm:chainrule} also reveals where the hypothesis that $F$ be finitary comes in. In general, we have proved that the composition product $\der_*(F) \circ \der_*(G)$ is equivalent to $\der_*((QF)G)$ where $QF$ is the finitary replacement for $F$ (see Remark \ref{rem:finitary}). When $F$ is itself finitary, this is of course equivalent to $\der_*(FG)$. Note that if $G$ takes values in homotopy-finite spectra, then $(QF)G \homeq FG$ even when $F$ is not finitary. Our chain rule therefore holds in this case without the finitary hypothesis.

Without any extra condition, a counterexample to Theorem \ref{thm:chainrule} is provided by Example \ref{ex:counterexample}. In that case, we have $\der_1(F) = L_E(S)$, $\der_1(G) = *$, but
\[ \der_1(FG) \homeq \hocofib(L_E(S) \smsh R\mathbb{P}^{\infty} \to L_E(R\mathbb{P}^{\infty})) \neq *. \]
\end{remark}

\section{Operad structures for comonads} \label{sec:comonads}

The philosophy behind much of this paper is that the composition maps $\mu^*$, of Definition \ref{def:mu}, allow us to construct operad and module structures on the (duals of) derivatives of functors that have appropriate extra structure. In general, however, there are rigidification problems with this. In particular, there does not seem to be a way to construct maps (on the point-set level) of the form
\[  \der^*(QF) \circ \der^*(QG) \to \der^*(Q(FG)) \]
for general functors $F,G: \spectra \to \spectra$. This means that we only get operad and module composition maps defined up to inverse weak equivalence, via the sequence
\[ \der^*(QF) \circ \der^*(QG) \to \der^*(Q((QF)(QG))) \lweq \der^*(Q(FG)) \]
where the second map is induced by $Q((QF)(QG)) \to Q(FG)$.

There are however circumstances in which this problem can be avoided, and these are sufficient for the purposes of this paper. For example, if we have a functor $F$ that has a comonad structure \emph{and} is also a presented cell functor, then the pro-symmetric sequence $\der^*(F)$ inherits a (rigid) operad structure. The comonad we are principally concerned with in this paper is $\Sigma^\infty \Omega^\infty$ which does have a cell structure, as we observed in Example \ref{ex:SigOm}. The main result of this section is the identification of the corresponding operad structure on $\der^*(\Sigma^\infty \Omega^\infty)$ with the commutative operad in $\spectra$.

\begin{definition} \label{def:comonad}
A \emph{comonad} on $\spectra$ is a functor $T: \spectra \to \spectra$ together with natural transformations
\[ m: T \to TT, \quad e: T \to I_{\spectra} \]
such that the following diagrams commute
\[ \begin{diagram}
  \node{T} \arrow{e,t}{m} \arrow{s,l}{m} \node{TT} \arrow{s,r}{Tm} \\
  \node{TT} \arrow{e,t}{mT} \node{TTT}
\end{diagram}, \quad
\begin{diagram}
  \node{T} \arrow{e,t}{m} \arrow{se,b}{1_T} \node{TT} \arrow{s,r}{Te} \\
  \node[2]{T}
\end{diagram}, \quad
\begin{diagram}
  \node{T} \arrow{e,t}{m} \arrow{se,b}{1_T} \node{TT} \arrow{s,r}{eT} \\
  \node[2]{T}
\end{diagram} \]
\end{definition}

\begin{example} \label{ex:SO-comonad}
Any adjunction gives rise to a comonad by composing the left and right adjoints. We are particularly interested in the case $T = \Sigma^\infty \Omega^\infty$ in which the comonad structure is given by the natural transformations
\[ m: \Sigma^\infty \Omega^\infty \to \Sigma^\infty \Omega^\infty \Sigma^\infty \Omega^\infty, \quad \epsilon: \Sigma^\infty \Omega^\infty \to I_{\spectra} \]
from the unit and counit maps of the $(\Sigma^\infty,\Omega^\infty)$ adjunction.
\end{example}

\begin{definition} \label{def:comonad-derivatives}
Let $T: \spectra \to \spectra$ be a comonad that is also the left Kan extension of a presented cell functor in $[\finspec,\spectra]$. The map $m: T \to TT$ then determines, by the construction of Definition \ref{def:composition}, a map of pro-symmetric sequences
\[ m^*: \der^*(T) \circ \der^*(T) \to \der^*(T). \]
We also define a map
\[ e^*: \mathsf{1} \to \der^*(T) \]
as follows. Recall that this consists of a map
\[ S \to \der^1(T) = \{\Nat(CX,X)\}_{C \in \mathsf{Sub}(T)}. \]
To define such a map of pro-objects, we have to give, for each $C \in \mathsf{Sub}(T)$, a map
\[ S \to \Nat(CX,X). \]
Such a map is given by the adjoint of the composite
\[ CX \into TX \arrow{e,t}{e_X} X. \]
These respect inclusions of subcomplexes of $T$ and so define the required map $e^*$ of pro-objects.
\end{definition}

\begin{proposition} \label{prop:operad}
Let $T: \spectra \to \spectra$ be a comonad that is also the left Kan extension of a presented cell functor in $[\finspec,\spectra]$. Then the maps $m^*$ and $e^*$ of Definition \ref{def:comonad-derivatives} make $\der^*(T)$ into an operad of pro-spectra (i.e. a monoid for the composition product of pro-symmetric sequences).
\end{proposition}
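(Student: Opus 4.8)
The plan is to verify the monoid axioms for $(\der^*(T), m^*, e^*)$ directly from the definitions of the composition maps in \S\ref{sec:comp-maps}, using the coassociativity and counitality of the comonad $T$. The key point is that $\der^*(-)$ is a contravariant functor on presented cell functors (Definition \ref{def:derivatives}), and that the composition map $\alpha^*$ of Definition \ref{def:composition} is natural in $\alpha$ in an appropriate sense. Since $T$ is a presented cell functor, all the relevant maps of pro-symmetric sequences are defined strictly (not just up to weak equivalence), so the axioms can be checked on the nose.

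First I would establish the naturality/functoriality properties of the construction $\alpha \mapsto \alpha^*$ that are needed. Concretely: given presented cell functors $F, G, H, F', G', H'$ with natural transformations making the appropriate squares commute, one needs that the induced maps on $\der^*(-)$ fit into commuting diagrams. The two facts required are (i) if $\alpha : H \to FG$ and we have $\beta : F' \to F$, $\gamma : G' \to G$, then the composite $H \to FG \to (\text{via } \beta,\gamma \text{ on sources}) \dots$ behaves well — more precisely, $\alpha^*$ composed with $\der^*(\beta) \circ \der^*(\gamma)$ equals the map induced by the lifted transformation; and (ii) an "associativity of $\alpha^*$" statement for iterated composites. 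Both follow by unwinding Definition \ref{def:composition} and Definition \ref{def:derivatives}, using the factorization Lemma \ref{lem:factorization} and the uniqueness-up-to-enlargement clause in its statement, which guarantees that the choices of finite subcomplexes $C, D$ do not affect the resulting map of pro-objects.

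Next I would check the associativity axiom: that the two composites
\[ \der^*(T) \circ \der^*(T) \circ \der^*(T) \rightrightarrows \der^*(T) \circ \der^*(T) \to \der^*(T) \]
(one going via $m^* \circ \mathrm{id}$ and the other via $\mathrm{id} \circ m^*$, then $m^*$) agree. This reduces, via the naturality facts above, to the coassociativity square for $T$: the diagram relating $m : T \to TT$, $Tm$ and $mT$. Applying $\der^*(-)$ to the coassociativity square and using that $\der^*$ sends $TT$-type composites correctly (here one uses the iterated-composite form of Definition \ref{def:composition} applied to $m : T \to TT$ and then again to the inner $m$), one obtains exactly the associativity of $m^*$. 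Similarly, the unit axioms follow by applying $\der^*$ to the two counit triangles for $T$ (involving $Te$ and $eT$), together with the identification of $e^*$ as the map induced by $e : T \to I_\spectra$ and the fact that $\der^*(I_\spectra) \isom \mathsf{1}$ appropriately (or rather, that composing with $e$ inserts the unit, matching the unit map of the composition product).

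The main obstacle I anticipate is bookkeeping with the indexing categories of the pro-objects: the composition product $\der^*(T) \circ \der^*(T)$ is indexed on $\mathsf{Sub}(T) \times \mathsf{Sub}(T)$ (Definition \ref{def:pro-comp}), the triple product on a threefold product, and $m^*$ involves, for each finite subcomplex $E \subset T$, a choice of subcomplexes of $T$ via Lemma \ref{lem:factorization}; checking that the two composites in the associativity diagram literally agree as morphisms of pro-objects requires showing that the relevant choices can be made compatibly, or invoking the uniqueness clause of Lemma \ref{lem:factorization} to see that any two such choices induce the same map. This is exactly the kind of verification already carried out in Proposition \ref{prop:composition}, so the argument there should extend; the remaining work is to organize these compatibilities across the three-fold composite. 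Everything else — the identification of $e^*$ with the unit, and the counit triangles — is then a routine diagram chase in the category of pro-symmetric sequences.
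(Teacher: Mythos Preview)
Your proposal is correct and takes essentially the same approach as the paper. The paper's proof is precisely the detailed subcomplex bookkeeping you anticipate in your final paragraph: for associativity, given a finite subcomplex $E \subset T$, it constructs explicit common finite subcomplexes $A, A', A'' \subset T$ (obtained by taking unions of the various factorizing subcomplexes from Lemma~\ref{lem:factorization}) through which both routes around the square factor, and then uses that $AA'A'' \to TTT$ is a monomorphism to conclude the two composites $E \to AA'A''$ agree; the unit axioms are handled by an analogous explicit verification.
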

\begin{proof}
We have to check associativity, i.e. that the following diagrams commutes:
\[ \begin{diagram}
  \node{\der^*(T) \circ \der^*(T) \circ \der^*(T)} \arrow{e,t}{m^* \circ 1} \arrow{s,l}{1 \circ m^*} \node{\der^*(T) \circ \der^*(T)} \arrow{s,r}{m^*} \\
  \node{\der^*(T) \circ \der^*(T)} \arrow{e,t}{m^*} \node{\der^*(T)}
\end{diagram} \]
To see this, consider a finite subcomplex $E \subset T$. By Lemma \ref{lem:factorization}, the composite $E \to T \to TT$ factors as $E \to CD \to TT$ for some $C,D \subset T$. Similarly, the composite $C \to T \to TT$ factors as $C \to C'C'' \to TT$ for $C',C'' \subset T$, and $D \to T \to TT$ factors as $D \to D'D'' \to TT$ for $D',D'' \subset T$.

Now consider the composite $E \to T \to TTT$ where $T \to TTT$ is as in the coassociativity square in the definition of a comonad (\ref{def:comonad}). It follows that this map factors in either of the following ways:
\[ E \to C'C''D \to TTT, \quad E \to CD'D'' \to TTT. \]
Now set $A = C' \cup C$, $A' = C'' \cup D'$, $A'' = D \cup D''$ and note that $A,A',A''$ are all finite subcomplexes of $T$. We then have factorizations $E \to CA'' \to TT$, $E \to AD \to TT$, $C \to AA' \to TT$ and $D \to A'A'' \to TT$.

The composite of the top and right-hand maps in the associativity square above can now be described as built from the following sequences of maps:
\[ \begin{split} \Nat(AX,X^{\smsh *}) \circ  \Nat(A'X,X^{\smsh *}) \circ \Nat(A''X,X^{\smsh *})
                &\to \Nat(AA'X,X^{\smsh *}) \circ \Nat(A''X,X^{\smsh *}) \\
                &\to \Nat(CX,X^{\smsh *}) \circ \Nat(A''X,X^{\smsh *})\\
                &\to \Nat(CA'',X^{\smsh *}) \\
                &\to \Nat(E,X^{\smsh *}). \end{split} \]
Here we are making use of the general maps of the form
\[ \tag{*} \Nat(FX,X^{\smsh *}) \circ \Nat(F'X,X^{\smsh *}) \to \Nat(FF'X,X^{\smsh *}) \]
described in Definition \ref{def:composition} and used to construct the maps $m^*$. These maps are natural and so the above composite is equal to
\[ \begin{split} \Nat(AX,X^{\smsh *}) \circ  \Nat(A'X,X^{\smsh *}) \circ \Nat(A''X,X^{\smsh *})
                &\to \Nat(AA'X,X^{\smsh *}) \circ \Nat(A''X,X^{\smsh *}) \\
                &\to \Nat(AA'A''X,X^{\smsh *}) \\
                &\to \Nat(CA'',X^{\smsh *}) \quad \text{(using $C \to AA'$)} \\
                &\to \Nat(E,X^{\smsh *}) \quad \quad \text{(using $E \to CA''$)}. \end{split} \]
Similarly, the composite of the left-hand and bottom maps in the associativity square can be described by the following sequences
\[ \begin{split} \Nat(AX,X^{\smsh *}) \circ  \Nat(A'X,X^{\smsh *}) \circ \Nat(A''X,X^{\smsh *})
                &\to \Nat(AX,X^{\smsh *}) \circ \Nat(A'A''X,X^{\smsh *}) \\
                &\to \Nat(AA'A''X,X^{\smsh *}) \\
                &\to \Nat(AD,X^{\smsh *}) \quad \text{(using $D \to A'A''$)} \\
                &\to \Nat(E,X^{\smsh *}) \quad \quad \text{(using $E \to AD$)}. \end{split} \]
Firstly, the two composites
\[ E \to AD \to AA'A'' \quad \text{and} \quad E \to CA'' \to AA'A'' \]
are equal since they are each factorizations of the overall map $E \to T \to TTT$. Such factorizations are unique since the map $AA'A'' \to TTT$ is a monomorphism. Comparing the above sequences, it is now sufficient to show that the following square commutes:
\[ \begin{diagram}
  \node{\Nat(AX,X^{\smsh *}) \circ  \Nat(A'X,X^{\smsh *}) \circ \Nat(A''X,X^{\smsh *})} \arrow{e,t}{m^* \circ 1} \arrow{s,l}{1 \circ m^*} \node{\Nat(AX,X^{\smsh *}) \circ  \Nat(A'A''X,X^{\smsh *})} \arrow{s,r}{m^*} \\
  \node{\Nat(AA'X,X^{\smsh *}) \circ  \Nat(A''X,X^{\smsh *})} \arrow{e,t}{m^*} \node{\Nat(AA'A''X,X^{\smsh *})}
\end{diagram} \]
Following through the definitions, each way round this square is expressed by the composites
\[ AA'A''(X) \to (A'A''(X))^{\smsh k} \to (A''(X))^{\smsh n_1+\dots+n_k} \to X^{\smsh r_1+\dots+r_{n_k}}. \]
It therefore commutes and we have thus shown that the original associativity square commutes.

Now consider the unit diagram
\[ \begin{diagram}
  \node{\der^*(T)} \arrow{e,t}{1 \circ e^*} \arrow{se,b}{1} \node{\der^*(T) \circ \der^*(T)} \arrow{s,r}{m^*} \\
  \node[2]{\der^*(T)}
\end{diagram} \]
Again take a finite subcomplex $E \subset T$ and suppose that $E \to T \to TT$ factors as $E \to CD \to TT$ for finite subcomplexes $C,D \subset T$. We can assume that $C$ contains $E$ without loss of generality. The composite of the top and right-hand sides of the above diagram is then built from the maps
\[ \begin{split} \Nat(CX,X^{\smsh k})   &\to \Nat(CX,X^{\smsh k}) \smsh \Nat(DX,X) \smsh \dots \smsh \Nat(DX,X) \\
                                        &\to \Nat(CDX,X^{\smsh k}) \\
                                        &\to \Nat(EX,X^{\smsh k})
\end{split} \]
where we have used the map $S \to \Nat(DX,X)$ that comes from the composite $D \to T \arrow{e,t}{e} I_{\spectra}$. This composite can then be expressed by the sequence
\[ EX \to CDX \to (DX)^{\smsh k} \to (TX)^{\smsh k} \to X^{\smsh k} \]
based on some chosen natural transformation $CY \to Y^{\smsh k}$. On the other hand, the identity on $\der^*(T)$ can be thought of as expressed by the map
\[ \Nat(CX,X^{\smsh k}) \to \Nat(EX,X^{\smsh k}) \]
determined by the inclusion $E \to C$. It is therefore sufficient to show that the following diagram commutes
\[ \begin{diagram}
  \node{EX} \arrow{e} \arrow{sse} \node{CDX} \arrow{e} \arrow{s} \node{(DX)^{\smsh k}} \arrow{s} \\
  \node[2]{CTX} \arrow{e} \arrow{s} \node{(TX)^{\smsh k}} \arrow{s} \\
  \node[2]{CX} \arrow{e} \node{X^{\smsh k}}
\end{diagram} \]
The right-hand half commutes by the naturality of the chosen natural transformation $CY \to Y^{\smsh k}$. The composite
\[ E \to CD \to CT \to C \to T \]
is equal to
\[ E \to T \to TT \to T \]
which, by the unit axiom in the definition of a comonad, is equal to the inclusion
\[ E \to T. \]
But this of course factors via the inclusion of $E$ in $C$:
\[ E \to C \to T. \]
Finally, since $C \to T$ is a monomorphism, we obtain the commutativity of the left-hand half of the diagram above. This completes the check that the first unit diagram for $\der^*(T)$ commutes.

Commutativity of the second unit diagram for $\der^*(T)$ follows in a similar manner to the first. This completes the proof that $\der^*(T)$ forms an operad.
\end{proof}

\begin{corollary} \label{cor:operad}
Let $T: \spectra \to \spectra$ be a comonad that is the left Kan extension of a finite cell functor. Then the symmetric sequence
\[ \der^*(T) \isom \Nat(TX,X^{\smsh *}) \]
has the structure of an operad in $\spectra$.
\end{corollary}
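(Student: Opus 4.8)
The plan is to derive Corollary~\ref{cor:operad} directly from Proposition~\ref{prop:operad} by observing that a finite cell functor is in particular a presented cell functor (with a chosen finite cell structure), so the hypotheses of Proposition~\ref{prop:operad} are met and $\der^*(T)$ acquires the structure of an operad of pro-spectra. The only additional content of the corollary is that when $T$ is a \emph{finite} cell functor, this operad of pro-spectra is in fact an ordinary operad in $\spectra$, because the pro-structure collapses. Concretely, if $T$ is the left Kan extension of a finite cell functor, then $T$ has a presentation with finitely many cells, so the poset $\mathsf{Sub}(T)$ of finite subcomplexes has a maximal element, namely $T$ itself. A pro-object indexed on a cofiltered category with a terminal object is isomorphic to the single object obtained by evaluating there (see the first Remark after Definition~\ref{def:pro-objects}), so
\[ \der^n(T) = \{ \Nat(CX,X^{\smsh n}) \}_{C \in \mathsf{Sub}(T)} \isom \Nat(TX,X^{\smsh n}) \]
as pro-spectra, i.e. $\der^*(T)$ is (isomorphic to) an honest symmetric sequence in $\spectra$.

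First I would spell out this identification of pro-objects and check it is compatible with $\Sigma_n$-actions and with the composition-product structure. The subtlety here is that the maps $m^*$ and $e^*$ of Definition~\ref{def:comonad-derivatives} were defined as morphisms of pro-objects using the factorization Lemma~\ref{lem:factorization}: for each finite subcomplex $E \subset T$ one chooses finite subcomplexes $C, D \subset T$ with $E \to CD \to TT$. When $T$ itself is finite, one can simply take $E = C = D = T$ throughout (the factorization $T \to TT$ need not land in a proper subcomplex, but $TT$ is the Kan extension of a finite cell functor composed with a finite cell functor, hence — by Lemma~\ref{lem:cell-F(X)} applied twice, or directly — the relevant maps factor through $T \cdot T$ as a subcomplex of the presentation). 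So under the isomorphism $\der^*(T) \isom \Nat(TX,X^{\smsh *})$ the map $m^*$ becomes the single map of symmetric sequences
\[ \Nat(TX,X^{\smsh *}) \circ \Nat(TX,X^{\smsh *}) \to \Nat(TX,X^{\smsh *}) \]
built from the composition maps (*) of Definition~\ref{def:composition} followed by the map induced by $m: T \to TT$, and $e^*$ becomes the map $\mathsf{1} \to \Nat(TX,X^{\smsh *})$ adjoint to $e: T \to I_{\spectra}$.

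Then I would invoke Proposition~\ref{prop:operad}: the associativity and unit diagrams commute for $\der^*(T)$ as an operad of pro-symmetric sequences, and since the isomorphism $\der^*(T) \isom \Nat(TX,X^{\smsh *})$ is an isomorphism of symmetric sequences carrying $m^*$, $e^*$ to the maps just described, those same diagrams commute for $\Nat(TX,X^{\smsh *})$ in the category of symmetric sequences in $\spectra$. Hence $\Nat(TX,X^{\smsh *})$ is a monoid for the composition product, i.e. an operad in $\spectra$, as claimed. I do not expect any serious obstacle: the only point requiring a little care is confirming that the choices forced by Lemma~\ref{lem:factorization} can all be taken to be $T$ itself when $T$ is finite, so that the pro-object maps genuinely descend to point-set maps of spectra; this is immediate once one notes $\mathsf{Sub}(T)$ has a top element and that the composition maps (*) are natural in the subcomplexes involved. (A side remark worth including: the example $T = \Sigma^\infty\Omega^\infty$ of Example~\ref{ex:SigOm} is precisely a case where $T$ is a finite cell functor, so Corollary~\ref{cor:operad} applies and gives the operad structure on $\der^*(\Sigma^\infty\Omega^\infty)$ whose identification with the commutative operad is the goal of this section.)
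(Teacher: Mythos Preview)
Your argument is correct and matches the paper's: since $T$ is finite, $\mathsf{Sub}(T)^{op}$ has an initial object ($T$ itself), so the pro-object collapses to the ordinary symmetric sequence $\Nat(TX,X^{\smsh *})$, and Proposition~\ref{prop:operad} yields an honest operad in $\spectra$ with exactly the composition maps you describe. One small slip: the cofiltered indexing category is $\mathsf{Sub}(T)^{op}$, in which $T$ is \emph{initial} rather than terminal --- this is what the Remark after Definition~\ref{def:pro-objects} requires for the pro-object to collapse.
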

\begin{proof}
In this case, the cofiltered category $\mathsf{Sub}(T)^{op}$ has an initial object ($T$ itself) and so the pro-symmetric sequence $\der^*(T)$ is isomorphic to the ordinary symmetric sequence $\Nat(TX,X^{\smsh *})$ (i.e. a pro-symmetric sequence indexed on the trivial category). The operad structure maps are then precisely the composites
\[ \Nat(TX,X^{\smsh *}) \circ \Nat(TX,X^{\smsh *}) \to \Nat(TTX, X^{\smsh *}) \to \Nat(TX,X^{\smsh *}). \]
\end{proof}

We now apply Corollary \ref{cor:operad} to the functor $\Sigma^\infty \Omega^\infty: \spectra \to \spectra$ (see Example \ref{ex:SO-comonad}). Recall from Example \ref{ex:SigOm} that $\Sigma^\infty \Omega^\infty$ is a finite cell functor with a presentation consisting of a single cell.

\begin{definition}[Commutative operad] \label{def:com}
The \emph{commutative operad} in $\spectra$ is the operad $\mathsf{Com}$ given by
\[ \mathsf{Com}(n) := S \]
for all $n$, with the trivial $\Sigma_n$-action, and with composition maps given by the unit isomorphisms
\[ S \smsh S \smsh \dots \smsh S \arrow{e,t}{\isom} S \]
for the symmetric monoidal structure on $\spectra$.
\end{definition}

\begin{proposition} \label{prop:com}
Corollary \ref{cor:operad} determines an operad structure on the symmetric sequence $\der^*(\Sigma^\infty \Omega^\infty)$ such that
\[ \der^*(\Sigma^\infty \Omega^\infty) \homeq \mathsf{Com} \]
in the category of operads in $\spectra$ (i.e. there is a zigzag of weak equivalences of operads between them).
\end{proposition}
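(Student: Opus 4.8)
The plan is to identify $\der^*(\Sigma^\infty\Omega^\infty)$, with the operad structure of Corollary \ref{cor:operad}, with the \emph{coendomorphism operad} of the cofibrant sphere $S_c$, and then to connect the latter to $\mathsf{Com}$ by a short zigzag of operad weak equivalences.

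First, recall from Example \ref{ex:SigOm} that $\Sigma^\infty\Omega^\infty = S_c \smsh \spectra(S_c,-)$ is a finite cell functor with a presentation by a single cell, so Corollary \ref{cor:operad} applies and gives an operad structure on the honest symmetric sequence $\der^*(\Sigma^\infty\Omega^\infty) \isom \Nat(\Sigma^\infty\Omega^\infty X, X^{\smsh *})$, whose composition maps are built from the composition-of-natural-transformations maps of Definition \ref{def:composition} followed by the map induced by the comonad comultiplication $m = \Sigma^\infty\eta\Omega^\infty$. Applying the strong Yoneda Lemma (Lemma \ref{lem:yoneda}) with $I = K = S_c$ identifies $\Nat(\Sigma^\infty\Omega^\infty X, X^{\smsh n})$ with $\Map(S_c, S_c^{\smsh n})$. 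Write $\mathsf{coEnd}(S_c)$ for the coendomorphism operad of $S_c$: the symmetric sequence $n \mapsto \Map(S_c, S_c^{\smsh n})$ with operadic composition sending $(f;g_1,\dots,g_k)$ to $(g_1\smsh\dots\smsh g_k)\circ f$ and unit $\mathrm{id}_{S_c}$. The first, and main, step is to check that the Yoneda isomorphism is an isomorphism of operads $\der^*(\Sigma^\infty\Omega^\infty) \isom \mathsf{coEnd}(S_c)$. Because $\Sigma^\infty\Omega^\infty$ has a single cell there are no choices of finite subcomplexes to track, so this reduces to a direct unwinding: the composite of Definition \ref{def:composition}, evaluated at $X = S_c$ and restricted along the generating cell $j\colon S_c \to \Sigma^\infty\Omega^\infty(S_c)$, together with precomposition by $m_{S_c}$, collapses under the triangle identities for the $\Sigma^\infty \dashv \Omega^\infty$ adjunction (equivalently, under the relation $\epsilon_{S_c}\circ j = \mathrm{id}_{S_c}$ between the generating cell and the counit) to ordinary composition of maps out of $S_c$. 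I expect this bookkeeping --- matching the elaborate composite of Definition \ref{def:composition} with $m^*$ against the coendomorphism composition --- to be the principal obstacle; everything else is essentially formal.

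Next I would note that the coendomorphism operad of the unit object $S$ is isomorphic to $\mathsf{Com}$: the unit isomorphisms $\Map(S, S^{\smsh n}) \isom \Map(S,S) \isom S$ carry the coendomorphism composition precisely onto the composition maps of Definition \ref{def:com}. To relate $\mathsf{coEnd}(S_c)$ to $\mathsf{coEnd}(S)$, let $g\colon S_c \weq S$ be the chosen cofibrant replacement (a trivial fibration), after first factoring it if necessary as a cofibration followed by a trivial fibration so that the relevant mapping-object maps below are fibrations. Form the ``coendomorphism operad of the arrow $g$'', namely the operad $\mathsf{coEnd}(g)$ with
\[ \mathsf{coEnd}(g)(n) := \Map(S_c, S_c^{\smsh n}) \times_{\Map(S_c, S^{\smsh n})} \Map(S, S^{\smsh n}), \]
the pullback being taken along the map induced by postcomposition with $g^{\smsh n}$ and the map induced by precomposition with $g$. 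This is an operad, being the evident sub-operad of $\mathsf{coEnd}(S_c) \times \mathsf{coEnd}(S)$ cut out by the naturality squares for $g$, and the two projections are morphisms of operads
\[ \mathsf{coEnd}(S_c) \longleftarrow \mathsf{coEnd}(g) \longrightarrow \mathsf{coEnd}(S). \]

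Finally I would check that both projections are weak equivalences. Since $\Map(S,-) \isom \mathrm{id}$ and $S_c$ is cofibrant while every spectrum is fibrant, the maps $\Map(S, S^{\smsh n}) \isom S^{\smsh n} \to \Map(S_c, S^{\smsh n})$ induced by $g$, and $\Map(S_c, S_c^{\smsh n}) \to \Map(S_c, S^{\smsh n})$ induced by $g^{\smsh n}\colon S_c^{\smsh n}\weq S^{\smsh n}$, are weak equivalences of spectra, using the unit and pushout--product axioms of the monoidal model structure on $\spectra$ together with the fact that $g$ is a trivial fibration; with the factorization arranged above, the square defining $\mathsf{coEnd}(g)(n)$ is a homotopy pullback, so the projections out of it are weak equivalences. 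Composing the four morphisms
\[ \der^*(\Sigma^\infty\Omega^\infty) \isom \mathsf{coEnd}(S_c) \lweq \mathsf{coEnd}(g) \weq \mathsf{coEnd}(S) \isom \mathsf{Com} \]
exhibits the required zigzag of weak equivalences of operads in $\spectra$, which proves Proposition \ref{prop:com}.
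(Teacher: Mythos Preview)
Your proof is correct and follows the same approach as the paper: identify $\der^*(\Sigma^\infty\Omega^\infty)$ with the coendomorphism operad of $S_c$ via the Yoneda isomorphism, then connect to $\mathsf{Com}\isom\mathsf{coEnd}(S)$ by a zigzag of operad weak equivalences induced by $g\colon S_c\weq S$. The paper's zigzag passes directly through the intermediate operad $\Map(S,S_c^{\smsh *})$ rather than your pullback arrow-operad $\mathsf{coEnd}(g)$, but this is a cosmetic difference; your construction is the standard Berger--Moerdijk-style one, and the only loose end---ensuring the strict pullback computes the homotopy pullback, which your ``factoring $g$'' remark does not quite pin down since neither leg is obviously a fibration even after factoring---is a routine technical point.
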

\begin{proof}
We saw in Example \ref{ex:SigOm} that
\[ \der^n(\Sigma^\infty \Omega^\infty) \isom \Nat(S_c \smsh \spectra(S_c,X), X^{\smsh n}) \isom \Map(S_c,(S_c)^{\smsh n}) \]
using the Yoneda Lemma. Following through the definitions, we see that the Yoneda Isomorphisms identify the operad structure on $\der^*(\Sigma^\infty \Omega^\infty)$ with the `coendomorphism operad' structure on the objects $\Map(S_c,(S_c)^{\smsh *})$. The structure maps in this coendomorphism operad are given by
\[ \Map(S_c,(S_c)^{\smsh k}) \smsh \Map(S_c,(S_c)^{\smsh n_1}) \smsh \dots \smsh \Map(S_c,(S_c)^{\smsh n_k}) \to \Map(S_c,(S_c)^{\smsh(n_1+\dots+n_k)}) \]
given (informally) by
\[ (f,g_1,\dots,g_k) \mapsto f \circ (g_1 \smsh \dots \smsh g_k). \]
This coendomorphism operad is equivalent to the commutative operad via the following zigzag of weak equivalences of operads:
\[ \Map(S_c,S_c^{\smsh n}) \lweq \Map(S,S_c^{\smsh n}) \weq \Map(S,S^{\smsh n}) \isom S. \]
\end{proof}

\begin{definition} \label{def:tilde-der}
The operad $\der^*(\Sigma^\infty \Omega^\infty)$ is not reduced because
\[ \der^1(\Sigma^\infty \Omega^\infty) \isom \Map(S_c,S_c) \ncong S. \]
When we come to consider bar constructions over $\der^*(\Sigma^\infty \Omega^\infty)$ in the next few sections, it is important to work over a reduced operad. We also need to take a projectively-cofibrant replacement at some point. We therefore make the following definitions:
\begin{itemize}
  \item For any operad $P$ in $\spectra$, we can associate a reduced operad $P^{\mathsf{red}}$ in which
  \[ P^{\mathsf{red}}(n) := \begin{cases} S & \text{if $n = 1$}; \\ P(n) & \text{otherwise}. \end{cases} \]
  with composition maps given by
  \[ \begin{split} P^{\mathsf{red}}(k) \smsh P^{\mathsf{red}}(n_1) \smsh \dots \smsh P^{\mathsf{red}}(n_k)
                            &\to P(k) \smsh P(n_1) \smsh \dots \smsh P(n_k) \\
                            &\to P(n_1+\dots+n_k) \\
                            &= P^{\mathsf{red}}(n_1+\dots+n_k) \end{split} \]
  if $n_1+\dots+n_k > 1$ where $P^{\mathsf{red}}(1) \to P(1)$ is the unit map
  \[ S \to P(1) \]
  for the operad $P$. The only composition map that maps into $P^{\mathsf{red}}(1)$ is given by
  \[ P^{\mathsf{red}}(1) \smsh P^{\mathsf{red}}(1) = S \smsh S \to S = P^{\mathsf{red}}(1). \]
  In particular, we have a reduced operad $\der^*(\Sigma^\infty \Omega^\infty)^{\mathsf{red}}$.
  \item We also let $\tilde{\der}^*(\Sigma^\infty \Omega^\infty)$ denote a projectively-cofibrant replacement for the reduced operad $\der^*(\Sigma^\infty \Omega^\infty)^{\mathsf{red}}$.
\end{itemize}
The map $S \to \Map(S_c,S_c)$ is a weak equivalence and so we obtain a sequence of weak equivalences of operads
\[ \tilde{\der}^*(\Sigma^\infty \Omega^\infty) \weq \der^*(\Sigma^\infty \Omega^\infty)^{\mathsf{red}} \weq \der^*(\Sigma^\infty \Omega^\infty). \]
In particular, $\tilde{\der}^*(\Sigma^\infty \Omega^\infty)$ is an $E_\infty$-operad in $\spectra$, i.e. a cofibrant replacement for the commutative operad.
\end{definition}

\begin{definition} \label{def:d*I}
The main result of \cite{ching:2005a} is that the Spanier-Whitehead dual of the reduced bar construction on the commutative operad is, as a symmetric sequence, equivalent to the Goodwillie derivatives of the identity functor on based spaces, that is:
\[ \der^G_*(I_{\sset}) \homeq \dual B(\mathsf{Com}). \]
On the level of symmetric sequences, this result is due to the first author and Mark Mahowald in \cite{arone/mahowald:1999}. The operad structure is constructed in \cite{ching:2005a}.

Proposition \ref{prop:com}, together with the homotopy invariance of the bar construction (Proposition \ref{prop:bar-invariance}), then implies that those derivatives of the identity are equivalent to
\[ \dual B(\tilde{\der}^*(\Sigma^\infty \Omega^\infty)). \]
The cooperad $B(\tilde{\der}^*(\Sigma^\infty \Omega^\infty))$ is directly-dualizable by Lemma \ref{lem:bar-directly-dualizable} and so this dual has an operad structure by Lemma \ref{lem:dual-cooperad}.

It is convenient to have notation for the operad formed by this dual. Since we know that it is equivalent to the derivatives of the identity on based spaces, we write
\[ \der_*(I) := \widetilde{\dual B(\tilde{\der}^*(\Sigma^\infty \Omega^\infty))}. \]
(For convenience we include a $\Sigma$-cofibrant replacement in this definition.) In \S\ref{sec:spaces-spaces} below, we give another proof that this operad is equivalent to the derivatives of the identity on based spaces that does not directly use \cite{arone/mahowald:1999}. However, our proof (indeed the main ideas of this paper) is still based on the adjunction $(\Sigma^\infty,\Omega^\infty)$, which was also the basis for studying the derivatives of the identity in \cite{arone/kankaanrinta:1998(2)}, and continued in \cite{arone/mahowald:1999}.
\end{definition}

\part{Functors of spaces}

The remainder of this paper is concerned with models for the Goodwillie derivatives of functors to and/or from simplicial sets, rather than spectra, and the corresponding chain rules. All of our results are based on the cosimplicial cobar construction associated to the $(\Sigma^\infty, \Omega^\infty)$ adjunction. In particular, they are depend on a fundamental result expressing the Taylor tower of a composite of two functors in which the `middle' category of simplicial sets, in terms of the cobar construction. In \S\ref{sec:cobar} we state and prove this (surprisingly simple) result. We also point out that this result gives us a way to approach the calculation of the full Taylor tower of a composite functor, rather than just the derivatives which are the main focus of this paper.

Here is a summary of the rest of the paper:
\begin{itemize}
  \item we construct models for the derivatives of a pointed simplicial homotopy functor $F: \sset \to \spectra$ that have the structure of a right $\der_*(I)$-module (\S\ref{sec:spaces-spectra});
  \item we construct models for the derivatives of a pointed simplicial homotopy functor $F: \spectra \to \sset$ that have the structure of a left $\der_*(I)$-module (\S\ref{sec:spectra-spaces});
  \item combining the previous two sections, we construct models for the derivatives of a functor $F: \sset \to \sset$ that have the structure of a $\der_*(I)$-bimodule (\S\ref{sec:spaces-spaces});
  \item we then turn to proving chain rules involving functors to and/or from $\sset$. In preparation for this, we prove a result on bar constructions that is essentially a weak form of `Koszul duality' for operads and modules in $\spectra$ (\S\ref{sec:koszul});
  \item finally, using our Koszul duality result and previous constructions, we deduce the form of the chain rule for functors to and/or from pointed simplicial sets (\S\ref{sec:chainrule-spaces}).
\end{itemize}

\section{The cobar construction} \label{sec:cobar}

As mentioned above, all of the main results in the rest of this paper depend on the following fundamental result.

\begin{theorem} \label{thm:key}
Let $F: \sset \to \cat{D}$ be a pointed simplicial homotopy functor with $\cat{D}$ equal to either spaces or spectra. Let $G: \cat{C} \to \sset$ be a pointed simplicial homotopy functor with $\cat{C}$ equal to either spaces or spectra. Suppose also that $F$ is finitary. Then there are equivalences (natural in $F$ and $G$):
\[ \eta_n: P_n(FG) \weq \widetilde{\Tot} \; \left[ P_n(F \Omega^\infty (\Sigma^\infty \Omega^\infty)^{\bullet} \Sigma^\infty G) \right] \]
and
\[ \epsilon_n: D_n(FG) \weq \widetilde{\Tot} \; \left[ D_n(F \Omega^\infty (\Sigma^\infty \Omega^\infty)^{\bullet} \Sigma^\infty G) \right]. \]
Recall that $\widetilde{\Tot}$ denotes the homotopy-invariant totalization of a cosimplicial object in which a Reedy fibrant replacement is made before taking the totalization.
\end{theorem}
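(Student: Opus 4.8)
The plan is to prove Theorem~\ref{thm:key} by induction on the Taylor tower of $F$, reducing to the case where $F$ is homogeneous, and then exploiting the classification of homogeneous functors to reduce further to the case $F = \Sigma^\infty$. First, I would note that the two statements (for $P_n$ and for $D_n$) are essentially equivalent: given the equivalence $\eta_m$ for all $m \le n$, one obtains $\epsilon_n$ by taking homotopy fibres of the maps $\eta_n \to \eta_{n-1}$, using that $\widetilde{\Tot}$ commutes with homotopy fibres (it is a homotopy limit) and that $D_n = \hofib(P_n \to P_{n-1})$. Conversely, knowing $\epsilon_m$ for $m \le n$ recovers $\eta_n$ by assembling the layers. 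So it suffices to prove one of them, and I would focus on the $P_n$ statement, building the natural coaugmentation map
\[ FG \to \Tot(F\Omega^\infty(\Sigma^\infty\Omega^\infty)^\bullet \Sigma^\infty G) \]
from the comonad structure on $\Sigma^\infty\Omega^\infty$ and the comodule structures on $F\Omega^\infty$ and $\Sigma^\infty G$ (as described in the introduction), and then showing it induces the claimed equivalence after applying $P_n$.

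The reduction steps proceed as follows. Using Proposition~\ref{prop:calculus}(1), the map $P_n(FG) \to P_n((P_nF)G)$ is an equivalence, and since $\widetilde{\Tot}$ only sees finitely many terms' worth of Taylor data (or rather, commutes appropriately with the relevant limits), the analogous statement holds for the right-hand side; this reduces to $F$ being $n$-excisive. Then, given the fibre sequence $D_kF \to P_kF \to P_{k-1}F$ for $k \le n$, and using that both $P_n(-G)$ and $\widetilde{\Tot}(P_n((-)\Omega^\infty(\Sigma^\infty\Omega^\infty)^\bullet\Sigma^\infty G))$ send fibre sequences in the functor variable to fibre sequences (the former because $P_n$ preserves finite homotopy limits by Theorem~\ref{thm:Taylor}, the latter because $\widetilde{\Tot}$ and $P_n$ both do), a downward induction on $k$ reduces to the case where $F$ is $r$-homogeneous for some $r \le n$. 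For homogeneous $F$ with source $\sset$, Proposition~\ref{prop:D_nF} gives a factorization $F \simeq F'\Sigma^\infty$ with $F'$ homogeneous and defined on spectra. Since $F'$ is a functor of spectra and therefore unproblematic — the cosimplicial identity $I_{\sset} \to \Tot(\Omega^\infty(\Sigma^\infty\Omega^\infty)^\bullet\Sigma^\infty)$ concerns precisely the "middle" spectra-level structure — applying $F'$ to everything reduces the theorem to the single case $F = \Sigma^\infty$ (more precisely, to showing $P_n(\Sigma^\infty G) \to \Tot(P_n(\Sigma^\infty\Omega^\infty(\Sigma^\infty\Omega^\infty)^\bullet\Sigma^\infty G))$ is an equivalence, using the universal property argument of Proposition~\ref{prop:calculus}(2) to handle the interaction between $P_n$ and $\Sigma^\infty$).

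In the base case $F = \Sigma^\infty$, the right-hand side becomes $\widetilde{\Tot}(P_n((\Sigma^\infty\Omega^\infty)^{\bullet+1}\Sigma^\infty G))$, i.e., the totalization of the standard cosimplicial resolution of $\Sigma^\infty G$ by the comonad $\Sigma^\infty\Omega^\infty$ (shifted), and the claim is that $P_n$ applied to the augmentation $\Sigma^\infty G \to \Tot((\Sigma^\infty\Omega^\infty)^{\bullet+1}\Sigma^\infty G)$ is an equivalence. Here I would use that the augmented cosimplicial object $(\Sigma^\infty\Omega^\infty)^{\bullet+1}\Sigma^\infty G$ admits extra codegeneracies coming from the counit of the $(\Sigma^\infty,\Omega^\infty)$-adjunction applied on the outside (the comonad bar resolution is always "split" at the level where the comonad acts), so by Lemma~\ref{lem:cosimplicial-contraction} / Lemma~\ref{lem:contraction-homotopy} the augmentation is already a weak equivalence before applying $P_n$ — wait, that would be too strong and is false in general; rather, the extra codegeneracy exhibits the augmentation as a cosimplicial homotopy equivalence \emph{of functors with values involving $\Sigma^\infty$}, i.e. after the outermost $\Sigma^\infty\Omega^\infty$ is present. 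The correct statement is that $\Sigma^\infty\Omega^\infty \to \Tot((\Sigma^\infty\Omega^\infty)(\Sigma^\infty\Omega^\infty)^\bullet)$ is a levelwise split equivalence, hence so is $\Sigma^\infty\Omega^\infty G' \to \Tot(\ldots \Sigma^\infty\Omega^\infty G')$ for any $G'$ — and then one writes $\Sigma^\infty G$, after applying $\Omega^\infty\Sigma^\infty$-resolution, in terms of such pieces. \textbf{The main obstacle} I anticipate is exactly this base case: the naive augmentation $\Sigma^\infty G \to \Tot((\Sigma^\infty\Omega^\infty)^{\bullet+1}\Sigma^\infty G)$ is \emph{not} an equivalence before applying $P_n$ (this is the usual failure of the unstable Bousfield--Kan resolution to converge without connectivity hypotheses), so one genuinely needs the $P_n$; and the subtlety flagged in the "Technical remarks" section — that the end coface map $d^0$ is a map between infinite loop spaces but not an infinite-loop map — means one cannot simply analyze this using functor-of-spectra technology alone. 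The resolution must be that $P_n$ of the cosimplicial object, levelwise, is built from functors of spectra (each term $(\Sigma^\infty\Omega^\infty)^k\Sigma^\infty G$ factors through $\Sigma^\infty$ on the outside for $k \ge 1$, and the $k=0$ term can be compared via the coface maps), and a careful Reedy-fibrancy plus connectivity-of-the-coskeleta argument shows the $\widetilde{\Tot}$ computes the right thing; alternatively, one inducts once more, this time on $n$ itself, using that the statement for $P_{n-1}$ plus the homogeneous-layer analysis pins down $P_n$. I would structure the base case around the observation that $D_k(\Sigma^\infty G)$ for each $k$ factors as $(\text{functor of spectra}) \circ \Sigma^\infty$, so the layer-wise statement $\epsilon_k$ is a statement purely about functors of spectra composed with $\Sigma^\infty G$, where the cobar construction \emph{does} resolve correctly because $\Sigma^\infty G$ is already a $\Sigma^\infty\Omega^\infty$-comodule in a rigid way.
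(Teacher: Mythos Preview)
Your overall strategy matches the paper's: reduce via the Taylor tower of $F$, using Proposition~\ref{prop:calculus}(1) to pass to $P_nF$, then use the fibre sequences $D_kF \to P_kF \to P_{k-1}F$ to reduce to the homogeneous case, then use Proposition~\ref{prop:D_nF} to write $F \simeq H\Sigma^\infty$ with $H$ defined on spectra. This is exactly what the paper does.

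The gap is in the base case, where you talk yourself out of the correct argument. When $F = H\Sigma^\infty$, the augmented cosimplicial object
\[ H\Sigma^\infty G \;\longrightarrow\; \bigl[k \mapsto H\Sigma^\infty\Omega^\infty(\Sigma^\infty\Omega^\infty)^k\Sigma^\infty G\bigr] \]
\emph{does} admit extra codegeneracies $s^{-1}$, given by applying the counit $\Sigma^\infty\Omega^\infty \to I_{\spectra}$ to the leftmost copy of $\Sigma^\infty\Omega^\infty$ (the one immediately to the right of $H$). These are honest natural transformations of functors, so they survive application of $P_n$, and by Lemma~\ref{lem:contraction-homotopy} the augmentation becomes an equivalence after $\widetilde{\Tot}$. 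Your instinct ``extra codegeneracies coming from the counit'' was right; your retraction ``wait, that would be too strong and is false in general'' is wrong in this case. The contraction genuinely exists once there is a $\Sigma^\infty$ on the outside --- the situation you are worried about (Bousfield--Kan non-convergence, the non-infinite-loop coface maps of the Technical Remarks) concerns the case $F = I_{\sset}$, where the leftmost functor is $\Omega^\infty$ and no such counit is available. That problem belongs to a later section of the paper and is irrelevant here.

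A secondary issue: your reduction ``applying $F'$ to everything reduces to $F = \Sigma^\infty$'' is not quite right, since $P_n(F'H) \not\simeq F'P_n(H)$ in general. The paper avoids this by running the contraction argument directly for arbitrary $H\Sigma^\infty$, never isolating $\Sigma^\infty$ alone; the presence of $H$ is harmless because the extra codegeneracies are natural transformations and $H$ just rides along.
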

\begin{proof}
The right-hand side of $\eta_n$ is the totalization of the cosimplicial object with $k$-simplices
\[ P_n(F \Omega^\infty (\Sigma^\infty \Omega^\infty)^k \Sigma^\infty G), \]
(and similarly for $D_n$), and coface and codegeneracies given by the unit and counit of the $(\Sigma^\infty,\Omega^\infty)$ adjunction.

The maps $\eta_n$ and $\epsilon_n$ come from augmentations for these cosimplicial objects (in the sense of Definition \ref{def:augmented-simplicial}) given by the unit of the adjunction
\[ FG \to F \Omega^\infty \Sigma^\infty G. \]

We prove that $\eta_n$ is an equivalence via a number of steps:

(1) Now suppose first that $F \to F'$ is a $P_n$-equivalence. Then we have a commutative diagram
\[ \begin{diagram}
  \node{P_n(FG)} \arrow{e} \arrow{s,l}{\sim}
    \node{\widetilde{\Tot} \; \left[ P_n(F \Omega^\infty (\Sigma^\infty \Omega^\infty)^{\bullet} \Sigma^\infty G) \right]} \arrow{s,l}{\sim} \\
  \node{P_n(F'G)} \arrow{e}
    \node{\widetilde{\Tot} \; \left[ P_n(F' \Omega^\infty (\Sigma^\infty \Omega^\infty)^{\bullet} \Sigma^\infty G) \right]}
\end{diagram} \]
The vertical maps are equivalences by Proposition \ref{prop:calculus}(1), and since $\widetilde{\Tot}$ takes level equivalences of cosimplicial objects to equivalences. This diagram tells us that if $\eta_n$ is an equivalence for $F'$, then it is also an equivalence for $F$.

(2) Now suppose that $F \to F' \to F''$ is a fibre sequence of functors. Then we have a commutative diagram
\[ \begin{diagram}
  \node{P_n(FG)} \arrow{e} \arrow{s}
    \node{\widetilde{\Tot} \; \left[ P_n(F \Omega^\infty (\Sigma^\infty \Omega^\infty)^{\bullet} \Sigma^\infty G) \right]} \arrow{s} \\
  \node{P_n(F'G)} \arrow{e} \arrow{s}
    \node{\widetilde{\Tot} \; \left[ P_n(F' \Omega^\infty (\Sigma^\infty \Omega^\infty)^{\bullet} \Sigma^\infty G) \right]} \arrow{s} \\
  \node{P_n(F''G)} \arrow{e}
    \node{\widetilde{\Tot} \; \left[ P_n(F'' \Omega^\infty (\Sigma^\infty \Omega^\infty)^{\bullet} \Sigma^\infty G) \right]}
\end{diagram} \]
Since $P_n$ commutes with fibre sequences, and $\widetilde{\Tot}$ takes levelwise fibre sequences to fibre sequences, each of the columns here is a fibre sequence of functors. Therefore, if the top and bottom horizontal maps are equivalences, so too is the middle horizontal map. In other words, if $\eta_n$ is an equivalence for $F$ and $F''$, it is also an equivalence for $F'$.

(3) Finally suppose that the functor $F$ is equivalent to one of the form $H \Sigma^\infty$ for some $H: \spectra \to \cat{D}$. Then $\eta_n$ takes the form
\[ \eta_n: P_n(H \Sigma^\infty G) \to \widetilde{\Tot} \; \left[ P_n(H \Sigma^\infty \Omega^\infty \dots \Sigma^\infty G) \right]. \]
There are now extra codegeneracies in the cosimplicial object on the right-hand side of the form
\[ P_n(H \Sigma^\infty \Omega^\infty (\Sigma^\infty \Omega^\infty)^k \Sigma^\infty G) \to P_n(H \Sigma^\infty \Omega^\infty (\Sigma^\infty \Omega^\infty)^{k-1} \Sigma^\infty G) \]
given by the counit map $\Sigma^\infty \Omega^\infty \to I_{\spectra}$ applied to the first copy of $\Sigma^\infty \Omega^\infty$ on the left-hand side. These provide a cosimplicial contraction (in the sense of Lemma \ref{lem:cosimplicial-contraction}). By Lemma \ref{lem:contraction-homotopy}, it follows that $\eta_n$ is a weak equivalence.

Now we employ induction on the Taylor tower of $F$ to prove that $\eta_n$ is an equivalence in general. Firstly, by Proposition \ref{prop:D_nF}, the homogeneous layers $D_kF$ are equivalent to functors of the form $H \Sigma^\infty$. Therefore, by (3), $\eta_n$ is an equivalence for each $D_kF$. Then, by (2), applied to the fibre sequences $D_kF \to P_kF \to P_{k-1}F$, it follows that $\eta_n$ is an equivalence for each $P_kF$. Finally, by (1), since $\eta_n$ is an equivalence for $P_nF$, it is also an equivalence for $F$ itself.

The proof that $\epsilon_n$ is an equivalence is almost identical, using the fact that $D_n$ preserves fibre sequences, and that by taking $D_n$ of the result of Proposition \ref{prop:calculus}(1), we have equivalences
\[ D_n(FG) \weq D_n((P_nF)G). \]
\end{proof}

\begin{example}
Taking $F$ and $G$ both to be the identity functor $I$ on $\sset$, we see that
\[ P_n(I) \homeq \widetilde{\Tot}\;(P_n(\Omega^\infty \dots \Sigma^\infty)) \]
and hence
\[ \der_n(I) \homeq \widetilde{\Tot}\;(\der_n(\Omega^\infty \dots \Sigma^\infty)). \]
This is precisely the method used by Arone-Kankaanrinta \cite{arone/kankaanrinta:1998(2)} and Arone-Mahowald \cite{arone/mahowald:1999} to approach the calculation of the Taylor tower of the identity functor.

It is interesting to note that the totalization
\[ \widetilde{\Tot}(\Omega^\infty \dots \Sigma^\infty(X)) \]
is, in general, equal to the Bousfield-Kan $\mathbb{Z}$-completion of the space $X$, which, for simply-connected $X$, is equivalent to $X$. Another way to see this is to recall that the Taylor tower (at $*$) for the identity functor $I$ converges for simply-connected $X$. For such $X$, it then follows from Theorem \ref{thm:key} that
\[ \begin{split} X &\homeq \holim P_nI(X) \\
    &\homeq \holim \widetilde{\Tot} \; (P_n(\Omega^\infty \dots \Sigma^\infty)(X)) \\
    &\homeq \widetilde{\Tot} \; (\holim P_n(\Omega^\infty \dots \Sigma^\infty)(X)) \\
    &\homeq \widetilde{\Tot} \; (\Omega^\infty \dots \Sigma^\infty X). \end{split} \]
\end{example}

\begin{remark}
In fact, $P_n(FG)$ is actually equivalent to $\Tot^n$ of the cosimplicial object in Theorem \ref{thm:key}, and similarly for $D_n(FG)$. In other words this cosimplicial object is degenerate above degree $n$.
\end{remark}

\section{Functors from spaces to spectra} \label{sec:spaces-spectra}

We can now, finally, start producing the main results of this paper. We start with functors from spaces (i.e. simplicial sets) to spectra. In this section, we construct new models for the Goodwillie derivatives of such a functor. These new models come equipped with a natural right $\der_*(I)$-module structure.

For $F: \sset \to \spectra$, we consider the pro-symmetric sequence $\der^*(F \Omega^\infty)$ (which we know is the Spanier-Whitehead dual to the derivatives of the functor $F \Omega^\infty: \spectra \to \spectra$. By the methods of \S\ref{sec:comonads}, this pro-symmetric sequence is a pro-right-module over the operad $\der^*(\Sigma^\infty \Omega^\infty)$. Our models for the derivatives of $F$ are then the Spanier-Whitehead duals of the one-sided bar construction on the pro-module $\der^*(F \Omega^\infty)$ (see Definition \ref{def:right-full}).

To make this approach work, we have to understand how composing with $\Omega^\infty$ affects the process of taking finite subcomplexes of a presented cell functor. The key to this is the following lemma.

\begin{lemma} \label{lem:extend-omega}
Let $F \in [\finsset,\spectra]$ be a presented cell functor, and denote also by $F$ the (enriched) left Kan extension of $F$ to a functor $\sset \to \spectra$. Then the composite $F \Omega^\infty$ is a presented cell functor in $[\finspec,\spectra]$ in which the cells correspond 1-1 with the cells of $F$.
\end{lemma}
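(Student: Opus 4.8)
The plan is to understand the cell structure of $F\Omega^\infty$ by analyzing how the left Kan extension of $F$ (from $\finsset$ to all of $\sset$) interacts with precomposition by $\Omega^\infty : \finspec \to \sset$. Recall from the notation established after Lemma~\ref{lem:extend-basic} that $F\Omega^\infty \in [\finspec,\spectra]$ is defined as the composite of $\Omega^\infty$ with the left Kan extension $F : \sset \to \spectra$. The key structural input is Remark~\ref{rem:extend}: the left Kan extension, being a left adjoint, preserves the pushout squares and sequential colimits that make up a presented cell structure. So if
\[ * = F_0 \to F_1 \to \dots \to F \]
is the chosen presentation of $F$ (with $F_{i+1}$ obtained from $F_i$ by attaching cells indexed by $A_i$ along generating cofibrations $I^\alpha_0 \smsh \finsset(K_\alpha,-) \to I^\alpha_1 \smsh \finsset(K_\alpha,-)$), then the Kan-extended $F$ is still the colimit of this sequence, and $F_{i+1}$ is still obtained from $F_i$ by the corresponding pushout of $\bigvee_\alpha I^\alpha_0 \smsh \sset(K_\alpha,-) \to \bigvee_\alpha I^\alpha_1 \smsh \sset(K_\alpha,-)$.

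First I would precompose the entire sequence with $\Omega^\infty$: since precomposition by any functor preserves all colimits computed objectwise, we get that $F\Omega^\infty$ is the colimit of
\[ * = F_0\Omega^\infty \to F_1\Omega^\infty \to \dots \to F\Omega^\infty, \]
with each stage a pushout of $\bigvee_\alpha I^\alpha_0 \smsh \sset(K_\alpha, \Omega^\infty -) \to \bigvee_\alpha I^\alpha_1 \smsh \sset(K_\alpha, \Omega^\infty -)$. The crucial point is then to identify the functor $\finspec \to \spectra$ given by $E \mapsto I \smsh \sset(K, \Omega^\infty E)$ with a basic cell functor in $[\finspec,\spectra]$, i.e.\ one of the form $I \smsh \spectra(K', -)$ for some $K' \in \finspec$. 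This is where the $(\Sigma^\infty,\Omega^\infty)$ adjunction of Definition~\ref{def:suspec} and Lemma~1.5 enters: the simplicial adjunction gives a natural isomorphism $\sset(K, \Omega^\infty E) \isom \spectra(\Sigma^\infty K, E)$. Since $K \in \finsset$ and $\Sigma^\infty K = S_c \smsh K$ with $S_c$ a finite cell spectrum (Definition~\ref{def:cofibrant-spectrum}), $\Sigma^\infty K$ is a finite cell spectrum, so $\Sigma^\infty K \in \finspec$. Hence
\[ I \smsh \sset(K, \Omega^\infty -) \isom I \smsh \spectra(\Sigma^\infty K, -), \]
which is precisely a basic cell functor in $[\finspec,\spectra]$. (One should also use Lemma~\ref{lem:extend-basic} and Remark~\ref{rem:extend} to be careful that the Kan extension of $F$ restricted to finite spectra of the form $\Omega^\infty E$ is computed by these formulas — but this is exactly what Remark~\ref{rem:extend} guarantees.)

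Assembling these identifications, the pushout square defining $F_{i+1}\Omega^\infty$ from $F_i\Omega^\infty$ becomes
\[ \begin{diagram}
  \node{\Wdge_{\alpha \in A_i} I^\alpha_0 \smsh \spectra(\Sigma^\infty K_\alpha,-)} \arrow{e} \arrow{s} \node{F_i\Omega^\infty} \arrow{s} \\
  \node{\Wdge_{\alpha \in A_i} I^\alpha_1 \smsh \spectra(\Sigma^\infty K_\alpha,-)} \arrow{e} \node{F_{i+1}\Omega^\infty}
\end{diagram} \]
so that $F\Omega^\infty$ is presented by attaching, for each original cell $\alpha$ of $F$ (with spectrum $K_\alpha \in \finsset$), a corresponding cell with the same $I^\alpha_0 \to I^\alpha_1$ but with $K_\alpha$ replaced by $\Sigma^\infty K_\alpha \in \finspec$. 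This is a presented cell structure in $[\finspec,\spectra]$ with cells in bijection with those of $F$, as claimed. The main obstacle — really the only subtlety — is ensuring that the Kan extension $F$ (as a functor on all of $\sset$) is genuinely computed on objects $\Omega^\infty E$ by the cell-by-cell formulas above, rather than by some larger coend; this is handled by Lemma~\ref{lem:extend-basic} together with Remark~\ref{rem:extend}, which tell us the Kan extension of a presented cell functor remains a cell complex built from the same generating cofibrations via the same formulas. A secondary bookkeeping point is that one attaching map may land in $F_i\Omega^\infty$ only after identifying $\sset(K_\alpha,\Omega^\infty-)$ with $\spectra(\Sigma^\infty K_\alpha,-)$ compatibly across all stages, but naturality of the adjunction isomorphism makes this automatic.
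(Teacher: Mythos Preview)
Your proposal is correct and follows essentially the same approach as the paper: precompose the presented cell structure with $\Omega^\infty$, use that Kan extension and precomposition preserve the relevant colimits so each stage remains a pushout, and then rewrite each basic cell via the adjunction isomorphism $\sset(K_\alpha,\Omega^\infty -)\isom\spectra(\Sigma^\infty K_\alpha,-)$ with $\Sigma^\infty K_\alpha\in\finspec$. The only cosmetic difference is that the paper justifies $\Sigma^\infty K_\alpha\in\finspec$ by noting that $\Sigma^\infty$ takes generating cofibrations to generating cofibrations and preserves colimits, whereas you argue directly that $S_c\smsh K_\alpha$ is a finite cell spectrum; both are fine.
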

\begin{proof}
To define the cell structure on $F \Omega^\infty$, we set
\[ (F \Omega^\infty)_i := F_i \Omega^\infty. \]
Composing the attaching diagram for the cells of $F$ of degree $i+1$ with $\Omega^\infty$, we get a square:
\[ \begin{diagram}
  \node{\Wdge_{\alpha}^{\mathstrut} I^{\alpha}_0 \smsh \sset(K_{\alpha},\Omega^\infty(-))} \arrow{e} \arrow{s} \node{(F \Omega^\infty)_i} \arrow{s} \\
  \node{\Wdge_{\alpha}^{\mathstrut} I^{\alpha}_1 \smsh \sset(K_{\alpha},\Omega^\infty(-))} \arrow{e} \node{(F \Omega^\infty)_i}
\end{diagram} \]
This is a pushout square of functors $\finspec \to \spectra$ because the left Kan extension commutes with colimits. Here each $I^{\alpha}_0 \to I^{\alpha}_1$ is one of the generating cofibrations in $\spectra$ and $K_{\alpha} \in \finsset$. Now notice that there is an isomorphism of pointed simplicial sets
\[ \sset(K_{\alpha},\Omega^\infty X) \isom \spectra(\Sigma^\infty K_{\alpha}, X). \]
The above diagram therefore determines a pushout square
\[ \begin{diagram}
  \node{\Wdge_{\alpha}^{\mathstrut} I^{\alpha}_0 \smsh \spectra(\Sigma^\infty K_{\alpha},-)} \arrow{e} \arrow{s} \node{(F \Omega^\infty)_i} \arrow{s} \\
  \node{\Wdge_{\alpha}^{\mathstrut} I^{\alpha}_1 \smsh \spectra(\Sigma^\infty K_{\alpha},-)} \arrow{e} \node{(F \Omega^\infty)_i}
\end{diagram} \]
Now if $K_{\alpha} \in \finsset$, then $\Sigma^\infty K_{\alpha} \in \finspec$. This is because $\Sigma^\infty$ preserves colimits, and takes the generating cofibrations in $\sset$ to generating cofibrations in $\spectra$. Therefore the above square is the attaching diagram for a presented cell functor.

Finally, notice that $F \Omega^\infty$ is equal to the colimit of the $(F \Omega^\infty)_i = F_i \Omega^\infty$, and so this is a cell structure on $F \Omega^\infty$. It is clear from the construction that the cells of $F \Omega^\infty$ are in 1-1 correspondence with those of $F$.
\end{proof}

\begin{remark}
If $F: \finsset \to \spectra$ is a presented cell functor then each finite subcomplex $C' \in \mathsf{Sub}(F \Omega^\infty)$ (where $F \Omega^\infty$ has the cell structure of Lemma \ref{lem:extend-omega}) is isomorphic to $C \Omega^\infty$ for a finite subcomplex $C$ of $F$. Similarly, the pro-symmetric sequence
\[ \der^*(F \Omega^\infty) \]
is canonically isomorphic to the pro-symmetric sequence
\[ \{ \Nat(C\Omega^\infty X,X^{\smsh *}) \} \]
indexed on the finite subcomplexes $C \in \mathsf{Sub}(F)$. We do not distinguish between these two pro-objects.
\end{remark}

\begin{remark} \label{rem:FOE}
In what follows, we need to be clear about the meaning of expressions of the form $F \Omega^\infty E$, where $F:\finsset \to \spectra$ is a cell functor, and $E$ is any spectrum. We can think of $F \Omega^\infty E$ in two ways:
\begin{enumerate}
  \item as the left Kan extension of $F$ to a functor $\sset \to \spectra$ applied to the simplicial set $\Omega^\infty E$;
  \item as the left Kan extension of the cell functor $F \Omega^\infty$ (as in Lemma \ref{lem:extend-omega}) applied to the spectrum $E$.
\end{enumerate}
It follows from Remark \ref{rem:extend} that these two possibilities are naturally isomorphic.
\end{remark}

We now recall the notion of a `comodule' over a comonad, that is a functor with either a left or right action of the comonad. (This is also sometimes called a `coalgebra'.)

\begin{definition} \label{def:comodule}
Let $T: \spectra \to \spectra$ be a comonad (Definition \ref{def:comonad}) and let $F: \spectra \to \spectra$ be another functor. A \emph{right $T$-comodule structure} on $F$ is a natural transformation $r: F \to FT$ such that the following diagrams commute:
\[ \begin{diagram}
  \node{F} \arrow{e,t}{r} \arrow{s,l}{r} \node{FT} \arrow{s,r}{Fm} \\
  \node{FT} \arrow{e,t}{rT} \node{FTT}
\end{diagram}, \quad
\begin{diagram}
  \node{F} \arrow{e,t}{r} \arrow{se,b}{1_T} \node{FT} \arrow{s,r}{Te} \\
  \node[2]{F}
\end{diagram} \]
Dually, a \emph{left $T$-comodule structure} on $F$ is a natural transformation $l:F \to TF$ such that analogous diagrams commute.
\end{definition}

The argument of Proposition \ref{prop:operad} then generalizes as follows.

\begin{proposition} \label{prop:module}
Let $F,T: \spectra \to \spectra$ be presented cell functors and suppose that $T$ is a comonad and $F$ a right $T$-comodule. Then the map
\[ r^*: \der^*(F) \circ \der^*(T) \to \der^*(F) \]
induced by $r:F \to FT$ according to Definition \ref{def:composition} makes $\der^*(F)$ into a pro-right-module over the operad $\der^*(T)$. Similarly, if $F$ is a left $T$-comodule, then the map
\[ l^*: \der^*(T) \circ \der^*(F) \to \der^*(F) \]
induced by $l: F \to TF$ makes $\der^*(F)$ into a pro-left-module over $\der^*(T)$.
\end{proposition}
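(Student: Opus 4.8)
The plan is to run the argument of Proposition~\ref{prop:operad} essentially verbatim, with the comonad axioms of Definition~\ref{def:comonad} replaced by the comodule axioms of Definition~\ref{def:comodule}. First, observe that $r^*$ requires no separate construction: it is the map $\alpha^*$ of Proposition~\ref{prop:composition} associated to the natural transformation $r\colon F \to FT$ (in the notation of Definition~\ref{def:composition}, take $H = F$ and use $FT$ in place of $FG$), and likewise $l^*$ is the instance of $\alpha^*$ for $l\colon F \to TF$. Since $\der^*(T)$ is an operad by Proposition~\ref{prop:operad}, with composition map $m^*$ and unit $e^*$ as in Definition~\ref{def:comonad-derivatives}, it remains only to check that $r^*$ (resp.\ $l^*$) satisfies the associativity and unit axioms of a right (resp.\ left) $\der^*(T)$-module.

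For associativity of the right module structure, one must show that the square with vertices $\der^*(F) \circ \der^*(T) \circ \der^*(T)$, $\der^*(F) \circ \der^*(T)$ (appearing twice), and $\der^*(F)$, with edges $r^* \circ 1$, $1 \circ m^*$ and $r^*$, commutes. Fix a finite subcomplex $E \subset F$. By iterated use of Lemma~\ref{lem:factorization}, the composites $E \to F \to FT$ and $E \to F \to FTT$ factor through finite subcomplexes, and, exactly as in Proposition~\ref{prop:operad}, these can be organized into finite subcomplexes $A, A', A'' \subset T$ (together with the ambient $E \subset F$) providing compatible factorizations. The coassociativity square for the right $T$-comodule $F$ (relating $Fm$ and $rT$ in Definition~\ref{def:comodule}) here plays precisely the role that comonad coassociativity plays in Proposition~\ref{prop:operad}: it guarantees that the two resulting factorizations of $E \to F \to FTT$ agree, using that the relevant map of a product of subcomplexes into $FTT$ is a monomorphism (as in the proof of Proposition~\ref{prop:operad}). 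The check then reduces, as there, to the commutativity of a single square of $\Nat$-objects, and each composite around that square is given by the same iterated natural transformation $A(A'A''(X)) \to (A'A''(X))^{\smsh k} \to (A''(X))^{\smsh n_1 + \dots + n_k} \to X^{\smsh r}$, hence commutes by naturality of the basic composition maps of Definition~\ref{def:composition}.

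For the unit axiom, one must show that the composite $\der^*(F) \xrightarrow{1 \circ e^*} \der^*(F) \circ \der^*(T) \xrightarrow{r^*} \der^*(F)$ is the identity. Fix $E \subset F$ and factor $E \to F \to FT$ as $E \to CD \to FT$ with $C$ chosen to contain $E$. Since $e^*$ is induced by the counit $e\colon T \to I_{\spectra}$ and $r^*$ is built from the composition maps of Definition~\ref{def:composition}, the composite above, evaluated on the subcomplex $C$, is expressed by $EX \to CDX \to (DX)^{\smsh k} \to (TX)^{\smsh k} \to X^{\smsh k}$. The counit triangle for the comodule $F$ (relating $r$ and $Te$) identifies the composite $E \to CD \to CT \to C \to T$ with the inclusion $E \to T$, which factors through $E \to C$; since $C \to F$ is a monomorphism this forces the whole diagram to commute, so the composite equals the map induced by $E \to C$, i.e.\ the identity on $\der^*(F)$. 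This reproduces word for word the unit check in Proposition~\ref{prop:operad}.

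The left comodule case is entirely dual: replace $r\colon F \to FT$ by $l\colon F \to TF$ throughout, use the left comodule axioms, and factor composites of the form $E \to F \to TF$ and $E \to F \to TTF$. I expect the only real difficulty to be the same bookkeeping issue as in Proposition~\ref{prop:operad}: one must choose the many finite subcomplexes produced by Lemma~\ref{lem:factorization} compatibly enough that the diagrams of \emph{pro}-objects --- not merely the diagrams at each index of the cofiltered indexing category --- commute. There is no new homotopical input beyond what was already used to construct the operad structure on $\der^*(T)$.
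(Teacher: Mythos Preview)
Your proposal is correct and matches the paper's own proof, which simply states that the argument of Proposition~\ref{prop:operad} applies directly with $T$ replaced by $F$ where appropriate. A couple of minor slips to fix: in the unit check the counit triangle for a right $T$-comodule involves $Fe$ rather than $Te$, and the composite whose value you identify should land in $F$ (so $E \to CD \to CT \to C \to F$ agrees with the inclusion $E \to F$, using that $C \to F$ is a monomorphism), not in $T$.
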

\begin{proof}
The proof of \ref{prop:operad} applies directly, replacing $T$ by $F$ as appropriate.
\end{proof}

We can now construct our models for the Goodwillie derivatives of a functor from simplicial sets to spectra.

\begin{definition} \label{def:right}
Let $F: \sset \to \spectra$ be a presented cell functor. Then we have a natural map
\[ r: F \Omega^\infty \to F \Omega^\infty \Sigma^\infty \Omega^\infty \]
given by the unit of the adjunction between $\Sigma^\infty$ and $\Omega^\infty$ that makes $F \Omega^\infty$ into a right comodule over the comonad $\Sigma^\infty \Omega^\infty$. Now $F \Omega^\infty$ is also a presented cell functor (by Lemma \ref{lem:extend-omega}). Therefore, by Proposition \ref{prop:module}, we have a map
\[ r^*: \der^*(F \Omega^\infty) \circ \der^*(\Sigma^\infty \Omega^\infty) \to \der^*(F \Omega^\infty) \]
that makes $\der^*(F \Omega^\infty)$ into a pro-right-$\der^*(\Sigma^\infty \Omega^\infty)$-module.

We now form the homotopically correct bar construction for the pro-module $\der^*(F \Omega^\infty)$:
\[ \der^*(F) := B(\tilde{\der}^*(F \Omega^\infty), \tilde{\der}^*(\Sigma^\infty \Omega^\infty), \mathsf{1}). \]
This is a pro-right-comodule over the cooperad $B(\tilde{\der}^*(\Sigma^\infty \Omega^\infty))$.
\end{definition}

\begin{definition} \label{def:right-full}
Now let $F: \finsset \to \spectra$ be any pointed simplicial functor, and let $QF$ be a cellular replacement for $F$ (see Definition \ref{def:QF}). Then we set
\[ \der_*(F) := \dual \der^*(QF) \]
where $\der^*(QF)$ is as in Definition \ref{def:right} using the standard cell structure on $QF$. This is the Spanier-Whitehead dual of a pro-right-comodule and so, according to Definition \ref{def:dual-pro-comodule}, can be given the structure of a right module over the dual operad. In this case that means $\der_*(F)$ is a right module over a $\Sigma$-cofibrant replacement of $\dual B(\tilde{\der}^*(\Sigma^\infty \Omega^\infty)$, that is, over $\der_*(I)$.

Explicitly, we can write
\[ \der_*(F) := \hocolim_{C \in \mathsf{Sub}(QF)} \Map \left(B(\tilde{\der}^*(C \Omega^\infty), \tilde{\der}^*(\Sigma^\infty \Omega^\infty), \mathsf{1}), S \right) \]
where the homotopy colimit is taken in the category of right $\der_*(I)$-modules.
\end{definition}

The following is the main result of this section.

\begin{theorem} \label{thm:right}
Let $F: \finsset \to \spectra$ be a pointed simplicial homotopy functor. Then there is a natural equivalence of symmetric sequences
\[ \der_*(F) \homeq \der^G_*(F). \]
That is, the right $\der_*(I)$-module $\der_*(F)$ consists of models for the Goodwillie derivatives of $F$.
\end{theorem}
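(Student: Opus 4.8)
The plan is to combine the cosimplicial cobar resolution of \S\ref{sec:cobar} (specifically Theorem \ref{thm:key}) with the chain rule for functors of spectra (Theorem \ref{thm:chainrule}) and the observation that Spanier--Whitehead duality turns the cosimplicial cobar construction into a bar construction. First I would reduce to the case that $F$ is a finite cell functor: the derivatives of $F$ depend only on the restriction of $F$ to $\finsset$, the construction $\der_*(F)$ of Definition \ref{def:right-full} uses the cellular replacement $QF$, and both $\der_*(-)$ and $\der^G_*(-)$ commute with filtered homotopy colimits (using Corollary \ref{cor:hocolim}, Lemma \ref{lem:bar-hocolim} and Proposition \ref{prop:deriv-commute}); so it suffices to treat $F = C$ a finite cell functor, or even, after a further induction over the cell structure using homotopy pushouts, a functor of the form $C = I \smsh \sset(K,-)$ with $I \in \finspec$, $K \in \finsset$ --- though the cleanest route is probably to keep $F$ a general finite cell functor and argue via the cobar resolution directly.

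The main argument runs as follows. Apply Theorem \ref{thm:key} with $G = I_{\sset}$ (the identity on simplicial sets) and the given $F$, together with the fact (Proposition \ref{prop:D_nF}) that reducing to homogeneous layers lets us assume $F$ is finitary where needed; this gives
\[ D_n(F) \weq \widetilde{\Tot}\left[ D_n(F\Omega^\infty (\Sigma^\infty\Omega^\infty)^{\bullet} \Sigma^\infty) \right]. \]
Since $\Sigma^\infty$ is $\mathsf{1}$ on derivatives, the inner functors are $F\Omega^\infty (\Sigma^\infty\Omega^\infty)^{k}\Sigma^\infty$, and applying the spectra chain rule (Theorem \ref{thm:chainrule}, in its pro-symmetric-sequence form Corollary \ref{cor:chainrule}) identifies $\der^*$ of each term of this cosimplicial object with
\[ \tilde\der^*(F\Omega^\infty) \circ \tilde\der^*(\Sigma^\infty\Omega^\infty)^{\circ k} \circ \mathsf{1}, \]
i.e. with the $k$-simplices of the simplicial bar construction $B_\bullet(\tilde\der^*(F\Omega^\infty), \tilde\der^*(\Sigma^\infty\Omega^\infty), \mathsf{1})$. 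One then checks that the coface/codegeneracy maps of the cobar construction, which come from the $(\Sigma^\infty,\Omega^\infty)$-adjunction unit and counit, are carried by $\der^*$ precisely to the face/degeneracy maps of this simplicial bar construction (the comonad and comodule structure maps of \S\ref{sec:comonads}, via the composition maps $\mu^*$ of Definition \ref{def:mu} --- this is where the strict operad/module structures from Propositions \ref{prop:operad} and \ref{prop:module} matter, and why it is important that $\Sigma^\infty\Omega^\infty$ and $F\Omega^\infty$ are genuine cell functors with strict (co)module structures). Dualizing, totalization becomes geometric realization (Definition \ref{def:realization} and the interaction of $\dual$ with $\Tot$, cf. Remark \ref{rem:ext-tot} and Lemma \ref{lem:dualcomprod}), so
\[ \der^G_n(F) \homeq \der_n\left(\dual\left[\widetilde{\Tot}(\cdots)\right]\right) \homeq \dual B\left(\tilde\der^*(F\Omega^\infty), \tilde\der^*(\Sigma^\infty\Omega^\infty), \mathsf{1}\right)(n) = \der_n(F), \]
which is exactly Definition \ref{def:right-full}. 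Naturality in $F$ follows since each step is natural.

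The hard part will be the bookkeeping in the middle step: verifying that the equivalence of Theorem \ref{thm:chainrule} is compatible, as $k$ varies, with the full cosimplicial structure --- not just levelwise but as a map of cosimplicial (pro-)symmetric sequences --- so that the totalizations match up. This requires knowing that the $\mu^*$ maps of Definition \ref{def:mu} are associative and unital in the appropriate sense (which \S\ref{sec:comonads} establishes for comonads and comodules) and that these structure maps are exactly the ones induced on derivatives by the adjunction (co)unit maps appearing in the cobar construction. A secondary technical point is homotopy-invariance/Reedy-cofibrancy bookkeeping: one must pass to termwise- or $\Sigma$-cofibrant replacements $\tilde\der^*$ throughout (Corollary \ref{cor:termwise-cofibrant}, Proposition \ref{prop:bar-pro-invariance}, Lemma \ref{lem:pro-map-invariance}) so that the bar and cobar constructions are homotopy-invariant and so that the identifications at each cosimplicial level glue to an equivalence after (homotopy-invariant) totalization, using Proposition \ref{prop:reedy}. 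Once these compatibilities are in hand, everything else is formal.
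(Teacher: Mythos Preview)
Your plan is essentially the ``direct'' approach that the paper's introduction explicitly warns does not go through, and the obstruction is exactly at the step you label the hard part. The functors $F\Omega^\infty(\Sigma^\infty\Omega^\infty)^k\Sigma^\infty$ are functors $\sset\to\spectra$, not $\spectra\to\spectra$, so Theorem~\ref{thm:chainrule} and Corollary~\ref{cor:chainrule} do not apply to them. You can strip off the right-hand $\Sigma^\infty$ and use that $\der^G_*(H\Sigma^\infty)\simeq\der^G_*(H)$ levelwise, but then the last coface $d^{k+1}$, which is $H(\Sigma^\infty\eta_X): H\Sigma^\infty X \to H\Sigma^\infty\Omega^\infty\Sigma^\infty X$ for $H=F\Omega^\infty(\Sigma^\infty\Omega^\infty)^k$, is \emph{not} of the form $\alpha\circ\Sigma^\infty$ for any natural transformation $\alpha:H\to H\Sigma^\infty\Omega^\infty$ of spectra-to-spectra functors (there is no such $\alpha$). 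Consequently the naturality of the spectra chain rule gives no information about what $d^{k+1}$ induces on $\der^*$, and you cannot match it to the last face $d_{k+1}$ of $B_\bullet(\tilde\der^*(F\Omega^\infty),\tilde\der^*(\Sigma^\infty\Omega^\infty),\mathsf{1})$. The other cofaces $d^0,\dots,d^k$ and all codegeneracies \emph{are} induced by spectra-to-spectra transformations, so they are fine; it is only this one end map that fails. This is the ``non-infinite-loop-space map'' problem described in the paper's introduction (there illustrated with the dual case $F=I_{\sset}$ and the coface $d^0$ at the $\Omega^\infty$ end).

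The paper circumvents this by not comparing the cobar and bar constructions directly. Instead it introduces an auxiliary functor $\Phi_nF(X)=\Ext^{\mathsf{right}}_{\tilde\der^*(\Sigma^\infty\Omega^\infty)}(\tilde\der^*(F\Omega^\infty),(\Sigma^\infty X)^{\le n})$ (Definition~\ref{def:Phi}), where the right module structure on $(\Sigma^\infty X)^{\smsh *}$ (Definition~\ref{def:sigmainfty-module}, Lemma~\ref{lem:diagonal}) is built precisely from the unit $\eta_X$ and the diagonal on $X$, so the problematic end map is absorbed into the target module rather than appearing as a cosimplicial structure map. One then shows $\der^G_n(\Phi_nF)\simeq\der_n F$ (Proposition~\ref{prop:phi-derivative}) and constructs $\phi:F\to\Phi_nF$ which is proved to be a $D_n$-equivalence by a levelwise comparison with the cobar resolution (Lemmas~\ref{lem:phi-tot},~\ref{lem:phi-FOS} and Proposition~\ref{prop:phi}); that levelwise comparison only uses the cofaces $d^0,\dots,d^k$ and codegeneracies, all of which are spectra-to-spectra. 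Filling your gap amounts to reinventing this $\Phi_nF$ machinery.
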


Most of the remainder of this section deals with the proof of Theorem \ref{thm:right}. Our method of proof is similar to that of Theorem \ref{thm:derivatives} in that we construct a natural transformation $\phi$ between $F$ and another functor (which we call $\Phi_nF$) whose \ord{n} derivative is equivalent to $\der_nF$, and show that $\phi$ induces an equivalence after applying $D_n$. The construction of $\Phi_nF$ and the natural transformation $\phi$ is somewhat more involved than in \S\ref{sec:nat}.

\begin{definition} \label{def:sigmainfty-module}
Let $X$ be a pointed simplicial set. Define maps
\[ \Delta_r: \Sigma^\infty X \smsh \der^r(\Sigma^\infty \Omega^\infty) \to (\Sigma^\infty X)^{\smsh r} \]
by composing the unit map
\[ \Sigma^\infty X \to \Sigma^\infty \Omega^\infty \Sigma^\infty X \]
with the evaluation map
\[ \Sigma^\infty \Omega^\infty \Sigma^\infty X \smsh \Nat(\Sigma^\infty \Omega^\infty E, E^{\smsh k}) \to (\Sigma^\infty X)^{\smsh k}. \]
Smashing together these maps appropriately, we get
\[ \Delta_{r_1,\dots,r_k}: (\Sigma^\infty X)^{\smsh k} \smsh \der^{r_1}(\Sigma^\infty \Omega^\infty) \smsh \dots \smsh \der^{r^k}(\Sigma^\infty \Omega^\infty) \to (\Sigma^\infty X)^{\smsh(r_1+\dots+r_k)}. \]
\end{definition}

\begin{lemma} \label{lem:sigmainfty-module}
For any $X \in \sset$, the maps $\Delta_{r_1,\dots,r_k}$ make the symmetric sequence $(\Sigma^\infty X)^{\smsh *}$ into a right module over the operad $\der^*(\Sigma^\infty \Omega^\infty)$.
\end{lemma}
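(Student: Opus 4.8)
The claim is that the maps $\Delta_{r_1,\dots,r_k}$ of Definition \ref{def:sigmainfty-module} satisfy the axioms for a right module over the operad $\der^*(\Sigma^\infty\Omega^\infty)$. The strategy is simply to unwind the definitions and reduce everything to the comonad axioms for $\Sigma^\infty\Omega^\infty$ (Example \ref{ex:SO-comonad}) together with the way the operad structure on $\der^*(\Sigma^\infty\Omega^\infty)$ was constructed from the comonad multiplication (Corollary \ref{cor:operad} and Proposition \ref{prop:operad}). First I would recall that $\der^r(\Sigma^\infty\Omega^\infty)\isom\Nat(\Sigma^\infty\Omega^\infty E, E^{\smsh r})$ (Example \ref{ex:SigOm}), so that an element of the $r$-th term is literally a natural transformation $\Sigma^\infty\Omega^\infty E \to E^{\smsh r}$, and the operad composition is "plug in and compose" as spelled out in the proof of Proposition \ref{prop:com}. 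The key observation is that the map $\Delta_r$ is exactly the composite of the adjunction unit $\Sigma^\infty X \to \Sigma^\infty\Omega^\infty\Sigma^\infty X$ with evaluation against such a natural transformation at $E = \Sigma^\infty X$; this is the same recipe used in Definition \ref{def:composition} and Definition \ref{def:comonad-derivatives} to build the operad and module structure maps, so the verification is formally parallel to (indeed a special case of) the argument in Proposition \ref{prop:operad}.

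Concretely, there are two axioms to check. For the \emph{associativity} (or compatibility) axiom, I would compare the two ways of combining three pieces of data: an $r$-fold operad element acting on $(\Sigma^\infty X)^{\smsh k}$ followed by the module action, versus first composing operad elements and then acting once. Writing out both composites as in the proof of Proposition \ref{prop:operad}, each reduces to an iterated string of maps
\[ \Sigma^\infty X \to \Sigma^\infty\Omega^\infty\Sigma^\infty X \to \Sigma^\infty\Omega^\infty\Sigma^\infty\Omega^\infty\Sigma^\infty X \to \cdots \]
followed by evaluations; the coassociativity square for the comonad $\Sigma^\infty\Omega^\infty$ (the first diagram in Definition \ref{def:comonad}, applied to $\Sigma^\infty X$) forces the two iterated unit maps to agree, and naturality of evaluation handles the rest. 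So this axiom follows from comonad coassociativity exactly as the operad associativity in Proposition \ref{prop:operad} followed from it. For the \emph{unit} axiom, I would check that acting by the unit element $S \isom \der^1(\Sigma^\infty\Omega^\infty)^{\mathsf{red}}$ — which under the Yoneda identification corresponds to the counit composite $\Sigma^\infty\Omega^\infty \to I_{\spectra}$ — recovers the identity on $(\Sigma^\infty X)^{\smsh k}$; this is precisely the counit triangle $\Omega^\infty\Sigma^\infty$-triangle identity for the $(\Sigma^\infty,\Omega^\infty)$ adjunction (the second and third diagrams in Definition \ref{def:comonad}), again mirroring the unit-diagram check at the end of the proof of Proposition \ref{prop:operad}.

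I do not expect any serious obstacle here; this lemma is a bookkeeping statement and the real content has already been established in Proposition \ref{prop:operad} and Proposition \ref{prop:com}. The only mild subtlety is keeping track of the equivariance and of the fact that the operad $\der^*(\Sigma^\infty\Omega^\infty)$ is not reduced: the unit element lives in $\Map(S_c,S_c)$ rather than $S$, so strictly speaking one should either work with the reduced replacement $\der^*(\Sigma^\infty\Omega^\infty)^{\mathsf{red}}$ of Definition \ref{def:tilde-der} or note that the evaluation map $\Sigma^\infty X \to \Sigma^\infty\Omega^\infty\Sigma^\infty X \smsh \Map(S_c,S_c) \to \Sigma^\infty X$ using the identity of $S_c$ is the identity on $\Sigma^\infty X$. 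I would phrase the conclusion so that the module structure is over the operad $\der^*(\Sigma^\infty\Omega^\infty)$ directly, with the understanding (as elsewhere in the paper) that one passes to $\tilde{\der}^*(\Sigma^\infty\Omega^\infty)$ when forming bar constructions. With those remarks in place the proof is a short diagram chase, which is why — as the paper does for several similar lemmas — one could reasonably just write "These are standard, following the proof of Proposition \ref{prop:operad}."
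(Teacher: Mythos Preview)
Your proposal is correct and follows essentially the same approach as the paper: the paper's proof is a single sentence stating that the result follows from the properties of the unit map $\Sigma^\infty X \to \Sigma^\infty \Omega^\infty \Sigma^\infty X$ and the definition of the operad structure on $\der^*(\Sigma^\infty \Omega^\infty)$, both coming from the $(\Sigma^\infty,\Omega^\infty)$ adjunction. You have simply spelled out the diagram chase that this sentence summarizes.
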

\begin{proof}
This follows from the properties of the unit map $\Sigma^\infty X \to \Sigma^\infty \Omega^\infty \Sigma^\infty X$ and the definition of the operad structure on $\der^*(\Sigma^\infty \Omega^\infty)$, using the fact that each of these comes from the $(\Sigma^\infty,\Omega^\infty)$ adjunction.
\end{proof}

\begin{remark} \label{rem:diagonal}
The right module structure on $(\Sigma^\infty X)^{\smsh *}$ of Lemma \ref{lem:sigmainfty-module} is the key to the definition of $\Phi_nF$. The following lemma gives us a way to interpret that module structure in terms of the diagonal map on the pointed simplicial set $X$.
\end{remark}

\begin{lemma} \label{lem:diagonal}
For any $X \in \sset$, the following diagram commutes
\[ \begin{diagram}
  \node{(\Sigma^\infty X) \smsh \der^r(\Sigma^\infty \Omega^\infty)} \arrow{e,t}{\isom} \arrow{s,l}{\Delta_r}
    \node{(S_c \smsh X) \smsh \Map(S_c,S_c^{\smsh r})} \arrow{s,r}{\Delta_X} \\
  \node{(\Sigma^\infty X)^{\smsh r}} \arrow{e,t}{\isom}
    \node{S_c^{\smsh k} \smsh X^{\smsh r}}
\end{diagram} \]
where the top horizontal map is given by the Yoneda isomorphism
\[ \Nat(\Sigma^\infty \Omega^\infty E, E^{\smsh r}) \isom \Map(S_c,S_c^{\smsh r}) \]
and the right-hand vertical map consists of the natural evaluation
\[ S_c \smsh \Map(S_c,S_c^{\smsh r}) \to S_c^{\smsh r} \]
and the diagonal map
\[ \Delta_X: X \mapsto X^{\smsh r}. \]
\end{lemma}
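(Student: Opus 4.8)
\textbf{Proof proposal for Lemma \ref{lem:diagonal}.}

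The plan is to unwind both vertical maps explicitly in terms of the $(\Sigma^\infty, \Omega^\infty)$ adjunction and then observe that the diagram commutes essentially by the triangle identities for that adjunction together with naturality. First I would fix notation: write $\eta: I_{\spectra} \to \Sigma^\infty \Omega^\infty$ for the unit of the adjunction, and recall from Example \ref{ex:SigOm} that the Yoneda isomorphism identifies $\der^r(\Sigma^\infty\Omega^\infty) = \Nat(\Sigma^\infty\Omega^\infty E, E^{\smsh r})$ with $\Map(S_c, S_c^{\smsh r})$ via evaluation at $S_c$ followed by the identification $\Sigma^\infty\Omega^\infty(S_c) = S_c \smsh \spectra(S_c,S_c) $ and the unit $\Delta[0]_+ \to \spectra(S_c,S_c)$. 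The left vertical map $\Delta_r$ is, by Definition \ref{def:sigmainfty-module}, the composite
\[ \Sigma^\infty X \smsh \der^r(\Sigma^\infty\Omega^\infty) \xrightarrow{\eta_{\Sigma^\infty X} \smsh 1} \Sigma^\infty\Omega^\infty\Sigma^\infty X \smsh \Nat(\Sigma^\infty\Omega^\infty E, E^{\smsh r}) \xrightarrow{\mathrm{ev}} (\Sigma^\infty X)^{\smsh r}. \]

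Next I would trace a generic element through both legs. The key point is that under the isomorphism $\Sigma^\infty X = S_c \smsh X$, the unit $\eta_{\Sigma^\infty X}$ is, up to the identification $\Sigma^\infty\Omega^\infty\Sigma^\infty X \cong S_c \smsh \spectra(S_c, S_c \smsh X)$, induced by the map $X \to \spectra(S_c, S_c \smsh X)$ adjoint to the diagonal-free structure map $S_c \smsh X \to S_c \smsh X$; more precisely, it sends $X$ into $\spectra(S_c, S_c) \smsh X$ via the unit $\Delta[0]_+ \to \spectra(S_c,S_c)$ smashed with $X$. Composing with evaluation against a natural transformation $\theta \in \Nat(\Sigma^\infty\Omega^\infty E, E^{\smsh r})$ and using naturality of $\theta$ in $E$ applied to the structure maps of $\Sigma^\infty X = S_c \smsh X$, the whole composite factors through the diagonal $\Delta_X: X \to X^{\smsh r}$ precisely because the only "spectrum-level" data in $\Sigma^\infty X$ lives in the $S_c$ factor, while the $X$ factor is carried along formally and hence gets diagonalized. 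This is exactly the content of the claim that $\Sigma^\infty(X)$ is a coalgebra over the coendomorphism operad of $S_c$ rather than a strictly commutative coalgebra, as discussed in the Technical Remarks; the diagram in the lemma is the module-level shadow of that coalgebra structure.

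Concretely, I would argue as follows. Both legs of the square are maps $(S_c \smsh X) \smsh \Map(S_c, S_c^{\smsh r}) \to S_c^{\smsh r} \smsh X^{\smsh r}$, so it suffices to check they agree after restricting to the definitions. The right leg, by hypothesis, is $(\mathrm{ev} \smsh \Delta_X)$ composed with the evident shuffle $(S_c \smsh X) \smsh \Map(S_c,S_c^{\smsh r}) \cong (S_c \smsh \Map(S_c,S_c^{\smsh r})) \smsh X \to S_c^{\smsh r} \smsh X^{\smsh r}$. For the left leg, I would use the explicit form of $\Delta_r$ above, insert the Yoneda identification $\Nat(\Sigma^\infty\Omega^\infty E, E^{\smsh r}) \xrightarrow{\cong} \Map(S_c, S_c^{\smsh r})$ (evaluation at $E = S_c$), and invoke naturality of an element $\theta \in \Nat(\Sigma^\infty\Omega^\infty E, E^{\smsh r})$ with respect to the morphism $S_c \to S_c \smsh X$ picking out a point of $X$ — or, more robustly, with respect to the identification of $\Sigma^\infty X$ as the tensor $S_c \smsh X$ in $\spectra$ over $\sset$. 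The naturality square for $\theta$ then converts the single copy of $\Omega^\infty\Sigma^\infty X$ into the $r$-fold smash with the diagonal appearing on the $X$-coordinate, which is precisely the right leg. Chasing the unit axioms of the adjunction closes the computation.

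The main obstacle I anticipate is bookkeeping: making the identification $\Sigma^\infty\Omega^\infty\Sigma^\infty X \cong S_c \smsh \spectra(S_c, S_c \smsh X)$ and the compatibility of the two readings of $F\Omega^\infty E$ from Remark \ref{rem:FOE} fully precise, and tracking the Yoneda isomorphism through the evaluation map so that the $\Sigma_r$-equivariance is visibly respected. None of this is deep — it is the same adjunction-chasing used to prove Lemma \ref{lem:yoneda} and Proposition \ref{prop:com} — but it requires care to present without drowning in diagrams. I would keep the written proof short by isolating the single naturality square for an arbitrary natural transformation $\theta$ as the crux and deducing everything else formally from the triangle identities and the definition of the evaluation map.
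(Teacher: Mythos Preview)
Your proposal is correct and takes essentially the same approach as the paper. The paper's proof is a one-sentence identification: it records that the evaluation map $\Sigma^\infty\Omega^\infty E \smsh \Nat(\Sigma^\infty\Omega^\infty E, E^{\smsh r}) \to E^{\smsh r}$, translated through the Yoneda isomorphism, becomes the composite
\[
S_c \smsh \spectra(S_c,E) \smsh \Map(S_c,S_c^{\smsh r}) \to S_c^{\smsh r} \smsh \spectra(S_c,E)^{\smsh r} \to S_c^{\smsh r} \smsh \spectra(S_c^{\smsh r},E^{\smsh r}) \to E^{\smsh r},
\]
the first arrow involving the diagonal on the pointed simplicial set $\spectra(S_c,E)$; precomposing with the unit $X \to \spectra(S_c,\Sigma^\infty X)$ then pulls this back to the diagonal on $X$. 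Your plan to unwind the evaluation via simplicial naturality of $\theta$ and the triangle identities is exactly the mechanism underlying this identification --- the diagonal you find ``on the $X$-coordinate'' is the paper's diagonal on $\spectra(S_c,E)$ restricted along the unit --- so the two arguments coincide once the bookkeeping is carried out.
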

\begin{proof}
This comes from the fact that the evaluation
\[ \Sigma^\infty \Omega^\infty E \smsh \Nat(\Sigma^\infty \Omega^\infty E,E^{\smsh r}) \to E^{\smsh r} \]
corresponds under the Yoneda isomorphism $\Nat(\Sigma^\infty \Omega^\infty E,E^{\smsh r}) \isom \Map(S_c,S_c^{\smsh r})$ to the map
\[ \begin{split}
    S_c \smsh \spectra(S_c,E) \smsh \Map(S_c,S_c^{\smsh r})
        &\to S_c^{\smsh r} \smsh \spectra(S_c,E)^{\smsh r} \\
        &\to S_c^{\smsh r} \smsh \spectra(S_c^{\smsh r},E^{\smsh r}) \\
        &\to E^{\smsh r}
\end{split} \]
where the first map involves the diagonal on the pointed simplicial set $\spectra(S_c,E)$.
\end{proof}

\begin{definition} \label{def:truncation-right}
Let $(\Sigma^\infty X)^{\leq n}$ denote the symmetric sequence given by
\[ (\Sigma^\infty X)^{\leq n}(r) := \begin{cases} (\Sigma^\infty X)^{\smsh r} & \text{for $1 \leq r \leq n$}; \\ * & \text{for $r > n$}. \end{cases} \]
We call this the \emph{truncation of $(\Sigma^\infty X)^{\smsh *}$ at the \ord{n} term}. This truncation inherits a right module structure from $(\Sigma^\infty X)^{\smsh *}$ and there is a natural morphism of right modules
\[ (\Sigma^\infty X)^{\smsh *} \to (\Sigma^\infty X)^{\leq n}. \]
\end{definition}

\begin{definition} \label{def:Phi}
Now let $F: \sset \to \spectra$ be a presented cell functor. Recall from Definition \ref{def:right} that $\der^*(F \Omega^\infty)$ is a right module over the operad $\der^*(\Sigma^\infty \Omega^\infty)$. Also recall the definition of $\Ext$-objects for pro-right-$P$-modules from Definition \ref{def:pro-ext-modules}. We then make the following definition:
\[ \Phi_n(F)(X) := \Ext^{\mathsf{right}}_{\tilde{\der}^*(\Sigma^\infty \Omega^\infty)}\left(\tilde{\der}^*(F \Omega^\infty), (\Sigma^\infty X)^{\leq n}\right). \]
Recall that the $\Ext$-objects for right $P$-modules are formed in $\spectra$. Hence $\Phi_n(F)$ is a functor from $\sset$ to $\spectra$.
\end{definition}

We now show that $\Phi_n(F)$ has \ord{n} Goodwillie derivative equivalent to the spectrum $\der_n(F)$ of Definition \ref{def:right-full}.

\begin{prop} \label{prop:phi-derivative}
Let $R$ be a levelwise-$\Sigma$-cofibrant directly-dualizable pro-right-module over the operad $\tilde{\der}^*(\Sigma^\infty \Omega^\infty)$. Then the functor $\sset \to \spectra$ given by
\[ X \mapsto \Ext^{\mathsf{right}}_{\tilde{\der}^*(\Sigma^\infty \Omega^\infty)}\left(R, (\Sigma^\infty X)^{\leq n}\right) \]
is a pointed simplicial homotopy functor and has \ord{n} Goodwillie derivative naturally (and $\Sigma_n$-equivariantly) equivalent to
\[ \dual B(R, \tilde{\der}^*(\Sigma^\infty \Omega^\infty), \mathsf{1})(n). \]
\end{prop}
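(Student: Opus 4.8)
\textbf{Proof proposal for Proposition \ref{prop:phi-derivative}.}

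The plan is to follow the same strategy used for Proposition \ref{prop:models-correct} and Corollary \ref{cor:derivatives}: first verify the homotopical soundness of the functor, then identify its $n$th derivative by reducing, via the bar resolution, to a computation of cross-effects. To begin, I would check that $X \mapsto \Ext^{\mathsf{right}}_{\tilde{\der}^*(\Sigma^\infty\Omega^\infty)}(R,(\Sigma^\infty X)^{\leq n})$ is a pointed simplicial homotopy functor. Pointedness is immediate since $(\Sigma^\infty *)^{\leq n} = *$ and $\Ext$ into the trivial module is trivial. For the homotopy-functor property, I would use the presentation of $\Ext$ as a totalization of $\Map_{\mathsf{\Sigma}}(R\circ P^{\bullet}, (\Sigma^\infty X)^{\leq n})$ from Remark \ref{rem:ext-tot}; since $\Sigma^\infty$ preserves all weak equivalences, a weak equivalence $X \weq Y$ induces level equivalences of these cosimplicial spectra, which are Reedy fibrant under our cofibrancy hypotheses, so Proposition \ref{prop:reedy} gives that $\Ext$ is homotopy-invariant in $X$. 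Simplicial functoriality follows from the simplicial enrichment of the $\Ext$-construction together with that of $\Sigma^\infty$.

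The core of the argument is computing the $n$th Goodwillie derivative. I would first rewrite, using Remark \ref{rem:ext-tot} and Definition \ref{def:ext-modules},
\[ \Ext^{\mathsf{right}}_{P}\left(R,(\Sigma^\infty X)^{\leq n}\right) \isom \Map^{\mathsf{right}}_{P}\left(B(R,P,P),(\Sigma^\infty X)^{\leq n}\right), \]
writing $P = \tilde{\der}^*(\Sigma^\infty\Omega^\infty)$ for brevity. Since $B(R,P,P)$ is projectively-cofibrant (Proposition \ref{prop:bar-resolution}), the adjunction of Lemma \ref{lem:adjunction} lets us, after resolving, express this in terms of the underlying symmetric sequences: $\Map^{\mathsf{right}}_P(B(R,P,P),N) \isom \Map_{\mathsf{\Sigma}}(B(R,P,\mathsf{1}),N)$ where $N = (\Sigma^\infty X)^{\leq n}$, using that $B(R,P,P) \isom B(R,P,\mathsf{1}) \circ P$ as a free right $P$-module (the key identity from Proposition \ref{prop:bar-comprod}). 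So the functor becomes, up to equivalence,
\[ X \mapsto \Map_{\mathsf{\Sigma}}\left(B(R,P,\mathsf{1}), (\Sigma^\infty X)^{\leq n}\right) = \prod_{r=1}^{n} \Map\left(B(R,P,\mathsf{1})(r), (\Sigma^\infty X)^{\smsh r}\right)^{\Sigma_r}. \]
Now each summand $\Map(B(R,P,\mathsf{1})(r),(\Sigma^\infty X)^{\smsh r})^{\Sigma_r}$ is, by Lemma \ref{lem:F_E} (with $E = B(R,P,\mathsf{1})(r)$, which is $\Sigma_r$-cofibrant since $B(R,P,\mathsf{1})$ is $\Sigma$-cofibrant by Lemma \ref{lem:sigma-cofibrant-comprod}, and homotopy-finite by Lemma \ref{lem:bar-directly-dualizable}), an $r$-excisive homotopy functor of $\Sigma^\infty X$ with $r$th derivative $\Sigma_r$-equivariantly equivalent to $\Map(B(R,P,\mathsf{1})(r),S) = \dual B(R,P,\mathsf{1})(r)$. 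Composing with $\Sigma^\infty$ (which by Proposition \ref{prop:D_nF}(2) relates $n$-homogeneous functors of spaces to those of spectra), the $n$th layer of the product is controlled entirely by the $r = n$ term, and the $n$th derivative of the whole functor is
\[ \der^G_n\left(\Map_{\mathsf{\Sigma}}(B(R,P,\mathsf{1}),(\Sigma^\infty -)^{\leq n})\right) \homeq \dual B(R,P,\mathsf{1})(n) \]
$\Sigma_n$-equivariantly, which is exactly the claimed answer $\dual B(R,\tilde{\der}^*(\Sigma^\infty\Omega^\infty),\mathsf{1})(n)$.

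The main obstacle I anticipate is making the reduction from the $\Ext$-object to the product $\prod_{r\le n}\Map(B(R,P,\mathsf{1})(r),(\Sigma^\infty X)^{\smsh r})^{\Sigma_r}$ genuinely functorial and homotopically correct in $X$, rather than just an equivalence for fixed $X$. The subtlety is that the right $P$-module structure on $(\Sigma^\infty X)^{\leq n}$ (Definition \ref{def:truncation-right}, via Lemmas \ref{lem:sigmainfty-module} and \ref{lem:diagonal}) encodes the diagonal on $X$, so the equaliser defining $\Map^{\mathsf{right}}_P$ does genuinely cut down the naive product — one must check that the bar resolution $B(R,P,\mathsf{1})$ absorbs precisely this module structure, i.e. that the codegeneracies in the cosimplicial $\Ext$-presentation collapse against the coaction maps in the way needed. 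I would handle this by working with the truncation carefully: because $(\Sigma^\infty X)^{\leq n}$ vanishes above degree $n$ and $P$ is reduced, the relevant cosimplicial object is degenerate above a finite stage, the totalization is a finite homotopy limit, and one can track the derivative through it using Lemma \ref{lem:pro-symseq-invariance}-type arguments (the "$B$ truncated" hypothesis). An alternative, cleaner route — which I would pursue if the direct approach gets tangled — is to avoid the adjunction manipulation and instead argue directly that $\Phi_n F$ is a homotopy colimit (over $\mathsf{Sub}(QF)$) of functors of the Lemma \ref{lem:F_E} type built from the bar construction termwise, then invoke that $D_n$ commutes with filtered homotopy colimits, mirroring the proof of Corollary \ref{cor:derivatives} almost verbatim.
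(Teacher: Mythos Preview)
Your argument is correct and in fact takes a slightly more direct route than the paper's own proof. The key observation you make---that $B(R,P,P) \cong B(R,P,\mathsf{1}) \circ P$ as right $P$-modules (via the isomorphism $\chi_l$ of Proposition~\ref{prop:bar-comprod}), so that the free/forgetful adjunction of Lemma~\ref{lem:adjunction} yields a \emph{strict} natural isomorphism
\[
\Ext^{\mathsf{right}}_{P}\bigl(R,(\Sigma^\infty X)^{\leq n}\bigr) \;\cong\; \Map_{\mathsf{\Sigma}}\bigl(B(R,P,\mathsf{1}),(\Sigma^\infty X)^{\leq n}\bigr) \;=\; \prod_{r=1}^{n}\Map\bigl(B(R,P,\mathsf{1})(r),(\Sigma^\infty X)^{\smsh r}\bigr)^{\Sigma_r}
\]
---cuts out an intermediate step. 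The paper instead keeps the cosimplicial presentation $\Tot\,\Map_{\mathsf{\Sigma}}(R\circ P^{\bullet},(\Sigma^\infty X)^{\leq n})$, introduces the auxiliary right module $(\Sigma^\infty X)^{=n}$ concentrated in degree~$n$, and compares the two $\Ext$-objects via the inclusion $\iota:(\Sigma^\infty X)^{=n}\hookrightarrow(\Sigma^\infty X)^{\leq n}$, arguing that $\iota_*$ is an equivalence on \ord{n} cross-effects because the lower factors are $(n-1)$-excisive and cross-effects commute with $\Tot$. Only after this does the paper identify the source of $\iota_*$ with $\Map(B(R,P,\mathsf{1})(n),(\Sigma^\infty X)^{\smsh n})^{\Sigma_n}$. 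Your approach reaches the same finite product in one step and then makes the identical $r<n$ vs.\ $r=n$ excision argument; both finish by invoking Lemma~\ref{lem:F_E}.

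Your ``main obstacle'' paragraph overstates the difficulty: because the adjunction $\Map^{\mathsf{right}}_{P}(A\circ P,N)\cong\Map_{\mathsf{\Sigma}}(A,N)$ is an \emph{isomorphism} natural in $N$, the identification is strictly functorial in $X$ and does not see the $P$-module structure on $(\Sigma^\infty X)^{\leq n}$ at all---that structure is entirely absorbed by the bar resolution. So no further cosimplicial bookkeeping is needed. The only point you leave implicit is the passage from an ordinary module $R$ to a pro-module, which the paper handles (and you gesture at) by a filtered homotopy colimit over the indexing category, using that $\der^G_n$ commutes with such colimits.
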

\begin{proof}
For brevity, we write $P = \tilde{\der}^*(\Sigma^\infty \Omega^\infty)$ in this proof. We start with the case where the pro-module $R$ is indexed over the trivial category, so that $R$ is a just a right $P$-module in the usual sense.

Recall from Remark \ref{rem:ext-tot} that
\[ \Ext_P(R,(\Sigma^\infty X)^{\leq n}) \isom \Tot_{k \in \Delta} \left[ \Map_{\mathsf{\Sigma}}(R \circ P^{\circ k}, (\Sigma^\infty X)^{\leq n}) \right]. \]
The $k$-simplices in this cosimplicial object are isomorphic to
\[ \tag{*} \prod_{r = 1}^{n} \Map\left(R \circ P^{\circ k}(r), (\Sigma^\infty X)^{\smsh r}\right)^{\Sigma_r}. \]
Since $R$ and $P$ are $\Sigma$-cofibrant, so is $R \circ P^k$, by Lemma \ref{lem:sigma-cofibrant-comprod}. Therefore, by Lemma \ref{lem:F_E}, each of the terms in this product is a pointed simplicial homotopy functor. It follows that the $\Ext$-object under consideration is also a pointed simplicial homotopy functor.

Now let $(\Sigma^\infty X)^{=n}$ denote the symmetric sequence
\[ (\Sigma^\infty X)^{=n}(r) := \begin{cases} (\Sigma^\infty X)^{\smsh n} & \text{if $r = n$}; \\ * & \text{otherwise}. \end{cases} \]
Then $(\Sigma^\infty X)^{=n}$ has a trivial right $P$-module structure in which the only non-trivial structure map is the isomorphism
\[ \begin{split} (\Sigma^\infty X)^{\smsh n} P(1) \smsh \dots \smsh P(1)
    &\isom (\Sigma^\infty X)^{\smsh n} \smsh S \smsh \dots \smsh S \\
    &\isom (\Sigma^\infty X)^{\smsh n} \end{split} \]W
Relative to this module structure, the obvious inclusion defines a morphism of right $P$-modules
\[ \iota: (\Sigma^\infty X)^{=n} \to (\Sigma^\infty X)^{\leq n} \]
and we therefore have an induced map of spectra
\[ \iota_*: \Ext^{\mathsf{right}}_{P}(R,(\Sigma^\infty X)^{=n}) \to \Ext^{\mathsf{right}}_{P}(R,(\Sigma^\infty X)^{\leq n}). \]
Now we can write the source of $\iota_*$ as a totalization in the same way we did for the target. The map $\iota_*$ is then given by taking the totalization of a map of cosimplicial objects that on $k$-simplices is given by the inclusion
\[ \iota_*^k: \Map\left(R \circ P^{\circ k}(n), (\Sigma^\infty X)^{\smsh n}\right)^{\Sigma_n} \to \prod_{r = 1}^{n} \Map\left(R \circ P^{\circ k}(r), (\Sigma^\infty X)^{\smsh r}\right)^{\Sigma_r}. \]
The key step is now that the \ord{r} term in this product is $r$-excisive by Lemma \ref{lem:F_E} (and since precomposing with $\Sigma^\infty$ preserves $n$-excisive functors). This tells us that the map $\iota_*^k$ is a $D_n$-equivalence, and moreover, is an equivalence on \ord{n} cross-effects (because the \ord{n} cross-effect of an $(n-1)$-excisive functor is trivial). But taking totalization commutes with cross-effects (both are types of homotopy limit) and so it follows that the map $\iota_*$ is also an equivalence on \ord{n} cross-effects, and hence on Goodwillie derivatives.

We have therefore established that $\iota_*$ induces an equivalence of \ord{n} Goodwillie derivatives. We complete the proof of the proposition by calculating the \ord{n} derivative of $\Ext^{\mathsf{right}}_{P}(R,(\Sigma^\infty X)^{=n})$. Looking at the cosimplicial form for this discussed above (i.e. the source of the map $\iota_*^k$), we see that it is isomorphic to the cosimplicial object
\[ \Map \left(B_{\bullet}(R,P,\mathsf{1})(n),(\Sigma^\infty X)^{\smsh n}\right)^{\Sigma_n}. \]
The totalization of this is isomorphic to
\[ \Map \left( B(R,P,\mathsf{1})(n),(\Sigma^\infty X)^{\smsh n}\right)^{\Sigma_n} \]
which by Lemma \ref{lem:F_E} has \ord{n} Goodwillie derivative given by
\[ \dual B(R,P,\mathsf{1})(n). \]
This completes the case where $R$ is indexed over the trivial category. For a general pro-module $R$, the proposition follows by taking the relevant filtered homotopy colimit.
\end{proof}

\begin{cor} \label{cor:phi-derivative}
The functor $\Phi_n(F)$ (Definition \ref{def:Phi}) is a pointed simplicial homotopy functor whose \ord{n} Goodwillie derivative is equivalent to $\der_n(F)$ (Definition \ref{def:right-full}).
\end{cor}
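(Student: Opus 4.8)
The plan is to deduce Corollary~\ref{cor:phi-derivative} from Proposition~\ref{prop:phi-derivative} by simply feeding in the correct pro-module. First I would set $R := \tilde{\der}^*(F\Omega^\infty)$, a projectively-cofibrant (hence, by Proposition~\ref{prop:termwise-cofibrant}, termwise-cofibrant) replacement for the pro-right-$\tilde{\der}^*(\Sigma^\infty\Omega^\infty)$-module $\der^*(F\Omega^\infty)$. I must check that this $R$ satisfies the hypotheses of Proposition~\ref{prop:phi-derivative}, namely that it is levelwise $\Sigma$-cofibrant and directly-dualizable. Levelwise $\Sigma$-cofibrancy holds by choice of the replacement; direct-dualizability requires that the indexing category $\mathsf{Sub}(QF)^{op}$ has the right form (this is Lemma~\ref{lem:lattice}), that each term is cofibrant (true by choice), and that each term is homotopy-finite --- this last point follows from Lemma~\ref{lem:homotopy-finite} applied to the finite cell functors $C\Omega^\infty$, where $C\Omega^\infty$ is a finite cell functor by Lemma~\ref{lem:extend-omega}.

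With these checks in place, Proposition~\ref{prop:phi-derivative} gives directly that $\Phi_n(F)$ is a pointed simplicial homotopy functor, and that its \ord{n} Goodwillie derivative is naturally and $\Sigma_n$-equivariantly equivalent to
\[ \dual B\!\left(\tilde{\der}^*(F\Omega^\infty), \tilde{\der}^*(\Sigma^\infty\Omega^\infty), \mathsf{1}\right)(n). \]
It then remains only to identify this spectrum with $\der_n(F)$. By Definition~\ref{def:right}, the pro-symmetric sequence $\der^*(F)$ is exactly the bar construction $B(\tilde{\der}^*(F\Omega^\infty), \tilde{\der}^*(\Sigma^\infty\Omega^\infty), \mathsf{1})$, and by Definition~\ref{def:right-full}, $\der_*(F) = \dual\der^*(QF)$, whose \ord{n} term is $\dual B(\tilde{\der}^*((QF)\Omega^\infty), \tilde{\der}^*(\Sigma^\infty\Omega^\infty), \mathsf{1})(n)$. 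Since $QF$ is a cellular replacement for $F$ and all the constructions involved are homotopy-invariant (by Propositions~\ref{prop:bar-pro-invariance} and the invariance of Spanier-Whitehead duality, Lemma~\ref{lem:pro-equivalence}), this matches the expression produced by Proposition~\ref{prop:phi-derivative} up to natural weak equivalence, giving $\der^G_n(\Phi_nF) \homeq \der_n(F)$.

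The only genuine subtlety --- and the step I would be most careful about --- is the bookkeeping surrounding $QF$ versus $F$ and the precise meaning of the tilde-decorated objects as discussed in Notation~\ref{not:cofibrant}. Proposition~\ref{prop:phi-derivative} is stated for an honest pro-module $R$, whereas $\Phi_n(F)$ in Definition~\ref{def:Phi} is built from $\tilde{\der}^*(F\Omega^\infty)$, which implicitly means $\tilde{\der}^*(QF\,\Omega^\infty)$ (cell structure on $QF$). One must also confirm that forming $\Phi_n$ from $F$ rather than $QF$ makes no difference: the natural transformation $QF \to F$ is an objectwise weak equivalence, and since $\Phi_n(-)$ is a homotopy functor in its functor variable (the $\Ext$-object depends only on the $\Sigma$-cofibrant replacement of the derivatives, which is invariant under such equivalences by Lemma~\ref{lem:pro-map-invariance}), we get $\Phi_n(QF) \homeq \Phi_n(F)$. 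Hence the \ord{n} derivatives agree, and the corollary follows. This is all routine once the hypotheses of Proposition~\ref{prop:phi-derivative} are verified; there is no new idea required beyond assembling the pieces.
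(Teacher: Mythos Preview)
Your proposal is correct and matches the paper's approach: the paper gives no explicit proof for this corollary, treating it as an immediate consequence of Proposition~\ref{prop:phi-derivative} applied with $R = \tilde{\der}^*(F\Omega^\infty)$. Your verification of the hypotheses (levelwise $\Sigma$-cofibrancy, direct-dualizability via Lemmas~\ref{lem:lattice}, \ref{lem:extend-omega}, and~\ref{lem:homotopy-finite}) and your unpacking of the identification with $\der_n(F)$ are exactly what is needed; the extended discussion of the $QF$-versus-$F$ bookkeeping in your final paragraph is more careful than the paper itself bothers to be, but it is accurate and does no harm.
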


\begin{remark} \label{rem:phi-n-excisive}
The proof of Proposition \ref{prop:phi-derivative} does not imply that $\Phi_n(F)$ is $n$-excisive. We see there that $\Phi_n(F)$ is the totalization of a cosimplicial object which is levelwise $n$-excisive, but totalization does not commute with $P_n$ so we cannot conclude that $\Phi_n(F)$ is $n$-excisive.

However, it is true that $\Phi_n(F)$ is $n$-excisive. This does not play any role in the rest of this paper, so we only provide a sketch of a proof. The key idea is to see that the totalization defining $\Phi_n(F)$ can be calculated at the $\Tot^n$ term in the totalization tower. The reason behind this is that \ord{r} term in the simplicial bar construction $B_{\bullet}(R,P,P)$ is degenerate above the $r$-simplices. Explicitly, we can see that every term in the composition product $(R \circ P^k \circ P)(r)$ is degenerate (i.e. comes from something in the $(R \circ P^{k-1} \circ P)(r)$ by applying a degeneracy) if $k > r$.
\end{remark}

We now turn to the second part of the proof of Theorem \ref{thm:right} which is to construct a natural transformation relating $F$ and $\Phi_n(F)$ that induces an equivalence on \ord{n} derivatives.

\begin{definition} \label{def:phi'-F}
First let $F: \sset \to \spectra$ be a finite cell functor. We define maps
\[ \phi'_F(r) := F(X) \smsh \der^r(F \Omega^\infty) \to (\Sigma^\infty X)^{\smsh r}. \]
These are made by combining the map
\[ F(X) \to F \Omega^\infty \Sigma^\infty (X) \]
that comes from the $(\Sigma^\infty, \Omega^\infty)$-adjunction with the evaluation map
\[ F \Omega^\infty \Sigma^\infty (X) \smsh \Nat(F \Omega^\infty E, E^{\smsh r}) \to (\Sigma^\infty X)^{\smsh r}. \]
The maps $\phi'_F(r)$ are $\Sigma_r$-equivariant and so together define a map
\[ \phi'_F: F(X) \to \Map_{\mathsf{\Sigma}}\left(\der^*(F \Omega^\infty), (\Sigma^\infty X)^{\smsh *}\right). \]
\end{definition}

\begin{lemma} \label{lem:phi'-F}
The map $\phi'_F$ of Definition \ref{def:phi'-F} factors via the corresponding mapping object for right $\tilde{\der}^*(\Sigma^\infty \Omega^\infty)$-modules instead of symmetric sequences. In other words, we have a commutative diagram:
\[ \begin{diagram}
  \node{F(X)} \arrow{e,t}{\phi''_F} \arrow{se,b}{\phi'_F}
    \node{\Map^{\mathsf{right}}_{\tilde{\der}^*(\Sigma^\infty \Omega^\infty)}\left(\der^*(F \Omega^\infty), (\Sigma^\infty X)^{\smsh *}\right)} \arrow{s} \\
  \node[2]{\Map_{\mathsf{\Sigma}}\left(\der^*(F \Omega^\infty), (\Sigma^\infty X)^{\smsh *}\right)}
\end{diagram} \]
\end{lemma}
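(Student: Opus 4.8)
The claim is that the natural transformation $\phi'_F: F(X) \to \Map_{\mathsf{\Sigma}}(\der^*(F\Omega^\infty), (\Sigma^\infty X)^{\smsh *})$ factors through the sub-object $\Map^{\mathsf{right}}_{\tilde{\der}^*(\Sigma^\infty\Omega^\infty)}(\der^*(F\Omega^\infty),(\Sigma^\infty X)^{\smsh *})$. Recall from Definition~\ref{def:enriched-modules} that this latter object is precisely the equalizer
\[
\Map^{\mathsf{right}}_{P}(R,R') = \lim\left(\Map_{\mathsf{\Sigma}}(R,R') \rightrightarrows \Map_{\mathsf{\Sigma}}(R\circ P,R')\right),
\]
with $P = \tilde{\der}^*(\Sigma^\infty\Omega^\infty)$, $R = \der^*(F\Omega^\infty)$, and $R' = (\Sigma^\infty X)^{\smsh *}$ (using the right $P$-module structure of Lemma~\ref{lem:sigmainfty-module} on $R'$). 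So the plan is: to produce the factorization, it suffices to check that $\phi'_F$ equalizes the two maps $\Map_{\mathsf{\Sigma}}(R,R') \rightrightarrows \Map_{\mathsf{\Sigma}}(R\circ P,R')$ — one induced by the $P$-action on $R$, the other by the composite through the $P$-action on $R'$. Equivalently, I would show that the two evident ways of building a map
\[
F(X)\smsh \der^*(F\Omega^\infty)\circ \der^*(\Sigma^\infty\Omega^\infty) \longrightarrow (\Sigma^\infty X)^{\smsh *}
\]
from the data $\phi'_F$, the right $\der^*(\Sigma^\infty\Omega^\infty)$-module structure $r^*$ on $\der^*(F\Omega^\infty)$ (Definition~\ref{def:right}), and the right module structure $\Delta_{\bullet}$ on $(\Sigma^\infty X)^{\smsh *}$, agree.

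First I would reduce to the case where $F$ is a single basic cell $I\smsh \sset(K,-)$, or unwind directly for general finite cell $F$; in either case the point is to trace everything back to the $(\Sigma^\infty,\Omega^\infty)$-adjunction. The key observation, already recorded in Lemma~\ref{lem:diagonal}, is that the right $\der^*(\Sigma^\infty\Omega^\infty)$-module structure on $(\Sigma^\infty X)^{\smsh *}$ is, after Yoneda identification, given by the diagonal $\Delta_X: X\to X^{\smsh r}$ smashed with the natural evaluation $S_c\smsh \Map(S_c,S_c^{\smsh r})\to S_c^{\smsh r}$. Meanwhile both $\phi'_F$ and the module structure $r^*$ on $\der^*(F\Omega^\infty)$ are built from the unit $\mathrm{id}\to\Omega^\infty\Sigma^\infty$ of the adjunction together with evaluation maps of natural transformation objects. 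So the compatibility I need is a purely formal consequence of: (i) coassociativity of the comonad $\Sigma^\infty\Omega^\infty$ — the maps $F\Omega^\infty\to F\Omega^\infty\Sigma^\infty\Omega^\infty\to F\Omega^\infty(\Sigma^\infty\Omega^\infty)^2$ agree with the two routes through $F m$ and $r\, \Sigma^\infty\Omega^\infty$ — which is exactly the right-comodule axiom for $F\Omega^\infty$ over $\Sigma^\infty\Omega^\infty$ (Definition~\ref{def:comodule}); and (ii) naturality of the evaluation maps $\Nat(-,-)$ used to define the composition maps $\alpha^*$ of Definition~\ref{def:composition}. I would assemble these into a single commuting diagram relating $F(X)\to F\Omega^\infty\Sigma^\infty X \to F\Omega^\infty\Sigma^\infty\Omega^\infty\Sigma^\infty X$ on one side and the evaluation against $\der^*(F\Omega^\infty)\circ\der^*(\Sigma^\infty\Omega^\infty)$ on the other.

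The main obstacle, as usual in this paper, is bookkeeping: $\phi'_F$ is defined term-by-term with $\Sigma_r$-actions, the composition product expands over (ordered, once one tracks the actions) partitions of $\{1,\dots,n\}$, and the maps of Definition~\ref{def:composition} involve Kan-extending $C$ to all of $\spectra$ and evaluating on $DX$; so to verify the equalizer condition I have to match up, partition-piece by partition-piece, the iterated evaluation $C(DX)\to (DX)^{\smsh k}\to X^{\smsh n}$ arising from $r^*$ with the diagonal-based module structure on $(\Sigma^\infty X)^{\smsh *}$. I expect the cleanest way to organize this is to check the equality after applying the Yoneda isomorphism $\Nat(\Sigma^\infty\Omega^\infty E,E^{\smsh r})\isom\Map(S_c,S_c^{\smsh r})$ and Lemma~\ref{lem:diagonal}, at which point both sides become explicit diagonal-and-evaluation maps on $S_c^{\smsh(\cdot)}\smsh X^{\smsh(\cdot)}$ and the comparison is the coherence of the comonad comultiplication with the coendomorphism-operad structure on $\Map(S_c,S_c^{\smsh *})$. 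Once that single coherence diagram is checked, the factorization $\phi''_F$ is immediate from the universal property of the limit defining $\Map^{\mathsf{right}}_P(-,-)$, and it is automatically a natural transformation since every map in sight is natural in $X$ and in $F$. Finally, for general pointed simplicial $F$ one passes to the cellular replacement $QF$ and takes the filtered homotopy colimit over $\mathsf{Sub}(QF)$, using that $\Map^{\mathsf{right}}_P(-,-)$ and $\Map_{\mathsf{\Sigma}}(-,-)$ for truncated targets commute with such colimits (as in Definition~\ref{def:pro-ext-modules} and the proof of Lemma~\ref{lem:pro-symseq-invariance}).
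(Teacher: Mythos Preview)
Your proposal is correct in that it identifies the right reformulation---checking that $\phi'_F$ equalizes the two maps into $\Map_{\mathsf{\Sigma}}(\der^*(F\Omega^\infty)\circ P,(\Sigma^\infty X)^{\smsh *})$---and your ingredient (ii), naturality of the evaluation maps, is in fact the entire content of the proof. But you have buried this under machinery that is not needed.

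The paper's argument is far more direct. After writing the equalizer condition as the commutativity of the square
\[
\begin{diagram}
  \node{F(X) \smsh \der^k(F\Omega^\infty) \smsh \der^{r_1}(\Sigma^\infty\Omega^\infty) \smsh \dots} \arrow{e} \arrow{s,l}{\phi'_F(k)} \node{F(X)\smsh \der^{r_1+\dots+r_k}(F\Omega^\infty)} \arrow{s,r}{\phi'_F(r_1+\dots+r_k)} \\
  \node{(\Sigma^\infty X)^{\smsh k} \smsh \der^{r_1}(\Sigma^\infty\Omega^\infty) \smsh \dots} \arrow{e} \node{(\Sigma^\infty X)^{\smsh (r_1+\dots+r_k)}}
\end{diagram}
\]
one observes that everything in sight---$\phi'_F$, the right action on $\der^*(F\Omega^\infty)$, and the right action on $(\Sigma^\infty X)^{\smsh *}$---is built from the unit $\eta$ of the $(\Sigma^\infty,\Omega^\infty)$-adjunction together with evaluation $F\Omega^\infty E \smsh \Nat(F\Omega^\infty E,E^{\smsh r}) \to E^{\smsh r}$. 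Unwinding, the square reduces to the square expressing naturality of this evaluation in $E$ along the map $\Sigma^\infty X \to \Sigma^\infty\Omega^\infty\Sigma^\infty X$. That is all.

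Your ingredient (i), the comodule/coassociativity axiom for $F\Omega^\infty$, is not used: what is needed is only the naturality of $\eta$ (i.e.\ that $\eta_{\Omega^\infty\Sigma^\infty X}\circ\eta_X = \Omega^\infty\Sigma^\infty(\eta_X)\circ\eta_X$), which is strictly weaker. Your proposed detour through the Yoneda isomorphism and Lemma~\ref{lem:diagonal}, and the reduction to basic cells, would work but is unnecessary---the paper's argument applies uniformly to any finite cell $F$ without identifying $\der^*(\Sigma^\infty\Omega^\infty)$ explicitly. Finally, your last paragraph (passing to general $F$ via $\hocolim$ over $\mathsf{Sub}(QF)$) lies outside the scope of this lemma: $\phi'_F$ is only defined for finite cell $F$, and the extension to arbitrary $F$ happens afterwards in Definition~\ref{def:phi}.
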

\begin{proof}
This is equivalent to saying that the following diagram commutes:
\[ \begin{diagram}
  \node{F(X) \smsh \der^k(F \Omega^\infty) \smsh \der^{r_1}(\Sigma^\infty \Omega^\infty) \smsh \dots \smsh \der^{r_k}(\Sigma^\infty \Omega^\infty)} \arrow{e} \arrow{s,r}{\phi'_F(k)} \node{C(X) \smsh \der^{r_1+\dots+r_k}(C \Omega^\infty)} \arrow{s,r}{\phi'_F(r_1+\dots+r_k)} \\
  \node{(\Sigma^\infty X)^{\smsh k} \smsh \der^{r_1}(\Sigma^\infty \Omega^\infty) \smsh \dots \smsh \der^{r_k}(\Sigma^\infty \Omega^\infty)} \arrow{e} \node{(\Sigma^\infty X)^{\smsh (r_1+\dots+r_k)}}
\end{diagram} \]
where the top and bottom maps are given by the module structures on $\der^*(C\Omega^\infty)$ and $(\Sigma^\infty X)^{\smsh *}$ respectively.

This in turn boils down to the commutativity of the following diagrams
\[ \begin{diagram}
  \node{C \Omega^\infty (\Sigma^\infty X) \smsh \Nat(C \Omega^\infty E, E^{\smsh r})} \arrow{s} \arrow{e}
    \node{(\Sigma^\infty X)^{\smsh r}} \arrow{s} \\
  \node{C \Omega^\infty (\Sigma^\infty \Omega^\infty \Sigma^\infty X) \smsh \Nat(C \Omega^\infty E, E^{\smsh r})} \arrow{e}
    \node{(\Sigma^\infty \Omega^\infty \Sigma^\infty X)^{\smsh r}}
\end{diagram} \]
which follows from the naturality of the horizontal evaluation maps.
\end{proof}

\begin{definition} \label{def:phi}
With $F: \sset \to \spectra$ still a finite cell functor, we can now construct a natural transformation $\phi: F \to \Phi_n(F)$ based on the maps
\[ \phi''_F:  F(X) \to \Map^{\mathsf{right}}_{\tilde{\der}^*(\Sigma^\infty \Omega^\infty)}\left(\der^*(F \Omega^\infty), (\Sigma^\infty X)^{\smsh *}\right) \]
of Lemma \ref{lem:phi'-F}. We combine these with the cofibrant replacement map
\[ \tilde{\der}^*(F \Omega^\infty) \to \der^*(F \Omega^\infty), \]
the truncation map
\[ (\Sigma^\infty X)^{\smsh *} \to (\Sigma^\infty X)^{\leq n} \]
and the map from the strict mapping object for right modules to the derived mapping object
\[ \Map^{\mathsf{right}}_P(R,R') \to \Ext^{\mathsf{right}}_P(R,R') \]
to get a map of the form
\[ \phi_F: F(X) \to \Ext_{\tilde{\der}^*(\Sigma^\infty \Omega^\infty)}\left(\tilde{\der}^*(F \Omega^\infty), (\Sigma^\infty X)^{\leq n}\right) = \Phi_n(F). \]
Finally, given any pointed simplicial functor $F: \sset \to \spectra$, we define $\phi_F: F \to \Phi_n(F)$ by taking the homotopy colimit over finite subcomplexes $C \subset QF$ of the maps $\phi_C$ above.
\end{definition}

\begin{remark} \label{rem:phi}
Strictly speaking, the source of the natural transformation $\phi: F \to \Phi_n(F)$ should be written as
\[ \hocolim_{C \in \mathsf{Sub}(QF)} C \]
but this is equivalent to $F$ by Corollary \ref{cor:hocolim}. We abuse the notation slightly and just write $\phi$ as a map from $F$ to $\Phi_n(F)$.
\end{remark}

To complete the proof of Theorem \ref{thm:right}, it is now sufficient to show that $\phi$ induces an equivalence of \ord{n} Goodwillie derivatives between $F$ and $\Phi_n(F)$. We start by constructing an alternative description of $\phi$.

\begin{definition} \label{def:phi-FOS}
Let $F: \sset \to \spectra$ be a finite cell functor. The construction of Definition \ref{def:phi'-F} gives us a map
\[ F \Omega^\infty \dots \Sigma^\infty X \to \Map_{\mathsf{\Sigma}} \left( \Nat(F \Omega^\infty \dots \Sigma^\infty \Omega^\infty E, E^{\smsh *}), (\Sigma^\infty X)^{\smsh *} \right). \]
The construction of Definition \ref{def:composition} gives a map
\[ \der^*(F \Omega^\infty) \circ \dots \circ \der^*(\Sigma^\infty \Omega^\infty) \to \Nat(F \Omega^\infty \dots \Sigma^\infty \Omega^\infty E, E^{\smsh *}). \]
Together with the previous map, this gives
\[ F \Omega^\infty \dots \Sigma^\infty X \to \Map_{\mathsf{\Sigma}} \left( \der^*(F \Omega^\infty) \circ \dots \circ \der^*(\Sigma^\infty), (\Sigma^\infty X)^{\smsh *} \right). \]
Using the argument of Lemma \ref{lem:phi'-F}, we can see that this factors via the corresponding mapping object for right $\der^*(\Sigma^\infty \Omega^\infty)$-modules, where the right module structure on
\[ \der^*(F \Omega^\infty) \circ \dots \circ \der^*(\Sigma^\infty \Omega^\infty) \]
is given by the regular action on the rightmost term. Composing also with appropriate cofibrant replacements, we get maps
\[ F \Omega^\infty \dots \Sigma^\infty X \to \Map^{\mathsf{right}}_{\tilde{\der}^*(\Sigma^\infty \Omega^\infty)} \left( \tilde{\der}^*(F \Omega^\infty) \circ \dots \circ \tilde{\der}^*(\Sigma^\infty \Omega^\infty), (\Sigma^\infty X)^{\smsh *} \right). \]
Finally, suppose that $F: \sset \to \spectra$ is any pointed simplicial functor. Taking the homotopy colimit of the above maps over finite subcomplexes $C \subset QF$, we get
\[ F \Omega^\infty \dots \Sigma^\infty X \to \Map^{\mathsf{right}}_{\tilde{\der}^*(\Sigma^\infty \Omega^\infty)} \left( \tilde{\der}^*(F \Omega^\infty) \circ \dots \circ \tilde{\der}^*(\Sigma^\infty \Omega^\infty), (\Sigma^\infty X)^{\smsh *} \right). \]
and we denote this map $\phi''_{F \Omega^\infty \dots \Sigma^\infty}$.
\end{definition}

\begin{lemma} \label{lem:phi-tot}
Let $F: \sset \to \spectra$ be a pointed simplicial functor. Then we have a commutative diagram
\[ \begin{diagram} \dgARROWLENGTH=3.5em
  \node{F(X)} \arrow{e,t}{\phi''_F} \arrow{s}
    \node{\Map^{\mathsf{right}}_{P}\left(\tilde{\der}^*(F \Omega^\infty), (\Sigma^\infty X)^{\leq n}\right)} \arrow{s} \\
  \node{\widetilde{\Tot} \left[ F \Omega^\infty \dots \Sigma^\infty (X) \right]} \arrow{e,t}{\phi''_{F \Omega^\infty \dots \Sigma^\infty}}
    \node{\widetilde{\Tot} \left[\Map^{\mathsf{right}}_{P}\left(\tilde{\der}^*(F \Omega^\infty) \circ \dots \circ \tilde{\der}^*(\Sigma^\infty \Omega^\infty), (\Sigma^\infty X)^{\leq n}\right)\right]}
\end{diagram} \]
where $P = \tilde{\der}^*(\Sigma^\infty \Omega^\infty)$. The bottom-left term is the (homotopy-invariant) totalization of the cosimplicial object formed from the $(\Sigma^\infty,\Omega^\infty)$ adjunction (as in Theorem \ref{thm:key}). The bottom-right is the totalization of the cosimplicial object formed from the operad structure on $\der^*(\Sigma^\infty \Omega^\infty)$ and the pro-right-module structure on $\der^*(F \Omega^\infty)$. The vertical maps come from standard coaugmentations of those cosimplicial objects, and the horizontal maps are as constructed in Lemma \ref{lem:phi'-F} and Definition \ref{def:phi-FOS}.
\end{lemma}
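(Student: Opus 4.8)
\textbf{Proof proposal for Lemma \ref{lem:phi-tot}.}

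The plan is to verify commutativity of the square by unravelling both composites into honest maps of cosimplicial objects and checking that, on the level of $k$-cosimplices, everything reduces to the naturality of the evaluation maps that define the horizontal arrows $\phi''_F$ and $\phi''_{F\Omega^\infty\dots\Sigma^\infty}$. Concretely, I would proceed in four steps.

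First, I would set up the two relevant cosimplicial objects explicitly. The bottom-left corner is $\Tot$ of the cosimplicial object $k \mapsto F\Omega^\infty(\Sigma^\infty\Omega^\infty)^k\Sigma^\infty(X)$ with coface and codegeneracy maps from the $(\Sigma^\infty,\Omega^\infty)$ adjunction, as in Theorem \ref{thm:key}. The bottom-right corner is $\Tot$ of the cosimplicial object $k \mapsto \Map^{\mathsf{right}}_P(\tilde{\der}^*(F\Omega^\infty)\circ P^{\circ k}, (\Sigma^\infty X)^{\leq n})$, which is exactly the cosimplicial object whose totalization computes $\Ext^{\mathsf{right}}_P(\tilde{\der}^*(F\Omega^\infty),(\Sigma^\infty X)^{\leq n})$ in the form given by Remark \ref{rem:ext-tot} (via the isomorphism $\Ext^{\mathsf{right}}_P(R,R')\isom\Tot[\Map_{\mathsf{\Sigma}}(R\circ P^{\bullet},R')]$). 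The left vertical map is the canonical coaugmentation $F(X)\to\widetilde{\Tot}[F\Omega^\infty\dots\Sigma^\infty(X)]$ coming from $FG\to F\Omega^\infty\Sigma^\infty G$ with $G = I_{\sset}$; the right vertical map is the coaugmentation $\Map^{\mathsf{right}}_P(\tilde{\der}^*(F\Omega^\infty),(\Sigma^\infty X)^{\leq n})\to\widetilde{\Tot}[\cdots]$ built from the bar resolution $B(\tilde{\der}^*(F\Omega^\infty),P,P)\to\tilde{\der}^*(F\Omega^\infty)$ of Definition \ref{def:module-resolutions}.

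Second, I would reduce to the case that $F$ is a finite cell functor, so that $\der^*(F\Omega^\infty)$ is a genuine (non-pro-) right $P$-module and all the mapping objects are ordinary $\Map$-objects rather than filtered homotopy colimits; the general case then follows by passing to the homotopy colimit over $C\in\mathsf{Sub}(QF)$, since every map in sight was defined as such a homotopy colimit and homotopy colimits commute with $\widetilde{\Tot}$ of Reedy-fibrant cosimplicial objects up to the truncation observation (the cosimplicial objects are degenerate above degree $n$, cf.\ Remark \ref{rem:phi-n-excisive} and the final remark after Theorem \ref{thm:key}). This reduction is harmless because both $\phi''_F$ and $\phi''_{F\Omega^\infty\dots\Sigma^\infty}$ are, by construction (Definition \ref{def:phi}, Definition \ref{def:phi-FOS}), defined levelwise before taking the homotopy colimit.

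Third, with $F$ a finite cell functor, I would check the square commutes by comparing $k$-cosimplices. Chasing the top-then-right composite, $\phi''_F$ is built from $F(X)\to F\Omega^\infty\Sigma^\infty(X)$ followed by evaluation $F\Omega^\infty\Sigma^\infty(X)\smsh\Nat(F\Omega^\infty E,E^{\smsh r})\to(\Sigma^\infty X)^{\smsh r}$, and the right vertical coaugmentation then inserts copies of $P$ via the resolution map $R\circ P^k\circ P\to R$. Chasing left-then-bottom, the left coaugmentation first inserts copies of $\Sigma^\infty\Omega^\infty$ via the adjunction unit, and then $\phi''_{F\Omega^\infty\dots\Sigma^\infty}$ applies the evaluation-and-composition map of Definition \ref{def:phi-FOS}, which is precisely built to be compatible with the composition maps $\der^*(F\Omega^\infty)\circ P^{\circ k}\to\Nat(F\Omega^\infty(\Sigma^\infty\Omega^\infty)^kE,E^{\smsh *})$ of Definition \ref{def:composition}. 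Both composites therefore unwind, on $k$-cosimplices, to the same map $F(X)\to\Map_{\mathsf{\Sigma}}(\tilde{\der}^*(F\Omega^\infty)\circ P^{\circ k},(\Sigma^\infty X)^{\leq n})$ obtained by: applying $k+1$ iterated units of the adjunction to land in $F\Omega^\infty(\Sigma^\infty\Omega^\infty)^k\Sigma^\infty(X)$, then using the evaluation pairing against $\Nat(F\Omega^\infty(\Sigma^\infty\Omega^\infty)^kE,E^{\smsh *})$, then pulling back along the composition map into the latter from $\der^*(F\Omega^\infty)\circ P^{\circ k}$. The identification of these two descriptions is exactly the content of the associativity/naturality diagrams established in the proof of Proposition \ref{prop:operad} (the "maps of the form $(*)$" there) together with the compatibility of the adjunction units with the comonad structure on $\Sigma^\infty\Omega^\infty$.

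The main obstacle I anticipate is purely bookkeeping: matching up the coface maps on the two sides. On the left-hand cosimplicial object the "inner" cofaces come from the comultiplication $\Sigma^\infty\Omega^\infty\to(\Sigma^\infty\Omega^\infty)^2$ and the "end" coface $d^0$ from the non-infinite-loop unit $I\to\Omega^\infty\Sigma^\infty$, whereas on the right-hand side the cofaces come from the operad composition $P\circ P\to P$ and the module action together with the special final coface $\delta^k$ of Remark \ref{rem:ext-tot}. Verifying that $\phi''$ intertwines these requires carefully tracking how the evaluation maps $F\Omega^\infty\Sigma^\infty X\smsh\Nat(F\Omega^\infty E,E^{\smsh r})\to(\Sigma^\infty X)^{\smsh r}$ behave under the adjunction unit and counit — essentially re-deriving, in this mixed setting, the naturality statements already proved in Definition \ref{def:composition} and Proposition \ref{prop:composition}. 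Once those cosimplicial identities are checked degreewise, commutativity of the totalized square and compatibility with the coaugmentations is formal, since $\widetilde{\Tot}$ is a functor on Reedy-fibrant cosimplicial objects and both vertical maps are induced by maps of (coaugmented) cosimplicial objects.
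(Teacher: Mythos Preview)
Your proposal is correct and follows exactly the same approach as the paper, which dispatches the lemma in a single sentence: ``This all follows from the similarity between (and naturality of) the constructions of the maps $\phi''_F$ and $\phi''_{F \Omega^\infty \dots \Sigma^\infty}$.'' What you have written is a careful unpacking of precisely this naturality claim---the reduction to finite cell functors, the levelwise comparison of cosimplicial objects, and the identification of both composites with the same evaluation-plus-adjunction-unit map on $k$-cosimplices---so there is no substantive difference in strategy, only in level of detail.
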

\begin{proof}
This all follows from the similarity between (and naturality of) the constructions of the maps $\phi''_F$ and $\phi''_{F \Omega^\infty \dots \Sigma^\infty}$.
\end{proof}

\begin{remark}
The cosimplicial object involved in the bottom-right corner of the diagram in Lemma \ref{lem:phi-tot} is exactly that whose (strict) totalization is
\[ \Ext^{\mathsf{right}}_{P}(\tilde{\der}^*(F \Omega^\infty), (\Sigma^\infty X)^{\leq n}) = \Phi_n(F). \]
We noted in Remark \ref{rem:ext-tot} that this cosimplicial object is Reedy fibrant. It follows that the homotopy-invariant $\Tot$ in that diagram is weakly equivalent to the strict $\Tot$. The composite of the top and right-hand maps in this diagram is, up to that weak equivalence, the definition of $\phi: F \to \Phi_n(F)$. The diagram in Lemma \ref{lem:phi-tot} therefore provides, up to weak equivalence, a factorization of $\phi$.
\end{remark}

\begin{lemma} \label{lem:phi-FOS}
The map
\[ \phi''_{F \Omega^\infty \dots \Sigma^\infty}: F \Omega^\infty \dots \Sigma^\infty(X) \to \Map^{\mathsf{right}}_{P}\left(\tilde{\der}^*(F \Omega^\infty) \circ \dots \circ \tilde{\der}^*(\Sigma^\infty \Omega^\infty), (\Sigma^\infty X)^{\leq n}\right). \]
of Definition \ref{def:phi-FOS} is a $D_n$-equivalence.
\end{lemma}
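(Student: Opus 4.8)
The plan is to reduce Lemma \ref{lem:phi-FOS} to the unstable case $F = \Sigma^\infty$ (more precisely, to a composite of the form $H\Sigma^\infty$) via an induction up the Taylor tower of $F$, exactly as in the proof of Theorem \ref{thm:key}, and then to recognize the map $\phi''_{F\Omega^\infty\dots\Sigma^\infty}$ on $D_n$ as essentially the map $\psi$ (or its multilinearized cross-effect version) studied in \S\ref{sec:nat} and in the proof of Theorem \ref{thm:chainrule}. First I would check that both the source and the target of $\phi''_{F\Omega^\infty\dots\Sigma^\infty}$, as functors of $F$, behave well under $D_n$: the source $F\Omega^\infty(\Sigma^\infty\Omega^\infty)^{k}\Sigma^\infty G$-type functors and the target $\Map^{\mathsf{right}}_P(\tilde\der^*(F\Omega^\infty)\circ\dots\circ\tilde\der^*(\Sigma^\infty\Omega^\infty),(\Sigma^\infty X)^{\leq n})$ both commute with fibre sequences in $F$ and with $P_n$-equivalences in $F$ (the latter because the $\ord{n}$ term of a composition product depends only on the first $n$ terms, combined with Proposition \ref{prop:calculus}(1) and the homotopy-invariance of the mapping objects, Proposition \ref{prop:map-homotopy} and Lemma \ref{lem:pro-map-invariance}). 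Both also commute with filtered homotopy colimits in $F$. This reduces, by the three-step scheme in Theorem \ref{thm:key}, to the case where $F$ is a homogeneous functor, hence of the form $H\Sigma^\infty$ for some $H:\spectra \to \spectra$, and then (peeling off the leftmost $\Sigma^\infty$ and using the counit of the $(\Sigma^\infty,\Omega^\infty)$-adjunction to produce extra codegeneracies, as in step (3) of Theorem \ref{thm:key}) to the single-cell case $F = I\smsh \spectra(K,-)\circ\Sigma^\infty$, i.e.\ effectively $F\Omega^\infty$ is $I\smsh\spectra(\Sigma^\infty K,-)$.

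In that base case the map $\phi''_{F\Omega^\infty\dots\Sigma^\infty}$ on $D_n$ unwinds to a map whose effect on multilinearized $\ord{n}$ cross-effects I would compute directly, precisely the computation carried out at the end of the proof of Proposition \ref{prop:models-correct} and reused in the proof of Theorem \ref{thm:chainrule} (around the maps labelled (**) and (***) there). The point is that $\der^*(F\Omega^\infty)\circ\dots\circ\der^*(\Sigma^\infty\Omega^\infty)$ is equivalent, via Corollary \ref{cor:chainrule} (the map $\mu^*$ is an equivalence of pro-symmetric sequences), to $\der^*(Q(F\Omega^\infty(\Sigma^\infty\Omega^\infty)^{k}\Sigma^\infty))$ for each fixed cosimplicial level, so the mapping-object target is computing $\Nat$ out of the derivatives of a genuine functor of spectra, and Proposition \ref{prop:models-correct} already tells us the relevant evaluation map is a $D_n$-equivalence. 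More precisely, I would run the argument levelwise in the cosimplicial direction first — for each fixed $k$, $\phi''$ on the $k$-simplices is a $D_n$-equivalence by Proposition \ref{prop:models-correct} applied to $F\Omega^\infty(\Sigma^\infty\Omega^\infty)^{k}\Sigma^\infty$ — and then invoke that $D_n$ commutes with (homotopy-invariant) totalization here because the cosimplicial object is Reedy fibrant (Remark \ref{rem:ext-tot}) and, by the degeneracy observation in Remark \ref{rem:phi-n-excisive} together with the final Remark of \S\ref{sec:cobar}, the totalization is achieved at the finite stage $\Tot^n$, so it is a finite homotopy limit and commutes with $D_n$ for spectrum-valued functors (Proposition \ref{prop:deriv-commute}).

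Assembling these: since $\phi''_{F\Omega^\infty\dots\Sigma^\infty}$ is a $D_n$-equivalence when $F = H\Sigma^\infty$, and since (by steps (1) and (2) of the Theorem \ref{thm:key} scheme) being a $D_n$-equivalence propagates up fibre sequences and along $P_n$-equivalences of $F$, we conclude it is a $D_n$-equivalence for all pointed simplicial $F$, first on homogeneous layers $D_kF$, then on $P_kF$, then on $F$ itself. I expect the main obstacle to be the bookkeeping in the base case: making the identification of $\phi''$ (built from the $(\Sigma^\infty,\Omega^\infty)$-adjunction and the composition maps of Definition \ref{def:composition}) with the map whose cross-effect is computed in Proposition \ref{prop:models-correct} is a genuine naturality/compatibility check, and one must be careful that the right-$\der^*(\Sigma^\infty\Omega^\infty)$-module structure used to form the mapping object matches the module structure on $(\Sigma^\infty X)^{\smsh *}$ of Lemma \ref{lem:sigmainfty-module} under the Yoneda and diagonal identifications of Lemma \ref{lem:diagonal}. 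A secondary technical point, which I would handle but not belabor, is ensuring that all the cofibrant-replacement tildes and pro-object indexing can be threaded through the cosimplicial totalization without disturbing direct-dualizability (Lemma \ref{lem:bar-directly-dualizable} and Proposition \ref{prop:bar-pro-invariance}).
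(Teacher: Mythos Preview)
Your proposal contains the right key ingredients --- Proposition~\ref{prop:models-correct} (the map $\psi$ is a $D_n$-equivalence) and Corollary~\ref{cor:chainrule} (the chain-rule map $\mu^*$ is an equivalence of pro-symmetric sequences) --- but you assemble them in a much more roundabout way than the paper, and you conflate this lemma with the totalization step that comes later.

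The paper's proof involves no induction on the Taylor tower of $F$ at all. It simply writes down a single commutative diagram
\[
\begin{diagram}
  \node{Q(F\Omega^\infty\dots)(\Sigma^\infty X)} \arrow{e,t}{(1)} \arrow{s,l}{\sim}
    \node{\Map_{\mathsf\Sigma}\bigl(\tilde\der^*(Q(F\Omega^\infty\dots)),(\Sigma^\infty X)^{\leq n}\bigr)} \arrow{s,r}{(2)} \\
  \node{(F\Omega^\infty\dots)(\Sigma^\infty X)} \arrow{e} \arrow{se,b}{\phi''}
    \node{\Map_{\mathsf\Sigma}\bigl(\tilde\der^*(F\Omega^\infty)\circ\dots,(\Sigma^\infty X)^{\leq n}\bigr)} \arrow{s,r}{(4)} \\
  \node[2]{\Map^{\mathsf{right}}_P\bigl(\tilde\der^*(F\Omega^\infty)\circ\dots\circ\tilde\der^*(\Sigma^\infty\Omega^\infty),(\Sigma^\infty X)^{\leq n}\bigr)}
\end{diagram}
\]
Here (1) is the map $\psi$ of Proposition~\ref{prop:models-correct} applied to the functor $Q(F\Omega^\infty(\Sigma^\infty\Omega^\infty)^k)$ --- which is already a functor $\spectra\to\spectra$, so that proposition applies directly with no reduction needed --- hence a $D_n$-equivalence. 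Map (2) is induced by the iterated $\mu^*$, an equivalence by Theorem~\ref{thm:chainrule} and Lemma~\ref{lem:pro-map-invariance}. Map (4) is the free/forgetful adjunction isomorphism of Lemma~\ref{lem:adjunction}. Commutativity plus these three facts gives the result immediately.

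Your Taylor-tower induction on $F$ is therefore unnecessary machinery: $F\Omega^\infty(\Sigma^\infty\Omega^\infty)^k$ is a functor of spectra to which Proposition~\ref{prop:models-correct} applies wholesale, so there is no need to reduce $F$ to homogeneous pieces. Moreover, your discussion of totalization, $\Tot^n$, and Reedy fibrancy does not belong here: Lemma~\ref{lem:phi-FOS} is a statement at a \emph{single} cosimplicial level (one fixed $k$); the passage to totalizations and the comparison with $D_n(FG)$ occurs afterwards in Proposition~\ref{prop:phi}. What your proposal gets right is the identification, via $\mu^*$, of the target with a $\Map_{\mathsf\Sigma}$ out of $\tilde\der^*(Q(F\Omega^\infty\dots))$; that, together with Proposition~\ref{prop:models-correct}, is the entire content of the paper's argument.
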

\begin{proof}
Consider the following diagram:
\[ \begin{diagram}
  \node{Q(F \Omega^\infty \dots)(\Sigma^\infty X)} \arrow{e,t}{(1)} \arrow{s,l}{\sim}
    \node{\Map_{\mathsf{\Sigma}}\left( \tilde{\der}^*(Q(F \Omega^\infty \dots)), (\Sigma^\infty X)^{\leq n} \right)} \arrow{s,r}{(2)} \\
  \node{(F \Omega^\infty \dots)(\Sigma^\infty X)} \arrow{e,t}{(3)} \arrow{se,b}{\phi''_{F \Omega^\infty \dots \Sigma^\infty}}
    \node{\Map_{\mathsf{\Sigma}}\left( \tilde{\der}^*(F \Omega^\infty) \circ \dots, (\Sigma^\infty X)^{\leq n} \right)} \arrow{s,r}{(4)} \\
  \node[2]{\Map^{\mathsf{right}}_{P}\left( \tilde{\der}^*(F \Omega^\infty) \circ \dots \circ \tilde{\der}^*(\Sigma^\infty \Omega^\infty), (\Sigma^\infty X)^{\leq n} \right)}
\end{diagram} \]
where we describe the numbered maps as follows:
\begin{enumerate}
  \item The target of this map is isomorphic to
  \[ \prod_{r = 1}^{n} \Map(\tilde{\der}^*(Q(F \Omega^\infty \dots)), (\Sigma^\infty X)^{\smsh r})^{\Sigma_r}. \]
  and so we take (1) to be given by the relevant maps $\psi$ from Proposition \ref{prop:models-correct}.

  We also claim that (1) is a $D_n$-equivalence. The \ord{r} term in the product is $r$-excisive by Lemma \ref{lem:F_E} and so taking $D_n$ we only need to consider the $r = n$ term. The map (1) is then a $D_n$-equivalence by Proposition \ref{prop:models-correct}.
  \item This is given by an iterated version of the map $\mu$ of Definition \ref{def:mu}. By Theorem \ref{thm:chainrule} and Lemma \ref{lem:pro-map-invariance}, this is an equivalence.
  \item This is similar to Definition \ref{def:phi-FOS} but without introducing an extra copy of $\Omega^\infty \Sigma^\infty$ as was done in Definition \ref{def:phi'-F}.
  \item This is the adjunction isomorphism of Lemma \ref{lem:adjunction}.
\end{enumerate}
Following through all of these definitions, we see that this square commutes. Since (1) is a $D_n$-equivalence, (2) is an equivalence, and (4) is an isomorphism, we deduce that $\phi''_{F \Omega^\infty \dots \Sigma^\infty}$ is a $D_n$-equivalence as claimed.
\end{proof}

\begin{prop} \label{prop:phi}
Let $F: \sset \to \spectra$ be a pointed simplicial homotopy functor. The map
\[ \phi: F \to \Phi_n(F) \]
of Definition \ref{def:phi} is a $D_n$-equivalence.
\end{prop}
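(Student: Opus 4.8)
The plan is to deduce Proposition~\ref{prop:phi} from the factorization of $\phi$ supplied by Lemma~\ref{lem:phi-tot} together with the $D_n$-equivalence of Lemma~\ref{lem:phi-FOS} and the cobar resolution theorem, Theorem~\ref{thm:key}. The key point is that both vertical maps in the diagram of Lemma~\ref{lem:phi-tot} are coaugmentations into (homotopy-invariant) totalizations of cosimplicial objects built from the $(\Sigma^\infty,\Omega^\infty)$-adjunction, and that the horizontal maps are levelwise $D_n$-equivalences. Since $D_n$ commutes with the appropriate totalizations at the relevant stage (the cosimplicial objects in question are degenerate above degree $n$, as noted in Remark~\ref{rem:phi-n-excisive} and the final remark of \S\ref{sec:cobar}), applying $D_n$ to the whole diagram reduces the claim to showing that the left-hand coaugmentation is a $D_n$-equivalence.

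First I would make precise the reduction: by Lemma~\ref{lem:phi-tot} and the remark following it, $\phi\colon F \to \Phi_n(F)$ is, up to natural weak equivalence, the composite of the top map $\phi''_F$ and the right-hand vertical coaugmentation into $\widetilde{\Tot}$ of the cosimplicial object whose strict totalization is $\Phi_n(F)$. Running the diagram the other way, $\phi$ also equals the composite of the left-hand vertical coaugmentation $F(X) \to \widetilde{\Tot}[F\Omega^\infty(\Sigma^\infty\Omega^\infty)^\bullet\Sigma^\infty(X)]$ with the map $\phi''_{F\Omega^\infty\cdots\Sigma^\infty}$ applied levelwise and totalized. By Lemma~\ref{lem:phi-FOS}, the map $\phi''_{F\Omega^\infty\cdots\Sigma^\infty}$ is a $D_n$-equivalence at each cosimplicial level. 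Since the cosimplicial objects on both sides are Reedy fibrant (Remark~\ref{rem:ext-tot} for the right-hand side; the left-hand side by the argument of Proposition~\ref{prop:bar-invariance} applied to $\Nat$-objects, or simply because every spectrum is fibrant) and degenerate above degree $n$, the functor $D_n$ commutes with $\widetilde{\Tot}$ here, so $\widetilde{\Tot}$ of a levelwise $D_n$-equivalence is a $D_n$-equivalence. Hence $D_n$ applied to the bottom horizontal map is an equivalence.

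It therefore remains to show that the left-hand vertical map
\[ F(X) \to \widetilde{\Tot}\left[ F\Omega^\infty(\Sigma^\infty\Omega^\infty)^{\bullet}\Sigma^\infty(X) \right] \]
becomes an equivalence after applying $D_n$. But this is exactly the content of Theorem~\ref{thm:key} (the equivalence $\epsilon_n$), applied with the outer functor taken to be $F$ and the inner functor the identity on $\sset$ --- note that $F$ is a pointed simplicial homotopy functor, and one may assume it is finitary after replacing it by its finitary approximation $QF$ (Remark~\ref{rem:finitary}), which does not change $D_n$ or the derivatives. Chaining these facts: $D_n(\phi)$ factors as $D_n$ of the left-hand coaugmentation (an equivalence by Theorem~\ref{thm:key}) followed by $D_n$ of the totalized map $\phi''_{F\Omega^\infty\cdots\Sigma^\infty}$ (an equivalence by the previous paragraph), hence $D_n(\phi)$ is an equivalence.

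The main obstacle I anticipate is the interchange of $D_n$ with $\widetilde{\Tot}$ for the two cosimplicial objects appearing in Lemma~\ref{lem:phi-tot}. In general $P_n$ and hence $D_n$ does not commute with totalization, so one cannot argue formally; the point is that each cosimplicial object here is, in the relevant sense, $n$-truncated --- its $k$-th term contributes nothing new to $D_n$ for $k > n$ --- so that $\widetilde{\Tot}$ agrees with the finite partial totalization $\Tot^n$, which is a finite homotopy limit and therefore \emph{does} commute with $D_n$ (by Theorem~\ref{thm:Taylor}, $P_n$ and hence $D_n$ preserves finite homotopy limits). Making this degeneracy statement precise for the bar-type cosimplicial object requires checking, as sketched in Remark~\ref{rem:phi-n-excisive}, that every simplex of $(\tilde{\der}^*(F\Omega^\infty) \circ P^{\bullet})(r)$ in cosimplicial degree $k > r$ is degenerate (using that $P = \tilde{\der}^*(\Sigma^\infty\Omega^\infty)$ has $P(1) \homeq S$); once this is in hand, the rest of the argument is the formal diagram chase described above.
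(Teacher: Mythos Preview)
Your overall strategy—factoring $\phi$ via Lemma~\ref{lem:phi-tot} and invoking Theorem~\ref{thm:key} together with Lemma~\ref{lem:phi-FOS}—is the right one, and you have correctly identified the main obstacle as the interchange of $D_n$ with $\widetilde{\Tot}$. However, your proposed resolution of that obstacle has a genuine gap. You assert that both cosimplicial objects in the diagram of Lemma~\ref{lem:phi-tot} are degenerate above degree $n$, but this is false for the left-hand one, the cobar construction $F\Omega^\infty(\Sigma^\infty\Omega^\infty)^{\bullet}\Sigma^\infty$. The remark at the end of \S\ref{sec:cobar} says that the cosimplicial object $P_n\bigl(F\Omega^\infty(\Sigma^\infty\Omega^\infty)^{\bullet}\Sigma^\infty G\bigr)$ is degenerate above degree $n$, i.e.\ \emph{after} applying $P_n$, not that the cobar construction itself is. (Already for $F=G=I_{\sset}$ this is the Bousfield--Kan cosimplicial resolution, which is not $n$-truncated for any $n$.) Without this, you cannot deduce from Theorem~\ref{thm:key} that the left-hand coaugmentation $F\to\widetilde{\Tot}[F\Omega^\infty\cdots\Sigma^\infty]$ is a $D_n$-equivalence: that theorem gives $D_n(F)\weq\widetilde{\Tot}[D_n(\cdots)]$, which differs from $D_n(F)\weq D_n\bigl(\widetilde{\Tot}[\cdots]\bigr)$ precisely by the interchange you are trying to justify. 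For the same reason you cannot conclude that $\widetilde{\Tot}$ of the levelwise $D_n$-equivalence $\phi''_{F\Omega^\infty\cdots\Sigma^\infty}$ is itself a $D_n$-equivalence, since its source is the same non-truncated cobar object.

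The paper avoids this by never attempting the interchange on the cobar side. It first combines Theorem~\ref{thm:key} with Lemma~\ref{lem:phi-FOS} \emph{at the $D_n$-applied level}: since $D_n(F\Omega^\infty\cdots\Sigma^\infty)\weq D_n(\Phi^k_nF)$ is a levelwise equivalence of cosimplicial spectra, one gets $D_nF\weq\widetilde{\Tot}\,D_n(\Phi^{\bullet}_nF)$ directly. The remaining comparison between $\widetilde{\Tot}\,D_n(\Phi^{\bullet}_nF)$ and $\widetilde{\Tot}\,\Phi^{\bullet}_nF=\Phi_nF$ (after one more $D_n$) is carried out entirely on the $\Phi$ side: one passes through $\widetilde{\Tot}\,P_n(\Phi^{\bullet}_nF)$, using that each $\Phi^k_nF$ is $n$-excisive (so $\Phi^k\to P_n\Phi^k$ is a levelwise equivalence) and that $n$\textsuperscript{th} cross-effects commute with $\widetilde{\Tot}$ (so $D_n\Phi^k\to P_n\Phi^k$, which is an $n$\textsuperscript{th}-cross-effect equivalence, totalizes to one). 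Your $n$-truncation observation about $\Phi^{\bullet}_nF$ from Remark~\ref{rem:phi-n-excisive} is correct and could substitute for this last step, but you must still first transfer from the cobar side to the $\Phi$ side at the $D_n$ level, as the paper does, before any such finiteness argument is available.
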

\begin{proof}
By Theorem \ref{thm:key}, the following map is an equivalence
\[ D_nF \to \widetilde{\Tot} \; D_n(F \Omega^\infty \dots \Sigma^\infty). \]
Set
\[ \Phi^{\bullet}_n(F) := \Map^{\mathsf{right}}_{P}\left(B_{\bullet}(\tilde{\der}^*(F \Omega^\infty),P,P), (\Sigma^\infty X)^{\leq n}\right), \]
so that by definition
\[ \Phi_n(F) := \widetilde{\Tot} \; \Phi^{\bullet}_n(F). \]
Then Theorem \ref{thm:key} and Lemma \ref{lem:phi-FOS} together give us an equivalence
\[ D_nF \weq \widetilde{\Tot} \; D_n(F \Omega^\infty \dots \Sigma^\infty) \weq \widetilde{\Tot} \; D_n(\Phi^{\bullet}_n(F)) \]
and hence an equivalence
\[ D_n(D_nF) \to D_n(\widetilde{\Tot} \; D_n(\Phi^{\bullet}_n(F))). \]
Now consider the commutative diagram
\[ \begin{diagram}
  \node{D_n(D_nF)} \arrow{s,l}{\sim} \arrow{e,t}{\sim} \node{D_n(\widetilde{\Tot} \; D_n(\Phi^{\bullet}_n(F)))} \arrow{s} \\
  \node{D_n(P_nF)} \arrow{e} \node{D_n(\widetilde{\Tot} \; P_n(\Phi^{\bullet}_n(F)))} \\
  \node{D_n(F)} \arrow{e,t}{D_n(\phi)} \arrow{n,l}{\sim} \node{D_n(\widetilde{\Tot} \; \Phi^{\bullet}_n(F))} \arrow{n,r}{\sim}
\end{diagram} \]
We just saw that the top map is an equivalence. The left-hand vertical maps are equivalence by standard results of calculus. The bottom-right vertical map is an equivalence because each $\Phi^k_n(F)$ is $n$-excisive by Lemma \ref{lem:F_E} (see also the proof of Proposition \ref{prop:phi-derivative}). Finally, to see that the top-right vertical map is an equivalence, note that it is sufficient to show it is an equivalence on \ord{n} cross-effects instead of $D_n$. But taking cross-effects commutes with $\widetilde{\Tot}$ and for any functor $G$, $D_nG \to P_nG$ induces an equivalence on \ord{n} cross-effects, so we are done. The conclusion therefore is that the map $\phi$ is a $D_n$-equivalence.
\end{proof}

We can now deduce Theorem \ref{thm:right} which we restate here.

\begin{thm:right}
Let $F: \sset \to \spectra$ be a pointed simplicial homotopy functor. Then there is a natural equivalence of symmetric sequences
\[ \der_*(F) \homeq \der^G_*(F). \]
That is, the right $\der_*(I)$-module $\der_*(F)$ consists of models for the Goodwillie derivatives of $F$.
\end{thm:right}
\begin{proof}[Proof of Theorem \ref{thm:right}]
This follows from Corollary \ref{cor:phi-derivative} and Proposition \ref{prop:phi}.
\end{proof}

\begin{remark} \label{rem:tate}
In the course of proving Theorem \ref{thm:right} we constructed natural transformations $F \to \Phi_nF$ with $\Phi_nF$ an $n$-excisive functor (see Remark \ref{rem:phi-n-excisive}). The natural truncation maps
\[ (\Sigma^\infty X)^{\leq n} \to (\Sigma^\infty X)^{\leq (n-1)} \]
determine natural transformations
\[ \Phi_nF \to \Phi_{n-1}F \]
and we therefore obtain a tower of functors
\[ F \to \dots \to \Phi_nF \to \Phi_{n-1}F \to \dots \to \Phi_0(F) = *. \]
This, however, in general is not equivalent to the Taylor tower of $F$. To see this, we can calculate the fibre $\Delta_nF$ of the map $\Phi_nF \to \Phi_{n-1}F$. This turns out to be
\[ \Delta_nF := \Map\left(B(\tilde{\der}^*(F\Omega^\infty), \tilde{\der}^*(\Sigma^\infty \Omega^\infty), \mathsf{1})(n), (\Sigma^\infty X)^{\smsh n}\right)^{\Sigma_n}. \]
(Note that this object appeared in the proof of Proposition \ref{prop:phi-derivative}, where we showed that it is $D_n$-equivalent to $\Phi_nF$.) This in turn is equivalent to
\[ \left[\dual B(\tilde{\der}^*(F\Omega^\infty), \tilde{\der}^*(\Sigma^\infty \Omega^\infty), \mathsf{1})(n) \smsh (\Sigma^\infty X)^{\smsh n}\right]^{h\Sigma_n} \]
or, by Theorem \ref{thm:right}, equivalent to
\[ \left[ \der_nF \smsh (\Sigma^\infty X)^{\smsh n}\right]^{h\Sigma_n}. \]
Therefore $\Delta_nF$ is not equivalent to $D_nF$ (which it would be if the $\Phi_nF$ formed the Taylor tower), but instead fits into a fibre sequence
\[ \tag{*} D_nF \to \Delta_nF \to \operatorname{Tate}_{\Sigma_n}(\der_nF \smsh (\Sigma^\infty X)^{\smsh n}) \]
coming from the norm map relating homotopy orbits and homotopy fixed points. The last term is the Tate spectrum for the action of $\Sigma_n$ on $\der_nF \smsh (\Sigma^\infty X)^{\smsh n}$ (see \cite{greenlees/may:1995} for more details).

Another way to obtain this map is to note that by the universal property of the Taylor tower, the map $\phi: F \to \Phi_nF$ factors as
\[ F \to P_nF \to \Phi_nF. \]
The resulting maps $P_nF \to \Phi_nF$ are compatible with the tower maps, and so we get an induced map on fibres. Altogether we have a map of fibre sequences
\[ \begin{diagram}
  \node{D_nF} \arrow{s} \arrow{e} \node{\Delta_nF} \arrow{s} \\
  \node{P_nF} \arrow{s} \arrow{e} \node{\Phi_nF} \arrow{s} \\
  \node{P_{n-1}F} \arrow{e} \node{\Phi_{n-1}F}
\end{diagram} \]
We now notice that the Tate spectra appearing in the sequence (*) are precisely the obstructions to the tower $\Phi_*F$ being exactly the Taylor tower of $F$. In particular, if all these Tate spectra vanish, then $P_nF \homeq \Phi_nF$ for all $n$.

Notice that the sequence of functors $\Phi_*F$ is determined completely by the right $\der^*(\Sigma^\infty \Omega^\infty)$-module $\der^*(F \Omega^\infty)$. We can think of $\Phi_*F$ as the best approximation to the Taylor tower of $F$ based on the information contained in that module. In particular, if those Tate spectra all vanish, then the Taylor tower of $F$ is determined by $\der^*(F \Omega^\infty)$, and hence also, in fact, by the right $\der_*(I)$-module $\der_*(F)$. (This last claim is true because the chain rules we prove in \S\ref{sec:chainrule-spaces} below, allow us to recover the derivatives (and module structure) of $F \Omega^\infty$ from those of $F$. See Example \ref{ex:chainrule}.)

These comments should be viewed as analogous to the work of McCarthy \cite{mccarthy:2001} showing that the Taylor tower of a functors from spectra to spectra splits (i.e. is determined by the derivatives) when the corresponding Tate objects vanish. This work was used extensively by Kuhn \cite{kuhn:2004} to study functors of spectra localized at the Morava K-theories.
\end{remark}

\begin{example}[Stable mapping spaces] \label{ex:right}
Let $K$ be a finite cell complex in $\sset$ and consider the functor
\[ \sset \to \spectra; \quad X \mapsto \Sigma^\infty \sset(K,X). \]
The first author showed in \cite{arone:1999} that
\[ \der_n(\Sigma^\infty \sset(K,-)) \homeq \dual K^{\smsh n}/\Delta^nK \]
where here $\Delta^nK$ denotes the `fat' diagonal:
\[ \Delta^nK = \{(k_1,\dots,k_n) \in K^{\smsh n} \; | \; k_i = k_j \text{ for some $i \neq j$} \}. \]
In our case, the functors $\Sigma^\infty \sset(K,-)$ are finite cell functors in $[\finsset,\spectra]$ so our models for their derivatives are particularly easy to calculate. We have
\[ \Sigma^\infty \sset(K,\Omega^\infty X) \isom S_c \smsh \spectra(\Sigma^\infty K, X) \]
and so
\[ \der^n(\Sigma^\infty \sset(K,\Omega^\infty(-)) = \Nat(S_c \smsh \spectra(\Sigma^\infty K, X), X^{\smsh n}) \isom \Map(S_c,(\Sigma^\infty K)^{\smsh n}) \homeq (\Sigma^\infty K)^{\smsh n} \]
by the Yoneda Lemma. The right $\der^*(\Sigma^\infty \Omega^\infty)$-module structure is then given by Definition \ref{def:sigmainfty-module}.

We now obtain $\der_*(\Sigma^\infty \sset(K,-))$ by taking the dual of the bar construction. In this case, we can factor the suspension spectrum out of the bar construction, which can be done entirely on the space level. We then obtain
\[ \der_*(\Sigma^\infty \sset(K,-)) \homeq \dual B(K^{\smsh *}, \mathsf{Com}, \mathsf{1}) \]
where this bar construction is over the commutative operad in $\sset$, with right module $K^{\smsh *}$ given in much the same way as Definition \ref{def:sigmainfty-module}. Now using the description of this bar construction in terms of trees (see \cite{ching:2005a}) we can show that
\[ B(K^{\smsh *}, \mathsf{Com}, \mathsf{1})(n) \homeq K^{\smsh n}/\Delta^nK \]
which recovers the first author's previous calculation. The right $\der_*(I)$-module structure on the symmetric sequence $\der_*(\Sigma^\infty \sset(K,-))$ is then dual to a right comodule structure on this bar construction over the cooperad formed by the partition poset complexes (whose duals are equivalent to $\der_*(I)$ -- see \cite{arone/mahowald:1999}).
\end{example}

\begin{example}[Waldhausen's algebraic K-theory of spaces] \label{ex:A(X)}
For a pointed simplicial set $X$, let $A(X)$ denote Waldhausen's algebraic K-theory of spaces functor applied to the geometric realization of $X$. The functor $A(X)$ is not reduced ($A(*)$ is equal to the algebraic K-theory of the sphere spectrum), so we consider instead the relative version:
\[ \tilde{A}(X) := \hofib(A(X) \to A(*)). \]
Note that for pointed $X$, there is a splitting
\[ A(X) \homeq \tilde{A}(X) \wdge A(*) \]
and so we can instead write
\[ \tilde{A}(X) \homeq \hocofib(A(*) \to A(X)). \]
In \cite[\S9]{goodwillie:2003}, Goodwillie calculates the derivatives of $A$ (and hence of $\tilde{A}$ since they are the same). He does this via a \emph{trace map}
\[ \tau: A(X) \to L(X) \]
where $L(X) = \Sigma^\infty (X^{S^1})_+$, and $X^{S^1}$ denotes the \emph{free loop-space}, i.e. the space of \emph{unbased} maps $S^1 \to X$. Relating the free loop-space to our notation, where everything is pointed, we have a cofibre sequence (of simplicial sets):
\[ (*^{S^1})_+ \to (X^{S^1})_+ \to X^{S^1} \isom \sset((S^1)_+,X). \]
It follows that for pointed $X \in \sset$, the relative version of $L(X)$ gives:
\[ \tilde{L}(X) := \hocofib(L(*) \to L(X)) \homeq \Sigma^\infty \sset((S^1)_+,X). \]
In other words, $\tilde{L}(X)$ is one of the stable mapping spaces considered in Example \ref{ex:right}, namely $F_{(S^1)_+}$.

The trace map $\tau$ induces a map $\tilde{A}(X) \to \tilde{L}(X)$ and hence a map on derivatives
\[ \tau_*: \der_*(\tilde{A}) \to \der_*(\tilde{L}). \]
Now $\tau$ is $S^1$-equivariant with respect to the regular $S^1$-action on $L(X)$, and the trivial action on $A(X)$. This carries over to the derivatives, and so $\tau_*$ factors via
\[ \der_*(\tilde{A}) \to [\der_*(\tilde{L})]^{hS^1} \]
Goodwillie's result is then that this map is an equivalence. (He actually proves a more general version involving the derivatives of $\tilde{A}$ at \emph{any} base space, not just for derivatives at $*$.)

We now use our knowledge of the right $\der_*(I)$-module structure on the derivatives of $\tilde{L}$ (from Example \ref{ex:right}) to calculate the module structure on the derivatives of $\tilde{A}$. This is very simple because the naturality tells us that $\tau_*$ is a morphism of right $\der_*(I)$-modules. The equivariance of $\tau_*$ then gives us a factorization
\[ \der_*(\tilde{A}) \to [\der_*(\tilde{L})]^{hS^1} \]
in the category of right modules. However, homotopy fixed points of right modules are calculated termwise, and so this map is the equivalence considered by Goodwillie. Therefore, as right $\der_*(I)$-modules, we have
\[ \der_*(\tilde{A}) \homeq [\der_*(\tilde{L})]^{hS^1}. \]
By Example \ref{ex:right}, we have
\[ \der_*(\tilde{L}) \homeq \dual B(((S^1)_+)^{\smsh *}, \mathsf{Com}, \mathsf{1}) = \dual B((S^1)^*_+, \mathsf{Com}, \mathsf{1}). \]
Therefore, since the $S^1$-action is free,
\[ \der_*(\tilde{A}) \homeq \dual \left[ B((S^1)^*_+, \mathsf{Com},\mathsf{1})/S^1 \right]. \]
The right-hand side here is equivalent to
\[ \dual \left[((S^1)^n/\Delta^n(S^1))/(S^1) \right] \]
which is Goodwillie's original description of these derivatives. The bar construction version (which can be understood explicitly in terms of spaces of trees, see \cite{ching:2005a}) allows the module structure to be seen.

As we noted in Remark \ref{rem:tate}, the right $\der_*(I)$-module $\der_*(\tilde{A})$ contains `more' of the information of the full Taylor tower of $\tilde{A}$ than just the derivatives. In particular, we can build the `fake' Taylor tower $\Phi_*(\tilde{A})$ that differs from the actual Taylor tower by certain Tate spectra.
\end{example}

\section{Functors from spectra to spaces} \label{sec:spectra-spaces}

We now consider functors from spectra to spaces. In this case the derivatives have the property that they form a left module over the operad $\der_*(I)$. Most of the material here is dual in a sense to that of section \ref{sec:spaces-spectra} (with right modules replaced by left modules), and so we abbreviate some of the exposition.

\begin{lemma} \label{lem:extend-sigma}
Let $F: \finspec \to \sset$ be a presented cell functor. Then $\Sigma^\infty F$ has a natural structure of a presented cell functor in $[\finspec,\spectra]$ with cells in 1-1 correspondence to those in $F$.
\end{lemma}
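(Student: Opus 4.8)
The statement is entirely parallel to Lemma \ref{lem:extend-omega}, with the roles of $\Omega^\infty$ and $\Sigma^\infty$ interchanged and source/target swapped, so the plan is to mimic that proof. The starting point is to fix a presentation of $F$ as a cell functor in $[\finspec,\sset]$: a sequence $* = F_0 \to F_1 \to \dots$ with $F = \colim F_i$, where $F_{i+1}$ is obtained from $F_i$ by a pushout of a coproduct of generating cofibrations $I_0^\alpha \smsh \finspec(K_\alpha,-) \to I_1^\alpha \smsh \finspec(K_\alpha,-)$ with $I_0^\alpha \to I_1^\alpha$ one of the generating cofibrations $\partial\Delta[n]_+ \to \Delta[n]_+$ in $\sset$ and $K_\alpha \in \finspec$. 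I then define the filtration on $\Sigma^\infty F$ by $(\Sigma^\infty F)_i := \Sigma^\infty F_i$, and apply $\Sigma^\infty = S_c \smsh |{-}|$ to the attaching pushout squares.

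The key points to check are then: (1) applying $\Sigma^\infty$ to each attaching square still yields a pushout square of functors $\finspec \to \spectra$ — this holds because the left Kan extension commutes with colimits and $\Sigma^\infty$ (being a left adjoint composed with geometric realization, both colimit-preserving) commutes with colimits, so $\Sigma^\infty$ of a pushout is a pushout, and $\Sigma^\infty$ of the sequential colimit $\colim F_i$ is $\colim \Sigma^\infty F_i$; (2) $\Sigma^\infty(I_0^\alpha \smsh \finspec(K_\alpha,-)) \isom (\Sigma^\infty I_0^\alpha) \smsh \finspec(K_\alpha,-)$ and likewise for $I_1^\alpha$, since $\Sigma^\infty$ is applied in the $\sset$-variable $I^\alpha$ while $\finspec(K_\alpha,-)$ is simply an indexing simplicial set for a tensor — here I would appeal to the fact that $\Sigma^\infty$, as a left adjoint, commutes with the tensoring of $\spectra$/$\sset$ over $\sset$; (3) the map $\Sigma^\infty I_0^\alpha \to \Sigma^\infty I_1^\alpha$ is (a coproduct of) generating cofibrations in $\spectra$ — this is exactly the statement, used already in the proof of Lemma \ref{lem:extend-omega}, that $\Sigma^\infty$ takes the generating cofibrations of $\sset$ to generating cofibrations of $\spectra$ (cf. Definition \ref{def:spectra} and the proof of Lemma \ref{lem:cell-F(X)}). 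Assembling these gives that $\Sigma^\infty F = \colim (\Sigma^\infty F)_i$ is a presented cell functor in $[\finspec,\spectra]$, with attaching squares
\[ \begin{diagram}
  \node{\Wdge_{\alpha}^{\mathstrut} \Sigma^\infty I^{\alpha}_0 \smsh \finspec(K_{\alpha},-)} \arrow{e} \arrow{s} \node{(\Sigma^\infty F)_i} \arrow{s} \\
  \node{\Wdge_{\alpha}^{\mathstrut} \Sigma^\infty I^{\alpha}_1 \smsh \finspec(K_{\alpha},-)} \arrow{e} \node{(\Sigma^\infty F)_{i+1}}
\end{diagram} \]
and the cells of $\Sigma^\infty F$ in evident bijection with those of $F$ (one cell of $\Sigma^\infty F$ for each cell $\alpha$ of $F$, possibly after re-expressing $\Sigma^\infty I^\alpha_0 \to \Sigma^\infty I^\alpha_1$ as a coproduct of generating cofibrations of $\spectra$ indexed by the nondegenerate simplices of the relevant simplicial sets, exactly as in Lemma \ref{lem:cell-F(X)}).

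I do not anticipate a serious obstacle; the only mild subtlety is bookkeeping in point (3), namely being careful about whether "in 1-1 correspondence with the cells of $F$" is meant on the nose or up to the standard re-indexing that turns $\Sigma^\infty(\partial\Delta[n]_+ \to \Delta[n]_+)$ into a relative cell complex in $\spectra$. Since Lemma \ref{lem:extend-omega} (and its use of Lemma \ref{lem:cell-F(X)}) already adopts the convention that such a map counts as "a cell", the same convention applies here and the correspondence is as clean as in the $\Omega^\infty$ case. In particular, as in the remark following Lemma \ref{lem:extend-omega}, every finite subcomplex of $\Sigma^\infty F$ is then of the form $\Sigma^\infty C$ for a finite subcomplex $C$ of $F$, which is what will be needed in the next section.
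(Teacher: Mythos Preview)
Your proposal is correct and follows essentially the same argument as the paper: define $(\Sigma^\infty F)_i := \Sigma^\infty F_i$, use that $\Sigma^\infty$ preserves colimits to transport the attaching pushouts, and observe that $\Sigma^\infty$ carries the generating cofibrations of $\sset$ to generating cofibrations of $\spectra$. Your worry in point (3) is unfounded here: with the paper's conventions ($S_c = S \smsh_{\cat{L}} \mathbb{L}S$ and the generating cofibrations of $\spectra$ as in Definition~\ref{def:spectra}), the map $\Sigma^\infty(\partial\Delta[n]_+ \to \Delta[n]_+)$ is literally the $q=0$ generating cofibration, so no re-indexing is needed and the cell bijection is on the nose.
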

\begin{proof}
The generating cofibrations in $[\finspec,\sset]$ are of the form
\[ I_0 \smsh \spectra(K,-) \to I_1 \smsh \spectra(K,-) \]
where $K \in \finspec$ and $I_0 \to I_1$ is one of the generating cofibrations in $\sset$. Recall that then $\Sigma^\infty I_0 \to \Sigma^\infty I_1$ is one of the generating cofibrations in $\spectra$ and so the induced map
\[ \Sigma^\infty I_0 \smsh \spectra(K,-) \to \Sigma^\infty I_1 \smsh \spectra(K,-) \]
is one of the generating cofibrations in $[\finspec,\spectra]$. Since $\Sigma^\infty$ preserves colimits, it follows that attaching diagrams for cells in $F$ determine attaching diagrams for a cell structure on $\Sigma^\infty F$ with
\[ (\Sigma^\infty F)_i = \Sigma^\infty F_i. \]
\end{proof}

\begin{definition} \label{def:left}
Let $F: \finspec \to \sset$ be a presented cell functor. We then have a left comodule structure map
\[ l: \Sigma^\infty F \to \Sigma^\infty \Omega^\infty \Sigma^\infty F \]
for $\Sigma^\infty F$ over the comonad $\Sigma^\infty \Omega^\infty$. According to Proposition \ref{prop:module}, this determines structure maps
\[ \der^*(\Sigma^\infty \Omega^\infty) \circ \der^*(\Sigma^\infty F) \to \der^*(\Sigma^\infty F) \]
that make the symmetric sequence $\der^*(\Sigma^\infty F)$ into a pro-left-module over the operad $\der^*(\Sigma^\infty \Omega^\infty)$. We then define
\[ \der^*(F) := B(\mathsf{1},\tilde{\der}^*(\Sigma^\infty \Omega^\infty),\tilde{\der}^*(\Sigma^\infty F)). \]
This is a pro-right-comodule over $B(\tilde{\der}^*(\Sigma^\infty \Omega^\infty))$.

Now for a general pointed simplicial functor $F:\spectra \to \sset$, we set
\[ \der_*(F) := \dual \der^*(QF) \]
where $QF$ denotes the cellular replacement for $F$ in $[\finspec,\sset]$. The symmetric sequence $\der_*(F)$ is then a left $\der_*(I)$-module. Explicitly, we have
\[ \der_*(F) := \hocolim_{C \in \mathsf{Sub}(QF)} \Map(B(\mathsf{1},\tilde{\der}^*(\Sigma^\infty \Omega^\infty), \tilde{\der}^*(\Sigma^\infty C)), S) \]
with the homotopy colimit formed in the category of left $\der_*I$-modules.
\end{definition}

\begin{theorem} \label{thm:left}
Let $F: \spectra \to \sset$ be a pointed simplicial homotopy functor. Then there is a natural equivalence of symmetric sequences
\[ \der_*(F) \homeq \der^G_*(F). \]
In other words, the left $\der_*(I)$-module constructed in Definition \ref{def:left} provides models for the Goodwillie derivatives of $F$.
\end{theorem}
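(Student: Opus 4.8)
The plan is to run the argument of Theorem \ref{thm:right} in the ``mirror'' setting, with right modules replaced by left modules and the cosimplicial cobar construction $F\Omega^\infty(\Sigma^\infty\Omega^\infty)^{\bullet}\Sigma^\infty$ playing the same structural role. Concretely, for $F:\spectra\to\sset$ a presented cell functor I would introduce, for each $n$, the functor
\[ \Gamma_n(F)(E) := \Ext^{\mathsf{left}}_{\tilde{\der}^*(\Sigma^\infty\Omega^\infty)}\left( \tilde{\der}^*(\Sigma^\infty F), {}^{\leq n}(\Sigma^\infty E) \right), \]
where ${}^{\leq n}(\Sigma^\infty E)$ is the left $\tilde{\der}^*(\Sigma^\infty\Omega^\infty)$-module given by the truncation of the cofree left module on $\Sigma^\infty E$ (the left-module analogue of Definitions \ref{def:sigmainfty-module} and \ref{def:truncation-right}, built from the evaluation maps $\der^r(\Sigma^\infty\Omega^\infty)\smsh(\Sigma^\infty E)^{\smsh k}\to(\Sigma^\infty E)^{\smsh r}$ coming from the counit of the adjunction). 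Since left-module $\Ext$-objects live in $\sset$ (Definition \ref{def:ext-modules}), $\Gamma_n(F)$ is a functor $\spectra\to\sset$, as required. Then I would prove two things: (i) the $n$-th Goodwillie derivative of $\Gamma_n(F)$ is naturally $\Sigma_n$-equivariantly equivalent to $\der_n(F)$ of Definition \ref{def:left}; and (ii) there is a natural transformation $\gamma:F\to\Gamma_n(F)$ that is a $D_n$-equivalence. Combining these gives $\der^G_*(F)\homeq\der_*(F)$.

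For step (i), I would imitate Proposition \ref{prop:phi-derivative}: write $\Ext^{\mathsf{left}}_P(\tilde{\der}^*(\Sigma^\infty F),{}^{\leq n}(\Sigma^\infty E))$ as a totalization of a cosimplicial simplicial set whose $k$-simplices involve $\Hom_{\mathsf{\Sigma}}(P^{\circ k}\circ\tilde{\der}^*(\Sigma^\infty F),{}^{\leq n}(\Sigma^\infty E))$ (Remark \ref{rem:ext-tot}), compare against the truncation-at-exactly-$n$ module ${}^{=n}(\Sigma^\infty E)$ as in the proof of \ref{prop:phi-derivative}, and use that the $r$-th factor is $r$-excisive (via the left-module analogue of Lemma \ref{lem:F_E}) so that taking $n$-th cross-effects kills everything but the $r=n$ term. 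That isolates $\Map(B_\bullet(\mathsf{1},P,\tilde{\der}^*(\Sigma^\infty F))(n),(\Sigma^\infty E)^{\smsh n})^{\Sigma_n}$, whose $n$-th derivative is $\dual B(\mathsf{1},P,\tilde{\der}^*(\Sigma^\infty F))(n)$ again by \ref{lem:F_E}, and this is exactly $\der_n(F)$. Here I do need a left-module version of \ref{lem:F_E} for the functor $E\mapsto\Hom_{\mathsf{\Sigma}}(L,(\Sigma^\infty E)^{\smsh *})$ restricted appropriately; since precomposition with $\Sigma^\infty$ preserves $n$-excisive functors and $\sset$-valued homotopy functors have a Goodwillie tower by Theorem \ref{thm:Taylor}, this should go through without trouble.

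For step (ii), I would construct $\gamma$ through the maps $F(E)\to\Hom_{\mathsf{\Sigma}}(\der^*(\Sigma^\infty F),(\Sigma^\infty E)^{\smsh *})$ obtained by combining the unit/counit maps $F\to\Omega^\infty\Sigma^\infty F$ with the evaluation $\Omega^\infty\Sigma^\infty F(E)\smsh\Nat(\Sigma^\infty F\circ\ldots)\to\ldots$, check (as in Lemma \ref{lem:phi'-F}) that this factors through the left-module mapping object, then land in $\Gamma_n(F)$ via cofibrant replacements, truncation, and the strict-to-derived map for $\Hom^{\mathsf{left}}_P$. To show it is a $D_n$-equivalence I would use Theorem \ref{thm:key} in the form $D_n(F)\weq\widetilde{\Tot}\,D_n(\Omega^\infty(\Sigma^\infty\Omega^\infty)^\bullet\Sigma^\infty F)$ — note that here $F$ is the outer functor to $\sset$, so Theorem \ref{thm:key} applies with ``$F$''$=\Omega^\infty$, ``$G$''$=F$ — and identify, levelwise in the cosimplicial direction, the cobar term with $\Gamma_n$ of the bar resolution via the chain rule for functors of spectra (Theorem \ref{thm:chainrule}, Corollary \ref{cor:chainrule}) and Lemma \ref{lem:pro-map-invariance}, exactly as in the proof chain Definition \ref{def:phi-FOS} $\to$ Lemma \ref{lem:phi-tot} $\to$ Lemma \ref{lem:phi-FOS} $\to$ Proposition \ref{prop:phi}. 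The final diagram chase deducing $D_n(\gamma)$ is an equivalence, using that each cosimplicial level is $n$-excisive and that $D_n\to P_n$ is an equivalence on $n$-th cross-effects, is formally identical to Proposition \ref{prop:phi}.

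\textbf{Main obstacle.} The technically delicate point is the one flagged in the ``Technical remarks'': the ``end'' coface map $d^0:\Omega^\infty\Sigma^\infty F\to\Omega^\infty\Sigma^\infty\Omega^\infty\Sigma^\infty F$ (insertion of the first $\Omega^\infty\Sigma^\infty$ via the adjunction unit) is a map between infinite loop spaces but \emph{not} an infinite-loop map, so one cannot analyze its effect on derivatives by pure functor-of-spectra methods. The resolution — and the reason the $\Gamma_n$ construction above is the right one — is that $\Gamma_n(F)$, built from an $\Ext$-object over $\der^*(\Sigma^\infty\Omega^\infty)$ rather than literally from the cobar tower, \emph{builds in} the non-infinite-loop maps through the left-module structure; the matching between $\Gamma_n^\bullet(F)$ and $\Omega^\infty(\Sigma^\infty\Omega^\infty)^\bullet\Sigma^\infty F$ then only has to be checked levelwise, where it reduces to the chain rule for spectra. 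Making this levelwise identification precise (the left-module analogue of Lemma \ref{lem:phi-FOS}) — in particular getting the module structure on $P^{\circ k}\circ\der^*(\Sigma^\infty F)$ and the $(\Sigma^\infty,\Omega^\infty)$-coaction to line up correctly under the Yoneda and chain-rule isomorphisms — is where essentially all the work lies; everything else is a transcription of Section \ref{sec:spaces-spectra} with ``right'' and ``left'' interchanged.
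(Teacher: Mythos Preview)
Your proposal is essentially the paper's own argument (Section~18): the paper defines $\Phi_n(F)(X):=\Ext^{\mathsf{left}}_{\tilde{\der}^*(\Sigma^\infty\Omega^\infty)}\bigl(\tilde{\der}^*(\Sigma^\infty F),\Map(S_c,X^{\leq n})\bigr)$, proves its $n$-th derivative is $\der_n(F)$ exactly as you outline (Proposition~\ref{prop:Phi-left-derivative}), constructs $\phi:F\to\Phi_n(F)$ and shows it is a $D_n$-equivalence via Lemmas~\ref{lem:phi-left-tot}, \ref{lem:phi-left-OSF} and Proposition~\ref{prop:phi-left}, just as you sketch.

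Two small slips to correct. First, since $F:\spectra\to\sset$ takes a \emph{spectrum} $X$ as input, the target left module is built on $X^{\smsh *}$ (the paper uses the equivalent $\Map(S_c,X^{\smsh *})$, Definition~\ref{def:sigmainfty-module-left}), not $(\Sigma^\infty E)^{\smsh *}$; there is no $\Sigma^\infty$ here, and the left module structure comes from composition of mapping objects rather than from a counit. Second, in your invocation of Theorem~\ref{thm:key} you cannot take the theorem's ``$F$'' to be $\Omega^\infty$ (its domain is $\spectra$, not $\sset$); the correct instantiation is the theorem's $F=I_{\sset}$ and the theorem's $G$ equal to your $F$, which yields the same cobar resolution $\Omega^\infty(\Sigma^\infty\Omega^\infty)^{\bullet}\Sigma^\infty F$ you wrote down.
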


Our proof of Theorem \ref{thm:left} is similar to that of \ref{thm:right} with a few variations in the constructions. We start with a construction that plays the role (for left modules) of the right module $(\Sigma^\infty X)^{\smsh *}$ of Definition \ref{def:sigmainfty-module}.

\begin{definition} \label{def:sigmainfty-module-left}
For any $X \in \spectra$, we define a left $\der^*(\Sigma^\infty \Omega^\infty)$-module structure on the symmetric sequence
\[ \Map(S_c,X^{\smsh *}). \]
Recall that $\der^n(\Sigma^\infty \Omega^\infty) \isom \Map(S_c,S_c^{\smsh n})$. The left module structure maps are then of the form
\[ \Map(S_c,S_c^{\smsh k}) \smsh \Map(S_c,X^{\smsh n_1}) \smsh \dots \smsh \Map(S_c,X^{\smsh n_k}) \to \Map(S_c,X^{\smsh (n_1+\dots+n_k)}) \]
and are given by smashing together terms of the form $\Map(S_c,X^{\smsh n_i})$ to get
\[ \Map(S_c ,S_c^{\smsh k}) \smsh \Map(S_c^{\smsh k},X^{\smsh (n_1+\dots+n_k)}) \]
and then using the usual composition of mapping objects.

The truncation $\Map(S_c,X^{\leq n})$ of this symmetric sequence (see Definition \ref{def:truncation-right}) then inherits a left module structure and there is a natural map of left modules:
\[ \Map(S_c,X^{\smsh *}) \to \Map(S_c,X^{\leq n}). \]
\end{definition}

\begin{remark}
Note that we have a natural equivalence $E \weq \Map(S_c,E)$ for all spectra $E$, so the symmetric sequence $\Map(S_c,X^{\smsh *})$ is equivalent to $X^{\smsh *}$. The left module structure of Definition \ref{def:sigmainfty-module-left} is equivalent to the left $\mathsf{Com}$-module structure on $X^{\smsh *}$ given by the isomorphisms
\[ S \smsh X^{\smsh n_1} \smsh \dots \smsh X^{\smsh n_k} \isom X^{\smsh (n_1+\dots+n_k)}. \]
\end{remark}

\begin{definition} \label{def:Phi-left}
Let $F: \spectra \to \sset$ be a presented cell functor. We define $\Phi_n(F): \spectra \to \sset$ by
\[ \Phi_n(F) := \Ext^{\mathsf{left}}_{\tilde{\der}^*(\Sigma^\infty \Omega^\infty)}\left(\tilde{\der}^*(\Sigma^\infty F), \Map(S_c,X^{\leq n})\right) \]
where the $\Ext$-objects are here calculated in the category of left $P$-modules (with $P = \tilde{\der}^*(\Sigma^\infty \Omega^\infty)$). Recall from Definition \ref{def:ext-modules} that these $\Ext$-objects are pointed simplicial sets, not spectra. Therefore, $\Phi_n(F)$ is a functor from $\spectra$ to $\sset$.
\end{definition}

We now show that $\Phi_n(F)$ has \ord{n} Goodwillie derivative equivalent to the object $\der_n(F)$ of Definition \ref{def:left}.

\begin{prop} \label{prop:Phi-left-derivative}
Let $P$ denote the operad $\tilde{\der}^*(\Sigma^\infty \Omega^\infty)$ and let $L$ be a levelwise $\Sigma$-cofibrant directly-dualizable pro-left-$P$-module. Then the functor $\spectra \to \sset$ given by:
\[ X \mapsto \Ext^{\mathsf{left}}_{P}\left(L,\Map(S_c,X^{\leq n})\right) \]
is a pointed simplicial homotopy functor with \ord{n} Goodwillie derivative naturally equivalent to
\[ \dual B(\mathsf{1},P,L)(n). \]
\end{prop}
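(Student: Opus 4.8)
The plan is to mimic closely the proof of Proposition~\ref{prop:phi-derivative} from the previous section, replacing "right" by "left" throughout and $(\Sigma^\infty X)^{\leq n}$ by $\Map(S_c, X^{\leq n})$. First I would treat the case in which $L$ is an ordinary (not pro-) left $P$-module, and obtain the general case at the end by passing to the relevant filtered homotopy colimit (using that derivatives, the composition product, and the bar construction all commute with filtered homotopy colimits, together with Lemma~\ref{lem:bar-hocolim}). So assume $L$ is an ordinary left $P$-module.

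The key steps, in order. (1) Using Remark~\ref{rem:ext-tot}, rewrite
\[ \Ext^{\mathsf{left}}_P(L, \Map(S_c,X^{\leq n})) \isom \Tot_{k \in \mathsf{\Delta}} \left[ \Hom_{\mathsf{\Sigma}}(P^{\circ k} \circ L, \Map(S_c, X^{\leq n})) \right], \]
whose $k$-simplices split as a finite product $\prod_{r=1}^n \spectra(P^{\circ k} \circ L(r), X^{\smsh r})^{\Sigma_r}$; since $P^{\circ k} \circ L$ is $\Sigma$-cofibrant by Lemma~\ref{lem:sigma-cofibrant-comprod}, each factor is a pointed simplicial homotopy functor (the left-module analogue of $F_E$ in Lemma~\ref{lem:F_E} applied with $X$ a spectrum rather than $\Sigma^\infty$ of a simplicial set), so the $\Ext$-object is a pointed simplicial homotopy functor. (2) Introduce the trivial left $P$-module $\Map(S_c, X^{=n})$ supported in term $n$, with the obvious inclusion $\iota: \Map(S_c,X^{=n}) \to \Map(S_c,X^{\leq n})$ of left $P$-modules, and show $\iota_*$ induces an equivalence on $n$-th Goodwillie derivatives: on $k$-simplices $\iota_*^k$ is the inclusion of the $r=n$ factor into the product, and the factors with $r<n$ are $(n-1)$-excisive by the left-module version of Lemma~\ref{lem:F_E}, hence have trivial $n$-th cross-effect; since totalization commutes with cross-effects, $\iota_*$ is an equivalence on $n$-th cross-effects and therefore on $n$-th derivatives. (3) Compute the $n$-th derivative of $\Ext^{\mathsf{left}}_P(L, \Map(S_c, X^{=n}))$: its cosimplicial model is $\spectra(B_\bullet(P,P,L)(n), X^{\smsh n})^{\Sigma_n}$, whose totalization is $\spectra(B(P,P,L)(n), X^{\smsh n})^{\Sigma_n}$, and by the left-module analogue of Lemma~\ref{lem:F_E} this has $n$-th derivative $\dual B(P,P,L)(n)$. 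But the resolution $B(P,P,L) \weq L$ of Lemma~\ref{lem:module-resolutions-eqs} (a weak equivalence of $\Sigma$-cofibrant left $P$-modules by Proposition~\ref{prop:bar-resolution}) gives $B(\mathsf{1},P,B(P,P,L)) \homeq B(\mathsf{1},P,L)$ via Proposition~\ref{prop:bar-invariance}, and $B(\mathsf{1},P,B(P,P,L)) \isom B(\mathsf{1},P,P,P,L)$ which resolves to the one-sided bar construction; cleaning this up identifies the answer with $\dual B(\mathsf{1},P,L)(n)$. Track the $\Sigma_n$-equivariance through each identification.

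The main obstacle I expect is not conceptual but bookkeeping: one must carefully set up the left-module analogue of Lemma~\ref{lem:F_E} for the functor $X \mapsto \Hom_{\mathsf{\Sigma}}(A, \Map(S_c, X^{\smsh *}))$ on spectra — in particular checking it is $n$-excisive when $A$ is truncated above degree $n$ and computing its $n$-th cross-effect $\Sigma_n$-equivariantly — and one must be scrupulous about which enrichment ($\sset$ versus $\spectra$) is in play, since left-module mapping objects are only simplicial sets. The one genuinely new wrinkle compared with the right-module case is that here the relevant "diagonal" left module is $\Map(S_c, X^{\smsh *})$ on \emph{spectra} $X$, which is equivalent to $X^{\smsh *}$ with its trivial $\mathsf{Com}$-module structure; one should record that equivalence (as in the Remark following Definition~\ref{def:sigmainfty-module-left}) so that the final answer $\dual B(\mathsf{1}, P, L)(n)$ matches Definition~\ref{def:left} on the nose. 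Everything else is a transcription, with dualities exchanged, of the argument already given for Proposition~\ref{prop:phi-derivative}.
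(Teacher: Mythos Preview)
Your overall plan is right and matches the paper exactly: reduce to an ordinary left $P$-module, unwind the $\Ext$ as a $\Tot$, compare with the degree-$n$ concentration $\Map(S_c,X^{=n})$ via cross-effects, and then read off the \ord{n} derivative from the resulting totalization. Steps (1) and (2) are fine as written.

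Step (3) contains a genuine mis-identification. Using Remark~\ref{rem:ext-tot} (and the free/forgetful adjunction of Lemma~\ref{lem:adjunction}), the $k$-simplices of the cosimplicial model for $\Ext^{\mathsf{left}}_P(L,\Map(S_c,X^{=n}))$ are
\[
\Hom_{\mathsf{\Sigma}}\bigl(P^{\circ k}\circ L,\Map(S_c,X^{=n})\bigr)\;\isom\;\spectra\bigl((P^{\circ k}\circ L)(n),\Map(S_c,X^{\smsh n})\bigr)^{\Sigma_n},
\]
and $P^{\circ k}\circ L = B_k(\mathsf{1},P,L)$, \emph{not} $B_k(P,P,L)$. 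So the totalization is (up to equivalence) $\Omega^\infty\Map\bigl(B(\mathsf{1},P,L)(n),X^{\smsh n}\bigr)^{\Sigma_n}$, whose \ord{n} derivative is $\dual B(\mathsf{1},P,L)(n)$ immediately by Lemma~\ref{lem:F_E}. Your identification with $B_\bullet(P,P,L)$ would give $\dual B(P,P,L)(n)\homeq \dual L(n)$, which is wrong in general, and the subsequent ``fix'' via $B(\mathsf{1},P,B(P,P,L))$ does not repair this: you never actually produced that double bar construction, so the manipulation is not connected to what you computed. Simply replace $B_\bullet(P,P,L)$ by $B_\bullet(\mathsf{1},P,L)$ and the detour disappears; this is precisely how the paper argues.
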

\begin{proof}
This is proved in a similar fashion to Proposition \ref{prop:phi-derivative}. We can identify the $k$-simplices in the cosimplicial object defining this $\Ext$ as
\[ \prod_{r = 1}^{n} \Map((P^k \circ L)(r),\Map(S_c,X^{\smsh r}))^{\Sigma_r} \]
which shows that $\Phi_nF$ is a pointed simplicial homotopy functor (by Lemma \ref{lem:F_E}). We then consider the map
\[ \iota_*: \Ext^{\mathsf{left}}_{P}\left(L, \Map(S_c,X^{=n})\right) \to \Ext^{\mathsf{left}}_{P}\left(L, \Map(S_c,X^{\leq n})\right) \]
induced by the inclusion of left modules
\[ \iota: \Map(S_c,X^{=n}) \to \Map(S_c,X^{\leq n}). \]
By Lemma \ref{lem:F_E} again we see that $\iota_*^k$, the map on $k$-simplices, is a $D_n$-equivalence between $n$-excisive functors, hence an equivalence on \ord{n} cross-effects. Therefore, taking totalizations, which commute with cross-effects, we see that $\iota_*$ is an equivalence on \ord{n} cross-effects, and so is a $D_n$-equivalence. Finally, we identify the source of $\iota_*$, up to equivalence, with
\[ \Omega^\infty \Map(B(\mathsf{1},P,L)(n),X^{\smsh n})^{\Sigma_n} \]
which, by Lemma \ref{lem:F_E} once more, has \ord{n} derivative $\dual B(\mathsf{1},P,L)(n)$.
\end{proof}

\begin{cor} \label{cor:Phi-left-derivative}
Let $F: \spectra \to \sset$ be a presented cell functor. Then $\Phi_n(F)$ is also a pointed simplicial homotopy functor and has \ord{n} Goodwillie derivative equivalent to $\der_n(F)$ (of Definition \ref{def:left}).
\end{cor}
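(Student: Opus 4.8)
\textbf{Proof proposal for Corollary \ref{cor:Phi-left-derivative}.}

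The plan is to deduce this corollary from Proposition \ref{prop:Phi-left-derivative} in exactly the way that Corollary \ref{cor:phi-derivative} follows from Proposition \ref{prop:phi-derivative} in the right-module case. First I would recall the definition $\Phi_n(F) := \Ext^{\mathsf{left}}_{\tilde{\der}^*(\Sigma^\infty \Omega^\infty)}\left(\tilde{\der}^*(\Sigma^\infty F), \Map(S_c,X^{\leq n})\right)$ from Definition \ref{def:Phi-left}, and observe that this is precisely the functor $X \mapsto \Ext^{\mathsf{left}}_{P}\left(L,\Map(S_c,X^{\leq n})\right)$ of Proposition \ref{prop:Phi-left-derivative} with $P = \tilde{\der}^*(\Sigma^\infty \Omega^\infty)$ and $L = \tilde{\der}^*(\Sigma^\infty F)$. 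So the corollary is just an instance of the proposition, provided the hypotheses are met.

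The key step, therefore, is to verify that $\tilde{\der}^*(\Sigma^\infty F)$ really is a levelwise $\Sigma$-cofibrant directly-dualizable pro-left-$P$-module. The module structure is exactly the one constructed in Definition \ref{def:left}: since $F: \finspec \to \sset$ is a presented cell functor, Lemma \ref{lem:extend-sigma} makes $\Sigma^\infty F$ into a presented cell functor in $[\finspec,\spectra]$, and then Proposition \ref{prop:module} (applied to the left $\Sigma^\infty\Omega^\infty$-comodule structure on $\Sigma^\infty F$) makes $\der^*(\Sigma^\infty F)$ a pro-left-module over the operad $\der^*(\Sigma^\infty\Omega^\infty)$; passing to $\tilde{\der}^*$ (a levelwise projectively-cofibrant replacement of the reduced operad $\der^*(\Sigma^\infty\Omega^\infty)^{\mathsf{red}}$ and of the module over it) gives the required module over $P$. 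Direct-dualizability follows as in Notation \ref{not:cofibrant}: the indexing category $\mathsf{Sub}(\Sigma^\infty F)^{op} \cong \mathsf{Sub}(F)^{op}$ has the properties of Lemma \ref{lem:lattice}, and the spectra $\Nat(C X, X^{\smsh n})$ for finite cell functors $C$ are homotopy-finite by Lemma \ref{lem:homotopy-finite}; taking the indicated (levelwise $\Sigma$-cofibrant, hence termwise-cofibrant by Lemma \ref{lem:termwise-cofibrant}) replacement $\tilde{\der}^*$ then makes each term cofibrant and homotopy-finite, which is the definition of directly-dualizable in \ref{def:directly-dualizable}.

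Granting this, Proposition \ref{prop:Phi-left-derivative} immediately gives that $\Phi_n(F)$ is a pointed simplicial homotopy functor and that
\[ \der^G_n(\Phi_n(F)) \homeq \dual B(\mathsf{1},\tilde{\der}^*(\Sigma^\infty \Omega^\infty),\tilde{\der}^*(\Sigma^\infty F))(n). \]
But by Definition \ref{def:left} the right-hand side is exactly $\der_n(F)$, since $\der^*(F) := B(\mathsf{1},\tilde{\der}^*(\Sigma^\infty \Omega^\infty),\tilde{\der}^*(\Sigma^\infty F))$ and $\der_*(F) := \dual \der^*(QF)$ (here $F$ is already a presented cell functor so no further cellular replacement is needed). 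This completes the proof. I expect no serious obstacle: the only thing requiring care is bookkeeping, namely checking that the various tilde-decorated replacements used in Definition \ref{def:left} and Definition \ref{def:Phi-left} agree with, or can be taken to satisfy, the precise cofibrancy and finiteness hypotheses demanded by Proposition \ref{prop:Phi-left-derivative} — this is the left-module analogue of the remarks in Notation \ref{not:cofibrant}, and is routine given Proposition \ref{prop:termwise-cofibrant}, Lemma \ref{lem:termwise-cofibrant}, and Lemma \ref{lem:bar-directly-dualizable}.
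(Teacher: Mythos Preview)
Your proposal is correct and matches the paper's approach: the paper states this corollary without proof, treating it as an immediate application of Proposition \ref{prop:Phi-left-derivative} with $L = \tilde{\der}^*(\Sigma^\infty F)$, exactly as you do. Your additional verification of the hypotheses (direct-dualizability, $\Sigma$-cofibrancy) is routine bookkeeping that the paper leaves implicit.
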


\begin{remark} \label{rem:Phi-left-n-excisive}
As discussed in Remark \ref{rem:phi-n-excisive} for functors from spaces to spectra, the functor $\Phi_n(F)$ of Definition \ref{def:Phi-left} is, in fact, $n$-excisive.
\end{remark}

\begin{definition} \label{def:phi-left}
For a presented cell functor $F: \spectra \to \sset$, we now construct a map $\phi: F \to \Phi_n(F)$ along similar lines to the map $\phi$ of Definition \ref{def:phi}.If $F$ is a finite cell functor, we define
\[ \phi'_F(n): F(X) \smsh \der^n(\Sigma^\infty F) \to \Map(S_c,X^{\smsh n}) \]
that are adjoint to the natural evaluation
\[ \Sigma^\infty F(X) \smsh \Nat(\Sigma^\infty F(X),X^{\smsh n}) \to X^{\smsh n}. \]

The maps $\phi'_F(n)$ are $\Sigma_n$-equivariant and so together they determine a map
\[ \phi'_F: F(X) \to \Hom_{\mathsf{\Sigma}}(\der^*(\Sigma^\infty F),\Map(S_c,X^{\smsh *})). \]
Now the symmetric sequences on the right-hand side here are left $\tilde{\der}^*(\Sigma^\infty \Omega^\infty)$-modules, and we claim that the above map factors via the mapping object for left modules on the right-hand side in place of the mapping object for symmetric sequences. This is equivalent to showing that the following diagram commutes:
\[ \begin{diagram}
  \node{F(X) \smsh \der^{k}(\Sigma^\infty \Omega^\infty) \smsh \der^{n_1}(\Sigma^\infty F) \smsh \dots \smsh \der^{n_k}(\Sigma^\infty F)} \arrow{e} \arrow{s,r}{(\phi'_F(n_1) \smsh \dots \smsh \phi_F(n_r)) \circ \Delta_{F(X)}} \node{F(X) \smsh \der^{n_1+\dots+n_k}(\Sigma^\infty F)} \arrow{s,r}{\phi'_F(n_1+\dots+n_k)} \\
  \node{\der^{k}(\Sigma^\infty \Omega^\infty) \smsh \Map(S_c,X^{\smsh n_1}) \smsh \dots \smsh \Map(S_c,X^{\smsh n_k})} \arrow{e} \node{\Map(S_c,X^{\smsh(n_1+\dots+n_k)})}
\end{diagram} \]
The left vertical map comes from combining the diagonal on the pointed simplicial set $C(X)$ with the maps $\beta_C(n_1),\dots,\beta_C(n_k)$. The top horizontal map is the module structure on $\der^*(\Sigma^\infty C)$, and the bottom horizontal map is the module structure on $\Map(S_c,X^{\smsh *})$.

Making use of various adjunctions, the commutativity of the above diagram is implied by that of
\[ \begin{diagram}
  \node{\Sigma^\infty F(X) \smsh \Map(S_c,S_c^{\smsh k})} \arrow{e} \arrow{s,l}{\Delta_{F(X)}} \node{\Sigma^\infty \Omega^\infty \Sigma^\infty F(X) \smsh \Map(S_c,S_c^{\smsh k})} \arrow{s} \\
  \node{\Sigma^\infty(F(X)^{\smsh k}) \smsh \Map(S_c,S_c^{\smsh k})} \arrow{e} \node{(\Sigma^\infty F(X))^{\smsh k}}
\end{diagram} \]
Here the top map comes from the $(\Sigma^\infty,\Omega^\infty)$-adjunction, the left vertical map from the diagonal on $F(X)$, the bottom map from identifying $\Sigma^\infty F(X)$ with $S_c \smsh F(X)$, and the right vertical map from the Yoneda isomorphism
\[ \Map(S_c,S_c^{\smsh k}) \isom \Nat(\Sigma^\infty \Omega^\infty E, E^{\smsh k}). \]
The commutativity of this diagram is given by Lemma \ref{lem:diagonal}.

We therefore obtain a natural (in both $X$ and $F$) transformation
\[ \phi''_F: F(X) \to \Hom^{\mathsf{left}}_{\tilde{\der}^*(\Sigma^\infty \Omega^\infty)}\left(\der^*(\Sigma^\infty F), \Map(S_c,X^{\smsh *})\right). \]
Composing with the cofibrant replacement map
\[ \tilde{\der}^*(\Sigma^\infty F) \to \der^*(\Sigma^\infty F), \]
the truncation map
\[ \Map(S_c,X^{\smsh *}) \to \Map(S_c,X^{\leq n}), \]
and the map from $\Hom^{\mathsf{left}}_{P}$ to $\Ext^{\mathsf{left}}_{P}$ of Definition \ref{def:ext-maps}, we get a natural transformation
\[ \phi_F: F(X) \to \Ext^{\mathsf{left}}_{\tilde{\der}^*(\Sigma^\infty \Omega^\infty)}\left(\tilde{\der}^*(\Sigma^\infty F), \Map(S_c,X^{\leq n})\right). \]
Now for general $F: \spectra \to \sset$, we take the homotopy colimit of $\phi_C$ over $C \in \mathsf{Sub}(QF)$ and get
\[ \phi: F \to \Phi_n(F). \]
\end{definition}

The remainder of the proof of Theorem \ref{thm:left} then consists of showing that the map $\phi$ of Definition \ref{def:phi-left} is a $D_n$-equivalence. We start by interpreting $\phi$ in the following way, analogous to Lemma \ref{lem:phi-tot}.

\begin{lemma} \label{lem:phi-left-tot}
Let $F: \spectra \to \sset$ be a pointed simplicial functor. Then there is a commutative diagram of the form
\[ \begin{diagram} \dgARROWLENGTH=3.5em
  \node{F(X)} \arrow{e,t}{\phi''_F} \arrow{s}
    \node{\Hom^{\mathsf{left}}_{P}\left(\tilde{\der}^*(\Sigma^\infty F), \Map(S_c,X^{\leq n})\right)} \arrow{s} \\
  \node{\widetilde{\Tot} \; \left[ \Omega^\infty \dots \Sigma^\infty F(X)\right]} \arrow{e,t}{\phi''_{\Omega^\infty \dots \Sigma^\infty F}}
    \node{\widetilde{\Tot} \; \left[\Hom^{\mathsf{left}}_{P}\left(\tilde{\der}^*(\Sigma^\infty \Omega^\infty) \circ \dots \circ \tilde{\der}^*(\Sigma^\infty F), \Map(S_c,X^{\leq n})\right)\right]}
\end{diagram} \]
\end{lemma}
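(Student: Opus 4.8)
\textbf{Proof proposal for Lemma \ref{lem:phi-left-tot}.}

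The plan is to mimic the proof of Lemma \ref{lem:phi-tot} from the right-module case, dualizing each construction (right modules replaced by left modules, the right comodule $F\Omega^\infty$ replaced by the left comodule $\Sigma^\infty F$, and the commutative-operad module $(\Sigma^\infty X)^{\smsh *}$ replaced by $\Map(S_c,X^{\smsh *})$). The key observation is that both vertical maps in the diagram are the canonical coaugmentation maps into totalizations of cosimplicial objects, and the horizontal maps are built by exactly the same recipe applied at each cosimplicial level; so commutativity is a formal consequence of naturality.

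First I would make explicit the two cosimplicial objects. The bottom-left totalization is of the cosimplicial object $k \mapsto (\Omega^\infty\Sigma^\infty)^k\Sigma^\infty F(X) = \Omega^\infty(\Sigma^\infty\Omega^\infty)^{k-1}\Sigma^\infty F(X)$ formed from the $(\Sigma^\infty,\Omega^\infty)$-adjunction (the cosimplicial object of Theorem \ref{thm:key}), with its standard coaugmentation $F(X) \to \Omega^\infty\Sigma^\infty F(X)$ given by the unit. The bottom-right totalization is of the cosimplicial object $k \mapsto \Hom^{\mathsf{left}}_P(\tilde{\der}^*(\Sigma^\infty\Omega^\infty)\circ\dots\circ\tilde{\der}^*(\Sigma^\infty\Omega^\infty)\circ\tilde{\der}^*(\Sigma^\infty F),\Map(S_c,X^{\leq n}))$ where there are $k$ copies of $\tilde{\der}^*(\Sigma^\infty\Omega^\infty)$; its coface and codegeneracy maps come from the operad structure on $\tilde{\der}^*(\Sigma^\infty\Omega^\infty)$ (Proposition \ref{prop:com}) and the pro-left-module structure on $\der^*(\Sigma^\infty F)$ (Definition \ref{def:left}, via Proposition \ref{prop:module}), with coaugmentation the adjunction isomorphism of Lemma \ref{lem:adjunction}. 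Then I would define the bottom horizontal map $\phi''_{\Omega^\infty\dots\Sigma^\infty F}$ exactly as $\phi''_F$ was defined in Definition \ref{def:phi-left}, but applied levelwise to the functors $(\Omega^\infty\Sigma^\infty)^k\Sigma^\infty F$ and using the composition maps of Definition \ref{def:composition} to push $\der^*((\Sigma^\infty\Omega^\infty)^k\Sigma^\infty F)$ into $\tilde{\der}^*(\Sigma^\infty\Omega^\infty)\circ\dots\circ\tilde{\der}^*(\Sigma^\infty F)$; this is the left-module analogue of Definition \ref{def:phi-FOS}.

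Next I would verify that the two ways around the square agree. Both composites send $F(X)$ first (in the vertical-then-horizontal direction) via the unit coaugmentation and then the levelwise $\phi''$-maps, and (in the horizontal-then-vertical direction) via $\phi''_F$ and then the coaugmentation of the target cosimplicial object. Commutativity reduces to the compatibility of the evaluation maps $\Sigma^\infty F(X)\smsh \Nat(\Sigma^\infty F(X),X^{\smsh *})\to X^{\smsh *}$ with the comonad structure on $\Sigma^\infty\Omega^\infty$ and the left-comodule structure $l:\Sigma^\infty F \to \Sigma^\infty\Omega^\infty\Sigma^\infty F$ --- precisely the diagrams that were checked in Definition \ref{def:phi-left} (the commutativity established there via Lemma \ref{lem:diagonal}), applied compatibly at each level of the cosimplicial resolution. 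Since all the maps involved ($\phi''$, the composition maps of Definition \ref{def:composition}, the cofibrant-replacement maps, the truncation maps, and the $\Hom^{\mathsf{left}}_P \to \Ext^{\mathsf{left}}_P$ maps) are natural transformations, and the coaugmentations are the universal maps into the totalizations, the square commutes. As in Lemma \ref{lem:phi-tot}, I would also note that since $P$ and $\tilde{\der}^*(\Sigma^\infty F)$ are termwise-cofibrant the target cosimplicial object is Reedy fibrant (by the argument of Proposition \ref{prop:bar-invariance}), so the homotopy-invariant $\widetilde{\Tot}$ agrees with the strict $\Tot$ and the composite of the top and right-hand maps is, up to weak equivalence, the map $\phi:F\to\Phi_n(F)$ of Definition \ref{def:phi-left}.

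The main obstacle I anticipate is purely bookkeeping: one must carefully track which copy of $\Sigma^\infty\Omega^\infty$ (the "end" one next to $\Sigma^\infty F$) the module and comonad structure maps act on, and match the cosimplicial structure maps of the cobar construction on the left against those of the $\Hom^{\mathsf{left}}_P$-cosimplicial object on the right. This is the dual of the same bookkeeping in the right-module case, so no new conceptual difficulty arises; the only subtlety worth flagging is that the diagonal on the simplicial set $F(X)$ (or on $C(X)$) that entered Definition \ref{def:phi-left} must be threaded through the levelwise maps consistently --- but its compatibility is exactly the content of Lemma \ref{lem:diagonal}, which applies verbatim at each level.
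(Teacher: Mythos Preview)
Your proposal is correct and follows essentially the same approach as the paper's proof, which is extremely brief: it just says the bottom horizontal map is defined ``in a manner analogous to those of Definition~\ref{def:phi-FOS}'' and that commutativity ``comes from the naturality of the constructions of the maps $\phi''_F$ and $\phi''_{\Omega^\infty\dots\Sigma^\infty F}$.'' You have simply spelled out in more detail what the paper leaves implicit, including the identification of the two cosimplicial objects, the levelwise construction of $\phi''_{\Omega^\infty\dots\Sigma^\infty F}$, and the Reedy-fibrancy remark (which in the right-module case appears separately just after Lemma~\ref{lem:phi-tot}).
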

\begin{proof}
The bottom horizontal map is the totalization of the map of cosimplicial objects formed from maps $\phi''_{\Omega^\infty \dots \Sigma^\infty F}$ defined in a manner analogous to those of Definition \ref{def:phi-FOS}.
The commutativity of this diagram then comes from the naturality of the constructions of the maps $\phi''_F$ and $\phi''_{\Omega^\infty \dots \Sigma^\infty F}$.
\end{proof}

\begin{lemma} \label{lem:phi-left-OSF}
Let $F: \spectra \to \sset$ be a finite cell functor. The map
\[ \phi''_{\Omega^\infty \dots \Sigma^\infty F}: \Omega^\infty \dots \Sigma^\infty F(X) \to \Hom^{\mathsf{left}}_{P}\left(\tilde{\der}^*(\Sigma^\infty \Omega^\infty) \circ \dots \circ \tilde{\der}^*(\Sigma^\infty F), \Map(S_c,X^{\leq n})\right) \]
is a $D_n$-equivalence.
\end{lemma}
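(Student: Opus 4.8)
\textbf{Proof proposal for Lemma \ref{lem:phi-left-OSF}.}

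The plan is to imitate the structure of the proof of Lemma \ref{lem:phi-FOS} exactly, transposing everything from the right-module setting to the left-module setting. First I would set up a commutative diagram relating the functor $\Omega^\infty \dots \Sigma^\infty F(X)$ to the target mapping object through two intermediate objects, exactly as in the square appearing in the proof of Lemma \ref{lem:phi-FOS}. Specifically, start from $Q(\Omega^\infty \dots \Sigma^\infty F)(X)$, which maps by a zigzag of (2)--(4)-type arrows: an $\Omega^\infty$ of the multilinear evaluation map $\psi$ of Proposition \ref{prop:models-correct} into $\Hom_{\mathsf{\Sigma}}\!\left(\tilde{\der}^*(Q(\Omega^\infty \dots \Sigma^\infty F)), \Map(S_c, X^{\leq n})\right)$; then an iterated chain-rule map $\mu$ of Definition \ref{def:mu}, which is an equivalence by Theorem \ref{thm:chainrule} together with the homotopy-invariance statement Lemma \ref{lem:pro-map-invariance}; and finally the adjunction isomorphism of Lemma \ref{lem:adjunction} identifying the symmetric-sequence mapping object on the free left module $\tilde{\der}^*(\Sigma^\infty \Omega^\infty) \circ \dots \circ \tilde{\der}^*(\Sigma^\infty F)$ with the left-module mapping object. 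One then checks, following through the definitions of all the maps involved (essentially the dual of Definition \ref{def:phi-FOS}), that the resulting square commutes, and the two-out-of-three property for $D_n$-equivalences (applied to the $r=n$ term of the product, the others being $(n-1)$-excisive by Lemma \ref{lem:F_E}) yields the result.

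The key inputs, in the order I would invoke them, are: (i) the cosimplicial presentation of the $\Hom^{\mathsf{left}}_P$-object, with $k$-simplices $\prod_{r=1}^n \Map\!\left((P^k \circ \tilde{\der}^*(\Sigma^\infty \Omega^\infty)\circ \dots \circ \tilde{\der}^*(\Sigma^\infty F))(r), \Map(S_c, X^{\smsh r})\right)^{\Sigma_r}$, so that we reduce to checking the claim levelwise and then invoke Reedy fibrancy (Remark \ref{rem:ext-tot}) and Proposition \ref{prop:reedy}; (ii) Proposition \ref{prop:models-correct}, applied to the finite cell functor $\Omega^\infty \dots \Sigma^\infty F$ (which is a finite cell functor by iterating Lemma \ref{lem:extend-omega} and Lemma \ref{lem:extend-sigma}), to see that the map labelled $\psi$ on the $r=n$ term is a $D_n$-equivalence; (iii) Lemma \ref{lem:F_E}, to see that the $r<n$ terms in all these products are $(n-1)$-excisive and hence have trivial $n$th derivative and cross-effect, so that only the $r=n$ term matters for $D_n$; (iv) Corollary \ref{cor:chainrule} (iterated) and Lemma \ref{lem:pro-map-invariance} for the equivalence labelled $\mu$; and (v) Lemma \ref{lem:adjunction} for the final adjunction isomorphism. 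Since taking $D_n$ commutes with finite products and with the cross-effect constructions used to detect $D_n$-equivalences (via \cite[6.1]{goodwillie:2003}), the diagram chase closes.

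The main obstacle I anticipate is verifying the commutativity of the key square — in particular, checking that the composite through the chain-rule map $\mu$ and the adjunction isomorphism genuinely agrees with $\phi''_{\Omega^\infty \dots \Sigma^\infty F}$ as defined. This requires unwinding, on the point-set level, how the evaluation maps $\Sigma^\infty \Omega^\infty \dots \Sigma^\infty F(X) \smsh \Nat(\dots, E^{\smsh r}) \to X^{\smsh r}$ interact with the composition-product maps of Definition \ref{def:composition} and with the left-module structure on $\Map(S_c, X^{\smsh *})$ from Definition \ref{def:sigmainfty-module-left}; the compatibility ultimately rests on the diagonal compatibility established in Lemma \ref{lem:diagonal}, just as in the right-module case. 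Once the square is known to commute, the $D_n$-equivalence conclusion is formal. I would also note that, exactly as in Lemma \ref{lem:phi-FOS}, one first reduces the general pointed simplicial $F$ to the finite cell case by passing to $\mathsf{Sub}(QF)$ and using that $D_n$, the composition product, and the mapping objects all commute with the relevant filtered homotopy colimits (Proposition \ref{prop:deriv-commute}, Lemma \ref{lem:comprod-hocolim}, Lemma \ref{lem:bar-hocolim}).
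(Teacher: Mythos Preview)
Your proposal is correct and follows essentially the same approach as the paper: set up the left-module analogue of the commutative diagram from Lemma \ref{lem:phi-FOS}, identify the top arrow via $\Omega^\infty$ applied to the map $\psi$ of Proposition \ref{prop:models-correct}, the right-hand vertical arrow via the iterated chain rule (Corollary \ref{cor:chainrule}) and Lemma \ref{lem:pro-map-invariance}, and the final step via the free/forgetful adjunction of Lemma \ref{lem:adjunction}. One small notational slip: the top-left corner should be $\Omega^\infty Q(\dots \Sigma^\infty F)$ rather than $Q(\Omega^\infty \dots \Sigma^\infty F)$, since $\psi$ is defined for spectrum-valued functors and you then postcompose with $\Omega^\infty$; your phrase ``an $\Omega^\infty$ of the map $\psi$'' shows you already have the right picture, but the placement of $Q$ in your written expression is off.
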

\begin{proof}
We have the following diagram, corresponding to that of Lemma \ref{lem:phi-FOS}, in which all the other maps are $D_n$-equivalences, showing that $\phi''_{\Omega^\infty \dots \Sigma^\infty F}$ is also a $D_n$-equivalence.
\[ \begin{diagram}
  \node{\Omega^\infty Q(\dots \Sigma^\infty F)} \arrow{e} \arrow{s}
    \node{\Hom_{\mathsf{\Sigma}}\left(\tilde{\der}^*(Q(\dots \Sigma^\infty F)), \Map(S_c,X^{\leq n})\right)} \arrow{s} \\
  \node{\Omega^\infty \dots \Sigma^\infty F} \arrow{e} \arrow{se,b}{\phi''_{\Omega^\infty \dots \Sigma^\infty F}}
    \node{\Hom_{\mathsf{\Sigma}}\left(\dots \circ \tilde{\der}^*(\Sigma^\infty F), \Map(S_c,X^{\leq n})\right)} \arrow{s} \\
  \node[2]{\Hom^{\mathsf{left}}_{P}\left(\tilde{\der}^*(\Sigma^\infty \Omega^\infty) \circ \dots \circ \tilde{\der}^*(\Sigma^\infty F), \Map(S_c,X^{\leq n})\right)}
\end{diagram} \]
\end{proof}

\begin{prop} \label{prop:phi-left}
Let $F: \spectra \to \sset$ be a pointed simplicial homotopy functor. Then the map
\[ \phi: F \to \Phi_n(F) \]
of Definition \ref{def:phi-left} is a $D_n$-equivalence.
\end{prop}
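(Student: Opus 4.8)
This statement — Proposition~\ref{prop:phi-left} — is the exact analogue for functors $\spectra \to \sset$ of Proposition~\ref{prop:phi} (which concerned functors $\sset \to \spectra$). So the plan is simply to dualize the proof of Proposition~\ref{prop:phi}, making the substitutions: right modules $\leadsto$ left modules, $F\Omega^\infty \leadsto \Sigma^\infty F$, $(\Sigma^\infty X)^{\leq n} \leadsto \Map(S_c,X^{\leq n})$, and the cobar construction $F\Omega^\infty(\Sigma^\infty\Omega^\infty)^\bullet\Sigma^\infty G$ from Theorem~\ref{thm:key} applied in the form with $G=F$ and the outer functor being the identity. Let me sketch the steps.

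First I would invoke Theorem~\ref{thm:key}, taking the ``$F$'' of that theorem to be the identity functor $I_{\sset}$ and its ``$G$'' to be our $F:\spectra\to\sset$; this gives a natural equivalence
\[
D_n F \weq \widetilde{\Tot}\;\left[ D_n(\Omega^\infty(\Sigma^\infty\Omega^\infty)^{\bullet}\Sigma^\infty F)\right].
\]
Next, set $\Phi_n^{\bullet}(F) := \Hom^{\mathsf{left}}_{P}\big(B_{\bullet}(P,P,\tilde{\der}^*(\Sigma^\infty F)),\Map(S_c,X^{\leq n})\big)$ where $P = \tilde{\der}^*(\Sigma^\infty\Omega^\infty)$, so that $\Phi_n(F) = \widetilde{\Tot}\;\Phi_n^{\bullet}(F)$ by Definition~\ref{def:Phi-left} and Remark~\ref{rem:ext-tot}. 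By Lemma~\ref{lem:phi-left-OSF}, the map $\phi''_{\Omega^\infty\dots\Sigma^\infty F}$ is a $D_n$-equivalence at each cosimplicial level, and combined with Lemma~\ref{lem:phi-left-tot} this exhibits a levelwise $D_n$-equivalence between the cosimplicial object $\Omega^\infty(\Sigma^\infty\Omega^\infty)^{\bullet}\Sigma^\infty F$ and $\Phi_n^{\bullet}(F)$. Applying $\widetilde{\Tot}$ and composing with the equivalence above yields
\[
D_n F \weq \widetilde{\Tot}\; D_n(\Phi_n^{\bullet}(F)),
\]
hence an equivalence $D_n(D_nF) \to D_n\big(\widetilde{\Tot}\; D_n(\Phi_n^{\bullet}(F))\big)$.

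The concluding step is the diagram chase exactly as in the proof of Proposition~\ref{prop:phi}: consider the commutative square
\[
\begin{diagram}
  \node{D_n(D_nF)} \arrow{s,l}{\sim} \arrow{e,t}{\sim} \node{D_n(\widetilde{\Tot}\; D_n(\Phi^{\bullet}_n(F)))} \arrow{s} \\
  \node{D_n(P_nF)} \arrow{e} \node{D_n(\widetilde{\Tot}\; P_n(\Phi^{\bullet}_n(F)))} \\
  \node{D_n(F)} \arrow{e,t}{D_n(\phi)} \arrow{n,l}{\sim} \node{D_n(\widetilde{\Tot}\; \Phi^{\bullet}_n(F))} \arrow{n,r}{\sim}
\end{diagram}
\]
The top map is the equivalence just constructed; the left-hand vertical maps are equivalences by standard calculus (since $D_nG\to P_nG\to P_{n-1}G$ and $D_n$ is $n$-homogeneous); the bottom-right vertical map is an equivalence because each $\Phi^k_n(F)$ is $n$-excisive by Lemma~\ref{lem:F_E} (see Remark~\ref{rem:Phi-left-n-excisive} and the proof of Proposition~\ref{prop:Phi-left-derivative}), so $\widetilde{\Tot}$ of $P_n$ applied levelwise agrees with $\widetilde{\Tot}\;\Phi_n^{\bullet}(F)$; and the top-right vertical map is an equivalence because taking $\ord{n}$ cross-effects commutes with $\widetilde{\Tot}$ and $D_nG\to P_nG$ is an equivalence on $\ord{n}$ cross-effects for any $G$. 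Chasing the diagram shows $D_n(\phi)$ is an equivalence, i.e. $\phi$ is a $D_n$-equivalence.

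The main obstacle — and the only place where a little care beyond pure transcription is needed — is verifying that $\widetilde{\Tot}$ genuinely commutes with cross-effects and interacts correctly with $P_n$ in the way claimed, which relies on the $n$-excisiveness of the $\Phi^k_n(F)$ and on Reedy fibrancy of the relevant cosimplicial objects (Remark~\ref{rem:ext-tot}); one should double-check that the left-module $\Ext$-objects, which land in $\sset$ rather than $\spectra$, still satisfy the Reedy fibrancy and homotopy-limit-commutation properties used. Given Proposition~\ref{prop:Phi-left-derivative} and Remark~\ref{rem:Phi-left-n-excisive} these are already in place, so the argument goes through verbatim.
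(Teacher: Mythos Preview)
Your proposal is correct and takes essentially the same approach as the paper, which simply says ``Arguing as in the proof of Proposition~\ref{prop:phi}, this follows from Lemmas~\ref{lem:phi-left-tot} and~\ref{lem:phi-left-OSF}.'' You have spelled out exactly what that transcription amounts to, including the application of Theorem~\ref{thm:key} with the identity on $\sset$ playing the role of the outer functor.
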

\begin{proof}
Arguing as in the proof of Proposition \ref{prop:phi}, this follows from Lemmas \ref{lem:phi-left-tot} and \ref{lem:phi-left-OSF}.
\end{proof}

This gives us Theorem \ref{thm:left} which we restate.

\begin{thm:left}
Let $F: \spectra \to \sset$ be a pointed simplicial homotopy functor. Then there is a natural equivalence of symmetric sequences
\[ \der_*(F) \homeq \der^G_*(F). \]
In other words, the left $\der_*(I)$-module constructed in Definition \ref{def:left} provides models for the Goodwillie derivatives of $F$.
\end{thm:left}
\begin{proof}
This follows from Corollary \ref{cor:Phi-left-derivative} and Proposition \ref{prop:phi-left}.
\end{proof}

\begin{remark}
The ideas of Remark \ref{rem:tate} apply to the case of functors from spectra to spaces as well. We have a tower of functors
\[ F \to \dots \to \Phi_nF \to \Phi_{n-1}F \to \dots \to \Phi_0F = * \]
with $\Phi_nF$ being $n$-excisive. The fibre
\[ \Delta_nF := \hofib(\Phi_nF \to \Phi_{n-1}F) \]
is equivalent to
\[ \Delta_nF(X) \homeq \Omega^\infty(\der_nF \smsh X^{\smsh n})^{h\Sigma_n} \]
and there is a fibre sequence
\[ D_nF(X) \to \Delta_n(F) \to \Omega^\infty \operatorname{Tate}_{\Sigma_n}(\der_nF \smsh X^{\smsh n}). \]
When all these Tate spectra vanish, we have $P_nF \homeq \Phi_nF$ and the Taylor tower of $F$ can be reassembled from the left $\der_*(I)$-module $\der_*F$.
\end{remark}

\begin{example} \label{ex:left}
Let $K$ be a finite cell complex in $\sset$ and consider the functor
\[ \spectra \to \sset; \quad X \mapsto K \smsh \Omega^\infty X. \]
This example is in some sense dual to the stable mapping space example considered in Example \ref{ex:right}. We calculate its derivatives and their left $\der_*(I)$-module structure.

We start with
\[ \der^*(\Sigma^\infty K \smsh \Omega^\infty(-)) = \Nat(\Sigma^\infty K \smsh \spectra(S_c,X),X^{\smsh *}) \isom \Map(\Sigma^\infty K, (S_c)^{\smsh n}) \homeq \dual K. \]
The left $\der^*(\Sigma^\infty \Omega^\infty)$-module structure on this symmetric sequence is then essentially given by the diagonal on $K$ by the maps
\[ S \smsh \Map(K,S) \smsh \dots \smsh \Map(K,S) \to \Map(K^{\smsh n},S) \arrow{e,t}{\Delta_K} \Map(K,S). \]
The derivatives of $K \smsh \Omega^\infty(-)$ are now given by
\[ \der_*(K \smsh \Omega^\infty(-)) = \dual B(\mathsf{1},\tilde{\der}^*(\Sigma^\infty \Omega^\infty), \tilde{\der}^*(\Sigma^\infty K \smsh \Omega^\infty(-))) \]
which, in this case, is equivalent to
\[ \dual B(\mathsf{1},\mathsf{Com},\underline{\dual K}) \]
where $\underline{\dual K}$ denotes the left $\mathsf{Com}$-module with $\dual_K$ in every term and structure maps as above.

We remark that the left $\der_*(I)$-module $\der_*(K \smsh \Omega^\infty(-))$ appeared in \cite[8.10]{ching:2005a} (there written $M_K$) as a natural way to associate a left $\der_*(I)$-module to a space $K$. Here we have shown that this module arises as the derivatives of the functor $K \smsh \Omega^\infty(-): \spectra \to \sset$. Unlike in Example \ref{ex:right}, we do not have a more explicit description of the individual derivatives of this functor, besides the bar construction above.
\end{example}

\section{Functors from spaces to spaces} \label{sec:spaces-spaces}

To deal with functors from spaces to spaces, we combine the constructions of sections \ref{sec:spaces-spectra} and \ref{sec:spectra-spaces} above. We thus obtain a $\der_*(I)$-bimodule structure on the derivatives of such a functor.

\begin{definition} \label{def:bi}
Let $F: \sset \to \sset$ be a presented cell functor. Then $\Sigma^\infty F \Omega^\infty$ is a presented cell functor (by \ref{lem:extend-omega} and \ref{lem:extend-sigma}) that has a bi-comodule structure over the comonad $\Sigma^\infty \Omega^\infty$. The symmetric sequence $\der^*(\Sigma^\infty F \Omega^\infty)$ then has the structure of a pro-bimodule over the operad $\der^*(\Sigma^\infty \Omega^\infty)$.

We then set
\[ \der^*(F) := B(\mathsf{1},P,\tilde{\der}^*(\Sigma^\infty F \Omega^\infty), P, \mathsf{1}). \]
where $P = \tilde{\der}^*(\Sigma^\infty \Omega^\infty)$. This is the bimodule bar construction of Definition \ref{def:pro-bar} and forms a pro-bicomodule over $B(\tilde{\der}^*(\Sigma^\infty \Omega^\infty))$. Then, for a general pointed simplicial functor $F: \sset \to \sset$, we set
\[ \der_*(F) := \dual \der^*(QF) \]
This is then a $\der_*(I)$-bimodule.
\end{definition}

\begin{theorem} \label{thm:bi}
Let $F: \sset \to \sset$ be a pointed simplicial homotopy functor. Then there is a natural equivalence of symmetric sequences
\[ \der_*(F) \homeq \der^G_*(F). \]
\end{theorem}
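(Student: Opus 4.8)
The plan is to deduce Theorem~\ref{thm:bi} from Theorems~\ref{thm:right} and \ref{thm:left} by the same kind of ``build a fake Taylor tower and check it is a $D_n$-equivalence'' argument used in the previous two sections, but now combining the right-module and left-module constructions simultaneously. First I would set $P = \tilde{\der}^*(\Sigma^\infty \Omega^\infty)$ and, for a presented cell functor $F: \sset \to \sset$, define a functor $\Phi_n(F): \sset \to \sset$ by
\[ \Phi_n(F)(X) := \Ext^{\mathsf{bi}}_{P}\left( \tilde{\der}^*(\Sigma^\infty F \Omega^\infty), \operatorname{Map}(S_c,(\Sigma^\infty X)^{\leq n}) \right), \]
where $\operatorname{Map}(S_c,(\Sigma^\infty X)^{\leq n})$ is given the $P$-bimodule structure obtained by combining the left module structure of Definition~\ref{def:sigmainfty-module-left} (applied to the spectrum $\Sigma^\infty X$) with the right module structure of Definition~\ref{def:sigmainfty-module} (on $(\Sigma^\infty X)^{\smsh *}$, truncated at level $n$); one must check these two actions commute, which follows formally from the fact that both come from the $(\Sigma^\infty,\Omega^\infty)$-adjunction and the diagonal on $X$, exactly as in Lemma~\ref{lem:diagonal}. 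Since $\Ext^{\mathsf{bi}}_P$ lands in $\sset$, this $\Phi_n(F)$ is a functor $\sset \to \sset$ as required.

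Next I would establish the analogue of Propositions~\ref{prop:phi-derivative} and \ref{prop:Phi-left-derivative}: that $\Phi_n(F)$ is a pointed simplicial homotopy functor whose $\ord{n}$ Goodwillie derivative is naturally $\Sigma_n$-equivariantly equivalent to $\der_n(F) = \dual B(\mathsf{1},P,\tilde{\der}^*(\Sigma^\infty F\Omega^\infty),P,\mathsf{1})(n)$. Using Remark~\ref{rem:ext-tot} to write $\Ext^{\mathsf{bi}}_P$ as a $\Tot$ of $\operatorname{Hom}_{\mathsf{\Sigma}}$-objects, the $k,\ell$-bisimplices are products over $r \leq n$ of mapping objects of the form in Lemma~\ref{lem:F_E}, which gives the homotopy-functor claim and $n$-excisiveness of each stage; then comparing $\operatorname{Map}(S_c,(\Sigma^\infty X)^{=n})$ with $\operatorname{Map}(S_c,(\Sigma^\infty X)^{\leq n})$ and using that the $r < n$ terms are $(n-1)$-excisive (hence kill $\ord{n}$ cross-effects) reduces the derivative computation to the top term, which by Lemma~\ref{lem:F_E} is $\dual$ of the bimodule bar construction. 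For a general $F$ we pass to $QF$ and take the relevant filtered homotopy colimit over $\mathsf{Sub}(QF)$, as in Definition~\ref{def:bi}.

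Then I would construct the natural transformation $\phi: F \to \Phi_n(F)$. For a finite cell functor $F$ this is built from evaluation maps $F(X) \smsh \der^r(\Sigma^\infty F \Omega^\infty) \to \operatorname{Map}(S_c,(\Sigma^\infty X)^{\smsh r})$ obtained by combining the $(\Sigma^\infty,\Omega^\infty)$-unit $F(X) \to F\Omega^\infty\Sigma^\infty X$ with the $(\Sigma^\infty,\Omega^\infty)$-counit to introduce $\Sigma^\infty\Omega^\infty$ on the outside, and the evaluation pairing; the key point, proved as in Lemmas~\ref{lem:phi'-F} and \ref{lem:diagonal}, is that this factors through the mapping object for $P$-\emph{bi}modules. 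Composing with cofibrant replacements, the truncation maps, and the $\operatorname{Hom}^{\mathsf{bi}}_P \to \Ext^{\mathsf{bi}}_P$ map, and then taking the homotopy colimit over $\mathsf{Sub}(QF)$, yields $\phi$. To show $\phi$ is a $D_n$-equivalence I would follow the pattern of Propositions~\ref{prop:phi} and \ref{prop:phi-left}: factor $\phi$ through a totalization using the coaugmented cosimplicial object $F\Omega^\infty(\Sigma^\infty\Omega^\infty)^\bullet\Sigma^\infty \cdots \Omega^\infty(\Sigma^\infty\Omega^\infty)^\bullet\Sigma^\infty$ associated to \emph{both} sides, use Theorem~\ref{thm:key} (the cobar construction result) to identify $D_n(FG)$ with the totalization of the levelwise $D_n$, and check that the corresponding map $\phi''$ at each cosimplicial level is a $D_n$-equivalence by reducing to the spectra-to-spectra chain rule (Theorem~\ref{thm:chainrule}) together with Lemmas~\ref{lem:pro-map-invariance} and \ref{lem:adjunction}, exactly as in Lemmas~\ref{lem:phi-FOS} and \ref{lem:phi-left-OSF}. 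Combining the derivative computation with the $D_n$-equivalence then gives $\der_*(F) \homeq \der^G_*(F)$.

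\textbf{Main obstacle.} The technical heart will be handling the \emph{two} cosimplicial directions at once: one must apply Theorem~\ref{thm:key} twice — once for the ``inner'' composite $F\Omega^\infty$ seen as a functor ending in $\sset$ and once for $\Sigma^\infty$ applied to that, i.e. to the two occurrences of $\sset$ as a ``middle'' category in $\Sigma^\infty F \Omega^\infty$ — and then show the resulting double (or, after taking the diagonal, single) cosimplicial object matches the bisimplicial bar construction defining $\der^*(F)$ in Definition~\ref{def:bi}. Keeping the bimodule structures strictly compatible through all these identifications, and checking that Spanier-Whitehead duality of the bisimplicial cobar construction is the bisimplicial bar construction (the bimodule analogue of the observation used for Proposition~\ref{prop:intro-indirect-chain-rule}), is where the bookkeeping is heaviest; everything else is a routine merge of the arguments in Sections~\ref{sec:spaces-spectra} and \ref{sec:spectra-spaces}.
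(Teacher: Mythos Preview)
Your proposal is correct and follows essentially the same approach as the paper's proof: define $\Phi_n(F)$ via the bimodule $\Ext$-object with target $\Map(S_c,(\Sigma^\infty X)^{\leq n})$, compute its \ord{n} derivative by combining the arguments of Propositions~\ref{prop:phi-derivative} and \ref{prop:Phi-left-derivative}, construct $\phi$ from evaluation maps, and show it is a $D_n$-equivalence by factoring through the bicosimplicial object $\Tot[\Omega^\infty \dots \Sigma^\infty F \Omega^\infty \dots \Sigma^\infty]$ and applying Theorem~\ref{thm:key} twice. The paper's proof is exactly this outline, and your identification of the ``main obstacle'' as the double application of Theorem~\ref{thm:key} and the matching of the bicosimplicial cobar with the bimodule bar is precisely where the paper gestures at ``a compilation of Lemmas~\ref{lem:phi-FOS} and \ref{lem:phi-left-OSF}.''
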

\begin{proof}
This follows the same pattern as Theorems \ref{thm:right} and \ref{thm:left}. Firstly, we can use the constructions of \ref{def:sigmainfty-module} and \ref{def:sigmainfty-module-left} to make the symmetric sequence
\[ \Map(S_c,(\Sigma^\infty X)^{\smsh *}) \]
into a $\der^*(\Sigma^\infty \Omega^\infty)$-bimodule for any $X \in \sset$. We then define
\[ \Phi_n(F) := \Ext^{\mathsf{bi}}_{\tilde{\der}^*(\Sigma^\infty \Omega^\infty)}\left(\tilde{\der}^*(\Sigma^\infty F \Omega^\infty), \Map(S_c,(\Sigma^\infty X)^{\leq n})\right) \]
where this is the $\Ext$-object for $\tilde{\der}^*(\Sigma^\infty \Omega^\infty)$-bimodules (Definition \ref{def:ext-modules}).

Combining the arguments of \ref{prop:phi-derivative} and \ref{prop:Phi-left-derivative} we see that $\Phi_n(F)$ is a pointed simplicial homotopy functor with \ord{n} Goodwillie derivative equivalent to $\der_n(F)$.

We then construct a comparison map
\[ \phi: F \to \Phi_n(F) \]
is a similar way to the maps $\phi$ of Definitions \ref{def:phi} and \ref{def:phi-left}. When $F$ is a finite cell functor, $\phi$ comes from the evaluation maps
\[ \Sigma^\infty F \Omega^\infty(E) \smsh \der^r(\Sigma^\infty F \Omega^\infty) \to E^{\smsh r}. \]
Taking the adjoint to this, setting $E = \Sigma^\infty X$ and combining with the unit map from $F(X)$ to $F \Omega^\infty \Sigma^\infty(X)$, we get maps
\[ F(X) \to \spectra\left(\der^r(\Sigma^\infty F \Omega^\infty), \Map(S_c,(\Sigma^\infty X)^{\smsh r})\right) \]
which together form the basis of $\phi$. For general pointed simplicial $F$, we take the homotopy colimit over the finite subcomplexes of $QF$.

We then see that $\phi$ factors via
\[ \Tot \left[ \Omega^\infty \dots \Sigma^\infty F \Omega^\infty \dots \Sigma^\infty \right]. \]
This is the totalization of the bicosimplicial object formed using the unit and counit maps of the $(\Sigma^\infty,\Omega^\infty)$-adjunction. We then see that $\phi$ is a $D_n$-equivalence using Theorem \ref{thm:key} (twice, once on each side) and a compilation of Lemmas \ref{lem:phi-FOS} and \ref{lem:phi-left-OSF}. The theorem then follows.
\end{proof}

\begin{remark}
The ideas of Remark \ref{rem:tate} again apply in the spaces to spaces case. The $\der_*(I)$-bimodule $\der_*F$ determines a tower of functors $\Phi_*F$ with $\Phi_nF$ being $n$-excisive. The layers of this tower are
\[ \Delta_nF(X) \homeq \Omega^\infty(\der_nF \smsh (\Sigma^\infty X)^{\smsh n})^{h\Sigma_n} \]
and fit into fibre sequences
\[ D_nF \to \Delta_nF \to \Omega^\infty \operatorname{Tate}_{\Sigma_n}(\der_nF \smsh (\Sigma^\infty X)^{\smsh n}). \]
Thus when the Tate spectra vanish, the $\der_*(I)$-bimodule $\der_*F$ determines the Taylor tower of $F$.
\end{remark}

\begin{examples} \label{ex:spaces-spaces}
Notice that the functors $\Sigma^\infty \sset(K,-)$ and $K \smsh \Omega^\infty(-)$, whose derivatives were calculated in Examples \ref{ex:right} and \ref{ex:left} respectively, really come from functors from spaces to spaces of the form
\[ \sset(K,-) \quad \text{and} \quad K \smsh -. \]
In Example \ref{ex:right} we showed that the derivatives of $\Sigma^\infty \sset(K,-)$ are given by
\[ \dual B(K^{\smsh *}, \mathsf{Com}, \mathsf{1}). \]
To recover the derivatives of $\sset(K,-)$ we know now that we should now take a bar construction on the left with respect to the left $\der^*(\Sigma^\infty \Omega^\infty)$-action on the dual of this. The symmetric sequence $K^{\smsh *}$ has a natural left $\mathsf{Com}$-module structure (essentially this is the same structure described in Definition \ref{def:sigmainfty-module-left}). The derivatives of $\sset(K,-)$ are then given by the bimodule bar construction
\[ \der_*(\sset(K,-)) \homeq \dual B(\mathsf{1},\mathsf{Com},K^{\smsh *},\mathsf{Com},\mathsf{1}). \]
It can be shown that this is equivalent, as a $\der_*(I)$-bimodule, to
\[ \Map(K,\der_*(I)) \]
where the symmetric sequence $\{\Map(K,\der_nI)\}$ gets a $\der_*(I)$-bimodule structure from the bimodule structure on $\der_*(I)$ itself (that comes from the operad structure), and the diagonal map on $K$.

More generally, our methods can be used to produce, for any pointed simplicial homotopy functor $F: \sset \to \sset$ and any finite cell spectrum $K$, an equivalence of $\der_*(I)$-bimodules
\[ \der_*(\sset(K,F)) \to \Map(K,\der_*(I)). \]

The derivatives of $K \smsh - $ are given by the bimodule bar construction
\[ \der_*(K \smsh -) \homeq \dual B(\mathsf{1},\mathsf{Com}, \underline{\dual K}, \mathsf{Com}, \mathsf{1}) \]
where $\underline{\dual K}$ is as in Example \ref{ex:left}, and has the left $\mathsf{Com}$-module structure described there, and the right $\mathsf{Com}$-module structure coming from
\[ \dual K \smsh S \smsh \dots \smsh S \isom \dual K. \]
These derivatives are equivalent, as a $\der_*(I)$-bimodule to the symmetric sequence $\{K^n \smsh \der_nI\}$ with bimodule structure again coming from that on $\der_*(I)$ and the diagonal on $K$.
\end{examples}

\section{A Koszul duality result for operads of spectra} \label{sec:koszul}

Having produced the claimed module structures on the derivatives of functors to and/or from spaces, we now turn to the chain rules for such derivatives. This essentially follows from Theorem \ref{thm:key}, but to put it in the form we are looking for, we need a further result about bar constructions for operads in $\spectra$ and their modules. This result is essentially a weak form of the `Koszul duality' for operads of Ginzburg-Kapranov \cite{ginzburg/kapranov:1994} transferred to the context of spectra.

\begin{definition} \label{def:koszul-map}
Let $P$ be a reduced operad in $\spectra$ with right and left $P$-modules $R$ and $L$ respectively. Ignoring the homotopy-theoretic consequences for the moment, let us write $\dual X$ for $\Map(X,S)$. Then $\dual B(\mathsf{1},P,\mathsf{1})$ is an operad with right and left modules $\dual B(R,P,\mathsf{1})$ and $\dual B(\mathsf{1},P,L)$ respectively. Furthermore, recall (by dualizing the maps of Proposition \ref{prop:bar}) that we have composition maps
\[ \dual B(R,P,\mathsf{1}) \circ \dual B(\mathsf{1},P,L) \to \dual B(R,P,L) \]
and hence
\[ \dual B(R,P,\mathsf{1}) \circ \dual B(\mathsf{1},P,\mathsf{1})^{k} \circ \dual B(\mathsf{1},P,L) \to \dual B(R,P,L) \]
for each $k$. Together these make up a map of symmetric sequences
\[ B(\dual B(R,P,\mathsf{1}), \dual B(\mathsf{1},P,\mathsf{1}), \dual B(\mathsf{1},P,L)) \to \dual B(R,P,L). \]
To make this have better homotopical properties, we can compose with termwise-cofibrant replacements for the operad and modules on the left-hand side to get
\[ \Gamma: B\left(\widetilde{\dual B(R,P,\mathsf{1})}, \widetilde{\dual B(\mathsf{1},P,\mathsf{1})}, \widetilde{\dual B(\mathsf{1},P,L)}\right) \to \dual B(R,P,L). \]
\end{definition}

\begin{theorem}[Weak Koszul duality for operads in $\spectra$] \label{thm:koszul}
Let $P$ be a reduced operad in $\spectra$ with right and left pro-$P$-modules $R$ and $L$ respectively. Suppose that $P$, $R$, $L$ are directly-dualizable. Then the map
\[ \Gamma: B\left(\widetilde{\dual B(R,P,\mathsf{1})}, \widetilde{\dual B(\mathsf{1},P,\mathsf{1})}, \widetilde{\dual B(\mathsf{1},P,L)}\right) \to \dual B(R,P,L) \]
of Definition \ref{def:koszul-map} is a weak equivalence of symmetric sequences.

If, moreover, $R$ is a $P$-bimodule, then $\Gamma$ is an equivalence of left $P$-modules. If $L$ is a $P$-bimodule, $\Gamma$ is an equivalence of right $P$-modules. If $R$ and $L$ and both $P$-bimodules, then $\Gamma$ is an equivalence of $P$-bimodules.
\end{theorem}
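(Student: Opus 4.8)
The plan is to reduce the statement about the Koszul duality map $\Gamma$ to a chain of equivalences that we have essentially already established, by carefully identifying the target $\dual B(R,P,L)$ with a composition product over the Koszul dual operad and then invoking homotopy invariance of the bar construction. The starting observation is that, by Proposition \ref{prop:bar}, the simplicial bar construction on the Spanier-Whitehead duals, $B(\dual B(R,P,\mathsf{1}), \dual B(\mathsf{1},P,\mathsf{1}), \dual B(\mathsf{1},P,L))$, is precisely the two-sided bar construction computing the derived composition product $\overline{R} \circ_{\overline{P}} \overline{L}$ (in the notation of the introduction, with $\overline{P} = \dual B(P)$ etc.), while the target is the Spanier-Whitehead dual of the three-fold bar construction $B(R,P,L)$. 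So the content of the theorem is the assertion from the introduction that $M \circ_P N \homeq \overline{M} \circ_{\overline{P}} \overline{N}$, in the special case where $M,N$ are taken to be $\mathsf{1}$-type modules, dualized appropriately.

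\textbf{First steps.} First I would set up notation: write $\overline{P} := \widetilde{\dual B(\mathsf{1},P,\mathsf{1})}$, $\overline{R} := \widetilde{\dual B(R,P,\mathsf{1})}$, $\overline{L} := \widetilde{\dual B(\mathsf{1},P,L)}$, noting that these are termwise-cofibrant by construction (using Lemma \ref{lem:bar-directly-dualizable} to see the bar constructions are directly-dualizable, hence their duals are well-behaved, and Lemma \ref{lem:dual-cooperad} to get the operad/module structures). Then I would exhibit $\Gamma$ more explicitly: it is the realization of a levelwise map of simplicial symmetric sequences, where on $k$-simplices it is the composite
\[ \overline{R} \circ \overline{P}^{\circ k} \circ \overline{L} \to \dual B(R,P,\mathsf{1}) \circ \dual B(\mathsf{1},P,\mathsf{1})^{\circ k} \circ \dual B(\mathsf{1},P,L) \to \dual B(R,P,L) \]
using the dualized composition maps $\phi_{R,L}$ of Proposition \ref{prop:bar} (dualized and iterated). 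The key point is that the target $\dual B(R,P,L)$, treated as a constant simplicial object, receives this augmented simplicial object, and the claim is that the induced map on realizations is an equivalence. I would then dualize once more: since $P$, $R$, $L$ are directly-dualizable, so is $B(R,P,L)$ (Lemma \ref{lem:bar-directly-dualizable}), and by Lemma \ref{lem:pro-equivalence} it suffices to check that $\dual \Gamma$ is an equivalence. Applying Lemma \ref{lem:dualcomprod} (duality interchanges $\circ$ and the dual composition product) and the fact that dualizing twice recovers the original object up to equivalence, $\dual \Gamma$ becomes the cobar-type comparison map relating $B(R,P,L)$ to the cosimplicial object built from $B(R,P,\mathsf{1})$, $B(\mathsf{1},P,\mathsf{1})$, $B(\mathsf{1},P,L)$. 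This is exactly the diagonal-versus-iterated-realization comparison, and its being an equivalence follows from the coassociativity structure of Corollary \ref{cor:bar} together with a simplicial contraction argument: the relevant augmented (co)simplicial object admits extra (co)degeneracies coming from the unit $\mathsf{1} \to P$ of the operad, so Lemma \ref{lem:simplicial-contraction} (or its dual, Lemma \ref{lem:cosimplicial-contraction}) applies levelwise.

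\textbf{The module-structure refinements.} Once the underlying-symmetric-sequence statement is in hand, the bimodule refinements are comparatively formal. If $R$ is a $P$-bimodule, then by Definition \ref{def:module-bar} both $B(R,P,\mathsf{1})$ and $B(R,P,L)$ acquire left $P$-module structures, and the dual construction of Lemma \ref{lem:dual-cooperad}, applied with the extra bicomodule structure from Corollary \ref{cor:bar-bi}, endows $\dual B(R,P,\mathsf{1})$ with a compatible structure; the iterated composition maps $\phi_{R,L}$ are maps of left $P$-modules by the naturality built into Proposition \ref{prop:bar-comprod}. Hence $\Gamma$, being built functorially from these maps, is a map of left $P$-modules, and a weak equivalence of symmetric sequences between left $P$-modules is a weak equivalence of left $P$-modules by Definition \ref{def:weq-operads}. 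The right-module and bimodule cases are identical with the roles of $L$ and $R$ swapped (or both). So the proof here is: verify the maps respect the extra module structure — which is a routine diagram-chase against the definitions in Section \ref{sec:bar} — and then appeal to the already-proved underlying equivalence.

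\textbf{Expected main obstacle.} The hard part will be the bookkeeping in the core equivalence: organizing the (co)simplicial contractions so that they are genuinely compatible across all the levels of the composite bar constructions, and making sure the directly-dualizable hypotheses are propagated correctly through $\dual$, through $\circ$, and back — in particular that all the termwise-cofibrant replacements $\widetilde{(-)}$ do not disturb the relevant structure maps up to the weak equivalences we need (this is where Lemma \ref{lem:pro-map-invariance} and Proposition \ref{prop:bar-invariance} do the real work). The structural statement itself is ``morally'' Proposition \ref{prop:bar} dualized, but turning ``morally'' into a clean argument requires being careful that the pro-object indexing (when $R$, $L$ are genuinely pro-modules) does not interfere — one reduces to the non-pro case by a filtered homotopy colimit argument, using Lemma \ref{lem:bar-hocolim} and Lemma \ref{lem:comprod-hocolim} to commute everything past the colimit.
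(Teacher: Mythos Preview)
Your reduction from pro-modules to ordinary modules via filtered homotopy colimits (Lemmas \ref{lem:bar-hocolim} and \ref{lem:comprod-hocolim}) is correct and matches the paper's first step, and your remarks on the module-structure refinements are also essentially right: once the underlying symmetric-sequence equivalence is known, the module statements follow formally.

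The gap is in the core step. You claim that the augmented (co)simplicial object underlying $\Gamma$ admits extra (co)degeneracies ``coming from the unit $\mathsf{1} \to P$'', and that this gives the equivalence via Lemma \ref{lem:simplicial-contraction} or \ref{lem:cosimplicial-contraction}. But no such contraction exists here. Extra degeneracies for a two-sided bar construction $B_\bullet(M,Q,N) \to X$ require one of the endpoints to be free over $Q$ (as in $B(R,P,P) \to R$, where the unit inserted on the right gives $s_{-1}$). In the augmented object $\overline{R} \circ \overline{P}^{\circ k} \circ \overline{L} \to \dual B(R,P,L)$, neither $\overline{R}$ nor $\overline{L}$ is free as a $\overline{P}$-module, and $\dual B(R,P,L)$ carries no evident $\overline{P}$-module structure that would produce the required section. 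Your appeal to a ``diagonal-versus-iterated-realization comparison'' does not apply either: the source and target of $\Gamma$ are not two realizations of a common bisimplicial object.

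The paper's argument proceeds instead by a filtration on $R$. One first reduces to $R$ bounded (replacing $R$ by its truncation $R^{\leq n}$, which is harmless below degree $n$), and then uses the termwise cofibre sequences $R^{=n} \to R^{\leq n} \to R^{\leq(n-1)}$ together with Lemma \ref{lem:bar-right-fibre} (both sides of $\Gamma$ take these to cofibre sequences) to reduce to the case where $R$ is concentrated in a single term. For such $R$ the right $P$-action is trivial, so $R \isom R \circ \mathsf{1}$ and Proposition \ref{prop:bar-comprod} gives $B(R,P,L) \isom R \circ B(\mathsf{1},P,L)$; after dualizing via Lemma \ref{lem:dualcomprod}, the map $\Gamma$ becomes precisely a bar-resolution equivalence of the form $B(P',P',L') \to L'$ (with $P' = \overline{P}$, $L' = \overline{L}$, and $\widetilde{\dual R}$ pulled out in front), which is an equivalence by Lemma \ref{lem:module-resolutions-eqs}. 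This is where the genuine extra degeneracies appear --- only after the filtration has reduced the problem to a free situation.
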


\begin{corollary} \label{cor:koszul-operad}
Let $P$ be a directly-dualizable reduced operad in $\spectra$. Then there is an equivalence of \emph{symmetric sequences}
\[ \dual B(\widetilde{\dual BP}) \homeq P. \]
In other words, if we define the \emph{Koszul dual} of $P$ to be the operad
\[ K(P) := \widetilde{\dual BP}, \]
then $K(K(P)) \homeq P$.
\end{corollary}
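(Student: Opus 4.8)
\textbf{Proof proposal for Corollary \ref{cor:koszul-operad}.}

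The plan is to deduce this as an immediate application of Theorem \ref{thm:koszul} (Weak Koszul duality) in the special case $R = L = \mathsf{1}$, with $\mathsf{1}$ given its trivial $P$-bimodule structures (available since $P$ is reduced, see Definition \ref{def:reduced}). First I would observe that $\mathsf{1}$ is directly-dualizable: each term $\mathsf{1}(n)$ is either $S$ or $*$, both of which are (in the appropriate sense) cofibrant homotopy-finite, and the indexing category is trivial, so the hypotheses of Theorem \ref{thm:koszul} are met as soon as $P$ itself is directly-dualizable. Then the theorem provides a weak equivalence of symmetric sequences
\[ \Gamma: B\left(\widetilde{\dual B(\mathsf{1},P,\mathsf{1})}, \widetilde{\dual B(\mathsf{1},P,\mathsf{1})}, \widetilde{\dual B(\mathsf{1},P,\mathsf{1})}\right) \weq \dual B(\mathsf{1},P,\mathsf{1}). \]

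Next I would unwind notation. By definition $BP = B(\mathsf{1},P,\mathsf{1})$ (Corollary \ref{cor:bar}), so $\dual B(\mathsf{1},P,\mathsf{1}) = \dual BP$, and $K(P) := \widetilde{\dual BP}$. The left-hand side of $\Gamma$ is therefore the two-sided bar construction $B(K(P),K(P),K(P))$, where $K(P)$ appears as an operad (the dual of the cooperad $BP$, via Corollary \ref{cor:bar} and Lemma \ref{lem:dual-cooperad}) and as its own trivial right and left modules — here I should note that $K(P)$, being a $\Sigma$-cofibrant replacement, is in particular termwise-cofibrant, and one should check (or cite Lemma \ref{lem:module-resolutions-eqs}) that $B(K(P),K(P),K(P))$ is weakly equivalent to $K(P)$ itself via the bar resolution augmentation. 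Concretely, by Lemma \ref{lem:module-resolutions-eqs} the augmentation $B(R,P,P) \to R$ is a weak equivalence of symmetric sequences; applied with $R = P = K(P)$ and the middle factor being $K(P)$ rather than $\mathsf{1}$, this gives $B(K(P),K(P),K(P)) \weq K(P)$.

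Combining these two equivalences yields
\[ K(P) \homeq B(K(P),K(P),K(P)) \homeq \dual BP, \]
but I also need to apply $\dual B(-)$ to this. The cleaner route: rather than resolving the outer bar construction, I would instead simply apply Theorem \ref{thm:koszul} and read it directly. The source of $\Gamma$ is $B(K(P), \widetilde{\dual BP}, K(P))$, and recognizing $\widetilde{\dual BP} = K(P)$ again, the source is the bar resolution of $K(P)$ over itself, hence equivalent to $K(P)$ by Lemma \ref{lem:module-resolutions-eqs}. Meanwhile one more application of $\dual B(-)$ to the target: we want to compare $\dual B(K(P))$ with $P$. So I would run Theorem \ref{thm:koszul} with $P$ replaced throughout — no; the correct final assembly is: Theorem \ref{thm:koszul} with $R=L=\mathsf{1}$ gives $\dual B(K(P)) = \dual B(\widetilde{\dual BP}) \homeq$ (by the weak equivalence $\Gamma$ read in the reverse direction, together with the bar-resolution equivalence identifying the source with a resolution of $P$) $\homeq P$. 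I expect the main obstacle to be bookkeeping: making sure the various cofibrant-replacement tildes, the reduced-operad conventions, and the identification of the source of $\Gamma$ with a bar resolution of the correct object are all consistent, so that the chain of equivalences genuinely lands on $P$ and not merely on some resolution thereof. No new homotopy-theoretic input is needed beyond Theorem \ref{thm:koszul}, Lemma \ref{lem:module-resolutions-eqs}, and Lemma \ref{lem:bar-directly-dualizable} (to know $BP$, and hence its dual and replacements, stays directly-dualizable).
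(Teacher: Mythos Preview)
Your choice $R = L = \mathsf{1}$ is the wrong specialization of Theorem \ref{thm:koszul} and yields only a tautology. With $R = L = \mathsf{1}$, the source of $\Gamma$ is $B(K(P),K(P),K(P))$ and the target is $\dual B(\mathsf{1},P,\mathsf{1}) = \dual BP$. The bar-resolution equivalence of Lemma \ref{lem:module-resolutions-eqs} identifies the source with $K(P)$, not with $P$ as you assert in your final paragraph; so all you have recovered is $K(P) \homeq \dual BP$, which is the definition of $K(P)$ up to cofibrant replacement. Nowhere in this chain does $\dual B(K(P))$ appear, and there is no step that produces $P$.

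The paper instead takes $R = L = P$ (with $P$ acting on itself as a bimodule). Then the source of $\Gamma$ is
\[
B\bigl(\widetilde{\dual B(P,P,\mathsf{1})},\ \widetilde{\dual BP},\ \widetilde{\dual B(\mathsf{1},P,P)}\bigr).
\]
Now Lemma \ref{lem:module-resolutions-eqs} gives $B(P,P,\mathsf{1}) \weq \mathsf{1}$ and $B(\mathsf{1},P,P) \weq \mathsf{1}$ as $BP$-comodules, so after dualizing and replacing, the outer modules become $\mathsf{1}$ and the source is $B(\mathsf{1},K(P),\mathsf{1}) = B(K(P))$. The target is $\dual B(P,P,P) \homeq \dual P$ by the same lemma. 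Thus $\Gamma$ gives $B(K(P)) \homeq \dual P$, and dualizing (using direct-dualizability of $P$) yields $\dual B(K(P)) \homeq P$. The point is that to see $P$ reappear you must feed $P$ itself into the module slots, so that the one-sided bar constructions collapse to $\mathsf{1}$ rather than to $BP$.
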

\begin{proof}
This follows from Theorem \ref{thm:koszul} by taking $R = L = P$, using equivalences of $BP$-comodules of the form $\mathsf{1} \weq B(P,P,\mathsf{1})$ and $\mathsf{1} \weq B(\mathsf{1},P,P)$, as well as the equivalence $B(P,P,P) \weq P$ of Lemma \ref{lem:module-resolutions-eqs}.
\end{proof}

\begin{remark}
We call this a \emph{weak} form of Koszul duality because it only establishes an equivalence of symmetric sequences $K(K(P)) \homeq P$. Both these objects are operads and we are unable to show that there is an equivalence of \emph{operads} connecting them. This obviously would be a significant improvement on Corollary \ref{cor:koszul-operad}.
\end{remark}

Before we can prove Theorem \ref{thm:koszul}, we need one more fact about bar constructions.

\begin{lemma} \label{lem:bar-right-fibre}
Let $P$ be a reduced operad in $\spectra$, and suppose that
\[ R \to R' \to R'' \]
is a sequence of right $P$-modules such that
\[ R(n) \to R'(n) \to R''(n) \]
is a cofibre sequence in $\spectra$ for each $n$, i.e. $R \to R' \to R''$ is a \emph{termwise cofibre sequence}. Suppose that all the symmetric sequences $R,R',R'',P,L$ are termwise-cofibrant. Then the corresponding sequence
\[ B(R,P,L) \to B(R',P,L) \to B(R'',P,L) \]
is also a termwise cofibre sequence.
\end{lemma}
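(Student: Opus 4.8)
The statement is that the two-sided bar construction $B(-,P,L)$, applied termwise to a termwise cofibre sequence of right $P$-modules $R \to R' \to R''$, yields a termwise cofibre sequence. The plan is to reduce this to the corresponding fact for the simplicial bar construction level by level, and then to the behaviour of geometric realization. Fix $n \geq 1$. By definition, $B(R,P,L)(n) = |B_\bullet(R,P,L)(n)|$, where the simplicial spectrum $B_\bullet(R,P,L)(n)$ has $k$-simplices $(R \circ P^k \circ L)(n)$, a wedge (coproduct) of terms of the form $R(i) \smsh P(j_1) \smsh \dots \smsh L(k_m)$. The key observation is that smashing with a fixed (cofibrant) spectrum preserves cofibre sequences — this is the standard fact that $- \smsh Y$ is a left Quillen functor when $Y$ is cofibrant, hence preserves homotopy pushouts, and in particular takes the cofibre sequence $R \to R' \to R''$ to a cofibre sequence $R \smsh Y \to R' \smsh Y \to R'' \smsh Y$ — applied here with $Y$ being the appropriate smash product of terms of $P$ and $L$, each of which is cofibrant (or $\cong S$) by the termwise-cofibrancy hypothesis. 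Since coproducts (wedges) of cofibre sequences are cofibre sequences, we conclude that for each $k$,
\[ B_k(R,P,L)(n) \to B_k(R',P,L)(n) \to B_k(R'',P,L)(n) \]
is a termwise cofibre sequence of spectra. (One must be slightly careful about the terms in which only $R$ contributes via $R(1)$ — but even there $R(1) \smsh L(1)$ or similar, and smashing preserves cofibre sequences of cofibrant-or-$S$ objects, as used in the proof of Proposition \ref{prop:bar-invariance}.)

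Next I would pass from the simplicial level to the realization. The relevant statement is that geometric realization of simplicial spectra preserves cofibre sequences of Reedy cofibrant simplicial objects — more precisely, if $X_\bullet \to Y_\bullet \to Z_\bullet$ is a levelwise cofibre sequence of Reedy cofibrant simplicial spectra, then $|X_\bullet| \to |Y_\bullet| \to |Z_\bullet|$ is a cofibre sequence. We already know from the proof of Proposition \ref{prop:bar-invariance} that under the termwise-cofibrancy hypotheses on $R,P,L$, the simplicial spectrum $B_\bullet(R,P,L)(n)$ is Reedy cofibrant; the same argument applies to $R'$ and $R''$. Geometric realization commutes with colimits, in particular with cofibres, so $|B_\bullet(R'',P,L)(n)|$ is the cofibre of $|B_\bullet(R,P,L)(n)| \to |B_\bullet(R',P,L)(n)|$ on the strict (point-set) level; that this strict cofibre is also a \emph{homotopy} cofibre follows because realization of Reedy cofibrant simplicial objects is a homotopically meaningful construction — equivalently, because the cofibration structure is preserved, as the map $|B_\bullet(R,P,L)(n)| \to |B_\bullet(R',P,L)(n)|$ is itself a cofibration (being the realization of a levelwise cofibration between Reedy cofibrant objects). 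Hence
\[ B(R,P,L)(n) \to B(R',P,L)(n) \to B(R'',P,L)(n) \]
is a termwise cofibre sequence, which is exactly the claim.

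The main obstacle I anticipate is the careful bookkeeping around the non-cofibrant unit $S$: in $B_k(R,P,L)(n)$ some of the smash factors are copies of $S$ rather than cofibrant spectra (this arises precisely in the degenerate-looking terms where some $P(j_u) = P(1) \cong S$, and possibly where $R(i) \smsh L(k_1)$ has $i = 1$), so one cannot blindly invoke "smash with a cofibrant object preserves cofibre sequences." As in the proof of Proposition \ref{prop:bar-invariance}, the fix is to use the axiom that $S_c \smsh X \to X$ is a weak equivalence, together with the fact that smashing with $S$ is the identity functor, so that cofibre sequences are trivially preserved; the content is only that the result remains termwise-cofibrant, which the same latching-map analysis as in Proposition \ref{prop:bar-invariance} provides. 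Everything else — the preservation of cofibre sequences by wedges, by smashing with cofibrant objects, and by realization of Reedy cofibrant simplicial objects — is standard and can be cited.
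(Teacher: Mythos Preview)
Your proposal is correct and follows essentially the same approach as the paper: establish that each simplicial level $R \circ P^k \circ L \to R' \circ P^k \circ L \to R'' \circ P^k \circ L$ is a termwise cofibre sequence (since smashing with a fixed spectrum preserves cofibre sequences), then pass to geometric realization using Reedy cofibrancy. The paper's proof is three sentences long and omits the careful discussion of the non-cofibrant unit $S$ that you include, but the argument is the same.
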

\begin{proof}
Taking smash products with a fixed spectrum preserves cofibre sequences, so we get termwise cofibre sequences of the form
\[ R \circ P^k \circ L \to R' \circ P^k \circ L \to R'' \circ P^k \circ L. \]
But then geometric realization (of Reedy-cofibrant objects) takes levelwise cofibre sequences of simplicial spectra to cofibre sequences of spectra. Therefore we get a termwise cofibre sequence
\[ B(R,P,L) \to B(R',P,L) \to B(R'',P,L) \]
as claimed.
\end{proof}

\begin{proof}[Proof of Theorem \ref{thm:koszul}]
We start by reducing to the case where $R$ and $L$ are ordinary $P$-modules, rather than pro-modules, so assume we already know that case. Suppose that $R$ is indexed on the cofiltered category $\cat{J}$ and $L$ on the cofiltered category $\cat{K}$. Then the source of the map $\Gamma$ is
\[ B\left(\hocolim_{j \in \cat{J}^{op}} \widetilde{\dual B(R_j,P,\mathsf{1})}, \widetilde{\dual B(\mathsf{1},P,\mathsf{1})}, \hocolim_{k \in \cat{K}^{op}} \widetilde{\dual B(\mathsf{1},P,L_k)}\right). \]
By Lemma \ref{lem:bar-hocolim}, this is equivalent to
\[ \hocolim_{(j,k) \in \cat{J}^{op} \times \cat{K}^{op}} B\left(\widetilde{\dual B(R_j,P,\mathsf{1})}, \widetilde{\dual B(\mathsf{1},P,\mathsf{1})},\widetilde{\dual B(\mathsf{1},P,L_k)}\right) \]
which, by our assumption, is equivalent via $\Gamma$ to
\[ \hocolim_{(j,k) \in \cat{J}^{op} \times \cat{K}^{op}} \dual B(R_j,P,L_k). \]
This is the definition of
\[ \dual B(R,P,L) \]
and so we have reduced to the case of ordinary $P$-modules. This case occupies the rest of the proof.s

Recall that the \emph{truncation} of a right $P$-module $R$ is the right $P$-module $R^{\leq n}$ given by
\[ R^{\leq n}(k) := \begin{cases} * & \text{if $k > n$}; \\ R(k) & \text{if $k \leq n$}, \end{cases} \]
and that there is a morphism of right $P$-modules
\[ R \to R^{\leq n}. \]
For any left $P$-module $L$, the induced map
\[ B(R,P,L) \to B(R^{\leq n},P,L) \]
is an isomorphism on terms up to and including $n$, that is
\[ B(R,P,L)(k) \isom B(R^{\leq n},P,L)(k) \]
for $k \leq n$.

Now consider the commutative diagram
\[ \begin{diagram}
  \node{B\left(\widetilde{\dual B(R^{\leq n},P,\mathsf{1})}, \widetilde{\dual B(\mathsf{1},P,\mathsf{1})}, \widetilde{\dual B(\mathsf{1},P,L)}\right)(k)} \arrow{s} \arrow{e,t}{\Gamma} \node{\dual B(R^{\leq n},P,L)(k)} \arrow{s} \\
  \node{B\left(\widetilde{\dual B(R,P,\mathsf{1})}, \widetilde{\dual B(\mathsf{1},P,\mathsf{1})}, \widetilde{\dual B(\mathsf{1},P,L)}\right)(k)} \arrow{e,t}{\Gamma} \node{\dual B(R,P,L)(k)}
\end{diagram} \]
where $k \leq n$. The remarks of the previous paragraph tell us that the vertical maps are equivalences and so we see that it is enough to prove the Theorem where the right $P$-module $R$ is \emph{bounded} (i.e. equal to $R^{\leq n}$ for some $n$).

Now define a right $P$-module $R^{=n}$ by
\[ R^{=n}(k) := \begin{cases} * & \text{if $k \neq n$}; \\ R(k) & \text{if $k = n$}. \end{cases} \]
There are then morphisms of right $P$-modules
\[ R^{=n} \to R^{\leq n}, \quad R^{\leq n} \to R^{\leq (n-1)} \]
and the resulting sequence
\[ R^{=n} \to R^{\leq n} \to R^{\leq (n-1)} \]
is a termwise cofibre sequence. Consider the commutative diagram
\[ \begin{diagram}
  \node{B\left(\widetilde{\dual B(R^{\leq (n-1)},P,\mathsf{1})}, \widetilde{\dual B(\mathsf{1},P,\mathsf{1})}, \widetilde{\dual B(\mathsf{1},P,L)}\right)(k)} \arrow{s} \arrow{e,t}{\Gamma} \node{\dual B(R^{\leq (n-1)},P,L)(k)} \arrow{s} \\
  \node{B\left(\widetilde{\dual B(R^{\leq n},P,\mathsf{1})}, \widetilde{\dual B(\mathsf{1},P,\mathsf{1})}, \widetilde{\dual B(\mathsf{1},P,L)}\right)(k)} \arrow{s} \arrow{e,t}{\Gamma} \node{\dual B(R^{\leq n},P,L)(k)} \arrow{s} \\
  \node{B\left(\widetilde{\dual B(R^{= n},P,\mathsf{1})}, \widetilde{\dual B(\mathsf{1},P,\mathsf{1})}, \widetilde{\dual B(\mathsf{1},P,L)}\right)(k)} \arrow{e,t}{\Gamma} \node{\dual B(R^{= n},P,L)(k)}
\end{diagram} \]
Then Lemma \ref{lem:bar-right-fibre}, together with the fact that Spanier-Whitehead duality takes cofibre sequences (of cofibrant and homotopy-finite spectra) to cofibre sequences (since cofibre and fibre sequences are equivalent in $\spectra$), implies that the vertical sequences here are cofibre sequences. By induction then, it is sufficient to prove the Theorem when the right $P$-module $R$ is concentrated in one position (i.e. equal to $R^{=n}$ for some $n$).

For a right $P$-module $R$ concentrated in a single term, the module structure is trivial (except for composition with the unit of the operad). Equivalently, there is an isomorphism of right $P$-modules
\[ R \isom R \circ \mathsf{1} \]
where the right $P$-module structure on $R \circ \mathsf{1}$ comes only from that on $\mathsf{1}$, i.e. via the augmentation of the reduced operad $P$.

The isomorphism of Proposition \ref{prop:bar-comprod} now tells us that
\[ B(R,P,L) \isom R \circ B(\mathsf{1},P,L). \]
Taking duals we get
\[ \dual B(R,P,L) \isom \dual(R \circ B(\mathsf{1},P,L)) \homeq \widetilde{\dual R} \circ \widetilde{\dual B(\mathsf{1},P,L)} \]
by Lemma \ref{lem:dualcomprod}. We therefore have also
\[ \widetilde{\dual B(R,P,L)} \homeq \widetilde{\dual R} \circ \widetilde{\dual B(\mathsf{1},P,L)}. \]
Using these equivalences on both source and target of the map $\Gamma$, we see that in this case $\Gamma$ is equivalent to the map
\[ B\left(\widetilde{\dual R} \circ \widetilde{\dual B(\mathsf{1},P,\mathsf{1})}, \widetilde{\dual B(\mathsf{1},P,\mathsf{1})}, \widetilde{\dual B(\mathsf{1},P,L)}\right) \to \widetilde{\dual R} \circ \widetilde{\dual B(\mathsf{1},P,L)}. \]
But this map is the composite of the isomorphism $\chi_r$ of Proposition \ref{prop:bar-comprod} and an equivalence of the form
\[ B(P',P',L') \to L' \]
from Lemma \ref{lem:module-resolutions-eqs}. Therefore, $\Gamma$ is indeed an equivalence for such $R$. This completes the proof.
\end{proof}

\section{Chain rules for functors of spaces and spectra} \label{sec:chainrule-spaces}

In this section, we prove a collection of chain rules for the Goodwillie derivatives of functors between spaces and spectra. In the spaces to spaces case, this generalizes the result of Klein-Rognes \cite{klein/rognes:2002} to higher derivatives.

\begin{theorem}[Chain rules in which the middle category is spectra] \label{thm:chainspectra}
Let $F: \spectra \to \cat{D}$ and $G: \cat{C} \to \spectra$ be pointed simplicial homotopy functors with $F$ finitary, and the categories $\cat{C}$ and $\cat{D}$ each either spaces or spectra. Then we have a natural equivalence (of symmetric sequences, left $\der_*(I)$-modules, right $\der_*(I)$-modules, or $\der_*(I)$-bimodules as appropriate):
\[ \der_*(FG) \homeq \der_*(F) \circ \der_*(G). \]
\end{theorem}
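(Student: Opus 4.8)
\textbf{Proof strategy for Theorem \ref{thm:chainspectra}.}

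The plan is to assemble the proof from pieces already in place: the chain rule for functors of spectra (Theorem \ref{thm:chainrule}), the cobar resolution (Theorem \ref{thm:key}), the module structures of Theorems \ref{thm:right}, \ref{thm:left}, \ref{thm:bi}, and crucially the weak Koszul duality result (Theorem \ref{thm:koszul}). The four cases ($\cat{C},\cat{D} \in \{\spectra, \sset\}$) are treated uniformly, but I would organize the argument around the three nontrivial cases ($F: \spectra \to \sset$, $G: \sset \to \spectra$, and the combination), with the $\spectra \to \spectra$ case being exactly Theorem \ref{thm:chainrule}.

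First I would handle the case $F: \spectra \to \sset$, $G: \spectra \to \spectra$ (so $FG: \spectra \to \sset$). By Definition \ref{def:left}, $\der_*(FG) \homeq \dual B(\mathsf{1}, P, \tilde\der^*(\Sigma^\infty FG))$ where $P = \tilde\der^*(\Sigma^\infty \Omega^\infty)$. Now $\Sigma^\infty FG = (\Sigma^\infty F)G$, and since $\Sigma^\infty F: \spectra \to \spectra$ is a homotopy functor (and $F$ finitary implies $\Sigma^\infty F$ finitary), Theorem \ref{thm:chainrule} gives $\tilde\der^*(\Sigma^\infty FG) \homeq \tilde\der^*(\Sigma^\infty F) \circ \tilde\der^*(G)$ as pro-symmetric sequences. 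The point is to check this equivalence respects the relevant left $P$-module structures, where the left $P$-module structure on the right-hand side comes from the left $P$-module structure on $\tilde\der^*(\Sigma^\infty F)$. This should follow from the naturality of the composition maps $\mu^*$ of Definition \ref{def:mu} with respect to the comodule structure maps; concretely, the left comodule structure $\Sigma^\infty FG \to \Sigma^\infty \Omega^\infty \Sigma^\infty FG$ factors through $(\Sigma^\infty F \to \Sigma^\infty \Omega^\infty \Sigma^\infty F)$ composed with $G$, and Corollary \ref{cor:chainrule} says $\mu^*$ is an equivalence compatible with this. Dualizing via Lemma \ref{lem:dualcomprod}, and using Lemma \ref{lem:bar-hocolim} (the bar construction commutes with filtered homotopy colimits, hence with the composition product in the dualized setting via Corollary \ref{cor:pro-comprod-equivalence}), gives $\der_*(FG) \homeq \dual B(\mathsf{1}, P, \tilde\der^*(\Sigma^\infty F) \circ \tilde\der^*(G))$. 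Using the isomorphism $\chi_l$ of Proposition \ref{prop:bar-comprod}, $B(\mathsf{1},P, L \circ A) \isom B(\mathsf{1},P,L) \circ A$ for a symmetric sequence $A$, so this becomes $\dual(B(\mathsf{1},P,\tilde\der^*(\Sigma^\infty F)) \circ \tilde\der^*(G))$, and Lemma \ref{lem:dualcomprod} once more yields $\der_*(F) \circ \der_*(G)$. The module structure bookkeeping is the routine-but-careful part here.

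The harder direction, and what I expect to be the main obstacle, is the case where the \emph{target} of $G$ is spaces --- but the theorem as stated restricts the middle category to spectra, so this does not arise, and instead the genuine subtlety is the case $G: \sset \to \spectra$, $F: \spectra \to \sset$ giving $FG: \sset \to \sset$, which needs the Koszul duality input. Here $\der_*(FG) \homeq \dual B(\mathsf{1}, P, \tilde\der^*(\Sigma^\infty FG \Omega^\infty), P, \mathsf{1})$ by Definition \ref{def:bi}, and $\Sigma^\infty FG \Omega^\infty = (\Sigma^\infty F)(G\Omega^\infty)$ is a composite of functors of spectra, so Theorem \ref{thm:chainrule} gives $\tilde\der^*(\Sigma^\infty FG\Omega^\infty) \homeq \tilde\der^*(\Sigma^\infty F) \circ \tilde\der^*(G\Omega^\infty)$, compatibly with the left and right $P$-comodule structures (the left from $\Sigma^\infty F$, the right from $G\Omega^\infty$). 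Dualizing and using $\chi_l, \chi_r$ of Proposition \ref{prop:bar-comprod} to pull the bar construction apart, one is led to an expression involving $B(\mathsf{1},P,\tilde\der^*(\Sigma^\infty F)) \circ (\text{middle}) \circ B(\tilde\der^*(G\Omega^\infty),P,\mathsf{1})$ --- but the two pieces $\der_*(F)$ and $\der_*(G)$ are each already bar constructions over $P$, not over $\der_*(I) = \dual BP$. To identify the composite over $\der_*(I)$ with $\der_*(F) \circ_{\der_*(I)} \der_*(G)$ --- or, in the form stated here, simply $\der_*(F) \circ \der_*(G)$ when there is no middle module to quotient by --- one applies Theorem \ref{thm:koszul} with $R = \tilde\der^*(G\Omega^\infty)$ (a right $P$-module, in the bimodule case also a left $P$-module through the bimodule structure when the source of $G$ is spaces) and $L = \tilde\der^*(\Sigma^\infty F)$, which says precisely that $B(\dual B(R,P,\mathsf{1}), \dual BP, \dual B(\mathsf{1},P,L)) \homeq \dual B(R,P,L)$ with the module structures matching. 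Since $\dual B(R,P,\mathsf{1})$ and $\dual B(\mathsf{1},P,L)$ are (after cofibrant replacement) exactly the pieces $\der_*(G)$ and $\der_*(F)$, and $\dual BP = \der_*(I)$, this gives the chain rule over $\der_*(I)$; in the present theorem, since the middle is $\spectra$ and $\der_*I_{\spectra} \homeq \mathsf{1}$, the $\circ_{\der_*(I)}$ degenerates to the plain composition product. The main obstacle is verifying that all the equivalences produced --- from Theorem \ref{thm:chainrule}, from Lemma \ref{lem:dualcomprod}, from $\chi_l/\chi_r$, and from Theorem \ref{thm:koszul} --- are compatible as maps of left, right, or bi-modules (not merely of symmetric sequences), which requires tracing the module-structure maps through each step; I would isolate this as a sequence of lemmas, one per case, each a diagram chase using the naturality statements already recorded in Definitions \ref{def:composition}, \ref{def:mu} and Propositions \ref{prop:bar-comprod}, \ref{prop:bar-pro-invariance}.

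Finally, I would note that the finitary hypothesis on $F$ is used exactly once, in invoking Theorem \ref{thm:chainrule} (equivalently Corollary \ref{cor:chainrule}) for $\Sigma^\infty F$ or for $F$ directly; without it the argument produces $\der_*((QF)G)$ instead, as in Remark following Corollary \ref{cor:chainrule}, and Example \ref{ex:counterexample} shows this genuinely differs from $\der_*(FG)$.
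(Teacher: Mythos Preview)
Your treatment of the case $F:\spectra\to\sset$, $G:\spectra\to\spectra$ is essentially the paper's argument (the paper proves the case $\cat{C}=\sset$, $\cat{D}=\spectra$ and declares the others ``similar''): apply Theorem~\ref{thm:chainrule} to $(\Sigma^\infty F)G$, then use Proposition~\ref{prop:bar-comprod} to pull the free factor out of the bar construction, and dualize via Lemma~\ref{lem:dualcomprod}. So far so good.

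The bimodule case ($\cat{C}=\cat{D}=\sset$) is where you go astray. You correctly reach
\[ B\bigl(\mathsf{1},P,\tilde\der^*(\Sigma^\infty F)\circ\tilde\der^*(G\Omega^\infty),P,\mathsf{1}\bigr) \]
using Theorem~\ref{thm:chainrule} for $(\Sigma^\infty F)(G\Omega^\infty)$, and you note that $\chi_l,\chi_r$ of Proposition~\ref{prop:bar-comprod} should split this. But then you abandon that route and invoke Theorem~\ref{thm:koszul}. This is both unnecessary and wrong as stated. Koszul duality with $R=\tilde\der^*(G\Omega^\infty)$ and $L=\tilde\der^*(\Sigma^\infty F)$ produces
\[ B\bigl(\der_*(G),\,\der_*(I_{\sset}),\,\der_*(F)\bigr)\;\homeq\;\dual B(R,P,L), \]
which is the wrong object on two counts: the right-hand side is $\dual B\bigl(\tilde\der^*(G\Omega^\infty),P,\tilde\der^*(\Sigma^\infty F)\bigr)$, which by the cobar argument of \S\ref{sec:chainrule-spaces} computes $\der_*(GF)$ (middle category $\sset$), not $\der_*(FG)$; and the operad appearing is $\der_*(I_{\sset})=\dual BP$, not $\der_*(I_{\spectra})\homeq\mathsf{1}$. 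Your final sentence conflates these two identity functors.

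The fix is simply to finish the $\chi_l,\chi_r$ argument you started. The bimodule structure on $A\circ B:=\tilde\der^*(\Sigma^\infty F)\circ\tilde\der^*(G\Omega^\infty)$ has its left $P$-action factoring through $A$ and its right $P$-action through $B$. Using Remark~\ref{rem:bibar} and Proposition~\ref{prop:bar-comprod} one gets
\[ B(\mathsf{1},P,A\circ B,P,\mathsf{1})\;\isom\;B(\mathsf{1},P,A)\circ B(B,P,\mathsf{1}), \]
a plain composition product of pro-symmetric sequences. Dualizing via Lemma~\ref{lem:dualcomprod} gives $\der_*(F)\circ\der_*(G)$ directly, with the bimodule structure coming along for the ride. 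No Koszul duality is needed for Theorem~\ref{thm:chainspectra}; that tool is reserved for Theorem~\ref{thm:chainspaces}, where the middle category is $\sset$.
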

\begin{proof}
The case $\cat{C} = \cat{D} = \spectra$ was Theorem \ref{thm:chainrule}. If $\cat{C} = \sset, \cat{D} = \spectra$, then $\der_*(FG)$ and $\der_*(G)$ are as defined in Definition \ref{def:right}. Now Theorem \ref{thm:chainrule} tells us (here we use the hypothesis that $F$ is finitary) that we have an equivalence of pro-right-$\der^*(\Sigma^\infty \Omega^\infty)$-modules:
\[ \tilde{\der}^*(FG \Omega^\infty) \homeq \tilde{\der}^*(F) \circ \tilde{\der}^*(G \Omega^\infty). \]
Taking cofibrant replacements and applying Proposition \ref{prop:bar-pro-invariance}, we get an equivalence of pro-right-comodules
\[ B(\tilde{\der}^*(F G \Omega^\infty), \tilde{\der}^*(\Sigma^\infty \Omega^\infty), \mathsf{1}) \homeq B(\tilde{\der}^*(F) \circ \tilde{\der}^*(G \Omega^\infty), \tilde{\der}^*(\Sigma^\infty \Omega^\infty), \mathsf{1}). \]
Using the isomorphism of Proposition \ref{prop:bar-comprod} and taking Spanier-Whitehead duals, we get, by Lemma \ref{lem:dualcomprod}, the required equivalence
\[ \der_*(FG) \homeq \der_*(F) \circ \der_*(G). \]
The cases with $\cat{D} = \sset$ are similar.
\end{proof}

\begin{theorem}[Chain rules in which the middle category is spaces] \label{thm:chainspaces}
Let $F: \sset \to \cat{D}$ and $G: \cat{C} \to \sset$ be pointed simplicial homotopy functors with $F$ finitary, and the categories $\cat{C}$ and $\cat{D}$ each either spaces or spectra. Then we have a natural equivalence (of symmetric sequences, left modules, right modules, or bimodules as appropriate):
\[ \der_*(FG) \homeq B \left(\der_*(F), \der_*(I), \der_*(G)\right). \]
\end{theorem}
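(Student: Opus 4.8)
The plan is to combine Theorem~\ref{thm:chainspectra} (the chain rule when the middle category is spectra) with Theorem~\ref{thm:key} (which expresses the Taylor tower of a composite through $\sset$ as a cobar construction) and Theorem~\ref{thm:koszul} (weak Koszul duality). The key observation is that $\der_*(I)$ is, by Definition~\ref{def:d*I}, the Koszul dual of the operad $\der^*(\Sigma^\infty \Omega^\infty)$, so the bar construction $B(\der_*(F),\der_*(I),\der_*(G))$ on the right-hand side should be identified, via Koszul duality, with the Spanier-Whitehead dual of a bar construction over $\der^*(\Sigma^\infty\Omega^\infty)$ itself.

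\emph{First step:} apply the indirect chain rule. By Theorem~\ref{thm:key}, we have $\der_n(FG) \homeq \widetilde{\Tot}\,[\der_n(F\Omega^\infty(\Sigma^\infty\Omega^\infty)^{\bullet}\Sigma^\infty G)]$. The Spanier-Whitehead dual of this cosimplicial totalization is a (simplicial) bar construction; more precisely, using the models $\der^*$ and the composition maps of \S\ref{sec:comp-maps}, together with Theorem~\ref{thm:chainspectra} applied to the functors of spectra $F\Omega^\infty$, $(\Sigma^\infty\Omega^\infty)^k$, and $\Sigma^\infty G$, one obtains an equivalence of pro-symmetric sequences
\[ \der^*(Q(FG)) \homeq B\bigl(\tilde{\der}^*(F\Omega^\infty), \tilde{\der}^*(\Sigma^\infty\Omega^\infty), \tilde{\der}^*(\Sigma^\infty G)\bigr), \]
compatible with whatever module structures are present on the two ends (using Proposition~\ref{prop:bar-pro-invariance} and the isomorphisms of Proposition~\ref{prop:bar-comprod}). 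This is the version of Proposition~\ref{prop:intro-indirect-chain-rule} at the level of duals. Taking Spanier-Whitehead duals via Lemma~\ref{lem:dualcomprod} and Definition~\ref{def:dual-pro-comodule} gives $\der_*(FG) \homeq \dual B(\tilde{\der}^*(F\Omega^\infty),\tilde{\der}^*(\Sigma^\infty\Omega^\infty),\tilde{\der}^*(\Sigma^\infty G))$.

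\emph{Second step:} apply weak Koszul duality. Write $P = \tilde{\der}^*(\Sigma^\infty\Omega^\infty)$, $R = \tilde{\der}^*(F\Omega^\infty)$, $L = \tilde{\der}^*(\Sigma^\infty G)$; these are directly-dualizable by Lemma~\ref{lem:homotopy-finite} (after levelwise cofibrant replacement) and Lemma~\ref{lem:bar-directly-dualizable}. By Theorem~\ref{thm:koszul}, the map
\[ \Gamma: B\bigl(\widetilde{\dual B(R,P,\mathsf{1})},\widetilde{\dual B(\mathsf{1},P,\mathsf{1})},\widetilde{\dual B(\mathsf{1},P,L)}\bigr) \weq \dual B(R,P,L) \]
is an equivalence (of bimodules, using the bimodule structures on $R$ coming from $F\Omega^\infty$ being a bicomodule when $F$ has source $\sset$, and similarly on $L$). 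Now $\dual B(\mathsf{1},P,\mathsf{1}) \homeq \der_*(I)$ by Definition~\ref{def:d*I} and Proposition~\ref{prop:com}; $\dual B(R,P,\mathsf{1}) = \der_*(F)$ by Definition~\ref{def:right-full} (or its analogues \ref{def:left}, \ref{def:bi} in the other cases); and $\dual B(\mathsf{1},P,L) = \der_*(G)$ by Definition~\ref{def:left} (or its analogues). Substituting, the left-hand side of $\Gamma$ is exactly $B(\der_*(F),\der_*(I),\der_*(G))$, so combining the two steps yields $\der_*(FG) \homeq B(\der_*(F),\der_*(I),\der_*(G))$, as claimed. Naturality and compatibility with the module structures follow by tracking the identifications through Propositions~\ref{prop:bar-comprod}, \ref{prop:bar-pro-invariance} and the constructions of \S\ref{sec:spaces-spectra}--\ref{sec:spaces-spaces}.

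\emph{Main obstacle.} The genuinely delicate point is the first step: producing the equivalence $\der^*(Q(FG)) \homeq B(\tilde{\der}^*(F\Omega^\infty),P,\tilde{\der}^*(\Sigma^\infty G))$ \emph{as pro-symmetric sequences with the correct module structures}, rather than merely termwise. This requires applying Theorem~\ref{thm:chainspectra} not once but to an entire cosimplicial diagram of composites of functors of spectra, and checking that the composition maps $\mu^*$ of Definition~\ref{def:mu} assemble into a map of cosimplicial (equivalently, simplicial after dualizing) objects. One must also confront the non-infinite-loop-space coface maps flagged in the introduction's technical remarks; the clean way around this, consistent with the paper's strategy, is to \emph{not} prove the first step directly but instead deduce it from the already-established Theorems~\ref{thm:right}, \ref{thm:left}, \ref{thm:bi} (which identify $\der_*(F)$, $\der_*(G)$ with the Goodwillie derivatives as modules) together with Theorem~\ref{thm:chainspaces}'s proof being reorganized so that the cobar/Koszul-duality comparison is made at the level of the functors $\Phi_n$ rather than raw cosimplicial objects. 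Ensuring the Reedy (co)fibrancy hypotheses needed for Proposition~\ref{prop:reedy} hold at each stage — so that $\widetilde{\Tot}$ and $\dual$ genuinely commute with the relevant homotopy colimits (Lemmas~\ref{lem:bar-hocolim}, \ref{lem:comprod-hocolim}) — is where most of the technical care will go.
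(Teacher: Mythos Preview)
Your two-step outline is correct and matches the paper's proof exactly: first establish the ``indirect'' equivalence
\[
\der_*(FG)\;\homeq\;\dual B\bigl(\tilde{\der}^*(F\Omega^\infty),\tilde{\der}^*(\Sigma^\infty\Omega^\infty),\tilde{\der}^*(\Sigma^\infty G)\bigr)
\]
by combining Theorem~\ref{thm:key} with iterated applications of the spectra-level chain rule (Corollary~\ref{cor:chainrule}), then apply Theorem~\ref{thm:koszul} together with the definitional identifications of $\der_*(F)$, $\der_*(I)$, $\der_*(G)$ from \S\S\ref{sec:spaces-spectra}--\ref{sec:spaces-spaces}.

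Your ``main obstacle'' is overcautious, and your proposed workaround (detouring through the $\Phi_n$ functors) is unnecessary. The non-infinite-loop-space difficulty you flag was the obstruction to \emph{proving} Theorems~\ref{thm:right}--\ref{thm:bi}, and was resolved there via $\Phi_n$; by the time you reach Theorem~\ref{thm:chainspaces} those results are available as black boxes for the second step. For the first step, the simplicial compatibility is not subtle: the coface and codegeneracy maps in the cobar construction $F\Omega^\infty(\Sigma^\infty\Omega^\infty)^{\bullet}\Sigma^\infty G$ are precisely the comonad/comodule structure maps on $\Sigma^\infty\Omega^\infty$, $F\Omega^\infty$, and $\Sigma^\infty G$, which are exactly the maps inducing the operad and module structures on $\der^*(\Sigma^\infty\Omega^\infty)$, $\der^*(F\Omega^\infty)$, $\der^*(\Sigma^\infty G)$ via the construction of Definition~\ref{def:composition}. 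Hence the levelwise equivalences
\[
\tilde{\der}^*(F\Omega^\infty)\circ\tilde{\der}^*(\Sigma^\infty\Omega^\infty)^{\circ k}\circ\tilde{\der}^*(\Sigma^\infty G)\;\weq\;\der^*\bigl(Q(F\Omega^\infty(\Sigma^\infty\Omega^\infty)^k\Sigma^\infty G)\bigr)
\]
automatically form a map of simplicial objects, and taking realizations yields the bar construction directly. The paper handles this in one sentence; no reorganization is required.
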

\begin{proof}
By replacing $F$ and $G$ with $QF$ and $G$ if necessary, we can, without loss of generality suppose that $F$ and $G$ are presented cell functors. (This uses the condition that $F$ be finitary since then $QF(X) \homeq F(X)$ for all $X \in \sset$ and so we have $FG \homeq (QF)(QG)$.)

Iterating Theorem \ref{thm:chainspectra}, we obtain equivalences
\[ \der^*(F \Omega^\infty) \circ \dots \circ \der^*(\Sigma^\infty G) \weq \der^*(Q(F\Omega^\infty \dots \Sigma^\infty G)). \]
These equivalences preserve the simplicial structures on the two sides and so taking realizations, we get a weak equivalence
\[ B \left(\tilde{\der}^*(F \Omega^\infty), \tilde{\der}^*(\Sigma^\infty), \tilde{\der}^*(\Sigma^\infty G) \right) \weq \left|\der^*(Q(F \Omega^\infty \dots \Sigma^\infty G))\right|. \]
We also have a map
\[ \left|\der^*(Q(F \Omega^\infty \dots \Sigma^\infty G)) \right| \to \der^*(Q(FG)) \]
built from the unit maps $FG \to F \Omega^\infty \dots \Sigma^\infty G$. Composing these and taking the Spanier-Whitehead duals (of these pro-objects) we get maps
\[ \der_*(FG) \to \widetilde{\Tot} \; \der_*(F \Omega^\infty \dots \Sigma^\infty G) \weq \dual B \left(\tilde{\der}^*(F \Omega^\infty), \tilde{\der}^*(\Sigma^\infty), \tilde{\der}^*(\Sigma^\infty G) \right). \]
However, the first map here is a weak equivalence by Theorem \ref{thm:key}, and so we get an equivalence
\[ \der_*(FG) \weq \dual B \left(\tilde{\der}^*(F \Omega^\infty), \tilde{\der}^*(\Sigma^\infty), \tilde{\der}^*(\Sigma^\infty G) \right). \]
Now we apply our Koszul duality result (Theorem \ref{thm:koszul}) to the right-hand side here which gives us a weak equivalence
\[ B \left(\der_*(F), \der_*(I), \der_*(G)\right) \weq \dual B \left(\tilde{\der}^*(F \Omega^\infty), \tilde{\der}^*(\Sigma^\infty \Omega^\infty), \tilde{\der}^*(\Sigma^\infty G) \right). \]
Combining these we have the necessary equivalence
\[ \der_*(FG) \homeq B \left(\der_*(F), \der_*(I), \der_*(G)\right). \]
\end{proof}

\begin{remark}
Theorem \ref{thm:chainspectra} can be written in the same form as Theorem \ref{thm:chainspaces} since the derivatives of the identity on spectra form the unit symmetric sequence $\mathsf{1}$ and we have
\[ B\left(\der_*(F), \mathsf{1}, \der_*(G)\right) \isom \der_*(F) \circ \der_*(G). \]
Thus in general, if $F:\cat{E} \to \cat{D}$ and $G: \cat{C} \to \cat{E}$ are pointed simplicial homotopy functors with $F$ finitary, then we have
\[ \der_*(FG) \homeq B \left(\der_*(F), \der_*(I_{\cat{E}}), \der_*(G)\right). \]
Recall that the bar construction is a model for the derived composition product, so this formula can also be rewritten as
\[ \der_*(FG) \homeq \der_*F \circ_{\der_*I} \der_*G. \]
\end{remark}

\begin{remark}
Inclusion of the zero-simplices of the simplicial bar construction gives us a map
\[ \der_*(F) \circ \der_*(G) \to B\left(\der_*(F), \der_*(I), \der_*(G)\right). \]
Putting this together with the equivalence of Theorem \ref{thm:chainspaces}, we get a \emph{weak} (i.e. only defined up to inverse weak equivalences) natural map
\[ \der_*(F) \circ \der_*(G) \to \der_*(FG). \]
The inverse weak equivalences prevent us from using this map to construct directly further operad and module structures. However, we do have explicit descriptions of the inverse weak equivalences involved and so by keeping track of these and making sure they are coherent, we might hope to obtain operad and module structures in any case.

As far as we know, no-one has yet constructed models for the derivatives of functors from spaces to spaces that allow for (suitably associative) point-set level maps
\[ \der_*(F) \circ \der_*(G) \to \der_*(FG). \]
Unpublished work has been done in this direction by Bill Richter and Andrew Mauer-Oats. It also seems that Lurie's framework of $\infty$-categories (see \cite{lurie:2008} and \cite{lurie:2008a}) could be useful for solving these rigidification problems.
\end{remark}

\begin{example} \label{ex:chainrule}
Let $F: \sset \to \spectra$ be any pointed simplicial homotopy functor. We saw in \S\ref{sec:spaces-spectra} that we can recover the derivatives of $F$ from the (dual) derivatives of $F \Omega^\infty$ along with the right $\der^*(\Sigma^\infty \Omega^\infty)$-module structure on those derivatives. We can now see how to reverse that process.

Theorem \ref{thm:chainspaces} implies that we have an equivalence
\[ \der_*(F \Omega^\infty) \homeq B(\tilde{\der}_*F, \tilde{\der}_*I, \tilde{\der}_*(\Omega^\infty)). \]
The derivatives of $\Omega^\infty$ are, as a left $\der_*(I)$-module given by the unit symmetric sequence with the trivial module structure. Taking duals, we can therefore write
\[ \der^*(F\Omega^\infty) \homeq \dual B(\tilde{\der}_*F,\tilde{\der}_*I, \mathsf{1}). \]
In order to recover the right $\der^*(\Sigma^\infty \Omega^\infty)$-module structure, we have to examine things more closely. Recall that this module structure is determined by the right-coaction map
\[ F \Omega^\infty \to F \Omega^\infty \Sigma^\infty \Omega^\infty. \]
According to the naturality of our chain rule, the corresponding map on derivatives is given by
\[ B(\tilde{\der}_*F, \tilde{\der}_*I, \tilde{\der}_*(\Omega^\infty)) \to B(\tilde{\der}_*F, \tilde{\der}_*I, \tilde{\der}_*(\Omega^\infty \Sigma^\infty \Omega^\infty)). \]
In order to understand this, we need to decipher the map
\[ \tag{*} \der_*(\Omega^\infty) \to \der_*(\Omega^\infty \Sigma^\infty \Omega^\infty) \]
of left $\der_*(I)$-modules. Note that this is equivalent to a map $\mathsf{1} \to \der_*(\Sigma^\infty \Omega^\infty)$ but that this does not preserve the left module structures. Instead, Theorem \ref{thm:left} tells us that (*) is given by
\[ \dual B\left(\mathsf{1},\der^*(\Sigma^\infty \Omega^\infty), \der^*(\Sigma^\infty \Omega^\infty)\right) \to \dual B\left(\mathsf{1},\der^*(\Sigma^\infty \Omega^\infty),\der^*(\Sigma^\infty \Omega^\infty \Sigma^\infty \Omega^\infty)\right). \]
We now have an equivalence
\[ \der^*(F \Omega^\infty) \homeq \dual B\left(\tilde{\der}_*F, \tilde{\der}_*(I), \dual B(\mathsf{1}, \der^*(\Sigma^\infty \Omega^\infty), \der^*(\Sigma^\infty \Omega^\infty))\right) \]
and the right $\der^*(\Sigma^\infty \Omega^\infty)$-module structure is now given by the induced action on the right of $B(\mathsf{1},\der^*(\Sigma^\infty \Omega^\infty),\der^*(\Sigma^\infty \Omega^\infty))$.

A similar argument applies for functors from spectra to simplicial sets and allows us to recover the (dual) derivatives of $\Sigma^\infty F$ and the left $\der^*(\Sigma^\infty \Omega^\infty)$-action from the left $\der_*I$-module $\der_*F$. Combining these two cases, we can do the same for functors from spaces to spaces and bimodules.
\end{example}

\begin{remark}
The relationship between $\der^*(F \Omega^\infty)$ and $\der_*F$ is an example of a `Koszul duality' for modules. For any (let us say termwise homotopy-finite) reduced operad $P$ and right $P$-module, we can form the right $\dual BP$-module $\dual B(R,P,\mathsf{1})$. Dually, from a right $\dual BP$-module $M$, we can form
\[ \dual B(M, \dual BP, \dual B(\mathsf{1},P,P)). \]
This inherits a right $P$-module structure from that on $B(\mathsf{1},P,P)$. Theorem \ref{thm:koszul} implies that composing these two constructions, we recover the original right $P$-module $R$. Subject to some finiteness conditions, these constructions set up a contravariant equivalence of homotopy categories between right $P$-modules and right $\dual BP$-modules. Again, analogous results hold for left modules and bimodules.
\end{remark}

\addcontentsline{toc}{part}{Appendix}

\appendix

\section{Categories of operads, modules and bimodules}

Here we describe in more detail some of the structure of the categories of operads, modules and bimodules that we use in this paper. In particular, we address the following related topics:
\begin{itemize}
  \item simplicial enrichment and tensoring;
  \item simplicial model structures;
  \item homotopy colimits;
  \item geometric realization of simplicial objects.
\end{itemize}
This material is used extensively in \S\ref{sec:cofibrant} to produce cofibrant replacements for operads and modules, and then in \S\S\ref{sec:spaces-spectra}-\ref{sec:spaces-spaces}, where we use filtered homotopy colimits of modules to form our models for Goodwillie derivatives.

Recall our notation for the following categories:
\begin{itemize}
  \item $\spectra^{\mathsf{\Sigma}}$: the category of symmetric sequences in $\spectra$
  \item $\spectra^{\mathsf{\Sigma}}_{\mathsf{red}}$: the category of reduced symmetric sequences in $\spectra$ (i.e. concentrated in terms $2$ and above)
  \item $\mathsf{Op}(\spectra)$: the category of reduced operads in $\spectra$
  \item $\mathsf{Mod}_{\mathsf{right}}(P)$: for a fixed reduced operad $P$ in $\spectra$, the category of right $P$-modules
  \item $\mathsf{Mod}_{\mathsf{left}}(P)$: for a fixed reduced operad $P$ in $\spectra$, the category of left $P$-modules
  \item $\mathsf{Mod}_{\mathsf{bi}}(P)$: for a fixed reduced operad $P$ in $\spectra$, the category of $P$-bimodules
\end{itemize}

We start by considering the categories $\spectra^{\mathsf{\Sigma}}$, $\spectra^{\mathsf{\Sigma}}_{\mathsf{red}}$ and $\mathsf{Mod}_{\mathsf{right}}(P)$. These are simpler than the others because they can all be realized as categories of functors $\cat{C} \to \spectra$ for appropriate categories $\cat{C}$:
\begin{itemize}
  \item for $\spectra^{\mathsf{\Sigma}}$, we have $\cat{C} = \mathsf{\Sigma}$;
  \item for $\spectra^{\mathsf{\Sigma}}_{\mathsf{red}}$, we have $\cat{C} = \mathsf{\Sigma} - \{1\}$
  \item for $\mathsf{Mod}_{\mathsf{right}}(P)$, we take $\cat{C}$ to be the following category enriched over $\spectra$: $\cat{C}$ has objects $\mathbb{N}$ and enrichment given by
      \[ \cat{C}(k,n) := \Wdge_{\text{partitions of $\{1,\dots,n\}$}} P(n_1) \smsh \dots \smsh P(n_k). \]
      The category of right $P$-modules is then equivalent to the category of $\spectra$-enriched functors $\cat{C} \to \spectra$.
\end{itemize}
We can give $\Sigma$ an enrichment over $\spectra$ by
\[ \Sigma(n,n) := S \smsh (\Sigma_n)_+, \quad \Sigma(m,n) := * \text{ for $m \neq n$}. \]
The categories of symmetric sequences are then also equivalent to categories of $\spectra$-enriched functors.

\begin{prop} \label{prop:right-mods}
Let $\cat{C}$ be a small $\spectra$-enriched category and let $\spectra^{\cat{C}}$ denote the category of $\spectra$-enriched functors from $\cat{C}$ to $\spectra$. Then
\begin{enumerate}
  \item $\spectra^{\cat{C}}$ is enriched over $\sset$ with
      \[ \spectra^{\cat{C}}(F,G) := \lim \left( \prod_{C \in \cat{C}} \spectra(FC,GC) \rightrightarrows \prod_{C,C' \in \cat{C}} \spectra(FC \smsh \cat{C}(C,C'),GC') \right) \]
  \item $\spectra^{\cat{C}}$ is tensored over $\sset$ with
      \[ (K \otimes F)(C) := K \smsh F(C); \]
  \item $\spectra^{\cat{C}}$ has all limits and colimits, and these are all calculated objectwise;
  \item $\spectra^{\cat{C}}$ has a simplicial model structure with generating cofibrations of the form
      \[ I_0 \smsh \cat{C}(C,-) \to I_1 \smsh \cat{C}(C,-) \]
      for $C \in \cat{C}$ and $I_0 \to I_1$ one of the generating cofibrations in $\spectra$;
  \item the intrinsic geometric realization of a simplicial object $F_{\bullet}$ in $\spectra^{\cat{C}}$ (see \cite[18.6.2]{hirschhorn:2003}) is isomorphic to that calculated termwise in $\spectra$, that is, there is a natural isomorphism
      \[ |F_{\bullet}|(C) \isom |F_{\bullet}(C)|; \]
  \item homotopy colimits in $\spectra^{\cat{C}}$ are calculated objectwise, that is, for a diagram $\mathcal{F}: \cat{J} \to \spectra^{\cat{C}}$, there is a natural equivalence
      \[ \left[\hocolim_{j \in \cat{J}} \mathcal{F}_j\right](C) \homeq \hocolim_{j \in \cat{J}} \left[\mathcal{F}_j(C)\right] \]
      where the homotopy colimit on the left is calculated in the simplicial model category $\spectra^{\cat{C}}$ and that on the right in $\spectra$.
\end{enumerate}
\end{prop}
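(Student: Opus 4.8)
\textbf{Proof plan for Proposition \ref{prop:right-mods}.}

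The statement packages together six standard facts about categories of $\spectra$-enriched functors, so the plan is to address them in the order given, with parts (1)--(4) being essentially formal consequences of enriched category theory together with the known model structure on $\spectra$. For (1) and (2), I would simply observe that the displayed formulas are the usual end and tensor constructions for an enriched functor category (as in Kelly \cite{kelly:2005}), and that they make sense because $\spectra$ is closed symmetric monoidal and tensored/cotensored over $\sset$; the enrichment and tensoring axioms follow from those of $\spectra$. For (3), limits and colimits in a diagram category valued in a (co)complete category are computed objectwise, and $\spectra$ is complete and cocomplete. For (4), I would invoke the enriched version of \cite[11.6.1]{hirschhorn:2003} -- exactly as was done for the functor categories $[\cat{C}^{\mathsf{fin}},\cat{D}]$ in Proposition \ref{prop:model-functors} -- using the enriched Yoneda Lemma to identify maps out of $I_0 \smsh \cat{C}(C,-)$ with maps out of $I_0$ in $\spectra$, so that the lifting properties defining the model structure reduce to those in $\spectra$; that $\spectra$ is a simplicial model category then gives the simplicial structure on $\spectra^{\cat{C}}$ via the tensoring of (2).

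The substantive parts are (5) and (6). For (5), I would recall the definition of the intrinsic (``geometric'') realization of a simplicial object $F_\bullet$ in a simplicial model category as the coend $\Delta[n]_+ \otimes_{\mathsf{\Delta}} F_n$ (Definition \ref{def:realization} in the case $\cat{C}$ is $\spectra$; \cite[18.6.2]{hirschhorn:2003} in general). Since by (3) colimits in $\spectra^{\cat{C}}$ are computed objectwise, and by (2) the tensoring $K \otimes (-)$ is also computed objectwise (as $K \smsh (-)$ in each $\spectra$-slot), the coend defining $|F_\bullet|$ is computed objectwise. Evaluating at an object $C \in \cat{C}$ therefore commutes with the coend, giving the natural isomorphism $|F_\bullet|(C) \isom |F_\bullet(C)|$, where the right-hand side is the realization computed in $\spectra$. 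This is a short argument: the only thing to check carefully is that evaluation $\mathrm{ev}_C: \spectra^{\cat{C}} \to \spectra$ preserves colimits and the $\sset$-tensoring, both of which are immediate from (2) and (3).

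For (6), the plan is as follows. The Bousfield--Kan homotopy colimit of $\mathcal{F}: \cat{J} \to \spectra^{\cat{C}}$ is built (after a cofibrant replacement of the diagram, if needed) as the realization of the simplicial object $[q] \mapsto \coprod_{j_0 \to \cdots \to j_q} \mathcal{F}_{j_0}$, with the relevant $\sset$-tensorings by nerve simplices. Using (3) to compute the coproducts objectwise, (2) to compute the tensorings objectwise, and (5) to compute the realization objectwise, one obtains $[\hocolim_j \mathcal{F}_j](C) \isom \hocolim_j [\mathcal{F}_j(C)]$ on the nose at the point-set level. The one genuine subtlety -- and the step I expect to be the main obstacle -- is the interplay with cofibrant replacement: the homotopy colimit in $\spectra^{\cat{C}}$ is strictly correct only for objectwise-cofibrant (better: projectively cofibrant) diagrams, and likewise in $\spectra$. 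So I would either (a) note that since $\spectra$ is a simplicial model category and the generating cofibrations of $\spectra^{\cat{C}}$ are of the form $I_0 \smsh \cat{C}(C,-) \to I_1 \smsh \cat{C}(C,-)$, a projectively cofibrant diagram in $\spectra^{\cat{C}}$ is objectwise cofibrant, and that the Bousfield--Kan formula applied to such a diagram represents the homotopy colimit both in $\spectra^{\cat{C}}$ and, after evaluation, in $\spectra$ (invoking \cite[18.5.3]{hirschhorn:2003} and the conventions of Definition \ref{def:holim}); or (b) appeal directly to the fact that a left Quillen functor between simplicial model categories preserves homotopy colimits, applied to $\mathrm{ev}_C: \spectra^{\cat{C}} \to \spectra$, which is left Quillen because it has a right adjoint (the enriched right Kan extension along $C \to \cat{C}$, equivalently $X \mapsto \prod$ of cotensors of $X$ by the $\cat{C}(C,-)$) and preserves cofibrations and trivial cofibrations since these are detected objectwise in $\spectra^{\cat{C}}$. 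Either route closes the argument; I would present (b) as the clean statement and (a) as the explicit unwinding.
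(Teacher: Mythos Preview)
Your approach is essentially the same as the paper's, which is extremely brief: parts (1)--(3) are deferred to Kelly, part (4) is declared identical to Proposition \ref{prop:model-functors}, and parts (5) and (6) are said to follow from (2) and (3) since realization and homotopy colimits are built from tensors and colimits. Your expansion of (5) and (6) unpacks exactly this.

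One small error worth fixing: in your route (b) for (6), you justify $\mathrm{ev}_C$ being left Quillen by saying cofibrations and trivial cofibrations are ``detected objectwise in $\spectra^{\cat{C}}$.'' That is false in the projective model structure---it is the \emph{fibrations} and weak equivalences that are objectwise, not the cofibrations. Whether $\mathrm{ev}_C$ sends projective cofibrations to cofibrations depends on whether smashing with the hom-spectra $\cat{C}(C',C)$ preserves cofibrations, which is not automatic. Your route (a) is closer to the mark and suffices at the level of detail the paper works at; alternatively, the cleanest argument is simply that $\mathrm{ev}_C$ preserves all weak equivalences, all colimits, and the $\sset$-tensoring, which is already enough to conclude it preserves homotopy colimits.
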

\begin{proof}
Parts (1)-(3) are standard results of enriched category theory (see \cite{kelly:2005}). The proof of (4) is the same as that of Proposition \ref{prop:model-functors}. Homotopy colimits and geometric realization are based on the tensoring over $\sset$ and taking colimits. Therefore parts (2) and (3) imply (5) and (6).
\end{proof}

\begin{remark}
The simplicial enrichments of part (1) of Proposition \ref{prop:right-mods} are isomorphic to those given in Definitions \ref{def:enriched-symseq} and \ref{def:enriched-modules}. The generating cofibrations described in part (4) can easily be rewritten in the form given in Definition \ref{def:gen-cofs}.
\end{remark}

We now turn to the categories $\mathsf{Op}(\spectra)$, $\mathsf{Mod}_{\mathsf{left}}(P)$ and $\mathsf{Mod}_{\mathsf{bi}}(P)$. The main goal of the rest of this appendix is to show that certain analogues of the parts of Proposition \ref{prop:right-mods} hold also in these categories. These analogues are easy generalizations of results of EKMM \cite[VII]{elmendorf/kriz/mandell/may:1997} for categories of S-algebras and we follow their approach closely. Much of their analysis applies directly and we only fill in the details needed to transfer their arguments to our setting.

The main idea is that each of our categories of interest is equivalent to the category of algebras for the monad given by the corresponding `free object' functor on the category of symmetric sequences. The structures we are interested in are transferred via this monad from the corresponding structures for symmetric sequences.

\begin{definition} \label{def:free-functors}
We recall the definitions of the relevant free functors:
\begin{itemize}
  \item $F: \spectra^{\mathsf{\Sigma}}_{\mathsf{red}} \to \spectra^{\mathsf{\Sigma}}_{\mathsf{red}}$ given in Definition \ref{def:free-operads} as
      \[ F(A)(n) := \Wdge_{T \in \mathsf{T}_n} A(T) \]
      with the wedge product taken over all rooted trees with leaves labelled $1,\dots,n$;
  \item for a fixed reduced operad $P$, the functor $L: \spectra^{\mathsf{\Sigma}} \to \spectra^{\mathsf{\Sigma}}$ given in Definition \ref{def:free-modules} as
      \[ L(A) := P \circ A \]
  \item for a fixed reduced operad $P$, the functor $M: \spectra^{\mathsf{\Sigma}} \to \spectra^{\mathsf{\Sigma}}$ given by
      \[ M(A) := P \circ A \circ P. \]
\end{itemize}
\end{definition}

\begin{lemma}
Each of the functors $F,L,M$ of Definition \ref{def:free-functors} has the structure of a monad on the appropriate category of symmetric sequences. The categories $\mathsf{Op}(\spectra)$, $\mathsf{Mod}_{\mathsf{left}}(P)$ and $\mathsf{Mod}_{\mathsf{bi}}(P)$ are equivalent to the categories of algebras over the monads $F$, $L$ and $M$ respectively.
\end{lemma}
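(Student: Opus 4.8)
The claim has two parts: (a) each of $F$, $L$, $M$ carries a monad structure on the relevant category of symmetric sequences, and (b) the Eilenberg--Moore category of algebras over that monad is equivalent to $\mathsf{Op}(\spectra)$, $\mathsf{Mod}_{\mathsf{left}}(P)$, and $\mathsf{Mod}_{\mathsf{bi}}(P)$ respectively. My plan is to treat the three cases in parallel, since they are structurally the same, and to derive the monad structures in each case from an adjunction between the free functor and the corresponding forgetful functor. The underlying principle is that whenever a forgetful functor $U$ from a category of ``algebraic gadgets'' to $\spectra^{\mathsf{\Sigma}}$ (or $\spectra^{\mathsf{\Sigma}}_{\mathsf{red}}$) admits a left adjoint, that adjunction induces a monad on the base category whose algebras are exactly the original gadgets; this is Beck's monadicity setup, and in these concrete cases the equivalence ``algebras $\simeq$ gadgets'' can be checked by hand.

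\textbf{Step 1: the left-module and bimodule cases.} For a fixed reduced operad $P$, the forgetful functor $U: \mathsf{Mod}_{\mathsf{left}}(P) \to \spectra^{\mathsf{\Sigma}}$ has left adjoint $L(A) = P \circ A$ (Definition \ref{def:free-modules}), and similarly $M(A) = P \circ A \circ P$ is left adjoint to the forgetful functor from $\mathsf{Mod}_{\mathsf{bi}}(P)$. The monad multiplications $LL \to L$ and $MM \to M$ come from the operad composition map $P \circ P \to P$: explicitly $P \circ (P \circ A) \isom (P \circ P) \circ A \to P \circ A$, using associativity of the composition product (Proposition \ref{prop:comprod-monoidal}), and the analogous two-sided map for $M$. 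The monad units $\mathsf{1} \to L$, $\mathsf{1} \to M$ come from the operad unit $\mathsf{1} \to P$. The monad axioms (associativity, unitality) then follow directly from the operad axioms for $P$ together with the coherence of $\circ$. To identify the algebras: an $L$-algebra is a symmetric sequence $A$ with a map $P \circ A \to A$ satisfying the two algebra axioms, which unwind to exactly the associativity and unit axioms of a left $P$-action (Definition \ref{def:modules}); conversely any left $P$-module gives an $L$-algebra. Morphisms match up on the nose. The bimodule case is identical, reading ``$P$-bimodule'' for ``left $P$-module'' and using that $M$-algebra structure maps $P \circ A \circ P \to A$ encode a pair of commuting left and right actions.

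\textbf{Step 2: the operad case.} The forgetful functor $U: \mathsf{Op}(\spectra) \to \spectra^{\mathsf{\Sigma}}_{\mathsf{red}}$ has left adjoint $F$, the free reduced operad functor built from trees (Definition \ref{def:free-operads}). Here the monad multiplication $FF \to F$ is grafting of trees: a tree decorated by $F(A)$-vertices (i.e. by trees decorated by $A$) collapses to a single tree decorated by $A$. The monad unit $A \to F(A)$ picks out the corolla (the tree with one internal vertex and $n$ leaves), using $A(n)$ directly. The monad axioms are the standard associativity and unitality of tree grafting, which can be cited from \cite[II.1.9]{markl/shnider/stasheff:2002}. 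An $F$-algebra is a reduced symmetric sequence $A$ with a map $F(A) \to A$; restricting this to the sub-trees with two internal vertices recovers the operad composition maps $P(k) \smsh P(n_1) \smsh \cdots \smsh P(n_k) \to P(n_1+\cdots+n_k)$, and the algebra axioms force exactly the operad associativity and equivariance (Remark \ref{rem:operad}); conversely an operad's structure assembles into an $F$-algebra map by iterated composition along a tree. Again morphisms correspond, and the reducedness condition ($P(1) = S$) is built into the target category $\spectra^{\mathsf{\Sigma}}_{\mathsf{red}}$ and the normalization $F(A)(1) = S$.

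\textbf{The main obstacle} is bookkeeping rather than conceptual: one must be careful that the free operad functor $F$ really lands in \emph{reduced} operads and that grafting is well-defined and associative with the right treatment of the unit in arity one; the subtlety is that $F(A)(1)$ is forced to be $S$ rather than a wedge over trees, so the monad multiplication in arity one is the unit isomorphism $S \smsh S \isom S$, and one has to check this is compatible with grafting in higher arities. Similarly for the module monads, one must verify the composition product's associativity isomorphisms interact correctly with the operad axioms --- this is where closedness of $\spectra$ (hence distributivity of $\smsh$ over wedges, Proposition \ref{prop:comprod-monoidal}) is essential. All of this is routine given the apparatus already set up, and mirrors \cite[VII]{elmendorf/kriz/mandell/may:1997} for $S$-algebras, so I would present it concisely and refer to that source and to \cite{markl/shnider/stasheff:2002} for the tree combinatorics.
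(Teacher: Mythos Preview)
Your proposal is correct and follows essentially the same approach as the paper, which dispatches the lemma in three sentences: the monad structure on $F$ is given by grafting trees, the monad structures on $L$ and $M$ come from the operad structure on $P$, and the identification of algebras with operads and modules is declared ``standard.'' You have simply unpacked what ``standard'' means, with the same references to \cite{markl/shnider/stasheff:2002} and \cite{elmendorf/kriz/mandell/may:1997}.
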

\begin{proof}
For $F$, the monad structure is given by grafting trees. For $L$ and $M$, the monad structure comes from the operad structure on $P$. The identification of operads and modules with algebras over these monads is standard.
\end{proof}

\begin{lemma}
Each of the functors $F$, $L$ and $M$ is simplicial with respect to the enrichments of the categories of symmetric sequences given by Proposition \ref{prop:right-mods}(1).
\end{lemma}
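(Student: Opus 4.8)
The claim is that each of the free-object monads $F$, $L$, $M$ on the relevant category of symmetric sequences is \emph{simplicial}, meaning it induces maps of simplicial sets on the enrichments described in Proposition \ref{prop:right-mods}(1). My plan is to deduce this from the fact that each of these functors is built out of operations that are manifestly compatible with the tensoring over $\sset$ and with colimits. Concretely, recall that the enrichment on $\spectra^{\cat{C}}$ is determined by the tensoring $(K \otimes F)(C) = K \smsh F(C)$ together with colimits (as noted in the proof of Proposition \ref{prop:right-mods}, parts (2)--(3) imply (5)--(6) for the same reason). So it suffices to check that $F$, $L$, and $M$ each (i) preserve the tensoring over $\sset$ up to coherent natural isomorphism, and (ii) preserve colimits, or at least the colimits used to express them; from these two facts the required simplicial structure maps are obtained formally by the standard argument that a colimit-and-copower-preserving functor between $\sset$-tensored categories is $\sset$-enriched.

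The key steps, in order, are as follows. First I would recall the explicit formulas: $F(A)(n) = \bigvee_{T \in \mathsf{T}_n} A(T)$ with $A(T) = \bigwedge_{v \in T} A(i(v))$, $L(A) = P \circ A$, and $M(A) = P \circ A \circ P$. Second, I would observe that the smash product in $\spectra$ satisfies $(K \smsh X) \smsh Y \isom K \smsh (X \smsh Y)$ naturally for $K \in \sset$, $X,Y \in \spectra$, so that each $A(T)$, being an iterated smash product of the terms $A(i(v))$, takes a copower $K \otimes A$ to $K \smsh A(T)$; here one uses that a tree with at least one internal vertex contributes exactly one copy of $K$ in a coherent way (for reduced symmetric sequences the root vertex has valence $\geq 2$, so there is always at least one internal vertex, and the usual convention that the trivial tree contributes the identity handles the $n=1$ term). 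The same computation shows $L$ and $M$ commute with the copower, since the composition product $(-)\circ(-)$ is built from smash products and coproducts, and smashing with $K \in \sset$ distributes over both. Third, I would note that all three functors preserve all colimits: $F$, $L$, $M$ are each left adjoints (to the respective forgetful functors, by Definitions \ref{def:free-operads}, \ref{def:free-modules}), hence preserve colimits. Fourth, combining (ii) and (i), the functor sends the defining colimit-presentation of a simplicial mapping object to the corresponding presentation for the image, producing the natural map of simplicial sets $\spectra^{\cat{C}}(A,B) \to \spectra^{\cat{C}}(FA,FB)$ (and similarly for $L$, $M$), which is exactly the assertion that these functors are simplicial. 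Finally, functoriality of this construction in the monad structure maps is automatic.

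I do not expect any serious obstacle here; this is a bookkeeping lemma. The one point that needs a little care is the treatment of the copower in the formula for $F$: one must check that $K \otimes A(T) \isom K \smsh A(T)$ holds with the single copy of $K$ appearing coherently and that these isomorphisms are compatible with the grafting maps that encode the monad structure and with the $\Sigma_n$-actions. This is routine given the coherence of the symmetric monoidal structure on $\spectra$ and the naturality of the isomorphism $K \smsh (X \smsh Y) \isom (K \smsh X) \smsh Y$, but it is the only place where a genuine (if small) verification is required rather than a pure appeal to adjointness. The alternative, if one wishes to avoid even this, is to observe that $F$, $L$, $M$ are all \emph{analytic} functors (polynomial in $A$ built from $\smsh$ and $\bigvee$), and that any such functor on a $\sset$-tensored category built from the symmetric monoidal structure of $\spectra$ is automatically $\sset$-enriched; but spelling that out is essentially the same verification, so I would just do the direct check.
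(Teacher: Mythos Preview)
Your approach contains a genuine error. The claim that $F$, $L$, $M$ preserve the $\sset$-tensoring up to isomorphism is false. For a tree $T$ with $k$ internal vertices, one has
\[
(K \otimes A)(T) = \Smsh_{v \in T} \bigl(K \smsh A(i(v))\bigr) \isom K^{\smsh k} \smsh A(T),
\]
not $K \smsh A(T)$. The same phenomenon occurs for $L$ and $M$: the composition product $(P \circ A)(n)$ involves smashing together several terms of $A$, so $P \circ (K \otimes A)$ acquires multiple copies of $K$ in each summand. These functors are polynomial, not linear, in $A$; a functor that genuinely preserves copowers is enriched over $\spectra$, and the paper explicitly notes (see the remark following Definition~\ref{def:enriched-modules}) that left modules and bimodules are enriched only over $\sset$, not over $\spectra$, for exactly this reason. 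Your subsidiary claim that $F$, $L$, $M$ are left adjoints is also off: as \emph{endofunctors} on symmetric sequences they are composites of a free functor with a forgetful functor, and the latter does not preserve general colimits.

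The paper's proof works differently and pinpoints the missing ingredient: it builds the enrichment map directly, using the \emph{diagonal} on the pointed simplicial set $\Hom_{\Sigma}(A,B)$. For $F$, one first projects $\Hom_{\Sigma}(A,B) \to \Hom_{\Sigma_{i(v)}}(A(i(v)),B(i(v)))$ for each vertex $v$ of $T$, then smashes these together via the diagonal $\Delta$ to land in $\Hom(A(T),B(T))$, and finally wedges over $T \in \mathsf{T}_n$. For $L$ and $M$ the construction is the one already given in Definition~\ref{def:enriched-symseq-maps}, which likewise uses the $k$-fold diagonal on $\prod_r \spectra(M(r),N(r))$. The diagonal is available precisely because $\sset$ is cartesian; this is what allows polynomial functors to be simplicially enriched without preserving copowers, and it is the key idea your argument omits.
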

\begin{proof}
The simplicial enrichments of $L$ and $M$ are constructed in Definition \ref{def:enriched-symseq-maps}. Let $A$ and $B$ be reduced symmetric sequences. Then there is a projection map
\[ \pi_n: \Hom_{\Sigma}(A,B) \to \Hom_{\Sigma_n}(A(n),B(n)) \]
for each $n$. For each tree $T \in \mathsf{T}_n$, we then get
\[ \begin{split} \pi_T: \Hom_{\Sigma}(A,B)
    &\dgTEXTARROWLENGTH=4em\arrow{e,t}{\Smsh_v \pi_{i(v)} \circ \Delta} \Smsh_{v \in T} \Hom_{\Sigma_{i(v)}}(A(i(v),B(i(v)))) \\
    &\dgTEXTARROWLENGTH=4em\arrow{e} \Hom_{\prod_v \Sigma_{i(v)}}(A(T),B(T))
\end{split} \]
where $\Delta$ denotes the reduced diagonal on the pointed simplicial set $\Hom_{\Sigma}(A,B)$. Wedging together over $T \in \mathsf{T}_n$, we then get
\[ \Hom_{\Sigma}(A,B) \to \Hom_{\Sigma_n}(F(A)(n),F(B)(n)) \]
and together these form the required map
\[ \Hom_{\Sigma}(A,B) \to \Hom_{\Sigma}(F(A),F(B)). \]
This gives the simplicial enrichment of the free operad functor $F$.
\end{proof}

One of the key technical results we need is the fact that our free functors preserve `reflexive coequalizers', for which we now recall the definition.

\begin{definition}
A diagram of the form
\[ A \rightrightarrows B \to C \]
is a \emph{reflexive coequalizer} if it is a coequalizer diagram and the two maps from $A$ to $B$ have a common right inverse.
\end{definition}

\begin{lemma}
Each of the functors $F$, $L$ and $M$ preserves reflexive coequalizers.
\end{lemma}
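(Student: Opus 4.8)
The statement is that the free functors $F$ (free reduced operad), $L$ (free left module over a fixed reduced operad $P$), and $M$ (free $P$-bimodule) each preserve reflexive coequalizers. The plan is to reduce everything to two basic facts about the category $\spectra$ and the composition product: first, that the smash product $\smsh$ in $\spectra$ preserves reflexive coequalizers in each variable (this follows from the fact that $\smsh$ is a left adjoint in each variable, hence preserves all colimits, in particular reflexive ones); and second, that colimits of symmetric sequences are computed termwise (Proposition \ref{prop:right-mods}(3) in the appropriate guise), so a reflexive coequalizer of symmetric sequences is just a termwise reflexive coequalizer in $\spectra$. From these, I would build up through the definitions of $F$, $L$, $M$ as (wedges of) iterated smash products.

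For the module functors $L(A) = P \circ A$ and $M(A) = P \circ A \circ P$, the key observation is that the composition product $X \circ Y$, when expanded, is a wedge over partitions of terms of the form $X(k) \smsh Y(n_1) \smsh \dots \smsh Y(n_k)$. Only finitely many copies of the variable $A$ appear in each such term (exactly $k$ of them in the case of $L$, or the middle slot in the case of $M$). Since a reflexive coequalizer $A_1 \rightrightarrows A_2 \to A_3$ in symmetric sequences is computed termwise, and since an iterated smash product $Z_1 \smsh \dots \smsh Z_k$ with one (or more) of the $Z_i$ ranging over a reflexive coequalizer diagram again yields a reflexive coequalizer — here I would invoke the standard categorical lemma that a functor of several variables each of which preserves reflexive coequalizers preserves reflexive coequalizers of a ``diagonal'' reflexive diagram, using the common section to build the required section in the multivariable setting — and since wedges (coproducts) in $\spectra$ preserve all colimits, the functors $- \circ Y$ and $X \circ -$ preserve reflexive coequalizers. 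Iterating gives the result for $L$ and $M$. The reflexivity hypothesis is exactly what is needed to handle the case where the variable appears in more than one smash factor (as it does for $L$ with $k \geq 2$): without a common section, $X(X \text{-diagonal})$ need not be a coequalizer.

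For the free operad functor $F(A)(n) = \Wdge_{T \in \mathsf{T}_n} A(T)$ with $A(T) = \Smsh_{v \in T} A(i(v))$, the argument is the same in spirit: each tree $T$ contributes a finite smash product $\Smsh_{v \in T} A(i(v))$ in which $A$ appears once per internal vertex, and the indexing is over the internal vertices of a fixed tree (so finite and independent of $A$). Thus $A(T)$, as a functor of $A$, is a finite iterated smash product and preserves reflexive coequalizers by the multivariable lemma, using the common section coordinatewise. Taking the wedge over the (discrete) set $\mathsf{T}_n$ of isomorphism classes of trees, and then assembling over $n$ (termwise), preserves reflexive coequalizers since wedges are colimits and colimits of symmetric sequences are termwise. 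Hence $F$ preserves reflexive coequalizers.

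\textbf{Main obstacle.} The only real subtlety — and the reason the hypothesis of \emph{reflexivity} rather than arbitrary coequalizers is imposed — is the passage from ``each variable separately preserves reflexive coequalizers'' to ``the multivariable functor preserves the reflexive coequalizer obtained by putting the same diagram into several slots.'' This is a known categorical lemma (see e.g.\ the discussion in \cite[VII.6]{elmendorf/kriz/mandell/may:1997}), proved by an explicit diagram chase: one checks that the candidate coequalizer, built by applying the functor to the $A_3$-terms, receives the right maps and that the common section $s: A_2 \to A_1$ of the two structure maps induces, via the functoriality in each slot, a section witnessing the universal property. I would cite EKMM for this lemma and simply note that the verification for $F$, $L$, $M$ reduces to it, together with the termwise computation of colimits of symmetric sequences and the cocontinuity of $\smsh$ and $\Wdge$ in $\spectra$. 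I expect no genuinely new difficulty beyond faithfully transcribing the EKMM argument to the present setting.
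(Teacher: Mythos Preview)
Your proposal is correct and follows essentially the same approach as the paper: reduce to termwise reflexive coequalizers in $\spectra$, invoke the EKMM lemma that iterated smash products preserve reflexive coequalizers (the paper cites \cite[II.7.2]{elmendorf/kriz/mandell/may:1997} rather than VII.6, which you may want to correct), and then use that coproducts preserve coequalizers to handle the wedge over trees or partitions. The paper's proof is more terse but the logical structure is identical.
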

\begin{proof}
Let $A \rightrightarrows B \to C$ be a reflexive coequalizer in $\spectra^{\mathsf{\Sigma}}_{\mathsf{red}}$. Since colimits of symmetric sequences are determined termwise, each of the diagrams
\[ A(n) \rightrightarrows B(n) \to C(n) \]
is a reflexive coequalizer in $\spectra$. As in the proof of \cite[II.7.2]{elmendorf/kriz/mandell/may:1997}, it follows that for any tree $T \in \mathsf{T}_n$, the diagram
\[ A(T) \rightrightarrows B(T) \to C(T) \]
is a coequalizer. Taking coproducts preserve coequalizers and so
\[ F(A)(n) \rightrightarrows F(B)(n) \to F(C)(n) \]
is a coequalizer. Finally, since again colimits in $\spectra^{\mathsf{\Sigma}}_{\mathsf{red}}$ are computed termwise, it follows that
\[ F(A) \rightrightarrows F(B) \to F(C) \]
is a coequalizer. Thus $F$ preserves reflexive coequalizers.

The proofs for $L$ and $M$ are similar based again on \cite[II.7.2]{elmendorf/kriz/mandell/may:1997} and the fact that the terms in $P \circ A$ and $P \circ A \circ P$ are given by taking appropriate smash products of the terms in $A$ (together with terms in $P$).
\end{proof}

\begin{corollary} \label{cor:left-mods-model}
The categories $\mathsf{Op}(\spectra)$, $\mathsf{Mod}_{\mathsf{left}}(P)$ and $\mathsf{Mod}_{\mathsf{bi}}(P)$ have all limits and colimits, and are enriched and tensored over $\sset$.
\end{corollary}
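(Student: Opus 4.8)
The plan is to mimic, in the setting of algebras over the monads $F$, $L$, $M$ on categories of symmetric sequences, the classical EKMM argument \cite[VII]{elmendorf/kriz/mandell/may:1997} that establishes limits, colimits, enrichment and tensoring for categories of algebras over a nice monad. All of the necessary input has just been assembled: each of $F$, $L$, $M$ is a simplicial monad on a category of symmetric sequences, and each preserves reflexive coequalizers. The category of symmetric sequences itself has all limits and colimits (computed objectwise, by Proposition \ref{prop:right-mods}(3)) and is enriched and tensored over $\sset$ (Proposition \ref{prop:right-mods}(1)--(2)), so the strategy is to transport these structures across the monadic adjunction.

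First I would treat limits. Limits in a category of $T$-algebras (for a monad $T$ on a category with limits) are created by the forgetful functor to the underlying category: given a diagram of algebras, form the limit of the underlying symmetric sequences objectwise, and equip it with the unique algebra structure making the cone maps algebra maps. This uses nothing beyond the fact that $T$ is a functor and that $\spectra^{\mathsf{\Sigma}}$ (resp. $\spectra^{\mathsf{\Sigma}}_{\mathsf{red}}$) is complete. Next, colimits: these are the genuinely delicate point. Colimits of $T$-algebras do \emph{not} in general exist for an arbitrary monad, but they do when $T$ preserves reflexive coequalizers, which is exactly why we proved that lemma. The standard argument (see \cite[VII.3.5]{elmendorf/kriz/mandell/may:1997}, going back to Linton) is: coproducts of free algebras are free on the coproduct; a general colimit of algebras is built from coproducts together with a reflexive coequalizer expressing each algebra as a coequalizer of free algebras $TT(X) \rightrightarrows T(X) \to X$; and since $T$ preserves reflexive coequalizers, the candidate colimit (formed as a reflexive coequalizer in $\spectra^{\mathsf{\Sigma}}$) carries a canonical $T$-algebra structure and has the required universal property. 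I would write this out following EKMM's treatment essentially verbatim, noting that the only categorical facts used about $\spectra^{\mathsf{\Sigma}}$ are cocompleteness and the existence of reflexive coequalizers, both available.

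Finally, enrichment and tensoring over $\sset$. For a pointed simplicial set $K$ and a $T$-algebra $X$, the tensor $K \otimes X$ should be the $T$-algebra satisfying $\mathsf{Alg}_T(K \otimes X, Y) \isom \sset(K, \mathsf{Alg}_T(X,Y))$; one constructs it as a reflexive coequalizer
\[ T(K \otimes TX) \rightrightarrows T(K \otimes X) \to K \otimes X \]
in the category of algebras, exactly as in \cite[VII.2--3]{elmendorf/kriz/mandell/may:1997}, where $K \otimes (-)$ on the underlying symmetric sequences is the (objectwise smash) tensoring from Proposition \ref{prop:right-mods}(2), and the two maps use the $T$-action on $X$ and the fact that $T$ is simplicial so commutes appropriately with $K \otimes (-)$. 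The existence of this coequalizer is exactly the colimit construction just established. The simplicial mapping object $\mathsf{Alg}_T(X,Y)$ is then obtained as the equalizer (a limit, hence available) of the pair
\[ \sset\text{-object of maps } X \to Y \ \rightrightarrows\ \sset\text{-object of maps } TX \to Y, \]
i.e. the underlying-symmetric-sequence simplicial hom $\Hom_\Sigma$ of Proposition \ref{prop:right-mods}(1) cut down to those maps respecting the algebra structure; this recovers the $\Hom^{\mathsf{left}}_P$, $\Hom^{\mathsf{bi}}_P$ and operad-morphism simplicial sets already written down in \S\ref{sec:ext} and the appendix. Checking the tensor--hom adjunction is a formal diagram chase once $K \otimes X$ is constructed via the coequalizer above. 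The main obstacle, and the only step requiring genuine care rather than formal nonsense, is the colimit/reflexive-coequalizer construction and verifying that the resulting symmetric sequence really is the colimit \emph{in the category of algebras}; everything else (limits, the mapping objects, the adjunction isomorphism) is routine enriched category theory given that step. I would therefore organize the proof as: (i) limits created objectwise; (ii) coproducts and then general colimits via reflexive coequalizers, invoking the preservation lemma; (iii) tensoring by $\sset$ as a reflexive coequalizer of free algebras; (iv) the simplicial hom as an equalizer, with the adjunction checked formally — citing \cite[VII]{elmendorf/kriz/mandell/may:1997} throughout for the template.
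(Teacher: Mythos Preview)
Your proposal is correct and follows essentially the same approach as the paper: both invoke the EKMM machinery \cite[II.7.4, VII.2.10]{elmendorf/kriz/mandell/may:1997} for algebras over a simplicial monad preserving reflexive coequalizers, using the preceding lemmas as input. The paper's proof is simply a one-line citation of those results, whereas you have spelled out the content of that citation in detail.
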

\begin{proof}
Using the preceding lemmas, this is given by \cite[II.7.4 and VII.2.10]{elmendorf/kriz/mandell/may:1997}.
\end{proof}

The tensors in these categories now allow us to define geometric realization for simplicial objects in the usual way (see \cite[18.6]{hirschhorn:2003}).

\begin{definition}
Let $\cat{C}$ be any category enriched and tensored over $\sset$, and with all colimits, and let $X_{\bullet}$ be a simplicial object in $\cat{C}$. The \emph{geometric realization} of $X_{\bullet}$, denoted $|X_{\bullet}|_{\cat{C}}$, is the object of $\cat{C}$ given by the coend (see \cite[IX.6]{maclane:1998})
\[ |X_{\bullet}|_{\cat{C}} := \Delta[n]_+ \otimes_{\mathsf{\Delta}} X_n \]
where $\otimes$ denotes the tensoring of $\cat{C}$ over $\sset$.
\end{definition}

\begin{proposition} \label{prop:left-mods-realization}
Let $\cat{C}$ be one of the categories $\mathsf{Op}(\spectra)$, $\mathsf{Mod}_{\mathsf{left}}(P)$ or $\mathsf{Mod}_{\mathsf{bi}}(P)$ and let $X_{\bullet}$ be a simplicial object in $\cat{C}$. Then we have a natural isomorphism of symmetric sequences
\[ \tag{*} |X_{\bullet}|_{\cat{C}} \isom |X_{\bullet}|_{\spectra^{\mathsf{\Sigma}}}. \]
\end{proposition}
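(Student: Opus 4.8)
The plan is to show that the forgetful functor from $\cat{C}$ (one of $\mathsf{Op}(\spectra)$, $\mathsf{Mod}_{\mathsf{left}}(P)$, $\mathsf{Mod}_{\mathsf{bi}}(P)$) down to $\spectra^{\mathsf{\Sigma}}$ commutes with geometric realization of simplicial objects. The key observation is that each of these categories is the category of algebras over a monad ($F$, $L$, or $M$) on $\spectra^{\mathsf{\Sigma}}$ (or on $\spectra^{\mathsf{\Sigma}}_{\mathsf{red}}$), that the relevant monad is simplicial, and — crucially — that it preserves reflexive coequalizers, all of which were established in the lemmas immediately preceding this proposition. Given these facts, the isomorphism $(*)$ follows by a standard argument of EKMM.

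The steps, in order, are as follows. First I would recall that geometric realization of a simplicial object $X_{\bullet}$ can be computed as a coequalizer
\[ |X_{\bullet}| \isom \operatorname{coeq}\left( \coprod_{[m] \to [n]} \Delta[m]_+ \otimes X_n \rightrightarrows \coprod_{[n]} \Delta[n]_+ \otimes X_n \right), \]
and that this coequalizer is in fact \emph{reflexive} (the common section is induced by the degeneracies $[n+1] \to [n]$, equivalently by inclusion of the identity maps). Second, I would note that in each of our categories $\cat{C}$, the tensor $K \otimes Y$ with a pointed simplicial set $K$ is computed by a reflexive coequalizer involving the free-algebra functor applied to the levelwise smash product $K \smsh (-)$: this is exactly the content of the construction of tensors via \cite[VII.2.10]{elmendorf/kriz/mandell/may:1997}. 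So both $\otimes$ and geometric realization in $\cat{C}$ are built out of colimits in the underlying category $\spectra^{\mathsf{\Sigma}}$ together with the monad, with all the colimits in sight being of reflexive-coequalizer or coproduct type (and the nested coends unravel into iterated reflexive coequalizers). Third, I would invoke the lemma that the monads $F$, $L$, $M$ preserve reflexive coequalizers, together with the fact that the left adjoint (free algebra) functor preserves all colimits, to conclude that the underlying symmetric sequence of $|X_{\bullet}|_{\cat{C}}$ is computed by the same diagram of colimits in $\spectra^{\mathsf{\Sigma}}$ that computes $|X_{\bullet}|_{\spectra^{\mathsf{\Sigma}}}$; this uses Proposition~\ref{prop:right-mods}(3),(5) to identify colimits and realization in $\spectra^{\mathsf{\Sigma}}$ as objectwise, hence in particular as the expected thing. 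Finally I would check naturality, which is automatic from the universal properties of the coends involved. As a whole this is precisely the generalization of \cite[VII.3.3]{elmendorf/kriz/mandell/may:1997}, whose proof transfers verbatim once the reflexive-coequalizer-preservation and simplicial-enrichment lemmas are in hand.

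\textbf{Main obstacle.} The one genuine point requiring care — rather than a citation — is the bookkeeping that expresses the coend defining $|X_{\bullet}|_{\cat{C}}$ as an iterated \emph{reflexive} coequalizer (and likewise for the coends defining the tensors $K \otimes Y$), so that the preservation-of-reflexive-coequalizers lemma actually applies at each stage. The coproduct pieces are harmless since left adjoints preserve coproducts; the subtlety is only ever with the coequalizer steps, and verifying that each such step in the unraveling is reflexive is the technical heart of the argument. Once that is granted, everything else is a formal consequence of the monadic picture and the objectwise description of colimits in $\spectra^{\mathsf{\Sigma}}$, exactly as in EKMM, so I would state the reflexivity bookkeeping explicitly and then defer to their proof for the rest.
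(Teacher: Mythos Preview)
Your proposal and the paper's proof share the same starting point --- both invoke \cite[VII.3.3]{elmendorf/kriz/mandell/may:1997} --- but the actual arguments differ, and your route as written has a gap.

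You propose to express the realization coend $|X_{\bullet}|_{\cat{C}}$ as a reflexive coequalizer of coproducts of tensors, and then push everything down to $\spectra^{\mathsf{\Sigma}}$ using the fact that the monad preserves reflexive coequalizers. The problem is the sentence ``The coproduct pieces are harmless since left adjoints preserve coproducts.'' This is aimed at the wrong functor: what you need is that the \emph{forgetful} functor $U:\cat{C}\to\spectra^{\mathsf{\Sigma}}$ preserves the coproducts $\coprod_{[n]}\Delta[n]_+\otimes X_n$ appearing in the coend, and it does not (the coproduct of two operads, for example, is not the termwise wedge). So the direct unraveling you outline does not obviously reduce to a diagram of symmetric sequences computing $|X_{\bullet}|_{\spectra^{\mathsf{\Sigma}}}$. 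One can repair this by further resolving each $X_n$ via its monadic bar construction to get everything free, but that is substantially more bookkeeping than you suggest, and it is not what EKMM actually do.

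The paper (following EKMM's proof of VII.3.3) takes a cleaner route that avoids coproducts in $\cat{C}$ entirely. First, it uses \cite[X.1.4]{elmendorf/kriz/mandell/may:1997} (realization commutes with smash products and wedges) to get a natural isomorphism $F|A_{\bullet}|_{\spectra^{\mathsf{\Sigma}}}\isom |F(A_{\bullet})|_{\spectra^{\mathsf{\Sigma}}}$; this endows $|X_{\bullet}|_{\spectra^{\mathsf{\Sigma}}}$ with an $F$-algebra structure whenever $X_{\bullet}$ is a simplicial $F$-algebra. Then it argues by adjunction: both $|{-}|_{\cat{C}}$ and $|{-}|_{\spectra^{\mathsf{\Sigma}}}$ (with the induced algebra structure) are left adjoint to the same functor $Y\mapsto\Map(\Delta[-]_+,Y)$ from $\cat{C}$ to $s\cat{C}$, the latter because cotensors in $\cat{C}$ are computed termwise. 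The check that a map $|X_{\bullet}|_{\spectra^{\mathsf{\Sigma}}}\to Y$ is an $F$-algebra map iff its adjoint is a map of simplicial $F$-algebras is direct. This is where the reflexive-coequalizer preservation is hiding --- it guarantees that $\cat{C}$ has the tensors and cotensors in the first place --- but it is not invoked again in the comparison.

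Your instinct that reflexive coequalizers are the engine is correct; the missing ingredient is the adjunction trick, which replaces the problematic coproduct-in-$\cat{C}$ analysis with a universal-property comparison.
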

\begin{proof}
The corresponding result for $S$-algebras is proved in \cite[VII.3.3]{elmendorf/kriz/mandell/may:1997} and we follow that argument. We do the operad case, with the left module and bimodule cases similar.

Firstly, \cite[X.1.4]{elmendorf/kriz/mandell/may:1997} implies that, for a simplicial symmetric sequence $A_{\bullet}$:
\[ F|A_{\bullet}|_{\spectra^{\mathsf{\Sigma}}} \isom |F(A_{\bullet})|_{\spectra^{\mathsf{\Sigma}}}. \]
This is because $F(A)$ is built from coproducts and smash products each of which is preserved by geometric realization. This implies that if $X_{\bullet}$ is a simplicial object in $\mathsf{Op}(\spectra)$, then $|X_{\bullet}|_{\spectra^{\mathsf{\Sigma}}}$ inherits the structure of an $F$-algebra (i.e. an operad).

Now the geometric realization $|-|_{\cat{C}}$ is left adjoint to the functor from $\cat{C}$ to $s\cat{C}$ (i.e. the category of simplicial objects in $\cat{C}$ given by
\[ Y \mapsto \Map(\Delta[-]_+,Y). \]
Here $\Map(-,-)$ denotes the cotensoring of $\cat{C}$ over $\sset$. Note that since cotensors in $\cat{C}$ are calculated termwise, this is the same as the cotensoring of the category of symmetric sequences over $\sset$. We now prove the Proposition by showing that $|-|_{\spectra^{\mathsf{\Sigma}}}$, with the $F$-algebra structure of the previous paragraph, is also left adjoint to this same functor.

Suppose given a map
\[ f: |X_{\bullet}|_{\spectra^{\mathsf{\Sigma}}} \to Y \]
of symmetric sequences. This is adjoint to a map
\[ \bar{f}: X_{\bullet} \to \Map(\Delta[-]_+,Y) \]
of simplicial symmetric sequences. It is now sufficient to show that $f$ is a map in $\cat{C}$ if and only if $\bar{f}$ is a map in $s\cat{C}$.

Suppose first that $\bar{f}$ is a map in $s\cat{C}$. Then we have the following commutative diagram
\[ \tag{**} \begin{diagram}
  \node{FX_{\bullet}} \arrow{s} \arrow{e,t}{F\bar{f}} \node{F\Map(\Delta[-]_+,Y)} \arrow{e} \node{\Map(\Delta[-]_+,FY)} \arrow{s} \\
  \node{X_{\bullet}} \arrow[2]{e,t}{\bar{f}} \node[2]{\Map(\Delta[-]_+,Y)}
\end{diagram} \]
Taking the geometric realization and using the counit map $|\Map(\Delta[-]_+,Y)| \to Y$, and the isomorphism $F|X_{\bullet}| \isom |FX_{\bullet}|$ we get a commutative diagram
\[ \tag{***} \begin{diagram}
  \node{F|X_{\bullet}|} \arrow{e,t}{Ff} \arrow{s} \node{FY} \arrow{s} \\
  \node{|X_{\bullet}|} \arrow{e,t}{f} \node{Y}
\end{diagram} \]
which tells us that $f$ is a map in $\cat{C}$.

Conversely, if $f$ is a map in $\cat{C}$, we have a commutative diagram (***). Applying $\Map(\Delta[-]_+,-)$ to this, and using the unit map $X_{\bullet} \to \Map(\Delta[-]_+,|X_{\bullet}|)$, we get the previous diagram (**). Therefore $\bar{f}$ is a map in $s\cat{C}$.
\end{proof}

We now turn to the existence of simplicial model structures on the categories $\mathsf{Op}(\spectra)$, $\mathsf{Mod}_{\mathsf{left}}(P)$ or $\mathsf{Mod}_{\mathsf{bi}}(P)$. These follow essentially by Theorem VII.4.7 of \cite{elmendorf/kriz/mandell/may:1997}. To apply this we have to check that the free object monads for these categories satisfy the `Cofibration Hypothesis' (see \cite[VII.4.12]{elmendorf/kriz/mandell/may:1997}). This involves an analysis of certain colimits.

\begin{lemma} \label{lem:pushouts}
Let $\cat{C}$ be one of the categories $\mathsf{Op}(\spectra)$, $\mathsf{Mod}_{\mathsf{left}}(P)$ or $\mathsf{Mod}_{\mathsf{bi}}(P)$. Consider a pushout diagram in $\cat{C}$ of the form
\[ \begin{diagram}
  \node{F(A)} \arrow{e} \arrow{s} \node{X'} \arrow{s} \\
  \node{F(B)} \arrow{e} \node{X}
\end{diagram} \]
where $F$ denotes the free object functor in $\cat{C}$ and the left-hand vertical map is induced by a morphism $A \to B$ of symmetric sequences. Suppose that $A \to B$ is a coproduct of a set of generating cofibrations in $\spectra^{\mathsf{\Sigma}}$. Then each of the maps $X'(n) \to X(n)$ is a monomorphism in $\spectra$. Moreover, if $X'$ is $\Sigma$-cofibrant (and $P$ is $\Sigma$-cofibrant when $\cat{C}$ is one of the module categories) then the map $X' \to X$ is a $\Sigma$-cofibration.
\end{lemma}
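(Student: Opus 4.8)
The plan is to follow the strategy of EKMM \cite[VII.6.2, VII.6.5]{elmendorf/kriz/mandell/may:1997} and analyze the pushout by filtering it according to the structure of the free operad (or free module) functor, reducing everything to explicit colimits of smash products of terms in the symmetric sequences $A$, $B$ and $P$. The starting point is the observation that since $F$ is a left adjoint and colimits in $\cat{C}$ are computed on underlying symmetric sequences for right modules (and via the monad $F$, $L$ or $M$ in the other cases), the pushout $X$ can be built by attaching cells. Concretely, for the operad case, $F(B) = F(A \union_{\text{via }A\to B} B)$ can be expanded in terms of trees: a tree in $\mathsf{T}_n$ whose vertices are decorated either by $A$-terms, $B$-terms, or (on the attaching side) by $X'$-terms. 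This produces a filtration $X'_0 = X' \to X'_1 \to \dots$ of $X$ where $X'_k$ is obtained from $X'_{k-1}$ by attaching the part of $F(B)$ using exactly $k$ decorations from $B$ (more precisely, from the map $A \to B$). The first step is to write down this filtration carefully and identify each $X'_{k-1} \to X'_k$ as a pushout of symmetric sequences along a specific map.

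The key computation is then to show that the attaching maps in this filtration are built, termwise, from the pushout-product of the generating cofibrations $A(m) \to B(m)$ (which, being a coproduct of generating cofibrations in $\spectra^{\mathsf{\Sigma}}$, are in particular cofibrations, hence monomorphisms, in each term) with various smash products of $X'$-terms and $P$-terms. Since the generating cofibrations in $\spectra$ are monomorphisms (Proposition \ref{prop:cell-complex}(1)), and smash products and pushout-products of monomorphisms of spectra are monomorphisms, each map $X'_{k-1}(n) \to X'_k(n)$ is a monomorphism; passing to the colimit over $k$ gives that $X'(n) \to X(n)$ is a monomorphism. This part is essentially bookkeeping with the combinatorics of trees and partitions; I would keep it at the level of "the same argument as \cite[VII.6]{elmendorf/kriz/mandell/may:1997}, using that the category $\spectra^{\mathsf{\Sigma}}$ is enriched over itself and that smash products in $\spectra$ preserve monomorphisms." For the module cases the trees are replaced by the simpler bookkeeping of the composition product $P \circ A$ or $P \circ A \circ P$, which makes the filtration even more transparent.

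For the second, stronger assertion — that $X' \to X$ is a $\Sigma$-cofibration when $X'$ (and $P$, in the module cases) is $\Sigma$-cofibrant — the point is to upgrade "monomorphism" to "$\Sigma$-cofibration" at each stage of the filtration. The attaching map for $X'_{k-1} \to X'_k$ is, termwise and equivariantly, of the form $Q \smsh (\text{pushout-product of the }A(m)\to B(m))$ smashed with further $\Sigma$-cofibrant factors (terms of $X'$ and of $P$), all with an appropriate induced action of $\Sigma_n$ that is free on the relevant summand (because distinct leaves of a tree, or distinct blocks of a partition, are permuted freely). Using Lemma \ref{lem:sigma-cofibrant-comprod}(1) — that the composition product of $\Sigma$-cofibrant symmetric sequences is $\Sigma$-cofibrant — together with the fact that the generating cofibrations of $\spectra^{\mathsf{\Sigma}}$ are precisely free $\Sigma_n$-spectra on generating cofibrations of $\spectra$, each such attaching map is a $\Sigma$-cofibration, so each $X'_{k-1} \to X'_k$ is one, and hence so is the transfinite composite $X' \to X$.

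The main obstacle is the first step: correctly setting up the filtration of the pushout $X$ and identifying the attaching maps, because the combinatorics of the free operad (grafting of trees) is genuinely more involved than the associative-algebra case of \cite{elmendorf/kriz/mandell/may:1997}, and one must track the $\Sigma_n$-actions through all the identifications to be sure the relevant summands carry free actions. Once that is in place, the monomorphism claim is automatic and the $\Sigma$-cofibration claim follows from Lemma \ref{lem:sigma-cofibrant-comprod}(1) exactly as in the analysis of the composition product there. I would present the operad case in full and remark that the module cases are strictly easier, since $L(A) = P \circ A$ and $M(A) = P \circ A \circ P$ are built directly from the composition product with no tree combinatorics, so the filtration is the evident one coming from the number of $B$-factors.
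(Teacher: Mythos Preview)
Your approach is correct but differs from the paper's. You filter the pushout $X$ by the number of $B$-factors (the cellular/pushout-product filtration of \cite[VII.6]{elmendorf/kriz/mandell/may:1997}, also used by Harper and Spitzweck), and then analyze the attaching maps directly as iterated pushout-products of the generating cofibrations smashed with $\Sigma$-cofibrant pieces. The paper instead follows \cite[VII.3.5--3.9]{elmendorf/kriz/mandell/may:1997} and builds a \emph{simplicial} model for the pushout, with $k$-simplices $F(B) \amalg F(A)^{\amalg k} \amalg X'$ (coproducts in $\cat{C}$); the crux there is the purely algebraic fact that $Y \to F(M) \amalg Y$ is an inclusion of a wedge summand on underlying symmetric sequences, which makes all degeneracies wedge-summand inclusions and hence gives the monomorphism claim immediately via \cite[X.2.7]{elmendorf/kriz/mandell/may:1997}. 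For the $\Sigma$-cofibration claim the paper again appeals to the simplicial model together with the statement that $F(M) \amalg Y$ is $\Sigma$-cofibrant when $Y$ and $M$ are (citing \cite[VII.6.1]{elmendorf/kriz/mandell/may:1997}, Harper, and Spitzweck).

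Your route has the advantage that the $\Sigma$-cofibration claim falls out of the same filtration that gives the monomorphism claim, with no separate argument needed; the paper's route keeps the monomorphism part very clean (no tree combinatorics at all, just the wedge-summand observation) at the cost of invoking an external result for the cofibrancy of $F(M) \amalg Y$. One small point: your justification ``smash products and pushout-products of monomorphisms of spectra are monomorphisms'' is not literally what you use---you are really using that pushout-products of \emph{cofibrations} are cofibrations (the pushout-product axiom) and that cofibrations in $\spectra$ are monomorphisms; it would be worth saying so explicitly.
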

\begin{proof}
We follow the approach of \cite[VII.3.5-3.9]{elmendorf/kriz/mandell/may:1997}. This involves constructing a simplicial model for the pushout $X$. This model has $k$-simplices
\[ F(B) \amalg \underbrace{F(A) \amalg \dots \amalg F(A)}_k \amalg X' \]
with face maps given by the maps $F(A) \to X'$ and $F(A) \to F(B)$, and the appropriate codiagonals, and degeneracies given by the inclusions. Note that here $\amalg$ means the coproduct in $\cat{C}$. The map $X' \to X$ then factors as
\[ \tag{*} X' \to F(B) \amalg X' \to |F(B) \amalg F(A)^{\bullet} \amalg X'| \isom X. \]
The key step is now to show that for any $Y \in \cat{C}$ and $M \in \spectra^{\mathsf{\Sigma}}$, the map
\[ Y \to F(M) \amalg Y \]
is the inclusion of a wedge summand in the category of symmetric sequences. The description of colimits in $\cat{C}$ given in \cite[II.7.4]{elmendorf/kriz/mandell/may:1997} tells us that, as a symmetric sequence, $F(M) \amalg Y$ is given by the coequalizer of a diagram
\[ \tag{**} F(FFM \wdge FY) \rightrightarrows F(FM \wdge Y) \]
For each of our categories $\cat{C}$, we can write $FY = \bar{F}Y \wdge Y$ (as symmetric sequences). Then (**) becomes
\[ Y \wdge \bar{F}Y \wdge FFM \wdge \bar{F}(FFM \wdge FY) \rightrightarrows Y \wdge FM \wdge \bar{F}(FM \wdge Y) \]
and it can be checked that this diagram has the trivial diagram $Y \rightrightarrows Y$ (with both maps the identity) as a wedge summand. Passing to coequalizers, we see that $Y$ is a wedge summand of $F(M) \amalg Y$.

The degeneracy maps in the simplicial object on the right-hand side of (*) are now all inclusions of wedge summands on the level of symmetric sequences. It follows from this that the inclusion of the zero simplices (i.e. the second map in (*)) is a spacewise inclusion of spectra, hence a monomorphism. The first map in (*) is itself an inclusion of a wedge summand, hence a monomorphism, so we deduce the first part of the lemma.

For the second part, we first show that $F(M) \amalg Y$ is $\Sigma$-cofibrant when $Y$ is a $\Sigma$-cofibrant object of $\cat{C}$ and $M$ is a $\Sigma$-cofibrant symmetric sequence. This follows by an argument analogous to that of \cite[VII.6.1]{elmendorf/kriz/mandell/may:1997}. (See also \cite[4.6]{harper:2008} and \cite[Lemma 3]{spitzweck:2001}.) The map $X' \to F(B) \amalg X'$ in (*) is a retract of a $\Sigma$-cofibrant object, so is a $\Sigma$-cofibration. Since $X'$ is $\Sigma$-cofibrant, this map is isomorphic to the inclusion of a subcomplex (in the category of symmetric sequences). Similarly, all the degeneracy maps in the simplicial model for the pushout $X$ are inclusions of subcomplexes of symmetric sequences. It follows, by \cite[X.2.7]{elmendorf/kriz/mandell/may:1997} that the second map in (*) is also the inclusion of a subcomplex, and hence the composite $X' \to X$ is a $\Sigma$-cofibration.
\end{proof}

\begin{lemma} \label{lem:free-filtered}
The monads $F,L,M$ of Definition \ref{def:free-functors} (considered as functors from symmetric sequences to symmetric sequences) preserve filtered colimits.
\end{lemma}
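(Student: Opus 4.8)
The plan is to prove the three statements simultaneously, as they share the same structure. Each of $F$, $L$, $M$ is built out of the following three basic operations: coproducts (indexed over trees $\mathsf{T}_n$ in the case of $F$, over partitions of $\{1,\dots,n\}$ in the cases of $L$ and $M$); smash products $\smsh$ of finitely many terms of the input symmetric sequence (possibly together with fixed terms of the operad $P$); and the passage to a new symmetric sequence via the identification of colimits with termwise colimits in $\spectra^{\mathsf{\Sigma}}$. The argument is that each of these three operations commutes with filtered colimits, and hence so does any composite of them.

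First I would recall that colimits in the category of symmetric sequences are computed termwise (Proposition \ref{prop:right-mods}(3) for the functor-category description, or directly from the fact that $\mathsf{\Sigma}$ is discrete on morphisms between distinct objects). Thus it suffices to check, for each fixed $n$, that $F(-)(n)$, $L(-)(n)$ and $M(-)(n)$ preserve filtered colimits of symmetric sequences. Next I would use that filtered colimits commute with finite smash products in $\spectra$: this is the standard fact that filtered colimits commute with finite limits and (more to the point here) that $\smsh$, being built from $\smsh_{\cat{L}}$ and a coequalizer/coproduct construction in the category of $S$-modules, commutes with filtered colimits in each variable --- a filtered colimit of spectra is computed as a certain colimit of the underlying spacewise-defined objects, and $\smsh$ distributes over such colimits. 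Since each term $A(T)$ (resp. $P(n_1)\smsh\dots\smsh P(n_k)\smsh A(m_1)\smsh\dots$) is a finite smash product involving only finitely many terms of $A$, the functor $A \mapsto A(T)$ commutes with filtered colimits. Finally, coproducts (even infinite ones) always commute with all colimits, in particular with filtered ones, so the wedge $\Wdge_{T \in \mathsf{T}_n} (-)(T)$, and similarly the wedge over partitions, preserves filtered colimits. Assembling these observations gives that $F(A)(n) = \Wdge_{T} A(T)$, $L(A)(n) = (P \circ A)(n)$, and $M(A)(n) = (P \circ A \circ P)(n)$ each send a filtered colimit $\colim_i A_i$ to $\colim_i$ of the corresponding expressions, which is the claim.

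I do not expect a serious obstacle here; the only point requiring mild care is the assertion that finite smash products in $\spectra = \cat{M}_S$ commute with filtered colimits. This can be cited from \cite[X.1]{elmendorf/kriz/mandell/may:1997} (the analogous statement for geometric realization, a more delicate colimit, is already used in this excerpt, e.g. in Proposition \ref{prop:left-mods-realization}), or deduced from the fact that $- \smsh_S Y$ is a left adjoint (as $\spectra$ is closed symmetric monoidal) and hence preserves all colimits, so that $X \smsh_S Y = \colim_i (X_i \smsh_S Y)$ for $X = \colim_i X_i$ and the further colimit in $Y$ is handled symmetrically. One should also note that the composition product $\circ$, which appears in the definitions of $L$ and $M$, is built precisely from coproducts over partitions and finite smash products, so the same reasoning covers it directly; this is consistent with Lemma \ref{lem:comprod-hocolim}, which records the homotopical version of the same phenomenon.
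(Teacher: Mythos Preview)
Your proposal is correct and follows essentially the same approach as the paper: both argue that $F$, $L$, $M$ are built from coproducts and finite smash products, each of which preserves filtered colimits (the paper phrases the smash-product step via the diagonal identity $\colim_j A_j \smsh B_j \isom (\colim_j A_j)\smsh(\colim_{j'} B_{j'})$, while you invoke the closed monoidal structure, but these amount to the same thing). The paper also explicitly points to Lemma~\ref{lem:comprod-hocolim} as the homotopical analogue, just as you do.
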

\begin{proof}
This is related to Lemma \ref{lem:comprod-hocolim}. The main point is that if $A,B: \cat{J} \to \spectra$ are filtered diagrams of spectra, then
\[ \colim_{j \in \cat{J}} A_j \smsh B_j \isom [\colim_{j \in \cat{J}} A_j] \smsh [\colim_{j' \in \cat{J}} B_{j'}]. \]
Since the functors $F$, $L$ and $M$ are all built from smash products (and coproducts), they also preserve filtered colimits.
\end{proof}

\begin{lemma} \label{lem:colimits}
Let $\cat{C}$ be one of the categories $\mathsf{Op}(\spectra)$, $\mathsf{Mod}_{\mathsf{left}}(P)$ or $\mathsf{Mod}_{\mathsf{bi}}(P)$. Let $\mathcal{X}: \cat{J} \to \cat{C}$ be a filtered diagram in $\cat{C}$. Then the colimit of this sequence calculated in $\cat{C}$ is naturally isomorphic to the colimit calculated in the underlying category of symmetric sequences.
\end{lemma}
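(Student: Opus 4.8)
The plan is to reduce the statement to a property of the free-object monads $F$, $L$, $M$ and then invoke the standard fact that, for a monad preserving filtered colimits, filtered colimits of algebras are computed in the underlying category. First I would recall (from \cite[II.7.4]{elmendorf/kriz/mandell/may:1997}, whose proof we have been following throughout this appendix) that if $T$ is a monad on a cocomplete category $\cat{A}$ and $T$ preserves reflexive coequalizers, then the category $\cat{A}^T$ of $T$-algebras has all colimits, and a colimit of a diagram $\mathcal{X}: \cat{J} \to \cat{A}^T$ is built as a reflexive coequalizer of a diagram whose two objects are free algebras on the $\cat{A}$-colimits $\colim_{j} UT\mathcal{X}_j$ and $\colim_j U\mathcal{X}_j$ (where $U$ is the forgetful functor). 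Explicitly, $U(\colim^{\cat{C}}_{j} \mathcal{X}_j)$ is the coequalizer in $\spectra^{\mathsf{\Sigma}}$ of
\[ T\left(\colim_{j \in \cat{J}} T U \mathcal{X}_j\right) \rightrightarrows T\left(\colim_{j \in \cat{J}} U \mathcal{X}_j\right). \]

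The key step is then to observe that when $\cat{J}$ is \emph{filtered}, this coequalizer collapses. By Lemma \ref{lem:free-filtered}, the monads $F$, $L$, $M$ all preserve filtered colimits, so $T(\colim_j TU\mathcal{X}_j) \isom \colim_j TTU\mathcal{X}_j$ and $T(\colim_j U\mathcal{X}_j) \isom \colim_j TU\mathcal{X}_j$. The two maps in the coequalizer are then the filtered colimit of the two maps $TTU\mathcal{X}_j \rightrightarrows TU\mathcal{X}_j$ coming from the monad multiplication and from $T$ applied to the structure map of $\mathcal{X}_j$; the coequalizer of each of these individual pairs is $U\mathcal{X}_j$ (since $\mathcal{X}_j$ is a $T$-algebra, its structure map exhibits $U\mathcal{X}_j$ as such a coequalizer in $\spectra^{\mathsf{\Sigma}}$), and filtered colimits commute with coequalizers (both being colimits). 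Hence the coequalizer above is $\colim_{j \in \cat{J}} U\mathcal{X}_j$, computed in $\spectra^{\mathsf{\Sigma}}$. This shows $U(\colim^{\cat{C}}_j \mathcal{X}_j) \isom \colim_j U\mathcal{X}_j$, which is exactly the claim. I would also note that colimits of symmetric sequences are themselves computed termwise (Proposition \ref{prop:right-mods}(3)), so in practice this colimit is the termwise colimit in $\spectra$ with the induced $\Sigma_n$-actions and the induced operad/module structure.

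I expect the main obstacle to be bookkeeping rather than conceptual: one has to be careful that the presentation of colimits in $\cat{C}$ as reflexive coequalizers of free algebras is exactly the one we need, and that the identification "coequalizer of the canonical pair $TTU\mathcal{X}_j \rightrightarrows TU\mathcal{X}_j$ is $U\mathcal{X}_j$" is the usual algebra-presentation coequalizer (which requires the common section given by $T$ applied to the unit, so that it is genuinely \emph{reflexive} and the interchange of colimits is legitimate). The preservation of reflexive coequalizers by $F$, $L$, $M$ — already established above — is what makes the colimit description valid in the first place, and the preservation of filtered colimits (Lemma \ref{lem:free-filtered}) is what makes the filtered case degenerate. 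Once these two inputs are in hand the argument is a formal diagram chase, so I would present it in two or three sentences rather than spelling out every coend.

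\begin{proof}
By \cite[II.7.4]{elmendorf/kriz/mandell/may:1997}, applied as in the proof of Corollary \ref{cor:left-mods-model}, the category $\cat{C}$ of algebras over the free-object monad $T$ (one of $F$, $L$, $M$) is cocomplete, and for a diagram $\mathcal{X}: \cat{J} \to \cat{C}$ the underlying symmetric sequence of $\colim^{\cat{C}}_{j} \mathcal{X}_j$ is the reflexive coequalizer in $\spectra^{\mathsf{\Sigma}}$ of
\[ T\left(\colim_{j \in \cat{J}} T \mathcal{X}_j\right) \rightrightarrows T\left(\colim_{j \in \cat{J}} \mathcal{X}_j\right), \]
where here we write $\mathcal{X}_j$ also for its underlying symmetric sequence, the top map is induced by the structure maps $T\mathcal{X}_j \to \mathcal{X}_j$ and the bottom map by the monad multiplication $TT \to T$, with common section induced by the unit $\mathsf{1} \to T$. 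Now suppose $\cat{J}$ is filtered. By Lemma \ref{lem:free-filtered}, $T$ preserves filtered colimits, so this coequalizer is isomorphic to the coequalizer of
\[ \colim_{j \in \cat{J}} \left( T T \mathcal{X}_j \rightrightarrows T \mathcal{X}_j \right). \]
Filtered colimits commute with coequalizers, so this is $\colim_{j \in \cat{J}}$ of the coequalizer of $TT\mathcal{X}_j \rightrightarrows T\mathcal{X}_j$, which, since $\mathcal{X}_j$ is a $T$-algebra, is $\mathcal{X}_j$ (the standard presentation of an algebra as a reflexive coequalizer of free algebras). Hence the underlying symmetric sequence of $\colim^{\cat{C}}_j \mathcal{X}_j$ is $\colim_{j \in \cat{J}} \mathcal{X}_j$, computed in $\spectra^{\mathsf{\Sigma}}$, as claimed. (By Proposition \ref{prop:right-mods}(3) this colimit of symmetric sequences is in turn computed termwise in $\spectra$.)
\end{proof}
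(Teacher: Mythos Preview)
Your proof is correct and follows essentially the same route as the paper: invoke the EKMM coequalizer description \cite[II.7.4]{elmendorf/kriz/mandell/may:1997} of colimits of $T$-algebras, use Lemma~\ref{lem:free-filtered} to pull the monad through the filtered colimit, and then identify the resulting coequalizer with $\colim_j \mathcal{X}_j$. The paper's version is slightly terser (it does not spell out that the coequalizer of $TT\mathcal{X}_j \rightrightarrows T\mathcal{X}_j$ is $\mathcal{X}_j$), but the argument is the same.
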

\begin{proof}
According to \cite[II.7.4]{elmendorf/kriz/mandell/may:1997}, the colimit of $\mathcal{X}$ in $\cat{C}$ is given by the coequalizer (in $\spectra^{\mathsf{\Sigma}}$) of
\[ F(\colim F(X_j)) \rightrightarrows F(\colim X_j) \]
where here $\colim$ denotes the colimit as symmetric sequences, and $F$ denotes the free object functor for $\cat{C}$. By Lemma \ref{lem:free-filtered} this is isomorphic to the coequalizer of
\[ \colim FF(X_i) \rightrightarrows \colim F(X_i) \]
which, since colimits commute, is just
\[ \colim X_i. \]
Thus the colimit in $\cat{C}$ is isomorphic to that in the underlying category of symmetric sequences.
\end{proof}

\begin{corollary}
The categories $\mathsf{Op}(\spectra)$, $\mathsf{Mod}_{\mathsf{left}}(P)$ and $\mathsf{Mod}_{\mathsf{bi}}(P)$ have simplicial cofibrantly-generated model structures with generating cofibrations given by applying the appropriate free object functor to the generating cofibrations in either $\spectra^{\mathsf{\Sigma}}_{\mathsf{red}}$ or $\spectra^{\mathsf{\Sigma}}$.
\end{corollary}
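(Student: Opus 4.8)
The plan is to obtain each of these model structures by the standard monadic lifting (or ``transport of structure'') theorem, in the form of \cite[VII.4.7]{elmendorf/kriz/mandell/may:1997}, applied to the free-object monads $F$, $L$, $M$ of Definition \ref{def:free-functors} acting on the underlying model category of reduced symmetric sequences ($\spectra^{\mathsf{\Sigma}}_{\mathsf{red}}$ in the operad case) or symmetric sequences ($\spectra^{\mathsf{\Sigma}}$ in the module cases) supplied by Proposition \ref{prop:right-mods}(4). Concretely, I would declare a morphism of operads (resp.\ left modules, bimodules) to be a weak equivalence or fibration precisely when the underlying morphism of symmetric sequences is one, take cofibrations to be the maps with the left lifting property against trivial fibrations, and take the generating (trivial) cofibrations to be the images under the relevant free functor of the sets $\mathbb{I}_{\spectra^{\mathsf{\Sigma}}_{\mathsf{red}}}$, $\mathbb{I}_{\spectra^{\mathsf{\Sigma}}}$ (resp.\ $\mathbb{J}_{\cdots}$) of Definition \ref{def:gen-cofs}. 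The task is then to check the hypotheses that make the lifting theorem, and in particular the small object argument, go through.

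The hypotheses I would verify, and the places they are established, are: (i) the algebra categories are complete and cocomplete and are tensored and cotensored over $\sset$, which is Corollary \ref{cor:left-mods-model}; (ii) the domains of the generating cofibrations are small relative to the free cell complexes -- this reduces, via Lemma \ref{lem:colimits} (which identifies filtered colimits in the algebra category with those of underlying symmetric sequences) and Lemma \ref{lem:free-filtered}, to the corresponding $\omega$-smallness in $\spectra^{\mathsf{\Sigma}}$, so these categories are again compactly generated in the sense of \cite[4.5]{may/sigurdsson:2006} and only countable cell complexes are needed; (iii) the ``Cofibration Hypothesis'' of \cite[VII.4.12]{elmendorf/kriz/mandell/may:1997}, i.e.\ that attaching a free cell along a coproduct of generating cofibrations is, on underlying symmetric sequences, a (termwise) monomorphism and indeed a $\Sigma$-cofibration when the domain is $\Sigma$-cofibrant -- this is exactly Lemma \ref{lem:pushouts}; and (iv) the acyclicity condition, for which one needs a functorial fibrant replacement and path objects. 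Here every object is already fibrant (fibrations are detected termwise and every spectrum is fibrant), and a path object can be taken to be the cotensor $X^{\Delta[1]_+}$, whose relevant properties again reduce to the termwise situation in $\spectra$; thus relative $F(\mathbb{J})$-cell complexes are weak equivalences and the small object argument produces the two functorial factorizations. For the simplicial enrichment, Corollary \ref{cor:left-mods-model} gives the tensoring/cotensoring over $\sset$, and the pushout-product (SM7) axiom is inherited from $\spectra^{\mathsf{\Sigma}}$ because cofibrations, fibrations, and the tensor/cotensor are all detected or computed on underlying symmetric sequences -- here one also invokes Proposition \ref{prop:left-mods-realization} for compatibility of geometric realization with that in symmetric sequences.

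The main obstacle is step (iii): controlling the pushout $X' \to X$ of $F(A)\to F(B)$ in the algebra category well enough to see it is a $\Sigma$-cofibration of underlying symmetric sequences. This is precisely where the tree description of the free operad (Definition \ref{def:free-operads}), the splitting $FY \cong \bar F Y \wedge Y$, and the bar-type simplicial resolution of the pushout (following \cite[VII.3.5--3.9]{elmendorf/kriz/mandell/may:1997}) are needed, and it is already carried out in Lemma \ref{lem:pushouts}. Granting that lemma, together with the preservation of reflexive coequalizers and of filtered colimits established just above, the remaining verifications are routine instances of the proof of \cite[VII.4.7]{elmendorf/kriz/mandell/may:1997}, exactly as for $S$-algebras and as extended to algebras over operads in $\spectra$ by Basterra--Mandell \cite[8.6]{basterra/mandell:2005}. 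Finally, Theorem \ref{thm:projective-model} follows by combining this corollary with Proposition \ref{prop:right-mods}(4), which already supplies the $\spectra^{\mathsf{\Sigma}}$, $\spectra^{\mathsf{\Sigma}}_{\mathsf{red}}$ and $\mathsf{Mod}_{\mathsf{right}}(P)$ cases, so that all six categories of Theorem \ref{thm:projective-model} acquire the asserted cofibrantly-generated simplicial model structures.
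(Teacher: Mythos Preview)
Your proposal is correct and follows essentially the same approach as the paper: both invoke the monadic lifting theorem \cite[VII.4.7]{elmendorf/kriz/mandell/may:1997}, with the Cofibration Hypothesis supplied by Lemmas \ref{lem:pushouts} and \ref{lem:colimits}. The paper's proof is just a two-line citation of these ingredients, whereas you have spelled out in detail the individual hypotheses (smallness, acyclicity via path objects, SM7) that \cite[VII.4.7]{elmendorf/kriz/mandell/may:1997} packages together; this elaboration is accurate but goes beyond what the paper records.
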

\begin{proof}
Lemmas \ref{lem:pushouts} and \ref{lem:colimits} together form the `Cofibration Hypothesis' and the corollary then follows essentially by \cite[VII.4.7]{elmendorf/kriz/mandell/may:1997} (or, to be precise, by a symmetric sequence version of this result).
\end{proof}

It now remains only to address the question of filtered homotopy colimits on the categories $\mathsf{Op}(\spectra)$, $\mathsf{Mod}_{\mathsf{left}}(P)$ and $\mathsf{Mod}_{\mathsf{bi}}(P)$.

\begin{prop} \label{prop:filtered-hocolim}
Let $\cat{C}$ be one of the categories $\mathsf{Op}(\spectra)$, $\mathsf{Mod}_{\mathsf{left}}(P)$ or $\mathsf{Mod}_{\mathsf{bi}}(P)$, with $P$ $\Sigma$-cofibrant. Let $\mathcal{X}: \cat{J} \to \cat{C}$ be a filtered diagram in $\cat{C}$. Then there is a natural equivalence between the homotopy colimit of $\mathcal{X}$ as calculated in $\cat{C}$, or in the underlying category of symmetric sequences.
\end{prop}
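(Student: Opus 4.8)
The plan is to reduce the comparison of the two homotopy colimits to a statement about cofibrant replacements, exactly as in the proof of Corollary \ref{cor:hocolim} and Lemma \ref{lem:colim-ind}. Since filtered (strict) colimits in $\cat{C}$ agree with filtered colimits in $\spectra^{\mathsf{\Sigma}}$ by Lemma \ref{lem:colimits}, it is enough to show that a cofibrant replacement for $\mathcal{X}$ in the projective model structure on $\cat{J}$-indexed diagrams in $\cat{C}$ is, after forgetting to symmetric sequences, also a cofibrant replacement for $\mathcal{X}$ in the projective model structure on $\cat{J}$-indexed diagrams of symmetric sequences. Applying the strict colimit functor to such a replacement then computes both homotopy colimits simultaneously.

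First I would take a projectively-cofibrant replacement $\mathcal{Y} \weq \mathcal{X}$ in the category of $\cat{J}$-diagrams in $\cat{C}$. The underlying diagram of symmetric sequences is still an objectwise weak equivalence, so the only thing to check is that the underlying $\cat{J}$-diagram of symmetric sequences is projectively cofibrant. The cofibrant replacement $\mathcal{Y}$ can be built as a cell complex for the generating cofibrations in the projective model structure on $\cat{J}$-diagrams in $\cat{C}$; these are of the form $F_{\cat{C}}(\text{gen.\ cof.\ in }\spectra^{\mathsf{\Sigma}}) \smsh \cat{J}(j,-)_+$ where $F_{\cat{C}}$ is the relevant free-object monad ($F$, $L$ or $M$). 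So by induction on a cell structure for $\mathcal{Y}$, and using that colimits along a sequence of cofibrations in diagrams of symmetric sequences are cofibrations, I reduce to showing the following: if $\mathcal{Y}'$ is a $\cat{J}$-diagram in $\cat{C}$ whose underlying diagram of symmetric sequences is projectively cofibrant, and $\mathcal{Y}'\to\mathcal{Y}''$ is a pushout in $\cat{J}$-diagrams in $\cat{C}$ of a map $F_{\cat{C}}(A)\smsh\cat{J}(j,-)_+ \to F_{\cat{C}}(B)\smsh\cat{J}(j,-)_+$ along a coproduct of generating cofibrations $A\to B$ in $\spectra^{\mathsf{\Sigma}}$, then $\mathcal{Y}'\to\mathcal{Y}''$ is a projective cofibration of $\cat{J}$-diagrams of symmetric sequences.

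The main obstacle is precisely this last claim, and it is here that the $\Sigma$-cofibrancy of $P$ is needed (for the module cases). The key is that pushouts in $\cat{C}$ along free maps are computed, on underlying symmetric sequences, by the simplicial bar-type model of Lemma \ref{lem:pushouts}: objectwise, $\mathcal{Y}''(j)$ is the realization of a simplicial symmetric sequence built from coproducts of $F_{\cat{C}}(B)\amalg F_{\cat{C}}(A)^{\amalg k}\amalg\mathcal{Y}'(j)$, and by the proof of Lemma \ref{lem:pushouts} (together with the $\Sigma$-cofibrancy analysis of \cite[VII.6.1]{elmendorf/kriz/mandell/may:1997} adapted as there) each of these degeneracy maps is an inclusion of a wedge summand of symmetric sequences, and each layer is $\Sigma$-cofibrant relative to $\mathcal{Y}'(j)$ provided $\mathcal{Y}'(j)$ and $P$ are $\Sigma$-cofibrant. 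Since this construction is functorial and compatible with the maps in $\cat{J}$, and since $-\smsh\cat{J}(j,-)_+$ is a left adjoint preserving these colimits, I get that $\mathcal{Y}'\to\mathcal{Y}''$ is a retract of a map obtained by attaching free cells (in the diagram category of symmetric sequences) along the generating cofibrations $A\to B$, hence is a projective cofibration of $\cat{J}$-diagrams of symmetric sequences.

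Finally, with $\mathcal{Y}$ shown to be objectwise-cofibrant and projectively cofibrant as a $\cat{J}$-diagram of symmetric sequences, the strict colimit $\colim_{\cat{J}}\mathcal{Y}$ computes the homotopy colimit of $\mathcal{X}$ in $\cat{C}$ (by definition of the derived colimit in the simplicial model category $\cat{C}^{\cat{J}}$, using Corollary \ref{cor:left-mods-model} for the existence of tensors and colimits), and by Lemma \ref{lem:colimits} it equals the strict colimit of $\mathcal{Y}$ as a diagram of symmetric sequences, which in turn computes the homotopy colimit of $\mathcal{X}$ in $\spectra^{\mathsf{\Sigma}}$ by Proposition \ref{prop:right-mods}(6). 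This gives the required natural equivalence. I would close by remarking that the same argument, specialized to the canonical diagram of finite subcomplexes, is what underlies the use of Definition \ref{def:dual-pro-comodule}: it is exactly this proposition that guarantees the homotopy colimit forming $\dual M$ in the category of $\widetilde{\dual Q}$-modules represents the termwise Spanier-Whitehead dual.
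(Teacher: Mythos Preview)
Your proposal is correct and follows essentially the same route as the paper: reduce to showing that a projectively-cofibrant $\cat{J}$-diagram in $\cat{C}$ has projectively-cofibrant underlying $\cat{J}$-diagram of symmetric sequences, induct on a cell structure, and invoke a diagrammatic version of Lemma~\ref{lem:pushouts} for the inductive step. The paper is terser (it simply says the key step ``follows from the same argument as the second part of Lemma~\ref{lem:pushouts}''), but your expanded account of that argument is accurate.
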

\begin{proof}
We use the fact that the homotopy colimit of $\mathcal{X}$ can be calculated by taking the strict colimit of a cofibrant approximation to $\mathcal{X}$ in the projective model structure on the relevant category of diagrams. By Lemma \ref{lem:colimits}, the strict colimit is the same whether calculated in the category $\cat{C}$ or in symmetric sequences. It is therefore enough to prove the following claim: suppose that $\mathcal{X}$ is cofibrant in the projective model structure on diagrams $\cat{J} \to \cat{C}$; then $\mathcal{X}$ is also cofibrant in the projective model structure on diagrams of symmetric sequences. Note that if $\cat{J}$ is the trivial category, this reduces to the fact that projectively-cofibrant operads and modules are $\Sigma$-cofibrant (i.e. termwise-cofibrant). We prove this statement in the same way: using a diagrammatic version of Lemma \ref{lem:pushouts}.

We can easily reduce to the case that $\mathcal{X}$ is a cell object in the diagram category $[\cat{J},\cat{C}]$. This means that $\mathcal{X}$ is the colimit of a sequence
\[ * = \mathcal{X}^{(0)} \to \mathcal{X}^{(1)} \to \dots \]
in which each map comes from a pushout in $[\cat{J},\cat{C}]$ of the form
\[ \begin{diagram}
  \node{\Wdge_{\alpha}^{\mathstrut} F(A_{\alpha}) \smsh \cat{J}(j_{\alpha},-)} \arrow{e} \arrow{s} \node{\mathcal{X}^{(i)}} \arrow{s} \\
  \node{\Wdge_{\alpha}^{\mathstrut} F(B_{\alpha}) \smsh \cat{J}(j_{\alpha},-)} \arrow{e} \node{\mathcal{X}^{(i+1)}}
\end{diagram} \]
where the $A_{\alpha} \to B_{\alpha}$ are generating cofibrations in the relevant category of symmetric sequences. In order to show that the colimit $\mathcal{X}$ is cofibrant as a diagram of symmetric sequences, it is sufficient to show for each $i$, that if $\mathcal{X}^{(i)}$ is cofibrant, then the map $\mathcal{X}^{(i)} \to \mathcal{X}^{(i+1)}$ is a cofibration (of diagrams of symmetric sequences). This follows from the same argument as the second part of Lemma \ref{lem:pushouts}.
\end{proof}

% Bibliography (mcching.bib)
\bibliographystyle{amsplain}
\bibliography{mcching}

\end{document}